\numberwithin{equation}{section}
\newcommand{\R}{{\mathbb R}}
\newcommand{\C}{{\mathbb C}}
\newcommand{\N}{{\mathbb N}}
\newcommand{\Z}{{\mathbb Z}}
\renewcommand{\d}{\partial}
\renewcommand{\Re}{{\operatorname{Re\,}}}
\renewcommand{\Im}{{\operatorname{Im\,}}}
\newcommand{\F}{F^{\text{st}}}
\newcommand{\Res}{{\operatorname{Res}\,}}
\newcommand{\al}{\alpha}
\newcommand{\be}{\beta}
\newcommand{\ga}{\gamma}
\newcommand{\Ga}{\Gamma}
\newcommand{\la}{\lambda}
\newcommand{\ep}{\varepsilon}
\newcommand{\de}{\delta}
\newcommand{\De}{\Delta}
\newcommand{\f}{\phi}
\newcommand{\sg}{\sigma}
\newcommand{\Sg}{\Sigma}
\newcommand{\Om}{\Omega}
\newcommand{\Arg}{{\operatorname{Arg}\,}}
\newcommand{\ZZ}{ \widetilde {\mathcal Z}}
\newcommand{\m}{\mathfrak m}
\newcommand{\mt}{\mathfrak m\mathfrak t}
\newcommand{\dd}{\mathrm d}
\newcommand{\B}{\mathcal B}
\newcommand{\eq}{\begin{equation}}
\newcommand{\eeq}{\end{equation}}
\newcommand{\E}{{\mathbb E}}
\newcommand{\bigO}{{\mathcal O}}
\newcommand{\ii}{\mathbf i}
\newcommand{\eps}{\varepsilon}
\newtheorem{theo}{{Theorem}}[section]
\newtheorem{cor}[theo]{{Corollary}}
\newtheorem{lem}[theo]{{Lemma}}
\newtheorem{prop}[theo]{{Proposition}}
\newtheorem{conjecture}[theo]{Conjecture}
\DeclareMathOperator{\sh}{sh}
\theoremstyle{remark}
\newtheorem{definition}[theo]{Definition}
\newtheorem{remark}[theo]{Remark}
\title{Boundary statistics for the six-vertex model with DWBC}
\author{ Vadim Gorin\thanks{Departments of Statistics and Mathematics, University of California at Berkeley, USA \href{mailto:vadicgor@gmail.com}{\nolinkurl{vadicgor@gmail.com}}.}  \and Karl Liechty\thanks{Department of Mathematical Sciences, DePaul University, Chicago, IL, 60614 USA \href{mailto:kliechty@depaul.edu}{\nolinkurl{kliechty@depaul.edu}}.} }
\begin{document}
\maketitle

\begin{abstract}
 We study the behavior of configurations in the symmetric six-vertex model with $a,b,c$ weights in the $n\times n$ square with Domain Wall Boundary Conditions as $n\to\infty$. We prove that when $\Delta=\frac{a^2+b^2-c^2}{2ab}<1$,  configurations near the boundary have fluctuations of order $n^{1/2}$ and are asymptotically described by the GUE-corners process of the random matrix theory. On the other hand, when $\Delta>1$, the fluctuations are of finite order and configurations are asymptotically described by the stochastic six-vertex model in a quadrant. In the special case $c=0$ (which implies $\Delta>1$), the limit is expressed as the $q$-exchangeable random permutation of infinitely many letters, distributed according to the infinite Mallows measure.
\end{abstract}

\tableofcontents

\section{Introduction}

\sloppy

\subsection{Background and motivations}

The six-vertex model is one of the most fundamental and important exactly solvable models in statistical physics, and we refer to \cite{baxter2016exactly,Bleher-Liechty14,Gorin_Nicoletti_lectures,LiebWu,reshetikhin2010lectures} for general reviews. The model can be defined on a domain in the two-dimensional square lattice by drawing arrows on the edges of the lattice which satisfy the {\it ice-rule}: for each vertex there are exactly two adjacent arrows pointing inward, and two pointing outward. There are therefore six possible arrow configurations at each vertex, which we can label as Type 1, 2, 3, 4, 5, and 6. By simple bijections, one can identify these six configurations with local configurations of ice molecules $H_2O$ or of up-right paths which are allowed to touch each other but not intersect, as in Figure \ref{Figure_six_vertices}.

\begin{figure}[t]
\begin{center}
   \includegraphics[width=0.7\linewidth]{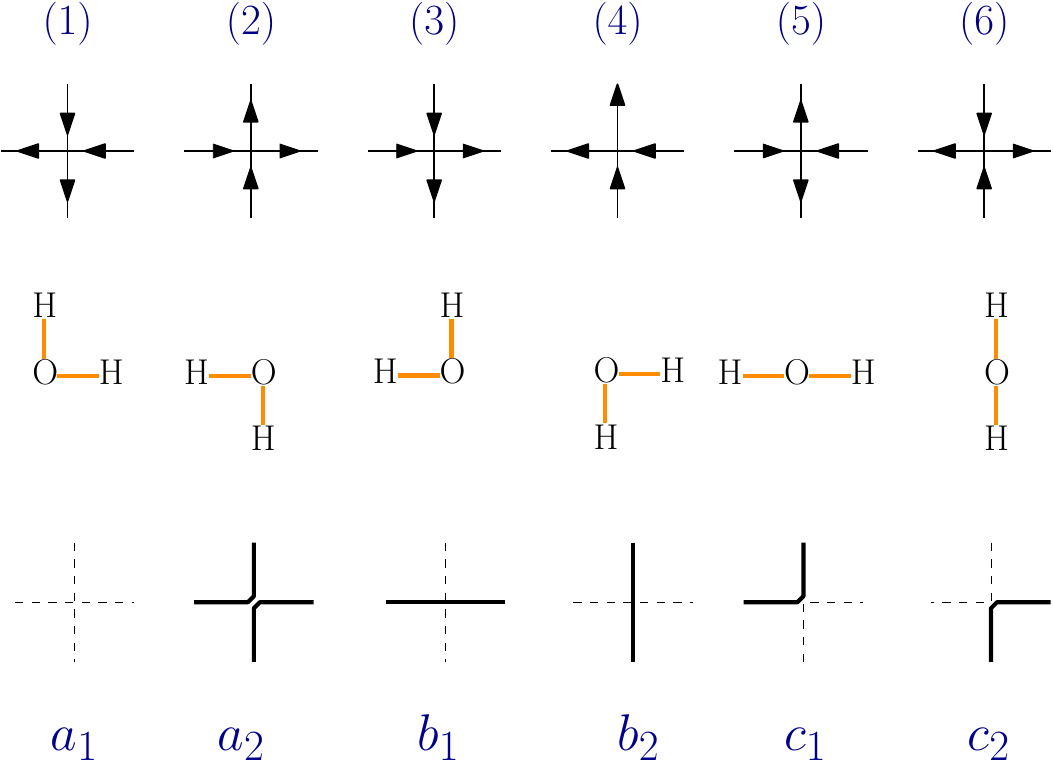}
\end{center}
        \caption{\label{Figure_six_vertices} The six types of vertices in three different representations}
\end{figure}

We introduce a weight $w_i>0$ for each vertex type $i = 1,2,\dots, 6$ and define the weight of a configuration $\sigma$ in a finite domain as
\[
w(\sg) = \prod_{i=1}^6 w_i^{N_i(\sg)},
\]
where $N_i(\sg)$ is the number of vertices of type $i$ in the configuration $\sg$. The Boltzmann-Gibbs measure is then defined by assigning to configurations probabilities computed as the normalized weights:
\begin{equation}
\label{eq_Gibbs_measure}
\mu(\sg) = \frac{w(\sg)}{\mathcal Z}, \qquad \mathcal Z = \sum_{\sg} w(\sg).
\end{equation}
The quantity $\mathcal Z$ is the partition function for the model, and the sum defining it is over all allowable configurations $\sg$ in the particular domain we deal with. In the current work we consider the six-vertex model with {\it Domain Wall Boundary Conditions} (DWBC), in which the vertices are constrained to a $n\times n$ rectangle, all arrows on the left and right boundary point inward, and all arrows on the top and bottom boundaries point outward. See Figure \ref{Figure_DWBC} for an example of an arrow configuration with DWBC on the $5\times 5$ lattice and corresponding family of non-intersecting paths.

\begin{figure}[t]
\begin{center}
   \includegraphics[width=0.43\linewidth]{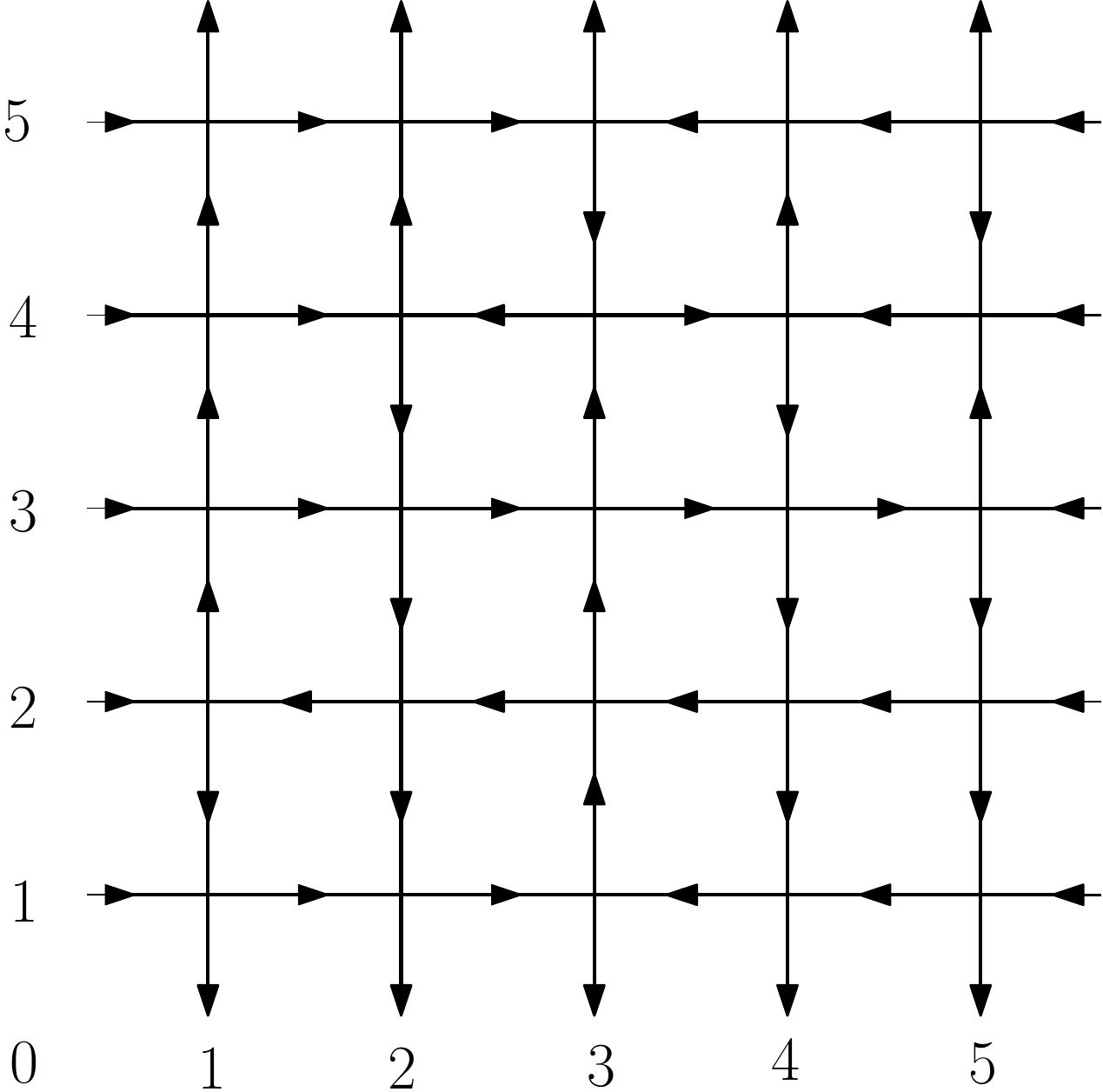} \hfill \includegraphics[width=0.43\linewidth]{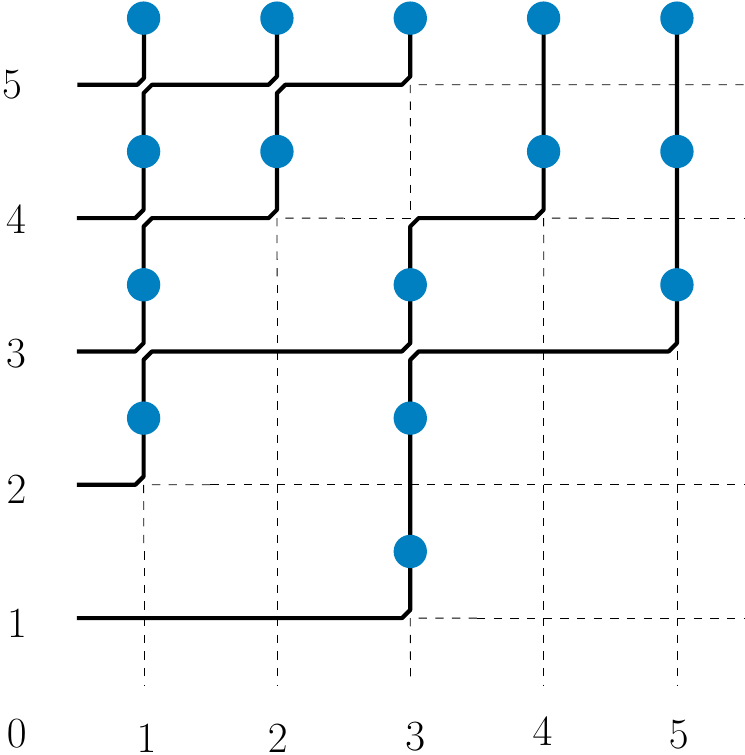}
\end{center}
        \caption{\label{Figure_DWBC} A sample configuration for the Domain Wall Boundary Conditions in $5\times 5$ square in arrows and paths representations. We additionally put blue dots on the vertical segments of paths.}
\end{figure}

It is customary to split the six types of vertices into three groups: we refer to Type 1 and Type 2 as type $a$ vertices, Types 3 and 4 are type $b$ vertices, and Types 5 and 6 are type $c$ vertices. The weights also reflect this split: rather than using $w_1,\dots,w_6$, we prefer the weights $a_1,a_2,b_1,b_2,c_1,c_2$, as in Figure \ref{Figure_six_vertices}. When dealing with the six-vertex model in a finite domain with fixed boundary conditions (as for our case of DWBC), there are four conservation laws: the simplest one is that the total number of vertices is deterministically fixed, and there are three others, see e.g.\ \cite[Lemma 2.1]{Gorin_Nicoletti_lectures}. As a corollary, one can change the six weights in four  ways without changing the Gibbs measure, but only getting factors which cancel between numerator and denominator in \eqref{eq_Gibbs_measure}. Hence, the Gibbs measure actually depends only on two parameters, rather than all six weights. Therefore, there is no loss of generality in considering the symmetric weights $a_1=a_2=a$, $b_1=b_2=b$, $c_1=c_2=c$, keeping in mind that all three coordinates of the triplet $(a:b:c)$ can still be multiplied by a common factor without changing \eqref{eq_Gibbs_measure}; see Figure \ref{Figure_DWBC_3} below for the products of symmetric weights corresponding to the all seven possible configurations for $n=3$.

In this paper we focus on the asymptotic analysis of the $(a,b,c)$--random configurations of the six-vertex model with Domain Wall Boundary Conditions as $n\to\infty$. The DWBC are the simplest possible fixed boundary conditions on the plane and in this role they have been intensively studied by various researchers. The interest is based on two components: on one hand, these boundary conditions lead to very rich  asymptotic behavior, so that we can use them for predictions for many other classes of domains; on the other hand, DWBC are \emph{integrable} boundary conditions, meaning that there are many more exact formulas available as compared to more general domains. The first manifestation of this integrability is the evaluation of the partition function $\mathcal Z$ as the Izergin-Korepin determinant \cite{Izergin87,korepin1982calculation}, which will also play a role in our developments.

From the behaviors side, one of the most interesting features of the six-vertex model in finite domains is the phenomenon of spatial phase separation. There are four different phases or kinds of local behaviors in the model: in the frozen phase only one type of vertex is seen (there are four distinct frozen phases corresponding to the first four types of vertices); in the rough (or liquid) phase all types of vertices are present, correlations decay polynomially in distance, the fluctuations are significant and are described by conformal objects, such as the Gaussian Free Field; in the smooth (or gaseous) phase all types of vertices are present, but the fluctuations are very small and correlations decay exponentially in distance; in the KPZ phase all types of vertices are again present, but the correlations  are inhomogeneous, in the sense that their decay significantly depends on the direction, and the fluctuations are described by the objects from random matrix theory and interacting particle systems, such as the Tracy--Widom distribution and Airy$_2$ process. It is expected that whenever $\Delta=\frac{a^2+b^2-c^2}{2ab}<-1$, the first three phases might occur, for $-1<\Delta<1$, only frozen and rough (liquid) phases appear, and for $\Delta>1$ frozen, rough, and KPZ phases are possible. Interestingly, several different phases might coexist\footnote{Outside the $\Delta=0$ case, this is mostly conjectural, as rigorous results are very limited.} in the same domain, being separated by certain curves. As a sample result, consider the DWBC model with $a=b=1$, $c=\sqrt{2}$, resulting in $\Delta=0$. It is known \cite{Jockusch-Propp-Shor98} that as $n\to\infty$, outside the inscribed circle the proportion of $c$--type vertices tends to $0$: near the bottom--right corner there are only Type 1 vertices, near the bottom-left corner there are only Type 3 vertices, near the top-left corner there are only Type 2 vertices, and near the top--right corner there are only Type 4 vertices. On the other hand, inside the inscribed circle all six types of vertices appear with positive proportions. For other values of $(a,b,c)$, the circle is replaced by more complicated curves, see Figure \ref{Figure_DWBC_simulations}.

\begin{figure}[t]
\begin{center}
   \includegraphics[width=0.46\linewidth]{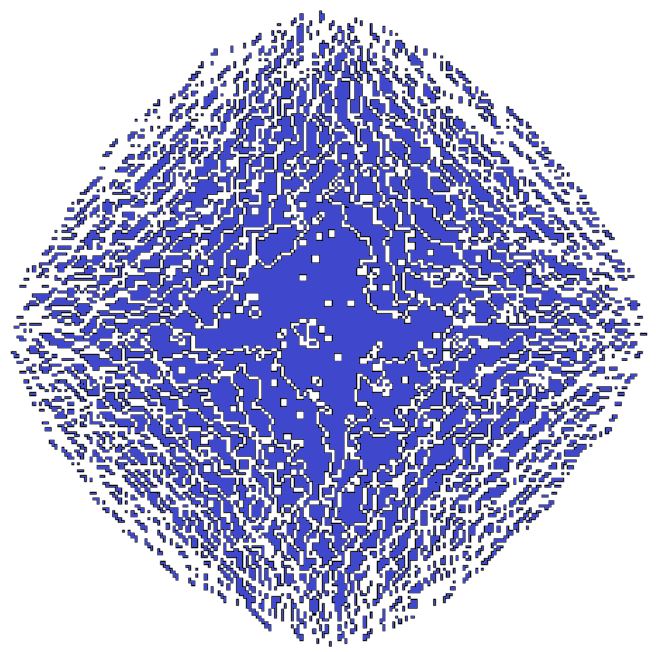} \hfill \includegraphics[width=0.46\linewidth]{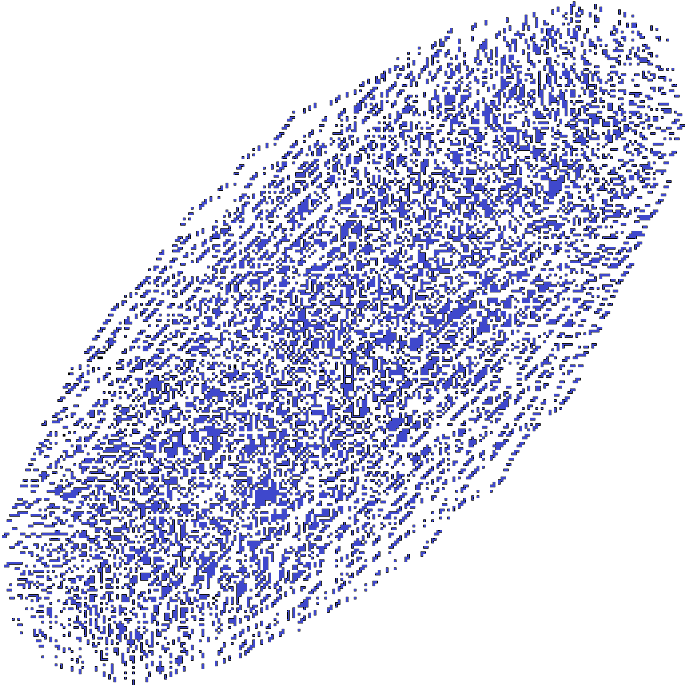}
\end{center}
        \caption{\label{Figure_DWBC_simulations} A random configuration for the Domain Wall Boundary Conditions with only $c$--type vertices shown. Left panel: $n=200$, $a=b=1$, $c=\sqrt{8}$; $\Delta=-3$. Right panel: $n=256$, $a=c=2$, $b=1$; $\Delta=\tfrac{1}{4}$.  We are grateful to David Keating for helping with these pictures based on \cite{keating2018random}.}
\end{figure}

The problem of the explicit identification of the curve separating different phases in the model with DWBC attracted significant attention of researchers. In the theoretical physics literature, several approaches were developed in \cite{Colomo-Pronko10,Colomo-Pronko-ZinnJustin10,Colomo-Sportiello16} which resulted in exact formulas for this curve. The mathematical justifications for these formulas are known in $\Delta=0$ case \cite{Cohn-Elkies-Propp96, Jockusch-Propp-Shor98} and for $a=b=c=1$ \cite{Aggarwal20}; the rigorous approaches might extend to other values of $(a,b,c)$ as well. The next step is to understand the fluctuations of this curve. At $\Delta=0$ for generic points of the separation curve the fluctuations were shown in \cite{Johansson02, Johansson05} to be of size $n^{1/3}$, with scaling limit governed by the Tracy-Widom distribution and the Airy$_2$ process\footnote{Despite the same limiting objects showing up, the way they appear is quite different from the asymptotic analysis of the KPZ phase in \cite{aggarwal2018current,borodin2016stochastic,dimitrov2023two}.}. A result of a similar flavor was also recently achieved for $a=b=c=1$ case in \cite{ayyer2021goe}. Beyond these results and, in particular, for other values of $(a,b,c)$ we are only aware of computer-simulated studies, see, e.g., \cite{lyberg2023fluctuation} or \cite{praehofer2023domain}.

Another area of interest is to investigate the special points where the separation curve touches the boundary of the square; because near all other points of the boundary the configurations are frozen, one could argue that the model interacts with the boundaries only through these special points, and therefore they are of central importance. For the $\Delta=0$ case the fluctuations near such points (there are four of them, corresponding to four sides of the square) were shown in \cite{Johansson-Nordenstam06} to be governed by the Gaussian Unitary Ensemble of random matrix theory and the related GUE-corners process. Later a similar result was obtained in \cite{Gorin14,gorin2015asymptotics} for the $a=b=c=1$ case. Our paper continues this line of research and addresses the question: does the GUE-corners asymptotics near the boundary extend to all values of $(a,b,c)$ or are there alternative limiting objects?

\subsection{Main results}

\label{Section_main_results}

The six-vertex model with DWBC has many symmetries: one can rotate the configurations in the domain by $\pi/2$ or reflect them with respect to the vertical axis, horizontal axis, or diagonals. Because of these symmetries, we only study the behavior of the model near the bottom boundary of the domain. Let us introduce an encoding of the configurations by arrays of integers.

\begin{definition}
\label{Definition_monotone_triangle}
 Consider a random $(a,b,c)$--weighted configuration of the six-vertex model with DWBC in the $n\times n$ square in the paths representation. For $k=1,2,\dots,n$, let random variables $(1\le \lambda_1^k< \lambda_2^k<\dots<\lambda_k^k\le n)$ denote the positions of vertical segments of paths connecting vertices in rows $k$ and $k+1$; they are blue dots in Figure \ref{Figure_DWBC}.
\end{definition}

In particular, the configuration of Figure \ref{Figure_DWBC} has: $\lambda_1^1=3$, $(\lambda_1^2,\lambda_2^2)=(1,3)$, $(\lambda_1^3,\lambda_2^3,\lambda_3^3)=(1,3,5)$, $(\lambda_1^4,\lambda_2^4,\lambda_3^4,\lambda_4^4)=(1,2,4,5)$, and $(\lambda_1^5,\lambda_2^5, \lambda_3^5, \lambda_4^5,\lambda_5^5)=(1,2,3,4,5)$. Note that for any configuration, interlacement inequalities
$\lambda_{i}^{k+1}\le \lambda_i^k \le \lambda_{i+1}^{k+1}$ hold and the top row is deterministically fixed: $(\lambda_1^n,\lambda_2^n,\dots,\lambda_n^n)=(1,2,\dots,n)$. Arrays of integers satisfying these conditions are called \emph{monotone triangles}.

We also need to define a limiting object:
\begin{definition}
\label{Definition_GUE_corners}
 Let $X$ be an $N\times N$ matrix of i.i.d.\ complex Gaussian random variables $\mathcal N(0,1)+\ii \mathcal N(0,1)$ and let the Hermitian matrix $\mathcal M$ be $\frac{1}{2}(X+X^*)$. The \emph{GUE-corners process} of rank $N$ is an array $\{g_i^k\}_{1\le i \le k \le N}$ of real numbers, such that $g_i^k$ is the $i$th eigenvalue of the top--left $k\times k$ corner of $\mathcal M$.
\end{definition}
The eigenvalues in the GUE-corners process interlace, i.e.\ $g_{i}^{k+1}\le g_i^k\le g_{i+1}^{k+1}$. Their distribution can be understood through the joint density, cf.\ \cite[Section 4]{Baryshnikov01}, or through the correlation functions \cite{Johansson-Nordenstam06}, \cite{okounkov2006birth}. Since each top-left square corner of $N\times N$ matrix $\mathcal M$ is again a matrix of the same type, we can omit $N$ and think about the GUE--corners process as an infinite array $(g_i^k)_{1\le i \le k}$, so that when we restrict to $k\le N$, we are back to Definition \ref{Definition_GUE_corners}. Here is our first main result:
\begin{theo} \label{Theorem_GUE_corners}
 Choose $a,b,c>0$ such that $\Delta=\frac{a^2+b^2-c^2}{2ab}<1$. Let $(\lambda_i^k)$ be a random monotone triangle corresponding to $(a,b,c)$--random configuration of the six-vertex model with DWBC, as in Definition \ref{Definition_monotone_triangle}. For explicit constants ${\mathfrak m}(a,b,c)$ and ${\mathfrak s}(a,b,c)$ given in Section \ref{Section_GUE_proof}, we have
 $$
  \lim_{n\to\infty} \left( \frac{\lambda_i^k- {\mathfrak m}(a,b,c)\cdot n}{{\mathfrak s}(a,b,c) \sqrt{n}} \right)_{1\le i \le k} = \bigl(g_i^k\bigr)_{1\le i \le k}
 $$
 in the sense of convergence of finite-dimensional distributions.
\end{theo}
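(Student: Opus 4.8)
The plan is to combine the Gibbs--Markov structure of the model with an asymptotic analysis of the integrable formulas for its boundary correlation functions. \emph{Step 1 (reduction to a one-sided problem).} First I would use that, read from the bottom, the monotone triangle $\emptyset=\lambda^0\prec\lambda^1\prec\dots\prec\lambda^n=(1,\dots,n)$ is a Markov chain: the weight of a configuration factors as $\prod_{k=1}^{n}T_k(\lambda^{k-1},\lambda^k)$, where $T_k$ is the product of the six-vertex weights along row $k$ and depends only on the two adjacent rows of the triangle. Hence, for any fixed $K$,
\[
\mathbb P\bigl((\lambda^k)_{k\le K}=(\mu^k)_{k\le K}\bigr)=\frac{1}{\mathcal Z_n}\Bigl(\prod_{k=1}^{K}T_k(\mu^{k-1},\mu^k)\Bigr)\,\Psi_{n,K}(\mu^K),
\]
where $\Psi_{n,K}(\nu)$ is the partition function of the model on the top $(n-K)\times n$ block whose bottom row of vertical segments is frozen at the columns listed in $\nu$. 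A direct bookkeeping of vertex types shows that, whenever $\mu^1\prec\dots\prec\mu^K$ interlace strictly, $\prod_{k=1}^{K}T_k(\mu^{k-1},\mu^k)=C_{n,K}\,(a/b)^{\mu^K_1+\dots+\mu^K_K}$ with $C_{n,K}$ independent of the $\mu$'s; configurations with coinciding neighbouring entries produce lower powers of $c$ and form a set negligible in the scaling limit. Thus $\mathbb P((\lambda^k)_{k\le K}=\cdot)\propto \mathbf 1[\mu^1\prec\dots\prec\mu^K]\,(a/b)^{\sum_i\mu^K_i}\,\Psi_{n,K}(\mu^K)$; in particular, in the limit, conditionally on $\lambda^K$ the rows $\lambda^1,\dots,\lambda^{K-1}$ become uniform on the Gelfand--Tsetlin polytope, exactly as for the GUE-corners process, so the problem reduces to the large-$n$ behaviour of $\nu\mapsto (a/b)^{\sum_i\nu_i}\Psi_{n,K}(\nu)$, i.e.\ to the law of the row $\lambda^K$ alone.

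\emph{Step 2 (asymptotics of the boundary partition function).} This is the analytic heart. Using the quantum inverse scattering representation behind the Izergin--Korepin determinant \cite{Izergin87,korepin1982calculation}, one writes $\Psi_{n,K}(\nu)$ --- equivalently, the $K$-point boundary correlation function of the DWBC model, cf.\ \cite{Colomo-Pronko10} --- as a $K\times K$ determinant $\det[\phi^{(n)}_j(\nu_i)]_{i,j=1}^{K}$ built from one explicit function, and runs a steepest-descent / Riemann--Hilbert analysis of $\phi^{(n)}_j$ as $n\to\infty$ with $\nu_i={\mathfrak m}n+{\mathfrak s}\sqrt n\,y_i+o(\sqrt n)$, building on the asymptotic theory of the Izergin--Korepin determinant \cite{Bleher-Liechty14}. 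The hypothesis $\Delta<1$ enters here decisively: the governing phase function has a conjugate pair of simple saddle points (rather than the real, coalescing saddles occurring for $\Delta\ge1$, where the limiting object is instead the stochastic six-vertex model), whose combined contribution is Gaussian. The location of the saddles fixes ${\mathfrak m}$ --- equivalently, ${\mathfrak m}$ is chosen so that the $O(\sqrt n)$, linear-in-$y$, term of $\log[(a/b)^{\sum_i\nu_i}\Psi_{n,K}(\nu)]$ vanishes, so that this factor precisely absorbs the $(a/b)^{\sum_i\nu_i}$ from Step~1 --- while the curvature at the saddles fixes ${\mathfrak s}$. After removing the common ($j$-independent) Gaussian weight from each column, the residual $j$-dependence is asymptotically a Hermite polynomial of degree $j-1$ in $y_i$, so the determinant collapses, after rescaling, to $(\text{const})\cdot\prod_{i<j}(y_j-y_i)\,e^{-\frac12\sum_i y_i^2}$. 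Combined with the count of strictly interlacing lower triangles from Step~1, which upon summation supplies a second Vandermonde $\prod_{i<j}(y_j-y_i)$, this shows that the rescaled law of $\lambda^K$ converges to the eigenvalue density $\propto\prod_{i<j}(y_j-y_i)^2 e^{-\frac12\sum_i y_i^2}$ of a $K\times K$ GUE matrix.

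\emph{Step 3 (assembling the corners process).} Multiplying the outputs of Steps~1 and~2, the rescaled joint law of $(\lambda^1,\dots,\lambda^K)$ has density converging to $(\text{const})\cdot\prod_{i<j}(y^K_j-y^K_i)\,e^{-\frac12\sum_i (y^K_i)^2}\cdot\prod_{k=1}^{K-1}\mathbf 1[y^k\prec y^{k+1}]$, which is exactly the joint density of the rank-$K$ GUE-corners process \cite{Baryshnikov01}. Since this is the restriction to the first $K$ rows of the infinite array in Definition~\ref{Definition_GUE_corners}, and since pointwise convergence of probability densities together with Scheff\'e's lemma yields convergence in total variation --- once one checks, via crude large-deviation bounds on the boundary correlation functions, that no mass escapes to coordinates outside an $O(\sqrt n)$ window around ${\mathfrak m}n$ --- this gives convergence of all finite-dimensional distributions, as claimed.

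\emph{Main obstacle.} The hard part will be Step~2: pinning down and controlling an exact determinantal (or multiple-integral) representation of $\Psi_{n,K}$ uniformly over the whole range $\Delta<1$ --- which comprises both the disordered regime $-1<\Delta<1$ and the antiferroelectric regime $\Delta<-1$, with genuinely different analytic structures --- and then carrying the steepest-descent analysis beyond the leading exponential order so as to extract the $O(\sqrt n)$ term (which pins down ${\mathfrak m}$), the Gaussian at order $O(1)$ (which pins down ${\mathfrak s}$, and must come out isotropic to match GUE), and the Vandermonde produced by the $K$ saddle-point evaluations colliding as the $y_i$'s are brought to a single macroscopic point.
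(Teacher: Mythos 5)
Your proposal is structurally sound and Steps 1 and 3 match the paper closely: the Gibbs/Markov factorization of the weight, the telescoping $(a/b)^{\sum_i\nu_i}$ identity for strictly interlacing towers, the negligibility of collisions, and the observation that the conditional law of the inner triangle given $\lambda^K$ tends to uniform on the Gelfand--Tsetlin polytope are exactly the content of the paper's Proposition~\ref{Proposition_Gibbs_approximation} and the closing argument of Section~\ref{Section_GUE_proof}. Step 2, however, takes a genuinely different route from the paper, and it is worth being clear-eyed about what is harder in your version. You aim for a local limit theorem: pointwise asymptotics of the discrete mass function $\mathbb P(\lambda^K=\nu)$ via steepest descent of a determinantal formula, followed by a Scheff\'e-type passage to weak convergence. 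The paper instead proves only a (multidimensional) Laplace-transform statement: it shows that the exponential generating function $\ZZ_n(\xi_1,\dots,\xi_k;t,\ga)$ converges to a Gaussian (Theorems~\ref{Theorem_asymptotics_case2}--\ref{Theorem_asymptotics_case4}), rewrites it via Proposition~\ref{Proposition_inhom_as_generating} as an expectation of normalized strip partition functions $\F^{\text{sym}}_\nu$, shows (Proposition~\ref{Proposition_F_to_Bessel}) that those limit to multivariate Bessel functions, and then closes with a uniqueness theorem for ``Bessel generating functions'' (Proposition~\ref{Proposition_GUE_BGF}). This only requires the two leading orders in $\xi$ of a log-Laplace transform, not a uniform local CLT, and sidesteps the discreteness of the lattice entirely.

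If you want to carry out your Step 2 as stated, there are three concrete points to repair. First, the object to analyze should be $\mathbb P(\lambda^K=\nu)$, not $\Psi_{n,K}(\nu)$ alone: only the former has a clean Colomo--Pronko determinant, of the form $\det\bigl[H^{(n-K+j)}_{\nu_i}\bigr]_{i,j}$ with the column index shifting the system size rather than an internal degree, so the determinant is not $\det[\phi_j(\nu_i)]$ for a fixed column family; the proof that this collapses to Hermite polynomials after removing the Gaussian needs to be rebuilt from this $n$-shifted structure (the paper's Theorem~\ref{Theorem_partition_general_k_reduction} is exactly this type of determinant, but at the level of the Laplace transform). Second, Scheff\'e's lemma does not apply to a discrete family converging to an absolutely continuous law; you would need a genuine local limit theorem with uniformity over $\nu_i$ in an $O(\sqrt n)$ window around $\mathfrak m n$, plus a Riemann-sum argument, which is a step up in precision from what the paper establishes. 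Third, you need subleading asymptotics of the one-point correlation function uniformly over both the disordered and antiferroelectric phases, which is precisely the Riemann--Hilbert work the paper does, but you would need an extra order in the local expansion than Propositions~\ref{prop:OP_asy}--\ref{prop:OP_asyF} currently supply. None of these is a wrong idea, but each is strictly harder than the corresponding step in the paper, and the promised Hermite-polynomial collapse has not been verified here. If you instead match the paper's decomposition and only seek Laplace-transform asymptotics, Steps 1 and 3 of your plan carry over essentially verbatim.
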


For the special cases $\Delta=0$ and $a=b=c=1$, Theorem \ref{Theorem_GUE_corners} was proven in \cite{Johansson-Nordenstam06} and \cite{Gorin14,gorin2015asymptotics}, respectively. For other values of $(a,b,c)$ the result is new. In the six-vertex model with other boundary conditions (and special choices of $a,b,c$ in the $\Delta>1$ situation), GUE-corners also appeared in \cite{dimitrov2020six,dimitrov2022gue}. In a more general context, Theorem \ref{Theorem_GUE_corners} can be viewed as a part of the program for establishing the universal appearance of the GUE-corners process in random matrix theory and $2d$ statistical mechanics: see \cite{aggarwal2022gaussian} for a universality result in the context of random lozenge tilings, \cite{cuenca2021universal,meckes2020random} for investigations in the context of orbital measures on random matrices, \cite{okounkov2006birth} for reasons to expect universality, and \cite{Gorin_Nicoletti_lectures} for a general discussion in the context of the six-vertex model. Along these lines, in particular, Theorem \ref{Theorem_GUE_corners} is the first result in which the GUE--corners process appeared in the six-vertex model with $\Delta<-1$.

\bigskip


The $\Delta<1$ restriction in Theorem \ref{Theorem_GUE_corners} is crucial for the result and for $\Delta>1$ the asymptotic behavior changes dramatically, see Figure \ref{Figure_DWBC_Delta_large}. We need to introduce a new limiting object.

\begin{figure}[t]
\begin{center}
   \includegraphics[width=0.45\linewidth]{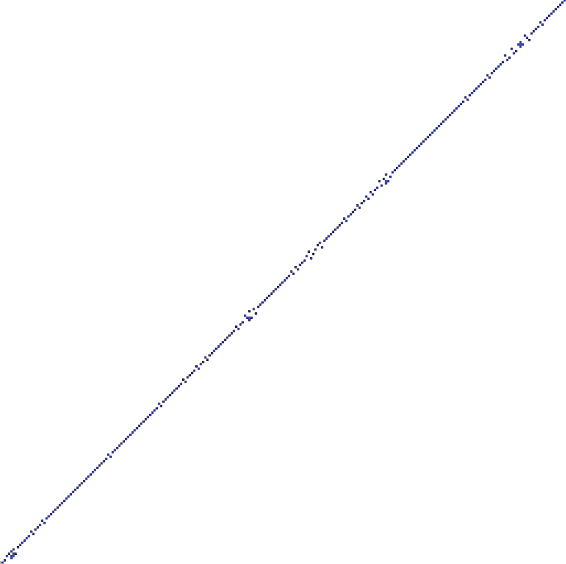} \hfill  \includegraphics[width=0.45\linewidth]{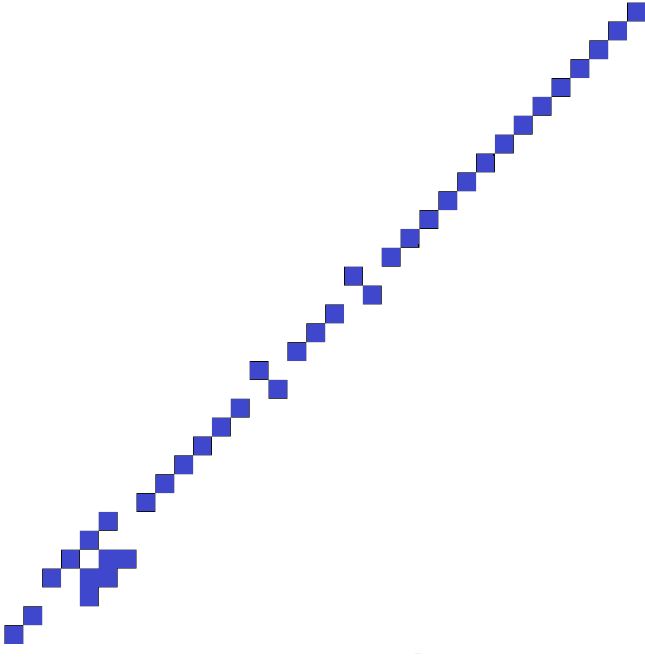}
\end{center}
        \caption{\label{Figure_DWBC_Delta_large} A random configuration for the Domain Wall Boundary Conditions with only $c$--type vertices shown. Left panel: $n=256$, $a=3$, $b=c=1$; $\Delta=\tfrac{3}{2}$. Right panel: corner of the same picture enlarged.  We are grateful to David Keating for helping with these pictures based on \cite{keating2018random}.}
\end{figure}

\begin{figure}[t]
\begin{center}
   \includegraphics[width=0.6\linewidth]{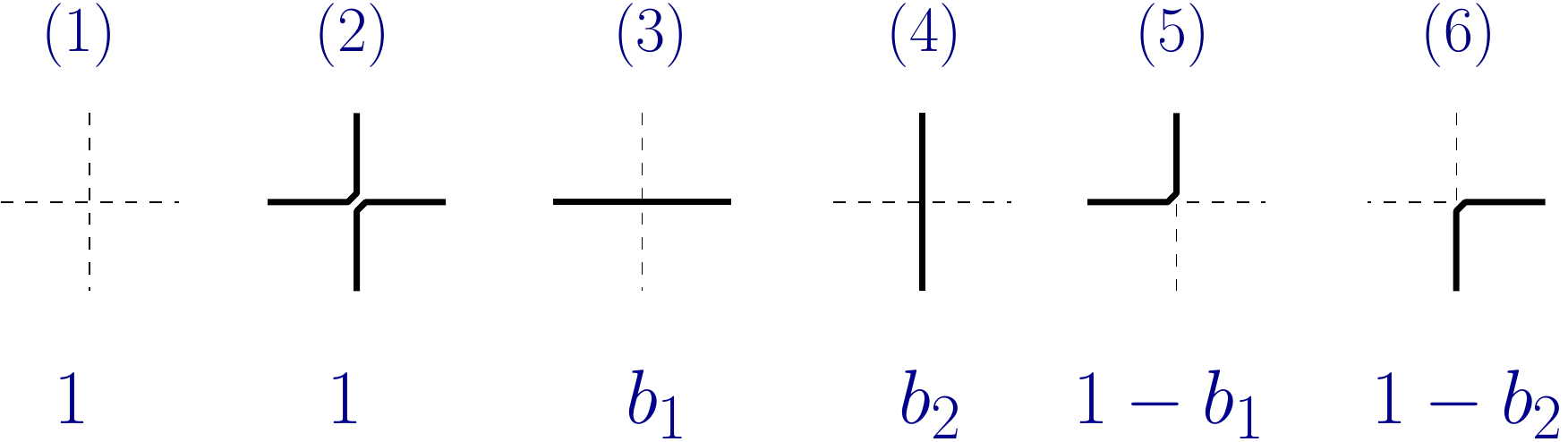}
\end{center}
        \caption{\label{Figure_stochastic} The stochastic weights}
\end{figure}

We return to asymmetric weights and set them to be $1,1,b_1,b_2,c_1,c_2$ subject to \emph{stochasticity condition} $b_1+c_1=b_2+c_2=1$, as shown in Figure \ref{Figure_stochastic}. The {\it stochastic six-vertex model} in the quadrant $\mathbb Z_{>0}\times \mathbb Z_{>0}$ has boundary conditions which look like the bottom-left corner of DWBC (i.e.\ paths entering from the left and not entering from below) and should be thought as $n=\infty$ version of DWBC. The vertex types are sampled recursively: we first define the type of the vertex at $(1,1)$, then types at $(1,2)$ and $(2,1)$, then types at $(1,3)$, $(2,2)$, and $(3,1)$, etc. When we need to sample a vertex at $(x,y)$, we already know whether a path should be entering into it from below, and whether a path should be entering from the left. Hence, there are four cases:
\begin{itemize}
 \item If no paths are entering from both directions, then the vertex at $(x,y)$ must have Type 1.
 \item If paths are entering from both directions, then the vertex at $(x,y)$ must have Type 2.
 \item If a path is only entering from the left, then we flip an independent biased coin and set the vertex to be of Type 3 with probability $b_1$ and of Type 5 with probability $1-b_1$.
\item If a path is only entering from below, then we flip an independent biased coin and set the vertex to be of Type 4 with probability $b_2$ and of Type 6 with probability $1-b_2$.
\end{itemize}
The stochastic six-vertex model on the torus first appeared in \cite{gwa1992six}, and the version in the quadrant, which we use, originates in \cite{borodin2016stochastic}. Our next result connects it to DWBC.

\begin{theo} \label{Theorem_convergence_to_stochastic}
Choose parameters $a,b,c>0$ such that $\Delta=\frac{a^2+b^2-c^2}{2ab}>1$ and $a>b$. Let $(\lambda_i^k)$ be a random monotone triangle corresponding to $(a,b,c)$--random configuration of the six-vertex model with DWBC, as in Definition \ref{Definition_monotone_triangle}. Then
$$
  \lim_{n\to\infty} \left( \lambda_i^k \right)_{1\le i \le k} = \bigl(\mu_i^k\bigr)_{1\le i \le k},
$$
in the sense of convergence of finite-dimensional distributions, where the array $\bigl(\mu_i^k\bigr)$ is obtained by the procedure of Definition \ref{Definition_monotone_triangle} from the configuration of the stochastic six-vertex model with
$$
b_1= \frac{a^2+b^2-c^2-\sqrt{(a^2+b^2-c^2)^2 - 4a^2b^2}}{2a^2}, \qquad
b_2= \frac{a^2+b^2-c^2+\sqrt{(a^2+b^2-c^2)^2 - 4a^2b^2}}{2a^2}.
$$
\end{theo}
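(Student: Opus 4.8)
The plan is to reduce the problem to stochastic weights by a gauge transformation, to recognize the target array as a stochastic six-vertex model in the quadrant that is read off from a \emph{conditioned} version of the DWBC model, and then to show that the conditioning becomes invisible near the corner as $n\to\infty$.

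\emph{Step 1: reduction to stochastic weights.} By the conservation laws for DWBC (cf.\ \cite[Lemma 2.1]{Gorin_Nicoletti_lectures}), the Gibbs measure on configurations in the $n\times n$ box depends on the six weights $(a_1,a_2,b_1,b_2,c_1,c_2)$ only through the ratios $(a_1a_2:b_1b_2:c_1c_2)$. I would use this to replace the symmetric weights $(a,a,b,b,c,c)$ by the asymmetric weights $(1,1,b_1,b_2,c_1,c_2)$ with $c_i:=1-b_i$ and $b_1,b_2$ the two roots of $a^2x^2-(a^2+b^2-c^2)x+b^2=0$: then $b_1b_2=b^2/a^2$ and $b_1+b_2=(a^2+b^2-c^2)/a^2$, hence $c_1c_2=(1-b_1)(1-b_2)=c^2/a^2$, so $(1:b_1b_2:c_1c_2)=(a^2:b^2:c^2)$ and the two models have the same Gibbs measure. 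The hypotheses $\Delta>1$ and $a>b$ are precisely what guarantee that $b_1,b_2$ are real and lie in $(0,1)$, so that $c_1,c_2>0$; one checks $\tfrac{b_1+b_2}{2\sqrt{b_1b_2}}=\Delta$, confirming these are the weights in the statement. Henceforth the DWBC weights are stochastic.

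\emph{Step 2: the conditioned quadrant model.} For stochastic weights the vertex weights are exactly the conditional probabilities used in the sequential (diagonal) sampling of the stochastic six-vertex model in the quadrant, so $\PP_{\mathrm{quad}}(\text{restriction to }\{1,\dots,n\}^2=\sigma)=\prod_i w_i^{N_i(\sigma)}$ for every legal DWBC configuration $\sigma$ in the box: the box vertices contribute the six-vertex weight of $\sigma$, the vertices to the right of the box are forced empty, and the half-infinite region above row $n$ is itself a stochastic six-vertex model whose boundary data depends on $\sigma$ only through $\lambda^n=(1,\dots,n)$ and hence contributes total mass $1$. Summing over $\sigma$ shows that the DWBC model with stochastic weights is the quadrant model, restricted to the box, conditioned on the event $E_n:=\{\lambda^n=(1,\dots,n)\}$, and that $\PP_{\mathrm{quad}}(E_n)=\mathcal{Z}_n$. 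Since in the quadrant $\lambda^n$ is an $n$-element subset of $\Z_{>0}$, one has $\lambda^n\ge(1,\dots,n)$ coordinatewise automatically, so $E_n$ is a decreasing event. As the array $(\mu_i^k)$ is by definition the array $(\lambda_i^k)$ under the \emph{unconditioned} quadrant law, the theorem reduces to: for every fixed $K$ and every monotone triangle $\tau$ of size $K$,
$$\PP_{\mathrm{quad}}\big((\lambda_i^k)_{1\le i\le k\le K}=\tau\mid E_n\big)\ \longrightarrow\ \PP_{\mathrm{quad}}\big((\lambda_i^k)_{1\le i\le k\le K}=\tau\big),\qquad n\to\infty.$$

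\emph{Step 3: decorrelation, the main obstacle.} The row sequence $\emptyset=\lambda^0\to\lambda^1\to\lambda^2\to\cdots$ is a Markov chain, so the Markov property at level $K$ reduces the displayed limit to: (i) tightness of $\lambda^K$ under $\PP_{\mathrm{quad}}(\cdot\mid E_n)$, uniformly in $n$; and (ii) $\PP_{\mathrm{quad}}(E_n\mid\lambda^K=S)/\PP_{\mathrm{quad}}(E_n)\to1$ for each fixed subset $S$. For (ii) I would use the monotonicity (attractiveness) of the stochastic six-vertex row dynamics: since $E_n=\{\lambda^n\le(1,\dots,n)\}$ is decreasing and $\lambda^n$ is stochastically monotone in $\lambda^K$, the ratio in (ii) is, for $S\subseteq\{1,\dots,M\}$, squeezed between its values at the extreme starting rows $\{1,\dots,K\}$ and $\{M-K+1,\dots,M\}$, and it remains to show those two asymptotically agree. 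This is the statement that an $O(1)$ perturbation of the boundary row does not affect the probability of the packed event $E_n$ (which is exponentially small in $n^2$), and I expect it to be the crux of the argument; I would establish it, together with the tightness (i), using the exact formulas peculiar to DWBC — the Izergin--Korepin determinant for $\mathcal{Z}_n$ and the known expressions for the boundary one-point function and its multi-row analogues giving the joint law of the bottom $K$ rows — which render the relevant partition-function ratios explicit enough to pass to the limit. In fact those same formulas provide an alternative, more computational route to the whole theorem: compute the distribution of $(\lambda_i^k)_{1\le i\le k\le K}$ in DWBC exactly, let $n\to\infty$, and match the answer with the geometric/sequential law of $(\mu_i^k)$ produced by Definition \ref{Definition_monotone_triangle} applied to the stochastic six-vertex model.
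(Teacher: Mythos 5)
Your Steps 1 and 2 are correct and give an attractive probabilistic reformulation. Step 1 is precisely the paper's Proposition~\ref{Proposition_sym_to_stoch} (the paper phrases it via the parameterization \eqref{eq_case1} and the $(q,w)$ variables, but the content is the same conservation-law argument and the same identification of $b_1,b_2$ as the roots of $a^2x^2-(a^2+b^2-c^2)x+b^2$). Step 2 --- recognizing the stochastic-weight DWBC measure as the quadrant model conditioned on $E_n=\{\lambda^n=(1,\dots,n)\}$, with $\PP_{\mathrm{quad}}(E_n)$ identified with the partition function --- is a clean restatement that the paper leaves implicit; it is consistent with what the paper's Proposition~\ref{Proposition_inhom_as_generating} encodes through generating functions.

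The genuine gap is Step 3. You correctly isolate the crux as $\PP_{\mathrm{quad}}(E_n\mid\lambda^K=S)/\PP_{\mathrm{quad}}(E_n)\to1$, but the tools you cite do not deliver it. The monotonicity squeeze only reduces matters to comparing the two extreme starting states $\{1,\dots,K\}$ and $\{M-K+1,\dots,M\}$, and showing that this comparison ratio tends to $1$ is not softer than the original claim: both numbers are probabilities of an event whose probability is exponentially small in $n^2$, and you need to control the ratio to $1+o(1)$. There is no coupling or FKG-type argument in your sketch that does this. What is actually required is the precise $n\to\infty$ asymptotics of ratios of inhomogeneous DWBC partition functions --- this is the content of the paper's Theorem~\ref{Theorem_asymptotics_case1} (equivalently Theorem~\ref{Theorem_stochastic_partition_limit} after the stochastic-weight translation), whose proof occupies the bulk of Sections~\ref{Section_inhomogeneous_partition_function}--\ref{Section_general_reduction} via the Izergin--Korepin determinant, a log-gas representation, orthogonal polynomial asymptotics and a Riemann--Hilbert analysis. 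Your appeal to ``known expressions for the boundary one-point function and its multi-row analogues'' gestures at exactly these objects without supplying the asymptotic analysis, and your ``Route B'' is, in outline, the paper's route: reinterpret $\ZZ_n^{\mathrm{st}}(w_1,\dots,w_k)$ as a generating function for the law of $\lambda^K$ (Proposition~\ref{Proposition_inhom_as_generating}), prove it tends to $1$ (Theorem~\ref{Theorem_stochastic_partition_limit}), and then invoke a uniqueness/characterization statement (Theorem~\ref{Theorem_stochastic_6v_characterization}, together with the growth estimate \eqref{eq_F_inequality} and the tightness input from the claim in Proposition~\ref{Proposition_F_lower_bound}) to pin the limit down as the quadrant model. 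Your proposal neither carries out that computation nor establishes the needed tightness, so while the framing is good, the proof as written is incomplete at its central step.
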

\begin{remark} 
 The conditions of the theorem guarantee that $0<b_1<b_2<1$. 
\end{remark}
\begin{remark} \label{Remark_symmetry} 
 The case $a<b$ can be reduced to $a>b$ by the following procedure. We first locally XOR the six types of vertices with horizontal line, i.e.\ with the Type 3 vertex. Then we reflect the entire configuration with DWBC boundary conditions with respect to the vertical axis. As a result of both steps, we have swapped the vertex types: Type 1 $\leftrightarrow$ Type 3 and Type 2 $\leftrightarrow$ Type 4 --- which means that we have swapped the  weights $a\leftrightarrow b$. Simultaneously, we have interchanged the bottom--left and bottom--right corners. Hence, in the version of Theorem \ref{Theorem_convergence_to_stochastic} for $a<b$ we should be looking at $N+1-\lambda_i^k$ rather than $\lambda_i^k$, and the limit of the former is described by the stochastic six-vertex model, but with modified formulas for $b_1$ and $b_2$ obtained by swap $a\leftrightarrow b$.
 
 Also note that the final case $a=b$ is impossible when $\Delta>1$.
\end{remark}

Theorem \ref{Theorem_convergence_to_stochastic} should be contrasted with the results of \cite{dimitrov2020six,dimitrov2022gue}, where GUE--corners process was found for the boundary asymptotics in the six-vertex model for $\Delta>1$ situation with boundary conditions very different from DWBC. This indicates that in the $\Delta>1$ situation the possible boundary limits of the six-vertex model are richer than for $\Delta<1$.

\bigskip

In Theorems \ref{Theorem_GUE_corners} and \ref{Theorem_convergence_to_stochastic} we had $a,b,c>0$, but we can also investigate the boundary cases as one of the parameters tends to $0$. One interesting situation is achieved by setting $c=\eps>0$ and then sending $\eps\to 0$ (which implies $\Delta\ge 1$) keeping the size of the square $n$ fixed: in the limit the Gibbs measure is supported on $n!$ configurations with minimal number of $c$--type vertices, which can be identified with $n\times n$ permutation matrices. For the $n\to\infty$ limiting object in this situation we introduce the $q$-shuffle procedure of \cite[Section 4]{gnedin2010q}, closely related to the Mallows measure on permutations\footnote{The Mallows measure assigns to a permutation $\tau:\{1,\dots,n\}\to\{1,\dots,n\}$ a weight proportional to $q^{\mathrm{inv}(\tau)}$, where $\mathrm{inv}(\tau)$ is the number of inversions in $\tau$. It was first introduced in \cite{mallows1957non} and has been intensively studied in probability, statistics, and theoretical physics, see, e.g.\ \cite{he2022cycles} for one recent result and many references to previous work.}.

\begin{definition} \label{Def_q_shuffle}
The $q$-exchangeable random bijection $\tau:\mathbb Z_{>0}\to\mathbb Z_{>0}$ is defined by starting from a sequence $\xi_1,\xi_2,\dots$ of i.i.d.\ Geometric random variables with parameter $0<q<1$: $\mathrm{Prob} (\xi_i=k)={(1-q) q^{k-1}}$, $k=1,2,\dots$. We set $\tau(1)=\xi_1$. More generally, for $k>1$ we let $\tau(k)$ be the $\xi_k$-th letter in the infinite word $1234\dots$ from which we removed $\tau(1),\tau(2),\dots,\tau(k-1)$.
\end{definition}
The $\tau$ of Definition \ref{Def_q_shuffle} is, indeed, a bijection as \cite[Lemma 4.2]{gnedin2010q} explains.

\begin{theo} \label{Theorem_Mallows}
 Choose parameters $a>b>0$ and $c=0$. Let $(\lambda_i^k)$ be a random monotone triangle corresponding to $(a,b,c)$--random configuration of the six-vertex model with DWBC, as in Definition \ref{Definition_monotone_triangle}. Then
$$
  \lim_{n\to\infty} \left( \lambda_i^k \right)_{1\le i \le k} = \bigl(\mu_i^k\bigr)_{1\le i \le k},
$$
in the sense of convergence of finite-dimensional distributions, where the array $\bigl(\mu_i^k\bigr)$ is created out of the $q$-exchangeable random bijection $\tau$ of positive integers of Definition \ref{Def_q_shuffle} with $q=\frac{b^2}{a^2}$ by requesting that for each $k\ge 1$ the $k$ numbers $(\mu_1^k<\mu_2^k<\dots,<\mu_k^k)$ are the reordering of $(\tau(1),\tau(2),\dots,\tau(k))$ in the increasing order.
\end{theo}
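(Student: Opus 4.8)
The plan is to show that the $c=0$ Gibbs measure is, after the encoding of Definition~\ref{Definition_monotone_triangle}, nothing but the finite Mallows measure on $S_n$ with parameter $q=b^2/a^2$, and then to identify Definition~\ref{Def_q_shuffle} as the $n\to\infty$ limit of that measure. The argument thus has two essentially independent parts: a finite-$n$ combinatorial identity, and a probabilistic limit for a sequence of Mallows measures.

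\emph{Part 1: the $c=0$ measure is Mallows.} One should read ``$c=0$'' as the limit $c=\eps\downarrow 0$ at fixed $n$, under which the normalized weights concentrate on configurations with the minimal number $N_5+N_6$ of type-$c$ vertices. Each of the $n$ up-right paths starts horizontal and ends vertical, hence turns an odd number $\ge1$ of times, and each type-$c$ vertex is a turn of exactly one path, so the minimum of $N_5+N_6$ is $n$ and is attained precisely by configurations in which every path turns exactly once. Such configurations are in bijection with permutations $\sigma\in S_n$: the path entering on the left in row $i$ runs right to column $\sigma(i)$, turns, and runs up to the top. For the configuration of $\sigma$, the vertical segments between rows $k$ and $k+1$ sit exactly in the columns $\{\sigma(1),\dots,\sigma(k)\}$, so the monotone triangle is $\lambda^k=\operatorname{sort}\bigl(\sigma(1),\dots,\sigma(k)\bigr)$. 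A direct count of the remaining vertices --- the lone-horizontal and the lone-vertical type-$b$ vertices, each totalling $\mathrm{inv}(\sigma)$ over the whole configuration --- gives $N_3+N_4=2\,\mathrm{inv}(\sigma)$, hence (using $\sum_i N_i=n^2$ and $N_5+N_6=n$) the weight of the configuration of $\sigma$ equals $c^{n}a^{\,n^2-n}\bigl(b^2/a^2\bigr)^{\mathrm{inv}(\sigma)}$. After dividing by the $\sigma$-independent factor $c^{n}a^{\,n^2-n}$, the $\eps\downarrow0$ limit of the DWBC measure is the Mallows measure on $S_n$ with $q=b^2/a^2\in(0,1)$ (here $a>b$ is used).

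\emph{Part 2: finite Mallows converges to the $q$-shuffle.} By Part~1 it suffices to prove that, under Mallows$(q)$ on $S_n$, the vector $\bigl(\sigma(1),\dots,\sigma(K)\bigr)$ converges in distribution to $\bigl(\tau(1),\dots,\tau(K)\bigr)$ as $n\to\infty$, for every fixed $K$; the finite-dimensional convergence of the array $(\lambda_i^k)$ then follows because $(\lambda^k)_{k\le K}$ is the fixed map $\sigma\mapsto\bigl(\operatorname{sort}(\sigma(1),\dots,\sigma(k))\bigr)_{k\le K}$ applied to $\bigl(\sigma(1),\dots,\sigma(K)\bigr)$, and likewise $\mu^k=\operatorname{sort}(\tau(1),\dots,\tau(k))$. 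For the convergence I would use the sequential sampling of the Mallows measure: from $\mathrm{inv}(\sigma)=(\sigma(1)-1)+\mathrm{inv}(\sigma')$, where $\sigma'$ is the pattern of $(\sigma(2),\dots,\sigma(n))$, one reads off $\Prob(\sigma(1)=j)=q^{j-1}/(1+q+\dots+q^{n-1})$ and that $(\sigma(2),\dots,\sigma(n))$ is conditionally Mallows$(q)$ on the $n-1$ remaining values; iterating, $\sigma$ is produced from independent truncated geometric variables $\xi^{(n)}_k$ with $\Prob(\xi^{(n)}_k=j)=q^{j-1}/(1+q+\dots+q^{\,n-k})$ by exactly the recursive rule of Definition~\ref{Def_q_shuffle}, but with selections taken inside $\{1,\dots,n\}$. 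As $n\to\infty$ with $k$ fixed, $\xi^{(n)}_k$ converges in distribution to a Geometric$(1-q)$ variable; and for any fixed realized values $\xi_1,\dots,\xi_K$ the finite selection rule returns the same first $K$ outputs as the rule of Definition~\ref{Def_q_shuffle} as soon as $n$ exceeds a threshold depending only on $K$ and those values. Since all quantities are integer-valued, convergence of probability mass functions gives $\bigl(\sigma(1),\dots,\sigma(K)\bigr)\xrightarrow{d}\bigl(\tau(1),\dots,\tau(K)\bigr)$; that $\tau(k)$ is always well defined is \cite[Lemma~4.2]{gnedin2010q}, and a convergence statement of this type is close to the circle of ideas in \cite{gnedin2010q}.

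I expect the main obstacle to be the finite-$n$ bookkeeping of Part~1 --- lining up the three local vertex pictures with the $a,b,c$ weights and with the vertical-segment statistics of Definition~\ref{Definition_monotone_triangle}, and making the $\eps\downarrow0$ degeneration of the Gibbs measure precise --- rather than Part~2, which is soft. As a cross-check, Theorem~\ref{Theorem_Mallows} is exactly the $c\to0$ specialization of Theorem~\ref{Theorem_convergence_to_stochastic}: there $b_1\to\frac{a^2+b^2-(a^2-b^2)}{2a^2}=\frac{b^2}{a^2}=q$ and $b_2\to1$, and the stochastic six-vertex model with $(b_1,b_2)=(q,1)$ is precisely the $q$-shuffle of Definition~\ref{Def_q_shuffle} --- a path entering a vertex only from below is frozen going straight up (because $b_2=1$), while a path entering only from the left proceeds across a Geometric$(1-q)$ number of ``free'' columns before turning up, so the vertical column of the $k$-th path is $\tau(k)$. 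Obtaining Theorem~\ref{Theorem_Mallows} along that route would, however, force one to justify exchanging the limits $\eps\downarrow0$ and $n\to\infty$, which the direct argument above avoids.
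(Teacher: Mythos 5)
Your proposal is correct and follows essentially the same route as the paper: Proposition~\ref{Proposition_DWBC_c_0} does your Part~1 (identifying the $c\to 0$ Gibbs measure with the Mallows measure by the same minimal-corner-count and row-by-row weight bookkeeping, with the cited $q$-shuffle sampling of Mallows from \cite{gnedin2010q} playing the role of your sequential derivation), and Theorem~\ref{Theorem_Mallows} is then obtained exactly as in your Part~2 by passing from the truncated geometrics of Definition~\ref{Definition_q_shuffle_finite} to the untruncated geometrics of Definition~\ref{Def_q_shuffle} as $n\to\infty$. The only cosmetic difference is that you rederive the Mallows-to-$q$-shuffle sampling inline rather than citing it, and your closing cross-check against the $c\to 0$ limit of Theorem~\ref{Theorem_convergence_to_stochastic} is a nice consistency observation that the paper does not spell out.
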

\begin{remark}
 As in Remark \ref{Remark_symmetry}, the $0<a<b$ case can be obtained from the $a>b>0$ case by applying vertical symmetry.
\end{remark}

\subsection{Further directions}

\begin{figure}[t]
\begin{center}
   \includegraphics[width=0.9\linewidth]{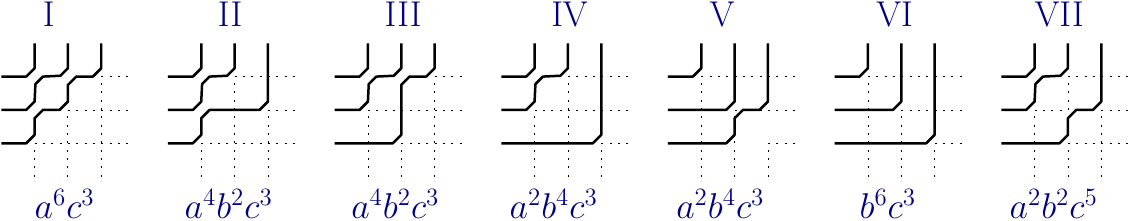}
\end{center}
        \caption{\label{Figure_DWBC_3} Seven configurations for $n=3$ and corresponding $(a,b,c)$--weights.}
\end{figure}

In the space of all possible $(a:b:c)$ parameters there are several additional points not covered by our results, yet potentially leading to interesting boundary limits for the six-vertex model with DWBC. First, there are more ways to send some of the $a,b,c$ parameters to $0$. In order to see that, it is helpful to draw all seven configurations in $3\times 3$ square and their corresponding weights, see Figure \ref{Figure_DWBC_3}. If we send $c\to 0$, as in Theorem \ref{Theorem_Mallows}, then we get a probability measure concentrated on the first six configurations, with weights obtained by removing $c^3$ factor. Alternatively, we could send $b\to 0$, getting a measure concentrated on configuration I; or we could send $a\to 0$, getting a measure concentrated on configuration VI; or we could simultaneously send $a,b\to 0$, getting a measure concentrated on configuration VII. We do note address this in our text, yet one can hope that there are ways to perform some of these limits simultaneously with $n\to\infty$ and get some non-trivial limits near the boundary of the square.

\medskip

Another intriguing point is $\Delta=1$, not covered by any of Theorems \ref{Theorem_GUE_corners}, \ref{Theorem_convergence_to_stochastic}, or \ref{Theorem_Mallows}. We expect a rich family of new limits of the six-vertex model with DWBC near boundaries as $n\to\infty$ simultaneously with $\Delta\to 1$. For comparison, in the stochastic six-vertex model (which always has $\Delta>1$) two such limits were previously found, leading to Kardar--Parisi--Zhang SPDE in \cite{corwin2020stochastic} and  to the stochastic telegraph equation in \cite{borodin2019stochastic, shen2019stochastic}. For the DWBC with $\Delta$ close to $1$,  we are only aware of the investigations of the partition function in \cite{bleher2014calculation}.
 We leave the study of all possible limit regimes in the neighborhood of $\Delta=1$ to a future research and only provide one (relatively simple) result in this direction.

\begin{prop} \label{Proposition_c_0_double}
 Let $(\lambda_i^k)$ be a random monotone triangle corresponding to $(a,b,c)$--random configuration of the six-vertex model with DWBC, as in Definition \ref{Definition_monotone_triangle}. Fix $\theta\in\mathbb R$, set $c=0$ and suppose that $a$ and $b$ depend on $n$ in such a way that
 \begin{equation}
 \label{eq_q_to_1}
  \lim_{n\to\infty}\left[ n \ln \left(\frac{b^2}{a^2}\right)\right]=\theta.
 \end{equation}
 Then
$$
  \lim_{n\to\infty} \left( \frac{1}{n}\lambda_i^k \right)_{1\le i \le k} = \bigl(\zeta_i^k\bigr)_{1\le i \le k},
 $$
in the sense of convergence of finite-dimensional distributions, where the array $\bigl(\zeta_i^k\bigr)$ is obtained from a sequence $\eta_i$, $i=1,2,\dots$, of i.i.d.\ random variables of density
\begin{equation}
\label{eq_eta_density}
 \rho_\eta(x)=\frac{\theta}{e^{\theta}-1} e^{\theta x}, \qquad x\in [0,1],
\end{equation}
by requiring that for each $k\ge 1$, the vector $(\zeta_1^k,\zeta_2^k,\dots,\zeta_k^k)$ is a reordering of $(\eta_1,\eta_2,\dots,\eta_k)$ in the increasing order.
\end{prop}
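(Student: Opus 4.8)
The plan is to reduce the statement to the finite-$n$ description of the $c=0$ model that is the combinatorial input behind Theorem \ref{Theorem_Mallows}, and then to carry out a soft scaling analysis of the underlying truncated geometric variables in the regime $q=b^2/a^2\to1$, $q^{n}\to e^{\theta}$. \textbf{Step 1 (finite-$n$ structure).} When $c=0$ the Gibbs measure on DWBC configurations in the $n\times n$ square is supported on the $n!$ permutation matrices; counting vertices shows that a configuration corresponding to $\pi\in S_n$ has exactly $n$ vertices of type $c$ and $2\,\mathrm{inv}(\pi)$ vertices of type $b$, so its weight is proportional to $q^{\mathrm{inv}(\pi)}$ with $q=b^2/a^2$ --- this is the Mallows measure on $S_n$ --- and the array of Definition \ref{Definition_monotone_triangle} satisfies $\{\lambda_1^k,\dots,\lambda_k^k\}=\{\pi(1),\dots,\pi(k)\}$ for every $k$, i.e.\ $(\lambda_1^k<\dots<\lambda_k^k)$ is the increasing reordering of $(\pi(1),\dots,\pi(k))$. (This is exactly the finite counterpart of Theorem \ref{Theorem_Mallows}, of which Theorem \ref{Theorem_Mallows} is the $n\to\infty$ limit.) Moreover the Mallows measure on $S_n$ has the sequential description: sample independent $\xi_1,\dots,\xi_n$ with $\xi_k$ a geometric variable of parameter $q$ truncated to $\{1,\dots,n-k+1\}$, so $\mathrm{Prob}(\xi_k=\ell)=\frac{(1-q)q^{\ell-1}}{1-q^{\,n-k+1}}$, and let $\pi(k)$ be the $\xi_k$-th smallest element of $\{1,\dots,n\}\setminus\{\pi(1),\dots,\pi(k-1)\}$; this is the finite version of the $q$-shuffle of Definition \ref{Def_q_shuffle} and, crucially, it makes sense for every $q>0$ (we will need $q>1$ when $\theta>0$). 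We fix $K\in\mathbb N$ and work with the finitely many variables $\lambda_i^k$, $1\le i\le k\le K$.

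\textbf{Step 2 (scaling of the $\xi_k$ and of $\pi$).} Since $n\ln q\to\theta$ we have $\ln q\to0$, hence for each fixed $k$ and each $t\in[0,1]$, $q^{\lfloor tn\rfloor}=e^{\lfloor tn\rfloor\ln q}\to e^{\theta t}$ and $q^{\,n-k+1}=e^{(n-k+1)\ln q}\to e^{\theta}$, so that
\[
\mathrm{Prob}\!\left(\tfrac1n\xi_k\le t\right)=\frac{1-q^{\lfloor tn\rfloor}}{1-q^{\,n-k+1}}\ \longrightarrow\ \frac{1-e^{\theta t}}{1-e^{\theta}},
\]
which (interpreted as the uniform law on $[0,1]$ when $\theta=0$) is the distribution function of the density $\rho_\eta$ in \eqref{eq_eta_density}, and $\tfrac1n\xi_k\in[0,1]$ deterministically. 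As $\xi_1,\dots,\xi_K$ are independent for every $n$, the vector $\bigl(\tfrac1n\xi_1,\dots,\tfrac1n\xi_K\bigr)$ converges in distribution to a vector $(\eta_1,\dots,\eta_K)$ of i.i.d.\ variables with density $\rho_\eta$. Now removing $k-1$ elements from $\{1,\dots,n\}$ moves the $\xi_k$-th smallest remaining element by at most $k-1$, so $\pi(k)=\xi_k+r_k$ with $0\le r_k\le k-1\le K-1$ deterministically; thus $\tfrac1n\pi(k)=\tfrac1n\xi_k+O(K/n)$ and jointly $\bigl(\tfrac1n\pi(1),\dots,\tfrac1n\pi(K)\bigr)\Rightarrow(\eta_1,\dots,\eta_K)$. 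Finally, for each $k\le K$ the map sending $(\pi(1),\dots,\pi(k))$ to its increasing reordering is continuous, and the limiting law has no ties since it has a density; by the continuous mapping theorem
\[
\left(\tfrac1n\lambda_i^k\right)_{1\le i\le k\le K}\ \Longrightarrow\ \bigl(\zeta_i^k\bigr)_{1\le i\le k\le K},
\]
where $(\zeta_1^k<\dots<\zeta_k^k)$ is the increasing reordering of $(\eta_1,\dots,\eta_k)$. Since $K$ is arbitrary, this is the claimed convergence of finite-dimensional distributions.

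\textbf{Main obstacle.} There is no genuine analytic difficulty: once Step 1 is in place, the rest is a one-line weak-convergence argument for truncated geometrics together with the continuous mapping theorem. The two points deserving care are (a) recording the finite-$n$ identification of the $c=0$ DWBC model with the finite Mallows measure together with the identity $\{\lambda_1^k,\dots,\lambda_k^k\}=\{\pi(1),\dots,\pi(k)\}$ --- this should be isolated as a lemma feeding the proof of Theorem \ref{Theorem_Mallows} --- and (b) stating the truncated-geometric sequential construction of the Mallows measure for general $q>0$ rather than only $q<1$, because the regime $\theta>0$ forces $q=b^2/a^2>1$, putting us outside the hypotheses of Definition \ref{Def_q_shuffle} even though the finite-$n$ objects remain perfectly well defined.
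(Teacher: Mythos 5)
Your proposal is correct and follows essentially the same route as the paper. Step 1 is exactly the paper's Proposition 7.2 (the identification of the $c=0$ DWBC model with the Mallows measure on $S_n$ and its sequential sampling via truncated geometrics, valid for all $q>0$), and Step 2 is the computation that the paper summarizes in one sentence (``$\tfrac{1}{n}\zeta_i$ converge in distribution to i.i.d.\ random variables $\eta_i$ of density \eqref{eq_eta_density}''); you have merely filled in the CDF calculation $\frac{1-q^{\lfloor tn\rfloor}}{1-q^{n-k+1}}\to\frac{1-e^{\theta t}}{1-e^{\theta}}$ and the $O(K/n)$ correction from passing from $\xi_k$ to $\pi(k)$, which the paper elides. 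The subtlety you flag about $q>1$ is already handled in the paper's remark after its equation for the confined geometric distribution.
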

\begin{remark}
 In this asymptotic regime $$\Delta=\frac{a^2+b^2-c^2}{2ab}=\frac{a}{2b}+\frac{b}{2a}=1+\frac{\theta^2}{8 n^2}+o\left(\frac{1}{n^2}\right).$$
\end{remark}

\subsection{An outline of the proofs}

The key ingredient for our proofs of Theorems \ref{Theorem_GUE_corners} and \ref{Theorem_convergence_to_stochastic} is an asymptotic analysis of the inhomogeneous partition function for the DWBC six-vertex model. The most general of such partition functions, $\mathcal Z_n(\chi_1,\dots,\chi_n; \psi_1,\dots,\psi_n; \gamma)$, depends on $n$ parameters $\chi_1,\dots,\chi_n$ attached to the columns of the square, $n$ parameters $\psi_1,\dots,\psi_n$ attached to the rows of the square, and an additional parameter $\gamma$. When all $\chi_i$s are equal to each other and all $\psi_i$s are equal to each other, we are back to the homogeneous $(a,b,c)$--weighting in \eqref{eq_Gibbs_measure}. The most relevant for our studies is the (normalized version of) the partition function $\mathcal Z_n(0^n; t+\xi_1,\dots,t+\xi_k, t^{n-k}; \gamma)$, which has $k$ inhomogeneities and this number does not grow with $n$. The asymptotic analysis of this partition function as $n\to\infty$ occupies the largest part of our text. In Section \ref{Section_inhomogeneous_partition_function} we define it, state asymptotic results, and introduce our main tool --- a representation in terms of a $\beta=2$ log-gas system in Theorem \ref{Theorem_partition_through_Bessel}. Section \ref{Section_asymptotics_through_log_gas} provides a short (but non-rigorous) argument for extracting the asymptotics of $\mathcal Z_n(0^n; t+\xi_1,\dots,t+\xi_k, t^{n-k}; \gamma)$ from a combination of Theorem \ref{Theorem_partition_through_Bessel} with asymptotics of linear statistics for log-gases and known asymptotic expansions of multivariate Bessel functions. In order to make this argument rigorous, one would have to establish certain concentration bounds on fluctuations of linear statistics for log-gases with non-smooth potential. The required bounds go beyond of what's currently available in the literature (see Section \ref{Section_asymptotics_through_log_gas} for the discussion), and rather than trying to prove them, we designed an alternative approach circumventing this difficulty.

For the second, rigorous, approach we discover in Section \ref{Section_k1_asymptotics} that the $k=1$ inhomogeneous partition function can be expressed as a contour integral involving polynomials orthogonal with respect to the weight of the aforementioned log-gas. These polynomials previously appeared in the literature on the fully homogeneous partition function (see \cite{Bleher-Liechty14} for the review), and their asymptotics can be obtained by the Riemann--Hilbert analysis. The desired asymptotic expansion of the polynomials has never been stated in the literature, so we need to carefully extract it from there by massaging the previously existing results and making additional computations: see Section \ref{sec:OPasy} for the statements and Appendix \ref{app:OP_asymptotics} for the proofs. In the rest of Section \ref{Section_k1_asymptotics} we insert the expansions of the polynomials into contour integrals, perform the steepest descent asymptotic analysis of these integrals, and achieve the desired asymptotic expansion for the normalized $\mathcal Z_n(0^n; t+\xi, t^{n-1}; \gamma)$. Next, we generalize the $k=1$ case to general $k$ in Section \ref{Section_general_reduction}, based on an exact expression (see Theorem \ref{Theorem_partition_general_k_reduction}) of the partition function with $k$ inhomogeneities as a $k\times k$ determinant involving partition functions with one inhomogeneity.

Section \ref{Section_probabilistic_proofs} extracts the limits of Theorems \ref{Theorem_GUE_corners} and \ref{Theorem_convergence_to_stochastic} from the asymptotic expansions of the partition functions developed in the previous sections. The idea here is to interpret the inhomogeneous partition functions as generating functions for the marginal probabilities of the six-vertex model along a horizontal section of the domain. In discrete settings, these generating functions are written in terms of inhomogeneous Hall-Littlewood symmetric polynomials (see \cite{borodin2017integrable} for a review of properties of these polynomials), while for the continuous limits we use $\beta=2$ multivariate Bessel functions (they are evaluations of the Harish Chandra/Itzykson-Zuber \cite{Harish-Chandra57, Itzykson-Zuber80} spherical integral). On our path we need to develop theorems connecting convergence of generating functions to convergence of random variables, which can be viewed as non-standard multi-dimensional versions of the classical relation between convergence of characteristic functions or Laplace transforms on one side and convergence in distribution for random variables on $\mathbb R$ on the other side.

In Section \ref{Section_c_0} we investigate the special case $c=0$ and prove Theorem \ref{Theorem_Mallows} and Proposition \ref{Proposition_c_0_double}. This section is essentially independent from the rest of the text and does not use the advanced analytic machinery of the previous sections. The argument is based on a curious observation that the random configurations of the six-vertex model at $c=0$ can be identified with random permutations distributed according to the Mallows measure.

\subsection*{Acknowledgements}
We would like to thank Pavel Bleher and Filippo Colomo for fruitful communications related to this project. We are grateful to David Keating for providing simulations.

\medskip

\noindent Part of this work began during the Fall 2021 program {\it Universality and Integrability in Random Matrix Theory and Interacting Particle Systems} at the Mathematical Sciences Research Institute (MSRI) in Berkeley, CA. That program was supported by the National Science Foundation under Grant No. DMS-1928930.

\medskip

\noindent The work of V.G.\ was partially supported by NSF grant DMS - 2152588.
The work of K.L. was partially supported by a Faculty Summer Research Grant from the College of Science and Health at DePaul University.

\section{Inhomogeneous partition functions}

\label{Section_inhomogeneous_partition_function}

Our proofs of Theorems \ref{Theorem_GUE_corners} and \ref{Theorem_convergence_to_stochastic} are based on the asymptotic analysis of inhomogeneous partition functions of the six-vertex model, which we present in this section.

\subsection{Izergin--Korepin determinant}

In our main results stated in Section \ref{Section_main_results}, the weights $(a:b:c)$ of the vertices did not depend on the position inside the $n\times n$ square. However, for the proofs we introduce inhomogeneities and make the weight of a vertex at $(x,y)$ explicitly depend on $1\le x,y\le n$. We introduce four different parameterizations of the weights by real numbers $\gamma$ and $t$. The cases correspond to different values of $\Delta=\frac{a^2+b^2-c^2}{2ab}$:
\begin{itemize}
\item {\bf Ferroelectric phase}. For $\Delta>1$ we parameterize the weights as
\begin{equation}\label{eq_case1}
a\equiv a(t, \ga)=\sinh(t-\ga), \quad
b\equiv b(t, \ga)=\sinh(t+\ga), \quad
c\equiv c(\ga)=\sinh(2\ga), \qquad
0<|\ga|<t.
\end{equation}

\item {\bf Disordered phase}. For $-1<\Delta<1$ we parameterize the weights as
\begin{equation}\label{eq_case2}
a\equiv a(t, \ga)=\sin(\ga-t), \quad
b\equiv b(t, \ga)=\sin(\ga+t), \quad
c\equiv c(\ga)=\sin(2\ga), \qquad
|t|<\ga<\pi/2.
\end{equation}
\item {\bf Antiferroelectric phase}. For $\Delta<-1$ we parameterize the weights as
\begin{equation}\label{eq_case3}
a\equiv a(t, \ga)=\sinh(\ga-t), \quad
b\equiv b(t, \ga)=\sinh(\ga+t), \quad
c\equiv c(\ga)=\sinh(2\ga), \qquad
|t|<\ga.
\end{equation}

\item {\bf Boundary phase.} For $\Delta=-1$, we parameterize\footnote{The choice \eqref{eq_case4} comes with an overparameterization: multiplication of both $\gamma$ and $t$ by the same constant does not change the probability measure on configuration.}  the weights as
\begin{equation}\label{eq_case4}
a\equiv a(t, \ga)=\gamma-t, \quad
b\equiv b(t, \ga)=\gamma+t, \quad
c\equiv c(\ga)=2 \gamma, \qquad
|t|<\gamma.
\end{equation}
\end{itemize}

\noindent The $\Delta=-1$ case is obtained from either of the parameterizations \eqref{eq_case2} or \eqref{eq_case3} as $t,\ga\to 0$ limit:
$$
\bigl(\gamma-t,\, \gamma+t,\, 2\gamma \bigr) = \lim_{\eps \to 0} \frac{1}{\eps} \bigl(a(t\eps, \ga\eps),\, b(t\eps,\ga\eps),\, c(\ga\eps)\bigr).
$$
There is another boundary case at $\Delta=1$, but we are not addressing it in this paper.

\begin{definition} \label{Definition_inhomogeneities}
 Fix $n=1,2,\dots$, and $2n+1$ complex parameters $\chi_1,\dots,\chi_n; \psi_1,\dots,\psi_n; \gamma$, and one of the four cases \eqref{eq_case1}, \eqref{eq_case2}, \eqref{eq_case3}, or \eqref{eq_case4}. For a configuration $\sigma$ of the six-vertex model in an $n\times n$ square with DWBC, we define the weight of the vertex at $(x,y)$ to be
 $$
  \omega(x,y;\sigma)=
  \begin{cases}
    a(\psi_y-\chi_x, \ga), &\quad\text{if }\sigma(x,y)\text{ is of Type 1 or 2,}\\
    b(\psi_y-\chi_x, \ga), &\quad\text{if }\sigma(x,y)\text{ is of Type 3 or 4,}\\
    c(\ga), &\quad\text{if }\sigma(x,y)\text{ is of Type 5 or 6.}
  \end{cases}
 $$
 The inhomogeneous partition function is then defined as
 \begin{equation}
  \mathcal Z_n(\chi_1,\dots,\chi_n; \psi_1,\dots,\psi_n; \gamma)=\sum_{\sigma} \prod_{x=1}^n \prod_{y=1}^n \omega(x,y;\sigma),
 \end{equation}
 where the sum goes over all configurations in the $n\times n$ square with DWBC.
\end{definition}

\begin{theo}[Izergin-Korepin determinant] \label{Theorem_IK_det} In each of the four phases \eqref{eq_case1}-\eqref{eq_case4} we have
\begin{multline}
\label{eq_IK_det}
  \mathcal Z_n(\chi_1,\dots,\chi_n;\, \psi_1,\dots,\psi_n;\, \gamma)\\= \frac{\prod\limits_{i,j=1}^n \bigl(a(\psi_j-\chi_i, \ga) b(\psi_j-\chi_i, \ga)\bigr)}{\prod\limits_{i<j} \bigl(b(\chi_i-\chi_j,0)b(\psi_i-\psi_j,0)\bigr)} \det \left[ \frac{c(\ga)}{a(\psi_j-\chi_i, \ga) b(\psi_j-\chi_i, \ga)} \right]_{i,j=1}^n.
\end{multline}
\end{theo}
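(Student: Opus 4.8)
The plan is to run the classical Korepin--Izergin argument: isolate a short list of properties that determine $\mathcal Z_n$ uniquely, and then verify that the right-hand side of \eqref{eq_IK_det} has all of them. It is enough to treat one phase, say the disordered parameterization \eqref{eq_case2}: after clearing denominators, \eqref{eq_IK_det} becomes an identity between Laurent polynomials in $e^{\pm\ii\chi_x}$, $e^{\pm\ii\psi_y}$, $e^{\pm\ii\gamma}$, so once it holds on an open set of real parameters it continues analytically. The substitution $\gamma\mapsto\ii\gamma$, $\chi_x\mapsto\ii\chi_x$, $\psi_y\mapsto\ii\psi_y$ turns $\sin$ into $\sinh$ and covers the ferroelectric \eqref{eq_case1} and antiferroelectric \eqref{eq_case3} phases, and the rescaling limit recorded after \eqref{eq_case4} covers the rational case \eqref{eq_case4}.

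For the disordered phase I would first recall the quantum inverse scattering realization of $\mathcal Z_n$: with the trigonometric $R$-matrix whose weights are $a,b,c$ from \eqref{eq_case2} and crossing parameter $\gamma$, the DWBC partition function equals a matrix element $\langle\Downarrow|\,C(\chi_1)\cdots C(\chi_n)\,|\Uparrow\rangle$ of a product of monodromy-matrix entries on $(\C^2)^{\otimes n}$ with inhomogeneities $\psi_1,\dots,\psi_n$; the factor $\prod_{i<j}b(\chi_i-\chi_j,0)\,b(\psi_i-\psi_j,0)$ in \eqref{eq_IK_det} is precisely the normalization that makes the answer a Laurent polynomial. The Yang--Baxter (``train'') argument then yields the following properties of $\mathcal Z_n$, viewed as a function of $\chi_1$ with all other parameters fixed: (i) $\mathcal Z_n\cdot\prod_{i<j}b(\chi_i-\chi_j,0)\,b(\psi_i-\psi_j,0)$ is antisymmetric separately under permutations of the $\chi$'s and of the $\psi$'s; (ii) $\mathcal Z_n$, times an explicit entire factor, is a trigonometric polynomial in $e^{2\ii\chi_1}$ of degree $\le n-1$; (iii) the reduction relation: specializing $\chi_1=\psi_j-\gamma$ kills the weight $a$ at the vertex $(1,j)$, and the train argument factors off the first column and the $j$-th row, giving $\mathcal Z_n\big|_{\chi_1=\psi_j-\gamma}=\bigl(\text{explicit product of }a,b,c\bigr)\cdot\mathcal Z_{n-1}$ in the remaining $2(n-1)$ variables; (iv) $\mathcal Z_1(\chi_1;\psi_1;\gamma)=c(\gamma)$. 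A function with (i)--(iv) is unique: by (ii) it is, in $\chi_1$, a trigonometric polynomial of degree $\le n-1$, hence determined by its values at the $n$ points $\chi_1=\psi_1-\gamma,\dots,\psi_n-\gamma$, and by (iii) together with induction on $n$ those values agree with those of any competing solution.

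It then remains to check that the right-hand side $R_n$ of \eqref{eq_IK_det} satisfies (i)--(iv). Symmetry (i) is immediate, since exchanging two $\chi_i$'s swaps two rows of the determinant and two factors of the antisymmetric product $\prod_{i<j}b(\chi_i-\chi_j,0)$ while leaving $\prod_{i,j}a(\psi_j-\chi_i,\gamma)b(\psi_j-\chi_i,\gamma)$ invariant, and likewise for the $\psi$'s. For (ii), expanding the determinant along the row carrying $\chi_1$ shows that the apparent poles at $\chi_1=\psi_j\pm\gamma$ are cancelled by the zeros of $\prod_j a(\psi_j-\chi_1,\gamma)b(\psi_j-\chi_1,\gamma)$, and a degree count in $e^{2\ii\chi_1}$ produces the bound $n-1$. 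For (iii), setting $\chi_1=\psi_j-\gamma$ makes the $(1,j)$ entry $c/(a_{1j}b_{1j})$ diverge; extracting the vanishing factor from the corresponding row and column and passing to the limit collapses the $n\times n$ determinant to the cofactor of that entry times an $(n-1)\times(n-1)$ determinant of the same Izergin--Korepin shape, and the prefactor in \eqref{eq_IK_det} supplies exactly the explicit product in property (iii). Finally (iv) is the one-line check $R_1=\dfrac{a(\psi_1-\chi_1,\gamma)b(\psi_1-\chi_1,\gamma)}{1}\cdot\dfrac{c(\gamma)}{a(\psi_1-\chi_1,\gamma)b(\psi_1-\chi_1,\gamma)}=c(\gamma)$.

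The main obstacle is the bookkeeping in the second and third steps: running the Yang--Baxter train argument against the DWBC boundary vectors so as to pin down \emph{exactly} the ``explicit entire factor'' in (ii) and the ``explicit product of $a,b,c$'s'' in (iii), and then matching those same normalizations when collapsing $R_n$ in the verification of (iii). Once the normalizations on both sides are reconciled, the uniqueness lemma closes the disordered-phase case, and the analytic continuation of the first paragraph disposes of the remaining three phases.
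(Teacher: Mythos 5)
The paper does not prove this theorem itself; it cites \cite{Izergin87, Izergin-Coker-Korepin92}, \cite[Chapter 5]{Bleher-Liechty14}, and \cite[Theorem 2.2]{Gorin_Nicoletti_lectures}. Your sketch is the standard Korepin--Izergin uniqueness argument (Korepin's characterizing properties from the Yang--Baxter train argument, then verification that Izergin's determinant satisfies them) used in those references, so it takes essentially the same approach as the proof the paper defers to.
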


\noindent The proof of Theorem \ref{Theorem_IK_det} is due to \cite{Izergin87, Izergin-Coker-Korepin92}; see also \cite[Chapter 5]{Bleher-Liechty14} or \cite[Theorem 2.2]{Gorin_Nicoletti_lectures}.

\subsection{Normalized partition functions}

Choosing the inhomogeneities $\chi_1,\dots,\chi_n; \psi_1,\dots,\psi_n$ in various ways, one can extract lots of asymptotic information out of $\mathcal Z_n(\chi_1,\dots,\chi_n; \psi_1,\dots,\psi_n; \gamma)$. Here is one particular choice which allows us to understand the limiting behavior near boundaries.

\begin{definition} \label{Definition_inhom_norm_part} Fix $n=1,2,\dots$, and $1\le k\le n$, one of the four cases \eqref{eq_case1}, \eqref{eq_case2}, \eqref{eq_case3}, or \eqref{eq_case4}, and real parameters $\gamma$, $t$ satisfying the inequalities for this case. For complex $\xi_1,\dots,\xi_k$, we define
\begin{equation}
 \ZZ_n(\xi_1,\dots,\xi_k;\, t,\gamma)=\frac{ \mathcal Z_n(0^n;\, t+\xi_1,\dots,t+\xi_k, t^{n-k};\, \gamma)}{\mathcal Z_n(0^n;\, t^n;\, \gamma)}.
\end{equation}
\end{definition}

For normalized partition functions $\ZZ_n$ our results are summarized in the following four theorems.

\begin{theo} \label{Theorem_asymptotics_case1}
For weights in the ferroelectric phase $\Delta>1$ parameterized as \eqref{eq_case1}, for any $k=1,2,\dots$, and complex $\xi_1, \dots, \xi_k$,  satisfying $-\tfrac{t}{2}+\tfrac{|\ga|}{2} < \Re \xi_j $, and $|\Im \xi | < \pi/2$, $1\le j \le k$, we have
\begin{equation}
\ZZ_n(\xi_1,\dots,\xi_k;\, t,\gamma) =  \prod_{j=1}^k\left(\frac{\sinh(t+|\ga|+\xi_j)}{\sinh(t+|\ga|)}\right)^n
\cdot \exp\left[-\sum_{j=1}^k \xi_j +\bigO\left(\frac{1}{\sqrt{n}}\right) \right], \qquad n\to\infty,
\end{equation}
where  the remainder $\bigO\left(\frac{1}{\sqrt{n}}\right)$ is uniform over $\xi_j$ belonging to compact subsets of the set $\left\{ \xi\in \C : -\tfrac{t}{2}+\tfrac{|\ga|}{2} < \Re \xi, \ |\Im \xi | < \pi/2\right\}$.
\end{theo}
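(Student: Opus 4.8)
The plan is to reduce the statement for general $k$ to the case $k=1$, and to settle $k=1$ by combining the Izergin--Korepin determinant with a Riemann--Hilbert analysis of an auxiliary family of orthogonal polynomials. For the reduction I would use the exact identity (proved in Section~\ref{Section_general_reduction}, cf.\ Theorem~\ref{Theorem_partition_general_k_reduction}) expressing $\ZZ_n(\xi_1,\dots,\xi_k;\,t,\gamma)$ as a $k\times k$ determinant divided by a Vandermonde-type factor, with the entries of the determinant assembled from the single-inhomogeneity quantities $\ZZ_n(\xi_i;\,t,\gamma)$ and the elementary $\sinh$ weights. Granting the $k=1$ asymptotics with a remainder uniform on compact sets, one expands this determinant row by row: row $i$ factors as $\bigl(\sinh(t+|\gamma|+\xi_i)/\sinh(t+|\gamma|)\bigr)^n e^{-\xi_i}$ times an $n$-independent row, the determinant of the $n$-independent rows is itself of Vandermonde type and cancels the normalizing denominator, and what survives is exactly $\prod_{j=1}^k\bigl(\sinh(t+|\gamma|+\xi_j)/\sinh(t+|\gamma|)\bigr)^n \exp[-\sum_j\xi_j]$ up to $\bigO(n^{-1/2})$. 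So the real content is $k=1$.

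For $k=1$ I would start from Theorem~\ref{Theorem_IK_det}. Specializing $\chi_1=\dots=\chi_n=0$ and $\psi=(t+\xi,t,\dots,t)$ makes both the numerator $\mathcal Z_n(0^n;\,t+\xi,t^{n-1};\,\gamma)$ and the denominator $\mathcal Z_n(0^n;\,t^n;\,\gamma)$ of $\ZZ_n$ formally of the type $0/0$, since $\prod_{i<j}b(\chi_i-\chi_j,0)$ and the corresponding $\psi$-product vanish. Resolving this through a confluent (Wronskian) limit turns each Izergin--Korepin determinant into a Hankel-type determinant built from the moments of the weight $w(\cdot\,;t,\gamma)$ of the $\beta=2$ log-gas of Theorem~\ref{Theorem_partition_through_Bessel}; in the numerator exactly one column stays ``un-confluent'' and carries the $\xi$-dependence. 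A Cramer's-rule computation then identifies the ratio of the two determinants with an explicit elementary prefactor in $\xi$ times $p_{n-1}\bigl(z(\xi)\bigr)$, normalized to equal $1$ at $\xi=0$, where $p_{m}$ is the monic degree-$m$ polynomial orthogonal with respect to $w(\cdot\,;t,\gamma)$ and $z(\cdot)$ is an explicit linear-fractional change of variable (using the symmetry $\gamma\leftrightarrow-\gamma$ one may assume $\gamma>0$, so that $\sinh(t+|\gamma|)=b=\max(a,b)$). It is convenient to repackage this as a contour integral of $p_{n-1}(z)$ against an explicit $z$-dependent weight raised to the $n$-th power, which is the form suited to steepest descent.

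It then remains to insert the asymptotics of $p_{n-1}$. In the ferroelectric regime $\Delta>1$ the equilibrium problem attached to this log-gas is degenerate --- its solution is supported at a single point --- so the Deift--Zhou steepest-descent analysis of the corresponding Riemann--Hilbert problem is comparatively mild and produces a pointwise expansion $p_{n-1}(z)=A(z)^n\bigl(B(z)+\bigO(n^{-1/2})\bigr)$, uniform for $z$ in the region we need; the $n^{-1/2}$ reflects a square-root-type local behaviour near the relevant point. This precise statement is not in the literature, and one has to assemble it from the known Riemann--Hilbert results for the homogeneous DWBC partition function in \cite{Bleher-Liechty14} --- this extraction is carried out in Section~\ref{sec:OPasy} and Appendix~\ref{app:OP_asymptotics}. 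Plugging the expansion into the contour integral and running the steepest-descent / residue analysis, the dominant critical point reproduces precisely $\bigl(\sinh(t+|\gamma|+\xi)/\sinh(t+|\gamma|)\bigr)^n e^{-\xi}$, and all remaining contributions are $\bigO(n^{-1/2})$ relative to it.

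The main obstacle is this Riemann--Hilbert step in the precise form needed: one wants pointwise asymptotics of the orthogonal polynomials (not merely the leading behaviour of the free energy, which is what the literature supplies) with a $\bigO(n^{-1/2})$ relative error that is \emph{uniform} as $z(\xi)$ ranges over the image of a compact subset of $\{\xi\in\C:\ -\tfrac t2+\tfrac{|\gamma|}{2}<\Re\xi,\ |\Im\xi|<\pi/2\}$. As $\xi$ approaches the boundary of this strip the saddle point of the contour integral approaches a pole of the weight, so the steepest-descent contour must be deformed with care to keep the estimates uniform. A secondary but necessary check is that the row-by-row factorization of the $k\times k$ determinant in the reduction step does not amplify the $\bigO(n^{-1/2})$ errors; everything else --- the confluent-limit bookkeeping that pins down $p_{n-1}$ and the map $z(\xi)$, and the algebra of the $\sinh$ prefactors --- is routine, though it must be executed carefully to reach the constants stated in the theorem.
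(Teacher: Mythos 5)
Your high-level roadmap matches the paper (reduce general $k$ to $k=1$ via the determinantal identity of Theorem~\ref{Theorem_partition_general_k_reduction}; attack $k=1$ via the Izergin--Korepin determinant, orthogonal polynomials, and Riemann--Hilbert asymptotics), but the key mechanism you propose for the $k=1$ step is not correct, and a second structural claim about the equilibrium problem is also false.

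First, the confluent limit of the Izergin--Korepin determinant with a single inhomogeneity does not reduce to an explicit prefactor times $p_{n-1}\bigl(z(\xi)\bigr)$ evaluated at one point. Carrying out the Cramer/cofactor expansion you allude to, the single $\xi$-dependent column contributes a linear combination $\sum_k c_k\,\f^{(k)}(t+\xi,\gamma)$ whose coefficients are those of the monic orthogonal polynomial; this equals $P_{n-1}(\partial_z)\f(z,\gamma)\big|_{z=t+\xi}$, which via the Laplace representation \eqref{eq_Laplace_rep} is the \emph{integral} $h_0\,\E_{\mathcal M^{1,t,\gamma}}\bigl[e^{\xi x}P_{n-1}(x)\bigr]$ (Theorem~\ref{Theorem_partition_k1}), i.e.\ a sum of $P_{n-1}$ over the lattice against an $\xi$-shifted weight, not a single evaluation. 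The paper then has to split this sum into $I_{\rm in}$ and $I_{\rm out}$, express $I_{\rm in}$ as a contour integral of the Cauchy transform $C^{\mt_n}(p_{n,n-1})(z)$ around $[\be,0]$ (Proposition~\ref{Prop:cauchy}), and do steepest descent on that integral; the tail $I_{\rm out}$ is controlled separately with the pointwise polynomial asymptotics. Your later remark about a contour integral points vaguely in this direction, but as written the ``ratio $=$ prefactor $\times\,p_{n-1}(z(\xi))$'' step would collapse the proof.

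Second, the equilibrium measure of the log-gas in the ferroelectric phase is \emph{not} a point mass. By Theorem~\ref{Theorem_LLN_log_gas} it is supported on $[\be,0]$ and has a saturated region $[\al,0]$ where the density hits the ceiling $1/2$ of the discrete log-gas. This is precisely what makes the Riemann--Hilbert analysis nontrivial: Proposition~\ref{prop:OP_asyF} requires Airy parametrices at the soft edges $\al,\be$ \emph{plus} a Gamma-function parametrix at the hard edge $0$, and is not available in the prior literature --- it is carried out from scratch in Appendix~\ref{app:A2}. Calling the equilibrium problem ``degenerate'' and the RH analysis ``comparatively mild'' misdescribes it. Relatedly, the $\bigO(n^{-1/2})$ error in the theorem is not inherited from the polynomial asymptotics (which are accurate to $\bigO(1/n)$); it arises from the Laplace/Gaussian step of the steepest-descent evaluation of the contour integral. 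These three points are substantive: without the integral representation and the correct picture of the equilibrium measure, you cannot reach either the leading term or the stated error rate.
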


\begin{theo} \label{Theorem_asymptotics_case2}
For weights in the disordered phase $|\Delta|<1$ parameterized as \eqref{eq_case2}, for any $k=1,2,\dots,$ and any $\xi_1, \dots, \xi_k \in \C$, we have as $n\to\infty$
\begin{align}  \notag
\ZZ_n\left(\frac{\xi_1}{\sqrt{n}},  \dots, \frac{\xi_k}{\sqrt{n}};\, t,\gamma\right) =  \exp\Biggl[ \sqrt{n}  &\sum_{j=1}^k \xi_j\left(\cot(\ga+t)-\cot(\ga-t)+\frac{\pi}{2\ga}\tan\left(\frac{\pi t}{2\ga}\right)\right) \\
 \notag - &\sum_{j=1}^k \frac{\xi_j^2}{2}\left(\frac{5}{3}- \frac{\pi^2}{6\ga^2}-\frac{\pi^2\tan^2\left(\frac{\pi t}{2\ga}\right)}{4\ga^2}+\cot^2(\ga+t)+\cot^2(\ga-t)\right)\\+&\bigO\left(\frac{1}{\sqrt{n}}\right) \Biggr],\label{eq_Zn_expansion_case2}
\end{align}
where the remainder $\bigO\left(\frac{1}{\sqrt{n}}\right)$ is uniform over $\xi_j$ belonging to compact subsets of $\C$.
\end{theo}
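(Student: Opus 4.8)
The plan is to extract the $n\to\infty$ asymptotics directly from the exact Izergin--Korepin formula of Theorem \ref{Theorem_IK_det}, organized in three stages: reduce the general-$k$ ratio $\ZZ_n$ to the single-inhomogeneity case, analyze the latter by steepest descent built on Riemann--Hilbert asymptotics of orthogonal polynomials, and then recombine. For the first stage, observe that in $\ZZ_n(\xi_1,\dots,\xi_k;t,\ga)$ exactly $n-k$ of the row parameters $\psi_j$ equal $t$ and all column parameters $\chi_i$ vanish; after passing to the confluent limit in the coinciding parameters (which is forced, since naively the prefactor and the determinant in \eqref{eq_IK_det} both degenerate), the ratio of the two $n\times n$ determinants collapses to a $k\times k$ determinant whose entries are assembled from the single-variable quantity $\ZZ_n(\xi;t,\ga)$ and finitely many of its $\xi$-derivatives, divided by a Vandermonde-type product $\prod_{i<j}\sin(\xi_i-\xi_j)$ in the $\xi_j$. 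This is the content of Theorem \ref{Theorem_partition_general_k_reduction}, and the Vandermonde denominator is precisely the structural reason the final answer has no $\xi_i\xi_j$ cross terms.

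The heart of the matter is the $k=1$ case. Using the confluent Izergin--Korepin determinant, $\ZZ_n(\xi;t,\ga)$ can be rewritten as a contour integral whose integrand involves the monic orthogonal polynomials $P_m$ for the weight of the $\beta=2$ log-gas attached to the homogeneous DWBC partition function in the disordered regime --- the family analyzed by Riemann--Hilbert methods in \cite{Bleher-Liechty14} --- together with an elementary $\xi$-dependent factor. I would then import the strong asymptotics of $P_{n-1}$ and $P_n$ on the relevant arc, in the refined form of Section \ref{sec:OPasy} and Appendix \ref{app:OP_asymptotics} which tracks subleading terms and the dependence on the location parameter, and run a steepest-descent analysis of the integral, letting the saddle move by an $\bigO(n^{-1/2})$ amount when $\xi=\bigO(n^{-1/2})$. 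Equivalently, one establishes for $\xi$ in a complex neighbourhood of $0$ an expansion $\log\ZZ_n(\xi;t,\ga)=n\,\mathcal F(\xi)+\bigO(1)$ with $\mathcal F$ analytic and $\mathcal F(0)=0$ (the analogue of the exact prefactor $(\sinh(t+|\ga|+\xi)/\sinh(t+|\ga|))^n$ seen in Theorem \ref{Theorem_asymptotics_case1}), and reads off that $\mathcal F'(0)=\cot(\ga+t)-\cot(\ga-t)+\frac{\pi}{2\ga}\tan\!\left(\frac{\pi t}{2\ga}\right)$ --- the two cotangents coming from the explicit $a\,b$ prefactor in \eqref{eq_IK_det} (whose $\xi$-log-derivative at $0$ is exactly $\cot(\ga+t)-\cot(\ga-t)$), and the $\tan$ term from the equilibrium measure of the log-gas --- while $\tfrac12\mathcal F''(0)$ gives the coefficient of $\xi^2$. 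Since the $\bigO(1)$ correction vanishes at $\xi=0$ (as $\ZZ_n(0;t,\ga)=1$ exactly), substituting $\xi\mapsto\xi/\sqrt n$ and Taylor-expanding produces the displayed expansion with remainder $\bigO(n^{-1/2})$, and uniformity over compacts follows from the uniform Riemann--Hilbert error bounds.

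The third stage is soft. Inserting the $k=1$ asymptotics at the arguments $\xi_j/\sqrt n$ into the $k\times k$ determinant of the first stage, every entry factors to leading order as a common $j$-independent exponential carrying the $\sqrt n\,\xi_j$ and $\xi_j^2$ terms, times a polynomial in $\xi_j$ of degree $j-1$ whose normalized coefficients are $1+\bigO(n^{-1/2})$; the determinant therefore equals the product over $j$ of the single-variable asymptotics times a Vandermonde in the $\xi_j$, which cancels the denominator. Exponentiating yields exactly the sum over $j$ in the theorem, with no cross terms, with uniformity inherited from the $k=1$ bound.

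The main obstacle is squarely in the second stage: \cite{Bleher-Liechty14} provides only leading-order orthogonal-polynomial asymptotics at the homogeneous point, so one must re-derive them carrying enough subleading and $\xi$-dependent information, and with the uniformity needed for the remainder estimate --- exactly the task carried out in Section \ref{sec:OPasy} and Appendix \ref{app:OP_asymptotics}. A conceptually simpler route, sketched non-rigorously in Section \ref{Section_asymptotics_through_log_gas}, would instead use Theorem \ref{Theorem_partition_through_Bessel} to write $\ZZ_n$ as a log-gas average of a multivariate Bessel function and combine a central limit theorem for linear statistics with Bessel asymptotics; but making that rigorous requires concentration bounds for linear statistics of log-gases with a non-smooth potential that are not available in the literature, which is why the orthogonal-polynomial route is the one to push through.
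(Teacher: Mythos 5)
Your proposal matches the paper's rigorous route: reduce general $k$ to $k=1$ via the determinantal identity of Theorem~\ref{Theorem_partition_general_k_reduction} (Section~\ref{Section_general_reduction}), prove the $k=1$ case by rewriting $\ZZ_n(\xi;t,\ga)$ through Theorem~\ref{Theorem_partition_k1} and Proposition~\ref{Prop:cauchy} as a contour integral involving the log-gas orthogonal polynomials, inserting the Riemann--Hilbert asymptotics of Section~\ref{sec:OPasy}, and doing steepest descent (Section~\ref{Section_k1_asymptotics}); and you correctly identify the non-rigorous log-gas concentration route of Section~\ref{Section_asymptotics_through_log_gas} as the conceptually cleaner but unjustified alternative. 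The key attributions of $\cot(\ga+t)-\cot(\ga-t)$ to the $ab$-prefactor of the Izergin--Korepin formula and of $\frac{\pi}{2\ga}\tan(\frac{\pi t}{2\ga})$ to the equilibrium measure are also accurate.

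One detail is mischaracterized. You say the steepest-descent saddle ``moves by an $\bigO(n^{-1/2})$ amount when $\xi=\bigO(n^{-1/2})$,'' but in fact the saddle $z_{\rm cr}(\xi)$ solving $g'(z_{\rm cr})=\xi$ satisfies $z_{\rm cr}\sim 1/\xi$ and \emph{diverges} as $\xi\to 0$ (eq.~\eqref{eq:crit_pt_exp} gives $z_{\rm cr}(\xi/\sqrt n)=\sqrt n/\xi+\bigO(1)$), so there is no uniform steepest-descent contour down to $\xi=0$. The paper handles this by a two-step argument: Proposition~\ref{prop:le_1_annulus} establishes $\ZZ_n(\xi;t,\ga)=\Psi(\xi)^n\psi(\xi)(1+\bigO(n^{-1/2}))$ only for $\ep\le|\xi|\le 2\ep$ by steepest descent, after which Lemma~\ref{lem:small_xi} --- a Cauchy-integral bootstrap using the analyticity and nonvanishing of $\Psi,\psi$ near $0$ (with $\Psi(0)=\psi(0)=1$) --- extends the same asymptotic uniformly to $|\xi|\le\ep$; only then is $\xi\mapsto\xi/\sqrt n$ substituted. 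Your ``equivalently, one establishes an expansion $\log\ZZ_n=n\mathcal F(\xi)+\bigO(1)$'' states the right goal, but the analytic-continuation lemma is the missing bridge, not a reparametrization of the saddle. (There are also small index slips in your stage~3 --- the exponential column factor is $i$-independent rather than $j$-independent, and the residual polynomial in $\xi_j$ has degree $i-1$ --- but these do not affect the argument.)
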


For the $\Delta<-1$ case we use four Jacobi elliptic theta functions, each with elliptic nome $q=e^{-\pi^2/(2\ga)}$, as in \cite[Section 7.2]{Bleher-Liechty14}:
\begin{equation}
\label{eq_elliptic_functions}
\vartheta_\ell(u)\equiv \vartheta_\ell( u; e^{-\pi^2/(2\ga)}), \quad \ell=1,2,3,4.
\end{equation}

\begin{theo} \label{Theorem_asymptotics_case3}
For weights in the antiferroelectric phase $\Delta<-1$ parameterized as \eqref{eq_case3}, for any $k=1,2,\dots,$ and any $\xi_1, \dots, \xi_k \in \C$, we have as $n\to\infty$,
\begin{align}  \notag
\ZZ_n&\left(\frac{\xi_1}{\sqrt{n}}, \dots, \frac{\xi_k}{\sqrt{n}};\, t,\gamma\right) =  \exp\Biggl[ \sqrt{n}  \sum_{j=1}^k \xi_j\left(\coth(\ga+t)-\coth(\ga-t)-\frac{\pi}{2\ga}\frac{\vartheta_2'\left(\frac{\pi t}{2\ga}\right)}{\vartheta_2\left(\frac{\pi t}{2\ga}\right)}
\right) \\
 \notag &+ \sum_{j=1}^k \frac{\xi_j^2}{2}\left(\frac{5}{3}- \frac{\pi^2}{12\ga^2}\left(\frac{\vartheta_2'\left(\frac{\pi t}{2\ga}\right)}{\vartheta_2\left(\frac{\pi t}{2\ga}\right)}\right)^2 + \frac{\pi^2}{12\ga^2} \sum_{\ell=1}^4 \left(\frac{\vartheta_\ell'\left(\pi \frac{t+\gamma}{4\gamma}\right)}{\vartheta_\ell\left(\pi \frac{t+\gamma}{4\gamma}\right)}\right)^2-\coth^2(\ga+t)-\coth^2(\ga-t)\right)
 \\&+\bigO\left(\frac{1}{\sqrt{n}}\right) \Biggr],\label{eq_Zn_expansion_case3}
\end{align}
where the remainder $\bigO\left(\frac{1}{\sqrt{n}}\right)$ is uniform over $\xi_j$ belonging to compact subsets of $\C$.
\end{theo}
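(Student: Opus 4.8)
The strategy follows the template of Theorems~\ref{Theorem_asymptotics_case1} and \ref{Theorem_asymptotics_case2}, with the trigonometric (genus-zero) parametrix of the disordered phase replaced by an elliptic (genus-one) one. First I would reduce to a single inhomogeneity: by the determinantal identity of Theorem~\ref{Theorem_partition_general_k_reduction}, $\ZZ_n(\xi_1,\dots,\xi_k;t,\ga)$ is a $k\times k$ determinant whose entries are divided differences, in the $\xi$-variables, of the one-inhomogeneity quantity $\ZZ_n(\xi;t,\ga)$. Extracting the Vandermonde in the $\xi_j$ and using that the one-variable asymptotics is smooth in $\xi$ with remainder controlled after a bounded number of $\xi$-derivatives, the determinant collapses to a product; hence it suffices to establish, uniformly for $\xi$ in compacts of $\C$,
$$
\ZZ_n\!\left(\tfrac{\xi}{\sqrt n};t,\ga\right)=\exp\!\left[\sqrt n\,\xi\,A(t,\ga)+\tfrac{\xi^2}{2}\,B(t,\ga)+\bigO\!\left(\tfrac1{\sqrt n}\right)\right],
$$
with $A,B$ the coefficients of \eqref{eq_Zn_expansion_case3}; the general $k$ then follows by the same bookkeeping as in the disordered case.

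Next, using the $\beta=2$ log-gas representation of Theorem~\ref{Theorem_partition_through_Bessel} together with the Izergin--Korepin determinant \eqref{eq_IK_det} with $\chi\equiv 0$ and $\psi=(t+\xi,t^{n-1})$, one rewrites $\ZZ_n(\xi;t,\ga)$ as an explicit prefactor times a contour integral $\oint \Phi_n(z;\xi)\,dz$, where $\Phi_n$ is built from the monic degree-$n$ orthogonal polynomial $P_n$ and its norm $h_n$ for the weight underlying the log-gas --- the same family analyzed in \cite[Chapter 7]{Bleher-Liechty14}. The prefactor, originating from $\prod a(\psi_j-\chi_i)b(\psi_j-\chi_i)/\prod b(\psi_i-\psi_j)$, contributes, upon Taylor expansion in $\xi$, precisely the $\coth(\ga+t)-\coth(\ga-t)$ and $-\coth^2(\ga+t)-\coth^2(\ga-t)$ pieces. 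The core input is then an asymptotic expansion, uniform for $z$ near the relevant saddle, of the form $P_n(z)=e^{n g(z)}\bigl(\Psi_0(z)+n^{-1}\Psi_1(z)+\bigO(n^{-2})\bigr)$, together with a matching two-term expansion of $h_n$: in the regime $\Delta<-1$ the equilibrium measure of the log-gas is supported on a full contour with no gap, so the Riemann--Hilbert steepest-descent analysis of \cite[Chapter 7]{Bleher-Liechty14} yields an outer parametrix assembled from Jacobi theta functions with nome $q=e^{-\pi^2/(2\ga)}$. Propagating that analysis one order beyond what appears in the literature gives $\Psi_0,\Psi_1$ explicitly through the $\vartheta_\ell$, and the point $\pi(t+\ga)/(4\ga)$ in \eqref{eq_Zn_expansion_case3} emerges as the image of the saddle under the uniformizing map of the RH problem; this is the content of the orthogonal-polynomial statements of Section~\ref{sec:OPasy}, proved in Appendix~\ref{app:OP_asymptotics}.

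Inserting this expansion into the contour integral, the phase has a simple saddle $z_\ast=z_\ast(t,\ga)$ which for $\xi=0$ reproduces $\ZZ_n(0;t,\ga)=1$. With $\xi$ rescaled by $\sqrt n$, a Laplace expansion around $z_\ast$ organizes the contributions as: the $O(\sqrt n)$ term $\sqrt n\,\xi\,A$ from the shift of the critical value of the phase, whose theta part matches $-\tfrac{\pi}{2\ga}\vartheta_2'(\tfrac{\pi t}{2\ga})/\vartheta_2(\tfrac{\pi t}{2\ga})$ by reading off the logarithmic derivative of the leading theta prefactor; the $O(1)$ term $\tfrac{\xi^2}{2}B$ from the combination of the Hessian of the phase at $z_\ast$, the quadratic-in-$\xi$ part of the phase, and the subleading amplitude $\Psi_1/\Psi_0$; and an $\bigO(n^{-1/2})$ remainder uniform on compacts. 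The universal constant $5/3$ arises, exactly as in the disordered case, from the local behavior of the log-gas weight at the saddle, and assembling all $O(1)$ pieces yields $B$.

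The principal obstacle is the orthogonal-polynomial step: the $o(1)$ correction in the $\Delta<-1$ asymptotics is not available off the shelf, and obtaining it forces one to carry the error analysis through every stage of the Bleher--Liechty RH construction --- global theta parametrix, Airy parametrices at the band edges, and the small-norm theorem --- while tracking both the $\vartheta_\ell$-dependence and the uniformity in $z$ near $z_\ast$. A secondary but computation-heavy point is identifying the $\xi^2$-coefficient $B$ with the stated combination $\tfrac{\pi^2}{12\ga^2}\sum_{\ell=1}^4(\vartheta_\ell'/\vartheta_\ell)^2$ at $\pi(t+\ga)/(4\ga)$ minus $\tfrac{\pi^2}{12\ga^2}(\vartheta_2'/\vartheta_2)^2$ at $\pi t/(2\ga)$; this rests on a theta-function identity --- ultimately a consequence of the heat equation for $\vartheta$ --- relating the second $z_\ast$-derivative of the equilibrium energy to these logarithmic derivatives, and verifying it is the most delicate algebraic part.
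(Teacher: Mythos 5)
Your high-level plan — reduce to $k=1$ via Theorem~\ref{Theorem_partition_general_k_reduction}, express $\ZZ_n(\xi;t,\gamma)$ through orthogonal polynomials and their Cauchy transform, import Riemann--Hilbert asymptotics from the Bleher--Liechty analysis, and finish by steepest descent — matches the paper's actual proof (Sections~\ref{Section_k1_asymptotics} and \ref{Section_general_reduction}). However, there is a substantive misconception at the center of your proposal that would send you after a far harder computation than is needed.

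You claim the $\xi^2$-coefficient $B$ receives a contribution from the subleading amplitude $\Psi_1/\Psi_0$ of the orthogonal polynomial, and consequently that the principal obstacle is carrying a genuinely two-term expansion $P_{n-1}(z)=e^{ng(z)}\bigl(\Psi_0(z)+n^{-1}\Psi_1(z)+\bigO(n^{-2})\bigr)$ through the entire RH construction. This is not the case. After the rescaling $\xi\to\xi/\sqrt{n}$, the critical point $z_{\rm cr}(\xi/\sqrt{n})$ escapes to infinity like $\sqrt{n}/\xi$ (see \eqref{eq:crit_pt_exp}), and the full amplitude prefactor $\psi(\xi)=M(z_{\rm cr})b(\xi,0)/(\ii\sqrt{g''(z_{\rm cr})})$ therefore satisfies $\psi(\xi/\sqrt{n})=1+\bigO(1/\sqrt{n})$ (see \eqref{eq:psi_asy}). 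In other words, the entire amplitude — and with it any $n^{-1}$-correction $\Psi_1$ from the OP asymptotics — is absorbed into the remainder $\bigO(1/\sqrt{n})$ of the theorem. The $\xi^2$-coefficient comes purely from the expansion of the critical value of the phase $g(z_{\rm cr})-\xi z_{\rm cr}$ (equivalently, from the equilibrium-measure moments $\mu_1,\mu_2$) and the elementary prefactors $a(t+\xi/\sqrt{n},\gamma)b(t+\xi/\sqrt{n},\gamma)/(ab)$ and $\xi/(\sqrt{n}\,b(\xi/\sqrt{n},0))$. A one-term formula of the type $C^{\mt_n}(p_{n,n-1})(z)/h_{n,n-1}=-\tfrac{1}{2\pi\ii}e^{-ng(z)}M(z)(1+\bigO(1/n))$, as in Proposition~\ref{prop:OP_asy}, suffices; the ``principal obstacle'' you identify does not exist.

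Two further points you should not gloss over. First, in the antiferroelectric phase the measure $\m$ is supported on the lattice $2\Z$, so the integral representation of $\ZZ_n(\xi;t,\gamma)$ cannot be turned directly into a closed-contour integral. The paper splits it into $I_{\rm in}+I_{\rm out}$ as in \eqref{eq:I_inout}; $I_{\rm in}$ becomes a contour integral via the discrete Cauchy transform, the contour must be kept at distance $\ge\ep/n$ from $\frac{2}{n}\Z$ (with a deformation near the critical point when necessary, \eqref{eq:Omega_deformed}), and $I_{\rm out}$ is shown to be exponentially negligible via the equilibrium inequality \eqref{eq:equilibrium_lessthan}. Your sketch passes over this entirely, but it is precisely the technical content that distinguishes the discrete phases from the disordered one. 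Second, the identity that collapses $\sum_{\ell=1}^4\vartheta_\ell'(\omega)/\vartheta_\ell(\omega)$, with $\omega=\pi\frac{t+\gamma}{4\gamma}$, to $2\vartheta_2'\bigl(\tfrac{\pi t}{2\gamma}\bigr)/\vartheta_2\bigl(\tfrac{\pi t}{2\gamma}\bigr)$ is the logarithmic derivative of the duplication formula $\vartheta_1(2z)\vartheta_1'(0)=\prod_{\ell=1}^4\vartheta_\ell(z)$ combined with $\vartheta_1(2z+\tfrac{\pi}{2})=\vartheta_2(2z)$, not a consequence of the heat equation; and it is only needed for $\mu_1$ — the sum $\sum_\ell(\vartheta_\ell'/\vartheta_\ell)^2$ appears in the theorem verbatim.
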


\begin{theo} \label{Theorem_asymptotics_case4} For weights in the boundary phase $\Delta=-1$ parameterized as \eqref{eq_case4} with $\ga=1$, for any $k=1,2,\dots,$ and any $\xi_1, \dots, \xi_k \in \C$, we have as $n\to\infty$
\begin{align}  \notag
\ZZ_n\left(\frac{\xi_1}{\sqrt{n}}, \dots, \frac{\xi_k}{\sqrt{n}};\, t,1\right) =  \exp\Biggl[& \sqrt{n}  \sum_{j=1}^k \xi_j\left(\frac{1}{1+t} - \frac{1}{1-t}+\frac{\pi}{2}\tan\left(\frac{\pi t}{2}\right)\right) \\
 \notag &+ \sum_{j=1}^k \frac{\xi_j^2}{2}\left(\frac{1}{(1-t)^2}+\frac{1}{(1+t)^2}- \frac{\pi^2}{6}-\frac{\pi^2\tan^2\left(\frac{\pi t}{2}\right)}{4}\right)
 \\&+\bigO\left(\frac{1}{\sqrt{n}}\right) \Biggr],\label{eq_Zn_expansion_case4}
\end{align}
where the remainder $\bigO\left(\frac{1}{\sqrt{n}}\right)$ is uniform over $\xi_j$ belonging to compact subsets of $\C$.
\end{theo}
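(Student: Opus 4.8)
The argument will mirror, step by step, the proofs of Theorems~\ref{Theorem_asymptotics_case2} and~\ref{Theorem_asymptotics_case3}, specialized to the rational weight that the Izergin--Korepin determinant \eqref{eq_IK_det} degenerates to under the parameterization \eqref{eq_case4}; by the overparameterization noted after \eqref{eq_case4}, fixing $\gamma=1$ loses nothing. \emph{Step 1 ($k=1$).} Starting from the $\beta=2$ log-gas representation of Theorem~\ref{Theorem_partition_through_Bessel}, I would reduce $\ZZ_n(\xi;\,t,1)$ to a contour integral whose integrand is assembled from the polynomials $P_m$ orthogonal with respect to the weight $w_t$ attached to the $\Delta=-1$ Izergin--Korepin determinant and from their squared norms $h_m$; with $\gamma=1$ this $w_t$ is a rational function of the integration variable, namely the confluent $\gamma\to0$ limit (after the rescaling $t\mapsto\gamma t$) of the $\sin$- and $\sinh$-weights governing the disordered and antiferroelectric regimes. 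This is precisely the reduction carried out in Section~\ref{Section_k1_asymptotics} for the other phases, with the trigonometric/elliptic weight replaced by its rational degeneration. \emph{Step 2 (OP asymptotics).} Into this integral one inserts Plancherel--Rotach-type asymptotics for $P_m$, $P_{m\pm1}$ and $h_m$, uniform for $m$ in a window around $n$ and for the spectral variable in the relevant region of $\C$; these follow from the Deift--Zhou steepest-descent (Riemann--Hilbert) analysis of the corresponding orthogonal polynomials, the equilibrium measure, $g$-function, and global/Airy parametrices being obtained as the appropriate limits of those in \cite{Bleher-Liechty14}, with the precise second-order expansion extracted in Section~\ref{sec:OPasy} and proved in Appendix~\ref{app:OP_asymptotics}. \emph{Step 3 (steepest descent and general $k$).} One then performs the saddle-point analysis of the $k=1$ integral: the saddle is controlled by the endpoints of the support of the equilibrium measure, and tracking their positions together with the curvature of the $g$-function produces the $\frac{1}{(1\pm t)^2}$ terms and the $-\tfrac{\pi^2}{6}-\tfrac{\pi^2}{4}\tan^2(\tfrac{\pi t}{2})$ term of the $\xi_j^2/2$ coefficient (this $\tan(\tfrac{\pi t}{2})$ is the rational-limit counterpart of the $\tfrac{\pi}{2\gamma}\tan(\tfrac{\pi t}{2\gamma})$ of \eqref{eq_Zn_expansion_case2} and of the $\vartheta_2'/\vartheta_2$ term of \eqref{eq_Zn_expansion_case3}), giving \eqref{eq_Zn_expansion_case4} for $k=1$. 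The general-$k$ case follows by feeding the $k=1$ asymptotics into the $k\times k$ determinant of Theorem~\ref{Theorem_partition_general_k_reduction} and checking, as for Theorems~\ref{Theorem_asymptotics_case2}--\ref{Theorem_asymptotics_case3}, that cross terms contribute only to the $\bigO(1/\sqrt n)$ remainder.

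A useful consistency check, which also explains the shape of \eqref{eq_Zn_expansion_case4}, is to take the confluent limit of Theorem~\ref{Theorem_asymptotics_case2}: replacing the disordered-phase parameters by $(\gamma,t)=(\eps,\eps t_0)$ and the inhomogeneities by $\xi_j^{\mathrm{dis}}=\eps\,\xi_j$, both the linear and the quadratic coefficients of \eqref{eq_Zn_expansion_case2} converge as $\eps\to0$ to those of \eqref{eq_Zn_expansion_case4} with $t=t_0$, the apparent $\eps^{-2}$ singularities cancelling against the $\eps^2$ coming from the rescaled $\xi$. One could try to promote this into an actual derivation, but that would require the $\bigO(1/\sqrt n)$ remainder in Theorem~\ref{Theorem_asymptotics_case2} to be uniform as $\gamma\to0$, which is not part of its statement; I would therefore carry out the direct analysis above rather than argue by limits.

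The main obstacle is Step 2: since the $\Delta=-1$ weight sits exactly on the boundary between the disordered and antiferroelectric regimes, the Riemann--Hilbert analysis must be re-run for this specific rational weight. One has to identify the equilibrium measure and verify that its support is a single interval on which the density vanishes like a square root at both endpoints (so that Airy parametrices apply and no band splitting occurs), construct the global and local parametrices with uniform error control, and push the expansion one order past the leading exponential in order to reach the term that feeds the $\xi_j^2/2$ coefficient. Extracting that subleading coefficient cleanly from the \cite{Bleher-Liechty14} machinery --- rather than just the leading growth rate $\bigl(\sinh(t+|\gamma|+\xi)/\sinh(t+|\gamma|)\bigr)^n$-type factor that is visible already in Theorem~\ref{Theorem_asymptotics_case1} --- is the delicate part; once it is in hand, the $k=1$ steepest descent and the $k\times k$ determinant bookkeeping proceed exactly as in the phases already treated.
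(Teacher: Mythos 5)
Your plan matches the paper's: for $k=1$ the boundary phase is handled by the orthogonal-polynomial representation of Theorem~\ref{Theorem_partition_k1}, the Riemann--Hilbert asymptotics of Proposition~\ref{prop:OP_asy}, and the steepest-descent argument of Section~\ref{Section_k1_asymptotics}, where Propositions~\ref{prop:crit_pt_ex}--\ref{prop:le_1_disc} treat $*\in\{\mathrm{D},\mathrm{B},\mathrm{AF}\}$ in a single unified pass; general $k$ then follows from Theorem~\ref{Theorem_partition_general_k_reduction} exactly as in Section~\ref{Section_general_reduction}. Two small corrections to your framing, though. First, the orthogonality weight at $\Delta=-1$, $\gamma=1$ is not a rational function of the integration variable --- it is the two-sided exponential $e^{(t-\sgn x)x}\,\dd x$ of \eqref{eq_measure_case4}, with a kink at the origin; what is rational is the Izergin--Korepin kernel $\f(t;1)=\tfrac{2}{1-t^2}$, of which this weight is the inverse Laplace transform. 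Second, the ``main obstacle'' you flag --- re-running the Riemann--Hilbert analysis for the boundary weight and verifying the one-band, square-root-endpoint structure needed for Airy parametrices --- is not redone in the paper: it is imported from \cite{Bleher-Bothner12}, where precisely this $\Delta=-1$ case was analyzed (see the proof of Proposition~\ref{prop:OP_asy} for $\Delta=-1$ in Appendix~\ref{app:OP_asymptotics}), and the equilibrium measure is simply the disordered-phase one \eqref{eq_case2_G}--\eqref{eq_case2_alpha_beta}, so no new parametrix construction is required. With those adjustments your proposal is the paper's argument.
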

\begin{remark}
 Multiplying both $\xi_i$ and $t$ by $\gamma$ and then sending $\gamma\to 0$, the expression \eqref{eq_Zn_expansion_case4} can be obtained from either of \eqref{eq_Zn_expansion_case2} or \eqref{eq_Zn_expansion_case3}. Hence there is no discontinuity in the asymptotic expansions at $\Delta=-1$.
\end{remark}

\subsection{Reduction to log-gas}

Our approach to proving Theorems \ref{Theorem_asymptotics_case1}-\ref{Theorem_asymptotics_case4} starts from rewriting the normalized partition functions $\ZZ_n$ in terms of a log-gas. The general idea that a log-gas can be relevant for the six-vertex model with DWBC goes back to the work of Zinn-Justin \cite{Zinn_Justin00}, and our main contribution in this section is in finding a new twist of that idea, providing access to $\ZZ_n$ which we need.

Using the $a$, $b$, $c$ functions of \eqref{eq_case1}--\eqref{eq_case4}, we denote
\begin{equation}
\label{eq_phi_def}
\f(t;\,  \ga) := \frac{c(\ga)}{a(t,\ga)b(t,\ga)}
\end{equation}
and represent $\f$ as a Laplace transform in the variable $t$ of a measure, following \cite{Zinn_Justin00} and \cite{Bleher-Liechty14}.
\begin{lem} We have
\begin{equation}
\label{eq_Laplace_rep}
 \f(t;\, \ga)=\int_{-\infty}^{\infty} e^{tx} \m(\dd x),
\end{equation}
where the measure $\m$ depends on the phase and is given as follows.
\begin{enumerate}
 \item For weights \eqref{eq_case1}, $\m$ is supported on negative even integers and is given by
 \begin{equation}
 \label{eq_measure_case1}
    \m=\sum_{x\in 2 \Z_{<0}} 2 \sinh(-\ga x)\, \delta_x.
 \end{equation}
 \item For weights \eqref{eq_case2}, $\m$ has density with respect to the Lebesgue measure:
 \begin{equation}
 \label{eq_measure_case2}
  \m=\frac{\sinh\left(\frac{x(\pi - 2\ga)}{2}\right)}{\sinh(\pi x/2)}\dd x.
 \end{equation}
  \item For weights \eqref{eq_case3}, $\m$ is supported on even integers and is given by
  \begin{equation}
 \label{eq_measure_case3}
   \m=\sum_{x\in 2\Z} 2 e^{-\ga|x|}\, \delta_x.
 \end{equation}
  \item For weights \eqref{eq_case4}, $\m$ has density with respect to the Lebesgue measure:
 \begin{equation}
 \label{eq_measure_case4}
  \m=e^{-\gamma |x|}\dd x.
 \end{equation}
\end{enumerate}
\end{lem}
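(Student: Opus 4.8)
The plan is to verify the Laplace representation \eqref{eq_Laplace_rep} case by case, in each case by computing the right-hand side of \eqref{eq_measure_case1}--\eqref{eq_measure_case4} directly and checking it equals $\f(t;\ga)=c(\ga)/(a(t,\ga)b(t,\ga))$. The computations are elementary but split naturally into two flavors: the continuous cases \eqref{eq_case2} and \eqref{eq_case4}, where $\m$ has a density and the integral $\int e^{tx}\m(\dd x)$ is evaluated by a residue/partial-fractions argument, and the discrete cases \eqref{eq_case1} and \eqref{eq_case3}, where $\int e^{tx}\m(\dd x)$ is a geometric-type series in $e^{2t}$. In all four cases one has to be a little careful about the range of $t$ (controlled by the inequalities on $\ga,t$ in \eqref{eq_case1}--\eqref{eq_case4}) for which the integral or sum converges — this is exactly where the constraints like $0<|\ga|<t$ or $|t|<\ga$ enter. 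Since these are the same computations appearing in \cite{Zinn_Justin00} and \cite[Section 7.2]{Bleher-Liechty14}, I will follow those references, but reproduce the key steps so the paper is self-contained.

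For the boundary case \eqref{eq_case4} I would start, since it is the simplest: here $a b = (\gamma-t)(\gamma+t)=\gamma^2-t^2$ and $c=2\gamma$, so $\f(t;\ga)=\frac{2\gamma}{\gamma^2-t^2}=\frac{1}{\gamma-t}+\frac{1}{\gamma+t}$; and for $|t|<\gamma$ one has $\int_{-\infty}^0 e^{tx}e^{\gamma x}\,\dd x=\frac{1}{\gamma+t}$ and $\int_0^\infty e^{tx}e^{-\gamma x}\,\dd x=\frac{1}{\gamma-t}$, which sum to give exactly $\int_{-\infty}^\infty e^{tx}e^{-\gamma|x|}\,\dd x=\f(t;\gamma)$. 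For case \eqref{eq_case3}, $a b=\sinh(\ga-t)\sinh(\ga+t)$ and $c=\sinh(2\ga)$, and one computes $\int e^{tx}\m(\dd x)=\sum_{x\in 2\Z} 2 e^{-\ga|x|}e^{tx} = 2\sum_{m\ge 1}(e^{2t-2\ga}+e^{-2t-2\ga})^m + 2$ type sums, summing the geometric series (convergent precisely when $|t|<\ga$) and simplifying with the identity $\sinh(\ga-t)\sinh(\ga+t)=\sinh^2\ga-\sinh^2 t$ together with $\sinh 2\ga=2\sinh\ga\cosh\ga$ to land on $\sinh(2\ga)/(\sinh(\ga-t)\sinh(\ga+t))$. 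Case \eqref{eq_case1} is handled by the same geometric-series manipulation with the roles of hyperbolic functions rearranged (now $\f(t;\ga)=\sinh(2\ga)/(\sinh(t-\ga)\sinh(t+\ga))$ and the measure is supported on $2\Z_{<0}$ with weight $2\sinh(-\ga x)$, the convergence condition being $0<|\ga|<t$). For the disordered case \eqref{eq_case2}, $\f(t;\ga)=\sin(2\ga)/(\sin(\ga-t)\sin(\ga+t))$, and $\int_{-\infty}^\infty e^{tx}\frac{\sinh(x(\pi-2\ga)/2)}{\sinh(\pi x/2)}\,\dd x$ is the standard Fourier/Laplace integral $\int_{-\infty}^\infty e^{sx}\frac{\sinh(\beta x)}{\sinh(\pi x/2)}\,\dd x=\frac{2\sin(2\beta)}{\cos(2s)+\cos(2\beta)}$ (valid for $|s|+|\beta|<\pi/2$), applied with $\beta=(\pi-2\ga)/2$ and $s=t$; after the substitution $\cos 2s+\cos 2\beta = 2\cos(s+\beta)\cos(s-\beta)$ this reduces to $\sin(2\ga)/(\sin(\ga-t)\sin(\ga+t))$, using $\sin(2\beta)=\sin(2\ga)$ and matching the arguments.

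The only genuine subtlety — and the step I would flag as the "main obstacle," though it is more bookkeeping than difficulty — is justifying the contour-shift / residue evaluation of the integral in case \eqref{eq_case2}, i.e. establishing $\int_{\R} e^{sx}\frac{\sinh(\beta x)}{\sinh(\pi x/2)}\,\dd x = \frac{2\sin(2\beta)}{\cos(2s)+\cos(2\beta)}$ on the correct parameter range: one must check absolute convergence of the integral (the integrand decays like $e^{-(\pi/2-|s|-|\beta|)|x|}$, which is where the hypothesis $|t|<\ga<\pi/2$ is used), close the contour in a strip of the complex plane avoiding the poles of $1/\sinh(\pi x/2)$ at $x\in 2\ii\Z$, and sum the resulting series of residues. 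Alternatively one can avoid complex analysis altogether by differentiating a known tabulated integral or by expanding $1/\sinh(\pi x/2)$ as a geometric series and integrating term by term, which reduces case \eqref{eq_case2} to the same computation as cases \eqref{eq_case1} and \eqref{eq_case3}; I would present whichever is shortest. In every case the final algebraic simplification is a one-line trigonometric/hyperbolic product-to-difference identity, so once the integral evaluations are in hand the lemma follows immediately.
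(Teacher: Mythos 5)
The paper does not actually give a proof of this lemma --- it states it with a citation to \cite{Zinn_Justin00} and \cite{Bleher-Liechty14} --- so a direct case-by-case verification of the Laplace transform identity, as you propose, is exactly the right thing to do, and your sketches for cases \eqref{eq_case2}, \eqref{eq_case3}, and \eqref{eq_case4} are sound. (For case \eqref{eq_case3}, your intermediate display ``$2\sum_{m\ge 1}(e^{2t-2\ga}+e^{-2t-2\ga})^m + 2$'' is garbled notation --- $(e^{2t-2\ga}+e^{-2t-2\ga})^m$ is not the same as $e^{(2t-2\ga)m} + e^{(-2t-2\ga)m}$ --- but the words ``type sums'' make your intent clear; just fix it when you write it up.)

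However, there is a genuine problem in case \eqref{eq_case1} which your proposal glosses over because you did not carry out the geometric sum explicitly. With $x = -2m$, $m\ge 1$, and $\m$ as stated,
\[
\int e^{tx}\,\m(\dd x) \;=\; \sum_{m\ge 1} 2\sinh(2\ga m)\,e^{-2tm}
\;=\; \sum_{m\ge 1}\bigl(e^{-2m(t-\ga)}-e^{-2m(t+\ga)}\bigr)
\;=\; \tfrac12\bigl(\coth(t-\ga)-\coth(t+\ga)\bigr),
\]
using $\tfrac{1}{e^{2u}-1}=\tfrac12(\coth u - 1)$; and $\coth(t-\ga)-\coth(t+\ga) = \frac{\sinh(2\ga)}{\sinh(t-\ga)\sinh(t+\ga)} = \f(t;\ga)$. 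So the sum equals $\tfrac12\f(t;\ga)$, \emph{not} $\f(t;\ga)$. Equivalently, starting from $\f(t;\ga)=\coth(t-\ga)-\coth(t+\ga)$ and expanding $\coth z = 1+2\sum_{n\ge1}e^{-2nz}$ gives $\f(t;\ga)=4\sum_{n\ge1}\sinh(2n\ga)\,e^{-2nt}$, so the correct coefficient in \eqref{eq_measure_case1} is $4\sinh(-\ga x)$, not $2\sinh(-\ga x)$. (By contrast, case \eqref{eq_case3} checks out exactly: there the separate $m=0$ term and the two one-sided sums each contribute what they should.) When you write up the case \eqref{eq_case1} computation, you must either flag this factor-of-two discrepancy as an apparent typo in the paper's statement of $\m$, or explain where an additional factor of two comes from; a proof that claims to ``land on $\sinh(2\ga)/(\sinh(t-\ga)\sinh(t+\ga))$'' with the measure as written is not correct. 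Note the factor is harmless downstream --- $\m$ only enters through normalized objects such as $\mathcal{M}^{n,t,\gamma}$ and the monic orthogonal polynomials, where a constant multiple of $\m$ cancels --- but the lemma as literally stated is off by a constant in this case.
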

\begin{remark}
 The total mass of $\m$ equals $\f(0;\,  \ga)$ and is not $1$. Hence, if we want to get a probability measure, we need to divide $\m$ by this number.
\end{remark}

We further introduce an $n$-dimensional probabilistic version of $\m$.

\begin{definition}
 The probability measure $\mathcal M^{n,t,\gamma}$ on ordered $n$-tuples $x_1< x_2<\dots < x_n$ of real numbers is defined for each of the phases \eqref{eq_case1}--\eqref{eq_case4} by its density with respect to $\m^{\otimes n}$:
 \begin{equation}
 \label{eq_M_def}
   \frac{1}{h_0 h_1 \cdots h_{n-1}} \prod_{1\le i < j \le n} (x_i-x_j)^2 \prod_{i=1}^n e^{tx_i} \m(\dd x_i),
 \end{equation}
 where $h_0 h_1\cdots h_{n-1}$ is a normalization constant making the total mass of $\mathcal M^{n,t,\gamma}$ equal to $1$.
\end{definition}
Note that depending on the phase, $\mathcal M^{n,t,\gamma}$ is supported on $n$-tuples of negative even integers, or on $n$-tuples of real numbers, or on $n$-tuples of even integers.

We next relate $\ZZ_n$ to expectations of certain random variables with respect to the measure $\mathcal M^{n,t,\gamma}$. For that we need one additional definition.

\begin{definition} The $\beta=2$ multivariate Bessel function is a function of complex variables $(z_1,\dots,z_n)$ labeled by an $n$-tuple of real numbers $(x_1,x_2,\dots, x_n)$ and given by
\begin{equation}
\label{eq_Bessel_1}
 \B_{x_1,\dots,x_n}(z_1,\dots,z_n)= 1! 2! \cdots (n-1)! \frac{\det\bigl[ e^{x_i z_j}\bigr]_{i,j=1}^n}{\prod\limits_{1\le i<j\le n} (x_i-x_j)(z_i-z_j)}.
\end{equation}
\end{definition}

\smallskip

\noindent These functions play an important role in random matrix theory. In particular, they can be identified with values of the Harish Chandra/Itzykson--Zuber spherical integral  \cite{Harish-Chandra57,Itzykson-Zuber80} :
\begin{equation}
\label{eq_Bessel_2}
 \B_{x_1,\dots,x_n}(z_1,\dots,z_n)=\int_{\mathbb U(n)} \exp\bigl[\mathrm{Trace} \bigl(\mathrm{diag}(x_1,\dots,x_n) U \mathrm{diag}(z_1,\dots,z_n) U^*\bigr)\bigr] \dd U,
\end{equation}
where the integration goes over the Haar measure on the group of $n\times n$ unitary matrices and $\mathrm{diag}(x_1,\dots,x_n)$ is the diagonal matrix with elements $x_1,\dots,x_n$. The two definitions \eqref{eq_Bessel_1} and \eqref{eq_Bessel_2} imply that $\B$ is symmetric both in $(x_1,\dots,x_n)$ and in $(z_1,\dots,z_n)$. In addition,
\begin{equation}
\label{eq_Bessel_norm}
 \B_{x_1,\dots,x_n}(0^n)=1\quad \text{ and }\quad \B_{x_1,\dots,x_n}(t+z_1,\dots,t+z_n)=\exp\left(t\sum_{i=1}^n x_i\right) \B_{x_1,\dots,x_n}(z_1,\dots,z_n).
\end{equation}

\begin{theo} \label{Theorem_partition_through_Bessel} In each of the four phases \eqref{eq_case1}--\eqref{eq_case4} we have for $1\le k \le n$:
\begin{align}
 \notag \ZZ_n(\xi_1,\dots,\xi_k;\, t,\gamma)=&
 \prod\limits_{j=1}^k \left[\left(\frac{a(t+\xi_j, \ga) b(t+\xi_j, \ga)}{a(t, \ga) b(t, \ga)}\right)^n  \left(\frac{\xi_j}{ b(\xi_j,0)}\right)^{n-k}\right]  \prod_{1\le i<j \le k} \frac{\xi_i-\xi_j}{ b(\xi_i-\xi_j,0)}
 \\ &\times \E_{\mathcal M^{n,t,\gamma}}\bigl[  \B_{x_1,\dots,x_n}(\xi_1,\dots,\xi_k,0^{n-k})\bigr], \label{eq_partition_through_Bessel}
\end{align}
where in $\B$ under the expectation, the label $(x_1,\dots,x_n)$ is $\mathcal M^{n,t,\gamma}$--random.
\end{theo}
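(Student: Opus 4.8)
\noindent\emph{Sketch of proof.}
Every factor on the right of \eqref{eq_partition_through_Bessel} is analytic in $(\xi_1,\dots,\xi_k)$ near the origin, so it suffices to prove the identity for $k=n$ with $\xi_1,\dots,\xi_n$ pairwise distinct and nonzero and then set $\xi_{k+1}=\dots=\xi_n=0$. That specialization is pure bookkeeping: for $j>k$ one has $(a(t+\xi_j,\ga)b(t+\xi_j,\ga)/a(t,\ga)b(t,\ga))^n\to1$; the factor $(\xi_i-\xi_j)/b(\xi_i-\xi_j,0)$ tends to $1$ when $i,j>k$ (in every phase $b(s,0)$ has a simple zero at $s=0$, so $b(s,0)/s\to1$) and becomes $\xi_i/b(\xi_i,0)$, occurring $n-k$ times for each $i\le k$, when $i\le k<j$; and $\B_{x_1,\dots,x_n}(\xi_1,\dots,\xi_n)\to\B_{x_1,\dots,x_n}(\xi_1,\dots,\xi_k,0^{n-k})$ by continuity of the integral \eqref{eq_Bessel_2}. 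The resulting prefactor is exactly the one in \eqref{eq_partition_through_Bessel}.

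So fix distinct nonzero $\xi_j$, set $\psi_j=t+\xi_j$, and apply the Izergin--Korepin formula (Theorem \ref{Theorem_IK_det}) with column parameters $\chi_i=\varepsilon c_i$ ($c_1,\dots,c_n$ distinct) and row parameters $\psi_j$; since $\mathcal Z_n$ is a polynomial in the vertex weights, letting $\varepsilon\to0$ computes $\mathcal Z_n(0^n;\psi_1,\dots,\psi_n;\ga)$. Using the Laplace representation \eqref{eq_Laplace_rep}, the $(i,j)$ entry of the determinant in \eqref{eq_IK_det} is $\f(\psi_j-\varepsilon c_i;\ga)=\int e^{(\psi_j-\varepsilon c_i)x}\,\m(\dd x)$, and expanding by multilinearity in the columns (valid by absolute convergence of the Laplace data in the admissible parameter range) gives $\det[\f(\psi_j-\varepsilon c_ix_j;\ga)]_{i,j}=\int_{\R^n}\det[e^{-\varepsilon c_ix_j}]_{i,j=1}^n\,\prod_{j=1}^n e^{\psi_jx_j}\,\m^{\otimes n}(\dd\vec x)$. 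By \eqref{eq_Bessel_1} the inner determinant equals $(\prod_{\ell=1}^{n-1}\ell!)^{-1}\prod_{i<j}\varepsilon(c_j-c_i)\prod_{i<j}(x_i-x_j)\,\B_{-\varepsilon c_1,\dots,-\varepsilon c_n}(x_1,\dots,x_n)$, and $\B_{0^n}\equiv1$ by \eqref{eq_Bessel_norm} and the symmetry of $\B$. Feeding this into \eqref{eq_IK_det}, the $\varepsilon^{\binom n2}$ and the $c$-Vandermonde cancel against $\prod_{i<j}b(\varepsilon c_i-\varepsilon c_j,0)\sim\prod_{i<j}\varepsilon(c_i-c_j)$ from its denominator (a dominated-convergence bound on the Bessel integrand passes the limit inside the $\R^n$-integral), and one reaches the Hankel form
\[
\mathcal Z_n(0^n;\psi_1,\dots,\psi_n;\ga)=\frac{(-1)^{\binom n2}\,\prod_{j=1}^n\bigl(a(\psi_j,\ga)b(\psi_j,\ga)\bigr)^n}{\bigl(\prod_{\ell=1}^{n-1}\ell!\bigr)\,\prod_{i<j}b(\psi_i-\psi_j,0)}\int_{\R^n}\prod_{i<j}(x_i-x_j)\,\prod_{j=1}^n e^{\psi_jx_j}\,\m^{\otimes n}(\dd\vec x).
\]

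The heart of the argument is to symmetrise this integral. Since $\m^{\otimes n}$ is exchangeable and $\prod_{i<j}(x_i-x_j)$ is alternating, averaging over relabellings of the variables replaces $\prod_{i<j}(x_i-x_j)\prod_j e^{\psi_jx_j}$ by $\tfrac1{n!}\prod_{i<j}(x_i-x_j)\det[e^{\psi_jx_i}]_{i,j}$. Now pull $e^{tx_i}$ out of row $i$ and apply \eqref{eq_Bessel_1} once more, this time to $\det[e^{\xi_jx_i}]_{i,j}$ (here distinctness of the $\xi_j$ is used): this creates a \emph{second} Vandermonde $\prod_{i<j}(x_i-x_j)$, so the integrand becomes proportional to $\prod_{i<j}(x_i-x_j)^2\prod_i e^{tx_i}\,\B_{x_1,\dots,x_n}(\xi_1,\dots,\xi_n)$ --- exactly the $\beta=2$ density in \eqref{eq_M_def} (the squared Vandermonde also annihilates the diagonal, so the atomic phases \eqref{eq_measure_case1}, \eqref{eq_measure_case3} create no difficulty). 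Being symmetric in the $x_i$, this integrand has $\R^n$-integral equal to $n!$ times its integral over $\{x_1<\dots<x_n\}$, which by the definition of $\mathcal M^{n,t,\gamma}$ is $n!\,h_0h_1\cdots h_{n-1}\,\E_{\mathcal M^{n,t,\gamma}}[\B_{x_1,\dots,x_n}(\xi_1,\dots,\xi_n)]$. Tracking constants,
\[
\int_{\R^n}\prod_{i<j}(x_i-x_j)\,\prod_{j}e^{\psi_jx_j}\,\m^{\otimes n}(\dd\vec x)=\frac{\prod_{i<j}(\xi_i-\xi_j)}{\prod_{\ell=1}^{n-1}\ell!}\,h_0\cdots h_{n-1}\,\E_{\mathcal M^{n,t,\gamma}}\bigl[\B_{x_1,\dots,x_n}(\xi_1,\dots,\xi_n)\bigr].
\]
For the denominator, either a further confluent limit $\psi_j\to t$ in the Hankel form, or the classical Hankel-determinant formula for the homogeneous DWBC partition function (see \cite[Ch.~5]{Bleher-Liechty14}) combined with the Andr\'eief--Heine identity $\det[\f^{(i+j-2)}(t;\ga)]_{i,j=1}^n=\int_{x_1<\dots<x_n}\prod_{i<j}(x_i-x_j)^2\prod_i e^{tx_i}\,\m(\dd x_i)=h_0\cdots h_{n-1}$, gives $\mathcal Z_n(0^n;t^n;\ga)=(-1)^{\binom n2}\bigl(a(t,\ga)b(t,\ga)\bigr)^{n^2}\bigl(\prod_{\ell=1}^{n-1}\ell!\bigr)^{-2}h_0\cdots h_{n-1}$. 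Dividing the two expressions for $\mathcal Z_n$ and using $\psi_i-\psi_j=\xi_i-\xi_j$, all of $(-1)^{\binom n2}$, $\prod_\ell\ell!$ and $h_0\cdots h_{n-1}$ cancel, and what remains is the $k=n$ case of \eqref{eq_partition_through_Bessel}. The general case then follows by the analytic continuation of the first paragraph.

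The obstacle here is entirely computational: the two confluent limits (in the $\chi_i$, and in the $\psi_j$ for the denominator) must be carried out with care, tracking the factorials $\prod_\ell\ell!$ and the sign $(-1)^{\binom n2}$ --- which, reassuringly, both disappear from the ratio $\ZZ_n$ --- and one must justify the column-expansion/integration interchanges and the $\varepsilon\to0$ dominated convergence; these are secured on a small polydisc in the $\xi_j$, and the identity then extends by analyticity to the full region where $\E_{\mathcal M^{n,t,\gamma}}[\B]$ converges. The one genuinely structural point is that after the $\chi\to0$ confluence the Izergin--Korepin matrix collapses to a single Vandermonde in the $x_i$, which combines with the Vandermonde intrinsic to the multivariate Bessel function \eqref{eq_Bessel_1} to reconstitute precisely the log-gas weight $\prod_{i<j}(x_i-x_j)^2\prod_i e^{tx_i}\m(\dd x_i)$ of $\mathcal M^{n,t,\gamma}$.
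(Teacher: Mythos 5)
Your argument is correct and follows the same core strategy as the paper's proof: Laplace representation \eqref{eq_Laplace_rep} of the Izergin--Korepin entries, multilinear expansion of the determinant, reconstruction of the log-gas density $\prod_{i<j}(x_i-x_j)^2\prod_ie^{tx_i}\m(\dd x_i)$ via the Vandermonde--Bessel identity \eqref{eq_Bessel_1}, and normalization by the homogeneous partition function. The organization is slightly different: you first send the column inhomogeneities $\chi_i=\varepsilon c_i$ to zero, producing a single Vandermonde $\prod_{i<j}(x_i-x_j)$ against $\prod_j e^{\psi_jx_j}$, and only then symmetrize in the $x$-variables to create the $\psi$-determinant and apply \eqref{eq_Bessel_1} a second time; the paper instead symmetrizes once to the Weyl chamber, reads off two determinants $\det[e^{\chi_jx_i}]\det[e^{\psi_lx_i}]$, turns both into multivariate Bessel functions simultaneously, and only afterwards sets $\chi=0$. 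These are the same manipulation performed in a different order.

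Two small remarks. First, your version carries an explicit $(-1)^{\binom n2}$ through the confluent $\chi$-limit, coming from the orientation conventions in the $\chi$-Vandermonde of \eqref{eq_Bessel_1} versus $\prod_{i<j}b(\chi_i-\chi_j,0)$ in \eqref{eq_IK_det}. You correctly observe it cancels in the ratio $\ZZ_n$ (and this is unavoidable: your derivation is internally consistent with the sign conventions of \eqref{eq_IK_det} as stated, and only the ratio is used). Second, the reduction to $k=n$ and the back-specialization $\xi_{k+1}=\dots=\xi_n=0$ is valid but, as you note, needs a little care since $b(\xi_i-\xi_j,0)$ vanishes when two $\xi$'s coincide; you handle this by continuity, consistent with the paper. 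The dominated-convergence and interchange-of-integrals points that you flag but do not fully detail are genuine but routine; the paper also addresses them only implicitly. The only typographical slip is ``$\f(\psi_j-\varepsilon c_ix_j;\ga)$'', which should read ``$\f(\psi_j-\varepsilon c_i;\ga)$''.
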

\begin{proof} It is sufficient to prove \eqref{eq_partition_through_Bessel} for $k=n$, as the $k<n$ case would immediately follow by substitution $\xi_{k+1}=\dots=\xi_n=0$.
 We transform the determinant in \eqref{eq_IK_det} using \eqref{eq_Laplace_rep}:
\begin{equation}
\label{eq_x1}
 \det \bigl[ \f(\psi_j-\chi_i, \ga) \bigr]_{i,j=1}^n= \iiint \sum_{\sigma\in S(n)} (-1)^{\sigma}\prod_{i=1}^n \left[ \exp\bigl((\psi_{\sigma(i)}-\chi_i) x_i \bigr) \m(\dd x_i)\right],
\end{equation}
 where each $x_i$ is integrated over $\mathbb R$. There are $n!$ possible ordering of $x_i$ in the last integral (if some of $x_i$ coincide, then the integrand vanishes), and we can instead rewrite \eqref{eq_x1} as an integral over $x_1\le x_2\le \dots\le x_n$, resulting in
\begin{align}
 &\iiint\limits_{x_1\le \dots\le x_n} \sum_{\sigma,\sigma'\in S(n)} (-1)^{\sigma} (-1)^{\sigma'}\prod_{i=1}^n \left[ \exp\bigl((\psi_{\sigma(i)}-\chi_{\sigma'(i)}) x_i \bigr) \m(\dd x_i)\right]\notag
 \\&= \iiint\limits_{x_1\le \dots\le x_n} \det[e^{\chi_j x_i}]_{i,j=1}^n \det [e^{\psi_l x_i}]_{i,l=1}^n \prod_{i=1}^n  \m(\dd x_i)\label{eq_x2}
  \\&=\frac{ \prod\limits_{i<j} (\chi_i-\chi_j)(\psi_i-\psi_j)}{1!^2 2!^2\cdots (n-1)!^2} \iiint\limits_{x_1\le \dots\le x_n} \B_{x_1,\dots,x_n}(\chi_1,\dots,\chi_n) \B_{x_1,\dots,x_n}(\psi_1,\dots,\psi_n) \prod_{i<j} (x_i-x_j)^2  \prod_{i=1}^n  \m(\dd x_i).\notag
\end{align}
Multiplying the last line of \eqref{eq_x2} by the prefactor of the determinant in \eqref{eq_IK_det}, setting $\chi_1=\chi_2=\dots=\chi_n=0$ using \eqref{eq_Bessel_norm} (relying on $\lim\limits_{\chi_i,\chi_j\to 0} \frac{\chi_i-\chi_j}{b(\chi_i-\chi_j,0)}=1$ to resolve the singularity), setting $\psi_i=t+\xi_i$, $1\le i \le n$, again using \eqref{eq_Bessel_norm}, and dividing the general $\xi$ version by $\xi_1=\xi_2=\dots=\xi_n=0$ version, we conclude that
\begin{align*}
 \frac{ \mathcal Z_n(0^n;\, t+\xi_1,\dots,t+\xi_n;\, \gamma)}{\mathcal Z_n(0^n;\, t^n;\, \gamma)}=&\prod\limits_{j=1}^n \left[\frac{a(t+\xi_j, \ga) b(t+\xi_j, \ga)}{a(t, \ga) b(t, \ga)}\right]^n \prod\limits_{1\le i<j\le n}\frac{\xi_i-\xi_j}{ b(\xi_i-\xi_j,0)} \\ &\times \E_{\mathcal M^{n,t,\gamma}}\bigl[  \B_{x_1,\dots,x_n}(\xi_1,\dots,\xi_n)\bigr]. \qedhere
\end{align*}
\end{proof}

\smallskip

For $k=1$, there is an alternative representation of $\ZZ_n(\xi;\, t,\gamma)$ as an expectation, this time using both $\mathcal M^{n-1,t,\gamma}$ and $\mathcal M^{1,t,\gamma}$.
\begin{theo}  \label{Theorem_partition_k1} In each of the four phases \eqref{eq_case1}--\eqref{eq_case4} we have
\begin{equation}
\label{eq_partition_k1}
 \ZZ_n(\xi;\, t,\gamma)=(n-1)! \frac{h_0}{ h_{n-1}} \left(\frac{a(t+\xi, \ga) b(t+\xi, \ga)}{a(t, \ga) b(t, \ga)}\right)^n  \left(\frac{1}{ b(\xi,0)}\right)^{n-1}  \E_{\mathcal M^{1,t,\gamma}} \left[e^{\xi x} P_{n-1}(x)\right],
\end{equation}
where $x$ under the expectation is $\mathcal M^{1,t,\gamma}$--random, and $P_{n-1}(x)$ is a degree $n-1$ polynomial defined as
$$
 P_{n-1}(x):=\E_{\mathcal M^{n-1,t,\gamma}} \left[\prod_{i=1}^{n-1} (x-x_i)\right], \qquad \qquad (x_1,\dots,x_{n-1})\text { is }\mathcal M^{n-1,t,\gamma}\text{-random,}
$$
and $h_0,h_{n-1}$ are normalization constants from \eqref{eq_M_def}.
\end{theo}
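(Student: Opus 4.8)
The plan is to derive \eqref{eq_partition_k1} from the $k=1$ instance of Theorem~\ref{Theorem_partition_through_Bessel} by first making the multivariate Bessel function explicit when all but one of its arguments vanish, and then reorganising the resulting expectation over $\mathcal M^{n,t,\gamma}$ into expectations over $\mathcal M^{n-1,t,\gamma}$ and $\mathcal M^{1,t,\gamma}$.

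The first step is the identity
\[
 \B_{x_1,\dots,x_n}(\xi,0^{n-1})=\frac{(n-1)!}{\xi^{\,n-1}}\sum_{i=1}^n\frac{e^{\xi x_i}}{\prod_{j\ne i}(x_i-x_j)} .
\]
To obtain it, start from the determinantal formula \eqref{eq_Bessel_1} with $z_1=\xi$ and let $z_2,\dots,z_n\to 0$. In the denominator $\prod_{1\le i<j\le n}(z_i-z_j)=\xi^{\,n-1}\prod_{2\le i<j\le n}(z_i-z_j)\,(1+o(1))$, while a Taylor expansion of the entries $e^{x_iz_j}$, $j\ge 2$, turns $\det[e^{x_iz_j}]_{i,j=1}^n$ into the confluent alternant proportional, with constant $1/(0!\,1!\cdots(n-2)!)$, to $\det\bigl[\,e^{\xi x_i}\bigm|1\bigm|x_i\bigm|\cdots\bigm|x_i^{\,n-2}\,\bigr]_{i=1}^n$. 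Since $\tfrac{1!\,2!\cdots(n-1)!}{0!\,1!\cdots(n-2)!}=(n-1)!$, expanding that determinant along its first column against Vandermonde minors and simplifying gives the claimed formula. (The right-hand side is a symmetric entire function of the $x_i$ --- it is the $(n-1)$st divided difference of $x\mapsto e^{\xi x}$ --- so the apparent poles cancel; a check at $n=2,3$ fixes the normalisation and the sign, and one verifies it tends to $1$ as $\xi\to 0$, consistent with $\B_x(0^n)=1$.)

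The second step substitutes this into the $k=1$ case of \eqref{eq_partition_through_Bessel}; the factor $\xi^{-(n-1)}$ cancels the $\xi^{\,n-1}$ sitting inside $\bigl(\xi/b(\xi,0)\bigr)^{n-1}$, leaving
\[
 \ZZ_n(\xi;\,t,\gamma)=(n-1)!\left(\frac{a(t+\xi,\ga)b(t+\xi,\ga)}{a(t,\ga)b(t,\ga)}\right)^{\!n}\!\left(\frac{1}{b(\xi,0)}\right)^{\!n-1}\E_{\mathcal M^{n,t,\gamma}}\!\left[\sum_{i=1}^n\frac{e^{\xi x_i}}{\prod_{j\ne i}(x_i-x_j)}\right].
\]
The summand, multiplied by the Vandermonde-square weight of $\mathcal M^{n,t,\gamma}$, is symmetric in $x_1,\dots,x_n$, so one passes from ordered to unordered configurations (a factor $1/n!$), notes that all $n$ terms of the inner sum contribute equally (a factor $n$), and isolates one variable $y$. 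Writing $\prod_{1\le i<j\le n}(x_i-x_j)^2=\prod_{1\le i<j\le n-1}(x_i-x_j)^2\cdot\prod_{i=1}^{n-1}(x_i-y)^2$ and cancelling one power of $\prod_i(x_i-y)$ against the denominator $\prod_{j\ne i}(x_i-x_j)$ of the isolated term, the integral over the other $n-1$ variables against $\prod e^{tx_i}\m(\dd x_i)$ equals $(n-1)!\,h_0\cdots h_{n-2}\,\E_{\mathcal M^{n-1,t,\gamma}}\!\bigl[\prod_{i=1}^{n-1}(y-x_i)\bigr]=(n-1)!\,h_0\cdots h_{n-2}\,P_{n-1}(y)$ (the two $(-1)^{n-1}$ factors from rewriting $\prod(x_i-y)$ as $(-1)^{n-1}\prod(y-x_i)$ cancel), and the remaining integral over $y$ against $e^{ty}\m(\dd y)$ is $h_0\,\E_{\mathcal M^{1,t,\gamma}}[e^{\xi y}P_{n-1}(y)]$. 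Collecting constants, $\tfrac{1}{n!\,h_0\cdots h_{n-1}}\cdot n\cdot(n-1)!\cdot h_0\cdot h_0\cdots h_{n-2}=\tfrac{h_0}{h_{n-1}}$, which is exactly \eqref{eq_partition_k1}.

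The only place where genuine care is needed is the bookkeeping: pinning down the normalisation and sign in the confluent limit of the first step (the telescoping of factorials to $(n-1)!$ and the power $\xi^{\,n-1}$), and, in the second step, threading the normalisation constants $h_j$ and the signs through the ``unorder, split the squared Vandermonde, integrate out $n-1$ variables'' manoeuvre. Conceptually the statement is nothing more than a one-variable reduction of the $k=1$ case of Theorem~\ref{Theorem_partition_through_Bessel}.
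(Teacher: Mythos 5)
Your proof is correct and follows essentially the same route as the paper: the paper also starts from the $k=1$ case of Theorem~\ref{Theorem_partition_through_Bessel}, invokes the identity $\B_{x_1,\dots,x_n}(\xi,0^{n-1})=\frac{(n-1)!}{\xi^{n-1}}\sum_i \frac{e^{\xi x_i}}{\prod_{j\ne i}(x_i-x_j)}$, symmetrizes to the unordered measure $\widetilde{\mathcal M}^{n,t,\gamma}$, isolates one variable, and factors the squared Vandermonde to produce $P_{n-1}$ and the $\mathcal M^{1,t,\gamma}$ expectation. The only cosmetic difference is in how the Bessel identity itself is obtained --- you take a confluent limit $z_2,\dots,z_n\to 0$ of \eqref{eq_Bessel_1} (confluent alternant / divided-difference picture), while the paper expands the determinant in the index carrying $z_1$ and recognizes smaller Bessel functions before specializing the arguments; both are standard and equivalent.
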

\begin{proof}
One can establish Theorem \ref{Theorem_partition_k1} directly by a modification of the argument of Theorem \ref{Theorem_partition_through_Bessel}, but we instead prove it as a corollary of \eqref{eq_partition_through_Bessel}. We rely on the formula for Bessel functions:
\begin{equation}
\label{eq_Bessel_as_sum}
    \B_{x_1,\dots,x_n}(\xi,0^{n-1})=\frac{(n-1)!}{\xi^{n-1}}\sum_{i=1}^n \frac{e^{\xi x_i}}{\prod_{j\ne i} (x_i- x_j)} .
\end{equation}
The formula \eqref{eq_Bessel_as_sum} is proven\footnote{The formula \eqref{eq_Bessel_as_sum} is a residue expansion of the contour integral representation previously used in \cite[(3.7)-(3.8)]{Colomo-Pronko-ZinnJustin10}, \cite[Theorem 5.1]{cuenca2021universal}, \cite[Theorem 1.1]{gorin2015asymptotics}.} by expanding the determinant in \eqref{eq_Bessel_1} in the row containing $z_1$, observing that the terms in the expansion are (up to simple factors) again Bessel functions in the smaller number of variables, and then setting $z_1=\xi$,\, $z_2=\dots=z_n=0$ using \eqref{eq_Bessel_norm}.

Let $\widetilde {\mathcal M}^{n,t,\gamma}$ denote the extension of the probability measure $\mathcal M^{n,t,\gamma}$ from ordered $n$-tuples ${x_1\le \dots\le x_n}$ to arbitrary $n$-tuples, given by the same formula \eqref{eq_M_def}, but with an additional prefactor $\tfrac{1}{n!}$ in front. Then, using the symmetry of the Bessel functions, we have
\begin{align*}
 \E_{\mathcal M^{n,t,\gamma}}&\bigl[  \B_{x_1,\dots,x_n}(\xi,0^{n-1})\bigr]=  \E_{\widetilde{\mathcal M}^{n,t,\gamma}}\bigl[  \B_{x_1,\dots,x_n}(\xi,0^{n-1})\bigr]\\&= \frac{(n-1)!}{n!}\frac{1}{\xi^{n-1}  h_0 \cdots h_{n-1} } \int_{\mathbb R^{n}} \left[\sum_{i=1}^n \frac{e^{\xi x_i}}{\prod_{j\ne i} (x_i- x_j)}\right]  \prod_{1\le i < j \le n} (x_i-x_j)^2 \prod_{i=1}^n e^{tx_i} \m(\dd x_i).
\end{align*}
Note that (by symmetry), each of the terms in $\sum_{i=1}^n$ leads to the same integral. Hence, we only deal with $i=n$ term, and we rename $x_n$ into $x$, getting
\begin{equation}
\label{eq_x3}
 \frac{1}{\xi^{n-1} h_0 \cdots h_{n-1} } \int_{\mathbb R} e^{\xi x}  \left[\int_{\mathbb R^{n-1}} \prod_{i=1}^{n-1} (x- x_i)  \prod_{1\le i < j \le n-1} (x_i-x_j)^2 \prod_{i=1}^{n-1} e^{tx_i} \m(\dd x_i) \right] e^{tx} \m(\dd x) .
\end{equation}
Replacing in the $k=1$ version of \eqref{eq_partition_through_Bessel} the expectation $\E_{\mathcal M^{n,t,\gamma}}\bigl[  \B_{x_1,\dots,x_n}(\xi,0^{n-1})\bigr]$ with \eqref{eq_x3}, we get \eqref{eq_partition_k1}.
\end{proof}

Taking $\ZZ_n(\xi;\, t,\gamma)$ as a basic building block, one can also express $\ZZ_n(\xi_1,\dots,\xi_k;\, t,\gamma)$ as a $k\times k$ determinant, as was first noticed in \cite[Sections 6.2, 6.3]{Colomo-Pronko08}:

\begin{theo}  \label{Theorem_partition_general_k_reduction} In each of the four phases \eqref{eq_case1}--\eqref{eq_case4} we have for $1\le k \le n$:
\begin{equation}\label{eq_general_k_reduction}
  \ZZ_n(\xi_1, \xi_2, \dots, \xi_k;\, t,\gamma )= \frac{\displaystyle\det \left[  \ZZ_{n-k+i}(\xi_j;\, t,\gamma) \left(\frac{a(t+\xi_j, \ga) b(t+\xi_j, \ga)}{a(t, \ga) b(t, \ga)}\right)^{k-i}  \left(b(\xi_j,0)\right)^{i-1} \right]_{i,j=1}^k}{\displaystyle \prod\limits_{1\le i<j\le k} b(\xi_j-\xi_i,0) }.
 \end{equation}
\end{theo}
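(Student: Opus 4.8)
The plan is to turn the claimed $k\times k$ determinantal identity into an elementary computation inside the $\beta=2$ log-gas $\mathcal M^{n,t,\gamma}$, using the representations of $\ZZ_n$ and $\ZZ_m$ already furnished by Theorems~\ref{Theorem_partition_through_Bessel} and \ref{Theorem_partition_k1}. First I would invoke \eqref{eq_partition_through_Bessel} to write the left-hand side as an explicit $\xi$-dependent prefactor times $\E_{\mathcal M^{n,t,\gamma}}\bigl[\B_{x_1,\dots,x_n}(\xi_1,\dots,\xi_k,0^{n-k})\bigr]$, so that the whole statement reduces to an identity for this single expectation. It suffices to prove it for pairwise distinct $\xi_1,\dots,\xi_k$, the general case following by analyticity, and the case $k=1$ is a tautology; the argument below is uniform across the four phases.

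The heart of the matter is evaluating that expectation. Starting from \eqref{eq_Bessel_1} and letting the last $n-k$ arguments of $\B$ tend to $0$ --- a confluent limit handled exactly as in the derivation of \eqref{eq_Bessel_as_sum}, i.e.\ by divided differences / Taylor expansion --- one writes $\B_{x_1,\dots,x_n}(\xi_1,\dots,\xi_k,0^{n-k})\prod_{i<j}(x_i-x_j)$ as an explicit $\xi$-dependent constant times $\det\bigl[e^{x_i\xi_1},\dots,e^{x_i\xi_k},P_0(x_i),\dots,P_{n-k-1}(x_i)\bigr]_{i=1}^n$, where $P_m$ denotes the monic degree-$m$ polynomial orthogonal with respect to the weight $e^{tx}\m(\dd x)$ (the confluent columns $1,x,\tfrac{x^2}{2!},\dots$ may be replaced by $P_0,P_1,\dots$ by column operations). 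Writing also $\prod_{i<j}(x_i-x_j)=\pm\det[P_{j-1}(x_i)]_{i,j=1}^n$ and integrating the product of these two determinants against $\prod_i e^{tx_i}\m(\dd x_i)$ via the Andr\'eief/Cauchy--Binet identity converts the expectation into an $n\times n$ determinant of pairings $\int(\cdot)(x)(\cdot)(x)e^{tx}\m(\dd x)$. Orthogonality then collapses the $n-k$ ``polynomial--polynomial'' columns: each has a single nonzero entry, equal to one of $h_0,\dots,h_{n-k-1}$, so a Laplace expansion leaves $\pm h_0h_1\cdots h_{n-k-1}$ times the $k\times k$ minor whose $(i,j)$ entry is $\int e^{x\xi_j}P_{n-k+i-1}(x)\,e^{tx}\m(\dd x)$, $1\le i,j\le k$.

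The last step is to identify this $k\times k$ minor. By Heine's formula, $P_{m-1}=\E_{\mathcal M^{m-1,t,\gamma}}\bigl[\prod_{\ell=1}^{m-1}(x-x_\ell)\bigr]$ is indeed the monic orthogonal polynomial, so \eqref{eq_partition_k1} with $n$ replaced by $n-k+i$ expresses $\int e^{x\xi_j}P_{n-k+i-1}(x)e^{tx}\m(\dd x)$ as $\tfrac{h_{n-k+i-1}}{(n-k+i-1)!}\,\rho_j^{-(n-k+i)}\,b(\xi_j,0)^{\,n-k+i-1}\,\ZZ_{n-k+i}(\xi_j;t,\gamma)$, where $\rho_j:=\tfrac{a(t+\xi_j,\ga)b(t+\xi_j,\ga)}{a(t,\ga)b(t,\ga)}$. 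Pulling out of the determinant the row factors $\tfrac{h_{n-k+i-1}}{(n-k+i-1)!}$ and the column factors $\rho_j^{-n}b(\xi_j,0)^{\,n-k}$ leaves exactly $\det\bigl[\ZZ_{n-k+i}(\xi_j;t,\gamma)\rho_j^{\,k-i}b(\xi_j,0)^{\,i-1}\bigr]_{i,j=1}^k$. Reassembling everything with the prefactors coming from \eqref{eq_partition_through_Bessel}, all of the $h_m$'s, all powers of $\rho_j$ and of $\xi_j$, and the Vandermonde $\prod_{i<j\le k}(\xi_i-\xi_j)$ cancel, and one is left with that determinant divided by $\prod_{i<j}b(\xi_j-\xi_i,0)$, as claimed. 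I expect the only real obstacle to be bookkeeping: carefully tracking the factorial constants and --- especially --- the signs produced by the confluent limit, the Vandermonde reorderings, and the Laplace expansion, and verifying that they amount to precisely $(-1)^{\binom{k}{2}}$, which uses that $b(\,\cdot\,,0)$ (namely $\sinh$, $\sin$, or the identity map in the respective phases) is an odd function.
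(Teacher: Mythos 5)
Your proof is correct, but it takes a genuinely different route from the paper's. The paper first observes that setting $\xi_k=0$ in \eqref{eq_general_k_reduction} collapses it to the same identity with $k$ replaced by $k-1$, so it suffices to treat the edge case $k=n$. It then starts directly from the Izergin--Korepin determinant \eqref{eq_IK_det}, passes to the $\chi_i\to 0$ confluent limit by L'H\^{o}pital (so the columns become derivatives $\phi^{(i-1)}$), uses row operations to convert $\partial_z^{i-1}$ into $P_{i-1}(\partial_z)$ for an arbitrary family of monic polynomials, and finally identifies $P_{i-1}(\partial_z)\phi(z,\gamma)\bigr|_{z=t+\xi}$ with the $\mathcal M^{1,t,\gamma}$-expectation appearing in \eqref{eq_partition_k1}. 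In particular, the paper never invokes Andr\'eief's identity, and it never actually needs the $P_m$ to be orthogonal --- monicity alone drives the row operations; orthogonality enters only through the choice that makes Theorem \ref{Theorem_partition_k1} applicable. You instead work with general $k$ directly, going through the log-gas representation of Theorem \ref{Theorem_partition_through_Bessel}: you confluent-limit the Bessel determinant to columns $e^{x\xi_j}$ and $P_0,\dots,P_{n-k-1}$, apply Andr\'eief to turn the $n$-fold integral into a determinant of pairings, and then use orthogonality to collapse the $n-k$ polynomial--polynomial columns, leaving a $k\times k$ minor that \eqref{eq_partition_k1} converts into $\ZZ_{n-k+i}(\xi_j)$. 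Your approach is the more standard random-matrix manipulation and has the small aesthetic advantage of avoiding the initial reduction to $k=n$; the paper's is slightly more economical in that it operates entirely at the level of determinants already present in the Izergin--Korepin formula and does not need the confluent Bessel identity or Andr\'eief. The sign and factorial bookkeeping you flag is real but manageable; as a cross-check, specializing to $k=1$ recovers \eqref{eq_Bessel_as_sum} and \eqref{eq_partition_k1} with the correct $(n-1)!/\xi^{n-1}$ normalization, which is the cleanest way to pin the remaining constants.
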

\begin{proof}
 Note that if we substitute $\xi_k=0$ into \eqref{eq_general_k_reduction}, the left-hand side turns into ${\ZZ_n(\xi_1, \xi_2, \dots, \xi_{k-1};\, t,\gamma)}$, while in the determinant in the right-hand side the last $j=k$ column has a single non-zero element $i=1$. Hence, the right-hand side becomes
$$
 (-1)^{k-1}\frac{\displaystyle \det \left[  \ZZ_{n-k+i+1}(\xi_j;\, t,\gamma) \left(\frac{a(t+\xi_j, \ga) b(t+\xi_j, \ga)}{a(t, \ga) b(t, \ga)}\right)^{k-i-1}  \left(b(\xi_j,0)\right)^{i} \right]_{i,j=1}^{k-1}}{\displaystyle \prod\limits_{1\le i<j\le k-1} b(\xi_j-\xi_i,0) \prod\limits_{i=1}^{k-1} b(-\xi_i,0) }.
$$
Using $b(-\xi,0)=-b(\xi,0)$ and cancelling $\prod_{i=1}^{k-1} b(\xi_i)$ between numerator and denominator, we arrive at the right-hand side of \eqref{eq_general_k_reduction} with $k$ decreased by $1$. Therefore, it is sufficient to prove \eqref{eq_general_k_reduction} for $k=n$, which we do in the rest of the argument.

 We recall the notation \eqref{eq_phi_def} and the formula \eqref{eq_IK_det}. By a version of L'H\^{o}pital's rule, we have

 \begin{align}
\notag  \mathcal Z_n(0^n;\, \psi_1,\dots,\psi_n;\, \gamma)&= \frac{\prod\limits_{j=1}^n \bigl(a(\psi_j, \ga) b(\psi_j, \ga)\bigr)^n}{\prod\limits_{i<j} b(\psi_j-\psi_i,0)} \lim_{\chi_1,\dots\chi_n\to 0}\frac{\det \bigl[ \f(\psi_j-\chi_i, \ga) \bigr]_{i,j=1}^n}{\prod\limits_{i<j} b(\chi_j-\chi_i,0)}\\&= \frac{\prod\limits_{j=1}^n \bigl(a(\psi_j, \ga) b(\psi_j, \ga)\bigr)^n}{\prod\limits_{i<j} b(\psi_j-\psi_i,0)} \frac{\det \biggl[ \f^{(i-1)}(\psi_j, \ga) \biggr]_{i,j=1}^n}{1! 2!\cdots (n-1)!}, \label{eq_x4}
\end{align}
where $\f^{(i-1)}(z,\ga)$ is the $(i-1)$st derivative of $\f(z,\ga)$ in $z$. Take a sequence of monic polynomials $P_m(x)=x^m+\dots$, and let $P_m(\partial_z)$ denote the $m$-order differential operator, obtained by plugging the derivation operator $\partial_z$ as a variable into $P_m(x)$. Then, by elementary row operations in the determinant, we can rewrite \eqref{eq_x4} as
 \begin{align}
 \mathcal Z_n(0^n;\, t+\xi_1,\dots,t+\xi_n;\, \gamma)= \frac{\prod\limits_{j=1}^n \bigl(a(t+\xi_j, \ga) b(t+\xi_j, \ga)\bigr)^n}{\prod\limits_{i<j} b(\xi_j-\xi_i,0)} \frac{\det \biggl[ P_{i-1}(\partial_z) \f(z, \ga)\bigr|_{z=t+\xi_j}  \biggr]_{i,j=1}^n}{1! 2!\cdots (n-1)!}. \label{eq_x5}
\end{align}
Note that differentiating \eqref{eq_Laplace_rep}, we get
$$
 P_{i-1}(\partial_z) \f(z, \ga)\bigr|_{z=t+\xi} =  h_0 \E_{\mathcal M^{1,t,\gamma}} \left[e^{\xi x} P_{i-1}(x)\right].
$$
Thus, if we choose $P_{i-1}(x)$ to be the polynomials from Theorem \ref{Theorem_partition_k1}, then the expression under the determinant in \eqref{eq_x5} can be transformed using \eqref{eq_partition_k1} into
\begin{equation}
\label{}
 P_{i-1}(\partial_z) \f(z, \ga)\bigr|_{z=t+\xi}= \ZZ_i(\xi;\, t,\gamma)  \cdot \frac{h_{i-1}}{(i-1)!} \left(\frac{a(t+\xi, \ga) b(t+\xi, \ga)}{a(t, \ga) b(t, \ga)}\right)^{-i}  \left(b(\xi,0)\right)^{i-1}.
\end{equation}
Therefore, \eqref{eq_x5} becomes
 \begin{multline}
 \mathcal Z_n(0^n;\, t+\xi_1,\dots,t+\xi_n;\, \gamma)\\= \frac{h_0\cdots h_{n-1}}{(1!2!\cdots(n-1)!)^2} \bigl(a(t, \ga) b(t, \ga)\bigr)^{n^2} \frac{\det \left[  \ZZ_i(\xi_j;\, t,\gamma) \left(\frac{a(t+\xi_j, \ga) b(t+\xi_j, \ga)}{a(t, \ga) b(t, \ga)}\right)^{n-i}  \left(b(\xi_j,0)\right)^{i-1} \right]_{i,j=1}^n}{ \prod\limits_{i<j} b(\xi_j-\xi_i,0) }. \label{eq_x6}
\end{multline}
Setting $\xi_j=\eps \zeta_j$ and sending $\eps\to 0$, we see that the determinant in the numerator turns into the Vandermonde determinant and cancels with denominator. Hence,
\begin{equation}
 \mathcal Z_n(0^n;\, t^n;\, \gamma)=\frac{h_0\cdots h_{n-1}}{(1!2!\cdots(n-1)!)^2}  \bigl(a(t, \ga) b(t, \ga)\bigr)^{n^2}.
 \label{eq_x7}
\end{equation}
Dividing \eqref{eq_x6} by \eqref{eq_x7}, we arrive at \eqref{eq_general_k_reduction} with $k=n$.
\end{proof}

\section{Theorems \ref{Theorem_asymptotics_case1}--\ref{Theorem_asymptotics_case4} through concentration for log-gases}

\label{Section_asymptotics_through_log_gas}

In this section we demonstrate how Theorem \ref{Theorem_partition_through_Bessel} combined with the known asymptotic expansions for the multivariate Bessel functions and conjectural concentration results for the log-gases (showing that their empirical distribution is closely approximated by the equilibrium measure), leads to quick proofs of Theorems \ref{Theorem_asymptotics_case1}--\ref{Theorem_asymptotics_case4}. The difficulty in making this approach rigorous is to justify these concentration results: while for the log-gases with smooth potentials similar theorems are well-known, we need to deal with a non-smooth case, where we are not aware of ``black-box'' results, giving the concentration at the necessary precision. Comparing with the best available results, \cite[Proposition 4.3]{dadoun2023asymptotics} requires the potential to be three times differentiable, \cite[Theorem 1]{bekerman2018clt} requires six times differentiability, and \cite[Theorem 1.2]{lambert2019quantitative} requires eight times differentiability; in contrast, our potentials are non-differentiable at $x=0$, which is the best seen in \eqref{eq_measure_case4}. Another complication is that for $|\Delta|>1$ our log-gases are discrete and the theory is even less developed for those.

\medskip

For the Bessel functions we use the following asymptotic expansion.

\begin{theo} \label{Theorem_Bessel_limit}
 Let $\mathbf x(n)=(x_1(n),\dots,x_n(n))\in\R^{n}$, $n=1,2,\dots$ be a sequence of $n$-tuples of reals. Denote
 $$
  \mu(n)=\frac{1}{n} \sum_{i=1}^n \frac{x_i(n)}{n},\qquad \sigma^2(n)=\left[\frac{1}{n} \sum_{i=1}^n \frac{x_i(n)^2}{n^2}\right] - \bigl(\mu(n)\bigr)^2,\qquad \kappa^3(n)=\frac{1}{n}\sum_{i=1}^n \left|\frac{x_i(n)}{n}-\mu(n)\right|^3.
 $$
Suppose that there exists a constant $C>0$, such that for all $n=1,2,\dots$ we have $C^{-1}< \sigma^2(n)< C$ and $\kappa^3(n)<C$. Then for each $k=1,2,\dots$ and $\xi_1,\dots,\xi_k\in\C$, as $n\to\infty$,
\begin{equation}
\label{eq_x52}
 \B_{x_1(n),\dots,x_n(n)}\left(\tfrac{\xi_1}{\sqrt{n}},\cdots, \tfrac{\xi_k}{\sqrt{n}}, 0^{n-k}\right)=\exp\left(\sqrt{n} \mu(n)\sum_{j=1}^k \xi_j+\frac{\sigma^2(n)}{2} \sum_{j=1}^k (\xi_j)^2+\bigO\left(\tfrac{1}{\sqrt{n}}\right)\right),
\end{equation}
where $\bigO\left(\tfrac{1}{\sqrt{n}}\right)$ is uniform over $\xi_i$ belonging to compact subsets of $\mathbb C$ and over sequences $\mathbf x(n)$ satisfying the inequalities with the same constant $C$.
\end{theo}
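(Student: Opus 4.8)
The plan is to read the expansion off the explicit power series of the multivariate Bessel function in its nonzero arguments, peel off the Gaussian main term exactly, and control the remainder using only the second and third moments of the empirical measure of $\mathbf x(n)$. First I would reduce to the centered case: by the shift covariance \eqref{eq_Bessel_norm}, replacing each $x_i(n)$ by $x_i(n)-n\mu(n)$ multiplies the left side of \eqref{eq_x52} by $\exp\bigl(-\sqrt n\,\mu(n)\sum_{j}\xi_j\bigr)$, which is exactly the linear-in-$\xi$ term on the right, and leaves $\sigma^2(n)$ and $\kappa^3(n)$ unchanged since these are defined with the mean subtracted. So it suffices to prove, assuming $\sum_i x_i(n)=0$, that $\B_{\mathbf x(n)}(\xi_1/\sqrt n,\dots,\xi_k/\sqrt n,0^{n-k})=\exp\bigl(\tfrac{\sigma^2(n)}{2}\sum_j\xi_j^2+\bigO(1/\sqrt n)\bigr)$.

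For $k=1$ I would start from the power series obtained by expanding $e^{\xi x_i/\sqrt n}$ in \eqref{eq_Bessel_as_sum} and invoking the classical identity $\sum_i x_i^m/\prod_{j\ne i}(x_i-x_j)=h_{m-n+1}(x_1,\dots,x_n)$ (which vanishes for $m<n-1$), namely
\[
 \B_{\mathbf x(n)}\!\Bigl(\tfrac{\xi}{\sqrt n},0^{n-1}\Bigr)=\sum_{\ell\ge 0}\frac{\xi^\ell\,h_\ell(x_1,\dots,x_n)}{n^{\ell/2}\;n(n+1)\cdots(n+\ell-1)}.
\]
Substituting $h_\ell=\sum_{\lambda\vdash\ell}p_\lambda/z_\lambda$ and using $p_1=\sum_i x_i=0$ to discard partitions with a part equal to $1$, the partitions $\lambda=(2^{\ell/2})$ form a ``backbone'': since $p_2=\sum_i x_i^2=n^3\sigma^2(n)$ and $n(n+1)\cdots(n+\ell-1)=n^\ell(1+\bigO(\ell^2/n))$, summing these terms gives $e^{\sigma^2(n)\xi^2/2}(1+\bigO(1/n))$, uniformly for $\xi$ in compacts because $\sigma^2(n)<C$. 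One then shows every partition with a part $\ge 3$ contributes at most $\bigO(1/\sqrt n)$ times its backbone factor, using only $|p_m|\le\sum_i|x_i|^m$ together with $\sum_i|x_i|^2\le Cn^3$ and, for $m\ge3$, $\sum_i|x_i|^m\le(\max_i|x_i|)^{m-3}\sum_i|x_i|^3\le\kappa^3(n)^{m/3}n^{4m/3}$ (from $\max_i|x_i|\le(\sum_i|x_i|^3)^{1/3}$ and $\sum_i|x_i|^3\le Cn^4$). Resumming the ``core'' parts over all shapes and the number of $2$'s recovers the factor $e^{\sigma^2(n)\xi^2/2}$ times a remainder which, once $n$ is large compared with $|\xi|$ and $C$, is a convergent series in the small parameter $|\xi|C^{1/3}n^{-1/6}$ and hence $\bigO(n^{-1/2})$ uniformly on compact sets of $\xi$ and over all $\mathbf x(n)$ sharing the same $C$; taking logarithms yields \eqref{eq_x52} for $k=1$.

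For general $k$ I would use the rank-$k$ generalization of the contour integral underlying \eqref{eq_Bessel_as_sum} and perform steepest descent. When the $\xi_j$ are pairwise distinct the saddle in the $j$-th integration variable lies near $n^{3/2}/\xi_j$, so the $k$ saddles occupy distinct far-away points (where $|x_i/w|\le C^{1/3}|\xi|\,n^{-1/6}\to 0$ uniformly), the integral factorizes into $k$ one-dimensional integrals of exactly the type treated above, and one obtains $\prod_j e^{\sigma^2(n)\xi_j^2/2}(1+\bigO(1/\sqrt n))=\exp\bigl(\tfrac{\sigma^2(n)}{2}\sum_j\xi_j^2+\bigO(1/\sqrt n)\bigr)$; the coinciding-$\xi$ case follows since both sides of \eqref{eq_x52} are continuous in $(\xi_1,\dots,\xi_k)$. (Alternatively, the $k=1$ combinatorial argument carries over to the multivariate power series in $(z_1,\dots,z_k)$, where the absence of mixed monomials $\xi_i\xi_j$ at the retained order produces the same product.)

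The main obstacle throughout is uniformity: the expansion must be organized — or, in the analytic route, the contour routed — so that the growing main term $e^{\sigma^2(n)\xi^2/2}$ is extracted \emph{exactly}, leaving a genuinely $\bigO(1/\sqrt n)$ remainder with constant depending only on $C$ and at most polynomially on $\xi$. A naive term-by-term bound on the series diverges for large $|\xi|$ at fixed $n$, so the key point is that the relevant series is in powers of the genuinely small parameter $n^{-1/6}$ (times $|\xi|C^{1/3}$), equivalently that the steepest-descent analysis takes place at a saddle of scale $n^{3/2}/\xi$ well beyond the range of the $x_i(n)$. In the analytic route the accompanying technical chores are justifying the deformation onto a steepest-descent path through that far-away saddle while uniformly bounding the rest of the contour, handling the confluent saddle when some $\xi_j$ coincide, and the Stirling/Pochhammer bookkeeping that cancels the $n$-dependent prefactor against the saddle value.
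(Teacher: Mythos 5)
The paper does not actually prove this theorem; it cites \cite{cuenca2021universal,gorin2015asymptotics,novak2015lozenge} and only hints that the $k=1$ case follows from steepest descent applied to the contour integral behind \eqref{eq_Bessel_as_sum}. Your $k=1$ argument is therefore genuinely different from the one the paper points to: rather than passing to a contour and deforming to a saddle near $n^{3/2}/\xi$, you expand the sum \eqref{eq_Bessel_as_sum} into the exact power series
\[
 \B_{\mathbf x(n)}\!\Bigl(\tfrac{\xi}{\sqrt n},0^{n-1}\Bigr)=\sum_{\ell\ge 0}\frac{\xi^\ell\,h_\ell(\mathbf x(n))}{n^{\ell/2}\,n(n+1)\cdots(n+\ell-1)}
\]
via the divided-difference identity $\sum_i x_i^m/\prod_{j\ne i}(x_i-x_j)=h_{m-n+1}(\mathbf x)$, split $h_\ell=\sum_{\lambda\vdash\ell}p_\lambda/z_\lambda$, kill $p_1$ by centering, and isolate the $\lambda=(2^m)$ backbone. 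This works: with $p_2=n^3\sigma^2(n)$ the backbone resums to $e^{\sigma^2(n)\xi^2/2}(1+\bigO(1/n))$, and your bound $\sum_i|x_i|^m\le C^{m/3}n^{4m/3}$ (valid for $m\ge 3$ by $\max_i|x_i|\le(\sum_i|x_i|^3)^{1/3}$) gives exactly the factor $n^{-\mu_j/6}$ per part $\mu_j\ge 3$, so the non-backbone partitions form a convergent series in $|\xi|C^{1/3}n^{-1/6}$ summing to $\bigO(n^{-1/2})$. Taking logarithms is legitimate because $\sigma^2(n)<C$ keeps $|e^{\sigma^2(n)\xi^2/2}|$ bounded away from $0$ on compacts. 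What this buys you over the steepest-descent route is that the error constant visibly depends only on $C$ and the compact set of $\xi$, so uniformity is built in from the start rather than recovered after the fact. The trade-off is that the argument is tied to the rank-one residue formula and does not extend to general $k$ without further ideas.

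For general $k$ there is a genuine gap in the version as written. Both of your suggestions — steepest descent in the $k$-fold contour integral and the "no mixed monomials" remark about the multivariate power series — rely on the $\xi_j$ being pairwise distinct, and you close the case of coinciding $\xi_j$ by "continuity of both sides." Continuity is not enough, because what you need to propagate is not the limit but the \emph{uniform} $\bigO(1/\sqrt n)$ error: if the steepest-descent error degrades like $\min_{i\ne j}|\xi_i-\xi_j|^{-p}$, then at $|\xi_i-\xi_j|\sim n^{-1/2}$ the bound is no longer $\bigO(1/\sqrt n)$, and continuity of the exact and asymptotic expressions says nothing about the rate of convergence there. The correct fix is the same Cauchy-formula argument the paper uses elsewhere to patch a degenerating asymptotic regime (Lemma \ref{lem:small_xi}): establish \eqref{eq_x52} uniformly on a real torus $|\xi_j-c_j|=2\ep$ where the $\xi_j$ are guaranteed distinct and the error is uniform, note that both $\B$ and the right-hand side are entire in $(\xi_1,\dots,\xi_k)$ with the latter nonvanishing, and then divide and use the Cauchy integral to extend the $\bigO(1/\sqrt n)$ bound to the interior polydisk including the diagonal. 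Alternatively, the "no mixed monomials" heuristic can be made rigorous — the HCIZ covariance $\mathrm{Cov}(Y_j,Y_l)$ for $Y_j=\sum_i x_i|U_{ij}|^2$ is $\bigO(1)$ off-diagonal versus $\bigO(n)$ on-diagonal, so the cross terms are down by $1/n$ — but to turn this observation into a proof with the claimed uniformity you would need to bound the full remainder of the multivariate cumulant expansion, which is essentially rebuilding the combinatorial argument in the multivariate setting; you should either do that explicitly or invoke the Cauchy-formula patch. Also be careful with your claim that the $k$-fold integral "factorizes into $k$ one-dimensional integrals": the integrand carries a Vandermonde $\prod_{j<l}(w_j-w_l)$ (and the prefactor a matching $\prod_{j<l}(\xi_j-\xi_l)$), so the factorization is not exact but emerges only after evaluating the Vandermonde at the widely separated saddles $w_j^*\approx n^{3/2}/\xi_j$ and cancelling; this is exactly the "bookkeeping" you flag, and it should be written out because it is where the $(n-1)!\cdots(n-k)!$ Pochhammer factors get absorbed.
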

Theorem \ref{Theorem_Bessel_limit} can be found in \cite{cuenca2021universal},\cite{gorin2015asymptotics},\cite{novak2015lozenge}; for $k=1$ it can be established by the steepest descent analysis applied to the contour integral representation of the sum \eqref{eq_Bessel_as_sum}.

We would like to combine Theorem \ref{Theorem_Bessel_limit} with Theorem \ref{Theorem_partition_through_Bessel} and for that we need to understand the asymptotic behavior of $\mu(n)$ and $\sigma^2(n)$ for $\mathcal M^{n,t,\gamma}$--distributed $(x_1,\dots,x_n)$. We recall that for a probability measure $\nu$ on $\mathbb R$, its Stieltjes transform is given by
\begin{equation}\label{eq:Stieltjes}
 G_\nu(z)= \int_{\mathbb R} \frac{\nu(\dd x)}{z-x}.
\end{equation}
If $\nu$ has compact support, then the integral converges for all large $z\in\mathbb C$ and $G_\nu(z)$ decays as $\frac{1}{z}$ when $z\to\infty$.

\begin{theo} \label{Theorem_LLN_log_gas}
 In each of the four phases \eqref{eq_case1}--\eqref{eq_case4}, with $\mathcal M^{n,t,\gamma}$ given by \eqref{eq_M_def}, the random  $\mathcal M^{n,t,\gamma}$--distributed $(x_1,\dots,x_n)$ satisfies the Law of Large Numbers as $n\to\infty$:
 \begin{equation}
 \label{eq_empirical_convergence}
   \lim_{n\to\infty} \frac{1}{n}\sum_{i=1}^n \delta_{x_i/n} = \nu, \qquad \text{ weakly, in probability,}
 \end{equation}
 where $\nu$ is a deterministic probability measure, depending on the phase and $(\gamma,t)$.
  \begin{itemize}
\item {\bf Ferroelectric phase} \eqref{eq_case1}: $\nu$ has Stieltjes transform\footnote{In ferroelectric phase $\nu$ is supported on interval $[\beta,0]$ and has density $\tfrac{1}{2}$ on $[\alpha,0]$.}
\begin{equation}
 \label{eq_case1_G}
 G_\nu(z)= \frac{|\ga|-t}{2}  -\frac{1}{2} \ln\left[\frac{\left(\sqrt{-\al(z-\be)}- \sqrt{-\be(z-\al)}\right)^2}{z(\alpha-\be)}\right],
\end{equation}
\begin{equation}
 \label{eq_case1_alpha_beta}
 \text{ where }\qquad
 \al= -2\frac{e^{t-|\ga|}-1}{e^{t-|\ga|}+1}, \qquad \be = -2\frac{e^{t-|\ga|}+1}{e^{t-|\ga|}-1}.
\end{equation}
Here and below we always choose the principal branch for $\sqrt{\cdot}$ and $\ln(\cdot)$ functions, mapping large positive reals to large positive reals.

\item {\bf Disordered phase} \eqref{eq_case2}:
$\nu$ has Stieltjes transform\footnote{In disordered phase $\nu$ is supported on interval $[\alpha,\beta]$.}
\begin{equation}
 \label{eq_case2_G}
 G_\nu(z)= \frac{\ga-t}{2}+ \ii \frac{\gamma}{\pi} \ln\left[\frac{\left(\sqrt{-\alpha(z-\beta)}-\sqrt{-\beta(z-\alpha)}\right)^2}{z(\alpha-\beta)}\right],
\end{equation}
\begin{equation}
\label{eq_case2_alpha_beta}
 \text{ where }\qquad
\alpha= -\frac{\pi}{\gamma} \tan\left(\frac{\pi}{4}\left(1-\frac{t}{\gamma}\right)\right),\qquad \beta= \frac{\pi}{\gamma} \tan\left(\frac{\pi}{4}\left(1+\frac{t}{\gamma}\right)\right).
\end{equation}
\item {\bf Antiferroelectric phase} \eqref{eq_case3}: $\nu$ has Stieltjes transform\footnote{In antiferroelectric phase $\nu$ is supported on interval $[\alpha,\beta]$ and has density $\tfrac{1}{2}$ on the interval $[\alpha',\beta']\subset [\alpha,\beta]$.}
\begin{equation}\label{eq_case3_G}
 G_\nu(z)= \int_{z}^{\infty} \frac{\dd \mathfrak z}{\sqrt{(\mathfrak z-\alpha)(\mathfrak z-\alpha')(\mathfrak z-\beta)(\mathfrak z-\beta')}},
\end{equation}
\begin{equation}
\label{eq_case3_alpha_beta}
 \text{ where }\qquad
\alpha= -\frac{\pi}{\gamma} \frac{\vartheta'_1(\omega)}{\vartheta_1(\omega)},\quad \alpha'= -\frac{\pi}{\gamma} \frac{\vartheta'_4(\omega)}{\vartheta_4(\omega)}, \quad \beta= -\frac{\pi}{\gamma} \frac{\vartheta'_2(\omega)}{\vartheta_2(\omega)},
 \quad \beta'= -\frac{\pi}{\gamma} \frac{\vartheta'_3(\omega)}{\vartheta_3(\omega)},
\end{equation}
$\omega=\pi \frac{t+\gamma}{4\gamma}$;\,   $\vartheta_\ell(u)$ and $\vartheta_\ell'(u)$ are Jacobi elliptic theta functions \eqref{eq_elliptic_functions} and their $u$-derivatives.

\item {\bf Boundary phase} \eqref{eq_case4}:
$\nu$ is given  by the formulas \eqref{eq_case2_G}, \eqref{eq_case2_alpha_beta} as in the disordered phase.
\end{itemize}
In addition, there exists $C>0$ depending on the phase and $(\gamma,t)$, such that
\begin{equation}
\label{eq_log_gas_support}
 \lim_{n\to\infty} \mathrm{Prob} \left(-C\le \frac{x_1}{n}\le \frac{x_n}{n}\le C\right)=1.
\end{equation}
\end{theo}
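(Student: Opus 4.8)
\emph{Proof proposal.} The idea is to recognize $\mathcal M^{n,t,\gamma}$ as a $\beta=2$ log-gas (an orthogonal polynomial ensemble for the weight $e^{tx}\,\m(\dd x)$, cf.\ the proof of Theorem \ref{Theorem_partition_through_Bessel}) and to deduce \eqref{eq_empirical_convergence}--\eqref{eq_log_gas_support} from the equilibrium problem attached to it; the relevant equilibrium measures are exactly those that surface in the Riemann--Hilbert analysis of the homogeneous Izergin--Korepin determinant in \cite{Zinn_Justin00, Bleher-Liechty14}.

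\emph{Step 1: rescaling and the effective potential.} Substitute $x_i = n y_i$ in \eqref{eq_M_def}. In the continuous phases \eqref{eq_case2}, \eqref{eq_case4} the pushforward density becomes proportional to $\prod_{i<j}(y_i-y_j)^2\,\prod_i e^{-nQ_n(y_i)}\,\dd y_i$, where $Q_n(y)=-ty-\tfrac1n\ln(\text{density of }\m\text{ at }ny)$ converges, uniformly on compact sets, to a Lipschitz, piecewise-linear potential $Q$ with a single kink at $0$; the inequalities $|t|<\ga$ that are in force in these phases are exactly what makes $Q$ confining (i.e.\ $Q(y)-2\ln|y|\to+\infty$). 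In the discrete phases \eqref{eq_case1}, \eqref{eq_case3} one gets the same picture except that the $y_i$ are now forced onto the lattice $\tfrac2n\Z$ (or $\tfrac2n\Z_{<0}$), so that the empirical measure $\tfrac1n\sum_i\delta_{y_i}$ automatically obeys the upper constraint $\le\tfrac12$ with respect to Lebesgue measure; again $Q_n\to Q$ for a confining piecewise-linear $Q$, now because $0<|\ga|<t$ (ferroelectric) resp.\ $|t|<\ga$ (antiferroelectric).

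\emph{Step 2: concentration.} For the continuous phases this is the classical large deviation principle for two-dimensional Coulomb gases with confining potential: $\tfrac1n\sum_i\delta_{y_i}$ concentrates on the unique minimizer $\nu$ of the weighted logarithmic energy $\mathcal E[\mu]=\iint\ln\tfrac1{|x-y|}\,\dd\mu(x)\dd\mu(y)+\int Q\,\dd\mu$, which gives \eqref{eq_empirical_convergence}, and the exponential tightness \eqref{eq_log_gas_support} follows from a one-particle estimate: conditionally on the other $n-1$ particles, the law of the remaining one has density $\propto e^{-n(Q(y)-2\int\ln|y-x|\,\dd\mu_{n-1}(x))}$, and the bracket tends to $+\infty$ as $|y|\to\infty$, uniformly over $\mu_{n-1}$ supported in a fixed compact set. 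For the discrete phases the same conclusion holds with $\nu$ now the minimizer of $\mathcal E$ over probability measures subject to $\mu\le\tfrac12$; rather than invoking a ``black-box'' discrete log-gas law of large numbers with non-smooth potential (which, as discussed in Section \ref{Section_asymptotics_through_log_gas}, is not available at the needed level of generality), I would extract both \eqref{eq_empirical_convergence} and \eqref{eq_log_gas_support} from the Riemann--Hilbert steepest-descent analysis of the orthogonal polynomials for $e^{tx}\,\m(\dd x)$ carried out in \cite{Bleher-Liechty14}: that analysis produces the equilibrium measure as the derivative of the $g$-function and simultaneously controls the correlation kernel, hence the empirical measure and the extreme particles.

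\emph{Step 3: identifying $\nu$.} The (constrained) equilibrium measure is pinned down by the Euler--Lagrange conditions: there is a constant $\ell$ such that $2\int\ln|x-y|\,\dd\nu(y)-Q(x)=\ell$ on the ``band'' $\{0<\nu'<\tfrac12\}$, $\ge\ell$ on the saturated set $\{\nu'=\tfrac12\}$, and $\le\ell$ outside $\mathrm{supp}\,\nu$. Equivalently, $G_\nu$ (defined in \eqref{eq:Stieltjes}) is holomorphic off $\mathrm{supp}\,\nu$, satisfies $G_\nu(z)=1/z+\bigO(1/z^2)$ at $\infty$, obeys $G_\nu(x+\ii0)+G_\nu(x-\ii0)=Q'(x)$ (a piecewise constant) on the band, and $G_\nu(x+\ii0)-G_\nu(x-\ii0)=-\pi\ii$ on the saturated set. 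This scalar Riemann--Hilbert problem is solved explicitly: when the support consists of one band possibly adjacent to one saturated interval (disordered: one band $[\alpha,\beta]$; ferroelectric: band $[\beta,0]$ next to a saturated $[\alpha,0]$) the solution is an elementary logarithm, giving \eqref{eq_case1_G} and \eqref{eq_case2_G}; when the support is an interval with a saturated interval strictly inside it (antiferroelectric) the solution lives on a genus-one curve and is written through Jacobi theta functions, giving \eqref{eq_case3_G}. The endpoint relations \eqref{eq_case1_alpha_beta}, \eqref{eq_case2_alpha_beta}, \eqref{eq_case3_alpha_beta} are precisely the conditions that the density vanish like a square root at a soft edge (resp.\ the theta-function matching at interior endpoints) together with the normalization $\int\dd\nu=1$. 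One then closes the argument by verifying directly that the resulting $G_\nu$ is the Stieltjes transform of a probability measure with density in $[0,\tfrac12]$ and that the outer variational inequality holds; uniqueness of the constrained equilibrium measure identifies it with $\nu$. The boundary phase $\Delta=-1$ is recovered as the $\ga\to0$ degeneration of either adjacent phase, which is why its $\nu$ coincides with the disordered one.

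\emph{Main obstacle.} The delicate part is the discrete ($|\Delta|>1$) case of Step 2: the relevant log-gas is discrete and the potential is only Lipschitz, so the off-the-shelf concentration estimates do not apply, and the equilibrium problem becomes a \emph{constrained} one; the workaround is to piggyback on the already-completed Riemann--Hilbert asymptotics in \cite{Bleher-Liechty14}. A secondary, purely computational, burden is the verification in Step 3 that the explicit logarithmic and elliptic candidates satisfy all of the equilibrium conditions, in particular positivity of the density and the variational inequality off the support.
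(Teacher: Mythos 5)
Your proposal is sound and matches the paper's treatment: the paper does not give a detailed proof here but rather cites the general log-gas literature (Deift, Ben Arous--Guionnet, etc.) for the law of large numbers \eqref{eq_empirical_convergence} and tightness \eqref{eq_log_gas_support}, and cites Zinn-Justin \cite{Zinn_Justin00} and Bleher--Liechty \cite{Bleher-Liechty14} for the explicit equilibrium-measure formulas --- precisely the two steps you sketch, and your identification of the scaled potential $V(y)=\gamma|y|-ty$ (resp.\ $-(t-|\gamma|)y$) and of the constrained Euler--Lagrange/scalar-RHP characterization of $\nu$ is exactly what those sources produce. Your caution in Step 2 about the discrete phases with a merely Lipschitz potential is also echoed by the authors, who leave the stronger quantitative concentration needed for the heuristic of Section~\ref{Section_asymptotics_through_log_gas} as Conjecture~\ref{Conjecture_concentration} and instead pursue, in Sections~\ref{Section_k1_asymptotics}--\ref{Section_general_reduction}, the Riemann--Hilbert asymptotics you propose piggybacking on.
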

\begin{remark} A unifying form for the formulas \eqref{eq_case1_G} and \eqref{eq_case2_G} is by writing
\eq\label{eq:Gp}
 \partial_z G_\nu(z)=-\frac{1}{z\sqrt{(z-\alpha)(z-\beta)}}.
\eeq
Similarly, \eqref{eq_case3_G} implies that
\eq\label{eq:GpAF}
 \partial_z G_\nu(z)=-\frac{1}{\sqrt{(z-\alpha)(z-\alpha')(z-\beta)(z-\beta')}}.
\eeq
Yet, let us emphasize that the precise expressions \eqref{eq_case1_alpha_beta}, \eqref{eq_case2_alpha_beta}, and \eqref{eq_case3_alpha_beta} for $\alpha$ and $\beta$ depend on the phase.
\end{remark}

\medskip

The fact that convergence of the form \eqref{eq_empirical_convergence} holds for some measure $\nu$ is well-known for a very general class of log-gases, which are distributions of the kind \eqref{eq_M_def}, but with $e^{tx_i} \m(\dd x_i)$ replaced by a more general potential, see \cite{Anderson-Guionnet-Zeitouni10,arous1997large,Deift99,feral2008large,guionnet2019asymptotics}. The measure $\nu$ is known as the \emph{equilibrium measure}, and is it is typically determined as a solution to a variational problem. Explicit formulas for $\nu$ in our situation first appeared in \cite{Zinn_Justin00}, see also \cite{Bleher-Liechty14} for additional details and \cite{Bleher-Bothner12} for the boundary case $\De = -1$.

\smallskip

Decoding what convergence in Theorem \ref{Theorem_LLN_log_gas} means, we have:
\begin{cor} \label{Corollary_linear_statistics}
 Take a real function $f(x)$, which is continuous on the segment $[-C,C]$, where $C$ comes from \eqref{eq_log_gas_support}. Then, in the notations of Theorem \ref{Theorem_LLN_log_gas},
 \begin{equation}
 \label{eq_linear_statistics}
   \lim_{n\to\infty} \frac{1}{n} \sum_{i=1}^n f\left(\frac{x_i}{n}\right)= \int_{\mathbb R} f(x) \nu(\dd x), \qquad \text{ in probability.}
 \end{equation}
\end{cor}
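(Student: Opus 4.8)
The plan is to derive \eqref{eq_linear_statistics} from Theorem \ref{Theorem_LLN_log_gas} by a routine truncation argument: the weak law \eqref{eq_empirical_convergence} handles bounded continuous test functions, and the uniform support bound \eqref{eq_log_gas_support} lets us pass from the (a priori only partially defined, possibly unbounded) statistic $\frac1n\sum_{i=1}^n f(x_i/n)$ to such a test function. Concretely, first extend $f$ from $[-C,C]$ to a bounded continuous function $\bar f\colon\mathbb{R}\to\mathbb{R}$ — e.g.\ set $\bar f\equiv f(-C)$ on $(-\infty,-C)$ and $\bar f\equiv f(C)$ on $(C,\infty)$ — and write $\mu_n:=\frac1n\sum_{i=1}^n\delta_{x_i/n}$ for the random empirical measure appearing in \eqref{eq_empirical_convergence}; set $M:=\sup_{x\in[-C,C]}|f(x)|=\sup_{x\in\mathbb{R}}|\bar f(x)|$.

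First I would observe that \eqref{eq_empirical_convergence} says $\mu_n\to\nu$ weakly, in probability; since weak convergence of probability measures on $\mathbb{R}$ is metrizable, say by the bounded--Lipschitz distance $d_{BL}$, so that $d_{BL}(\mu_n,\nu)\to0$ forces $\int g\,d\mu_n\to\int g\,d\nu$ for every $g\in C_b(\mathbb{R})$, a routine subsequence argument upgrades \eqref{eq_empirical_convergence} to: for every $g\in C_b(\mathbb{R})$, $\int g\,d\mu_n\to\int g\,d\nu$ in probability. Applying this with $g=\bar f$ gives $\frac1n\sum_{i=1}^n\bar f(x_i/n)\to\int_{\mathbb{R}}\bar f\,d\nu$ in probability. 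Next, applying it with the bounded continuous function $g(x)=1\wedge\operatorname{dist}(x,[-C,C])$, which vanishes on $[-C,C]$ and whose $\mu_n$-integral is identically $0$ on the event $A_n:=\{-C\le x_1/n\le x_n/n\le C\}$, together with $\mathrm{Prob}(A_n)\to1$ from \eqref{eq_log_gas_support}, we get $\int g\,d\nu=0$; hence $\nu$ is supported on $[-C,C]$, so $\int_{\mathbb{R}}\bar f\,d\nu=\int f\,d\nu$ and the right-hand side of \eqref{eq_linear_statistics} is unambiguous. Finally, on $A_n$ every $x_i/n$ lies in $[-C,C]$, so $\bar f(x_i/n)=f(x_i/n)$ for all $i$ and the averages $\frac1n\sum_{i=1}^n f(x_i/n)$ and $\frac1n\sum_{i=1}^n\bar f(x_i/n)$ coincide; since $\mathrm{Prob}(A_n)\to1$, they have the same limit in probability, namely $\int f\,d\nu$. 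This proves \eqref{eq_linear_statistics}.

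I do not expect any genuine obstacle here: the argument is soft and uses only the two outputs of Theorem \ref{Theorem_LLN_log_gas}, the weak law \eqref{eq_empirical_convergence} and the tail bound \eqref{eq_log_gas_support}. The one point that needs a little care is the bookkeeping around the fact that $f$ is a priori defined only on the compact set $[-C,C]$ while $\mu_n$ could, on an event of vanishing probability, put mass outside it — this is exactly what conditioning on $A_n$ resolves. If one prefers to avoid invoking metrizability of weak convergence, one can instead approximate $f$ uniformly on $[-C,C]$ by a polynomial via Weierstrass, reducing matters to convergence of the moments $\frac1n\sum_{i=1}^n (x_i/n)^m$, each of which is the linear statistic of the bounded continuous function $x\mapsto x^m$ and is handled on $A_n$ exactly as above.
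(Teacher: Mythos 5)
Your proof is correct. The paper gives no explicit argument here — it presents the corollary as an immediate unpacking of the weak-law convergence \eqref{eq_empirical_convergence} together with the support bound \eqref{eq_log_gas_support} — and your reasoning (extend $f$ to a bounded continuous $\bar f$ on $\mathbb{R}$, upgrade weak convergence in probability to convergence of bounded-continuous linear statistics via metrizability, note that $\nu$ is supported on $[-C,C]$, and condition on the high-probability event $A_n$ where $\bar f$ and $f$ agree on all the sample points) is precisely the standard argument the paper implicitly relies on.
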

We would like to apply this corollary to the expectation in Theorem \ref{Theorem_partition_through_Bessel}. Taking into account Theorem \ref{Theorem_Bessel_limit}, for that we need some control on the speed of convergence in \eqref{eq_linear_statistics}. In fact we expect the difference between left and right sides of \eqref{eq_linear_statistics} to be $\bigO\left(\tfrac{1}{n}\right)$.

\begin{conjecture} \label{Conjecture_concentration}
 Take a polynomial $f(x)$. Then, in the notations of Theorem \ref{Theorem_LLN_log_gas}, the random variable
 \begin{equation}
 \label{eq_linear_statistics_remainder}
   \xi_n=\left[\sum_{i=1}^nf\left(\frac{x_i}{n}\right)\right]- n \int_{\mathbb R} f(x) \nu(\dd x),
 \end{equation}
 is exponentially bounded as $n\to\infty$, which means that there exist two constant $c_1,c_2>0$, such that $\E \exp(c_1|\xi_n|)<c_2$ for all $n=1,2,\dots$.
\end{conjecture}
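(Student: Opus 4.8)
\emph{The plan} is to prove Conjecture~\ref{Conjecture_concentration} in three stages: a soft, smoothness-free localization of the gas; a quantitative loop-equation bootstrap upgrading it to optimal-order control of linear statistics; and a conversion of that control into the exponential integrability of $\xi_n$ from \eqref{eq_linear_statistics_remainder}. The single genuinely new difficulty, and the step I expect to be the main obstacle, is the non-smoothness of the effective potential at $x=0$, which in the disordered and antiferroelectric phases sits in the \emph{bulk} of the equilibrium measure $\nu$.

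\emph{Stage 1: localization.} Rescaling $x_i=ny_i$ rewrites \eqref{eq_M_def} as $\mathrm{const}\cdot\exp\bigl(2\sum_{i<j}\ln|y_i-y_j|-n\sum_i V_n(y_i)\bigr)$ on the ordered simplex (intersected with the lattice $\tfrac2n\Z$ in the phases \eqref{eq_case1}, \eqref{eq_case3}), where, using the asymptotics of $\m$ read off from \eqref{eq_measure_case1}--\eqref{eq_measure_case4}, one has $V_n(y)\to V(y):=\gamma|y|-ty$ locally uniformly. The limiting Coulomb energy $\mathcal E[\mu]=\iint\ln|x-y|^{-1}\,d\mu(x)\,d\mu(y)+\int V\,d\mu$ is strictly convex and coercive, its minimizer is $\nu$, and the classical large-deviations theory for $\beta=2$ ensembles (\cite{Anderson-Guionnet-Zeitouni10} and the references cited after Theorem~\ref{Theorem_LLN_log_gas}, together with its discrete counterpart) yields \eqref{eq_log_gas_support} and, quantitatively, a favorable event $\mathcal G_n$ with $\mathrm{Prob}(\mathcal G_n^c)$ decaying stretched-exponentially in $n$, on which $\max_i|y_i|\le C$ and $\tfrac1n\sum_i\delta_{y_i}$ lies within $n^{-1/2+\varepsilon}$ of $\nu$ in a Wasserstein-type metric. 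None of this uses smoothness of $V$.

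\emph{Stage 2: loop-equation bootstrap, and the obstacle.} On $\mathcal G_n$ I would run the Dyson--Schwinger hierarchy for $G_n(z)=\tfrac1n\sum_i(z-y_i)^{-1}$ --- integration by parts in \eqref{eq_M_def} for the continuous phases \eqref{eq_measure_case2}, \eqref{eq_measure_case4}, and the discrete analogue (``Nekrasov equations'') for \eqref{eq_measure_case1}, \eqref{eq_measure_case3}, where the ratio $\m(\{x-2\})/\m(\{x\})$ is a fixed exponential on each half-line. Away from $z=0$ these equations read $\E\bigl[(G_n(z)-G_\nu(z))\,K(z)\bigr]=(\text{quadratic error})+(\text{fluctuation of }\tfrac1n\sum_i V_n'(y_i)(z-y_i)^{-1})$ with a nonvanishing kernel $K$ built from $\partial_z G_\nu$, cf.\ \eqref{eq:Gp}--\eqref{eq:GpAF}; the mechanism of \cite{bekerman2018clt,dadoun2023asymptotics,lambert2019quantitative,guionnet2019asymptotics} then self-improves the Stage~1 bound to $|G_n(z)-G_\nu(z)|=O(n^{-1})$ with high probability, hence to a local law/rigidity for the $y_i$ and to $\mathrm{Var}(\xi_n)=O(1)$ for every fixed polynomial $f$; iterating the hierarchy $p$ times with bookkeeping of constants gives $|\kappa_p(\xi_n)|\le C^p p!$ uniformly in $n$. \textbf{The hard part} is that $V_n'$ jumps by $2\gamma+o(1)$ across $y=0$, so the error term picks up a contribution proportional to the empirical mass of the gas in an $O(1/n)$ window around $0$ --- not covered by any off-the-shelf result, since $0$ is a bulk point of $\nu$ in \eqref{eq_case2}--\eqref{eq_case3} (and a hard edge in \eqref{eq_case1}). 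I would control this either (i) by proving, simultaneously with the global bootstrap, a local law at $0$, $\#\{i:|y_i|\le\delta\}=n\nu([-\delta,\delta])+O(\delta n^{1/2}+\log n)$, which renders the offending term lower order; or (ii) by conditioning on the number of $y_i$ below $0$ (rigid up to $O(\log n)$) and treating the two halves as essentially independent $\beta=2$ ensembles, each with a \emph{hard wall} at $0$ and a real-analytic potential elsewhere, for which hard-edge loop equations are available; the same half-line split isolates the jump of $\m(\{x-2\})/\m(\{x\})$ at $0$ in the discrete phases.

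\emph{Stage 3: from fluctuation control to exponential integrability.} Since Conjecture~\ref{Conjecture_concentration} asks only for sub-exponential (not Gaussian) tails, this step is comparatively soft once Stage~2 is in place: on $\mathcal G_n$, $\log\E\bigl[e^{s\xi_n}\mathbf 1_{\mathcal G_n}\bigr]=s\,\E\xi_n+\sum_{p\ge2}\tfrac{s^p}{p!}\kappa_p+o(1)$ converges for small $|s|$ by the cumulant bound, while $\mathcal G_n^c$ contributes at most $e^{\mathrm{poly}(n)}\cdot\mathrm{Prob}(\mathcal G_n^c)=o(1)$ because $|\xi_n|$ is always at most polynomial in $n$; this gives $\E e^{c_1|\xi_n|}<c_2$. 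A more structural alternative, available in all four phases, exploits log-concavity: $\sum_{i<j}\ln(y_j-y_i)$ is concave on the ordered simplex, and $\ln\m$ has concave density (immediate for \eqref{eq_measure_case4}; for \eqref{eq_measure_case2} it follows from the elementary monotonicity of $\xi\mapsto\xi^{-1}\sinh(\xi x)$; and the lattice weights \eqref{eq_measure_case1}, \eqref{eq_measure_case3} are discretely log-concave), so $\mathcal M^{n,t,\gamma}$ is a (discretely) log-concave measure on a convex set, and combining the standard concentration for Lipschitz/convex functionals of log-concave measures with the rigidity from Stage~2, which forces the covariance of $(y_1,\dots,y_n)$ to have small operator norm, again yields the claim. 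Either way the crux is Stage~2 and, within it, the analysis at the kink $x=0$ --- most acutely in the phases \eqref{eq_case1}, \eqref{eq_case3}, where even the underlying quantitative theory of discrete log-gases is only partially developed.
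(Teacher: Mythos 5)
You are being asked to ``prove'' a statement that the paper itself does not prove: Conjecture~\ref{Conjecture_concentration} is, by design, left open. The authors state explicitly that ``Results of the flavor of Conjecture~\ref{Conjecture_concentration} are known for log-gases with \emph{smooth} potential\ldots our potentials in \eqref{eq_measure_case1}--\eqref{eq_measure_case4} are more exotic, and therefore the existing results in the literature do not apply.\ We do not attempt to prove Conjecture~\ref{Conjecture_concentration} in this text.'' Their actual route to Theorems~\ref{Theorem_asymptotics_case1}--\ref{Theorem_asymptotics_case4} (Sections~\ref{Section_k1_asymptotics}--\ref{Section_general_reduction}) is a contour-integral representation of $\ZZ_n(\xi;t,\gamma)$ via the orthogonal polynomials for the weight $e^{tx}\m(\dd x)$, whose Riemann--Hilbert asymptotics \emph{are} available, and which cleanly bypasses the concentration question. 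They also remark that the Breuer--Duits link between orthogonal polynomials and fluctuations of linear statistics \cite{Breuer-Duits13} is a plausible avenue to the conjecture itself. So there is no ``paper's proof'' against which to check your argument; you should compare against the paper's acknowledged obstruction.

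Against that standard your write-up is a reasonable and well-informed \emph{plan}, and to your credit it locates the same obstruction the authors do. But it is not a proof: Stage~2 is exactly where the argument is missing, and you say so yourself. Both of your proposed fixes --- (i) a simultaneous local law at the bulk singularity $y=0$, and (ii) conditioning on the particle count on each half-line and running hard-edge loop equations --- are left as programs; neither the local law at a bulk non-analyticity nor the conditional hard-wall ensembles are established, and for the discrete phases \eqref{eq_case1}, \eqref{eq_case3} you correctly note that the discrete loop-equation machinery needed is itself underdeveloped. Stage~3 is also thinner than it looks: the claim that $|\xi_n|$ ``is always at most polynomial in $n$'' fails verbatim (the $x_i$ are not deterministically bounded, and polynomial $f$ grows), so the contribution of $\mathcal G_n^c$ needs a genuine tail estimate rather than the crude bound you invoke; and the log-concavity route requires more than noting that the joint density is log-concave, because $\sum_i f(x_i/n)$ for polynomial $f$ is not Lipschitz, so the Poincar\'e/Cheeger-type inequalities for log-concave measures do not directly yield $n$-uniform constants $c_1,c_2$ without the very rigidity statement you are trying to establish. (Your log-concavity checks themselves are correct: $u\mapsto\sinh(u)/u$ increasing gives concavity of $\ln\bigl[\sinh(ax)/\sinh(bx)\bigr]$ for $0<a<b$; the lattice weights $2\sinh(\gamma|x|)$ and $2e^{-\gamma|x|}$ are discretely log-concave by the identity $\sinh(u-a)\sinh(u+a)=\sinh^2u-\sinh^2a$ and the triangle inequality, respectively; and $\sum_{i<j}\ln(x_j-x_i)$ is concave on the ordered simplex.) In short: the genuine gap is precisely the one you name at the end of Stage~2, and it is the same gap the paper declines to close. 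A rigorous treatment would have to deliver one of your two sub-programs in full --- most plausibly via the orthogonal-polynomial/Breuer--Duits route, for which the Riemann--Hilbert analysis in Appendix~\ref{app:OP_asymptotics} already does the heavy lifting.
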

Results of the flavor of Conjecture \ref{Conjecture_concentration} are known for log-gases with \emph{smooth} potential, see, e.g., \cite{Johansson98} for the continuous setting or \cite{borodin2017gaussian} for the discrete setting. As mentioned at the beginning of this section, our potentials in \eqref{eq_measure_case1}--\eqref{eq_measure_case4} are more exotic, and therefore the existing results in the literature do not apply. Below we show how Conjecture \ref{Conjecture_concentration} implies Theorems \ref{Theorem_asymptotics_case1}--\ref{Theorem_asymptotics_case4}. We do not attempt to prove Conjecture \ref{Conjecture_concentration} in this text; instead in Sections \ref{Section_k1_asymptotics} and \ref{Section_general_reduction} we present an alternative approach to Theorems \ref{Theorem_asymptotics_case1}--\ref{Theorem_asymptotics_case4} based on the asymptotic analysis of orthogonal polynomials with orthogonality measures \eqref{eq_measure_case1}--\eqref{eq_measure_case4}. We remark that \cite{Breuer-Duits13} explains a link between orthogonal polynomials and asymptotic behavior of random variables of the kind \eqref{eq_linear_statistics_remainder}; hence, it is plausible that the orthogonal polynomial approach can also be used to prove Conjecture \ref{Conjecture_concentration}.

\begin{proof}[Proof of Theorems \ref{Theorem_asymptotics_case2}, \ref{Theorem_asymptotics_case3}, and \ref{Theorem_asymptotics_case4} conditional on Conjecture \ref{Conjecture_concentration}]

Theorem \ref{Theorem_partition_through_Bessel} yields
\begin{align}
 \notag \ZZ_n\left(\frac{\xi_1}{\sqrt{n}},\dots,\frac{\xi_k}{\sqrt{n}};\, t,\gamma\right)=&
 \prod\limits_{j=1}^k \left[\left(\frac{a\left(t+\frac{\xi_j}{\sqrt{n}}, \ga\right) b\left(t+\frac{\xi_j}{\sqrt{n}}, \ga\right)}{a(t, \ga) b(t, \ga)}\right)^n  \left(\frac{\xi_j}{ \sqrt{n}\, b\left(\frac{\xi_j}{\sqrt{n}},0\right)}\right)^{n-k}\right]
  \\ &\times \prod_{1\le i<j \le k} \frac{\xi_i-\xi_j}{\sqrt{n}\, b\left(\frac{\xi_i-\xi_j}{\sqrt{n}},0\right)}
 \,\, \E_{\mathcal M^{n,t,\gamma}}\left[  \B_{x_1,\dots,x_n}\left(\frac{\xi_1}{\sqrt{n}},\dots,\frac{\xi_k}{\sqrt{n}},0^{n-k}\right)\right]. \label{eq_x10}
\end{align}
Theorem \ref{Theorem_Bessel_limit} and Conjecture \ref{Conjecture_concentration} provide an asymptotic expansion for the expectation in the last line:\footnote{Formally, one needs an additional argument to explain that $\bigO\left(\tfrac{1}{\sqrt{n}}\right)$ term in \eqref{eq_x52} when plugged under expectation results in $\bigO\left(\tfrac{1}{\sqrt{n}}\right)$ in \eqref{eq_x11}. We omit this justification which requires making the tail bound \eqref{eq_log_gas_support} and the error term in \eqref{eq_x52} more precise.}
\begin{equation}
\label{eq_x11}
 \E_{\mathcal M^{n,t,\gamma}}\left[  \B_{x_1,\dots,x_n}\left(\frac{\xi_1}{\sqrt{n}},\dots,\frac{\xi_k}{\sqrt{n}},0^{n-k}\right)\right]=
 \exp\left(\sqrt{n} \mu_1\sum_{j=1}^k \xi_j+\frac{\mu_2-(\mu_1)^2}{2} \sum_{j=1}^k (\xi_j)^2+\bigO\left(\tfrac{1}{\sqrt{n}}\right)\right),
\end{equation}
where $\mu_1$ and $\mu_2$ are the first two moments of the measure $\nu$ of Theorem \ref{Theorem_LLN_log_gas}. They are readily found as coefficients of the expansion of $G_\nu(z)$ in powers of $\tfrac{1}{z}$, and using \eqref{eq_case2_G}, \eqref{eq_case3_G}, we get
\begin{itemize}
\item {\bf Disordered phase} \eqref{eq_case2}: using \eqref{eq_case2_alpha_beta}
\begin{equation}
 \label{eq_case2_moments}
  \mu_1=\frac{\alpha+\beta}{4}, \qquad \mu_2=\frac{3\alpha^2+2\alpha\beta+3\beta^2}{24}.
\end{equation}
\item {\bf Antiferroelectric phase} \eqref{eq_case3}: using \eqref{eq_case3_alpha_beta}
\begin{equation}
 \label{eq_case3_moments}
  \mu_1=\frac{\alpha+\alpha'+\beta+\beta'}{4}, \qquad \mu_2=\frac{(\alpha)^2+(\alpha')^2+(\beta)^2+(\beta')^2}{12}+\frac{(\alpha+\alpha'+\beta+\beta')^2}{24}.
\end{equation}
\item {\bf Boundary phase} \eqref{eq_case4}: $\mu_1$ and $\mu_2$ are given by \eqref{eq_case2_moments}.
\end{itemize}

Next, the double product $\prod_{1\le i<j \le k}$ in \eqref{eq_x10}  is $1+\bigO\left(\tfrac{1}{\sqrt{n}}\right)$ directly from the definition of $b(\cdot)$ in \eqref{eq_case2}--\eqref{eq_case4}. For the single product $\prod_{j=1}^k$, we use the $n\to\infty$ asymptotic expansions:
$$
 \frac{\sin\left(x+\frac{y}{\sqrt{n}}\right)}{\sin(x)}=1+\frac{y}{\sqrt{n}} \cot(x)- \frac{y^2}{2n} +\bigO\left(n^{-\frac{3}{2}}\right), \qquad
 \frac{y}{\sqrt{n} \sin\left(\frac{y}{\sqrt{n}}\right)}=1+\frac{y^2}{6n}+\bigO\left(n^{-\frac{3}{2}}\right),
$$
$$
 \frac{\sinh\left(x+\frac{y}{\sqrt{n}}\right)}{\sinh(x)}=1+\frac{y}{\sqrt{n}} \coth(x)+ \frac{y^2}{2n} +\bigO\left(n^{-\frac{3}{2}}\right),\qquad
 \frac{y}{\sqrt{n} \sinh\left(\frac{y}{\sqrt{n}}\right)}=1-\frac{y^2}{6n}+\bigO\left(n^{-\frac{3}{2}}\right).
$$
Hence, using also $\ln(1+z)=z-\frac{z^2}{2}+\bigO(z^3)$, in the disordered phase  \eqref{eq_case2}, we have
\begin{align*}
 &\left(\frac{a\left(t+\frac{\xi_j}{\sqrt{n}}, \ga\right) b\left(t+\frac{\xi_j}{\sqrt{n}}, \ga\right)}{a(t, \ga) b(t, \ga)}\right)^n  \left(\frac{\xi_j}{ \sqrt{n}\, b\left(\frac{\xi_j}{\sqrt{n}},0\right)}\right)^{n-k}=
 \left(1-\frac{\xi_j}{\sqrt n}\cot(\gamma-t)-\frac{\xi_j^2}{2n}+\bigO\left(\frac{1}{n^{\frac{3}{2}}}\right)\right)^n
 \\ &\times \left(1+\frac{\xi_j}{\sqrt n}\cot(\gamma+t)-\frac{\xi_j^2}{2n}+\bigO\left(\frac{1}{n^{\frac{3}{2}}}\right)\right)^n \left(1+\frac{\xi_j^2}{6n}+\bigO\left(n^{-\frac{3}{2}}\right)\right)^{n-k}
 \\ &=\exp\left( \sqrt{n}\xi_j \bigl(\cot(\gamma+t)-\cot(\gamma-t)\bigr)+ \xi_j^2 \left(-\frac{5}{6}- \frac{\cot^2(\gamma-t)}{2}-\frac{\cot^2(\gamma+t)}{2} \right) +\bigO\left(n^{-\frac{1}{2}}\right) \right).
\end{align*}
Multiplying by \eqref{eq_x11} and simplifying using
$$
 \mu_1=\frac{\alpha+\beta}{4}=  \frac{\pi}{4\gamma} \tan\left(\frac{\pi}{4}\left(1+\frac{t}{\gamma}\right)\right)-\frac{\pi}{\gamma} \tan\left(\frac{\pi}{4}\left(1-\frac{t}{\gamma}\right)\right)=\frac{\pi}{2\gamma} \tan \left(\frac{\pi t}{2\gamma}\right),
$$
$$
\mu_2-(\mu_1)^2=\frac{3\alpha^2-2\alpha\beta +3\beta^2}{48}= \frac{\pi^2}{\gamma^2} \left( \frac{1}{4}\tan^2 \left(\frac{\pi t}{2\gamma}\right)+\frac{1}{6}\right),
$$
we get \eqref{eq_Zn_expansion_case2} of Theorem \ref{Theorem_asymptotics_case2}. In the antiferroelectric phase \eqref{eq_case3} we have
\begin{align*}
 &\left(\frac{a\left(t+\frac{\xi_j}{\sqrt{n}}, \ga\right) b\left(t+\frac{\xi_j}{\sqrt{n}}, \ga\right)}{a(t, \ga) b(t, \ga)}\right)^n  \left(\frac{\xi_j}{ \sqrt{n}\, b\left(\frac{\xi_j}{\sqrt{n}},0\right)}\right)^{n-k}=
 \left(1-\frac{\xi_j}{\sqrt n}\coth(\gamma-t)+\frac{\xi_j^2}{2n}+\bigO\left(\frac{1}{n^{\frac{3}{2}}}\right)\right)^n
 \\ &\times \left(1+\frac{\xi_j}{\sqrt n}\coth(\gamma+t)+\frac{\xi_j^2}{2n}+\bigO\left(\frac{1}{n^{\frac{3}{2}}}\right)\right)^n \left(1-\frac{\xi_j^2}{6n}+\bigO\left(n^{-\frac{3}{2}}\right)\right)^{n-k}
 \\ &=\exp\left( \sqrt{n}\xi_j \bigl(\coth(\gamma+t)-\coth(\gamma-t)\bigr)+ \xi_j^2 \left(\frac{5}{6}- \frac{\coth^2(\gamma-t)}{2}-\frac{\coth^2(\gamma+t)}{2} \right) +\bigO\left(n^{-\frac{1}{2}}\right) \right).
\end{align*}
We  multiply by \eqref{eq_x11} and simplify using
\begin{align*}
 \mu_1=\frac{\alpha+\alpha'+\beta+\beta'}{4}=-\frac{\pi}{4\gamma} \sum_{\ell=1}^4 \frac{\vartheta'_\ell\left(\pi \frac{t+\gamma}{4\gamma}\right)}{\vartheta_\ell\left(\pi \frac{t+\gamma}{4\gamma}\right)}
 =-\frac{\pi}{2\gamma}\frac{\vartheta'_2\left(\frac{\pi t}{2\gamma}\right)}{\vartheta'_2\left(\frac{\pi t}{2\gamma}\right)}.
\end{align*}
The last identity simplifying $\sum_{\ell=1}^4$ is obtained from\footnote{See \cite{Watson-Whittaker96} or \cite[Section 7.2]{Bleher-Liechty14} for the properties of Theta functions.} the duplication formula for Theta functions $\vartheta_1(2z) \vartheta'(0)= \prod_{\ell=1}^4 \vartheta_i(z)$ by taking the logarithmic derivative in $z$ and using $\vartheta_1(2z+\tfrac{\pi}{2})=\vartheta_2(2z)$. Further,
\begin{align*}
 \mu_2&=\frac{(\alpha)^2+(\alpha')^2+(\beta)^2+(\beta')^2}{12}+\frac{(\alpha+\alpha'+\beta+\beta')^2}{24}
 \\&= \frac{\pi^2}{12\gamma^2} \sum_{\ell=1}^4 \left(\frac{\vartheta'_\ell\left(\pi \frac{t+\gamma}{4\gamma}\right)}{\vartheta_\ell\left(\pi \frac{t+\gamma}{4\gamma}\right)}\right)^2
 + \frac{\pi^2}{6\gamma^2}  \left(\frac{\vartheta'_2\left(\frac{\pi t}{2\gamma}\right)}{\vartheta'_2\left(\frac{\pi t}{2\gamma}\right)}\right)^2.
\end{align*}
Plugging all ingredients, we get \eqref{eq_Zn_expansion_case3} of Theorem \ref{Theorem_asymptotics_case3}. The boundary case of Theorem \ref{Theorem_asymptotics_case4} is obtained as a limiting case of either of the above two computations.
\end{proof}

The scaling in Theorem \ref{Theorem_asymptotics_case1} is different and therefore the proof proceeds in a slightly different (although conceptually similar) way. For weights \eqref{eq_case1}, we introduce an auxiliary measure $\widehat \m$, which is a relative of $\m$ in \eqref{eq_measure_case1}: $\widehat \m$ is supported on negative even integers and is given by
 \begin{equation}
 \label{eq_measure_case1_aux}
 \widehat \m=\sum_{x\in 2 \Z_{<0}} \exp(-|\ga| x)\, \delta_x.
 \end{equation}

\begin{definition}
 The probability measure $\widehat{\mathcal M}^{n,t,\gamma}$ on ordered $n$-tuples $x_1\le x_2\le \dots \le x_n$ of negative even integers  is defined by its density with respect to $\widehat \m^{\otimes n}$:
 \begin{equation}
 \label{eq_M_def_2}
   \frac{1}{h_0 h_1 \cdots h_{n-1}} \prod_{1\le i < j \le n} (x_i-x_j)^2 \prod_{i=1}^n e^{tx_i} \widehat \m(\dd x_i),
 \end{equation}
 where $h_0 h_1\cdots h_{n-1}$ is a normalization constant making the total mass of $\widehat{\mathcal M}^{n,t,\gamma}$ equal to $1$.
\end{definition}

\begin{conjecture} \label{Conjecture_concentration_case1}
 In the setting of Theorem \ref{Theorem_asymptotics_case1}, we have as $n\to\infty$:
 \begin{equation}
 \label{eq_x13}
   \E_{\mathcal M^{n,t,\gamma}}\bigl[  \B_{x_1,\dots,x_n}(\xi_1,\dots,\xi_k,0^{n-k})\bigr]=\E_{\widehat{\mathcal M}^{n,t,\gamma}}\bigl[  \B_{x_1,\dots,x_n}(\xi_1,\dots,\xi_k,0^{n-k})\bigr] (1+o(1)).
 \end{equation}
\end{conjecture}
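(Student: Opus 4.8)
The plan is to deduce the conjecture by comparing the two log-gases through a Radon--Nikodym derivative, and then feeding the resulting identity into the asymptotics we already have (Theorems~\ref{Theorem_partition_through_Bessel} and~\ref{Theorem_asymptotics_case1}). Assume $\ga>0$ (the case $\ga<0$ being symmetric). On $2\Z_{<0}$ one has $2\sinh(-\ga x)=e^{-\ga x}(1-e^{2\ga x})$, so comparing \eqref{eq_measure_case1} with \eqref{eq_measure_case1_aux} gives $\m(\dd x)=(1-e^{2\ga x})\,\wh\m(\dd x)$. Hence, with $W_n(x_1,\dots,x_n):=\prod_{i=1}^n(1-e^{2\ga x_i})$, the density of $\mathcal M^{n,t,\ga}$ with respect to $\wh{\mathcal M}^{n,t,\ga}$ equals $W_n/\E_{\wh{\mathcal M}^{n,t,\ga}}[W_n]$ (the normalization follows since both are probability measures), and pointwise $W_\infty\le W_n\le 1$ where $W_\infty:=\prod_{m\ge1}(1-e^{-4\ga m})\in(0,1)$. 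Therefore $\E_{\mathcal M^{n,t,\ga}}[F]=\E_{\wh{\mathcal M}^{n,t,\ga}}[F\,W_n]/\E_{\wh{\mathcal M}^{n,t,\ga}}[W_n]$ for any bounded $F$, and, writing $\B=\B_{x_1,\dots,x_n}(\xi_1,\dots,\xi_k,0^{n-k})$, the claim \eqref{eq_x13} becomes the asymptotic decorrelation
\[
 \E_{\wh{\mathcal M}^{n,t,\ga}}[\B\,W_n]=\E_{\wh{\mathcal M}^{n,t,\ga}}[\B]\cdot\E_{\wh{\mathcal M}^{n,t,\ga}}[W_n]\cdot\bigl(1+o(1)\bigr).
\]

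The measure $\wh{\mathcal M}^{n,t,\ga}$ is the orthogonal polynomial ensemble attached to the geometric weight $\wh w(-2m)=e^{-2(t-\ga)m}$, $m\ge1$; it has the same equilibrium measure $\nu$ as $\mathcal M^{n,t,\ga}$ (computed in \cite{Zinn_Justin00,Bleher-Liechty14}), and in the ferroelectric phase $\nu$ has density $\tfrac12$ on an interval $[\al,0]$ abutting the wall $x=0$. Since $\tfrac12$ is the maximal possible rescaled density for particles on $2\Z$, this is a frozen region: the one-point function satisfies $\wh K_n(-2m,-2m)\to1$ for every fixed $m$, so for each fixed $K$ the event $A_{n,K}=\{-2,-4,\dots,-2K\text{ are all occupied}\}$ has $\Prob_{\wh{\mathcal M}^{n,t,\ga}}(A_{n,K}^c)\le\sum_{m\le K}\bigl(1-\wh K_n(-2m,-2m)\bigr)\to0$; on $A_{n,K}$ one has $W_n=W_\infty\,(1+\bigO(\de_K))$ with $\de_K:=1-\prod_{m>K}(1-e^{-4\ga m})\to0$. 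As $W_n\in[W_\infty,1]$, this already gives $\E_{\wh{\mathcal M}^{n,t,\ga}}[W_n]\to W_\infty$; moreover, for real admissible $\xi_j$ one has $\B>0$ pointwise, and the sandwich $W_\infty\,\E_{\wh{\mathcal M}^{n,t,\ga}}[\B]\le\E_{\wh{\mathcal M}^{n,t,\ga}}[\B W_n]=\E_{\mathcal M^{n,t,\ga}}[\B]\cdot\E_{\wh{\mathcal M}^{n,t,\ga}}[W_n]\le\E_{\wh{\mathcal M}^{n,t,\ga}}[\B]$ shows that $\E_{\wh{\mathcal M}^{n,t,\ga}}[\B]$ has the same (exponential in $n$) order as $\E_{\mathcal M^{n,t,\ga}}[\B]$, which is explicitly controlled by Theorems~\ref{Theorem_partition_through_Bessel} and~\ref{Theorem_asymptotics_case1}.

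Now fix real admissible $\xi_1,\dots,\xi_k$. Splitting on $A_{n,K}$ and using $0\le W_n\le1$ on the complement,
\[
 \E_{\wh{\mathcal M}^{n,t,\ga}}[\B W_n]=W_\infty(1+\bigO(\de_K))\,\E_{\wh{\mathcal M}^{n,t,\ga}}[\B\,\mathbf{1}_{A_{n,K}}]+\bigO\!\Bigl(\E_{\wh{\mathcal M}^{n,t,\ga}}[\B\,\mathbf{1}_{A_{n,K}^c}]\Bigr).
\]
In view of the order estimate above, it suffices to show $\E_{\wh{\mathcal M}^{n,t,\ga}}[\B\,\mathbf{1}_{A_{n,K}^c}]=o\bigl(\E_{\wh{\mathcal M}^{n,t,\ga}}[\B]\bigr)$ for each fixed $K$ and then let $K=K(n)\to\infty$ slowly; equivalently, under the $\B$-tilted probability $\mathcal Q_n:=\B\cdot\wh{\mathcal M}^{n,t,\ga}/\E_{\wh{\mathcal M}^{n,t,\ga}}[\B]$ the top is still frozen, $\mathcal Q_n(A_{n,K}^c)\to0$. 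But expanding $\B$ by \eqref{eq_Bessel_as_sum} and absorbing the sum into the two Vandermonde determinants shows that $\mathcal Q_n$ is a polynomial (biorthogonal) ensemble obtained from $\wh{\mathcal M}^{n,t,\ga}$ by replacing one of the spanning monomials $1,x,\dots,x^{n-1}$ by $e^{\xi x}$; being a bounded-rank modification of the geometric-weight orthogonal polynomial ensemble, it keeps the equilibrium measure $\nu$ and the saturation at $x=0$, so $\mathcal Q_n(A_{n,K}^c)\to0$. This proves \eqref{eq_x13} for real $\xi_j$; the general complex case on $\{\Re\xi_j>-\tfrac t2+\tfrac{|\ga|}2,\ |\Im\xi_j|<\tfrac\pi2\}$ follows by Vitali's theorem, since both sides of \eqref{eq_x13}, divided by the common nonvanishing prefactor from Theorem~\ref{Theorem_partition_through_Bessel}, are analytic in $(\xi_1,\dots,\xi_k)$ and, by the order estimate, locally uniformly bounded. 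Finally, general $k$ reduces to $k=1$ via the determinantal identity of Theorem~\ref{Theorem_partition_general_k_reduction}, which expresses $\ZZ_n(\xi_1,\dots,\xi_k;t,\ga)$ as a $k\times k$ determinant in the one-inhomogeneity functions $\ZZ_m(\xi_j;t,\ga)$, $m\le n$, simultaneously for $\m$ and $\wh\m$.

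The main obstacle is the step $\mathcal Q_n(A_{n,K}^c)\to0$: showing that the frozen top survives the $\B$-tilt, with enough uniformity that the $o(1)$'s above are relative to $\E_{\wh{\mathcal M}^{n,t,\ga}}[\B]$ rather than absolute errors. Rigorously this needs the asymptotics of the correlation kernel of $\mathcal Q_n$ near the wall $x=0$; because $\mathcal Q_n$ is a low-rank modification of the ensemble for the geometric weight $\wh w$, these are exactly in reach of the Riemann--Hilbert analysis of the corresponding classical orthogonal polynomials carried out in \cite{Bleher-Liechty14}, from which one reads off convergence to the frozen kernel near $0$. An essentially equivalent route bypasses $\mathcal Q_n$: bound the second moment $\E_{\wh{\mathcal M}^{n,t,\ga}}[|\B|^2]=\bigO\bigl(|\E_{\wh{\mathcal M}^{n,t,\ga}}[\B]|^2\bigr)$ through the Christoffel--Darboux kernel (for $k=1$ this amounts to geometric decay of the coefficients $\langle e^{\xi x},\wh P_j\rangle_{\wh w}$ for $j\ge n$, which is where the lower bound on $\Re\xi$ enters), and then apply Cauchy--Schwarz to $\E_{\wh{\mathcal M}^{n,t,\ga}}[\B(W_n-W_\infty)]$. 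Either way one is proving a concentration estimate for the Bessel linear statistic under the log-gas, in the spirit of Conjecture~\ref{Conjecture_concentration}, but now only for the simpler geometric ensemble, whose orthogonal polynomials are available in closed form.
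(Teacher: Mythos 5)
The statement you are proving is presented in the paper as a \emph{conjecture}: the authors do not prove it, and immediately after stating it they give only a one-sentence heuristic (the equilibrium measures of $\m$ and $\widehat\m$ coincide) before bypassing it entirely via the Riemann--Hilbert analysis of Sections~\ref{Section_k1_asymptotics}--\ref{Section_general_reduction}. Indeed, the remark after \eqref{stg3b} in Appendix~\ref{app:A2} calls the passage from $V_n^q$ to $V$ in the jump matrix ``a rigorous analogue of Conjecture~\ref{Conjecture_concentration_case1}''---the RH route proves what the conjecture would give, without proving the conjecture itself. So there is no proof in the paper to compare against.

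Your reduction is a genuinely different angle from the paper's heuristic, and the setup through the sandwich $W_\infty\le W_n\le 1$ and the identification of $\mathcal Q_n$ as a biorthogonal ensemble with spanning family $1,x,\dots,x^{n-2},e^{\xi x}$ is correct. But the step you flag as the ``main obstacle'' is a genuine gap, not a detail: asserting that a bounded-rank modification of the geometric-weight OPE ``keeps the equilibrium measure $\nu$ and the saturation at $x=0$'' is not a black-box fact. Saturation is a \emph{local} statement (the one-point function at each fixed lattice site must tend to $1$), and for the $\B$-tilted ensemble $\mathcal Q_n$ this requires kernel asymptotics near the wall that are not contained in \cite{Bleher-Liechty14} (which treats the weight $\m$, not the $e^{\xi x}$-modified biorthogonal ensemble). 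The relevant correlation kernel is built from precisely the Cauchy-transform objects the paper analyzes by steepest descent in Section~\ref{subsec:F_steepest_descent}, so proving the required local estimate is not lighter than the paper's own argument---the reduction does not actually avoid the hard part. The Cauchy--Schwarz alternative, via $\E_{\wh{\mathcal M}^{n,t,\gamma}}[|\B|^2]=\bigO(|\E_{\wh{\mathcal M}^{n,t,\gamma}}[\B]|^2)$, is likewise unproven and of comparable difficulty. In short: the plan is plausible and the bookkeeping sound, but the decisive estimate is missing, and you correctly say so.
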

We expect the validity of Conjecture \ref{Conjecture_concentration_case1}, because it should be possible to represent (similar to how we did in the argument for Theorems \ref{Theorem_asymptotics_case2}--\ref{Theorem_asymptotics_case4}) the asymptotic of the expectations in two sides of \eqref{eq_x13} in terms of the equilibrium measures for $\mathcal M^{n,t,\gamma}$ and $\widehat{\mathcal M}^{n,t,\gamma}$. But these two equilibrium measures actually coincide because $\m$ and $\widehat \m$ are very similar, cf.\ \cite{Zinn_Justin00} or \cite{Bleher-Liechty14}.

\begin{proof}[Proof of Theorem \ref{Theorem_asymptotics_case1} conditional on Conjecture \ref{Conjecture_concentration_case1}] Our first task is to evaluate $\E_{\widetilde{\mathcal M}^{n,t,\gamma}}\bigl[  \B_{x_1,\dots,x_n}(\xi_1,\dots,\xi_k,0^{n-k})\bigr]$ explicitly. For that, we take $n$ complex numbers $z_1,\dots,z_n$, such that $\Re (z_i+t-|\gamma|)>0$ and compute
\begin{align}
\label{eq_x12}
 \sum_{x_1< x_2<\dots < x_n\le -2}\,  \prod_{1\le i < j \le n} (x_i-x_j)^2 \prod_{i=1}^n e^{(t-|\gamma|)x_i}  \frac{\det\bigl[ e^{x_i z_j}\bigr]_{i,j=1}^n}{\prod\limits_{1\le i<j\le n} (x_i-x_j)(z_i-z_j)},
\end{align}
where all $x_i$ are even negative integers. Introducing an auxiliary parameter $r$ and using the Vandermonde determinant evaluation  we rewrite \eqref{eq_x12} as
\begin{align}
\label{eq_x13a}
   \lim_{r\to 1} \left( \frac{(1-r)^{-n(n-1)/2}}{\prod\limits_{i<j} (z_i-z_j)}  \sum_{x_1< x_2<\dots < x_n\le -2}\,  \det\bigl[r^{k x_i}\bigr]_{i,k=1}^n \det\bigl[ e^{x_i (z_j+t-|\gamma|)}\bigr]_{i,j=1}^n\right).
\end{align}
The Cauchy--Binet formula computes the last sum in the closed form, transforming \eqref{eq_x13a} into
\begin{multline}
\label{eq_x14}
 \lim_{r\to 1} \left( \frac{(1-r)^{-n(n-1)/2}}{\prod\limits_{i<j} (z_i-z_j)}  \det\left[ \sum_{x=-2,-4,\dots} r^{kx} e^{x (z_j+t-|\gamma|)}\right]_{k,j=1}^n\right)
  \\= \lim_{r\to 1} \left( \frac{(1-r)^{-n(n-1)/2}}{\prod\limits_{i<j} (z_i-z_j)}  \det\left[ \frac{ r^{-2k} e^{-2 (z_j+t-|\gamma|)}}{1- r^{-2k} e^{-2(z_j+t-|\gamma|)}}\right]_{k,j=1}^n\right).
\end{multline}
We further use the Cauchy determinant evaluation in the form
$$
 \det\left[\frac{1}{1-a_i b_j}\right]_{i,j=1}^n = \frac{\prod\limits_{1\le i<j \le n} (a_i-a_j)(b_i-b_j)}{\prod_{i,j=1}^n(1-a_i b_j)},
$$
which transforms \eqref{eq_x14} into
\begin{multline}
\label{eq_x15}
 \lim_{r\to 1} \left( \frac{(1-r)^{-n(n-1)/2}}{\prod\limits_{i<j} (z_i-z_j)} \cdot \frac{\prod\limits_{i<j}(r^{-2i}-r^{-2j})(e^{-2(z_i+t-|\gamma|)}-e^{-2(z_j+t-|\gamma|)})}{\prod_{i,j=1}^n \bigl(1- r^{-2i} e^{-2(z_j+t-|\gamma|)}\bigr) } \prod_{i=1}^n \bigl[ r^{-2i} e^{-2 (z_i+t-|\gamma|)}\bigr]\right)
 \\ = \prod\limits_{i<j}\left[ (2j-2i)\frac{e^{-2(z_i+t-|\gamma|)}-e^{-2(z_j+t-|\gamma|)}}{z_i-z_j}\right] \cdot \prod_{j=1}^n\left[ \bigl(1-  e^{-2(z_j+t-|\gamma|)}\bigr)^{-n}    e^{-2 (z_i+t-|\gamma|)}\right]
 \\ = \prod\limits_{i<j}\left[ (2i-2j)\frac{e^{2(z_i+t-|\gamma|)}-e^{2(z_j+t-|\gamma|)}}{z_i-z_j}\right] \cdot \prod_{j=1}^n\left[ \bigl(1-  e^{2(z_j+t-|\gamma|)}\bigr)^{-n}  \right].
\end{multline}
Computing the ratio of \eqref{eq_x15} at $z_1=\xi_1,\dots,z_k=\xi_k$, $z_{k+1}=\dots=z_n=0$ and at $z_1=\dots=z_n=0$, we get
\begin{equation}
 \E_{\widehat{\mathcal M}^{n,t,\gamma}}\bigl[  \B_{x_1,\dots,x_n}(\xi_1,\dots,\xi_k,0^{n-k})\bigr]= \prod\limits_{1\le i <j \le k} \frac{e^{2\xi_i}-e^{2\xi_j}}{2(\xi_i-\xi_j)} \prod_{j=1}^k \Biggl[ \frac{e^{2\xi_j}-1}{2\xi_j} \Biggr]^{n-k}  \Biggl[ e^{-\xi_j} \frac{\sinh(t-|\gamma|)}{\sinh(\xi_j+t-|\gamma|)}\Biggr]^n.
\end{equation}
We combine the last formula with Theorem \ref{Theorem_partition_through_Bessel} and Conjecture \ref{Conjecture_concentration_case1}. We get
\begin{align*}
 \notag \ZZ_n(\xi_1,\dots,\xi_k;\, t,\gamma)&= \prod_{1\le i<j \le k} \frac{\xi_i-\xi_j}{ \sinh(\xi_i-\xi_j)}
 \prod\limits_{j=1}^k \left[\frac{\sinh(\xi_j+t-\ga) \sinh(\xi_j+t+\ga)}{\sinh(t-\ga) \sinh(t+\ga)}\right]^n  \left[\frac{\xi_j}{ \sinh(\xi_j)}\right]^{n-k}
 \\ &\quad \times \prod\limits_{1\le i <j \le k} \frac{e^{2\xi_i}-e^{2\xi_j}}{2(\xi_i-\xi_j)} \prod_{j=1}^k \Biggl[ \frac{e^{2\xi_j}-1}{2\xi_j} \Biggr]^{n-k}  \Biggl[e^{-\xi_j} \frac{\sinh(t-|\gamma|)}{\sinh(\xi_j+t-|\gamma|)}\Biggr]^n \cdot (1+o(1))
 \\ &=  \prod\limits_{j=1}^k \left(e^{-\xi_j} \left[\frac{\sinh(\xi_j+t+|\ga|)}{ \sinh(t+|\ga|)}\right]^n \right) \cdot (1+o(1)). \qedhere
\end{align*}
\end{proof}

\section{$k=1$ in Theorems \ref{Theorem_asymptotics_case1} -- \ref{Theorem_asymptotics_case4} through orthogonal polynomials}

\label{Section_k1_asymptotics}

In this section and the next one we provide a rigorous alternative to the arguments of Section \ref{Section_asymptotics_through_log_gas}. First, in the current section we prove Theorems \ref{Theorem_asymptotics_case1}--\ref{Theorem_asymptotics_case4}  for the case $k=1$,  relying on Theorem \ref{Theorem_partition_k1} and associated orthogonal polynomials for the asymptotic analysis. Section \ref{Section_general_reduction} then extends the arguments to $k>1$.

Theorem \ref{Theorem_partition_k1} expresses $\ZZ_n(\xi; t, \ga)$ in terms of the polynomial $P_{n-1}(x) = \E_{\mathcal M^{n-1,t,\gamma}} \left[\prod_{i=1}^{n-1} (x-x_i)\right] $.
 Our asymptotic analysis of $\ZZ(\xi; t, \ga)$ is based on the observation that $P_k$, $k\ge 0$, are orthogonal polynomials.
 \begin{lem}
Let $P_0(x) = 1$ and $P_k(x) := \E_{\mathcal M^{k,t,\gamma}} \left[\prod_{i=1}^{k} (x-x_i)\right] $ for $k>1$. These are the unique monic polynomials satisfying the orthogonality condition
\eq\label{eq:OP-def}
\int_{\R} P_j(x) P_k(x) e^{tx} \m(\dd x)= h_k \de_{j=k}, \quad j,k=0,1,2,\dots,
\eeq
where $h_k$ are the constants defined in \eqref{eq_M_def}.
\end{lem}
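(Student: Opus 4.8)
The plan is to exploit the determinantal structure of the measure $\mathcal M^{k,t,\gamma}$. First I would write out $P_k(x)$ explicitly: by the definition of $\mathcal M^{k,t,\gamma}$ in \eqref{eq_M_def},
\eq
 P_k(x)=\frac{1}{h_0h_1\cdots h_{k-1}}\int_{\R^k}\prod_{i=1}^k(x-x_i)\prod_{1\le i<j\le k}(x_i-x_j)^2\prod_{i=1}^k e^{tx_i}\,\m(\dd x_i),
\eeq
the integral being over ordered $x_1\le\cdots\le x_k$, but by symmetry of the integrand we may integrate over all of $\R^k$ at the cost of a factor $1/k!$. The key observation is that $\prod_{i=1}^k(x-x_i)\prod_{1\le i<j\le k}(x_i-x_j)=\det[\,x_i^{\,j-1}\,]$ extended to $(k+1)$ points $x_1,\dots,x_k,x$, i.e.\ $\prod_{i=1}^k(x-x_i)$ times the Vandermonde of $x_1,\dots,x_k$ equals the Vandermonde of $(x_1,\dots,x_k,x)$. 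Hence $P_k(x)$ is, up to the normalization, a $(k+1)$-fold integral of a product of two Vandermondes in which the last variable $x$ is frozen. This is the standard route to the Heine-type formula for orthogonal polynomials.

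Next I would establish that $P_k$ is monic of degree $k$: expanding $\prod_{i=1}^k(x-x_i)=x^k+(\text{lower order in }x)$, the top coefficient is $\frac{1}{h_0\cdots h_{k-1}}\int\prod_{i<j}(x_i-x_j)^2\prod e^{tx_i}\m(\dd x_i)$, which is exactly the normalization constant making $\mathcal M^{k,t,\gamma}$ a probability measure and hence equals $1$. (For $k=0$ the convention $P_0\equiv1$ matches.) Then I would verify the orthogonality \eqref{eq:OP-def}. Fix $j<k$; it suffices to show $\int_\R x^m P_k(x)e^{tx}\m(\dd x)=0$ for every $m=0,1,\dots,k-1$, since $P_j$ is a combination of such monomials. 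Substituting the integral representation of $P_k$ and renaming the free variable $x=x_{k+1}$, we get a $(k+1)$-fold integral of $x_{k+1}^m\cdot\prod_{i=1}^k(x_{k+1}-x_i)\cdot\prod_{1\le i<j\le k}(x_i-x_j)^2\cdot\prod_{i=1}^{k+1}e^{tx_i}\m(\dd x_i)$ against the measure. Writing $\prod_{i=1}^k(x_{k+1}-x_i)\cdot\prod_{1\le i<j\le k}(x_i-x_j)=\prod_{1\le i<j\le k+1}(x_i-x_j)$ (up to sign) and the remaining factor $\prod_{1\le i<j\le k}(x_i-x_j)=\det[\,x_i^{\,\ell-1}\,]_{i,\ell=1}^{k}$, one of the two alternating factors is a Vandermonde in all $k+1$ variables while the other involves only the monomials $1,x,\dots,x^{k-1}$ in the first $k$ variables and the monomial $x_{k+1}^m$ with $m\le k-1$ in the last. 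Antisymmetrizing the integrand over all permutations of $(x_1,\dots,x_{k+1})$ (legitimate because the measure $e^{tx}\m(\dd x)^{\otimes(k+1)}$ is exchangeable and the Vandermonde is alternating) turns the second factor into $\det$ of a $(k+1)\times(k+1)$ matrix whose entries are drawn from the power set $\{1,x,\dots,x^{k-1},x^m\}$ — only $k$ distinct powers among $k+1$ columns — so that determinant is identically zero, and the integral vanishes. The case $j=k$ gives $\int P_k^2\,e^{tx}\m(\dd x)=h_k$ by the same bookkeeping: now the two alternating factors are the Vandermonde of all $k+1$ variables and $\det[\,x_i^{\ell-1}\,]$ with $\ell=1,\dots,k$ for the first $k$ variables and $x_{k+1}^k$, which antisymmetrizes to $\pm(k+1)!$ times the full Vandermonde, and one reads off $h_k=h_0h_1\cdots h_k/(h_0\cdots h_{k-1})$ after matching normalizations. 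Uniqueness of monic orthogonal polynomials with respect to a (positive) measure is standard: the conditions $\int x^mP_k\,\dd\m_t=0$ for $m<k$ together with monicity determine the $k$ lower coefficients via a nonsingular (Gram/Hankel) linear system, the nonsingularity being exactly the positivity of the $h_k$'s, which in turn follows since each $h_k=\int P_k^2\,e^{tx}\m(\dd x)>0$.

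The main obstacle I anticipate is purely bookkeeping: making the antisymmetrization/Cauchy--Binet step fully rigorous when $\m$ is a discrete measure (cases \eqref{eq_measure_case1}, \eqref{eq_measure_case3}), where one must be careful that the diagonal terms $x_i=x_j$ contribute zero (they do, since the Vandermonde vanishes there) and that all sums converge absolutely so that reordering is allowed — this requires the decay of $\m$ and the growth control already implicit in the definition and used elsewhere in the paper (e.g.\ the Laplace-transform representation \eqref{eq_Laplace_rep} converges in the relevant strip). There is no deep difficulty, only the need to phrase the classical Heine argument so it covers both the absolutely continuous and the discrete potentials simultaneously; alternatively one can cite the general theory of orthogonal polynomials for measures with finite moments of all orders, noting that $e^{tx}\m(\dd x)$ has this property in every phase by the explicit formulas \eqref{eq_measure_case1}--\eqref{eq_measure_case4}.
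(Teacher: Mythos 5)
Your proof is correct and takes essentially the same route as the paper's: both recognize $P_k(x)=\E_{\mathcal M^{k,t,\gamma}}[\prod_i(x-x_i)]$ as the Heine (Vandermonde-squared) formula for the monic orthogonal polynomial with respect to $e^{tx}\m(\dd x)$. The only difference is that the paper simply cites Deift's book [Proposition 3.8 and Eq.\ (3.11) of Deift99] for the Heine formula and the normalization $h_k=D_k/D_{k-1}$, whereas you prove orthogonality directly via the antisymmetrization argument (the $(k+1)\times(k+1)$ determinant with a repeated column vanishes for $j<k$, and for $j=k$ one recovers the squared Vandermonde, giving $h_k$). Your self-contained derivation is a reasonable trade: it avoids the citation but adds bookkeeping that the paper sidesteps, and it correctly flags the only point that needs care (absolute convergence and finiteness of moments in the discrete and continuous cases alike, so that the reorderings are legitimate).
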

\begin{proof}
By definition
 \eq\label{eq:P_n1_Heine_def}
 \begin{aligned}
 \E_{\mathcal M^{k,t,\gamma}} \left[\prod_{i=1}^{k} (x-x_i)\right] &= \frac{1}{h_0h_1 \cdots h_{k-1}}\int_{x_1<x_2<\cdots<x_k}  \prod_{i=1}^{k} (x-x_i)\prod_{1\le i < j \le k} (x_j-x_i)^2 \prod_{i=1}^{k} e^{tx_i} \m(\dd x_i) \\
& =\frac{1}{h_0h_1 \cdots h_{k-1}k!}\int_\R \cdots \int_\R  \prod_{i=1}^{k} (x-x_i)\prod_{1\le i < j \le k} (x_j-x_i)^2 \prod_{i=1}^{k} e^{tx_i} \m(\dd x_i),
 \end{aligned}
 \eeq
 where in the second line we used the fact that the integrand is symmetric in all $k$ variables. This is the well-known Heine formula for orthogonal polynomials, see, e.g., \cite[Proposition 3.8]{Deift99} for a proof of orthogonality  for the corresponding system orthonormal polynomials. The formula given there differs from \eqref{eq:P_n1_Heine_def} by a multiplicative constant, and the polynomials  $ P_{k}(x) $ defined here are monic by construction.

The normalizing constant $h_0h_1\cdots h_{k-1}$ is given as
\eq\label{eq:h0hk}
h_0h_1\cdots h_{k-1} =\frac{1}{k!}  \int_\R\cdots\int_{\R} \prod_{1\le i < j \le k} (x_j-x_i)^2 \prod_{i=1}^{k} e^{tx_i} \m(\dd x_i),
\eeq
which is the same as the quantity $D_{k-1}$ in  \cite[Proposition 3.8]{Deift99}. The orthonormal polynomial of degree $k$ has leading coefficient $\sqrt{\frac{D_{k-1}}{D_k}}$ (see \cite[Equation (3.11)]{Deift99}), which implies the monic orthonormal polynomials satisfy
\eq\label{eq:OP-Dk}
\int_{\R} P_j(x) P_k(x) e^{tx} \m(\dd x)= \frac{D_k}{D_{k-1}} \de_{j=k}.
\eeq
Combining \eqref{eq:h0hk} and \eqref{eq:OP-Dk} we prove \eqref{eq:OP-def}.
\end{proof}

Since
\eq
\E_{\mathcal M^{1,t,\gamma}} \left[e^{\xi x} P_{n-1}(x)\right]= \frac{1}{h_0} \int_\R e^{\xi x} P_{n-1}(x) e^{tx} \m(\dd x),
\eeq
the formula \eqref{eq_partition_k1} can be written as
\eq\label{eq:Ztilde_OP_formula}
\ZZ_n(\xi; t, \ga) =(n-1)!\left(\frac{a(t+\xi,\ga)b(t+\xi,\ga)}{a(t,\ga)b(t,\ga)}\right)^n \left(\frac{1}{b(\xi,0)}\right)^{n-1} \int_\R e^{\xi x} \frac{P_{n-1}(x)}{h_{n-1}} e^{tx} \m(\dd x),
\eeq
where $P_{n-1}(x)$ and $h_{n-1}$ are the monic orthogonal polynomial and normalizing constant, respectively, defined by the orthogonality system \eqref{eq:OP-def}.

In order to evaluate \eqref{eq:Ztilde_OP_formula} asymptotically as $n\to\infty$, it is convenient to rescale the integral by a factor of $n$ and to absorb the $e^{tx}$ factor into the definition of the measure. We therefore introduce an auxiliary rescaled measure $\m_n$ by
\[
\m_n(A) = \frac{1}{n} \m(nA),
\]
for $A\subseteq \R$. And then we use $\m_n$ to define a new measure $\mt_n$ through
\[
\mt_n(\dd x) = e^{ntx} \m_n(\dd x).
\]
The rescaled polynomials
\eq\label{eq:OPs_rescaled}
p_{n,k}(x) = \left(\frac{1}{n}\right)^k P_k(nx),
\eeq
satisfy the orthogonality relation under the measure $\mt_n$:
\eq\label{eq:rescaled_orth}
\int_{\R} p_{n,j}(x) p_{n,k}(x)\mt_n(\dd x) = h_{n,k} \de_{j=k},  \qquad h_{n,k} = \left(\frac{1}{n}\right)^{2k+1} h_k.
\eeq
In terms of the rescaled polynomials, the formula \eqref{eq:Ztilde_OP_formula} becomes
\eq\label{eq:Ztilde_OP_formula_rescaled}
\ZZ_n(\xi; t, \ga) = (n-1)!\left(\frac{a(t+\xi,\ga)b(t+\xi,\ga)}{a(t,\ga)b(t,\ga)}\right)^n \left(\frac{1}{n b(\xi,0)}\right)^{n-1} \int_\R e^{n \xi x} \frac{p_{n,n-1}(x)}{h_{n,n-1}}  \mt_n(\dd x).
\eeq
It is convenient to split the last integral into two. For any $r, s>0$, we define the functions $I_{\rm in}(\xi;r,s)$ and $I_{\rm out}(\xi;r,s)$ as
\eq\label{eq:I_inout}
I_{\rm in}(\xi; r,s) = \int_{-r}^s e^{n \xi x}\frac{p_{n,n-1}(x)}{h_{n,n-1}}\mt_n(\dd x),\qquad I_{\rm out}(\xi; r,s) = \int_{\R \setminus (-r, s)} e^{n \xi x}\frac{p_{n,n-1}(x)}{h_{n,n-1}}\mt_n(\dd x),
\eeq
so that
\eq\label{eq:Ztilde_in_out}
\ZZ_n(\xi; t, \ga) = (n-1)!\left(\frac{a(t+\xi,\ga)b(t+\xi,\ga)}{a(t,\ga)b(t,\ga)}\right)^n \left(\frac{1}{n b(\xi,0)}\right)^{n-1} \bigl(I_{\rm in}(\xi; r,s)+I_{\rm in}(\xi; r,s)\bigr).
\eeq
The advantage of splitting the integral in \eqref{eq:Ztilde_OP_formula_rescaled} into these two pieces is that $I_{\rm in}(\xi; r,s)$ may then be expressed as a complex contour integral. For that we need one more definition. For a H\"older continuous function $f(x)\in L^1(\mt_n)$, we define the Cauchy transform with respect to  $\mt_n$ as
\eq\label{eq:Cauchy_def}
C^{\mt_n}(f)(z) = \frac{1}{2\pi \ii} \int_{\R} \frac{f(x) \mt_n(\dd x)}{x- z}.
\eeq
The integral \eqref{eq:Cauchy_def} is obviously convergent for $z$ outside the support of $\mt_n$. If the measure $\mt_n$ is a discrete one (as it is in the ferroelectric and antiferroelectric phase regions), then $C^{\mt_n}(f)(z)$ is a meromorphic function on $\C$. If the measure $\mt_n$ is absolutely continuous with respect to Lebesgue measure on $\R$ (as it is in the disordered and boundary phase regions), then the integral \eqref{eq:Cauchy_def} is singular for $z\in \R$, but H\"older continuity of $f$ implies that it takes limiting values as $z$ approaches the real line from the upper and lower half-planes. We therefore define for $x\in \R$ the functions $C^{\mt_n}(f)_\pm(x)$ as
\[
C^{\mt_n}(f)_{\pm}(x) =  \lim_{\eps\to 0}  \frac{1}{2\pi \ii} \int_{\R} \frac{f(y) \mt_n(\dd y)}{y- (x\pm \ii \eps) },
\]
so that $C^{\mt_n}(f)_{+}(z)$ (resp. $C^{\mt_n}(f)_{-}(z)$) is analytic in the upper (resp. lower) half-plane and continuous up to the boundary $\R$.

\begin{prop} \label{Prop:cauchy}
We can express $I_{\rm in}(\xi; r,s)$ in terms of the Cauchy transform \eqref{eq:Cauchy_def} as
\eq\label{eq:I_in_contour}
I_{\rm in}(\xi; r,s) = - \int_\Om e^{n \xi z} \frac{C^{\mt_n}(p_{n,n-1})(z) }{h_{n,n-1}}\, \dd z,
\eeq
where $\Om$ is a positively oriented contour which encloses the interval $(-r, s)$ and crosses the real axis at the points $z=-r$ and $z=s$. If $\mt_n$ is a discrete measure, then we additionally assume that $-r$ and $s$ are outside its support.
\end{prop}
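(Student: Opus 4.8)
The plan is to establish \eqref{eq:I_in_contour} by unfolding the definition \eqref{eq:Cauchy_def} of the Cauchy transform, interchanging the two integrals, and evaluating the resulting inner contour integral by residues. Writing out the right-hand side of \eqref{eq:I_in_contour},
\[
-\int_\Om e^{n\xi z}\,\frac{C^{\mt_n}(p_{n,n-1})(z)}{h_{n,n-1}}\,\dd z
= -\frac{1}{2\pi\ii\,h_{n,n-1}}\int_\Om\left(\int_\R \frac{e^{n\xi z}\,p_{n,n-1}(x)}{x-z}\,\mt_n(\dd x)\right)\dd z ,
\]
I would swap the order of integration by Fubini and then observe that, for each fixed real $x$, the function $z\mapsto e^{n\xi z}/(x-z)$ is meromorphic with a single simple pole at $z=x$ of residue $-e^{n\xi x}$. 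Since $\Om$ is a positively oriented simple closed contour whose interior region meets $\R$ exactly in the interval $(-r,s)$, the inner integral $\int_\Om e^{n\xi z}/(x-z)\,\dd z$ equals $-2\pi\ii\,e^{n\xi x}$ for $x\in(-r,s)$ and $0$ for real $x\notin[-r,s]$; the endpoints $x=-r,s$ are irrelevant because they are excluded from the support of $\mt_n$ in the discrete phases (by hypothesis) and carry zero $\mt_n$-mass in the absolutely continuous ones. Substituting back collapses the right-hand side to $\frac{1}{h_{n,n-1}}\int_{(-r,s)}e^{n\xi x}p_{n,n-1}(x)\,\mt_n(\dd x)=I_{\rm in}(\xi;r,s)$, which is \eqref{eq:I_in_contour}.

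The one step needing genuine care is the Fubini interchange, i.e.\ the absolute convergence of the double integral. Since $\Om$ is a fixed compact contour, $|e^{n\xi z}|$ is bounded on $\Om$, and $p_{n,n-1}\in L^1(\mt_n)$ because in all four phases the weight of $\mt_n$ has exponentially decaying tails (which follows, e.g., from finiteness of $\mt_n(\R)$ and of $h_{n,n-1}$), so polynomial growth is harmless. It remains to control the factor $|x-z|^{-1}$. In the discrete phases \eqref{eq_measure_case1} and \eqref{eq_measure_case3} no atom of $\mt_n$ lies on $\Om$ (the atoms are real, and $\Om$ meets $\R$ only at the non-atoms $-r,s$), so $\dist(\operatorname{supp}\mt_n,\Om)>0$ and $|x-z|^{-1}$ is bounded uniformly; in the absolutely continuous phases \eqref{eq_measure_case2} and \eqref{eq_measure_case4}, $\int_\R |x-z|^{-1}\,\mt_n(\dd x)$ is bounded for $z$ away from $\R$ and grows only like $\log(1/|\Im z|)$ as $z$ tends to one of the crossing points $-r,s$, which is integrable against arclength along $\Om$. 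This yields absolute convergence and licenses the interchange.

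An essentially equivalent argument, which may read more transparently in the discrete phases where $C^{\mt_n}(p_{n,n-1})$ is genuinely meromorphic, is to apply the residue theorem directly to $z\mapsto e^{n\xi z}C^{\mt_n}(p_{n,n-1})(z)$: inside $\Om$ its only poles are the atoms $x$ of $\mt_n$ in $(-r,s)$, each simple with residue $-\tfrac{1}{2\pi\ii}e^{n\xi x}p_{n,n-1}(x)\,\mt_n(\{x\})$, and summing these gives $\int_\Om e^{n\xi z}C^{\mt_n}(p_{n,n-1})(z)\,\dd z=-h_{n,n-1}\,I_{\rm in}(\xi;r,s)$. In the absolutely continuous phases one can instead split $\Om$ into its arcs in the open upper and lower half-planes, deform each onto the segment $[-r,s]$ from the respective side using analyticity of $C^{\mt_n}(p_{n,n-1})$ off $\R$ together with that of $e^{n\xi z}$, and apply the Plemelj jump relation $[C^{\mt_n}(p_{n,n-1})]_+(x)-[C^{\mt_n}(p_{n,n-1})]_-(x)=p_{n,n-1}(x)w_n(x)$, where $\mt_n(\dd x)=w_n(x)\,\dd x$. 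I expect the only real obstacle to be bookkeeping: tracking orientations and the sign in the Plemelj relation carefully enough to land on the minus sign in front of the integral in \eqref{eq:I_in_contour}.
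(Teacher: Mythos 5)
Your primary argument --- unfold the definition of $C^{\mt_n}$, interchange the double integral by Fubini, and evaluate the resulting $z$-integral $\int_\Om e^{n\xi z}(x-z)^{-1}\,\dd z$ by residues as a function of the pole location $x$ --- is correct and is a genuinely different route from what the paper does. The paper argues phase by phase: in the discrete phases it applies the residue theorem directly to the meromorphic function $e^{n\xi z}C^{\mt_n}(p_{n,n-1})(z)$ (which coincides with the first of your two ``alternative'' sketches), and in the absolutely continuous phases it uses the Plemelj jump relation to rewrite $\int_{-r}^s$ as an integral over the two sides of the slit and then deforms those arcs into a closed loop $\Om$ (your second alternative sketch). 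Your Fubini argument has the advantage of treating all four phases in one stroke without invoking Plemelj or Hölder continuity of the density; the cost is the absolute-convergence check, which you carry out correctly (bounded distance from the lattice in the discrete phases, logarithmic divergence of $\int|x-z|^{-1}\mt_n(\dd x)$ near the two real crossings in the continuous phases, integrable against arclength). One small but worthwhile addition: to invoke Fubini you also need the $x$-integral of $|p_{n,n-1}(x)|$ against $\mt_n$ to converge over all of $\R$, not just over $(-r,s)$, since you are unfolding $C^{\mt_n}(p_{n,n-1})(z)$ which integrates over the whole support. This holds because the weight $e^{ntx}\m(\dd x)$ (or its discrete analogue) has exponential tails dominating any polynomial --- you gesture at this but it is the one place where a referee would ask you to say a sentence more. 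With that, the proof is complete and independent of the paper's.
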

\begin{remark}
In the disordered and boundary phase regions there is a minor ambiguity in the formula \eqref{eq:I_in_contour}, since $C^{\mt_n}(f)(z)$ is defined to be two-valued on the real line and the contour $\Om$ necessarily crosses the real line. However, since it crosses the real line at only two points, the particular value of $C^{\mt_n}(f)(z)$ at these two points does not affect the value of the integral and we may choose the value of $C^{\mt_n}(f)(-r)$ and $C^{\mt_n}(f)(s)$ to be either the limit from above or below.
\end{remark}
\begin{proof}[Proof of Proposition \ref{Prop:cauchy}] In the case that $\mt_n$ is a discrete measure, the integrals \eqref{eq:I_inout} are in fact sums and \eqref{eq:I_in_contour} follows from the residue theorem.

 If $\mt_n$ is absolutely continuous with respect to Lebesgue measure and supported on $\R$  then we can use the Plemelj formula (see, e.g., \cite[Lemma 7.2.1]{Ablowitz-Fokas97}), which relates the limiting values $C^{\mt_n}(f)_\pm(x)$ as
\eq\label{eq:Cauchy_jump}
C^{\mt_n}(f)_+(x) - C^{\mt_n}(f)_-(x) = f(x)w(x), \quad x\in \R,
\eeq
where $w(x)$ is the density for the measure $\mt_n$:
\eq
\mt_n(\dd x) = w(x) \dd x.
\eeq
It follows that
\eq
I_{\rm in}(\xi; r,s) = -\int_{\Ga_+ \cup \Ga_-} e^{n \xi z} \frac{C^{\mt_n}(p_{n,n-1})(z) }{h_{n,n-1}}\, \dd z,
\eeq
where $\Ga_\pm$ is the upper (resp. lower) cusp of the interval $(-r, s)$, oriented right-to-left (resp. left-to-right). Since $\Ga_\pm$ can be deformed into the upper (resp. lower) half-plane, the union $\Ga_+ \cup \Ga_-$ can be deformed into a positively oriented closed contour $\Om$ which encloses $(-r, s)$.
\end{proof}

\subsection{Asymptotics of the orthogonal polynomials}\label{sec:OPasy}

Since the measure $\m$ (and therefore $\mt_n$) is different in each of the phase regions, so are the orthogonal polynomials \eqref{eq:OP-def}, and asymptotic analysis of \eqref{eq:Ztilde_in_out} is slightly different in each of the phase regions. In each phase region, the large-$n$ asymptotics for the polynomial $p_{n,n-1}(z)$ can be obtained for $z\in \C$ by the Riemann--Hilbert method. This analysis has been implicitly carried out in a series of earlier papers \cite{Bleher-Bothner12, Bleher-Fokin06, Bleher-Liechty09, Bleher-Liechty10} (see also \cite{Bleher-Liechty14}), though explicit asymptotic formulas for $p_{n,n-1}(z)$ are not presented in any of those papers (the focus of those papers was on the asymptotic expansion of the normalizing constants $h_n$) and we need to extract these formulas.

Below we describe the relevant systems of orthogonal polynomials in each of the phase regions and present their asymptotic properties.
%

\begin{itemize}
 \item For  the ferroelectric phase parameterized by \eqref{eq_case1}, the orthogonality condition \eqref{eq:rescaled_orth} is
 \eq\label{def:OPsrescaled_F}
 \frac{1}{n}\sum_{x\in \frac{2}{n} \Z_{<0}} p_{n,k}^{\rm F}(x)p_{n,j}^{\rm F}(x)\, w_n^{\rm F}(x) = h^{\rm F}_{n,k}\de_{j=k}, \qquad w_n^{\rm F}(x) = e^{n(t-|\ga|)x} - e^{n(t+|\ga|)x}.
 \eeq
 \item For  the disordered phase parameterized by \eqref{eq_case2},  the orthogonality condition \eqref{eq:rescaled_orth} is
\eq\label{eq:rescaledOP}
\int_{-\infty}^\infty p_{n,j}^{\rm D}(x) p_{n,k}^{\rm D}(x) w_n^{\rm D}(x)\dd x = h_{n,k}^{\rm D} \de_{j=k}, \qquad w_n^{\rm D}(x) = e^{ntx}\frac{\sinh\left(\frac{nx(\pi - 2\ga)}{2}\right)}{\sinh(n\pi x/2)}.
\eeq
\item For the antiferroelectric  phase parameterized by \eqref{eq_case3},  the orthogonality condition \eqref{eq:rescaled_orth} is\footnote{There is an additional factor of 2 in the original measure \eqref{eq_measure_case3}. Changing the measure by a constant factor does not change the monic orthogonal polynomials, and only changes the normalizing constants $h_{n,k}$ by the same factor. Since the formula \eqref{eq:Ztilde_OP_formula_rescaled} depends only on the ratio of the measure and the normalizing constant $h_{n,n-1}$, it is unaffected by this change.}
\eq\label{def:OPsrescaled_AF}
\frac{1}{n}\sum_{x\in \frac{2}{n}\Z} p_{n,k}^{\rm AF}(x) p_{n,j}^{\rm AF}(x) w_n^{\rm AF}(x) = h_{n,k}^{\rm AF} \de_{j=k}, \qquad w_n^{\rm AF}(x) = e^{-n(\ga|x| - tx)}.
\eeq
  \item For  the boundary case parameterized by \eqref{eq_case4},  the orthogonality condition \eqref{eq:rescaled_orth} is
\eq\label{eq:rescaledOP_B}
\int_{-\infty}^\infty p^{\rm B}_{n,k}(x)p^{\rm B}_{n,j}(x) w^{\rm B}_n(x)\dd x = h^{\rm B}_{n,k} \de_{j=k}, \qquad w_n^{\rm B}(x) = e^{-n(\ga|x| - tx)}.
\eeq
\end{itemize}

Large-$n$ asymptotics of the orthogonal polynomials and their Cauchy transforms can be described in terms of the {\it equilibrium measure} described in Theorem \ref{Theorem_LLN_log_gas}.  Note that in each of the phase regions, the equilibrium measure $\nu$ is supported on an interval $[\al,\be]$ except in the ferroelectric region, where it is supported on the interval $[\be, 0]$ having density $\frac{1}{2}$ on the interval $[\al,0]$ with $\be<\al<0$. Since the points $\al$ and $\be$ describing the equilibrium measure are different in each phase region, we introduce subscripts for each phase region to avoid confusion: we use $\al_{\rm F}$ and $\be_{\rm F}$ to denote $\al$ and $\be$ as defined in \eqref{eq_case1_alpha_beta}; we use $\al_{\rm D}$ and $\be_{\rm D}$ to denote $\al$ and $\be$ as defined in \eqref{eq_case2_alpha_beta}; we use $\al_{\rm AF}$, $\al_{\rm AF}'$, $\be_{\rm AF}'$ and $\be_{\rm AF}$ to denote $\al$, $\al'$, $\be'$, and $\be$ as defined in \eqref{eq_case3_alpha_beta}; and we use $\al_{\rm B} \equiv \al_{\rm D}$ and $\be_{\rm B} \equiv \be_{\rm D}$ when referring to the boundary phase $\De = -1$.

The asymptotics of the orthogonal polynomials are given in terms of the log-transform of the equilibrium measure. For $* \in \{ {\rm D, B, AF}\}$, denote
\eq\label{def:g-function}
g_{*}(z):= \int_{\al_{*}}^{\be_{*}} \ln(z-x)\nu(\dd x), \quad z\in \C \setminus [\al_{*}, \be_{*}],
\eeq
and for the ferroelectric phase denote
\eq\label{def:g-functionF}
g_{{\rm F}}(z):= \int_{\be_{\rm F}}^0 \ln(z-x)\nu(\dd x), \quad z\in \C \setminus [\be_{\rm F}, 0],
\eeq
where we take the principal branch of the logarithm so that for $* \in \{ {\rm D, B, AF}\}$, $g_{*}(z)$ has a cut on $(-\infty, \be_{*}]$, and $g_F(z)$ has a cut on $(-\infty,0]$.
The derivative of the $g$-function is exactly the Stieltjes transform \eqref{eq:Stieltjes}, for which explicit formulas are given in each of the phase regions in Theorem \ref{Theorem_LLN_log_gas}. Explicit formulas for the $g$-function are then obtained by the conditions
\eq\label{eq:g_gp}
g_*'(z) = G_{\nu}(z), \qquad g_*(z) \sim \ln(z)+\bigO(1/z) \ \textrm{as} \ z\to\infty.
\eeq

%
The $g$-function satisfies several additional properties which we collect below. For $* \in \{ {\rm D, B, AF, F}\}$ introduce the notation
\eq
V_*(x):= -\lim_{n\to\infty} \frac{1}{n}  \ln w_n^*(x),
\eeq
so that
\eq\label{def:V}
V_{\rm D}(x)=V_{\rm B}(x) = V_{\rm AF}(x) = \ga|x| - t x, \qquad  V_{\rm F}(x) = -(t-|\ga|) x.
\eeq
The $g$-function satisfies the following properties, see e.g., \cite[Sections 2.4.2 and 3.4]{Bleher-Liechty14}.
\begin{itemize}
\item For $* \in \{ {\rm D, B, AF}\}$, $g_*(z)$ is analytic on $\C \setminus (-\infty , \be_*]$, and for $* = {\rm F}$ it is analytic on $\C \setminus (-\infty , 0]$. On the cuts  $(-\infty,  \be_*]$ or $(-\infty, 0]$  it takes limiting values from the upper and lower half planes, which we may denote
$g_{*\pm}(x)$. For $x$ on these cuts, the functions $g_{*+}(x)$ and $g_{*-}(x)$ differ by a purely imaginary term, thus $\Re g(z)$ is continuous on $\C$.
\item There exists a constant $l_*$ such that the functions $g_{*\pm}(x)$ satisfy
\eq\label{eq:eq_condition_band}
\begin{aligned}
g_{\rm D +}(x) + g_{\rm D -}(x) &= 2\Re(g_{\rm D \pm}(x))= V_{\rm D}(x) + l_{\rm D}, \quad x\in [\al_{\rm D}, \be_{\rm D}], \\
g_{\rm AF +}(x) + g_{\rm AF -}(x) &= 2\Re(g_{\rm AF \pm}(x)) = V_{\rm AF}(x) + l_{\rm AF}, \quad x\in [\al_{\rm AF}, \al_{\rm D}'] \cup [\be_{\rm AF}', \be_{\rm AF}], \\
g_{\rm F +}(x) + g_{\rm F -}(x) &=2\Re(g_{\rm F \pm}(x)) = V_{\rm F}(x) + l_{\rm F}, \quad x\in [ \be_{\rm F}, \al_{\rm F}]. \\
\end{aligned}
\eeq
\item For the same constant $l_*$, for $*\in \{{\rm D, B, AF}\}$,
\eq\label{eq:equilibrium_lessthan}
g_{ *+}(x) + g_{ *-}(x) = 2\Re(g_{\rm * \pm}(x)) < V_{\rm *}(x) + l_{\rm *}, \quad x \notin  {\rm supp}(\nu),
\eeq
and for $*={\rm F}$,
\eq\label{eq:equilibrium_lessthanF}
g_{ {\rm F}+}(x) + g_{ {\rm F}-}(x) = 2\Re(g_{\rm F \pm}(x)) < V_{\rm F}(x) + l_{\rm F}, \quad x< \be_{\rm F} .
\eeq
\item For the same constant $l_*$, for $*\in \{{\rm F, AF}\}$,
\eq \label{eq:eq_condition_sat}
\begin{aligned}
g_{ {\rm AF}+}(x) + g_{ {\rm AF}-}(x) &=  2\Re(g_{\rm AF \pm}(x))> V_{\rm AF}(x) + l_{\rm AF}, \quad x \in (\al_{\rm AF}', \be_{\rm AF}'), \\
g_{ {\rm F}+}(x) + g_{ {\rm F}-}(x) &=  2\Re(g_{\rm F \pm}(x)) > V_{\rm F}(x) + l_{\rm F}, \quad x \in ( \al_{\rm F},0].
\end{aligned}
\eeq
\end{itemize}

The following propositions  give asymptotic formulas for the systems of orthogonal polynomials as well as their Cauchy transforms  in terms of the relevant $g$-function.
\begin{prop}\label{prop:OP_asy}
For $*\in \{{\rm D},  {\rm B}, {\rm AF}\}$, let $p^*_{n,n-1}(z)$ and $h^*_{n,n-1}$ be the orthogonal polynomials and normalizing constants, respectively, defined in \eqref{eq:rescaledOP}, \eqref{def:OPsrescaled_AF}, and \eqref{eq:rescaledOP_B}.
\begin{itemize}
\item
For $* \in \{{\rm D}, {\rm B}\}$, the following asymptotic formulas are uniformly valid in closed subsets of $\C$ which are disjoint from $[\al_{\rm *},\be_{\rm *}]$.
\item For $* = {\rm AF}$ the following asymptotic formulas are uniformly valid in closed subsets of $\C$ which are disjoint from $[\al_{\rm AF}, \be_{\rm AF}]$ and whose distance from the lattice $\frac{2}{n}\Z$ is at least $\ep/n$ for a fixed $\ep>0$.
\end{itemize}
For $z$ in the sets described above we have as $n\to\infty$,
 \eq\label{eq:leading_Cauchy}
\frac{C^{\mt_n}(p^*_{n,n-1})(z)}{h^*_{n,n-1}} =-\frac{1}{2\pi \ii}e^{-ng_*(z)}M_*(z)(1+\bigO(1/n)),
\eeq
where $M_*(z)$ is an explicit function which is analytic on $\C\setminus [\al_*, \be_*]$ and satisfies
\eq\label{eq:Mstar_infinity}
M_*(z) = 1 + \bigO(1/z) \quad \textrm{as} \ z\to\infty.
\eeq
For $z$ in the same sets we have
\eq\label{eq:leading_OPs}
\begin{aligned}
\frac{p^*_{n,n-1}(z)}{h^*_{n,n-1}} &=e^{n(g_*(z)-l_*)}N_*(z)(1+\bigO(1/n)), \\
\end{aligned}
\eeq
where $N_*(z)$ is an explicit function which is analytic on $\C\setminus [\al_*, \be_*]$ and satisfies
\eq\label{eq:Nstar_infinity}
N_*(z) = \bigO(1/z) \quad \textrm{as} \ z\to\infty.
\eeq
\end{prop}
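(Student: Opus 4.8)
\emph{Proof idea.} The plan is a Deift--Zhou steepest-descent analysis of the Fokas--Its--Kitaev Riemann--Hilbert problem (RHP) for orthogonal polynomials, from which $p^*_{n,n-1}$, its Cauchy transform, and $h^*_{n,n-1}$ are extracted as the bottom row. Concretely, let $Y^{(n)}(z)$ be the $2\times2$ matrix, analytic off the support of $\mt_n$, with the standard unit-upper-triangular jump carrying $w_n^*$ in the $(1,2)$ slot and with $Y^{(n)}(z)\,z^{-n\sigma_3}\to I$ as $z\to\infty$; then $Y^{(n)}_{11}$ is the monic degree-$n$ polynomial orthogonal with respect to $w_n^*$, while $Y^{(n)}_{21}=-2\pi\ii\, p^*_{n,n-1}(z)/h^*_{n,n-1}$ and $Y^{(n)}_{22}=-2\pi\ii\, C^{\mt_n}(p^*_{n,n-1})(z)/h^*_{n,n-1}$ (the latter after matching $C^{\mt_n}$ with the Plemelj Cauchy operator). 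So it suffices to track the $(2,1)$ and $(2,2)$ entries of $Y^{(n)}$ for $z$ outside $[\al_*,\be_*]$. I emphasize that essentially all the analytic infrastructure --- existence and regularity of the equilibrium measure, the band/void structure (for ${\rm AF}$, also the saturated band where $\nu$ has density $\tfrac12$), the global parametrix, and every local parametrix --- has already been worked out for precisely these weights in \cite{Bleher-Bothner12,Bleher-Fokin06,Bleher-Liechty09,Bleher-Liechty10} (see also \cite{Bleher-Liechty14}); what is new is carrying those analyses down to the bottom row, since the cited papers only recorded the normalizing constants $h_n$.

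I would then proceed through the following steps. (i) In the ${\rm AF}$ (discrete, weight on $\tfrac2n\Z$) case, first convert the interpolation RHP to a continuous-jump RHP by the standard pole-removal conjugation; the conjugating factor is controlled only at distance of order $\ge 1/n$ from the lattice, which is exactly the source of the $\ep/n$ uniformity restriction in the statement. (ii) Apply the $g$-function transformation $Y^{(n)}\mapsto T = e^{-\frac{n l_*}2\sigma_3}\,Y^{(n)}\,e^{-n g_*(z)\sigma_3}\,e^{\frac{n l_*}2\sigma_3}$, using \eqref{eq:g_gp} to normalize $T$ at infinity and \eqref{eq:eq_condition_band} to turn the jump on the bands into an oscillatory one. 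A direct computation with this conjugation gives $Y^{(n)}_{21}=T_{21}\,e^{n(g_*-l_*)}$ and $Y^{(n)}_{22}=T_{22}\,e^{-ng_*}$ --- this is where the $e^{n(g_*(z)-l_*)}$ of \eqref{eq:leading_OPs} comes from and why no $l_*$ appears in \eqref{eq:leading_Cauchy}. (iii) Open lenses, $T\mapsto S$: the oscillatory band jump factorizes, and the strict inequalities \eqref{eq:equilibrium_lessthan} on the voids and \eqref{eq:eq_condition_sat} on the ${\rm AF}$ saturated band make the new jumps on the lens boundaries and on the complement of the support $I+O(e^{-cn})$. (iv) Build the global parametrix $M_*$ with jumps only on the bands and $M_*\to I$ at infinity: a Szeg\H{o} parametrix built from $\bigl(\tfrac{z-\be_*}{z-\al_*}\bigr)^{1/4}$ in the one-cut phases ${\rm D}$ and ${\rm B}$, and a Riemann theta-function parametrix (this is where the $\vartheta_\ell$'s of Theorem~\ref{Theorem_LLN_log_gas} enter) in the ${\rm AF}$ phase with its band--saturated band--band topology; then Airy local parametrices at $\al_*,\be_*$ and a dedicated one at $x=0$, where $V_*$ has a corner and where, in phase ${\rm D}$, the analytic continuation of $w_n^*$ has poles accumulating at $0$. (v) Set $R = S\,\Pi_*^{-1}$ with $\Pi_*$ the composite parametrix; its jumps are $I+O(1/n)$ on the parametrix circles and $I+O(e^{-cn})$ elsewhere, so $R = I + O(1/n)$ uniformly on closed sets disjoint from $[\al_*,\be_*]$ (and, for ${\rm AF}$, also at distance of order $\ge 1/n$ from $\tfrac2n\Z$).

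Finally I would unwind. On the indicated sets every lens and local-parametrix factor is the identity, so $S=T=R\,M_*$ there, whence $T_{21}=(M_*)_{21}\bigl(1+O(1/n)\bigr)$ and $T_{22}=(M_*)_{22}\bigl(1+O(1/n)\bigr)$. Combined with step (ii), this is \eqref{eq:leading_OPs} with $N_*(z)=-\tfrac1{2\pi\ii}(M_*)_{21}(z)$ and \eqref{eq:leading_Cauchy} with the $M_*(z)$ of the statement equal to the $(2,2)$ entry of the global parametrix; the normalizations \eqref{eq:Mstar_infinity}, \eqref{eq:Nstar_infinity} follow at once from $M_*\to I$ at infinity (the $(2,2)$ entry tends to $1$, the $(2,1)$ entry tends to $0$ like $1/z$). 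The main obstacle I anticipate is not a single hard estimate but the bookkeeping: stripping the chain of conjugations down to the bottom row while keeping exact track of the $g$-function and the constant $l_*$ (the literature reports only $h_n$), together with two genuinely non-generic features that must be imported from the references above --- the corner of $V_*$ and the pole accumulation of $w_n^{\rm D}$ at the origin, and the discrete (interpolation) nature of the ${\rm AF}$ problem, which is what forces the $\ep/n$ window in the uniformity claim.
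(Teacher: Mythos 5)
Your plan coincides structurally with the paper's proof in Appendix \ref{app:OP_asymptotics}: Fokas--Its--Kitaev RHP, bottom-row identifications $Y_{21}\propto p^*_{n,n-1}/h^*_{n,n-1}$ and $Y_{22}\propto C^{\mt_n}(p^*_{n,n-1})/h^*_{n,n-1}$, $g$-function conjugation, lens opening, and global/local parametrices imported from \cite{Bleher-Fokin06,Bleher-Liechty10,Bleher-Bothner12,Bleher-Liechty14}, with $M_*$ and $N_*$ read off as the $(2,2)$ and $(2,1)$ entries of the global parametrix. The role of the saturated band and theta-function parametrix in the AF phase and the $\ep/n$ lattice restriction are exactly as you describe, and you are right that the main work is bookkeeping the chain of conjugations down to the bottom row and reconciling the different scaling conventions (the paper does this in Lemma \ref{lem:appendix_misc}, matching $g,l$ with the $n$-dependent $g_n,l_n$ of the references to $\bigO(n^{-2})$).

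The one step you pass over too quickly is the conclusion in (v) that ``$R=I+\bigO(1/n)$''. This does not follow from the jumps being $I+\bigO(1/n)$ on parametrix circles and $I+\bigO(e^{-cn})$ elsewhere, because in the disordered phase there is \emph{no} local parametrix at the origin: the paper, following \cite{Bleher-Fokin06}, instead lets the lens boundaries $\Sigma_n^{\pm}$ dip to distance of order $1/n$ from the real axis near $z=0$, where the analytic continuation of $w_n^{\rm D}$ has poles spaced $\bigO(1/n)$ apart. On that short arc the jump $j_X-I$ is only $\bigO(n^{-\delta})$ for some small $\delta>0$ and is not exponentially small, so the naive small-norm contraction yields only $R=I+\bigO(n^{-\delta'})$ --- not enough for the stated proposition. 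The paper upgrades this to $\bigO(1/n)$ by (a) a sharpened $L^2$ estimate of the jump near $0$, (b) an inductive gain of $n^{-\delta}$ per iterate in the Neumann series for $R$, and (c) an explicit residue computation for the first iterate, deforming $\Sigma_n^{\pm}$ across the accumulating poles and summing their contributions. That residue/bootstrap argument is the non-trivial piece your proposal must still supply; a dedicated local parametrix at $z=0$ for the disordered phase is a plausible alternative, but it is not what the references or the paper construct, and it is not obvious that a closed-form one exists for $w_n^{\rm D}$.
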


\begin{remark}
In the disordered and boundary phases, the Cauchy transform on the left-hand-side of \eqref{eq:leading_Cauchy} has a discontinuity across the real axis which does not appear in the asymptotic formula on the right-hand-side of \eqref{eq:leading_Cauchy}. In fact the discontinuity is exponentially small in $n$ compared to the leading order behavior $e^{-ng_*(z)}$, which can be seen as follows. Denote $V_n(x) = -\frac{1}{n}\ln w_n(x)$, and for $x\in \R$ write $C^{\mt_n}(p_{n,n-1})_\pm(x)$ for  for the the limit $\lim_{\ep\to 0} C^{\mt_n}(p_{n,n-1})(x\pm \ii \ep)$. The Plemelj formula gives
\[
\frac{C^{\mt_n}(p_{n,n-1})_+(x)}{h_{n,n-1}e^{-ng(z)}} - \frac{C^{\mt_n}(p_{n,n-1})_-(x)}{h_{n,n-1}e^{-ng(z)}} = \frac{p_{n,n-1}(x)e^{-nV_n(x)}}{h_{n,n-1}e^{-ng(z)}}.
\]
Assuming the result \eqref{eq:leading_OPs}, for $x\in \R \setminus [\al,\be]$, this is
\[
e^{n(2g(x)-l-V_n(x))}N(z)(1+\bigO(1/n)),
\]
which is exponentially small in $n$ by \eqref{eq:equilibrium_lessthan}, assuming $V_n(x) = V(x) + \bigO(1/n)$, which is easily checked for $x\in \R \setminus [\al,\be]$.
\end{remark}

In the ferroelectric phase there is a similar result.
\begin{prop}\label{prop:OP_asyF}
For $z\in \C\setminus\{\frac{2}{n}\Z_{<0}\cup [\be_{\rm F}, 0]\}$, the orthogonal polynomials $p^{\rm F}_{n,n-1}(z)$ satisfy
\eq\label{eq:leading_OPsF}
\frac{p^{\rm F}_{n,n-1}(z)}{h^{\rm F}_{n,n-1}} =e^{n(g_{\rm F}(z+1/n)-l_{\rm F})}N_{\rm F}(z+1/n)(1+\bigO(1/n)), \\
\eeq
and
 \eq\label{eq:leading_CauchyF}
\frac{C^{\mt_n}(p^{\rm F}_{n,n-1})(z)}{h^{\rm F}_{n,n-1}} =-\frac{1}{2\pi \ii}e^{-ng_{\rm F}(z+1/n)}M_{\rm F}(z+1/n)(1+\bigO(1/n)),
\eeq
where $M_{\rm F}(z)$ and $N_{\rm F}(z)$ are explicit functions analytic on $\C\setminus [ \be_{\rm F}, \al_{\rm F}]$ satisfying \eqref{eq:Mstar_infinity} and \eqref{eq:Nstar_infinity}, respectively,  with
$M_{\rm F}(z)$  given explicitly as
\eq\label{def:M}
M_{\rm F}(z) = \frac{1}{2}\left(\left(\frac{z-\al_{\rm F}}{z-\be_{\rm F}}\right)^{1/4} +  \left(\frac{z-\be_{\rm F}}{z-\al_{\rm F}}\right)^{1/4}\right),
\eeq
with a cut on $[\be_{\rm F}, \al_{\rm F}]$.

The asymptotic formulas \eqref{eq:leading_OPsF} and \eqref{eq:leading_CauchyF} are uniform on closed subsets of $\C\setminus  [\be_{\rm F}, 0]$ whose distance from $\frac{2}{n}\Z_{<0}$ is at least $\ep/n$.
\end{prop}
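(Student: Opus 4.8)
The plan is to run the Deift--Zhou steepest-descent analysis of the Riemann--Hilbert problem (RHP) for the discrete orthogonal polynomials defined by \eqref{def:OPsrescaled_F}, following the scheme used for the ferroelectric phase in \cite{Bleher-Liechty09} (see also \cite{Bleher-Liechty14}), and to read off from it the leading behaviour of the polynomial $p^{\rm F}_{n,n-1}(z)$ itself together with that of its Cauchy transform against $\mt_n$ --- rather than only of the norming constant $h^{\rm F}_{n,n-1}$, on which those works focused. First I would write the interpolation RHP whose solution $Y(z)$ has $(1,1)$-entry $p^{\rm F}_{n,n-1}(z)$ and $(1,2)$-entry equal to $\tfrac{1}{2\pi\ii}$ times the Cauchy transform of $p^{\rm F}_{n,n-1}$ against $\mt_n$, with pole conditions at the nodes $\tfrac{2}{n}\Z_{<0}$ encoding the weight $w^{\rm F}_n$. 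Multiplying the second column by a fixed reciprocal-Gamma-type factor whose only zeros are simple ones at the nodes turns these pole conditions into a jump across a contour, and the Stirling expansion of that factor --- through its ``$(z-\tfrac12)\ln z$'' term --- is precisely what replaces $z$ by $z+\tfrac1n$ (half the scaled lattice spacing) wherever the $g$-function enters, producing the shifts displayed in \eqref{eq:leading_OPsF}--\eqref{eq:leading_CauchyF}.

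I would then carry out the standard chain of transformations: the $g$-function substitution $Y\mapsto T$ built from $g_{\rm F}$ of \eqref{def:g-functionF} and the constant $l_{\rm F}$, so that the equilibrium relations \eqref{eq:eq_condition_band}, \eqref{eq:equilibrium_lessthanF} and \eqref{eq:eq_condition_sat} normalize all the jumps; the opening of lenses across the band $[\be_{\rm F},\al_{\rm F}]$ together with the upper-constraint (pole-trading) move on the saturated interval $[\al_{\rm F},0]$, giving $S$; the construction of the global parametrix on $\C\setminus[\be_{\rm F},\al_{\rm F}]$, whose relevant scalar combinations are exactly $M_{\rm F}$ of \eqref{def:M} and its companion entry $N_{\rm F}$, both analytic off $[\be_{\rm F},\al_{\rm F}]$ and with the prescribed behaviour at $\infty$; and local parametrices from Airy functions at the soft edges $\be_{\rm F}$ and $\al_{\rm F}$, plus a separate local model at the point $0$, where the saturated region meets the terminating lattice. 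A routine small-norm argument then gives $R:=S\,(\text{parametrix})^{-1}=I+\bigO(1/n)$ uniformly; and for $z$ in a closed set disjoint from $[\be_{\rm F},0]$ --- and, for the Cauchy transform, at distance at least $\ep/n$ from the nodes --- one has $S=T$ with no lens contribution, so unwinding the substitutions and reading off the two first-row entries of $Y$ yields \eqref{eq:leading_OPsF} and \eqref{eq:leading_CauchyF}.

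The main obstacle is the bookkeeping at the two non-generic features of this phase. First, at the right endpoint $0$ the equilibrium density stays at the hard constraint $\tfrac12$ all the way up to $0$ while the lattice $\tfrac{2}{n}\Z_{<0}$ also terminates there, so a dedicated local parametrix at $0$ is needed; since, however, we only claim asymptotics on sets bounded away from $[\be_{\rm F},0]$, nothing about its precise form is required beyond its existence and a boundary matching estimate of order $1+\bigO(1/n)$, both of which the cited Riemann--Hilbert analysis supplies. Second, the discrete-to-continuous reduction must be done carefully enough both to pin down that the correct argument of $g_{\rm F}$ is $z+\tfrac1n$ and to justify uniformity only away from the nodes: the Cauchy transform $C^{\mt_n}(p^{\rm F}_{n,n-1})$ genuinely has (exponentially small but nonzero) simple poles at every lattice node, so a pole-free asymptotic formula cannot be uniform within $\ep/n$ of the lattice. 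Once these two points are settled, the remainder is the same unravelling of the transformations as in the proof of Proposition \ref{prop:OP_asy}.
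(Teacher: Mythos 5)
Your proposal follows the same overall strategy as the paper's Appendix A.2: set up the interpolation RHP for the discrete orthogonal polynomials, perform the $g$-function substitution using $g_{\rm F}$ and $l_{\rm F}$, open lenses across the band $[\be_{\rm F},\al_{\rm F}]$ and trade poles on the saturated interval $[\al_{\rm F},0]$, construct a global parametrix $\mathbf M(z)$ whose entries give $M_{\rm F}$ and $N_{\rm F}$, build Airy parametrices at the soft edges $\al_{\rm F}$ and $\be_{\rm F}$, add a separate local model at $0$, and finish with a small-norm estimate. Where you diverge from the paper is in the discrete-to-continuous reduction. You propose multiplying the second column by a single reciprocal-Gamma-type factor whose zeros sit at the nodes $\tfrac{2}{n}\Z_{<0}$, and extracting the $z\mapsto z+\tfrac1n$ shift from the $(w-\tfrac12)\ln w$ term in Stirling's expansion of that factor; this is essentially the Wang--Wong Meixner mechanism. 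The paper instead \emph{first} performs an explicit change of variable $q_{n,k}(z)=p_{n,k}(z-1/n)$ (so the lattice becomes $L_n=\{-1/n,-3/n,\dots\}$), and then uses the trigonometric factor $\Pi(z)=\tfrac{2\cos(n\pi z/2)}{n\pi}$ of the Bleher--Liechty framework (see \eqref{redp1}--\eqref{redp9a}) to convert poles into jumps; the Gamma-quotient $D(z)$ of \eqref{eq:def_D} appears only in the local parametrix at the origin. Both mechanisms produce the shifted argument $g_{\rm F}(z+1/n)$ and the same end result; the paper's choice keeps the bulk of the analysis maximally parallel to \cite{Bleher-Liechty11}, \cite[Chapter 3]{Bleher-Liechty14}, which is why it is organized this way.

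One point you should tighten: you say the local parametrix at $0$ and its $1+\bigO(1/n)$ boundary matching are supplied by ``the cited Riemann--Hilbert analysis,'' meaning the Bleher--Liechty ferroelectric papers. They are not --- as the paper itself remarks before the proposition, the RH analysis in the ferroelectric phase with a lattice terminating at $0$ has not appeared before, and the local model at $0$ (a Gamma-function construction, cf.\ \cite{Wang-Wong11}, \cite[Section~6.6]{Geronimo-Liechty20}) is new in this context. Its existence, the jump matching with $\tilde j_S$ on the saturated interval and on the vertical segment near $0$, and the $\bigO(1/n)$ boundary estimate on $\d D(0,\ep)$ must be exhibited explicitly (the paper does this via the properties \eqref{eq:D_jump}--\eqref{eq:D_bound} of $D(z)$), not taken as read from prior work on the full-line lattice. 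Once that is done, your unravelling to \eqref{eq:leading_OPsF}--\eqref{eq:leading_CauchyF}, with uniformity only at distance $\ge\ep/n$ from the nodes, is correct.
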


Proposition \ref{prop:OP_asy} follows from the Riemann--Hilbert analysis performed in \cite{Bleher-Bothner12,Bleher-Fokin06,Bleher-Liechty10}. We present a proof in Appendix \ref{app:OP_asymptotics} based on the analysis of those papers. For the orthogonal polynomials appearing in Proposition \ref{prop:OP_asyF}, the Riemann--Hilbert analysis has not been carried out before, though it is very similar to the analysis of the Meixner polynomials performed in \cite{Wang-Wong11}. We prove Proposition \ref{prop:OP_asyF} in Section \ref{app:A2} by presenting the steepest descent analysis of the Riemann--Hilbert problem for the polynomials \eqref{def:OPsrescaled_F} based on the approach of \cite{Bleher-Liechty11}, \cite[Chapter 3]{Bleher-Liechty14}. The shifts by $1/n$ in the right-hand sides of \eqref{eq:leading_OPsF} and \eqref{eq:leading_CauchyF} can be removed at the expense of adjusting the definitions $N_{\rm F}$ and $M_{\rm F}$ --- we keep the shifts in order to closer match the notations of the previous papers.

\medskip

In the following subsections  we prove the $k=1$ versions of Theorems \ref{Theorem_asymptotics_case1} -- \ref{Theorem_asymptotics_case4} by first inserting the asymptotic expressions from Propositions \ref{prop:OP_asy} and \ref{prop:OP_asyF} into the contour integral formulas for the integrals $I_{\rm in}(\xi; r, s)$ and $I_{\rm out}(\xi; r, s)$. The contour integrals can then be evaluated as $n\to\infty$ by the method of steepest descent, see \cite{Copson65,Erdelyi56} for the general introduction to the steepest descent. This method works for all $\xi$ in certain neighborhoods of the origin except for $\xi = 0$; in that case the critical point is at $\infty$ and is unapproachable by a steepest descent contour. We therefore prove the $k=1$ versions of Theorems \ref{Theorem_asymptotics_case1} -- \ref{Theorem_asymptotics_case4} first for $|\xi|> \ep$ for a fixed $\ep>0$ via steepest descent analysis. We then extend the result to $|\xi| \le \ep$ using the following lemma.

\begin{lem}\label{lem:small_xi}
Fix $\ep>0$ and suppose $F_n(\xi)$ is an $n$-dependent function of the complex variable $\xi$ which is analytic in the disc $\{\xi : |\xi|< 2\ep\}$ for all $n\in \N$ and which satisfies the asymptotic formula as $n\to\infty$
\eq\label{eq:2ep_lemma}
F_n(\xi) = \Psi(\xi)^{n} \psi(\xi) (1+\bigO(1/\sqrt{n})),
\eeq
uniformly for $ |\xi| = 2\ep$, where $\Psi(\xi)$ and $\psi(\xi)$ are both analytic and nonvanishing in the disc $\{\xi : |\xi|< 2\ep\}$ and independent of $n$. Then $F_n(\xi)$ satisfies the asymptotic formula \eqref{eq:2ep_lemma} uniformly for $|\xi| \le \ep$.
\end{lem}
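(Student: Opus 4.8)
The plan is to divide out the conjectured leading behavior and invoke the maximum modulus principle. Since $\Psi$ and $\psi$ are analytic and nowhere vanishing on the disc $\{\xi:|\xi|<2\ep\}$, the function $\Psi(\xi)^n\psi(\xi)$ is analytic and nowhere vanishing there as well (for integer $n$ the power $\Psi^n$ is simply the $n$-fold product, hence an honest single-valued analytic function, and there is no branch ambiguity). Consequently the ratio
\[
 G_n(\xi):=\frac{F_n(\xi)}{\Psi(\xi)^n\,\psi(\xi)}
\]
is analytic on all of $\{\xi:|\xi|<2\ep\}$, and the assertion of the lemma is precisely that $G_n(\xi)=1+\bigO(1/\sqrt n)$ uniformly for $|\xi|\le\ep$.

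First I would record what the hypothesis gives on the boundary: by \eqref{eq:2ep_lemma} applied on $|\xi|=2\ep$, together with the stated uniformity of the error term, there is a constant $C>0$, independent of $n$ and of $\xi$, such that $|G_n(\xi)-1|\le C n^{-1/2}$ for all $\xi$ with $|\xi|=2\ep$ and all large $n$. (If one wishes to be scrupulous about the fact that analyticity is only assumed on the open disc, one runs the same argument on a slightly smaller circle $|\xi|=\rho$ with $\rho<2\ep$ close to $2\ep$, where the same uniform bound holds with a constant converging to $C$; the closed disc of radius $\rho$ still contains $\{|\xi|\le\ep\}$, so nothing is lost.)

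Next I would apply the maximum modulus principle to the analytic function $G_n-1$ on the closed disc $\{|\xi|\le 2\ep\}$: its modulus on that disc is bounded by its maximum on the bounding circle, hence $|G_n(\xi)-1|\le C n^{-1/2}$ for all $|\xi|\le 2\ep$, in particular for all $|\xi|\le\ep$. Multiplying through by $\Psi(\xi)^n\psi(\xi)$ recovers $F_n(\xi)=\Psi(\xi)^n\psi(\xi)\bigl(1+\bigO(n^{-1/2})\bigr)$ uniformly for $|\xi|\le\ep$, which is \eqref{eq:2ep_lemma}. There is no serious obstacle in this argument; the only points that warrant a moment's attention are that $\Psi^n$ is a genuinely single-valued, analytic, nonvanishing function (so that $G_n$ is analytic and the division is legitimate) and that the boundary estimate is uniform in $\xi$ with an $n$-independent constant — both immediate from the hypotheses.
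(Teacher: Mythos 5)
Your proof is correct, but it takes a genuinely different route from the paper's. The paper applies Cauchy's integral formula: it writes $F_n(\xi)/(\Psi(\xi)^n\psi(\xi))$ as $\frac{1}{2\pi \ii}\oint_{|\xi'|=2\ep}\frac{F_n(\xi')\,\dd\xi'}{\Psi(\xi')^n\psi(\xi')(\xi'-\xi)}$, substitutes the hypothesis on the circle, and uses the bound $|\xi'-\xi|\ge\ep$ — this is precisely why the conclusion in the lemma is stated only for $|\xi|\le\ep$ rather than on the whole disc of radius $2\ep$. You instead form $G_n=F_n/(\Psi^n\psi)$ and apply the maximum modulus principle to $G_n-1$. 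Your argument is cleaner and, notably, stronger: it yields the uniform bound $|G_n(\xi)-1|\le C n^{-1/2}$ on the entire closed disc $|\xi|\le 2\ep$ with the same constant as on the boundary, with no loss as $\xi$ approaches the circle. Both proofs share the same mild implicit assumption that $F_n$ extends continuously (or analytically) to the boundary circle $|\xi|=2\ep$, which is needed to make sense of the hypothesis — the paper needs it to integrate over the circle, you need it to apply maximum modulus on the closed disc. Your aside about running the argument on a slightly smaller circle $|\xi|=\rho<2\ep$ is not quite how to fix this, since the hypothesis provides the uniform asymptotic only at $|\xi|=2\ep$ and says nothing on smaller circles; the right fix in either proof is simply to read the hypothesis as guaranteeing boundary values and invoke the boundary form of the relevant classical theorem. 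This is a minor point of exposition and not a gap in the logic.
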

\begin{proof}
This lemma follows easily from Cauchy's formula. Assume $|\xi| \le \ep$. The function $F_n(\xi) \Psi(\xi)^{-n}\psi(\xi)^{-1}$ can be expressed as
\[
\frac{F_n(\xi)}{ \Psi(\xi)^{n}\psi(\xi)}= \frac{1}{2\pi \ii} \oint_{|\xi| = 2\ep} \frac{F_n(\xi')  \dd\xi'}{\Psi(\xi')^{n}\psi(\xi')(\xi' - \xi)}.
\]
Inserting the asymptotics \eqref{eq:2ep_lemma} for $|\xi'| = 2\ep$, this gives
\begin{multline}
\frac{F_n(\xi)}{ \Psi(\xi)^{n}\psi(\xi)} = \frac{1}{2\pi \ii} \oint_{|\xi| = 2\ep} \frac{ (1+\bigO(1/\sqrt{n}))\dd\xi'}{\xi' - \xi} \\
=  \frac{1}{2\pi \ii} \oint_{|\xi| = 2\ep} \frac{\dd\xi'}{\xi' - \xi} + \frac{1}{2\pi \ii\sqrt{n} } \oint_{|\xi| = 2\ep} \frac{\bigO(1)\dd\xi'}{\xi' - \xi}
= 1 + \bigO(1/\sqrt{n}),
\end{multline}
where we have used $|\xi'-\xi|\ge \ep$.
\end{proof}


\subsection{Steepest descent analysis for $\De > 1$ and the proof of Theorem \ref{Theorem_asymptotics_case1} for $k=1$}\label{subsec:F_steepest_descent}
In this subsection we prove the $k=1$ version of Theorem \ref{Theorem_asymptotics_case1}.
Throughout this subsection, the orthogonal polynomials and all related quantities are those described in \eqref{def:OPsrescaled_F}
corresponding to the ferroelectric phase region, and we omit the sub/superscript F throughout. In the asymptotic analysis of integrals below, we assume $\xi$ is a complex number satisfying $\Re \xi > - (t - |\ga|)/2$, $|\Im \xi | < \pi/2$, and $|\xi| > \ep$ for a small but fixed positive number $\ep$. We then extend the asymptotics to the full domain  $\Re \xi > - (t - |\ga|)/2$, $|\Im \xi | < \pi/2$ by applying Lemma \ref{lem:small_xi}.

The expression \eqref{eq:Ztilde_in_out} is the sum of two terms, and we show below that $I_{\rm in}(\xi;r,s)$ provides the dominant contribution to the asymptotics of $\ZZ_n(\xi; t,\ga)$. We first evaluate
\eq\label{eq:I_in_shifted}
I_{\rm in}(\xi;r,s)= - \int_{\Om}\frac{C^{\mt_n}\left(p_{n,n-1}\right)(z)}{h_{n,n-1}}e^{n\xi z}\,\dd z = - \int_{\Om}\frac{C^{\mt_n}\left(p_{n,n-1}\right)(z-1/n)}{h_{n,n-1}} e^{n\xi z}e^{-\xi}\,\dd z,
\eeq
where in the second equality above we have shifted $z$ by $1/n$ in order to use Proposition \ref{prop:OP_asyF} and slightly abused notation by using $\Om$ for the shifted contour as well. Assuming the (shifted) contour $\Om$ remain at a distance $\ep/n$ from the discrete set $\frac{2}{n}\Z_{< 0}$, we can apply  the asymptotic formula \eqref{eq:leading_CauchyF}, we obtain
\eq\label{eq:S_in_F}
I_{\rm in}(\xi;r,s)=\frac{e^{-\xi}}{2\pi \ii} \int_{\Om} M(z) e^{-n(g(z)-\xi z)}(1+\bigO(1/n))\,\dd z.
\eeq
%
%
%

We will choose the contour $\Omega$ so that it passes through the critical point $z_{\rm cr}$ which solves
\eq\label{eq:case2_critF}
g'(z_{\rm cr})-\xi = 0.
\eeq
To solve this equation we use the exact formula \eqref{eq_case1_G} for $g'(z) \equiv G_\nu(z)$, from which we see that \eqref{eq:case2_critF} is equivalent to
\[
\frac{(\sqrt{-\al(z_{\rm cr}-\be)} - \sqrt{-\be(z_{\rm cr}-\al)})^2}{z_{\rm cr}(\al-\be)} =e^{- t + |\ga| -2\xi},
\]
assuming $\Re \xi> -(t-|\ga|)/2$, $|\Im \xi|< \pi/2$.
The above equation is straightforward to solve for $z_{\rm cr}$. Using the expressions \eqref{eq_case1_alpha_beta} for $\al$ and $\be$, we find the unique solution
\eq\label{eq:x0F}
z_{\rm cr}= \frac{\sinh(t-|\ga|)}{\sinh(\xi)\sinh(t-|\ga|+\xi)}.
\eeq

\begin{prop}\label{prop:Omega_F}
Let $g(z)$ be the function defined in \eqref{def:g-functionF}.
Assume $\xi$ is a complex parameter satisfying $\Re \xi > (t-|\ga|)/2$, $|\Im \xi|<\pi/2$, and $|\xi| > \ep$ for fixed $\ep>0$. There exists a positively oriented contour $\Om$ which encloses the interval $[\be,0]$ and satisfies the following properties:
\begin{enumerate}
\item $\Omega$ crosses $z_{\rm cr}$ in the direction of steepest descent for the function $\Re\left(-g(z) + \xi z\right)$.
\item For all $z\in \Om \setminus \{ z_{\rm cr}\}$,
\eq\label{eq:Omega_inequality}
\Re\left(-g(z_{\rm cr})+\xi z_{\rm cr} \right) > \Re\left(-g(z)+\xi z \right) , \quad z\in \Om \setminus \{ z_{\rm cr}\},
\eeq
\item $\Om$ encloses the interval $[\be, 0]$ and crosses the negative real axis at a point $-r<\be$ such that $\Re(g(x) -V(x)+ \xi x) = \Re(g(x) + (t-|\ga|+\xi) x)$ is strictly increasing on $(-\infty, -r)$.\label{prop:Omega_Fc}
\end{enumerate}
\end{prop}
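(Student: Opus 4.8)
# Proof Proposal for Proposition \ref{prop:Omega_F}

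\textbf{Overall strategy.} The plan is to construct $\Omega$ explicitly as a level/steepest-descent curve of the function $h(z) := -g(z) + \xi z$ passing through $z_{\rm cr}$, using the explicit formula \eqref{eq_case1_G} for $g'(z) = G_\nu(z)$ to control the sign of $\Re h$ away from the critical point. The key is that $g'(z)$ has an explicit closed form involving only $\ln$ and $\sqrt{\cdot}$, so $h'(z) = \xi - g'(z)$ has exactly one zero $z_{\rm cr}$ (computed in \eqref{eq:x0F}) in the relevant region, and the Riemann surface picture for $h$ is simple enough to describe the steepest descent topology directly.

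\textbf{Step 1: Local analysis at $z_{\rm cr}$.} First I would verify that $z_{\rm cr}$ from \eqref{eq:x0F} is a simple critical point of $h$, i.e. $h''(z_{\rm cr}) \neq 0$, by differentiating \eqref{eq_case1_G} once more and using the explicit $\alpha,\beta$ from \eqref{eq_case1_alpha_beta}; this is a direct computation showing $h''(z_{\rm cr})$ is a nonzero rational expression in $\sinh(\xi), \sinh(t-|\ga|+\xi)$. Near $z_{\rm cr}$ the function $\Re h$ therefore has a genuine saddle, and there are two steepest-descent rays emanating from $z_{\rm cr}$ along which $\Re h$ strictly decreases; $\Omega$ will be tangent to these rays at $z_{\rm cr}$, which gives property (1).

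\textbf{Step 2: Global steepest-descent contour and the inequality \eqref{eq:Omega_inequality}.} Next I would extend the local descent rays to a closed curve $\Omega$ encircling $[\beta,0]$. The cleanest route is to track the level set $\{\Re h(z) = \Re h(z_{\rm cr})\}$: through a simple saddle this level set is locally two crossing arcs, dividing a neighborhood into four sectors, two where $\Re h < \Re h(z_{\rm cr})$ and two where $\Re h > \Re h(z_{\rm cr})$. I would argue that one can follow the boundary between a ``low'' sector near $z_{\rm cr}$ around the branch cut $[\beta,0]$ of $g$ (noting $g$ has logarithmic behavior $g(z) \sim \ln z$ near the cut endpoints, and recall from \eqref{eq:eq_condition_sat} and \eqref{eq:equilibrium_lessthanF} the equilibrium inequalities that $2\Re g_\pm(x) < V(x)+l$ for $x < \beta$ and $> V(x)+l$ on $(\alpha,0]$) to close up the curve while staying in the region $\Re h < \Re h(z_{\rm cr})$. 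Here I would use that $z_{\rm cr}$ is the \emph{only} critical point of $h$ in the slit plane for the given range of $\xi$, so the level set $\{\Re h = \Re h(z_{\rm cr})\}$ has no other singular points and its global topology is forced; standard Morse-theoretic reasoning for harmonic functions (the function $\Re h$ is harmonic off the cut, $\to \mp\infty$ appropriately at $\infty$ depending on $\Re\xi$, and has controlled behavior near the endpoints of $[\beta,0]$) then yields a closed contour around $[\beta,0]$ on which the strict inequality \eqref{eq:Omega_inequality} holds.

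\textbf{Step 3: Crossing point $-r$ and property (3).} Finally, for property \eqref{prop:Omega_Fc}, I would locate where $\Omega$ meets the negative real axis. On the real axis for $x < \beta$, both $g$ and $h$ are real (principal branches), and $\frac{d}{dx}\big(g(x) + (t-|\ga|+\xi)x\big) = g'(x) + t - |\ga| + \xi$; using \eqref{eq_case1_G} one checks $g'(x) \to -\infty$ as $x \downarrow \beta$ (log singularity pushing down) and $g'(x) \to 0$ as $x \to -\infty$, while $g'$ is monotone on $(-\infty,\beta)$, so there is a unique point $-r$ where this derivative vanishes and the function is strictly increasing on $(-\infty, -r)$ precisely — and I would choose $\Omega$ to cross there. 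Requiring $\Omega$ to remain at distance $\ge \eps/n$ from the lattice $\frac2n\Z_{<0}$ (needed to apply Proposition \ref{prop:OP_asyF}) is a cosmetic perturbation that does not affect any of the strict inequalities.

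\textbf{Main obstacle.} The hardest part is Step 2: rigorously establishing that the global level set of the harmonic function $\Re h$ through $z_{\rm cr}$ has the right topology — a single closed loop around $[\beta,0]$ — uniformly for all $\xi$ with $\Re\xi > -(t-|\ga|)/2$, $|\Im\xi| < \pi/2$, $|\xi| > \eps$. This requires care about how the level curves behave as $\xi$ varies (in particular ensuring no bifurcation of the contour as $\xi$ ranges over this non-compact set), and about the mild singularities of $g$ at $\beta$ and $0$. I expect to handle this by a combination of the explicit formula for $g'$, a count of critical points (exactly one), and a continuity/connectedness argument in $\xi$ anchored at a convenient real value of $\xi$ where the picture can be drawn by hand.
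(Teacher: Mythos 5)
Your overall strategy — constructing $\Omega$ from the level set of $\Re(-g(z)+\xi z)$ through the critical point — is the same as the paper's, but two of your three steps contain issues that need to be addressed.

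\textbf{Step 3 contains a concrete error.} You assert that $g'(x)\to-\infty$ as $x\downarrow\beta$, calling it a ``log singularity pushing down,'' and on that basis conclude there is a unique zero of $g'(x)+t-|\gamma|+\Re\xi$ on $(-\infty,\beta)$. Neither claim is correct. Evaluating \eqref{eq_case1_G} at $z=\beta$ gives $g'(\beta)=G_\nu(\beta)=\frac{|\gamma|-t}{2}$, a finite number; the square-root behavior at the endpoint $\beta$ produces a singularity in $g''$ (via \eqref{eq:Gp}), not in $g'$. (The logarithmic singularity of $G_\nu$ is at $z=0$, not at $\beta$.) Consequently $g'(x)+t-|\gamma|+\Re\xi$ ranges from $t-|\gamma|+\Re\xi$ at $x=-\infty$ down to $\frac{t-|\gamma|}{2}+\Re\xi$ at $x=\beta^-$, and both endpoint values are strictly positive whenever $\Re\xi>-(t-|\gamma|)/2$: there is no zero. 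The correct argument, as in the paper, is that $\Re(g(x)-V(x)+\xi x)$ is concave on $(-\infty,\beta)$ (because $\Re g''<0$ there, with the branch of $\sqrt{(z-\alpha)(z-\beta)}$ that behaves like $z$ at infinity) and has positive derivative as $x\to-\infty$; concavity then propagates positivity of the derivative to all $x<-r$ once one has $\Re(g'(-r)-V'(-r)+\xi)>0$. In addition, you do not address the constraint that $-r$ must be compatible with a \emph{closed} contour satisfying \eqref{eq:Omega_inequality}: when $\Re\xi<0$ the unbounded piece $\mathcal{C}_\infty$ of the level set crosses the negative real axis and thereby caps $-r$ from above, which is exactly what the paper has to check in that case.

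\textbf{Step 2 is substantially underdetermined.} You argue that the topology of the level set $\{\Re h=\Re h(z_{\rm cr})\}$ is ``forced'' because $z_{\rm cr}$ is the unique critical point and $\Re h$ is harmonic off the cut. This is not enough. The component through $z_{\rm cr}$ has four ends, and a priori any combination of them can terminate at infinity or on the cut $[\beta,0]$ (where $\Re h$ is not harmonic). Ruling out the bad configurations is precisely the content of Lemma \ref{lem_contour_Omega}: part (a) bounds the number of real-axis crossings by a direct monotonicity/concavity analysis, part (c) identifies exactly the two ends going to infinity and their asymptotic directions, and part (d) is a maximum-principle argument excluding enclosed regions of strictly smaller $\Re h$ when they meet the cut. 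Your proposal also substitutes a continuity-in-$\xi$/``no bifurcation'' argument for the paper's direct per-$\xi$ analysis; this is not obviously easier, since establishing that no bifurcation occurs as $\xi$ varies essentially requires the same quantitative control that the direct argument provides. Step 1, while correct, is implicit in the paper and not strictly necessary as a separate step.
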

Proposition \ref{prop:Omega_F} is proven in Section \ref{app:Omega}.
As long as the critical point $z_{\rm cr}$ itself is not within a distance of $\ep/n$ from the lattice $\frac{2}{n} \Z_{<0}$, it is possible to choose the contour $\Om$ from the above proposition such that it remains at a distance of at least $\ep/n$ from the lattice $\frac{2}{n} \Z_{<0}$. We will first proceed under the assumption that the critical point is not near $\frac{2}{n} \Z_{<0}$, and describe at the end of this subsection how the analysis may be adjusted in the case that it is.

Using \eqref{eq:Omega_inequality}, the integral \eqref{eq:S_in_F} is localized near $z_{\rm cr}$, where it is computed by Taylor expanding $g(z)$ near $z_{\rm cr}$ and recognizing a Gaussian integral after a change of variables $z=z_{\rm cr}+\frac{w}{\sqrt{n}}$. The result is
\eq\label{eq:I_in_locF}
I_{\rm in}(\xi;r,s)=  \frac{1}{\sqrt{2\pi n }} \frac{1}{\ii \sqrt{g''(z_{\rm cr})}} e^{-n(g(z_{\rm cr})-z_{\rm cr}\xi)}M(z_{\rm cr})(1+\bigO(1/\sqrt{n})),
\eeq
where the branch of $\sqrt{g''(z_{\rm cr})}$ is specified by the steepest descent direction of the contour $\Omega$ at the point $z_{\rm cr}$. Specifically,  $\Arg \sqrt{g''(z_{\rm cr})} = - \phi$, where $\phi$ is the local (oriented) direction of $\Om$ at the point $z_{\rm cr}$. We note that this direction depends continuously on $\xi$.

Turning to $I_{\rm out}(\xi;r,s)$ and using the asymptotic formula \eqref{eq:leading_OPs}, we have
\eq\label{eq:SoutF1}
I_{\rm out}(\xi;r,s)  =  \frac{1}{n} \sum_{\substack{x\in \frac{2}{n}\Z_{<0} \\ x < -r}} e^{n(g(x) - l  +\xi x)}w_n(x) N(x)(1+\bigO(1/n)), \\
\eeq
where $w_n(x)\equiv w_n^{\rm F}(x)$ is given in \eqref{def:OPsrescaled_F} with $N(x)$ uniformly $\bigO(1)$ throughout the sum. Note that for $x<-r$,
\[
w_n(x) = e^{n(t-|\ga|)x}(1-e^{2n|\ga| x}) = e^{-nV(x)}(1+\bigO(e^{-2nr|\ga|})),
\]
where $V(x) \equiv V_F(x)$ is defined in \eqref{def:V}.
Then Proposition \ref{prop:Omega_F}\eqref{prop:Omega_Fc} implies that \eqref{eq:SoutF1} satisfies the estimate
\eq\label{eq:I_out_F}
I_{\rm out}( \xi; r,s)   = \bigO(e^{n(g(-r) - l - V(-r) -\xi r) }). \\
\eeq
Combining \eqref{eq:I_in_locF} and \eqref{eq:I_out_F} we have
\begin{multline}\label{eq:in_plus_out}
I_{\rm in}( \xi; r,s) +I_{\rm out}( \xi; r,s)  = \\ \frac{1}{\sqrt{2\pi n }} \frac{M(z_{\rm cr})}{\ii \sqrt{g''(z_{\rm cr})}} e^{-n(g(z_{\rm cr})-z_{\rm cr}\xi)}  \left(1+\bigO(e^{n(g(-r)+g(z_{\rm cr}) - l - V(-r) -\xi(z_{\rm cr}+ r)) }\right)+\bigO(1/\sqrt{n})).
\end{multline}
Since $-r$ is on the contour $\Omega$, the estimate \eqref{eq:Omega_inequality} implies
\eq
\Re g(z_{\rm cr}) \le \Re(g(-r) + \xi (r+z_{\rm cr})).
\eeq
Using this estimate, the $\bigO$-term in \eqref{eq:in_plus_out} can be written as
\eq
e^{n(g(-r)+g(z_{\rm cr}) - l - V(-r) -\xi(z_{\rm cr}+ r)) } = \bigO(e^{n(2g(-r)-V(-r) - l )}).
\eeq
According to \eqref{eq:equilibrium_lessthan}, this exponent is negative, so \eqref{eq:in_plus_out} is simply
\eq
I_{\rm in}(\xi; r,s)+I_{\rm out}(\xi; r,s)  = \frac{1}{\sqrt{2\pi n }} \frac{M(z_{\rm cr})}{\ii \sqrt{g''(z_{\rm cr})}} e^{-n(g(z_{\rm cr})-z_{\rm cr}\xi)}  \left(1+\bigO(1/\sqrt{n})\right).
\eeq
The inhomogeneous partition function \eqref{eq:Ztilde_in_out} is therefore
\begin{multline}\label{eq:Zn_F_1}
\ZZ_n(\xi; t,\ga) = (n-1)!\left[\frac{\sinh(t-\ga+\xi)}{\sinh(t-\ga)}\frac{\sinh(t+\ga+\xi)}{\sinh(t+\ga)}\right]^n
 \left(\frac{1}{n\sinh(\xi)}\right)^{n-1}  \frac{e^{-\xi}}{\sqrt{2\pi n }} \frac{M(z_{\rm cr})}{\ii \sqrt{g''(z_{\rm cr})}}\\
 \times e^{-n(g(z_{\rm cr})-z_{\rm cr}\xi)}(1+\bigO(1/\sqrt{n})).
\end{multline}
We need to evaluate $g(z_{\rm cr})-z_{\rm cr}\xi$.  
The definition of $g(z)$ implies  that
\[
g(z) = z g'(z) + \ln\left[(\sqrt{z-\al} + \sqrt{z-\be})^2\right]- 1 - \ln 4.
\]
Plugging in $z=z_{\rm cr}$ and using $g'(z_{\rm cr})=\xi$ we find
\begin{multline}
g(z_{\rm cr}) -z_{\rm cr}\xi = z_{\rm cr} g'(z_{\rm cr}) - z_{\rm cr}\xi + \ln[(\sqrt{z_{\rm cr}-\al}+ \sqrt{z_{\rm cr}-\be})^2] - 1 - \ln 4\\
=  \ln[(\sqrt{z_{\rm cr}-\al}+ \sqrt{z_{\rm cr}-\be})^2]- 1 - \ln 4 \\
=\ln\left(2z_{\rm cr} - \al-\be+2\sqrt{(z_{\rm cr}-\al)(z_{\rm cr}-\be)}\right) - 1 - \ln 4.
\end{multline}
A direct calculation shows
\eq\label{eq:sqrt_prod}
2\sqrt{(z_{\rm cr}-\al)(z_{\rm cr}-\be)} =  \frac{2\sinh(t-|\ga|+2\xi)}{\sinh(\xi)\sinh(t-|\ga|+\xi)} = 2\coth(t-|\ga|+\xi)  + 2\coth(\xi),
\eeq
\eq
-\al-\be =4\coth(t-|\ga|).
\eeq
Combining these expressions with the formula \eqref{eq:x0F} for $z_{\rm cr}$, we find (employing the addition formulas for $\sinh$ and $\cosh$),
\eq\label{eq:sqrt_diff_squared}
2z_{\rm cr} - \al-\be+2\sqrt{(z_{\rm cr}-\al)(z_{\rm cr}-\be)} = \frac{4\sinh(t-|\ga|+\xi)}{\sinh(\xi)\sinh(t-|\ga|)},
\eeq
so that
\[
g(z_{\rm cr}) -z_{\rm cr}\xi = \ln\left(\frac{\sinh(t-|\ga|+\xi)}{\sinh(\xi)\sinh(t-|\ga|)}\right) - 1.
\]
We therefore have
\[
e^{-n(g(z_{\rm cr})-z_{\rm cr}\xi)}  = \left(\frac{e\sinh(t-|\ga|)\sinh(\xi)}{\sinh(t-|\ga|+\xi)}\right)^n,
\]
and using also Stirling's formula for $(n-1)!$,  \eqref{eq:Zn_F_1} becomes
\begin{multline}\label{eq:ZZ_pen}
\ZZ_n(\xi; t,\ga) = \frac{n!}{\sqrt{2\pi n}}\left(\frac{e}{n}\right)^n\left[\frac{\sinh(t+|\ga|+\xi)}{\sinh(t+|\ga|)}\right]^n
 \frac{e^{-\xi}\sinh(\xi)M(z_{\rm cr})}{\sqrt{- g''(z_{\rm cr})}} (1+\bigO(1/\sqrt{n}))  \\
 = \left[\frac{\sinh(t+|\ga|+\xi)}{\sinh(t+|\ga|)}\right]^n
 \frac{e^{-\xi}\sinh(\xi)M(z_{\rm cr})}{\ii \sqrt{ g''(z_{\rm cr})}} (1+\bigO(1/\sqrt{n})).
\end{multline}
It remains to calculate
$
\frac{M(z_{\rm cr})}{\ii \sqrt{g''(z_{\rm cr})}}.
$
As noted after \eqref{eq:I_in_locF}, the denominator $\ii \sqrt{g''(z_{\rm cr})}$ is continuous in $\xi$ throughout $\Re \xi > -(t-|\ga|)/2$, $|\Im \xi|< \pi/2$, and $|\xi|>\ep$.
Since $M(z)$ is analytic on $\C \setminus [\be, \al]$, and $z_{\rm cr}$ also depends continuously on $\xi$ and does not cross the interval $[\be, \al]$, the numerator is continuous in $\xi$ on the same set.

Using the formula \eqref{eq:Gp} for $g''(z)$ along with the formula \eqref{def:M} for $M(z)$, we obtain
\[
\frac{M(z_{\rm cr})}{\ii \sqrt{g''(z_{\rm cr})}} = \frac{M(z_{\rm cr})}{\sqrt{- g''(z_{\rm cr})}} = \pm \frac{\sqrt{z_{\rm cr}}\left(\sqrt{z_{\rm cr} - \al}+\sqrt{z_{\rm cr} - \be}\right)}{2},
\]
where the $\pm$ in the above formula accounts for the two possible branches of the square root function. While it is not immediately clear which branch is the correct one, we keep this ambiguity in the calculations for now.
The quantity $(\sqrt{z_{\rm cr}-\al}+ \sqrt{z_{\rm cr}-\be})^2$ is computed in \eqref{eq:sqrt_diff_squared}. Combining it with the formula \eqref{eq:x0F} for $z_{\rm cr}$, we find
\[
\frac{z_{\rm cr}(\sqrt{z_{\rm cr}-\al}+ \sqrt{z_{\rm cr}-\be})^2}{4} = \frac{1}{\sinh^2(\xi)}.
\]
Taking the square root yields
\[
\frac{M(z_{\rm cr})}{\ii \sqrt{g''(z_{\rm cr})} } =\pm \frac{1}{\sinh \xi},
\]
where once again the $\pm$ symbol refers to an ambiguity coming from the branch of the square root. To resolve this ambiguity, we first note that because $\frac{M(z_{\rm cr})}{\ii \sqrt{g''(z_{\rm cr})} }$ is continuous in $\xi$, the same sign must be applied for all $\xi$ in the region $\Re \xi > -(t-|\ga|)/2$, $|\Im \xi|< \pi/2$, and $|\xi|>\ep$. We can therefore determine the correct branch by looking at the case $z_{\rm cr}>0$, which is equivalent to $0 < \xi$. In this case the direction of steepest descent is $\phi = \pi/2$, so the argument of $\sqrt{g''(z_{\rm cr})}$ is $-\phi = -\pi/2$, and the denominator of $\frac{M(z_{\rm cr})}{\ii \sqrt{g''(z_{\rm cr})} }$ is positive. The numerator is positive as well, since $M(z)>0$ for $z>\al$, which implies that
\eq\label{eq:M_over_gpp}
\frac{M(z_{\rm cr})}{\ii \sqrt{g''(z_{\rm cr})} } = \frac{1}{\sinh \xi}
\eeq
is the correct branch of the square root for $\xi>0$, and by continuity for all $\Re \xi > -(t-|\ga|)/2$, $|\Im \xi| < \pi/2$, $|\xi|>\ep$.

Plugging \eqref{eq:M_over_gpp} into \eqref{eq:ZZ_pen} yields
\eq\label{ZZ_k1_F}
\ZZ_n(\xi; t,\ga) =  \left[\frac{\sinh(t+|\ga|+\xi)}{\sinh(t+|\ga|)}\right]^n
e^{-\xi}(1+\bigO(1/\sqrt{n})).
\eeq
This proves the $k=1$ version of Theorem \ref{Theorem_asymptotics_case1} for $\Re \xi > (t-|\ga|)/2$, $|\Im \xi|<\pi/2$, and $|\xi| > \ep$ for fixed $\ep>0$. The extension to $|\xi| \le \ep$ follows from Lemma \ref{lem:small_xi} with $\Psi(\xi) = \frac{\sinh(t+|\ga|+\xi)}{\sinh(t+|\ga|)}$ and $\psi(\xi) = e^{-\xi}$.

\medskip

In the steepest descent analysis above, we proved equation \eqref{eq:I_in_locF} under the assumption that the critical point $z_{\rm cr}$ is at a distance at least $\ep/n$ from the lattice $\frac{2}{n} \Z_{<0}$, so that the contour $\Omega$ can be taken at the same distance and the asymptotic formula \eqref{eq:leading_CauchyF} can be used throughout $\Om$. We complete the proof of the $k=1$ version of Theorem \ref{Theorem_asymptotics_case1} by explaining how the analysis should be adjusted when the critical point $z_{\rm cr}$ lies within a distance $\ep/n$ of lattice $\frac{2}{n} \Z_{<0}$.
If the critical point $z_{\rm cr}$ is closer than a distance $\ep/n$ from the lattice $\frac{2}{n} \Z$, then we must deform the contour $\Om$ in a small neighborhood of $z_{\rm cr}$ in order for the asymptotic formula \eqref{eq:leading_CauchyF} to hold throughout the contour of integration. In this case, denote by $x_0$ the lattice point of $\frac{2}{n}\Z_{< 0}$ which is nearest to $z_{\rm cr}$, and define the deformed contour $\widetilde \Om$ in the following way:
\eq\label{eq:Omega_deformed}
\widetilde \Om = \{\Om \setminus D(x_0, \ep/n)\} \cup \left\{x_0 + \frac{\ep}{n} e^{\ii\theta} \ | \ \phi_1 < \theta < \phi_2\right\},
\eeq
where $D(x_0, \ep/n)$ is the disc of radius $\ep/n$ centered at $x_0$, and $x_0+ \frac{\ep}{n} e^{\ii\phi_1}$ and $x_0 + \frac{\ep}{n} e^{\ii\phi_2}$ are the points at which $\Om$ intersects $\d D(x_0, \ep/n)$, see Figure \ref{fig:Om_tilde}.

\begin{figure}\
\begin{center}
\begin{tikzpicture}[scale=1.3]
\draw[red,thick,fill=red] (-1.8,0.3) circle (0.5mm)
(-1.6,0.2) node {$z_{\rm cr}$};

\draw[blue,thick,fill=blue] (-1,0) circle (0.5mm)
(-0.8,-0.1) node {$x_0$};

\draw[black,thick,fill=black] (-1.8,1.27) circle (0.5mm)
(-1.05,1.2) node {$x_0+\frac{\ep}{n}e^{{\bf i} \phi_1}$};

\draw[black,thick,fill=black] (-1.8,-1.27) circle (0.5mm)
(-1.15,-1.1) node {$x_0+\frac{\ep}{n}e^{{\bf i} \phi_2}$};

\draw[red,dashed] (-1.8,1.27) --(-1.8,-1.27);

\draw[red,very thick] (-1.8,1.27) -- (-1.8,2.27);

\draw[red,very thick]  (-1.8,-1.27) -- (-1.8,-2.27);

\draw[thick,dotted] (-1,0) circle [radius=1.5];

\draw[red,very thick] (-1.8,1.27) arc
    [
        start angle=122.231,
        end angle=237.769,
radius=1.5
    ] ;
    \end{tikzpicture}
\end{center}
\caption{The segment of the contour $\widetilde \Om$ in a small neighborhood of $z_{\rm cr}$ is shown in (solid) red. The dotted circle is the circle of radius $\ep/n$ around $x_0$, the point in $\frac{2}{n} \Z_{<0}$ which is closest to $z_{\rm cr}$. The red dashed line is the piece of the original contour $\Om$ which has been deformed to remain at a distance $\ep/n$ from $x_0$.}\label{fig:Om_tilde}
\end{figure}
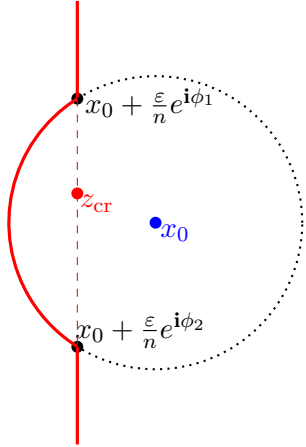

Replacing $\Om$ with $\widetilde \Om$ in \eqref{eq:I_in_shifted}, we can apply the asymptotic formula \eqref{eq:leading_CauchyF} to obtain
\eq\label{eq:Om_tilde_integral}
I_{\rm in}(\xi;r,s)=\frac{e^{-\xi}}{2\pi \ii} \int_{\widetilde \Om} M(z) e^{-n(g(z)-\xi z)}(1+\bigO(1/n))\,\dd z.
\eeq
The contour $\widetilde \Om$ differs from $\Om$ only in a $\bigO(1/n)$ neighborhood of $z_{\rm cr}$, and Proposition \ref{prop:Omega_F} implies that the integral localizes in a neighborhood of $z_{\rm cr}$:
\eq\label{eq:Om_tilde_integral_loc}
I_{\rm in}(\xi;r,s)=\frac{e^{-\xi}}{2\pi \ii} \int_{\widetilde \Om \cap D(z_{\rm cr}, \ep)} M(z) e^{-n(g(z)-\xi z)}(1+\bigO(1/n))\,\dd z.
\eeq
Let $\phi$ denote the direction of steepest descent for the contour $\Om$ at the point $z_{\rm cr}$ and denote by $\mathcal{S}$ the line segment from $z_{\rm cr} - \ep e^{\ii \phi}$ to  $z_{\rm cr} - \ep e^{\ii \phi}$. Also let $\mathcal{S}_n \subset \mathcal{S}$ be the line segment from $x_0+ \frac{\ep}{n} e^{\ii\phi_1}$ to $x_0+ \frac{\ep}{n} e^{\ii\phi_2}$.  Also denote by $\mathcal{A}$ the arc
\[
\mathcal{A} = \left\{x_0 + \frac{\ep}{n} e^{\ii\theta} \ | \ \phi_1 < \theta < \phi_2\right\},
\]
so that the only difference between $\Om$ and $\widetilde\Om$ is that $\mathcal{S}_n \subset \Om$ is deformed to $\mathcal{A} \subset \widetilde \Om$.
Note that both $\mathcal{S}_n$ and $\mathcal{A}$ have length of order $\bigO(1/n)$.
We can split the integral \eqref{eq:Om_tilde_integral_loc} as
\begin{multline}\label{eq:I_in_split}
I_{\rm in}(\xi;r,s)=\frac{e^{-\xi}}{2\pi \ii} \int_{\mathcal{S} \setminus \mathcal{S}_n} M(z) e^{-n(g(z)-\xi z)}(1+\bigO(1/n))\,\dd z \\
+ \frac{e^{-\xi}}{2\pi \ii} \int_{\mathcal{A}} M(z) e^{-n(g(z)-\xi z)}(1+\bigO(1/n))\,\dd z.
\end{multline}
The integral over $\mathcal{A}$ lies entirely in an $\bigO(1/n)$ neighborhood of $z_{\rm cr}$.  Since $z_{\rm cr}$ is a critical point for $g(z)-\xi z$, the Taylor expansions of $g(z)-\xi z$ and $M(z)$ at $z=z_{\rm cr}$ yield
\eq\label{eq:Om_tilde_estimate}
M(z) e^{-n(g(z)-\xi z)} = M(z_{\rm cr})e^{-n(g(z_{\rm cr})-\xi z_{\rm cr})}\left(1+ \bigO(1/n)\right), \quad \textrm{for} \ |z - z_{\rm cr}| = \bigO(1/n).
\eeq
Since the length of $\mathcal{A}$ is $\bigO(1/n)$, we therefore have
\begin{multline}\label{eq:SminusSn_int}
 \frac{e^{-\xi}}{2\pi \ii} \int_{\mathcal{A}} M(z) e^{-n(g(z)-\xi z)}(1+\bigO(1/n))\,\dd z =  \frac{e^{-\xi}}{2\pi \ii} \int_{\mathcal{A}} M(z_{\rm cr}) e^{-n(g(z_{\rm cr})-\xi z_{\rm cr})}(1+\bigO(1/n))\,\dd z  \\
=   e^{-n(g(z_{\rm cr})-\xi z_{\rm cr})}\bigO(1/n).
\end{multline}

In the integral over $\mathcal{S} \setminus \mathcal{S}_n$ we make the usual change of variables
\[
x=\sqrt{n g''(z_{\rm cr})}(z-z_{\rm cr}) \quad \textrm{with} \ \Arg \sqrt{n g''(z_{\rm cr})} = -\phi,
\]
which gives
\begin{multline}
\frac{e^{-\xi}}{2\pi \ii} \int_{\mathcal{S} \setminus \mathcal{S}_n} M(z) e^{-n(g(z)-\xi z)}(1+\bigO(1/n))\,\dd z  \\
= \frac{e^{-\xi}}{2\pi \ii  \sqrt{n g''(z_{\rm cr})}} M(z_{\rm cr}) e^{-n(g(z_{\rm cr})-\xi z_{\rm cr})}\int_{(-\ep \sqrt{n|g''(z_{\rm cr})|},\ep \sqrt{n|g''(z_{\rm cr})|})\setminus (\ep_n^{(1)}, \ep_n^{(2)})}e^{-x^2/2} (1+\bigO(1/n))\,\dd x,
\end{multline}
where $\ep_n^{(1)}$ and $\ep_n^{(2)}$ are the images of the points $x_0+ \frac{\ep}{n} e^{\ii\phi_1}$ and $x_0+ \frac{\ep}{n} e^{\ii\phi_2}$ under the change of variables $x=\sqrt{n g''(z_{\rm cr})}(z-z_{\rm cr})$. Since both $x_0+ \frac{\ep}{n} e^{\ii\phi_1}$ and $x_0+ \frac{\ep}{n} e^{\ii\phi_2}$ are within a distance of order $\bigO(1/n)$ from $z_{\rm cr}$, the numbers  $\ep_n^{(1)}$ and $\ep_n^{(2)}$ are both $\bigO(1/\sqrt{n})$. The integral in the second line of \eqref{eq:SminusSn_int} differs from a standard Gaussian integral by $\bigO(1/\sqrt{n})$ and we obtain
\[
\frac{e^{-\xi}}{\sqrt{2\pi} \ii  \sqrt{n g''(z_{\rm cr})}} M(z_{\rm cr}) e^{-n(g(z_{\rm cr})-\xi z_{\rm cr})}\left(1+\bigO(1/\sqrt{n})\right).
\]
Combining with \eqref{eq:I_in_split} and \eqref{eq:SminusSn_int} we obtain \eqref{eq:I_in_locF}.

\subsection{Steepest descent analysis for $\De < 1$ and the proof of Theorems \ref{Theorem_asymptotics_case2} -- \ref{Theorem_asymptotics_case4} for $k=1$}
In this section we assume $\De<1$ so that the six-vertex weights are parametrized by \eqref{eq_case2}, \eqref{eq_case3}, or \eqref{eq_case4}. We will prove the $k=1$ versions of Theorems \ref{Theorem_asymptotics_case2} -- \ref{Theorem_asymptotics_case4} in the following way: we first use \eqref{eq:Ztilde_in_out} and the contour integral formula \eqref{eq:I_in_contour} to derive an asymptotic formula for $\ZZ(\xi; t,\ga)$ which is uniform in an annulus $\ep \le |\xi| < 2\ep$; then we use Lemma \ref{lem:small_xi} to extend this asymptotic formula to the full disc $|\xi| \le 2\ep$; finally we replace $\xi$ with $\xi/\sqrt{n}$ to obtain Theorems \ref{Theorem_asymptotics_case2} -- \ref{Theorem_asymptotics_case4}.


As in the case $\De>1$ described in the previous subsection, the primary contribution to \eqref{eq:Ztilde_in_out} comes from $I_{\rm in}(\xi;r,s)$, and we will use the formula \eqref{eq:I_in_contour} for $I_{\rm in}(\xi;r,s)$. Substituting the asymptotic formula \eqref{eq:leading_Cauchy} into the contour integral \eqref{eq:I_in_contour} yields
\eq\label{eq:case2_int_asy}
I_{\rm in}(\xi; r,s)= \frac{1}{2 \pi i} \int_{\Omega} e^{n(-g(z) + z\xi)}M(z)(1+\bigO(1/n)) \dd z,
\eeq
so we once again seek a critical point $z_{\rm cr}$ which solves the equation
\eq\label{eq:case2_crit}
g'(z)= \xi.
\eeq
In order to prove Theorems \ref{Theorem_asymptotics_case2} -- \ref{Theorem_asymptotics_case4} for $k=1$, we will not need an explicit formula for this critical point; indeed, in the antiferroelectric phase $\De<-1$ the function $g(z)$ is described by elliptic integrals and an explicit solution to \eqref{eq:case2_crit} becomes rather complicated. To avoid this difficulty we prove the existence of a unique solution to \eqref{eq:case2_crit} for $\xi$ in a small annulus around the origin, as described in the following proposition.
\begin{prop}\label{prop:crit_pt_ex}
For any choice of six-vertex weights \eqref{eq_case2} -- \eqref{eq_case4}, there exists $\ep>0$ such that the equation \eqref{eq:case2_crit} has a unique solution $z_{\rm cr} \equiv z_{\rm cr}(\xi)$ for all $\xi\in \C$ satisfying $0< |\xi| \le 2\ep$. Furthermore, the set $\{ z_{\rm cr}(\xi) : \ep\le |\xi| \le 2\ep\}$ is a compact set in $\C$ which does not intersect the support of the equilibrium measure, i.e., it is a compact subset of $\C \setminus [\al,\be]$ when $\De<1$.
\end{prop}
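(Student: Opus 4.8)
The plan is to produce the critical point as a small perturbation of the point $z=\infty$, where $g'(z)=G_\nu(z)$ vanishes, and then to use a compactness argument to rule out any other solution in $\C\setminus[\al,\be]$. Recall from \eqref{eq:g_gp} that $g'(z)=G_\nu(z)$ is the Stieltjes transform of the equilibrium measure $\nu$, which by Theorem~\ref{Theorem_LLN_log_gas} is a probability measure supported on the single interval $[\al,\be]$ in each of the cases \eqref{eq_case2}--\eqref{eq_case4}.

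\emph{Step 1: the critical point near infinity.} Since $\nu$ is compactly supported, $G_\nu$ is analytic at $z=\infty$ with $G_\nu(z)=z^{-1}+\bigO(z^{-2})$. Writing $w=1/z$ and $\wt G(w):=G_\nu(1/w)$, one gets a function analytic near $w=0$ with $\wt G(0)=0$ and $\wt G'(0)=1$. By the inverse function theorem, $\wt G$ restricts to a biholomorphism of a disc $\{|w|<1/R_0\}$ onto an open neighborhood of $0$ containing some disc $\{|\xi|\le 2\ep_0\}$, where we may take $R_0>\max(|\al|,|\be|)$. Hence for every $\xi$ with $|\xi|\le 2\ep_0$ the equation $g'(z)=\xi$ has a unique solution with $|z|>R_0$: this is $z=\infty$ when $\xi=0$, and the finite point $z_{\rm cr}(\xi):=1/\wt G^{-1}(\xi)\neq 0$ when $\xi\neq 0$; moreover $\xi\mapsto z_{\rm cr}(\xi)$ is holomorphic on $\{0<|\xi|\le 2\ep_0\}$.

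\emph{Step 2: $g'$ is bounded away from $0$ near $[\al,\be]$.} First, $G_\nu(z)\neq 0$ on all of $\C\setminus[\al,\be]$: for non-real $z$ one has $\Im G_\nu(z)=-\Im(z)\int|z-x|^{-2}\,\nu(\dd x)$, which has sign opposite to $\Im z$; for real $z>\be$ the integral $\int(z-x)^{-1}\nu(\dd x)$ is strictly positive, and for $z<\al$ it is strictly negative. Next, the one-sided boundary values of $G_\nu$ on $[\al,\be]$ are nonzero as well: on the interior of the support one has $\Im G_{\nu,\pm}(x)=\mp\pi\rho(x)$, where $\rho$ is the density of $\nu$, which is strictly positive in the bulk (equal to $\tfrac12$ on $[\al',\be']$ in the antiferroelectric case) and vanishes like a square root at the soft edges $\al,\be$ by \eqref{eq:Gp}--\eqref{eq:GpAF}, so that $G_\nu$ has finite nonzero limits at $\al$ and $\be$ from $\C\setminus[\al,\be]$. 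Therefore $G_\nu$, continued continuously from the closed upper (resp.\ lower) half-plane, is continuous and nonvanishing on the compact set $\{z:\pm\Im z\ge 0,\ \dist(z,[\al,\be])\le\de\}$ for small $\de>0$; consequently there is $c_1>0$ with $|g'(z)|\ge c_1$ for every $z\in\C\setminus[\al,\be]$ with $0<\dist(z,[\al,\be])\le\de$.

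\emph{Step 3: gluing, and the compactness claim.} The set $\mathcal K:=\{z:\dist(z,[\al,\be])\ge\de,\ |z|\le R_0\}$ is compact and contained in $\C\setminus[\al,\be]$, so $|g'|\ge c_2>0$ on it for some $c_2$. Take $\ep:=\tfrac12\min(\ep_0,c_1,c_2)$. For $0<|\xi|\le 2\ep$ the equation $g'(z)=\xi$ has no solution with $0<\dist(z,[\al,\be])\le\de$ nor in $\mathcal K$ (there $|g'|\ge\min(c_1,c_2)>2\ep\ge|\xi|$), while in $\{|z|>R_0\}$ it has exactly the solution $z_{\rm cr}(\xi)$ of Step 1; since these three regions cover $\C\setminus[\al,\be]$, uniqueness follows. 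Finally, $\{z_{\rm cr}(\xi):\ep\le|\xi|\le 2\ep\}$ is the image of the compact annulus $\{\ep\le|\xi|\le 2\ep\}$ under the continuous (and finite, since $\wt G^{-1}(\xi)\neq 0$ there) map $\xi\mapsto 1/\wt G^{-1}(\xi)$, hence compact; and since $|z_{\rm cr}(\xi)|>R_0>\max(|\al|,|\be|)$, it is disjoint from $[\al,\be]=\operatorname{supp}\nu$. The soft parts of this argument (the inverse function theorem at $\infty$ and the covering bookkeeping) are routine; the one point requiring care is the nonvanishing of the boundary values of $G_\nu$ along $[\al,\be]$ in Step 2, where one must invoke the explicit description of the equilibrium measure from Theorem~\ref{Theorem_LLN_log_gas} — in particular positivity of the density in the bulk, the square-root decay at the soft edges, and, in the antiferroelectric phase, the saturated region of density $\tfrac12$.
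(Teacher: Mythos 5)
Your proof follows the same three-step strategy as the paper's: produce the unique critical point near $\infty$ via the inverse function theorem, bound $|g'|$ away from $0$ near $[\al,\be]$, and dispose of the intermediate compact region. The implementation of the second step differs, though. The paper derives $|g'(x)|\ge V'(x)/2\ge(\ga-|t|)/2$ for $x\in\mathrm{supp}(\nu)$ directly from the equilibrium conditions \eqref{eq:eq_condition_band}--\eqref{eq:eq_condition_sat} (differentiating $g_++g_-=V+l$ gives $\Re g'_\pm=V'/2$ on the band, and \eqref{eq:G_sat} gives $\Im g'_\pm=\mp\pi/2$ on the saturated region), and then handles the remaining region by estimating $\Re g'$ and $\Im g'$ directly. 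You instead argue that $G_\nu$ is nowhere zero on $\C\setminus[\al,\be]$ via sign considerations, then try to push a nonvanishing lower bound up to the boundary. Both are reasonable, but yours has two small gaps that need patching.

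First, the claim that $G_\nu$ extends continuously from each closed half-plane onto the compact set $\{z:\pm\Im z\ge0,\ \dist(z,[\al,\be])\le\de\}$ is false in the disordered and boundary phases: the density of $\nu$ has a logarithmic singularity at $x=0$ (visible from the pole of $\partial_z G_\nu$ in \eqref{eq:Gp} and from the explicit density formulas of \cite{Bleher-Liechty14}), so $\Im G_{\nu,\pm}(x)=\mp\pi\rho(x)\to\mp\infty$ as $x\to0$. This is harmless for the conclusion — the blowup makes $|G_\nu|$ \emph{large}, not small — but the extraction of $c_1>0$ should be phrased so as to tolerate the blowup, e.g., show $|G_\nu|$ is bounded below separately on the compact set minus a small disc about $0$ (by continuity and nonvanishing) and inside that disc (by the explicit log divergence). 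Second, you assert that $G_\nu$ has finite \emph{nonzero} limits at the soft edges $\al,\be$; finiteness follows from integrability of the square-root singularity in $\partial_z G_\nu$, and $\Im G_\nu(\al)=\Im G_\nu(\be)=0$ since the density vanishes, but the nonvanishing needs the observation that $\Re G_{\nu,\pm}(x)=V'(x)/2\ne 0$ at the endpoints, which you should state. After these two fixes your argument is a clean alternative to the case analysis in the paper.
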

\begin{proof}
Note that
\[
g'(z) = \int_\al^\be \frac{\nu(\dd x)}{z-x}= \frac{1}{z}\int_\al^\be \frac{\nu(\dd x)}{1-x/z},
\]
which can be expanded as a geometric series in $x/z$, giving the expansion at infinity,
\eq\label{eq:gp_infinity}
g'(z) = \frac{1}{z} + \frac{\mu_1}{z^2} + \frac{\mu_2}{z^3} + \frac{\mu_3}{z^4} + \cdots, \qquad \mu_j = \int_\al^\be x^j \nu(\dd x),
\eeq
thus $g'(z)$ is analytic at $z=\infty$ with value $g'(\infty) = 0$. Making the change of variable $w=1/z$, the expansion \eqref{eq:gp_infinity} becomes
\[
g'(z(w))=g'(1/w) = w + \mu_1w^2 + \mu_2 w^3 + \mu_3 w^4 + \cdots
\]
which is clearly invertible in a neighborhood of $w=0$. We can denote this inverse function as $w_{\rm cr}(\xi)$, so that for some fixed $\ep>0$,
\[
g'(1/w_{\rm cr}(\xi)) = \xi, \qquad |\xi|\le 2\ep.
\]
Since $g'(\infty) = g'(w(0)) = 0$, we have $w_{\rm cr}(0) = 0$ and $|w_{\rm cr}(\xi)|>0$ for $\ep\le|\xi|\le 2\ep$.
Writing $z_{\rm cr}(\xi) = \frac{1}{w_{\rm cr}(\xi)}$, we then have that for a fixed $\ep>0$, $z_{\rm cr}(\xi)$ is well defined on the annulus $\ep\le|\xi|\le 2\ep$ and maps that region to a bounded closed subset of $\C$. To see that this set does not intersect the support of the equilibrium measure, we can look at the equilibrium conditions \eqref{eq:eq_condition_band} and \eqref{eq:eq_condition_sat}, which imply (see the formulas \eqref{def:V} for $V(x)$) that
\eq\label{eq:gp_V_bound}
|g'(x)| \ge \frac{V'(x)}{2} \ge \frac{\ga - |t|}{2},
\eeq
for $x$ in the support of the equilibrium measure. Thus if we take $\ep< \frac{\ga - |t|}{4}$ we cannot have $g'(x) = \xi$ for any $x$ in the support of the equilibrium measure and $\xi$ satisfying $|\xi|\le 2\ep$.

The above argument shows that for each $0<\xi\le \ep$ there is a unique solution to \eqref{eq:case2_crit} lying in a neighborhood of infinity. To complete the proof we must show additionally that there are no solutions outside a neighborhood of infinity. The estimate \eqref{eq:gp_V_bound} implies that for small enough $\ep>0$, there are no solutions to \eqref{eq:case2_crit} in a small complex neighborhood of the interval $[\al,\be]$.
To consider other possible solutions, let us split $g'(z)$ into its real and imaginary parts and write \eqref{eq:case2_crit} as
\[
\int_\al^\be \frac{(\Re z -x) \nu(\dd x)}{(\Re z -x)^2 +(\Im z)^2} - \ii \, \Im z \int_\al^\be \frac{\nu(\dd x)}{(\Re z -x)^2 +(\Im z)^2} = \xi.
\]
Since $|\xi|<\ep$, both the real and imaginary parts of $g'(z)$ must be smaller than $\ep$ in absolute value as well. Since we are looking for a solution outside a neighborhood of infinity, consider first the case $|\Re z| < C$ and $C^{-1} < |\Im z| < C$ for a fixed $C>\max\{|\al|, \be\}$. Then the imaginary part of $g'(z)$ is estimated as
\[
|\Im g'(z) | =\left|  \Im z \int_\al^\be \frac{\nu(\dd x)}{(\Re z -x)^2 +(\Im z)^2}\right| \ge \frac{C^{-1} }{D^2+C^2}, \quad D = \max\{C-\al, \be+C\},
\]
which is larger than $\ep$ for small enough $\ep$. We are left only to consider the case $|\Im z|$ is small and $\Re z$ is bounded away from both $[\al,\be]$ and $\infty$. For concreteness, consider the case $\be +C^{-1}< \Re z < C$ and $|\Im z |< C^{-1}$ for a fixed $C>\be$ (the case $\Re z < \al - C^{-1}$ is similar). Then the real part of $g'(z)$ is an integral with a positive integrand and may be estimated as
\[
\Re g'(z) = \int_\al^\be \frac{(\Re z -x) \nu(\dd x)}{(\Re z -x)^2 +(\Im z)^2}  \ge  \frac{C^{-1}}{(C -\al)^2 +C^{-2}},
\]
which is again bounded away from zero for a fixed $C$, thus greater than a small enough $\ep>0$. It follows that for small enough $|\xi|$ the only solution to \eqref{eq:case2_crit} lies in a neighborhood of $\infty$ and is therefore unique by the inverse function argument at the beginning of this proof.
\end{proof}

In the rest of this subsection we assume that $0<|\xi|\le2\ep \le \frac{\ga-|t|}{2}$ and denote the unique solution to \eqref{eq:case2_crit} as $z_{\rm cr}$. The asymptotic analysis of $\ZZ(\xi; t,\ga)$ depends on the following proposition which describes a contour for which the steepest descent analysis of \eqref{eq:Ztilde_in_out} is possible.
\begin{prop}\label{prop:Omega_DAF}
Let $g(z)$ be the function defined in \eqref{def:g-function} associated with six-vertex weights parametrized by one of the cases \eqref{eq_case2} -- \eqref{eq_case4}.
Assume $\xi$ is a complex parameter satisfying $\xi \in \C$ and $\ep\le|\xi|\le2\ep \le \frac{\ga-|t|}{2}$. There exists a positively oriented contour $\Om$ which encloses the interval $[\al,\be]$ and satisfies the following properties:
\begin{enumerate}
\item $\Omega$ crosses $z_{\rm cr}$ in the direction of steepest descent for the function $\Re\left(-g(z) + \xi z\right)$.
\item For all $z\in \Om \setminus \{ z_{\rm cr}\}$,
\eq\label{eq:Omega_inequality_DAF}
\Re\left(-g(z_{\rm cr})+\xi z_{\rm cr} \right) > \Re\left(-g(z)+\xi z \right) , \quad z\in \Om \setminus \{ z_{\rm cr}\}.
\eeq
\item $\Om$ encloses the interval $[\al, \be]$ and crosses the real axis at the points $-r<\be$ and $s>\al$ such that $\Re(g(x) -V(x)+ \xi x) = \Re(g(x) + (t+\xi) x - \ga|x|)$ is strictly increasing on $(-\infty, -r)$ and strictly decreasing on $(s,\infty)$.\label{prop:Omega_DAFc}
\end{enumerate}
\end{prop}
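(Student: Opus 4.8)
The plan is to construct $\Omega$ as a steepest-descent contour for the phase function $\phi(z):=\xi z-g(z)$, following the same scheme as in the proof of Proposition~\ref{prop:Omega_F} in Section~\ref{app:Omega}. First note that $g'=G_\nu$ is analytic on $\C\setminus[\al,\be]$ and $\Re g$ is continuous across the whole real line (see the properties of the $g$-function collected before Proposition~\ref{prop:OP_asy}), so $\Re\phi$ is harmonic on $\C\setminus[\al,\be]$ and continuous on $\C$; and since $g(z)\sim\ln z$ at infinity, $\Re\phi(z)=\Re(\xi z)-\ln|z|+\bigO(1/|z|)$ grows to $+\infty$ along every ray inside $\{\Re(\xi z)>0\}$ and tends to $-\infty$ inside $\{\Re(\xi z)\le 0\}$. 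The point $z_{\rm cr}$ satisfies $\phi'(z_{\rm cr})=0$, and by \eqref{eq:Gp} (respectively \eqref{eq:GpAF} in the antiferroelectric phase) $\phi''(z_{\rm cr})=-g''(z_{\rm cr})\neq 0$ --- here we use that, by Proposition~\ref{prop:crit_pt_ex}, $z_{\rm cr}$ is finite and bounded away from $[\al,\be]$, hence also away from the pole of $g''$ at the origin --- so $z_{\rm cr}$ is a nondegenerate saddle of $\Re\phi$, and it is moreover the unique critical point of $\phi$ in $\C\setminus[\al,\be]$ for $\ep\le|\xi|\le 2\ep$, ranging over a compact subset of $\C\setminus[\al,\be]$ as $\xi$ varies over the annulus. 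From $\phi(z)=\phi(z_{\rm cr})+\tfrac12\phi''(z_{\rm cr})(z-z_{\rm cr})^2+\cdots$ one reads off the two steepest-descent directions at $z_{\rm cr}$; they depend continuously on $\xi$, and along the two arcs issuing from $z_{\rm cr}$ in these directions $\Re\phi$ decreases strictly from $c_0:=\Re\phi(z_{\rm cr})$.

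Next I would pin down the real-axis crossings, which give property~(3). Using \eqref{def:V}, for $x<\al$ the derivative $\partial_x\Re\bigl(g(x)-V(x)+\xi x\bigr)$ equals $G_\nu(x)+\ga+t+\Re\xi$, while for $x>\be$ it equals $G_\nu(x)+t-\ga+\Re\xi$. Since $G_\nu$ is real and strictly decreasing on each of $(-\infty,\al)$ and $(\be,+\infty)$, with one-sided limits $\tfrac12 V'(\al)$ at $\al$ and $\tfrac12 V'(\be)$ at $\be$ forced by the equilibrium equality \eqref{eq:eq_condition_band}, and since $|\Re\xi|\le 2\ep\le\tfrac{\ga-|t|}{2}$, a short computation shows that the first derivative is strictly positive on $(-\infty,\al)$ and the second strictly negative on $(\be,+\infty)$. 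Therefore $\Re\bigl(g(x)-V(x)+\xi x\bigr)$ is strictly increasing on $(-\infty,\al)$ and strictly decreasing on $(\be,+\infty)$, and one may take $-r$ to be any point of $(-\infty,\al)$ and $s$ any point of $(\be,+\infty)$; in the real cases one can simply set $s=z_{\rm cr}>\be$ when $\xi>0$ and $-r=z_{\rm cr}<\al$ when $\xi<0$.

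With these ingredients the contour is completed exactly as in the ferroelectric case. Along the two steepest-descent arcs out of $z_{\rm cr}$ one has $\Re\phi<c_0$, so these arcs cannot return to $z_{\rm cr}$, cannot terminate at a second saddle (there is none), and cannot run off to infinity inside $\{\Re(\xi z)>0\}$ (where $\Re\phi\to+\infty$); using the strict equilibrium inequalities \eqref{eq:equilibrium_lessthan} and \eqref{eq:eq_condition_sat} to control $\Re\phi$ on $\R\setminus[\al,\be]$ and near the band(s), together with the maximum principle for the harmonic function $\Re\phi$ on $\C\setminus[\al,\be]$, one checks that these arcs --- completed through the crossing points $-r$ and $s$ and along a suitable level set $\{\Re\phi=c_1\}$ with $c_1<c_0$ --- bound a positively oriented Jordan curve $\Omega$ which encloses $[\al,\be]$, passes through $z_{\rm cr}$ in the steepest-descent direction, and lies in $\{\Re\phi\le c_0\}$ with equality only at $z_{\rm cr}$. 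This is precisely (1), (2), (3). For a general $\xi$ in the annulus, $z_{\rm cr}(\xi)$ is a simple zero of $\phi'$ and hence analytic in $\xi$, the level-set picture of $\Re\phi$ and the points $-r(\xi)$, $s(\xi)$ vary continuously, and the reference construction --- most transparent for $\xi$ real positive, where $z_{\rm cr}>\be$ minimizes $\Re\phi$ on $(\be,+\infty)$ and the steepest-descent direction is vertical --- deforms continuously over the compact annulus $\ep\le|\xi|\le 2\ep$, producing $\Omega$ for every such $\xi$. The antiferroelectric phase is handled the same way, using \eqref{eq:GpAF} in place of \eqref{eq:Gp} and the fact that $z_{\rm cr}$ stays away from the entire support $[\al_{\rm AF},\be_{\rm AF}]$ by Proposition~\ref{prop:crit_pt_ex}, so that no explicit solution of $g'(z)=\xi$ (which would involve elliptic integrals) is required.

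The main obstacle is the global topological step in the previous paragraph: showing that the two steepest-descent arcs can genuinely be closed into a simple loop encircling $[\al,\be]$ and contained in $\{\Re\phi\le c_0\}$, rather than escaping to infinity inside $\{\Re(\xi z)<0\}$, and that this can be arranged continuously and uniformly in $\xi$ over the annulus --- and, in the antiferroelectric phase, without any closed-form formulas. This rests on combining the maximum principle for $\Re\phi$ with the precise equilibrium (in)equalities \eqref{eq:eq_condition_band}, \eqref{eq:equilibrium_lessthan}, \eqref{eq:eq_condition_sat}; everything else --- the local Gaussian analysis at the saddle, the sign computation on the real axis, and the continuity and compactness arguments --- is routine.
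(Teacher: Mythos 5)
The construction you propose for $\Omega$ — a closed contour through the unique nondegenerate saddle $z_{\rm cr}$, lying in the sublevel set of $\Re(-g(z)+\xi z)$ with equality only at $z_{\rm cr}$, with the real-axis crossings controlled by the equilibrium inequalities — is exactly the paper's strategy, and your treatment of Part~(3) is a genuine and correct simplification of the paper's. Differentiating and using that $G_\nu$ is strictly decreasing on $\R\setminus[\al,\be]$ with one-sided limits $G_\nu(\al^-)=\tfrac12 V'(\al)$, $G_\nu(\be^+)=\tfrac12 V'(\be)$ forced by \eqref{eq:eq_condition_band}, one gets $\partial_x\Re(g-V+\xi x)>\tfrac12(\ga+t)+\Re\xi\ge\tfrac12(t+|t|)\ge 0$ on $(-\infty,\al)$ and $\partial_x\Re(g-V+\xi x)<-\tfrac12(\ga-t)+\Re\xi\le\tfrac12(t-|t|)\le 0$ on $(\be,\infty)$, strictly, by strict monotonicity of $G_\nu$. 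So Part~(3) holds automatically for any crossing points $-r<\al$ and $s>\be$. The paper instead establishes the sign of the derivative only at specific crossing points and extends by concavity; your observation is cleaner and subsumes it.

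The real gap is exactly where you flag it: the global topological step that yields a closed curve satisfying Parts~(1) and~(2). Asserting that the two steepest-descent arcs out of $z_{\rm cr}$ ``can genuinely be closed into a simple loop encircling $[\al,\be]$'' is the whole difficulty and cannot be handled by ``one checks'' with the maximum principle: a priori both steepest-descent arcs could terminate on the cut, leaving no way to complete a loop that encircles $[\al,\be]$ while staying in the strict sublevel set, or both could escape to infinity on the same side. The paper resolves this with Lemma~\ref{lem_contour_Omega}, which establishes (a) a specific monotonicity/convexity profile for $\Re(-g(x)+\xi x)$ on $\R$, bounding the number of times the level curve $\{\Re(-g(z)+\xi z)=\Re(-g(z_{\rm cr})+\xi z_{\rm cr})\}$ can cross the real axis; (b) that exactly two of its ends escape to infinity at an angle determined by $\Arg\xi$; and (c) that any bounded region enclosed by the level curve and touching $\mathcal{I}$ has $\Re(-g+\xi z)$ strictly larger than the level inside it — this is the precise maximum-principle input that is needed. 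These three facts, combined with a case analysis on $\Im\xi$, force exactly two arcs of the level-curve component through $z_{\rm cr}$ to reach the cut and two to go to infinity, and thereby pin down the topology of the region in which $\Omega$ can be drawn. You would need an analogue of Lemma~\ref{lem_contour_Omega}, or some comparably concrete substitute, for your argument to close; without it Parts~(1) and~(2) are not established.
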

This proposition is proved in Section \ref{app:Omega}.

\medskip

We now let the contour $\Om$ in \eqref{eq:I_in_contour} be as described in Proposition \ref{prop:Omega_DAF}. For $\De\ge -1$, we can then apply the asymptotic formula \eqref{eq:leading_Cauchy} to obtain \eqref{eq:case2_int_asy}. If $\De<-1$ then we can only apply \eqref{eq:leading_Cauchy} if the contour $\Om$ is separated from the lattice $\frac{2}{n} \Z$ by a distance of at least $\ep/n$. It is possible to choose the contour $\Om$ of Proposition \ref{prop:Omega_DAF} in this way unless the critical point $z_{\rm cr}$ itself is within a distance $\ep/n$ of $\frac{2}{n} \Z$. In that case the contour $\Om$ can be deformed in an $\bigO(1/n)$-neighborhood of $z_{\rm cr}$ to a contour $\widetilde\Om$ as defined in \eqref{eq:Omega_deformed} without affecting the asymptotic analysis, as described in \eqref{eq:Om_tilde_estimate} --- this part of the argument is exactly the same as the one at the end of Section \ref{subsec:F_steepest_descent}.

We therefore proceed assuming that \eqref{eq:case2_int_asy} holds, in which case the integral $I_{\rm in}(\xi;r,s)$ is localized near $z_{\rm cr}$, and the method of steepest descent yields
\eq\label{eq:hn_case2_asy2}
I_{\rm in}(\xi; r,s)  =  \frac{e^{-n(g(z_{\rm cr})- z_{\rm cr}\xi)}}{\sqrt{2\pi n}}\frac{M(z_{\rm cr})}{\ii \sqrt{g''(z_{\rm cr})}}
(1+\bigO(1/\sqrt{n})),
\eeq
where the branch of $\sqrt{g''(z_{\rm cr})}$ is again given as $\Arg \sqrt{g''(z_{\rm cr})} = -\phi$, where $\phi$ is the local (oriented) steepest descent direction for the contour $\Om$ at the point $z_{\rm cr}$. In particular, $\sqrt{g''(z_{\rm cr})}$ is continuous, and thus analytic, in $\xi$ on the set $\ep < |\xi| < 2\ep$.

Now consider $I_{\rm out}(\xi;r,s)$, given in \eqref{eq:I_inout}. Inserting the asymptotic formula \eqref{eq:leading_OPs} we obtain
\eq\label{eq:I_out_asy}
I_{\rm out}(\xi;r,s) = \int_{\R \setminus (-r,s)} e^{n(g(z)-V(x) - l +\xi x )}(1+\bigO(1/\sqrt{n}))\dd x,
\eeq
and Proposition \ref{prop:Omega_DAF}\eqref{prop:Omega_DAFc} implies
\[
I_{\rm out}(\xi;r,s) = \bigO\left(e^{n(g(-r)-V(-r) - l -\xi r)}\right) + \bigO\left(e^{n(g(s)-V(s) - l +\xi s)}\right).
\]
Combining with \eqref{eq:hn_case2_asy2} we obtain
\begin{multline}\label{eq:in_plus_out2}
I_{\rm in}(\xi;r,s)+I_{\rm out}(\xi;r,s)  =   \frac{e^{-n(g(z_{\rm cr})- z_{\rm cr}\xi)}}{\sqrt{2\pi n}}\frac{M(z_{\rm cr})}{\ii \sqrt{g''(z_{\rm cr})}} \\
\times\left(1+\bigO(e^{n(g(-r)+g(z_{\rm cr})-V(-r) - l +\xi (-r-z_{\rm cr}))}) + \bigO(e^{n(g(s)+g(z_{\rm cr})-V(s) - l +\xi (s-z_{\rm cr}))})+\bigO(1/\sqrt{n})\right).
\end{multline}
Since $-r$ and $s$ are on the contour $\Omega$, the estimate \eqref{eq:Omega_inequality_DAF} implies
\eq
\Re g(z_{\rm cr}) \le \Re(g(-r) - \xi (-r-z_{\rm cr})), \quad \Re g(z_{\rm cr}) \le \Re(g(s) - \xi (s-z_{\rm cr})).
\eeq
Using this estimate, the $\bigO$-terms in \eqref{eq:in_plus_out} can be written as
\begin{align}
\bigO\left(e^{n(g(-r)+g(z_{\rm cr})-V(-r) - l +\xi (-r-z_{\rm cr}))}\right) &= \bigO\left(e^{n(2g(-r)-V(-r) - l )}\right),  \\ \bigO\left(e^{n(g(s)+g(z_{\rm cr})-V(s) - l +\xi (s-z_{\rm cr}))}\right) &= \bigO\left(e^{n(2g(s)-V(s) - l )}\right).
\end{align}
According to \eqref{eq:equilibrium_lessthan}, both of these exponents are negative, so \eqref{eq:in_plus_out2} is simply
\eq
I_{\rm in}(\xi;r,s)+I_{\rm out}(\xi;r,s)  =
 \frac{e^{-n(g(z_{\rm cr})- z_{\rm cr}\xi)}}{\sqrt{2\pi n}}\frac{M(z_{\rm cr})}{\ii \sqrt{g''(z_{\rm cr})}}\left(1+\bigO(1/\sqrt{n})\right).
\eeq
Inserting this asymptotic into \eqref{eq:Ztilde_in_out} we have
\begin{multline}\label{eq:hn_case2_asy3}
\ZZ(\xi; t,\ga) = (n-1)!\left(\frac{a(t+\xi,\ga)b(t+\xi,\ga)}{a(t,\ga)b(t,\ga)}\right)^n \left(\frac{1}{n b(\xi,0)}\right)^{n-1} \\
\times  \frac{e^{-n(g(z_{\rm cr})- z_{\rm cr}\xi)}}{\sqrt{2\pi n}}\frac{M(z_{\rm cr})}{\ii \sqrt{g''(z_{\rm cr})}}(1+\bigO(1/\sqrt{n})) \\
= \left(\frac{a(t+\xi,\ga)b(t+\xi,\ga)e^{-(1+g(z_{\rm cr})- z_{\rm cr}\xi)}}{a(t,\ga)b(t,\ga)b(\xi,0)}\right)^n \frac{M(z_{\rm cr})b(\xi,0)}{\ii \sqrt{g''(z_{\rm cr})}}
(1+\bigO(1/\sqrt{n})).
\end{multline}
where we have used Stirling's formula in the second line.
We have proved the following proposition.
\begin{prop}\label{prop:le_1_annulus}
Fix $0<\ep<\frac{\ga-|t|}{4}$ such that Proposition \ref{prop:crit_pt_ex} holds, and denote $z_{\rm cr} \equiv z_{\rm cr}(\xi)$ for all $\xi\in \C$ satisfying $\ep\le|\xi|\le 2\ep$. Then the following asymptotic formula holds uniformly for $\ep\le|\xi|\le 2\ep$:
\eq\label{eq:Psipsi}
\ZZ(\xi; t,\ga) = \Psi(\xi)^n \psi(\xi)(1+\bigO(1/\sqrt{n})),
\eeq
where
\eq\label{def:Psipsi}
\Psi(\xi) = \frac{a(t+\xi,\ga)b(t+\xi,\ga)e^{-(1+g(z_{\rm cr})- z_{\rm cr}\xi)}}{a(t,\ga)b(t,\ga)b(\xi,0)} , \quad \psi(\xi) = \frac{M(z_{\rm cr})b(\xi,0)}{\ii \sqrt{ g''(z_{\rm cr})}}.
\eeq
In the above, the branch of the square root $\sqrt{g''(z_{\rm cr})}$ is determined by the local oriented direction of the steepest descent contour, and $\sqrt{g''(z_{\rm cr})}$ is analytic in $\xi$ on the set $\ep<|\xi|<2\ep$.
\end{prop}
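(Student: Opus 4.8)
The plan is to assemble the asymptotic ingredients developed earlier in this subsection into the single clean statement. The starting point is the representation \eqref{eq:Ztilde_in_out}, which writes $\ZZ_n(\xi;t,\ga)$ as an explicit $n$-dependent prefactor times $I_{\rm in}(\xi;r,s)+I_{\rm out}(\xi;r,s)$, where by Proposition \ref{Prop:cauchy} the ``inner'' piece $I_{\rm in}$ is the contour integral \eqref{eq:I_in_contour} of $e^{n\xi z}$ against the Cauchy transform of the rescaled orthogonal polynomial. First I would substitute the Riemann--Hilbert asymptotics \eqref{eq:leading_Cauchy} from Proposition \ref{prop:OP_asy} to reduce $I_{\rm in}$ to the oscillatory integral \eqref{eq:case2_int_asy} with phase $-g(z)+\xi z$ and amplitude $M(z)$.

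Next I would invoke Proposition \ref{prop:crit_pt_ex} to produce, for $\ep\le|\xi|\le 2\ep$, the unique critical point $z_{\rm cr}$ solving $g'(z)=\xi$, lying in a compact subset of $\C\setminus[\al,\be]$, and Proposition \ref{prop:Omega_DAF} to choose the integration contour $\Om$ crossing $z_{\rm cr}$ in the steepest descent direction for $\Re(-g(z)+\xi z)$ and dominated elsewhere by the strict bound \eqref{eq:Omega_inequality_DAF}. A standard Laplace localization at $z_{\rm cr}$, using the Taylor expansions of $g$ and $M$ and the Gaussian change of variables $z=z_{\rm cr}+w/\sqrt n$, then yields \eqref{eq:hn_case2_asy2}, with the branch of $\sqrt{g''(z_{\rm cr})}$ pinned by $\Arg\sqrt{g''(z_{\rm cr})}=-\phi$ where $\phi$ is the local oriented direction of $\Om$. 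For $I_{\rm out}$ I would insert the polynomial asymptotics \eqref{eq:leading_OPs}, apply property \eqref{prop:Omega_DAFc} of $\Om$ together with the strict equilibrium inequality \eqref{eq:equilibrium_lessthan} to bound $I_{\rm out}$ by $\bigO(e^{n(2g(-r)-V(-r)-l)})+\bigO(e^{n(2g(s)-V(s)-l)})$ with negative exponents, hence $I_{\rm in}+I_{\rm out}=I_{\rm in}(1+\bigO(1/\sqrt n))$. Plugging back into \eqref{eq:Ztilde_in_out} and applying Stirling's formula to $(n-1)!$ collapses all $n$-dependence into $\Psi(\xi)^n\psi(\xi)$, which is exactly \eqref{eq:hn_case2_asy3}. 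Analyticity of $\sqrt{g''(z_{\rm cr})}$ on $\ep<|\xi|<2\ep$ then follows because $z_{\rm cr}(\xi)$ is analytic there (inverse function theorem, as in the proof of Proposition \ref{prop:crit_pt_ex}), $g''$ is analytic and nonvanishing off $[\al,\be]$, and $\phi$ varies continuously in $\xi$, so the chosen branch is consistent.

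The one genuinely delicate point — and the place I expect the main obstacle — is uniformity in $\xi$ over the annulus, and, in the antiferroelectric phase $\Delta<-1$, the fact that $\mt_n$ is supported on $\tfrac2n\Z$ so that \eqref{eq:leading_Cauchy} only holds at distance $\ge\ep/n$ from the lattice. If $z_{\rm cr}$ itself is that close, I would deform $\Om$ in an $\bigO(1/n)$-neighborhood of $z_{\rm cr}$ to the contour $\widetilde\Om$ of \eqref{eq:Omega_deformed}, noting that the deformed arc contributes $e^{-n(g(z_{\rm cr})-\xi z_{\rm cr})}\bigO(1/n)$ by \eqref{eq:Om_tilde_estimate} and so does not disturb the leading term; this repeats verbatim the argument at the end of Section \ref{subsec:F_steepest_descent}. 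Otherwise the proof is bookkeeping: every error term — from \eqref{eq:leading_Cauchy} and \eqref{eq:leading_OPs}, from the Laplace localization, from the Gaussian tail truncation, and from the possible contour deformation — must be checked to be uniform in $\xi$ on $\{\ep\le|\xi|\le 2\ep\}$, which is precisely why the compactness clause of Proposition \ref{prop:crit_pt_ex} and the uniform validity clauses of Propositions \ref{prop:OP_asy} and \ref{prop:Omega_DAF} were isolated beforehand.
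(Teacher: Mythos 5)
Your proposal reproduces the paper's own argument step for step: insert \eqref{eq:leading_Cauchy} into \eqref{eq:I_in_contour} to get \eqref{eq:case2_int_asy}, use Proposition \ref{prop:crit_pt_ex} and Proposition \ref{prop:Omega_DAF} to set up the steepest descent, localize to obtain \eqref{eq:hn_case2_asy2}, bound $I_{\rm out}$ via \eqref{eq:leading_OPs} together with part \eqref{prop:Omega_DAFc} and \eqref{eq:equilibrium_lessthan}, apply Stirling to the $(n-1)!$ prefactor, and handle the antiferroelectric lattice issue by the $\widetilde\Om$ deformation of \eqref{eq:Omega_deformed}. This matches the paper's derivation and correctly identifies the sources of uniformity.
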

We would like to extend these asymptotics to all $|\xi|< 2\ep$ using Lemma \ref{lem:small_xi}. In order to do so we need only to show that $\Psi(\xi)$ and $\psi(\xi)$ are analytic and nonvanishing in the disc $|\xi| < 2\ep$. Consider first $\Psi(\xi)$. It is clear that  $\frac{a(t+\xi,\ga)b(t+\xi,\ga)}{a(t,\ga)b(t,\ga)}$ is analytic and nonvanishing in a neighborhood of $\xi=0$, so we need to consider the factor  $\frac{e^{-(1+g(z_{\rm cr})- z_{\rm cr}\xi)}}{b(\xi,0)}$. The quantity $z_{\rm cr}\equiv z_{\rm cr}(\xi)$ is well-defined and analytic for $\xi\ne 0$, but approaches $\infty$ as $\xi\to 0$. Similarly $g(z_{\rm cr}(\xi))$ and $1/b(\xi,0)$ are well-defined and analytic for $\xi\ne 0$, but both approach $\infty$ as $\xi\to 0$. In particular, for any of the parametrizations \eqref{eq_case2} -- \eqref{eq_case4}, as $\xi\to 0$,
\eq\label{eq:b_xi0}
b(\xi, 0) = \xi + \bigO(\xi^2).
\eeq
Therefore in order to show that $\Psi(\xi)$ is analytic and nonvanishing in the full disc $|\xi|<2\ep$, we need to show that it has a non-zero limit as $\xi\to 0$. To see this, we first use the expansion \eqref{eq:gp_infinity} to obtain the relation
\[
\xi z_{\rm cr}(\xi) = 1+\bigO(1/z_{\rm cr}), \quad \textrm{as} \ z_{\rm cr} \to \infty.
\]
Since $z_{\rm cr} \to \infty$ is equivalent to $\xi\to 0$, we have
\eq\label{eq:xi_zcr}
\lim_{\xi \to 0} \xi z_{\rm cr}(\xi) = 1.
\eeq
Now consider $\frac{e^{-g(z_{\rm cr})}}{b(\xi,0)}$ as $\xi\to 0$. Since $g(z) = \ln(z) + \bigO(1/z)$ as $z\to\infty$, and $b(\xi,0) = \xi + \bigO(\xi^2)$ as $\xi\to 0$,
\eq
\frac{e^{-g(z_{\rm cr})}}{b(\xi,0)} = 1/(\xi z_{\rm cr})\left(1+\bigO(\xi) + \bigO(1/z_{\rm cr})\right),
\eeq
which in light of \eqref{eq:xi_zcr} implies
\eq\label{eq:exp_b}
\lim_{\xi\to 0} \frac{e^{-g(z_{\rm cr})}}{b(\xi,0)} = 1.
\eeq
Combining  \eqref{eq:xi_zcr} and \eqref{eq:exp_b} we find that $\lim_{\xi\to 0} \Psi(\xi) = 1$.

Now consider $\psi(\xi)$. The expansion \eqref{eq:Mstar_infinity} implies that $M(z_{\rm cr})$ is analytic and nonzero for large enough $|z_{\rm cr}|$, or equivalently for $\xi$ in a fixed small neighborhood of zero.  We need to check that $\frac{b(\xi,0)}{\ii \sqrt{g''(z_{\rm cr})}}$ is also analytic and non-zero. Recall that $\sqrt{g''(z_{\rm cr})}$ is analytic in $\xi$ on $\ep<|\xi|<2\ep$, and that $z_{\rm cr} \to \infty$ as $\xi \to 0$. Expanding \eqref{eq:Gp} or \eqref{eq:GpAF} at $z=\infty$ we find as $z_{\rm cr} \to \infty$,
\[
g''(z_{\rm cr}) = -\frac{1}{z_{\rm cr}^2} + \bigO(z_{\rm cr}^{-3}),
\]
thus
\eq\label{eq:sqrt_gp_pm}
\sqrt{g''(z_{\rm cr})} = \pm \frac{\ii }{z_{\rm cr}} + \bigO(z_{\rm cr}^{-2}),
\eeq
where the $\pm$ symbol refers to the fact that we must choose the correct branch of the square root. Since $\sqrt{g''(z_{\rm cr})}$ is analytic $\xi$ on $\ep<|\xi|<2\ep$, the same sign choice must apply for all $\xi$ in this set. To fix the sign then, we consider real positive $\xi$, $\ep<\xi<2\ep$, in which case $z_{\rm cr}>\be$ and $g''(z_{\rm cr})<0$. Locally near $z_{\rm cr}$, the contour $\Om$ is oriented in the direction $\phi = \pi/2$, and so the argument of $\sqrt{g''(z_{\rm cr})}$ is $-\phi = -\pi /2$, which implies that \eqref{eq:sqrt_gp_pm} should come with a minus-sign, and that
\eq
\ii \sqrt{g''(z_{\rm cr})} = \frac{1}{z_{\rm cr}} + \bigO(z_{\rm cr}^{-2}),
\eeq
which we can use as the local behavior near $\xi=0$ of $\ii \sqrt{g''(z_{\rm cr})}$ into the set $|\xi|<2\ep$.
It follows that as $\xi\to 0$ (and equivalently $z_{\rm cr} \to\infty$),
\eq\label{eq:b_over_sqrt}
\frac{b(\xi,0)}{\ii \sqrt{g''(z_{\rm cr}(\xi))}} = \xi z_{\rm cr}\bigl(1+\bigO(\xi)+\bigO(1/z_{\rm cr})\bigr),
\eeq
and therefore
\[
\lim_{\xi\to 0} \frac{b(\xi,0)}{\ii \sqrt{g''(z_{\rm cr}(\xi))}}=\lim_{\xi\to 0} \frac{b(\xi,0)M(z_{\rm cr})}{\ii \sqrt{g''(z_{\rm cr}(\xi))}} =\lim_{\xi\to 0} \psi(\xi) =1,
\]
and we find that $\psi(\xi)$ is analytic and nonvanishing in $|\xi| < \ep$.

We therefore may apply Lemma \ref{lem:small_xi} to the result of Proposition \ref{prop:le_1_annulus} to obtain the following strengthened version of Proposition \ref{prop:le_1_annulus}.
\begin{prop}\label{prop:le_1_disc}
Fix $0<\ep<\frac{\ga-|t|}{4}$ such that Proposition \ref{prop:crit_pt_ex} holds, and denote $z_{\rm cr} \equiv z_{\rm cr}(\xi)$ for all $\xi\in \C\setminus \{0\}$ satisfying $|\xi|\le 2\ep$. Let $\Psi(\xi)$ and $\psi(\xi)$ be defined by \eqref{def:Psipsi} for $\xi \ne 0$, and for $\xi=0$ as $\Psi(0) = \psi(0) = 1$. Then the asymptotic formula \eqref{eq:Psipsi} holds uniformly for $|\xi|\le 2\ep$.
\end{prop}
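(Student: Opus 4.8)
The plan is to obtain Proposition \ref{prop:le_1_disc} from Proposition \ref{prop:le_1_annulus} by invoking Lemma \ref{lem:small_xi}, so the work reduces to checking that lemma's hypotheses for $F_n(\xi):=\ZZ_n(\xi;t,\ga)$ and for the functions $\Psi$, $\psi$ of \eqref{def:Psipsi}: namely (a) $F_n$ is analytic in the disc $\{|\xi|<2\ep\}$ for every $n$; (b) the expansion \eqref{eq:Psipsi} holds uniformly on the circle $|\xi|=2\ep$; and (c) $\Psi$ and $\psi$, with the conventions $\Psi(0)=\psi(0)=1$, extend to analytic and nowhere-vanishing functions on $\{|\xi|<2\ep\}$.

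Point (a) I would dispose of by noting that in each of the parametrizations \eqref{eq_case2}--\eqref{eq_case4} the weights $a(t+\xi,\ga)$, $b(t+\xi,\ga)$ are entire in $\xi$, so the numerator $\mathcal Z_n(0^n;t+\xi,t^{n-1};\ga)$ of $\ZZ_n(\xi;t,\ga)$ is a finite sum of products of entire functions, the denominator $\mathcal Z_n(0^n;t^n;\ga)$ is a positive constant, and hence $F_n$ is entire. Point (b) is exactly Proposition \ref{prop:le_1_annulus} restricted to $|\xi|=2\ep$.

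The substantive point is (c). For $0<|\xi|<2\ep$, Proposition \ref{prop:crit_pt_ex} provides a unique, analytically varying critical point $z_{\rm cr}(\xi)$ lying outside $[\al,\be]$; on $\C\setminus[\al,\be]$ the functions $g$, $M$, $g''$ are analytic with $M$ and $g''$ nonvanishing, and $b(\xi,0)$ is analytic and nonzero for small $\xi\ne0$ by \eqref{eq:b_xi0}, so $\Psi$ and $\psi$ are analytic and nonvanishing on the punctured disc. To pass to the limit $\xi\to0$ (equivalently $z_{\rm cr}\to\infty$) I would use the expansion \eqref{eq:gp_infinity} to get $\xi\,z_{\rm cr}(\xi)\to1$, then combine this with $g(z)=\ln z+\bigO(1/z)$ and $b(\xi,0)=\xi+\bigO(\xi^2)$ to conclude $e^{-g(z_{\rm cr})}/b(\xi,0)\to1$ and hence $\Psi(\xi)\to1$; and combine $g''(z)=-z^{-2}+\bigO(z^{-3})$ at infinity with the branch of $\sqrt{g''(z_{\rm cr})}$ dictated by the steepest-descent direction of $\Om$ to conclude $b(\xi,0)/(\ii\sqrt{g''(z_{\rm cr})})\to1$ and hence $\psi(\xi)\to1$. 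Being bounded near $0$ with nonzero limits, $\Psi$ and $\psi$ have removable singularities there, which gives (c).

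Granting (a)--(c), Lemma \ref{lem:small_xi} yields \eqref{eq:Psipsi} uniformly for $|\xi|\le\ep$, and together with Proposition \ref{prop:le_1_annulus} on $\ep\le|\xi|\le2\ep$ this gives \eqref{eq:Psipsi} uniformly on the full disc $|\xi|\le2\ep$. The one step I expect to require genuine care is fixing the branch of $\sqrt{g''(z_{\rm cr})}$ consistently as $\xi\to0$: one uses that this branch is analytic on the annulus $\ep<|\xi|<2\ep$ (hence rigid) and identifies it on a real positive ray, where $z_{\rm cr}>\be$, $g''(z_{\rm cr})<0$, and the local direction of $\Om$ at $z_{\rm cr}$ equals $\pi/2$; all the remaining estimates are routine.
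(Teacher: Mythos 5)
Your proposal is correct and follows essentially the same route as the paper: invoke Lemma \ref{lem:small_xi}, reduce the problem to verifying that $\Psi$ and $\psi$ extend analytically and without zeros across $\xi=0$, and establish the limits $\Psi(\xi)\to1$ and $\psi(\xi)\to1$ by exactly the computations the paper uses — $\xi z_{\rm cr}(\xi)\to1$ from \eqref{eq:gp_infinity}, $g(z)=\ln z+\bigO(1/z)$, $b(\xi,0)=\xi+\bigO(\xi^2)$, and $g''(z_{\rm cr})=-z_{\rm cr}^{-2}+\bigO(z_{\rm cr}^{-3})$ with the branch of $\sqrt{g''(z_{\rm cr})}$ pinned down on a real positive ray and then propagated by analyticity on the annulus. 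The one thing you supply that the paper leaves tacit is item (a), the analyticity of $\ZZ_n(\cdot;t,\ga)$ on the disc, which is indeed needed to apply the lemma and is immediate since the numerator of $\ZZ_n$ is a finite sum of products of entire functions of $\xi$.
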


To complete the proof of Theorems \ref{Theorem_asymptotics_case2} -- \ref{Theorem_asymptotics_case4}, we now replace $\xi$ with $\xi/\sqrt{n}$ in the asymptotic formula \eqref{eq:Psipsi}. Since $z_{\rm cr} \to \infty$ as $\xi\to 0$, we can use the expansion \eqref{eq:gp_infinity} of $g'(z)$ at $\infty$ to find that $z_{\rm cr}(\xi/\sqrt{n})$ satisfies
\[
 \frac{1}{z} + \frac{\mu_1}{z^2} + \frac{\mu_2}{z^3} + \dots =  \frac{\xi}{\sqrt{n}},
 \]
and we find that
\eq\label{eq:crit_pt_exp}
z_{\rm cr}(\xi/\sqrt{n}) =\frac{\sqrt{n}}{\xi}+\mu_1+\frac{\xi}{\sqrt{n}}\left(\mu_2-\mu_1^2\right)+\bigO(1/n).
\eeq
From \eqref{eq:crit_pt_exp}, \eqref{eq:b_over_sqrt}, and \eqref{eq:Mstar_infinity} we immediately obtain
\eq\label{eq:psi_asy}
\psi(\xi/\sqrt{n}) = 1 + \bigO(1/\sqrt{n}).
\eeq
 To evaluate $\Psi(\xi/\sqrt{n})^n$, we use the expansion for $g(z)$ at infinity
 \eq\label{eq:g_infinity}
g(z) = \ln z - \frac{\mu_1}{z}-\frac{\mu_2}{2z^2}-\frac{\mu_3}{3z^3}+\bigO(1/z^4).
\eeq
to see
\[
-1-g(z_{\rm cr}(\xi/\sqrt{n})) + (\xi/\sqrt{n})z_{\rm cr}(\xi/\sqrt{n})) = \ln (\xi/\sqrt{n}) + \frac{\xi \mu_1}{\sqrt{n}} + \frac{\xi^2}{2n}(\mu_2 - \mu_1^2) + \bigO(n^{-3/2}).
\]
Then Proposition \ref{prop:le_1_disc} along with \eqref{eq:psi_asy} give that
\begin{multline}\label{eq:ZZ_Psi}
\ZZ(\xi/\sqrt{n}; t,\ga) = \Psi(\xi/\sqrt{n})^n \left(1 + \bigO(1/\sqrt{n})\right) \\
 =\left(\frac{a(t+\xi/\sqrt{n},\ga)b(t+\xi/\sqrt{n},\ga)\xi}{a(t,\ga)b(t,\ga)b(\xi/\sqrt{n},0)\sqrt{n}}\right)^n \exp\left(\xi \mu_1 \sqrt{n}+\frac{\xi^2}{2}(\mu_2 - \mu_1^2)\right)(1+\bigO(1/\sqrt{n})). \\
\end{multline}
Using  \eqref{eq:b_xi0}, we can write the above formula in a slightly different way:
\begin{multline}\label{eq:ZZ_Psi2}
\ZZ(\xi/\sqrt{n}; t,\ga) = \left(\frac{a(t+\xi/\sqrt{n},\ga)b(t+\xi/\sqrt{n},\ga)}{a(t,\ga)b(t,\ga)}\right)^n\left(\frac{\xi}{b(\xi/\sqrt{n},0)\sqrt{n}}\right)^{n-1} \\
\times \exp\left(\xi \mu_1 \sqrt{n}+\frac{\xi^2}{2}(\mu_2 - \mu_1^2)\right)(1+\bigO(1/\sqrt{n})),
\end{multline}
which is exactly the formula given in equations \eqref{eq_x10} and \eqref{eq_x11} with $k=1$. The rest of the proof of Theorems \ref{Theorem_asymptotics_case2} -- \ref{Theorem_asymptotics_case4} for $k=1$ is then identical to the one which follows those formulas, though we emphasize that \eqref{eq:ZZ_Psi2} was obtained using the rigourously proven asymptotics for orthogonal polynomials and steepest descent analysis of an integral, and does not rely on Conjecture \ref{eq_linear_statistics_remainder}.

\subsection{Existence of the contour $\Omega$}\label{app:Omega}
In this subsection we prove Propositions \ref{prop:Omega_F} and \ref{prop:Omega_DAF}. 
In order to describe the steep descent contour for the function $\Re (-g(z) +\xi z)$ uniformly in all phase regions, we introduce the notation
\eq
\mathcal{I} = \left\{
\begin{aligned}
& [\al, \be] \quad \textrm{for} \   \Delta < 1,\\
  &[\be, 0] \quad \textrm{for} \ \Delta > 1.
 \end{aligned}
 \right.
\eeq
to denote the intervals on which the function $g'(z)$ has a cut in each of the phase regions.

Locally near $z_{\rm cr}$, the level curve $\Re (-g(z) + \xi z) = \Re (-g(z_{\rm cr}) + \xi z_{\rm cr})$ consists of two smooth contours intersecting at $z_{\rm cr}$. Since $z_{\rm cr}$ is the unique critical point for $(-g(z) + \xi z)$, these curves cannot intersect at any other point in $\C\setminus \mathcal{I}$. For concreteness in this section, we assume that $\Re \xi\ge 0$ and $\Im \xi \ge 0$, in which case $\Im z_{\rm cr}\le 0$. The structure of $\Om$ for other values of $\xi$ is similar and given by straightforward symmetries. We will show the existence of a contour $\Omega$ for which we can apply the steepest descent analysis by examining the possible ways the four ends of these curves terminate in the following lemma.
\begin{lem}\label{lem_contour_Omega}
For six-vertex weights with $\De<1$ parametrized by \eqref{eq_case2} -- \eqref{eq_case4}, assume ${\ep<|\xi|<2\ep}$ for some fixed $\ep< \frac{\ga-|t|}{4}$. For six-vertex weights with $\De>1$ parametrized by \eqref{eq_case1}, assume $\Re \xi > (t-|\ga|)/2$, $|\Im \xi|<\pi/2$, and $|\xi| > \ep$ for a fixed $\ep>0$. In all cases, assume $\Re \xi \ge0$, and $\Im \xi\ge 0$.
\begin{enumerate}
\item
\begin{enumerate}
\item For $-1\le \De<1$, $\Re (-g(x) + \xi x)$ is strictly increasing in $x\in \R$ on $(-\infty, 0)$, decreases linearly on $(0, \be)$, and is convex on $(\be,\infty)$.
\label{lem:contour_Omega_a1}
\item For $\De<-1$, there exists $x_0 \in [\al',\be']$ such that $\Re (-g(x) + \xi x)$ is strictly increasing in $x\in \R$ on $(-\infty, x_0)$, strictly decreasing on $(x_0, \be)$, and convex on $(\be,\infty)$.
\label{lem:contour_Omega_a2}
\item For $\De>1$, $\Re (-g(x) + \xi x)$ is strictly increasing in $x\in \R$ on $(-\infty, \al)$, concave on $(\al,0)$, and convex on $(0,\infty)$.
\label{lem:contour_Omega_a3}
\end{enumerate}
\label{lem:contour_Omega_a}
\item Exactly one end of the level curve $\Re (-g(z) + \xi z) = \Re (-g(z_{\rm cr}) + \xi z_{\rm cr})$  approaches infinity at the angle $\arctan((\Re \xi)/(\Im \xi)) \in[0,\pi/2]$\footnote{If $\Im \xi = 0$, the level curve will end at $\infty$ at angle $\pi/2$, so we adopt the convention $\arctan(a/0) = \pi/2$ for $a>0$.}
, and exactly one end of the level curve $\Re (-g(z) + \xi_ z) = \Re (-g(z_{\rm cr}) + \xi_nz_{\rm cr})$ approaches infinity at the angle $\arctan((\Re \xi)/(\Im \xi)) -\pi\in[-\pi,-\pi/2]$. These are the only two ends of this level curve which approach infinity.\label{lem:contour_Omega_c}
\item If any part of the level curve $\Re (-g(z) + \xi z) = \Re (-g(z_{\rm cr}) +\xi z_{\rm cr})$ encloses a bounded region of the complex plane whose intersection with $\mathcal{I}$ is nonempty, then $\Re (-g(z) + \xi z) > \Re (-g(z_{\rm cr}) + \xi z_{\rm cr})$ for all $z$ within this region.\label{lem:contour_Omega_d}
\end{enumerate}
\end{lem}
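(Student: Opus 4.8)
The plan is to deduce all three assertions from elementary properties of the single function $h(z):=\Re\bigl(-g(z)+\xi z\bigr)$, which is harmonic on $\C\setminus\mathcal I$ (with $\mathcal I=\operatorname{supp}\nu$, so $\mathcal I=[\al,\be]$ when $\De<1$ and $\mathcal I=[\be,0]$ when $\De>1$) and continuous across $\mathcal I$ because $\Re g$ is. Set $\ell_0:=\Re\bigl(-g(z_{\rm cr})+\xi z_{\rm cr}\bigr)$. Assertion (1) is a one-variable analysis of $h$ on $\R$; assertion (2) describes $h$ near $\infty$; assertion (3) is a minimum-principle argument. Throughout I use the hypotheses $\Re\xi\ge0$, $\Im\xi\ge0$, with $0\le\Re\xi<\tfrac{\ga-|t|}{2}$ when $\De<1$ and $t>|\ga|$ when $\De>1$.

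For (1): on the void to the left of $\mathcal I$ one has $g'(x)=\int\nu(\dd y)/(x-y)<0$ real, so $h'(x)=\Re\xi-g'(x)>0$; on the band part of $\mathcal I$ the equilibrium condition \eqref{eq:eq_condition_band} gives $\Re g'_\pm(x)=\tfrac12 V'(x)$, whence $h'(x)=\Re\xi+\tfrac{\ga+t}{2}>0$ for $x<0$ and $h'(x)=\Re\xi-\tfrac{\ga-t}{2}<0$ for $x>0$ (in the ferroelectric phase $V_{\rm F}$ is linear and $h'(x)=\Re\xi+\tfrac{t-|\ga|}{2}>0$ on the whole band). On a saturated sub-interval of $\mathcal I$ (present only for $\De\notin(-1,1)$) the formulas \eqref{eq:Gp}, \eqref{eq:GpAF} show $g''(x)$ is real and nonvanishing, so $\Re g'$ is strictly monotone there; for $\De<-1$, since its two endpoint values (the band limits $\tfrac12 V'(0^-)=-\tfrac{\ga+t}{2}$ and $\tfrac12 V'(0^+)=\tfrac{\ga-t}{2}$) must then differ, the saturated interval $(\al',\be')$ contains $0$ and $\Re g'$ increases across it from $-\tfrac{\ga+t}{2}$ to $\tfrac{\ga-t}{2}$, so $h'=\Re\xi-\Re g'$ has a unique zero $x_0\in(\al',\be')$, $h$ increasing on $(\al',x_0)$ and decreasing on $(x_0,\be')$; for $\De>1$ the same formula gives $g''>0$ on the saturated interval $(\al,0]$, i.e. $h$ concave there. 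Finally, to the right of $\mathcal I$ (i.e. $x>\be$ when $\De<1$, $x>0$ when $\De>1$) both formulas give $g''(x)<0$, hence $h''(x)>0$, the asserted convexity. Piecing these together along $\R$, using that $h$ is $C^1$ across the soft edges (where $g'$ is continuous with value $\tfrac12 V'$), yields (1a)--(1c).

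For (2): from $g(z)=\ln z+\bigO(1/z)$ we get $h(z)=\Re(\xi z)-\ln|z|+\bigO(1/|z|)$, so along $\{h=\ell_0\}$ boundedness forces $|\xi|\,|z|\cos(\Arg\xi+\Arg z)=\ln|z|+\bigO(1)$, hence $\cos(\Arg\xi+\Arg z)\to0$ and $\Arg z\to\theta_\pm:=\pm\tfrac\pi2-\Arg\xi$ along any unbounded branch. Since $\Re\xi,\Im\xi\ge0$, $\Arg\xi=\arctan(\Im\xi/\Re\xi)$ and therefore $\theta_+=\arctan(\Re\xi/\Im\xi)\in[0,\tfrac\pi2]$ and $\theta_-=\theta_+-\pi$ --- exactly the two angles in the statement. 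Writing $z=\rho\,e^{\ii(\theta_\pm+\varepsilon(\rho))}$ and imposing $h=\ell_0$ gives $\varepsilon(\rho)\sim-(\ln\rho+\ell_0)/(|\xi|\rho)$, a single one-sided asymptote to each ray, so at most one unbounded branch approaches each of $\theta_+,\theta_-$; exactly one does, because $h(z)\to+\infty$ for $\Arg z$ in the open half-plane $(\theta_-,\theta_+)$ and $h(z)\to-\infty$ in its complement, so the unbounded regions $\{h>\ell_0\}$ and $\{h<\ell_0\}$ must be separated near $\infty$. Thus exactly two ends go to infinity, one at each angle. (A bounded closed loop of the level curve must enclose part of $\mathcal I$, since otherwise $h$ would be harmonic and $\equiv\ell_0$ on its boundary, hence constant --- the configuration controlled by (3).)

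For (3), let $R$ be a bounded region with $\partial R\subset\{h=\ell_0\}$ and $R\cap\mathcal I\ne\emptyset$. I will show $\min_{\overline{R}}h=\ell_0$; the strong minimum principle on each component of $R\setminus\mathcal I$, together with continuity of $h$, then gives $h>\ell_0$ on $R$. The minimum over $\overline{R}$ is attained on $\partial R\cup(\mathcal I\cap\overline{R})$, equals $\ell_0$ on $\partial R$, so suppose it is smaller, attained at $x_*\in\mathcal I\cap R$. If $x_*$ lies in the open support of $\nu$, the transverse derivative $\partial_\epsilon h(x_*+\ii\epsilon)\big|_{\epsilon=0^+}=\Im g'_+(x_*)-\Im\xi=-\pi\rho_\nu(x_*)-\Im\xi<0$ produces points $x_*+\ii\epsilon\in R$ with $h<h(x_*)$, contradicting minimality. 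If $x_*$ is an edge of $\mathcal I$ in $R$: at a soft edge $z_{\rm cr}\notin\mathcal I$ forces $g'(x_*)\ne\xi$, so $\nabla h(x_*)\ne0$ and $x_*$ is not a local minimum, while at the ferroelectric hard edge $0$ the local expansion of $g$ near $z=0$ (where $g'$ diverges logarithmically) exhibits $0$ as a saddle point of $h$. Hence $\min_{\overline{R}}h=\ell_0$, proving (3). I expect the main obstacle to be precisely this endpoint bookkeeping in (3) --- ensuring the minimum of $h$ over $\overline{R}$ cannot migrate to a soft or hard edge of $\mathcal I$ --- and the companion sign determination of $g''$ on the saturated bands in (1b); both rest on the precise local behaviour of $g$ and $\rho_\nu$ at the band endpoints, which comes from the Riemann--Hilbert analysis behind Propositions \ref{prop:OP_asy}--\ref{prop:OP_asyF}.
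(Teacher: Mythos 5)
Your proof is correct and follows the paper's overall strategy (harmonicity of $h=\Re(-g+\xi z)$ away from $\mathcal I$, one-variable analysis of $h|_{\R}$ from the equilibrium conditions, $g(z)\sim\ln z$ at infinity, and the maximum principle), with two tactical departures. In part (1), on the void you observe directly that $g'(x)<0$ (a Stieltjes transform evaluated to the left of the support) and hence $h'(x)=\Re\xi-g'(x)>0$; the paper instead shows $h'$ is increasing and positive at $-\infty$. In part (3), the paper invokes part (1) to push the hypothetical minimum of $h$ over $R\cap\mathcal I$ to an endpoint of $\mathcal I$, which is then ruled out by the one-variable monotonicity of $h$ near that endpoint; you exclude every point of $\mathcal I$ directly --- interior points via the transverse derivative $\partial_{s}h(x_*+\ii s)\big|_{s=0^+}=-\pi\rho(x_*)-\Im\xi<0$ with $\rho$ the density of $\nu$, soft edges via $\nabla h\ne 0$ (using $z_{\rm cr}\notin\mathcal I$), and the ferroelectric hard edge $0$ via the local $\tfrac12 z\ln z$ expansion of $-g+\xi z$. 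This makes your part (3) logically independent of part (1), a small structural gain, at the cost of a few more local computations. One imprecision worth flagging: you call $0$ a ``saddle point,'' but $\nabla h$ diverges there rather than vanishing, so it is not a critical point in the usual sense; the correct and sufficient observation, which your $z\ln z$ computation does furnish, is that $\Re(\tfrac12 z\ln z)\sim\tfrac12 r\ln r\cos\theta$ is negative in the right half-plane near $0$, so $h$ strictly decreases into the interior of $R$ from $0$, and $0$ cannot be a minimizer.
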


This lemma is proved at the end of this subsection, and determines the structure of the level curve $\Re (-g(z) + \xi z) = \Re (-g(z_{\rm cr}) +\xi z_{\rm cr})$ precisely. Recall that we have assumed that $\Im z_{\rm cr}\le 0$, and let $\mathcal{C}$ be the component of the level curve $\Re (-g(z) + \xi z) = \Re (-g(z_{\rm cr}) + \xi z_{\rm cr})$ which contains $z_{\rm cr}$, and therefore consists of four curves emanating from $z_{\rm cr}$ which can end either at $\infty$ or on the cut $\mathcal{I}$. Part \eqref{lem:contour_Omega_c} of the above lemma implies that that at least two of the ends of $\mathcal{C}$ terminate at the cut, and we claim that in fact exactly two of the ends of $\mathcal{C}$ terminate on the cut $\mathcal{I}$ and the other two terminate  at $\infty$. Note that Part \ref{lem:contour_Omega_a} of Lemma  \ref{lem_contour_Omega} implies that any level curve for the function $\Re (-g(z) + \xi z) = \Re (-g(z_{\rm cr}) + \xi z_{\rm cr})$ can cross the real axis at most at 3 distinct points, of which at most 2 can be on the cut $\mathcal{I}$.

First consider the case $\xi \in \R$, for which $z_{\rm cr}\in \R$ as well. Then the conjugate symmetry $- g(\overline{z}) + \xi \overline{z} = \overline{ - g( z) + \xi  z}$ implies that either two or four of the ends terminate on the cut.  If all four ends of $\mathcal{C}$ go to the cut, then Part \ref{lem:contour_Omega_a} of the lemma implies they can do so only at two points, so two of them must go to the upper cusp of the cut. Then according to Part  \ref{lem:contour_Omega_c} of  Lemma \ref{lem_contour_Omega} there is a component of the level curve $\Re (-g(z) + \xi z) = \Re (-g(z_{\rm cr}) + \xi z_{\rm cr})$ which ends at infinity at the angle $\pi/2$. The other end of this component can only end on the cut $\mathcal{I}$ or at infinity with angle $-\pi/2$. Either way it must cross the real axis at a new point, giving three distinct real points such that $\Re (-g(x) + \xi x) = \Re (-g(z_{\rm cr}) +\xi z_{\rm cr})$, which cannot happen by Part \ref{lem:contour_Omega_a} of Lemma \ref{lem_contour_Omega}.

Now consider the case $\Im \xi >0$, so that $\Im z_{\rm cr}<0$. If all four ends of $\mathcal{C}$ go to the cut, then Part \ref{lem:contour_Omega_a} of the lemma implies they can do so only at two points, so two of them must go to the upper cusp of the cut. Since $\Im z_{\rm cr}< 0$, they would each need to cross the real axis outside of the cut. This gives four points of the real axis for which $\Re (-g(x) + \xi x) = \Re (-g(z_{\rm cr}) +\xi z_{\rm cr})$, which is impossible by Part \ref{lem:contour_Omega_a} of the lemma.

If three of the ends of $\mathcal{C}$ go to the cut (which we already established cannot happen for $\xi\in \R$), then two would go to the lower cusp and one to the upper cusp (since we just determined that two ends cannot go to the upper cusp). Part \ref{lem:contour_Omega_a}  of Lemma \ref{lem_contour_Omega} indicates that there are at most two points on $(-\infty, \be)$ where $\Re (-g(x) + \xi x) = \Re (-g(z_{\rm cr}) +\xi z_{\rm cr})$, so the end of $\mathcal{C}$ which goes to the upper cusp of the cut would necessarily cross the real axis to the right of the point $x=\be$. The fourth end would necessarily go to infinity at the angle $\arctan((\Re \xi)/(\Im \xi))-\pi \in (-\pi, -\pi/2)$ (if it went at the angle $\arctan((\Re \xi)/(\Im \xi)) \in (0, \pi/2)$ it would need to cross the real axis). Since there is always a component of the level curve $\Re (-g(x) + \xi x) = \Re (-g(z_{\rm cr}) +\xi z_{\rm cr})$ going to infinity at the angle $\arctan((\Re \xi)/(\Im \xi)) \in (0, \pi/2)$, this end coming from infinity must go to the upper cusp of the cut and must end at the same point of the cut as one of the ends of $\mathcal{C}$ on the lower cusp. The level curve $\Re (-g(x) + \xi x) = \Re (-g(z_{\rm cr}) +\xi z_{\rm cr})$ then divides the complex plane into 3 regions: a finite region which encloses part of the cut as well as two regions which contain infinity, see Figure \ref{fig:Omegas}. Taking into account the behavior of $\Re (-g(x+iy) + \xi (x+iy))$ as $x\to \infty$ or $y\to-\infty$ (recall that $g(z)\sim \ln(z)$ as $z\to\infty$ and both $\Re \xi$ and $\Im \xi$ are non-negative, with at least one of them positive), we find that the region which is to the right of or below the level curve must satisfy $\Re (-g(z) + \xi z) > \Re (-g(z_{\rm cr}) + \xi z_{\rm cr})$, and therefore the finite region enclosed by the level curve must satisfy $\Re (-g(z) + \xi z) < \Re (-g(z_{\rm cr}) + \xi z_{\rm cr})$, a scenario not allowed by part \ref{lem:contour_Omega_d} of Lemma  \ref{lem_contour_Omega}

  \begin{figure}[ht]
      \includegraphics[width=0.33\linewidth]{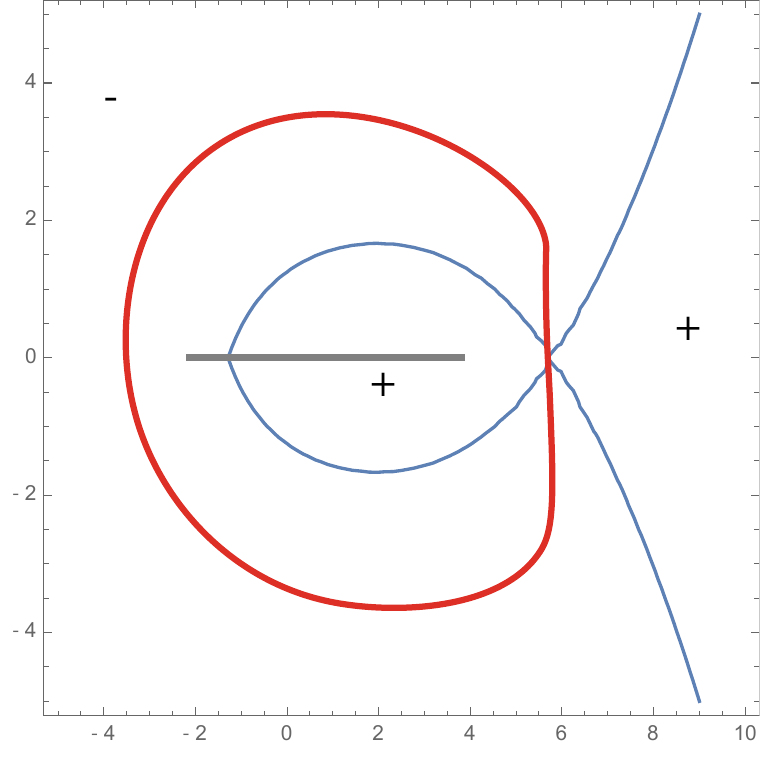}
\includegraphics[width=0.33\linewidth]{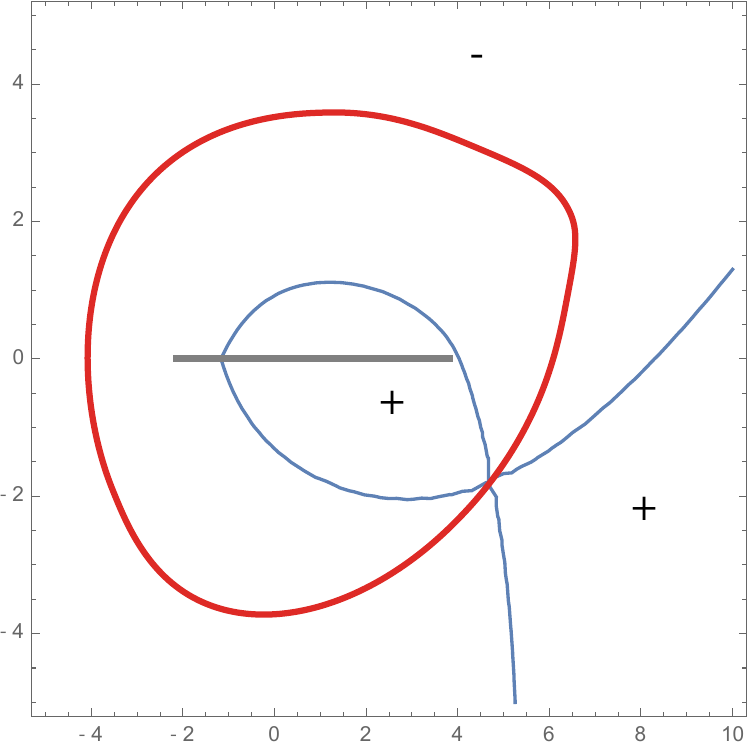}
\includegraphics[width=0.33\linewidth]{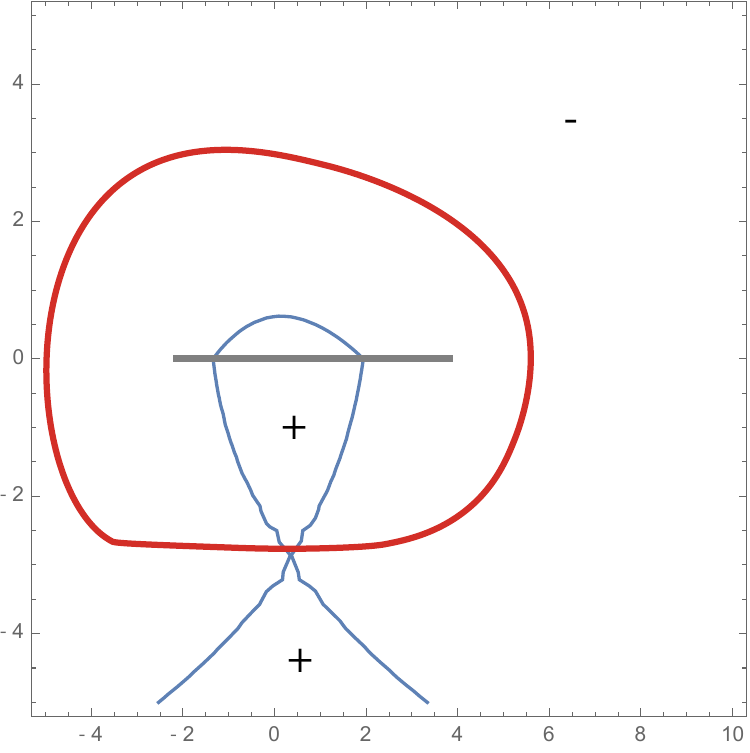}
\caption{\label{fig:Omegas} Plots of the level curve $\Re (-g(z) +\xi z) = \Re (-g(z_{\rm cr}) + \xi z_{\rm cr})$ are shown in blue for various values of $\xi$. Each plot uses six-vertex weights in the disordered phase with $t=0.2$, $\ga =1.1$. The different values of $\xi$ are, from left to right, $\xi = 0.2, 0.2+0.1\ii,$ and $0.3\ii.$ The $+$ and $-$ symbols denote the sign of $\Re (-g(z) +\xi z) - \Re (-g(z_{\rm cr}) + \xi z_{\rm cr})$ in each region separated by the level curve.
The thick grey line is the support of the equilibrium measure $[\al,\be]$, and a possible contour $\Om$ is shown in red.}
  \end{figure}

We therefore conclude that exactly two ends of $\mathcal{C}$ go to infinity asymptotic to a line with slope $(\Re \xi)/(\Im \xi) \ge 0$, and two of them go to the cut $\mathcal{I}$. The two pieces of $\mathcal{C}$ that go to infinity form a continuous path in $\C\setminus \mathcal{I}$ crossing $z_{\rm cr}$. Denote this path $\mathcal{C}_\infty$. Then $\{\mathcal{C}\setminus \mathcal{C}_\infty\} \cup \{z_{\rm cr}\}$ is a closed path which passes through the cut, and part \eqref{lem:contour_Omega_d} of the Lemma indicates that this closed path encloses a region for which $\Re (-g(z) + \xi) z) < \Re (-g(z_{\rm cr}) + \xi z_{\rm cr})$. As noted in the previous paragraph, the infinite region which is to the right of and/or below $\mathcal{C}_\infty$ must satisfy $\Re (-g(z) +\xi z) > \Re (-g(z_{\rm cr}) + \xi z_{\rm cr})$. These two facts force the interval $\mathcal{I}$ to lie to the left of and/or above $\mathcal{C}_\infty$.

We can now choose $\Omega$ to be a closed contour which stays to the left of and/or above $\mathcal{C}_\infty$, encircling $\{\mathcal{C}\setminus \mathcal{C}_\infty\}$ as well as the interval(s) $\mathcal{I}$, passing through the point $z_{\rm cr}$ in the direction of steepest descent. Such a contour will satisfy \eqref{eq:Omega_inequality}, \eqref{eq:Omega_inequality_DAF}.

We still need to prove Part \ref{prop:Omega_DAFc} of Propositions \ref{prop:Omega_F} and \ref{prop:Omega_DAF}. We describe the proof here for the case $\Delta<1$, i.e. we prove Part \eqref{prop:Omega_DAFc} of Proposition \ref{prop:Omega_DAF} in detail. Since $g(x)\sim \ln x$ for large $x$-values, the function $\Re (g(x) -V(x)+\xi x-l)$ is decreasing for large positive values of $x$, and increasing for large negative values of $x$. Its second derivative is $\frac{\dd^2}{\dd x^2} \Re (g(x) -V(x)+\xi x-l)  = \Re g''(x)$, which by  \eqref{eq:Gp} and \eqref{eq:GpAF} is negative for $x>\be$ and for $x<\al$. (recall $g'(z) = G_\nu(z)$).  In order to prove Part \ref{prop:Omega_DAFc} of Proposition \ref{prop:Omega_DAF}, we need only show that $\Re (g'(s) -V'(s)+\xi)< 0$ and $\Re (g'(-r) -V'(-r)+\xi)> 0$.

In the case $\Re \xi \ge 0$ described above, the contour $\mathcal{C}_\infty$ may only intersect $\R$ to the right of $\mathcal{I}$, and we are free to take $-r$, the point at which $\Om$ crosses the negative real axis, as large as we like. Since $\Re (g(x) -V(x)+\xi x-l)$ is increasing for large negative values of $x$, we can choose $r>0$ large enough that so that $\Re (g'(-r) -V'(-r)+\xi)> 0$. The value $s$ at which the contour $\Om$ crosses the positive real axis has the constraint that it must not lie to the right of the point at which $\mathcal{C}_\infty$ crosses $(\be,\infty)$. That is, it we denote by $\mathfrak{s}$ the minimal point at which $\mathcal{C}_\infty$ crosses $(\be,\infty)$, we must have $s<\mathfrak{s}.$  (If $\Re \xi = 0$, then $\mathcal{C}_\infty$ may not cross $\R$ at all, and we are free to choose $s$ as large as necessary so that $\Re (g'(s) -V'(s)+\xi)< 0$.) Since $s$ may be taken very close to $\mathfrak{s}$, showing that  $\Re (g'(\mathfrak{s}) -V'(\mathfrak{s})+\xi)< 0$ is sufficient to show that the same inequality holds for $s$.

Since $\mathfrak{s}$ is on the level curve $\Re (-g(z) + \xi z) = \Re (-g(z_{\rm cr}) + \xi z_{\rm cr})$ with the region $\Re (-g(z) + \xi z) > \Re (-g(z_{\rm cr}) + \xi z_{\rm cr})$ to the right of $\mathfrak{s}$, $\Re (-g(x) + \xi x)$ must be nondecreasing at $\mathfrak{s}$, i.e.,
\[
\Re (-g'(\mathfrak{s}) + \xi ) \ge 0,
\]
or equivalently
\eq
\Re (g'(\mathfrak{s})-V'(\mathfrak{s})+\xi )\le \Re(\xi-V'(\mathfrak{s})+\xi) = -(\ga - t) + 2\Re \xi < 0,
\eeq
under the assumption that $|\Re \xi| < (\ga - |t|)/2$. Taking $s$ slightly less than $\mathfrak{s}$ (or equal to $\mathfrak{s}$ if $\mathfrak{s} = z_{\rm cr}$), we have that $\Re (g(x) -V(x)+\xi x-l)$ is decreasing at $x=s$, and therefore decreasing on $(s,\infty)$ since it is concave on $(\al,\infty)$.

This proves Part \ref{prop:Omega_DAFc} of Proposition \ref{prop:Omega_DAF} when $\Re \xi \ge 0$. The case $\Re \xi < 0$ is similar, but the contour $\mathcal{C}_\infty$ crosses the real axis to the left of $\mathcal{I}$ rather than to the right. The proof of Part \eqref{prop:Omega_DAFc} of Proposition \ref{prop:Omega_F} is the same, but we only need to check that $\Re (g'(-r) -V'(-r)+\xi)> 0$ in the case $\Re \xi < 0$.

\medskip

{\bf {Proof of Lemma \ref{lem_contour_Omega}}}\begin{proof}

We first prove Parts \ref{lem:contour_Omega_a1} and \ref{lem:contour_Omega_a2} of Lemma \ref{lem_contour_Omega}. We will use the equilibrium condition \eqref{eq:eq_condition_band}, and it will be convenient to define $\al' = \be' \equiv 0$ for $-1\le \De<1$, while $\al'$ and $\be'$ are as defined in \eqref{eq_case3_alpha_beta} for $\De<-1$. Then \eqref{eq:eq_condition_band} states
\eq
\begin{aligned}
\Re (-g(x) + \xi x)&=-V(x)/2 + x\Re \xi - l/2  \\
&=\begin{cases}
x(-(\ga-t)/2 + \Re\xi) - l/2 \quad &\textrm{for} \ x\in [\al,\al'], \\
x((\ga+t)/2 +\Re\xi) - l/2 \quad &\textrm{for} \  x\in [\be',\be].\\
\end{cases}
\end{aligned}
\eeq
Since we have assumed $|\xi|< (\ga-|t|)/2$ (and so $|\Re \xi|<(\ga-|t|)/2$), this expression is increasing on the interval $[\al,\al']$ and decreasing on $[\be',\be]$. We claim that $\Re (-g(x) + \xi x)$ is increasing on $(-\infty, \al)$ as well. See see this, first note that for $x<\al$,
\eq
\frac{\dd^2}{\dd x^2} \Re (-g(x) +\xi x) = -\Re g''(x) = \begin{cases}
- \frac{1}{ x\sqrt{(\al-x)(\be-x)}} >0 \quad \textrm{for} \ -1\le \Delta < 1, \\
 \frac{1}{\sqrt{(\al-x)(\al'-x)(\be'-x)(\be-x)}} >0 \quad \textrm{for} \ \Delta < -1,
 \end{cases}
\eeq
where we have used \eqref{eq:Gp} and \eqref{eq:GpAF} (recall $g'(z) = G_\nu(z)$), so $\frac{\dd}{\dd x} \Re (-g(x) + \xi x)$ is increasing on $(-\infty, \al)$.
As $x\to -\infty$ its value is
\eq
\frac{\dd}{\dd x} \Re (-g(x) +\xi x) = \Re \xi-\frac{1}{x}+\bigO(1/|x|^2) > 0,
\eeq
where we have used the asymptotic formula \eqref{eq:gp_infinity} for $g'(z)$.
It follows that $\Re (-g(x) + \xi x)$ is strictly positive on
 on $(-\infty, \al)$. This proves Lemma \ref{lem_contour_Omega}\eqref{lem:contour_Omega_a} for the disordered phase $|\Delta|<1$ as well as the boundary phase $\De = -1$. For the antiferroelectric phase $\Delta <-1$, we additionally use the facts that $\Re (-g(x) + \xi x)$ is continuous on $\R$, and concave on the interval $(\al',\be')$. The latter fact follows from \eqref{eq:GpAF}, which gives that for $\al'<x<\be'$, $\frac{\dd^2}{\dd x^2} \Re (-g(x) + (\xi/\sqrt{n}) x)= -\Re g''(x)$ is negative. The statements  in Parts \ref{lem:contour_Omega_a1} and \ref{lem:contour_Omega_a2} that $\Re (-g(x) + \xi x)$ is convex on $(\be,\infty)$ follow immediately from \eqref{eq:Gp} and \eqref{eq:GpAF} as well.

The proof of Part \ref{lem:contour_Omega_a3} is similar. The equilibrium condition \eqref{eq:eq_condition_band} implies
\eq
\begin{aligned}
\Re (-g(x) + \xi  x)&=-V(x)/2 + x\Re \xi - l/2  \\
&=x((t-|\ga|)/2+\Re \xi) - l/2 \quad &\textrm{for} \ x\in \mathcal{I}, \\
\end{aligned}
\eeq
which is increasing in $x$, as we have assumed $\Re \xi > -(t-|\ga|)/2$. To see that $\Re (-g(x) + \xi  x)$ is increasing on $(-\infty,\be)$ as well, note that
\[
\frac{\dd^2}{\dd x^2} \Re (-g(x) + \xi  x)= -g''(x),
\]
which by \eqref{eq:Gp} is positive for $x<\be$, so $\frac{\dd}{\dd x} \Re (-g(x) + \xi  x)$ is increasing on $(-\infty, \be)$. As $x\to-\infty$ it behaves as
\[
 \Re (-g'(x) + \xi  x) = \xi - \frac{1}{x} + \bigO(1/x^2) >0.
 \]
Since $\frac{\dd}{\dd x} \Re (-g(x) + \xi  x)$ increases on $(-\infty, \be)$ from a positive value near $x=-\infty$, it is positive on the entire interval $(-\infty, \be)$, so $\Re (-g(x) + \xi  x)$ increases on that interval as well. The statement that $\Re (-g(x) + \xi x)$ is concave on $(\al,0)$ and convex on $(0,\infty)$ follows immediately from  \eqref{eq:Gp}.



To prove Part \ref{lem:contour_Omega_c} of Lemma \ref{lem_contour_Omega}, we can use the asymptotic expansion $g(z) \sim \ln z+ \bigO(1/z)$. Writing $z=x+\ii y$ and $\xi=a+\ii b$, this expansion  implies that if $\Re(-g(x+\ii y) + (a+\ii b) (x+\ii y)) =\Re (-g(z_{\rm cr}) + \xi z_{\rm cr})$ we must have
\eq\label{eq:abxy}
ax - by = \Re (-g(z_{\rm cr}) + \xi z_{\rm cr}) + \bigO(\ln |z|).
\eeq
For $z$ to approach infinity requires $|x|\to\infty$ or $y\to\infty$ (or both).  Fixing $n$ and taking $|x|\to\infty$, \eqref{eq:abxy} is
\[
a - by/x =  \bigO(\ln |x|)/|x|.
\]
Solving for the slope $y/x$ gives
\[
y/x = \frac{a}{b}+ \bigO(\ln |x|)/|x|.
\]
Similarly, taking $y\to\infty$ in \eqref{eq:abxy} gives
\[
\frac{y}{x} = \frac{a}{b}+ \bigO(\ln |y|)/|y|,
\]
proving that any part of the level curve $\Re (-g(z) + \xi z) =\Re (-g(z_{\rm cr}) +\xi z_{\rm cr})$ ending at infinity must be asymptotic to the slope $\frac{\Re \xi}{\Im \xi}$. Existence and uniqueness of such an end follows from the fact that $\Re (-g(z) + \xi z)$ approaches a linear function in $x$ as $y\to\infty$ or a linear function in $y$ as $x\to\infty$.

Finally we prove Part \ref{lem:contour_Omega_d} of the lemma. First assume $\De<1$.
Suppose there is a bounded region $R$ such that $\Re (-g(z) + \xi z) = \Re (-g(z_{\rm cr}) + \xi z_{\rm cr})$ for $z\in \partial R$, and such that $R \cap \mathcal{I} \ne \emptyset$. Suppose further that, contrary to the statement of the lemma, $\Re (-g(z) +\xi z) < \Re (-g(z_{\rm cr}) + \xi z_{\rm cr})$ for $z\in R$. The maximum principle indicates that the minimum value of $\Re (-g(z) + \xi z)$ in the region $R$ is achieved at some point $x_{\min}$ on the interval $R \cap \mathcal{I}$ which is not also on $\partial R$. According to Part \ref{lem:contour_Omega_a} of the lemma the continuous function $\Re (-g(x) +\xi x)$ has a unique maximum on $\mathcal{I}$ so its minimum value  on $R \cap \mathcal{I}$ can only occur at an endpoint of $R \cap \mathcal{I}$. This implies that $x_{\min}$ is either $\al$ or $\be$, since otherwise it would lie on $\partial R$.

If $x_{\min} = \be$, then $\be+\ep\in R$ for some $\ep>0$ as well, since $x_{\min}$ cannot be on $\partial R$. But in fact, since as a function of a real variable, $g'(x)$ is continuous at $x=\be$, $\Re (-g(x) + \xi x)$ is decreasing at $x=\be$ so $\Re (-g(\be + \ep) + \xi (\be +\ep)) < \Re (-g(\be) + \xi (\be))$ for small enough $\ep>0$, contradicting the implication of the maximum principle that the minimum value of $\Re (-g(z) + \xi z)$ in the region $R$ is achieved on $\mathcal{I}$.
If $x_{\min} = \al$, a similar argument shows that $\Re (-g(\al -\ep) + \xi (\al -\ep)) < \Re (-g(\al) +\xi (\al))$, again contradicting the implication of the maximum principle that the minimum value of $\Re (-g(z) + \xi z)$ in the region $R$ is achieved on $\mathcal{I}$.

The proof of Part \ref{lem:contour_Omega_d} of Lemma \ref{lem_contour_Omega} in the ferroelectric phase $\De>1$ is similar, except that Part \eqref{lem:contour_Omega_a} of the Lemma does not state that as a function of a real variable $x\in \R$, $-g(x) + \xi x$ is decreasing at $x=0$. In fact, from \eqref{eq_case1_G}, one sees that $\frac{\dd}{\dd x} (-g(x) + \xi x)$ has a logarithmic singularity at $x=0$, and approaches $-\infty$ as $x\to 0$. The singularity is integrable, so $-g(x) + \xi x$ is still continuous at $x=0$, and is clearly decreasing there since its derivative is negative in a neighborhood of $x=0$. The rest of the proof of Part \ref{lem:contour_Omega_d} of the Lemma for $\De>1$ is identical to the $\De<1$ case, with the left and right endpoints of $\mathcal{I}$ being $\be$ and $0$ instead of $\al$ and $\be$.
\end{proof}

\section{Reduction of general $k$ to $k=1$ in Theorems \ref{Theorem_asymptotics_case1} -- \ref{Theorem_asymptotics_case4}}

\label{Section_general_reduction}

In this section we explain how Theorem \ref{Theorem_partition_general_k_reduction} can be used to reduce general $k$ in Theorems \ref{Theorem_asymptotics_case1}--\ref{Theorem_asymptotics_case4} to $k=1$ case. Thus, we finish the proofs of these theorems.

\begin{proof}[Proof of Theorem \ref{Theorem_asymptotics_case1}]
 We plug $k=1$ asymptotics (established in Section \ref{Section_k1_asymptotics}) into \eqref{eq_general_k_reduction}:
\begin{align}
\notag  \ZZ_n&(\xi_1, \xi_2, \dots, \xi_k;\, t,\gamma )\\\notag &= \frac{\det \left[  \left(\frac{\sinh(t+|\ga|+\xi_j)}{\sinh(t+|\ga|)} \right)^{n-k+i} e^{-\xi_j} \left(\frac{a(t+\xi_j, \ga) b(t+\xi_j, \ga)}{a(t, \ga) b(t, \ga)}\right)^{k-i}  \left(b(\xi_j,0)\right)^{i-1} \left(1+\bigO\left(\tfrac{1}{\sqrt{n}}\right)\right)\right]_{i,j=1}^k}{\displaystyle \prod\limits_{1\le i<j\le k} b(\xi_j-\xi_i,0) }
  \\\notag &=  \prod_{j=1}^k\left(\frac{\sinh(t+|\ga|+\xi_j)}{\sinh(t+|\ga|)} \right)^n \, \exp\left(-\sum_{j=1}^k \xi_j\right)
    \\ &\quad \times \frac{\det \left[  \left(\frac{\sinh(t+|\ga|+\xi_j)}{\sinh(t+|\ga|)} \right)^{i-k} \left(\frac{a(t+\xi_j, \ga) b(t+\xi_j, \ga)}{a(t, \ga) b(t, \ga)}\right)^{k-i}  \left(b(\xi_j,0)\right)^{i-1} \left(1+\bigO\left(\tfrac{1}{\sqrt{n}}\right)\right)\right]_{i,j=1}^k}{\displaystyle \prod\limits_{1\le i<j\le k} b(\xi_j-\xi_i,0) }. \label{eq_x8}
\end{align}
The proof of Theorem \ref{Theorem_asymptotics_case1} would be complete, if we show that the last line is $ \left(1+\bigO\left(\tfrac{1}{\sqrt{n}}\right)\right)$ as $n\to\infty$. Using the definitions of the weights \eqref{eq_case1}, we rewrite the matrix entry under determinant as
\begin{align*}
  \left(\frac{\sinh(t-|\ga|+\xi_j)}{\sinh(t-|\ga|)} \right)^{k-i}   \sinh(\xi_j)^{i-1} \left(1+\bigO\left(\tfrac{1}{\sqrt{n}}\right)\right)=&
   \left(\frac{\sinh(t-|\ga|+\xi_j)}{\sinh(t-|\ga|)} \right)^{k-1} \\ &\times \left(\frac{ \sinh(\xi_j)\sinh(t-|\ga|) }{\sinh(t-|\ga|+\xi_j)}\right)^{i-1}  \left(1+\bigO\left(\tfrac{1}{\sqrt{n}}\right)\right).
\end{align*}
Hence, the determinant in the last line of \eqref{eq_x8} evaluates as the Vandermonde determinant. We conclude that the last line of \eqref{eq_x8} is
\begin{multline*}
 \prod_{j=1}^k   \left(\frac{\sinh(t-|\ga|+\xi_j)}{\sinh(t-|\ga|)} \right)^{k-1} \prod_{1\le i<j \le k} \frac{\frac{ \sinh(\xi_j)\sinh(t-|\ga|) }{\sinh(t-|\ga|+\xi_j)}-\frac{ \sinh(\xi_i)\sinh(t-|\ga|) }{\sinh(t-|\ga|+\xi_i)}}{ \sinh(\xi_j-\xi_i)} \left(1+\bigO\left(\tfrac{1}{\sqrt{n}}\right)\right)
 \\= \prod_{1\le i<j \le k} \frac{\sinh(\xi_j) \sinh(t-|\ga|+\xi_i)-\sinh(\xi_i)\sinh(t-|\ga|+\xi_j)}{ \sinh(t-|\ga|) \sinh(\xi_j-\xi_i)} \left(1+\bigO\left(\tfrac{1}{\sqrt{n}}\right)\right).
\end{multline*}
It remains to notice that the numerator and the denominator in the last formula are the same.
\end{proof}

\begin{remark} An additional care is required if $\xi_i=\xi_j$, because the denominator $b(\xi_j-\xi_i,0)$ vanishes in \eqref{eq_x8}. One way to resolve this is by expressing the $\xi_i=\xi_j$ situation through $\xi_i\ne \xi_j$ by the Cauchy integral formula. Same applies in the next proof.
\end{remark}

\begin{proof}[Proof of Theorem \ref{Theorem_asymptotics_case2}]
We write the $k=1$ asymptotics (established in Section \ref{Section_k1_asymptotics}) in the form
\begin{align}  \notag
\widetilde Z_n\left(\frac{\xi}{\sqrt{n}};\, t,\gamma\right) =  \exp\Biggl[ \sqrt{n} \,  \xi \theta_1 + \xi^2 \theta_2+\bigO\left(\frac{1}{\sqrt{n}}\right) \Biggr], \qquad n\to\infty,
\end{align}
with constants $\theta_1$ and $\theta_2$ not depending on $n$, plug into \eqref{eq_general_k_reduction}, and use $\sqrt{n-k+i}=\sqrt{n}+\bigO\left(\tfrac{1}{\sqrt{n}}\right)$ to get
\begin{align}
 \notag \ZZ_n&\left(\frac{\xi_1}{\sqrt{n}}, \frac{\xi_2}{\sqrt{n}},\dots, \frac{\xi_k}{\sqrt{n}};\, t,\gamma \right)
  \\\notag &= \frac{\displaystyle\det \left[   \exp\Biggl[ \sqrt{n} \,  \xi_j \theta_1 + (\xi_j)^2 \theta_2+\bigO\left(\frac{1}{\sqrt{n}}\right) \Biggr] \left(\frac{a\left(t+\tfrac{\xi_j}{\sqrt{n}}, \ga\right) b\left(t+\tfrac{\xi_j}{\sqrt{n}}, \ga\right)}{a(t, \ga) b(t, \ga)}\right)^{k-i}  \left(b\left(\tfrac{\xi_j}{\sqrt{n}},0\right)\right)^{i-1} \right]_{i,j=1}^k}{\displaystyle \prod\limits_{1\le i<j\le k} b\left(\tfrac{\xi_j-\xi_i}{\sqrt{n}},0\right) }
  \\&=  \exp\Biggl[ \sqrt{n} \,  \sum_{j=1}^k \xi_j \theta_1 + \sum_{j=1}^k(\xi_j)^2 \theta_2\Biggr]
   \frac{\displaystyle\det \left[     \left(b\left(\tfrac{\xi_j}{\sqrt{n}},0\right)\right)^{i-1} \left(1+\bigO\left(\tfrac{1}{\sqrt{n}}\right)\right) \right]_{i,j=1}^k}{\displaystyle \prod\limits_{1\le i<j\le k} b\left(\tfrac{\xi_j-\xi_i}{\sqrt{n}},0\right) } \label{eq_x9}.
 \end{align}
The definition of the weight $b$ in \eqref{eq_case2} readily implies that $b\left(\tfrac{\xi}{\sqrt{n}};0\right)=\frac{\xi}{\sqrt{n}} \left(1+\bigO\left(\tfrac{1}{\sqrt{n}}\right)\right)$. Hence, the numerator in the last line of \eqref{eq_x9} evaluates as the Vandermonde determinant and cancels with denominator. We conclude that
$$
  \ZZ_n\left(\frac{\xi_1}{\sqrt{n}}, \frac{\xi_2}{\sqrt{n}},\dots, \frac{\xi_k}{\sqrt{n}};\, t,\gamma \right)= \exp\Biggl[ \sqrt{n} \,  \sum_{j=1}^k \xi_j \theta_1 + \sum_{j=1}^k(\xi_j)^2 \theta_2+\bigO\left(\tfrac{1}{\sqrt{n}}\right)\Biggr].\qedhere
$$
\end{proof}

The reduction of general $k$ cases in Theorems \ref{Theorem_asymptotics_case3} and \ref{Theorem_asymptotics_case4} is exactly the same as for Theorem \ref{Theorem_asymptotics_case2} and is omitted.

\section{Proofs of Theorems \ref{Theorem_GUE_corners} and \ref{Theorem_convergence_to_stochastic}}

\label{Section_probabilistic_proofs}

The goal of this section is to prove Theorems \ref{Theorem_GUE_corners} and \ref{Theorem_convergence_to_stochastic}. The two inputs are the limits of inhomogeneous partition functions of Theorems \ref{Theorem_asymptotics_case1} -- \ref{Theorem_asymptotics_case4} and preservation of the Gibbs property of the six-vertex model in limit transitions.

\subsection{Random variables and generating functions}

\label{Section_F_functions}

Our main results, Theorems \ref{Theorem_GUE_corners} and \ref{Theorem_convergence_to_stochastic} are stated in terms of the random variables $\lambda_k^i$ of Definition \ref{Definition_monotone_triangle}. Let us connect the distribution of these random variables to the inhomogeneous partition functions $\ZZ_n(\xi_1,\dots,\xi_k;\, t,\gamma)$ of Definition \ref{Definition_inhom_norm_part}.

\smallskip

For that we introduce for each $n=1,2,\dots$, $0<k\le n$, each of the four cases \eqref{eq_case1}--\eqref{eq_case4}, and each $k$-tuple $\nu$ of positive integers, $0<\nu_1<\nu_2<\dots<\nu_k\le n$, a function $F^{\text{sym}}_\nu(\xi_1,\dots,\xi_k;\, t,\gamma,n)$, which is a function\footnote{While we are not going to use it directly, $F^{\text{sym}}_\nu$ is a \emph{symmetric} function of its $k$ arguments $\xi_1,\dots,\xi_k$, as can be shown using the Yang-Baxter relation, see e.g.\ \cite[Section 5.2]{Bleher-Liechty14} or \cite[Section 2]{Gorin_Nicoletti_lectures} and references therein for arguments of this type.}   of $k$ complex variable $\xi_1$,\dots, $\xi_k$.

\begin{figure}[t]
\begin{center}
\includegraphics[width=0.45\linewidth]{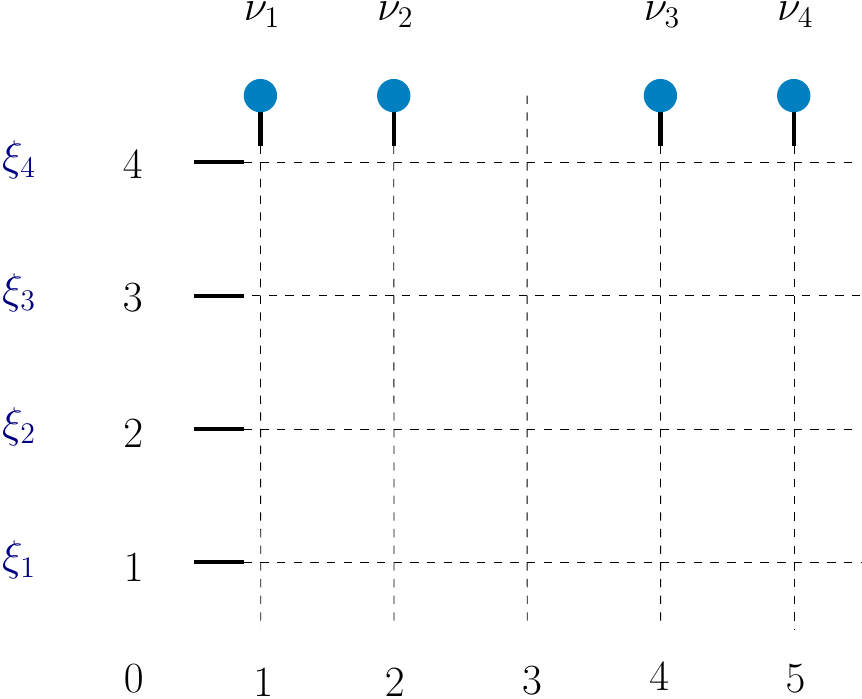} \hfill
\includegraphics[width=0.45\linewidth]{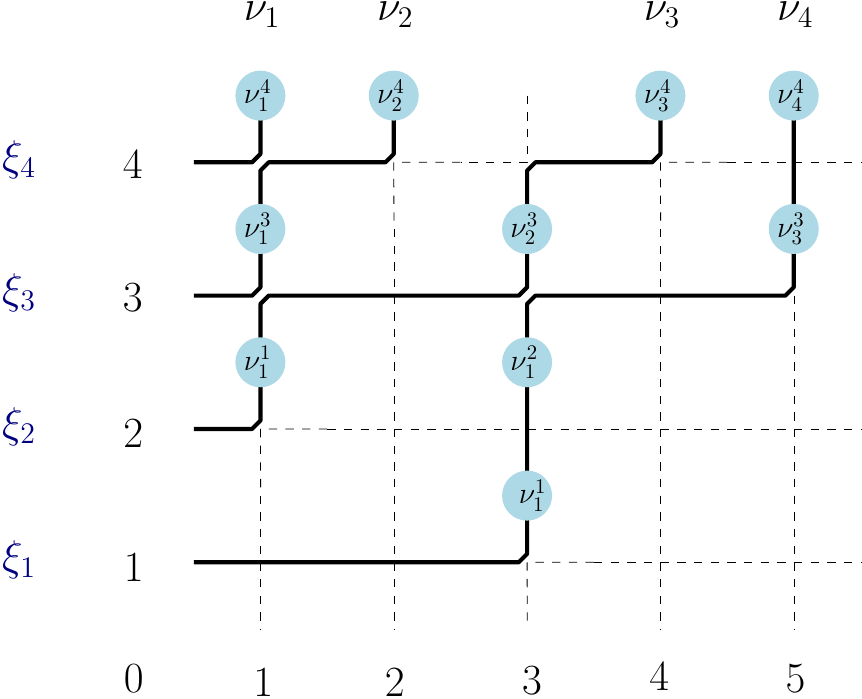}
 \caption{Left panel: $F^{\text{sym}}_\nu(\xi_1,\dots,\xi_4;\, t,\gamma,n)$ is a partition function for this domain with $n=5$, $\nu=(1,2,4,5)$. Right panel: one configuration in the partition function sum and corresponding monotone triangle $(\nu_i^k)_{1\le i\le k \le 4}$: $(3; 1<3; 1<3<5; 1<2<4<5)$.\label{Fig_domain_for_symfunctions}}
\end{center}
\end{figure}

The function is obtained as a partition function of the six-vertex model in a rectangular domain $\{1,2,\dots,n\}\times \{1,2,\dots,k\}$ with $k$ paths entering on the left at positions $1,2,\dots,k$ and exiting on top at positions $\nu_1,\nu_2,\dots,\nu_k$, as in Figure \ref{Fig_domain_for_symfunctions}. The vertex weights depend on the vertical coordinate, as in Definition \ref{Definition_inhomogeneities}, using the formulas \eqref{eq_case1}--\eqref{eq_case4}:
 $$
  \omega(x,y;\sigma)=
  \begin{cases}
    a(t+\xi_y, \ga), &\quad\text{if }\sigma(x,y)\text{ is of Type 1 or 2,}\\
    b(t+\xi_y, \ga), &\quad\text{if }\sigma(x,y)\text{ is of Type 3 or 4,}\\    c(\ga), &\quad\text{if }\sigma(x,y)\text{ is of Type 5 or 6.}
  \end{cases}
 $$
We define
\begin{equation}
\label{eq_rectangular_function}
 F^{\text{sym}}_\nu(\xi_1,\dots,\xi_k;\, t,\gamma,n):=\sum_{\sigma} \prod_{x=1}^n \prod_{y=1}^k \omega(x,y;\sigma),
\end{equation}
where the summation goes over all the configurations of the six-vertex model in the rectangle agreeing with the just prescribed boundary conditions. Let us emphasize that \eqref{eq_rectangular_function} depends on the choice of one of the four cases \eqref{eq_case1}--\eqref{eq_case4} and, therefore, we have defined four different functions here. The superscript $\text{sym}$ indicates that we use symmetric weights in \eqref{eq_rectangular_function}. If $k=n$ and ${\nu=(1<2<\dots<n)}$, then $F^{\text{sym}}_\nu$ is the partition function for DWBC, as before, so that:
\begin{equation}
\label{eq_x16}
 \ZZ_n(\xi_1,\dots,\xi_n;\, t,\gamma)=\frac{F^{\text{sym}}_\nu(\xi_1,\dots,\xi_n;\, t,\gamma,n)}{F^{\text{sym}}_\nu(0^n;\, t,\gamma,n)}, \qquad \nu=(1,2,\dots,n).
\end{equation}
We can rewrite $ F^{\text{sym}}_\nu$ as a sum over monotone triangles. As in Definition \ref{Definition_monotone_triangle} and Figure \ref{Fig_domain_for_symfunctions}, we identify each configuration $\sigma$ in \eqref{eq_rectangular_function} with a collection of positive integers $\{\nu_i^j\}_{1\le i \le j \le k}$, such that:
\begin{itemize}
 \item $\nu^k_i=\nu_i$, for each $i=1,2,\dots,k$;
 \item $\nu^j_1<\nu^j_2<\dots<\nu^j_j$, for each $j=1,2,\dots,k$;
 \item $\nu^{j+1}_i\le \nu^j_i\le \nu^{j+1}_i$, for each $1\le i \le j <k$.
\end{itemize}
Formally, $\nu_i^j$ is the position of the $i$--th vertical path segment going from vertices on level $y=j$ to those on level $y=j+1$.
In terms of such $\{\nu_i^j\}_{1\le i \le j \le k}$, we have
\begin{multline}
\label{eq_rectangular_function_2}
  F^{\text{sym}}_\nu(\xi_1,\dots,\xi_k;\, t,\gamma,n):=\prod_{j=1}^k \Bigl[a(t+\xi_j,\gamma)^{n-2j+1}  c(\gamma)^{2j-1}  \Bigr]\\ \times  \sum_{\nu_i^j} \prod_{j=1}^k \Biggl[ \left(\frac{b(t+\xi_j,\gamma)}{a(t+\xi_j,\gamma)}\right)^{\sum\limits_{i=1}^j \nu_i^j - \sum\limits_{i'=1}^{j-1} \nu_{i'}^{j-1}-j}
 \cdot  \prod_{i=1}^j  \left(\frac{b^2(t+\xi_j,\gamma)}{c^2(\gamma)}\right)^{\mathbf 1(\nu_i^j=\nu_{i-1}^{j-1})} \left(\frac{a^2(t+\xi_j,\gamma)}{c^2(\gamma)}\right)^{\mathbf 1(\nu_i^j=\nu_{i}^{j-1})}  \Biggr].
\end{multline}
The simplest way to see that \eqref{eq_rectangular_function_2} is equivalent to \eqref{eq_rectangular_function} is by first looking at the ``generic'' situation when all indicators $\mathbf 1(\nu_i^j=\nu_{i}^{j-1})$ and $\mathbf 1(\nu_i^j=\nu_{i-1}^{j-1})$ vanish (which implies having the maximal possible number, $2j-1$, of type $c$ vertices in row $j$), and then checking that the corrections to these situation are precisely described by $ \prod_{i=1}^j $ factor in the second line of \eqref{eq_rectangular_function_2}.
\begin{remark}
\label{Remark_F_n_indepent}
 One corollary of the form \eqref{eq_rectangular_function_2} is that $ F^{\text{sym}}_\nu(\xi_1,\dots,\xi_k;\, t,\gamma,n) \prod\limits_{j=1}^k a(t+\xi_j,\gamma)^{-n}$ is stable, which means that it does not depend on $n$ as long as $n\ge \nu_k$.
\end{remark}

Here is a generalization of the identity \eqref{eq_x16} to arbitary $k\le n$:
\begin{prop} \label{Proposition_inhom_as_generating}
 For each $k\le n$, each of the four cases \eqref{eq_case1}--\eqref{eq_case4}, and $k$-tuple of complex numbers $\xi_1,\dots,\xi_k$, we have using the monotone triangle of Definition \ref{Definition_monotone_triangle}:
\begin{equation}\label{eq_inhom_as_generating}
 \ZZ_n(\xi_1,\dots,\xi_k;\, t,\gamma)=\sum_{\nu=(\nu_1<\dots<\nu_k)} \mathrm{Prob}\bigl( (\lambda_1^k,\dots,\lambda_k^k)=\nu\bigr)\, \frac{ F^{\text{sym}}_\nu(\xi_1,\dots,\xi_k;\, t,\gamma,n)}{ F^{\text{sym}}_\nu(0^k;\, t,\gamma,n)}.
\end{equation}
\end{prop}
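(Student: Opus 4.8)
The identity \eqref{eq_x16} already handles the case $k=n$, $\nu=(1,2,\dots,n)$. The plan is to deduce the general $k\le n$ case from it by conditioning the full DWBC model on the locations of the vertical path segments between rows $k$ and $k+1$. First I would recall from Definition \ref{Definition_monotone_triangle} that a configuration of the six-vertex model with DWBC in the $n\times n$ square is specified by its monotone triangle $(\lambda_i^j)_{1\le i\le j\le n}$, and that the random variables $(\lambda_1^k,\dots,\lambda_k^k)$ record exactly the $k$ positions at which paths cross the horizontal line between rows $k$ and $k+1$. Fix a $k$-tuple $\nu=(\nu_1<\dots<\nu_k)$ with $0<\nu_1<\dots<\nu_k\le n$ that is realizable (i.e.\ has positive probability). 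Conditionally on the event $\{(\lambda_1^k,\dots,\lambda_k^k)=\nu\}$, the configuration splits along this horizontal line into two independent pieces: the ``bottom'' piece occupies the rectangle $\{1,\dots,n\}\times\{1,\dots,k\}$ with DWBC on the left, bottom, and right boundaries and with paths exiting on top precisely at positions $\nu_1,\dots,\nu_k$, while the ``top'' piece occupies $\{1,\dots,n\}\times\{k+1,\dots,n\}$ and does not interact with the $\xi$-inhomogeneities (which by the $\psi_y = t+\xi_y$ prescription in Definition \ref{Definition_inhom_norm_part} are placed only in rows $1,\dots,k$, all other rows carrying the homogeneous weight $t$).

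The key step is then to express both the numerator and denominator of $\ZZ_n(\xi_1,\dots,\xi_k;t,\gamma)$ as sums over $\nu$ of products of a bottom factor and a top factor. By Definition \ref{Definition_inhom_norm_part},
\[
\ZZ_n(\xi_1,\dots,\xi_k;t,\gamma)=\frac{\mathcal Z_n(0^n;\,t+\xi_1,\dots,t+\xi_k,t^{n-k};\,\gamma)}{\mathcal Z_n(0^n;\,t^n;\,\gamma)},
\]
and summing the weighted configuration count according to the value of $\nu$ gives
\[
\mathcal Z_n(0^n;\,t+\xi_1,\dots,t+\xi_k,t^{n-k};\,\gamma)=\sum_{\nu}F^{\text{sym}}_\nu(\xi_1,\dots,\xi_k;\,t,\gamma,n)\cdot T_\nu,
\]
where $F^{\text{sym}}_\nu(\xi_1,\dots,\xi_k;\,t,\gamma,n)$ is precisely the partition function \eqref{eq_rectangular_function} of the bottom $k$-row rectangle with the prescribed boundary, and $T_\nu$ is the partition function of the top $(n-k)$-row piece (which carries no $\xi$-dependence, hence is the same for numerator and denominator). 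Taking $\xi_1=\dots=\xi_k=0$ gives the same decomposition of the denominator, with $F^{\text{sym}}_\nu(0^k;\,t,\gamma,n)$ replacing $F^{\text{sym}}_\nu(\xi_1,\dots,\xi_k;\,t,\gamma,n)$ and the same $T_\nu$. Dividing, and recognizing that $\mathrm{Prob}\bigl((\lambda_1^k,\dots,\lambda_k^k)=\nu\bigr)=F^{\text{sym}}_\nu(0^k;\,t,\gamma,n)\,T_\nu/\mathcal Z_n(0^n;\,t^n;\,\gamma)$, we obtain
\[
\ZZ_n(\xi_1,\dots,\xi_k;t,\gamma)=\sum_{\nu}\frac{F^{\text{sym}}_\nu(\xi_1,\dots,\xi_k;\,t,\gamma,n)\,T_\nu}{\mathcal Z_n(0^n;\,t^n;\,\gamma)}=\sum_{\nu}\mathrm{Prob}\bigl((\lambda_1^k,\dots,\lambda_k^k)=\nu\bigr)\,\frac{F^{\text{sym}}_\nu(\xi_1,\dots,\xi_k;\,t,\gamma,n)}{F^{\text{sym}}_\nu(0^k;\,t,\gamma,n)},
\]
which is \eqref{eq_inhom_as_generating}. (One must be mildly careful that $F^{\text{sym}}_\nu(0^k;\,t,\gamma,n)$ is nonzero whenever the event has positive probability, which holds since the homogeneous weights $a,b,c$ are strictly positive; for $\nu$ with zero probability the corresponding term is omitted on both sides.)

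The main (and only genuine) obstacle is verifying cleanly that the DWBC partition function factors as $\sum_\nu F^{\text{sym}}_\nu\cdot T_\nu$ along the horizontal cut, i.e.\ that cutting the square between rows $k$ and $k+1$ along a fixed profile $\nu$ decouples the weight into the product of the two sub-rectangle weights. This is essentially the Markov/Gibbs property of the six-vertex model with respect to horizontal slices: the weight of a configuration is $\prod_{x,y}\omega(x,y;\sigma)$, a product over all vertices, so once the down-arrows entering row $k+1$ (equivalently, the vertical segments at height between rows $k$ and $k+1$, recorded by $\nu$) are fixed, the rows $1,\dots,k$ and the rows $k+1,\dots,n$ contribute multiplicatively and independently, and the inhomogeneities $\xi_y$ only appear in rows $y\le k$. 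Spelling this out and matching it with Definition \ref{Definition_inhom_norm_part}'s placement of the inhomogeneities is the one point that needs attention; everything else is bookkeeping. I would also invoke Remark \ref{Remark_F_n_indepent} only if convenient to normalize the factors, but it is not strictly needed for \eqref{eq_inhom_as_generating} as stated.
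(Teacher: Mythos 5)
Your proof is correct and follows essentially the same route as the paper's: both decompose $\mathcal Z_n(0^n;\, t+\xi_1,\dots,t+\xi_k,t^{n-k};\,\gamma)$ by summing over the profile $\nu$ at the horizontal cut between rows $k$ and $k+1$, use the multiplicativity of the vertex-weight product to factor each summand into the bottom rectangle's $F^{\text{sym}}_\nu(\xi_1,\dots,\xi_k;\,t,\gamma,n)$ times a $\xi$-independent top factor, and then identify $\mathrm{Prob}((\lambda_1^k,\dots,\lambda_k^k)=\nu)\cdot\mathcal Z_n(0^n;\,t^n;\,\gamma)$ with $F^{\text{sym}}_\nu(0^k;\,t,\gamma,n)$ times that same top factor. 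You have merely spelled out a bit more explicitly what the paper phrases as ``merely a rewriting of the definition of $\mathcal Z_n$.''
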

\begin{proof}
 Plugging Definition \ref{Definition_inhom_norm_part} of $\ZZ_n$, \eqref{eq_inhom_as_generating} is equivalent to
\begin{multline}\label{eq_inhom_as_generating_2}
  \mathcal Z_n(0^n;\, t+\xi_1,\dots,t+\xi_k, t^{n-k};\, \gamma)\\=\sum_{\nu=(\nu_1<\dots<\nu_k)} \mathrm{Prob}\bigl( (\lambda_1^k,\dots,\lambda_k^k)=\nu\bigr)\, \mathcal Z_n(0^n;\, t^n;\, \gamma)\, \frac{ F^{\text{sym}}_\nu(\xi_1,\dots,\xi_k;\, t,\gamma,n)}{ F^{\text{sym}}_\nu(0^k;\, t,\gamma,n)}.
\end{multline}
Noting that $\mathrm{Prob}\bigl( (\lambda_1^k,\dots,\lambda_k^k)=\nu\bigr)\, \mathcal Z_n(0^n;\, t^n;\, \gamma)$ is a product of partition functions of the six-vertex model in two rectangles, corresponding to rows $1,\dots,k$ and rows $k+1,\dots,n$, we see that
\eqref{eq_inhom_as_generating_2} is merely a rewriting of the definition of $\mathcal Z_n$, in which we combined together all the configurations satisfying $ (\lambda_1^k,\dots,\lambda_k^k)=\nu$.
\end{proof}

In words, \eqref{eq_inhom_as_generating} says that $\ZZ_n(\xi_1,\dots,\xi_k;\, t,\gamma)$ is a generating function for the distribution of $(\lambda_1^k,\dots,\lambda_k^k)$, written in terms of the (normalized) functions $F^{\text{sym}}_\nu$. In principle, this generating function contains complete information about the distribution of $(\lambda_1^k,\dots,\lambda_k^k)$. In the next two subsections we demonstrate how this information can be extracted in the asymptotic regimes of Theorems \ref{Theorem_GUE_corners} and \ref{Theorem_convergence_to_stochastic}. Note that in our asymptotic theorems $k$ is fixed. It is an important open problem to figure out how to extract asymptotic information from generating functions in the regime of $k$ growing with $n$, cf.\ \cite{bufetov2019fourier} for a such theory in the situation when $F^{\text{sym}}_\nu$ are replaced by the Schur symmetric functions, related to the $\Delta=0$ case of the six-vertex model.

\subsection{Proof of Theorem \ref{Theorem_GUE_corners}}
\label{Section_GUE_proof}

We start by detailing the constants in Theorem \ref{Theorem_GUE_corners}:
\begin{itemize}
\item {\bf Disordered phase} $|\Delta|<1$, parameterization \eqref{eq_case2}:
\begin{align}
 \label{eq_case2_constants}
 {\mathfrak m}(a,b,c)&=\frac{\cot(\ga+t)+\frac{\pi}{2\ga}\tan\left(\frac{\pi t}{2\ga}\right)}{\cot(\gamma-t)+\cot(\gamma+t)}, \\ {\mathfrak s}^2(a,b,c)&=\frac{-\frac{2}{3}+ \frac{\pi^2}{6\ga^2}
   -\left(\cot(\ga+t)+\frac{\pi}{2\ga}\tan\left(\frac{\pi t}{2\ga}\right) \right)\left( \cot(\gamma-t)-\frac{\pi}{2\ga}\tan\left(\frac{\pi t}{2\ga}\right)\right) }{(\cot(\gamma-t)+\cot(\gamma+t))^2}.\notag
\end{align}
\item {\bf Antiferroelectric phase} $\Delta<-1$, parameterization \eqref{eq_case3}:

\begin{align}
 \label{eq_case3_constants}
  {\mathfrak m}(a,b,c)&=\frac{\coth(\ga+t)
-\frac{\pi}{2\ga}\frac{\vartheta_2'\left(\frac{\pi t}{2\ga}\right)}{\vartheta_2\left(\frac{\pi t}{2\ga}\right)}}{ \coth(\gamma-t)+\coth(\gamma+t)},
  \\{\mathfrak s}^2(a,b,c)&= \frac{\frac{2}{3}- \frac{\pi^2}{12\ga^2}\left(\frac{\vartheta_2'\left(\frac{\pi t}{2\ga}\right)}{\vartheta_2\left(\frac{\pi t}{2\ga}\right)}\right)^2 + \frac{\pi^2}{12\ga^2} \sum\limits_{\ell=1}^4 \left(\frac{\vartheta_\ell'\left(\pi \frac{t+\gamma}{4\gamma}\right)}{\vartheta_\ell\left(\pi \frac{t+\gamma}{4\gamma}\right)}\right)^2
  }{(\coth(\gamma-t)+\coth(\gamma+t))^2} \notag
 \\ & \quad +
  \frac{
\frac{\pi}{2\ga}\frac{\vartheta_2'\left(\frac{\pi t}{2\ga}\right)}{\vartheta_2\left(\frac{\pi t}{2\ga}\right)} \bigl(\coth(\gamma-t)-\coth(\gamma+t)\bigr)-\coth(\ga+t)\coth(\ga-t)}{(\coth(\gamma-t)+\coth(\gamma+t))^2}\notag
. \notag
\end{align}
\item {\bf Boundary phase} $\Delta=-1$, parameterization \eqref{eq_case4}:
\begin{align}
 \label{eq_case4_constants}
 {\mathfrak m}(a,b,c)&=\frac{\ga-t}{2\ga}+ \pi \frac{\ga^2-t^2}{4\ga^2}\tan\left(\frac{\pi t}{2\ga}\right), \\ {\mathfrak s}^2(a,b,c)&=\frac{ \frac{\pi^2}{6\ga^2}
   -\left(\frac{1}{\ga+t}+\frac{\pi}{2\ga}\tan\left(\frac{\pi t}{2\ga}\right) \right)\left( \frac{1}{\gamma-t}-\frac{\pi}{2\ga}\tan\left(\frac{\pi t}{2\ga}\right)\right) }{\left(\frac{1}{\gamma-t}+\frac{1}{\gamma+t}\right)^2}.\notag
\end{align}
\end{itemize}
Next, we state and prove Theorem \ref{Theorem_GUE_corners} for $k=1$.
\begin{prop} \label{Proposition_Gaussian_limit}
 Choose parameters $a,b,c>0$ such that $\Delta=\frac{a^2+b^2-c^2}{2ab}<1$. Let $(\lambda_i^k)$ be a random monotone triangle corresponding to $(a,b,c)$--random configuration of the six-vertex model with DWBC, as in Definition \ref{Definition_monotone_triangle}. We have
 $$
  \lim_{n\to\infty} \left( \frac{\lambda_1^1- {\mathfrak m}(a,b,c)\cdot n}{{\mathfrak s}(a,b,c) \sqrt{n}} \right) \stackrel{d}{=} \mathcal N(0,1).
 $$
\end{prop}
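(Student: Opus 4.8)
The plan is to compute the moment generating function of the centered and rescaled variable $Y_n := \bigl(\lambda_1^1 - {\mathfrak m}(a,b,c)\,n\bigr)/\bigl({\mathfrak s}(a,b,c)\sqrt n\bigr)$ and show that it converges to $e^{s^2/2}$; the proposition then follows from the classical fact that convergence of moment generating functions on a neighbourhood of the origin implies convergence in distribution. The key preliminary observation is that Proposition \ref{Proposition_inhom_as_generating} becomes completely explicit for $k=1$: the domain underlying \eqref{eq_rectangular_function} is then a single row, so for each $\nu$ there is a unique admissible configuration and \eqref{eq_rectangular_function_2} reads $F^{\text{sym}}_\nu(\xi;\,t,\gamma,n)=a(t+\xi,\gamma)^{n-\nu}b(t+\xi,\gamma)^{\nu-1}c(\gamma)$. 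Substituting this into \eqref{eq_inhom_as_generating} and pulling out $\bigl(a(t+\xi,\gamma)/a(t,\gamma)\bigr)^{n-1}$ gives
\[
 \ZZ_n(\xi;\,t,\gamma)=\left(\frac{a(t+\xi,\gamma)}{a(t,\gamma)}\right)^{n-1}\E\!\left[q(\xi)^{\lambda_1^1-1}\right],\qquad q(\xi):=\frac{b(t+\xi,\gamma)\,a(t,\gamma)}{b(t,\gamma)\,a(t+\xi,\gamma)},
\]
so that $\ZZ_n(\xi;\,t,\gamma)\bigl(a(t,\gamma)/a(t+\xi,\gamma)\bigr)^{n-1}$ is the probability generating function of $\lambda_1^1-1\in\{0,1,\dots,n-1\}$ evaluated at $q(\xi)$.

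Next I would change variables. Since $q(0)=1$ and $q'(0)=\partial_t\ln b(t,\gamma)-\partial_t\ln a(t,\gamma)=:{\mathfrak q}_1\neq 0$ — this equals precisely the denominator appearing in \eqref{eq_case2_constants}, \eqref{eq_case3_constants}, or \eqref{eq_case4_constants}, namely $\cot(\gamma+t)+\cot(\gamma-t)$, $\coth(\gamma+t)+\coth(\gamma-t)$, or $\tfrac1{\gamma-t}+\tfrac1{\gamma+t}$ in the three cases \eqref{eq_case2}, \eqref{eq_case3}, \eqref{eq_case4} — the map $\xi\mapsto q(\xi)$ has an analytic local inverse at $0$. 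Hence for small $v$ there is $\xi=\xi(v)$ with $q(\xi(v))=e^{v}$ and $\xi(v)=v/{\mathfrak q}_1+\bigO(v^2)$, and the previous display yields $\E[e^{v\lambda_1^1}]=e^{v}\,\ZZ_n(\xi(v);\,t,\gamma)\bigl(a(t,\gamma)/a(t+\xi(v),\gamma)\bigr)^{n-1}$.

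Taking $v=s/({\mathfrak s}\sqrt n)$, so that $\xi(v)=\widehat\xi_n/\sqrt n$ with $\widehat\xi_n=s/({\mathfrak s}{\mathfrak q}_1)+\bigO(1/\sqrt n)$, I would write
\[
 \E\!\left[e^{sY_n}\right]=e^{-s{\mathfrak m}\sqrt n/{\mathfrak s}}\;e^{s/({\mathfrak s}\sqrt n)}\;\ZZ_n\!\left(\frac{\widehat\xi_n}{\sqrt n};\,t,\gamma\right)\left(\frac{a(t,\gamma)}{a(t+\widehat\xi_n/\sqrt n,\gamma)}\right)^{\!n-1},
\]
then insert the asymptotic expansion of Theorem \ref{Theorem_asymptotics_case2} (resp.\ \ref{Theorem_asymptotics_case3}, \ref{Theorem_asymptotics_case4}) into the $\ZZ_n$ factor — this is legitimate because the error there is uniform over arguments in compact sets and $\widehat\xi_n$ stays in a compact neighbourhood of $s/({\mathfrak s}{\mathfrak q}_1)$ — and Taylor-expand $\bigl(a(t,\gamma)/a(t+\widehat\xi_n/\sqrt n,\gamma)\bigr)^{n-1}$ to second order in $1/\sqrt n$. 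Collecting the terms of order $\sqrt n$ and using the explicit formula for ${\mathfrak m}(a,b,c)$ from \eqref{eq_case2_constants}–\eqref{eq_case4_constants} one checks that they cancel the factor $e^{-s{\mathfrak m}\sqrt n/{\mathfrak s}}$ exactly, so that $\E[e^{sY_n}]=\exp[\kappa(a,b,c)\,s^2+\bigO(1/\sqrt n)]$ for an explicit constant $\kappa(a,b,c)$; a trigonometric/theta-function computation, of the same flavour as the simplifications already carried out in Section \ref{Section_asymptotics_through_log_gas}, then verifies that $\kappa(a,b,c)=\tfrac12$ precisely when ${\mathfrak s}^2(a,b,c)$ takes the value stated in \eqref{eq_case2_constants}–\eqref{eq_case4_constants}. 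This gives $\E[e^{sY_n}]\to e^{s^2/2}$ uniformly for $s$ in a fixed real neighbourhood of $0$, hence the proposition.

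I expect the main obstacle to be exactly this last bookkeeping: the term $\sqrt n\,\widehat\xi_n\cdot(\text{const})$ is sensitive to the $\bigO(1/\sqrt n)$ coefficient in the expansion of $\widehat\xi_n$, equivalently to the second Taylor coefficient of $q$, so $\xi(v)$ must be expanded to second order, and one must then check that this contribution, together with the $\xi^2$-coefficient from Theorems \ref{Theorem_asymptotics_case2}–\ref{Theorem_asymptotics_case4} and the second-order term of the $a$-prefactor, assembles into the closed forms \eqref{eq_case2_constants}–\eqref{eq_case4_constants}. Everything else is routine, and the three parameterizations \eqref{eq_case2}–\eqref{eq_case4} covered by the hypothesis $\Delta<1$ are handled in exactly the same manner.
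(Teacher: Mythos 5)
Your proposal follows the same route as the paper: Proposition \ref{Proposition_inhom_as_generating} at $k=1$ gives exactly the identity you write (the paper's equation \eqref{eq_x17}), the asymptotic input is Theorems \ref{Theorem_asymptotics_case2}--\ref{Theorem_asymptotics_case4}, and the normalization $\mathfrak m,\mathfrak s$ is chosen so that the $\sqrt n$ terms cancel and the quadratic terms normalize to $\tfrac12$. The one place where you deviate slightly from the paper's bookkeeping is that you invert the map $\xi\mapsto q(\xi)$ and pick $\xi_n=\xi(s/(\mathfrak s\sqrt n))$ so that the right-hand side is \emph{exactly} $\E[e^{sY_n}]$; the paper instead fixes $\xi_1$ proportional to $\xi/\sqrt n$ and ends up with $\lim_n\E\exp\bigl(\eta(\xi+o(1))\bigr)=e^{\xi^2/2}$, which leaves a small extra argument (tightness of $\eta$ plus the uniformity of the limit in $\xi$) to convert into genuine Laplace-transform convergence. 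Your variant avoids that step at the cost of carrying the second-order Taylor coefficient of $q^{-1}$ through the computation; as you correctly flag, the $\sqrt n\,\widehat\xi_n\,\theta_1$ term picks up an $O(1)$ contribution from that coefficient, so the identity $\kappa(a,b,c)=\tfrac12$ is not automatic but must be checked against the formulas \eqref{eq_case2_constants}--\eqref{eq_case4_constants}; this is exactly the verification the paper performs (directly in the $\eta$-parameterization) in equation \eqref{eq_x18} and its analogues. Modulo that bookkeeping, the argument is correct and complete.
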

\begin{proof}
 At $k=1$, the function $F^{\text{sym}}_\nu$ of the previous section simplifies. Using \eqref{eq_rectangular_function_2}, we have
\begin{equation}
    F^{\text{sym}}_{\nu_1}(\xi_1;\, t,\gamma,n)=a(t+\xi_1,\gamma)^{n-1}  c(\gamma) \left(\frac{b(t+\xi_1,\gamma)}{a(t+\xi_1,\gamma)}\right)^{\nu_1-1}.
\end{equation}
Hence, the result of Proposition \ref{Proposition_inhom_as_generating} is restated as
\begin{equation}
\label{eq_x17}
 \ZZ_n\left(\frac{\xi_1}{\sqrt{n}};\, t,\gamma\right)= \frac{a\left(t+\frac{\xi_1}{\sqrt{n}},\gamma\right)^{n-1}  }{a(t,\gamma)^{n-1}  }\, \E \left[ \left(\frac{b\left(t+\frac{\xi_1}{\sqrt{n}},\gamma\right)a(t,\gamma)}{b(t,\gamma) a\left(t+\frac{\xi_1}{\sqrt{n}},\gamma\right)}\right)^{\lambda_1^1-1}\right].
\end{equation}
We write the random variables $\lambda_1^1$ as
$$
 \lambda_1^1=n \, {\mathfrak m}+ \sqrt{n}\, {\mathfrak s}\eta, \quad \text{ where }\qquad   {\mathfrak m}={\mathfrak m}(a,b,c), \quad {\mathfrak s}={\mathfrak s}(a,b,c),
$$
for a random variable $\eta$ and plug into \eqref{eq_x17} asymptotic expansions of all factors as $n\to\infty$. For $\ZZ_n\left(\frac{\xi_1}{\sqrt{n}};\, t,\gamma\right)$ we directly use Theorems \ref{Theorem_asymptotics_case2}--\ref{Theorem_asymptotics_case4}. In the disordered phase \eqref{eq_case2}, using the same computations as in the conditional proof of Theorem \ref{Theorem_asymptotics_case2} in Section \ref{Section_asymptotics_through_log_gas}, we further have
\begin{align*}
  \frac{a\left(t+\frac{\xi_1}{\sqrt{n}},\gamma\right)^{n-1}}{a\left(t,\gamma\right)^{n-1}} &= \left(\frac{\sin\left(\gamma-t-\frac{\xi_1}{\sqrt{n}}\right)}{\sin(\gamma-t)}\right)^{n-1}=\left(1-\frac{\xi_1}{\sqrt{n}} \cot(\gamma-t)- \frac{(\xi_1)^2}{2n}+o(n^{-1})\right)^{n-1}
  \\&=\exp\left(-\sqrt{n}\, \xi_1 \cot(\gamma-t) - \frac{(\xi_1)^2}{2} \left( 1+ \cot^2(\gamma-t)  \right)+o(1)\right)
\end{align*}
\begin{multline*}
  \left(\frac{b\left(t+\frac{\xi_1}{\sqrt{n}},\gamma\right)a(t,\gamma)}{b(t,\gamma) a\left(t+\frac{\xi_1}{\sqrt{n}},\gamma\right)}\right)^{\lambda_1^1}= \exp\Biggl( {\mathfrak m} \sqrt{n} \xi_1 (\cot(\gamma-t)+\cot(\gamma+t)) \\ \qquad\qquad\qquad\qquad + {\mathfrak m} \frac{(\xi_1)^2}{2} \left(\cot^2(\gamma-t)-\cot^2(\gamma+t)  \right)  +o(1)  \\
  +{\mathfrak s} \eta\left( \xi_1(\cot(\gamma-t)+\cot(\gamma+t)) + \frac{(\xi_1)^2}{2\sqrt{n}} \left(\cot^2(\gamma-t)-\cot^2(\gamma+t)\right)+o(n^{-1/2})  \right)
  \Biggr),
\end{multline*}
where all $o(\cdot)$ terms are real and deterministic. Combining the expansions, we transform \eqref{eq_x17} into
\begin{align*}
 \E &\exp\Biggl(  {\mathfrak s} \eta\left( \xi_1(\cot(\gamma-t)+\cot(\gamma+t)) + \frac{(\xi_1)^2}{2\sqrt{n}} \left(\cot^2(\gamma-t)-\cot^2(\gamma+t)\right)+o(n^{-1/2})  \right)
  \Biggr)\\&=\exp\Biggl (\sqrt{n} \xi_1\left(\cot(\ga+t)+\frac{\pi}{2\ga}\tan\left(\frac{\pi t}{2\ga}\right)-{\mathfrak m} \cdot \bigl(\cot(\gamma-t)+\cot(\gamma+t)\bigr) \right)
  \\&\quad\quad\quad -  \frac{\xi_1^2}{2}\left(\frac{2}{3}- \frac{\pi^2}{6\ga^2}-\frac{\pi^2\tan^2\left(\frac{\pi t}{2\ga}\right)}{4\ga^2}+\cot^2(\ga+t)
  + {\mathfrak m} \left(\cot^2(\gamma-t)-\cot^2(\gamma+t)\right)\right)+o(1)\Biggr).
\end{align*}
Note that $\mathfrak m$ was chosen in \eqref{eq_case2_constants}, in such a way that $\sqrt{n}$ terms in the second line of the last formula cancel out. Thus, changing the variable $\xi_1= \frac{\xi}{{\mathfrak s} (\cot(\gamma-t)+\cot(\gamma+t))}$, and recalling the definition of $\mathfrak s$ from \eqref{eq_case2_constants}, we finally get
\begin{align}
 \label{eq_x18}&\lim_{n\to\infty} \E \exp\Biggl(  \eta\left( \xi  + \frac{\xi^2}{2{\mathfrak s}  \sqrt{n}} \cdot \frac{\cot(\gamma-t)-\cot(\gamma+t)}{\cot(\gamma-t)+\cot(\gamma+t)}+o(n^{-1/2})  \right)
  \Biggr)\\&=\exp\Biggl ( -  \frac{\xi^2}{2 {\mathfrak s}^2} \cdot \frac{\frac{2}{3}- \frac{\pi^2}{6\ga^2}-\frac{\pi^2\tan^2\left(\frac{\pi t}{2\ga}\right)}{4\ga^2}+\cot^2(\ga+t)
  + {\mathfrak m} \left(\cot^2(\gamma-t)-\cot^2(\gamma+t)\right)}{(\cot(\gamma-t)+\cot(\gamma+t))^2}\Biggr)=\exp\left(\frac{\xi^2}{2}\right).\notag
\end{align}
This asymptotic identity means that the Laplace transform of $\eta$ converges as $n\to\infty$ to that of standard normal $\mathcal N(0,1)$. Hence, $\eta$ converges to $\mathcal N(0,1)$ in distribution.

In the antiferroelectric phase \eqref{eq_case3}, we similarly have
\begin{align*}
  \frac{a\left(t+\frac{\xi_1}{\sqrt{n}},\gamma\right)^{n-1}}{a\left(t,\gamma\right)^{n-1}}
  =\exp\left(-\sqrt{n}\, \xi_1 \coth(\gamma-t) - \frac{(\xi_1)^2}{2} \left(-1+ \coth^2(\gamma-t)  \right)+o(1)\right),
\end{align*}
\begin{multline*}
  \left(\frac{b\left(t+\frac{\xi_1}{\sqrt{n}},\gamma\right)a(t,\gamma)}{b(t,\gamma) a\left(t+\frac{\xi_1}{\sqrt{n}},\gamma\right)}\right)^{\lambda_1^1}= \exp\Biggl( {\mathfrak m} \sqrt{n} \xi_1 (\coth(\gamma-t)+\coth(\gamma+t)) \\ \qquad\qquad\qquad\qquad + {\mathfrak m} \frac{(\xi_1)^2}{2} \left(\coth^2(\gamma-t)-\coth^2(\gamma+t)  \right)  +o(1)  \\
  +{\mathfrak s} \eta\left( \xi_1(\coth(\gamma-t)+\coth(\gamma+t)) + \frac{(\xi_1)^2}{2\sqrt{n}} \left(\coth^2(\gamma-t)-\coth^2(\gamma+t)\right)+o(n^{-1/2})  \right)
  \Biggr),
\end{multline*}
Combining the expansions with \eqref{eq_Zn_expansion_case3}, we transform \eqref{eq_x17} into
\begin{multline*}
 \E \exp\Biggl(  {\mathfrak s} \eta\left( \xi_1(\coth(\gamma-t)+\coth(\gamma+t)) + \frac{(\xi_1)^2}{2\sqrt{n}} \left(\coth^2(\gamma-t)-\coth^2(\gamma+t)\right)+o(n^{-1/2})  \right)
  \Biggr)\\=\exp\Biggl[\sqrt{n} \xi_1\left(\coth(\ga+t)
-\frac{\pi}{2\ga}\frac{\vartheta_2'\left(\frac{\pi t}{2\ga}\right)}{\vartheta_2\left(\frac{\pi t}{2\ga}\right)} -{\mathfrak m} \cdot \bigl(\coth(\gamma-t)+\coth(\gamma+t)\bigr) \right)
  \\   \frac{\xi_1^2}{2}\Biggl(\frac{2}{3}- \frac{\pi^2}{12\ga^2}\left(\frac{\vartheta_2'\left(\frac{\pi t}{2\ga}\right)}{\vartheta_2\left(\frac{\pi t}{2\ga}\right)}\right)^2 + \frac{\pi^2}{12\ga^2} \sum_{\ell=1}^4 \left(\frac{\vartheta_\ell'\left(\pi \frac{t+\gamma}{4\gamma}\right)}{\vartheta_\ell\left(\pi \frac{t+\gamma}{4\gamma}\right)}\right)^2-\coth^2(\ga+t)
  \\- {\mathfrak m} \bigl(\coth^2(\gamma-t)-\coth^2(\gamma+t)\bigr)\Biggr)+o(1)\Biggr].
\end{multline*}
Comparing with the definition of $\mathfrak m$ in \eqref{eq_case3_constants}, we see that the second line vanishes. Thus, changing the variable $\xi_1= \frac{\xi}{{\mathfrak s} (\coth(\gamma-t)+\coth(\gamma+t))}$, and recalling the definition of $\mathfrak s$ from \eqref{eq_case3_constants}, we finally get
\begin{multline*}
 \lim_{n\to\infty}\E \exp\Biggl(  \eta\left( \xi  + \frac{\xi^2}{2{\mathfrak s}  \sqrt{n}} \cdot \frac{\coth(\gamma-t)-\coth(\gamma+t)}{\coth(\gamma-t)+\coth(\gamma+t)}+o(n^{-1/2})  \right)
  \Biggr)=\exp\Biggl[ \frac{\xi^2}{2 {\mathfrak s}^2}\\ \times \frac{\frac{2}{3}- \frac{\pi^2}{12\ga^2}\left(\frac{\vartheta_2'\left(\frac{\pi t}{2\ga}\right)}{\vartheta_2\left(\frac{\pi t}{2\ga}\right)}\right)^2 + \frac{\pi^2}{12\ga^2} \sum\limits_{\ell=1}^4 \left(\frac{\vartheta_\ell'\left(\pi \frac{t+\gamma}{4\gamma}\right)}{\vartheta_\ell\left(\pi \frac{t+\gamma}{4\gamma}\right)}\right)^2-\coth^2(\ga+t)
  - {\mathfrak m} \left(\coth^2(\gamma-t)-\coth^2(\gamma+t)\right)}{(\coth(\gamma-t)+\coth(\gamma+t))^2}\Biggr]\\=\exp\left(\frac{\xi^2}{2}\right).
\end{multline*}
We again conclude that $\eta$ converges to $\mathcal N(0,1)$ in distribution. In the boundary case \eqref{eq_case4} the computation is similar and we omit it.
\end{proof}

For general $k>1$ the proof of Theorem \ref{Theorem_GUE_corners} has the same structure as for $k=1$ case and the arithmetics of the rescaling constants is exactly the same. The new feature is that the left-hand side of the analogue of \eqref{eq_x18} (and similarly in $\Delta\le -1$ cases) is no-longer a simple one-dimensional Laplace transform of the desired random variable; instead it becomes a multidimensional version, which we refer to as the \emph{Bessel generating function}.

\begin{definition}
 Fix $k\ge 1$ and let $\mathbf x=(x_1\le x_2\le \dots\le x_k)$ be a random vector. Its \emph{Bessel generating function} (or BGF) is a deterministic symmetric function of $k$ complex variables $z_1,\dots,z_k$ given in terms of Bessel functions \eqref{eq_Bessel_1} by
 \begin{equation}
 \label{eq_BGF}
  \mathcal G( z_1,\dots, z_k;\, \mathbf x)=\E_{\mathbf x}\bigl[ \B_{x_1,\dots,x_k}(z_1,\dots,z_k)\bigr].
 \end{equation}
\end{definition}

\smallskip

\noindent When $k=1$, the BGF \eqref{eq_BGF} turns into the usual Laplace transform. For general $k$ its properties are parallel to those of the Laplace transform. The relation to GUE eigenvalues is based on the following:

\begin{prop} \label{Proposition_GUE_BGF}
 Suppose that a random vector $\mathbf x$ is such that
 \begin{equation}
   \mathcal G( z_1,\dots, z_k;\, \mathbf x)= \exp\left(\frac{1}{2}\sum_{j=1}^k (z_j)^2 \right)
 \end{equation}
  for all $z_1,\dots,z_k$ in an open real neighborhood of $(0,\dots,0)$. Then $\mathbf x$ is equal in distribution to $k$ GUE eigenvalues, i.e.\ to $k$-dimensional vector $(g_1^k,g_2^k,\dots,g_k^k)$ with $g_i^k$ from Definition \ref{Definition_GUE_corners}.
\end{prop}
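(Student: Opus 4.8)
\textbf{Proof proposal for Proposition \ref{Proposition_GUE_BGF}.}

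The plan is to show that the Bessel generating function determines the distribution of a compactly supported (or suitably tame) random vector uniquely, and then to identify the Gaussian right-hand side with the known Bessel generating function of the GUE corners. First I would record the classical fact that the multivariate Bessel function $\B_{x_1,\dots,x_k}(z_1,\dots,z_k)$ is, for fixed real $(x_1,\dots,x_k)$, an entire function of $(z_1,\dots,z_k)$, and that $\mathcal G(z_1,\dots,z_k;\mathbf x)=\E_{\mathbf x}[\B_{\mathbf x}(z_1,\dots,z_k)]$ is real-analytic in a neighborhood of the origin whenever the relevant moments of $\mathbf x$ exist. The key analytic input is a ``moment problem'' statement: from \eqref{eq_Bessel_1}, expanding the exponentials $e^{x_iz_j}$ in power series, one sees that the coefficients of the Taylor expansion of $\mathcal G$ at $z=0$ are (up to explicit combinatorial factors) the expectations of symmetric polynomials in $x_1,\dots,x_k$, i.e.\ the joint moments of the empirical measure of $\mathbf x$. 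Hence, knowing $\mathcal G$ on an open real neighborhood of the origin determines all joint moments of $\mathbf x$, and standard Carleman-type criteria then pin down the law of $\mathbf x$ provided those moments do not grow too fast --- which will hold here because the Gaussian generating function has at most Gaussian growth of moments.

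The next step is to exhibit \emph{some} random vector whose Bessel generating function is $\exp\bigl(\tfrac12\sum_{j=1}^k z_j^2\bigr)$, so that the uniqueness above forces $\mathbf x$ to coincide with it. The natural candidate is $(g_1^k,\dots,g_k^k)$, the spectrum of the top-left $k\times k$ corner $\mathcal M_k$ of a GUE matrix, normalized as in Definition \ref{Definition_GUE_corners}. Using the Harish-Chandra/Itzykson--Zuber formula \eqref{eq_Bessel_2}, we have
\begin{equation}
\mathcal G(z_1,\dots,z_k; (g_i^k))=\E_{\mathcal M_k}\int_{\mathbb U(k)}\exp\bigl[\Tr\bigl(\mathcal M_k\, U\diag(z_1,\dots,z_k)U^*\bigr)\bigr]\,\dd U
=\E_{\mathcal M_k}\exp\bigl[\Tr(\mathcal M_k\, Z)\bigr],
\end{equation}
where $Z=\diag(z_1,\dots,z_k)$ and the last equality uses unitary invariance of the GUE distribution of $\mathcal M_k$ to absorb $U$. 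Since $\mathcal M_k=\tfrac12(X+X^*)$ with $X$ having i.i.d.\ entries $\mathcal N(0,1)+\ii\mathcal N(0,1)$, the linear statistic $\Tr(\mathcal M_k Z)=\sum_j z_j (\mathcal M_k)_{jj}$ is a real Gaussian with mean $0$; one computes $\mathrm{Var}\bigl(\sum_j z_j(\mathcal M_k)_{jj}\bigr)=\sum_j z_j^2$ using that the diagonal entries $(\mathcal M_k)_{jj}$ are i.i.d.\ $\mathcal N(0,1)$. Therefore $\E\exp[\Tr(\mathcal M_k Z)]=\exp\bigl(\tfrac12\sum_j z_j^2\bigr)$, matching the hypothesis.

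Combining the two steps: the hypothesis says $\mathbf x$ and $(g_i^k)$ have the same Bessel generating function on a neighborhood of the origin, hence the same joint moments; the Gaussian decay of these moments places us in a determinate moment problem, so $\mathbf x\stackrel{d}{=}(g_1^k,\dots,g_k^k)$, which is the claim. The main obstacle I anticipate is making the moment-determinacy argument fully rigorous in the multivariate, ordered-eigenvalue setting: one needs to justify that the power series for $\mathcal G$ converges on a fixed neighborhood, that term-by-term differentiation recovers moments of symmetric polynomials, and that these moments (which are symmetric-function moments of a vector supported, a priori, on an unbounded set) satisfy a Carleman condition. The cleanest route is probably to first reduce to the case of bounded support --- one may truncate $\mathbf x$ and control the error, or invoke that the GUE corners already have all exponential moments finite --- and then appeal to a standard multidimensional moment-determinacy theorem applied to the pushforward measure under the power-sum map $(x_1,\dots,x_k)\mapsto(p_1,\dots,p_k)$; alternatively, one can quote the corresponding uniqueness statement for Bessel generating functions from the literature on multivariate Bessel functions (e.g.\ \cite{cuenca2021universal}), which packages exactly this reduction.
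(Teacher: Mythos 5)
Your proof is correct in spirit but takes a genuinely different route from the paper, and the detour you acknowledge is where the real work would need to go. You and the paper agree on the computational half: via the Harish--Chandra/Itzykson--Zuber formula \eqref{eq_Bessel_2} and unitary invariance of GUE, the Bessel generating function of $(g_1^k,\dots,g_k^k)$ collapses to $\E\exp[\Tr(\mathcal M_k Z)]$ with $Z$ diagonal, which is a Gaussian mgf computation yielding $\exp\bigl(\tfrac12\sum_j z_j^2\bigr)$. Where you diverge is the uniqueness step. You propose reading off joint moments of $\mathbf x$ from the Taylor expansion of $\mathcal G$ and invoking moment determinacy (Carleman, truncation, pushforward under power sums, or a literature citation). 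The paper instead sidesteps moments entirely: it builds the random Hermitian matrix $X=U^*\diag(\mathbf x)U$ with Haar $U$ independent of $\mathbf x$, observes that the matrix Laplace transform $A\mapsto\E\exp[\Tr(XA)]$ is unitarily invariant, hence depends only on the eigenvalues $(a_1,\dots,a_k)$ of $A$, and — via \eqref{eq_Bessel_2} again — equals $\mathcal G(a_1,\dots,a_k;\mathbf x)$. The hypothesis then says $X$ and a GUE matrix $Y$ have the same finite Laplace transform on an open neighborhood of $0$ in the space of Hermitian matrices, and the standard uniqueness theorem for multidimensional Laplace transforms of the matrix entries finishes the argument. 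This buys you exactly what you were worried about: finiteness of the Laplace transform on an open set is precisely the convexity hypothesis that makes the classical uniqueness theorem apply, so no Carleman-type estimate, no truncation, and no pushforward under the power-sum map is needed.

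The gap in your proposal as written is therefore real but not fatal: the moment-determinacy route can be made rigorous, but it is strictly harder than necessary. In particular, your step ``Taylor coefficients of $\mathcal G$ at $0$ are expectations of symmetric polynomials'' requires a justification that the series for $\mathcal G$ converges on a fixed neighborhood and can be differentiated term by term under the expectation, which in turn needs a priori control on the tails of $\mathbf x$ — and that control is exactly what the finiteness of the Laplace transform of $X$ on an open set would give you for free. In short, you end up re-deriving, moment by moment, the content of the Laplace-transform uniqueness theorem that the paper simply quotes. If you want a proof that is self-contained and short, I would encourage you to mimic the paper's device of constructing $X=U^*\diag(\mathbf x)U$ and deferring to Laplace transform uniqueness for the matrix entries, rather than leaning on the determinate moment problem.
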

\begin{proof} Consider $k\times k$ Hermitian matrix $X$, whose eigenvalues are given by $\mathbf x$, and whose eigenvectors are independent of $\mathbf x$ and chosen uniformly at random. Such a matrix can be constructed as
$$
 X= U^* \mathrm{diag}(\mathbf x) U,
$$
where $U$ is a uniformly random (i.e.\ Haar-distributed) $k\times k$ unitary matrix independent of $\mathbf x$ and $\mathrm{diag}(\mathbf x)$ is a diagonal matrix with eigenvalues given by the coordinates of $\mathbf x$. The statement of Proposition \ref{Proposition_GUE_BGF} would follow if we prove that the distribution of $X$ is that of GUE random matrix. For that we compute the Laplace transform of $X$:
\begin{equation}
\label{eq_x30}
 f(A)=\E \exp\bigl( \mathrm{Trace} (XA) \bigr),
\end{equation}
where $A$ is an arbitrary $k\times k$ (deterministic) Hermitian matrix of variables. Notice that due to invariance of the law of $X$ with respect to unitary conjugations, we can diagonalize $A$ in \eqref{eq_x30} by such conjugations. Hence, it is sufficient to compute  \eqref{eq_x30} with $A$ replaced by a diagonal matrix consisting of eigenvalues of $A$, which we denote $a_1,\dots,a_k$. Then, using \eqref{eq_Bessel_2} (in which we moved $U^*$ under the trace to the left) we have
\begin{multline*}
 f(A)=\E_{U,\mathbf x} \exp\bigl( \mathrm{Trace} (U^* \mathrm{diag}(\mathbf x) U \mathrm{diag}(a_1,\dots,a_k) ) \bigr)=\E_{\mathbf x} \B_{\mathbf x}(a_1,\dots,a_k)\\=\mathcal G(a_1,\dots,a_k)=\exp\left(\frac{1}{2}\sum_{j=1}^k (a_j)^2 \right).
\end{multline*}
On the other hand, if $Y$ is a $k\times k$ GUE matrix, then its law is also invariant under unitary conjugations, hence, the Laplace transform of $Y$ again depends only on the eigenvalues of the matrix of variables $A$. If $A$ is diagonal, then the Laplace transform of $Y$ evaluated on such a matrix matches the Laplace transform of diagonal elements of $Y$. They are i.i.d.\ $\mathcal N(0,1)$, and, therefore, the desired Laplace transform is again $\exp\left(\frac{1}{2}\sum_{j=1}^k (a_j)^2 \right)$. We conclude that the Laplace transforms of $X$ and $Y$ coincide (in a small neighborhood of $0$); therefore, by the uniqueness theorem for the Laplace transforms, the distribution of $X$ matches that of GUE.
\end{proof}

Multivariate Bessel functions appear naturally in our problem as a scaling limit of the functions $F^{\text{sym}}_\nu$ of Section \ref{Section_F_functions}. In order to see that, we  need to analyze the sum in the definition \eqref{eq_rectangular_function_2} of $F^{\text{sym}}_\nu$. The asymptotic analysis is done by comparing the following two probability measures.

\begin{definition}
\label{Def_abc_measure}
 Fix $a,b,c>0$ and a $k$-tuple of integers $\nu_1<\nu_2<\dots<\nu_k$. We define a random $\frac{k(k+1)}{2}$--tuple of integers $\Upsilon^{\nu; a,b,c}=(\upsilon_{i}^j)_{1\le i \le j \le k}$, satisfying the constraints:
\begin{itemize}
 \item $\upsilon^k_i=\nu_i$, for each $i=1,2,\dots,k$;
 \item $\upsilon^j_1<\upsilon^j_2<\dots<\upsilon^j_j$, for each $j=1,2,\dots,k$;
 \item $\upsilon^{j+1}_i\le \upsilon^j_i\le \upsilon^{j+1}_i$, for each $1\le i \le j <k$.
\end{itemize}
The probability weights of $\Upsilon^{\nu; a,b,c}$ are proportional to\footnote{The formula \eqref{eq_abc_measure} is obtained from \eqref{eq_rectangular_function_2} by setting $\xi_1=\dots=\xi_k=0$ and omitting the factors which depend only on $\nu_i$, but not on $\nu_i^j$ with $j<k$.}:
\begin{equation}
\label{eq_abc_measure}
 \mathrm{Prob} \Bigl( (\upsilon_i^j)_{1\le i\le j \le k}=(\nu_i^j)_{1\le i \le j \le k}\Bigr) \sim
 \prod_{j=1}^k \prod_{i=1}^j \Biggl[  \left(\frac{b^2}{c^2}\right)^{\mathbf 1(\nu_i^j=\nu_{i-1}^{j-1})} \left(\frac{a^2}{c^2}\right)^{\mathbf 1(\nu_i^j=\nu_{i}^{j-1})}  \Biggr].
\end{equation}
\end{definition}
\begin{definition}
\label{Def_continuous_measure}
 Fix a $k$-tuple of reals $ \nu_1\le \nu_2\le \dots\le \nu_k$. We define a random $\frac{k(k+1)}{2}$--tuple of reals $\widetilde \Upsilon^{ \nu}=(\widetilde \upsilon_{i}^j)_{1\le i \le j \le k}$, as a \emph{uniformly random} array of real numbers satisfying the constraints:
\begin{itemize}
 \item $\widetilde \upsilon^k_i=\nu_i$, for each $i=1,2,\dots,k$;
 \item $\widetilde \upsilon^{j+1}_i\le \widetilde \upsilon^j_i\le \widetilde \upsilon^{j+1}_i$, for each $1\le i \le j <k$.
\end{itemize}
\end{definition}

\medskip

There are two differences between Definitions \ref{Def_abc_measure} and \ref{Def_continuous_measure}: first, the former deals with discrete random variables, while the latter deals with continuous ones; second, the former has non-trivial weights \eqref{eq_abc_measure}, while for the latter the weights (densities) are constant. However, these differences disappear asymptotically for large $\nu$.

\begin{prop} \label{Proposition_Gibbs_approximation}
 Fix $a,b,c>0$, $k=1,2,\dots$, and let $\mathfrak F$ denote the space of all $1$--Lipschitz real functions on $\mathbb R^{k(k+1)/2}$. Then for each $C>0$, we have
 \begin{equation} \label{eq_Wesserstein distance}
  \lim_{\eps\to 0}\, \sup_{\begin{smallmatrix} \nu=(\nu_1<\dots<\nu_k)\in\mathbb Z^k\\ |\nu_i|\le \eps^{-1} C\end{smallmatrix}}\, \sup_{f\in \mathfrak F} \left[\E f\bigl(\eps \Upsilon^{\nu; a,b,c}\bigr)- \E f\bigl(\eps \widetilde \Upsilon^{\nu}\bigr)  \right].
 \end{equation}
\end{prop}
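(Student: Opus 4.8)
The plan is to prove Proposition~\ref{Proposition_Gibbs_approximation} by exhibiting an explicit coupling between the discrete array $\Upsilon^{\nu;a,b,c}$ and the continuous array $\widetilde\Upsilon^{\nu}$ built row by row from the top ($j=k$) down to the bottom ($j=1$), and then controlling the discrepancy layer by layer. The key point is that both measures are \emph{Gibbs}: conditionally on the row $\upsilon^{j+1}=(\upsilon^{j+1}_1<\dots<\upsilon^{j+1}_{j+1})$, the next row $\upsilon^j=(\upsilon^j_1<\dots<\upsilon^j_j)$ is a product of independent one-dimensional conditional distributions (one for each $\upsilon^j_i$, ranging over the interlacing window $[\upsilon^{j+1}_i,\upsilon^{j+1}_{i+1}]$), because the weight \eqref{eq_abc_measure} factorizes over $i$ within a fixed row. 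For the continuous model the conditional law of each $\widetilde\upsilon^j_i$ is uniform on $[\widetilde\upsilon^{j+1}_i,\widetilde\upsilon^{j+1}_{i+1}]$, while for the discrete model the conditional law of $\upsilon^j_i$ on the integer window $\{\upsilon^{j+1}_i,\dots,\upsilon^{j+1}_{i+1}\}$ has probability weights proportional to a simple expression in $a^2/c^2$, $b^2/c^2$ coming from the indicators $\mathbf 1(\nu^j_i=\nu^{j-1}_{i-1})$ and $\mathbf 1(\nu^j_i=\nu^{j-1}_{i})$ — crucially, those indicators only fire at the two endpoints of the window, so away from a bounded neighborhood of the endpoints the discrete conditional law is simply uniform on the integers.

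First I would make precise the statement that, conditionally on the previous row, the discrete conditional distribution of $\upsilon^j_i$ on a window of length $L$ equals the uniform distribution on the $L+1$ integer points \emph{perturbed only by an $O(1)$ reweighting at the two endpoints}; hence after rescaling by $\eps$, if $L\ge 1$ the rescaled discrete conditional measure and the uniform measure on $[\eps\upsilon^{j+1}_i,\eps\upsilon^{j+1}_{i+1}]$ are within Wasserstein-$1$ distance $O(\eps)$, uniformly in the window as long as endpoints stay in $[-C,C]$ after rescaling. When $L=0$ (the window degenerates to a point) both conditional laws are deterministic and agree. This gives the base estimate: a single site, conditioned on its window, can be coupled so that the two (rescaled) values differ by $O(\eps)$ in expectation, with constant depending only on $a,b,c$ and $C$.

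Then I would run this coupling through all $k$ layers: couple row $k$ (identical, equal to $\nu$), then inductively couple row $j$ given the already-coupled rows $j+1,\dots,k$ by using the per-site estimate on each of the $j$ windows. The error propagates because the windows for row $j$ depend on where row $j+1$ landed, so a discrepancy $\Delta_{j+1}$ between the two copies of row $j+1$ shifts the windows for row $j$ by at most $\Delta_{j+1}$, and on top of that the single-layer coupling adds $O(\eps)$ per site. Since the number of sites and layers is bounded by $k(k+1)/2$, after $k$ layers the total $L^1$ discrepancy between $\eps\Upsilon^{\nu;a,b,c}$ and $\eps\widetilde\Upsilon^{\nu}$ in $\mathbb R^{k(k+1)/2}$ is $O_{k,a,b,c,C}(\eps)$. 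Testing against $1$-Lipschitz $f$ and taking $\eps\to0$ yields \eqref{eq_Wesserstein distance}; more precisely the supremum in \eqref{eq_Wesserstein distance} is bounded by a constant times $\eps$ and hence tends to $0$.

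The main obstacle — and the only place that needs genuine care rather than bookkeeping — is the quantitative comparison of a single discrete conditional law with the uniform law when the window is short, since then the endpoint reweightings by $a^2/c^2$ and $b^2/c^2$ are not negligible relative to the window. The resolution is that when the (unrescaled) window length $L$ is bounded, the rescaled window has length $O(\eps)$, so both conditional laws are supported in an interval of length $O(\eps)$ and therefore automatically within Wasserstein distance $O(\eps)$ of each other regardless of the weights; when $L\to\infty$, the two endpoint reweightings are a vanishing fraction of the total mass, so the rescaled discrete law converges to uniform. Interpolating these two regimes (bounded $L$ versus large $L$) gives a uniform $O(\eps)$ bound, which is exactly what the layer-by-layer induction consumes. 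A second, minor technical point is to ensure all intermediate rows stay in a bounded region after rescaling; this follows from the interlacing constraints, which force $\upsilon^j_i\in[\min_i\nu_i,\max_i\nu_i]$, so the hypothesis $|\nu_i|\le\eps^{-1}C$ propagates to all entries of the array.
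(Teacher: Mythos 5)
Your proposal rests on a row-to-row factorization that is not actually present in either model, and this is a genuine gap rather than a bookkeeping issue. You claim that, conditionally on row $j+1$, the entries of row $j$ are independent, with each $\widetilde\upsilon^{j}_i$ (resp.\ $\upsilon^{j}_i$) uniform (resp.\ uniform up to $O(1)$ endpoint reweightings) on the window $[\upsilon^{j+1}_i,\upsilon^{j+1}_{i+1}]$. This is false already for the continuous model: the uniform measure of Definition~\ref{Def_continuous_measure} is uniform on the \emph{entire} Gelfand--Tsetlin polytope with fixed top row, so the marginal conditional law of row $j$ given row $j+1$ carries the extra weight proportional to the volume of the sub-polytope below row $j$, which by the Weyl formula is the Vandermonde $\prod_{i<i'}(\widetilde\upsilon^{j}_{i'}-\widetilde\upsilon^{j}_{i})$. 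Already for $k=3$ one checks directly that the density of the middle row $(\mu_1,\mu_2)$ given the top row is proportional to $(\mu_2-\mu_1)$, not uniform on the rectangle. The analogous marginalization weight appears on the discrete side. So the ``per-site Wasserstein-$O(\eps)$ coupling'' that your induction consumes would need to compare the discrete ``below'' partition function to the continuous Vandermonde --- but that comparison is essentially the content of the proposition itself, so the argument as written is circular. (A correct factorization \emph{does} hold if you condition on rows $j+1$ \emph{and} $j-1$ simultaneously --- this is the actual Gibbs property the paper exploits when it writes $\eqref{eq_abc_measure}$ --- but that two-sided conditioning does not let you build the array sequentially.)

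There is also a smaller issue you glossed over: even disregarding the marginalization weight, the nearest-neighbor conditional law in the discrete model is not a product over $i$, because the constraint $\upsilon^{j}_1<\upsilon^{j}_2<\dots<\upsilon^{j}_j$ couples adjacent sites at the shared endpoint of consecutive windows (e.g.\ $\upsilon^{j}_m=\upsilon^{j}_{m+1}=\upsilon^{j+1}_{m+1}$ is forbidden). This is likely repairable, but it is not the ``only those indicators fire at the two endpoints'' picture you describe. For what it is worth, the paper's proof does not attempt a sequential coupling at all: it argues by contradiction, with a dichotomy on whether the gaps $\nu_{i+1}-\nu_i$ diverge or stay bounded; in the first case it compares $\Upsilon^{\nu;a,b,c}$ globally to the uniform discrete interlacing measure (using that the indicators in \eqref{eq_abc_measure} fire on a vanishing fraction of configurations) and then compares the latter to $\widetilde\Upsilon^{\nu}$ via a Riemann-sum approximation; in the second case it conditions on the constrained coordinate and uses that the residual freedom is $O(\eps)$ after rescaling. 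If you want to rescue a coupling-based argument, you would need to work with the correct (Vandermonde-weighted) transition kernels and establish a quantitative comparison of the discrete and continuous ``below'' partition functions, which is a substantially harder task than your per-window estimate.
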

\begin{remark} Equivalently, \eqref{eq_Wesserstein distance} says that the Wesserstein distance between the distributions of random vectors $\eps \Upsilon^{\nu; a,b,c}$ and $\eps \widetilde \Upsilon^{\nu}$ tends to $0$ as $\eps\to 0$, uniformly over $\nu$ with bounded $\eps^{-1} \nu_i$.
\end{remark}
\begin{proof} We argue by contradiction. If \eqref{eq_Wesserstein distance} fails, then there exists a sequence $\nu^{(l)}$, $l=1,2,\dots$, of $k$-tuples of integers, a sequence $\eps^{(l)}$ tending to $0$ and a sequence of $1$--Lipschitz functions $f^{(l)}$, such that $\E f^{(l)}\bigl(\eps^{(l)} \Upsilon^{\nu^{(l)}; a,b,c}\bigr)- \E f^{(l)}\bigl(\eps^{(l)} \widetilde \Upsilon^{\nu^{(l)}}\bigr)$ is bounded away from $0$ as $l\to\infty$.

Suppose first that
\begin{equation}
\label{eq_x19}
 \lim_{l\to\infty} \min_{i=1,\dots,k-1} \left|\nu^{(l)}_{i+1}-\nu^{(l)}_i\right| = +\infty.
\end{equation}
In this case we notice that for large $l$ the equalities $\upsilon_i^j=\upsilon_{i-1}^{j-1}$ or $\upsilon_i^j=\upsilon_{i-1}^{j-1}$ hold only on a vanishing fraction of possible configurations in the support of the distribution of $\Upsilon^{\nu; a,b,c}$. Hence, for most configurations the factors in \eqref{eq_abc_measure} are all equal to $1$, and the distribution of $\Upsilon^{\nu; a,b,c}$ is close to being uniform over all $\nu_i^j$ satisfying the interlacement constraints spelled in Definition \ref{Def_abc_measure}. Therefore, up to small error $\E f^{(l)}\bigl(\eps^{(l)} \Upsilon^{\nu^{(l)}; a,b,c}\bigr)$ is a Riemann sum (with step $\eps^{(l)}$) for the integral computed by $\E f^{(l)}\bigl(\eps^{(l)} \widetilde \Upsilon^{\nu^{(l)}}\bigr)$. Because $f^{(l)}$ is $1$--Lipschitz, the Riemenn sum should be approximating the integral as $\eps^{(l)}\to 0$, contradicting that $\E f^{(l)}\bigl(\eps^{(l)} \Upsilon^{\nu^{(l)}; a,b,c}\bigr)- \E f^{(l)}\bigl(\eps^{(l)} \widetilde \Upsilon^{\nu^{(l)}}\bigr)$ is bounded away from $0$.

\smallskip

If \eqref{eq_x19} fails, then some of the differences $\nu^{(l)}_{i+1}-\nu^{(l)}_i$ should not be converging to infinity. In order to shorten the notations, we only study the case when the first difference $ \nu^{(l)}_{2}-\nu^{(l)}_1$ does not converge to infinity, while all others differences do; all other cases are very similar. Passing to the subsequence, if necessary, we can assume that the first difference is constant, i.e.\ we now deal with the situation
\begin{equation}
\label{eq_x20}
 \lim_{l\to\infty} \min_{i=2,\dots,k-1} |\nu^{(l)}_{i+1}-\nu^{(l)}_i| = +\infty\quad \text{ and }\quad \nu^{(l)}_{2}-\nu^{(l)}_1= d\text{ for all }l=1,2,\dots.
\end{equation}
Then $\upsilon_1^{k-1}$ can take $d+1$ different values, $\nu^{(l)}_1,\dots,\nu^{(l)}_2$; we condition on each of these $d+1$ cases and then use the same approximation argument as for the case \eqref{eq_x19}. Note that the dependence on the value of $\upsilon_1^{k-1}$ is negligible, because $d\eps_l$ tends to $0$ as $l\to \infty$. Conditionally on $\upsilon_1^{k-1}$, $\E \left[ f^{(l)}\bigl(\eps^{(l)} \Upsilon^{\nu^{(l)}; a,b,c}\bigr)\mid \upsilon_1^{k-1}=\nu_1^{k-1}\right]- \E f^{(l)}\bigl(\eps^{(l)} \widetilde \Upsilon^{\nu^{(l)}}\bigr)$ is again small as $\eps_l\to 0$, because this is an approximation error for Riemann integral via Riemann sum. Summing over all choices of $\nu_1^{k-1}$ with corresponding probabilities we again reach a contradiction with $\E f^{(l)}\bigl(\eps^{(l)} \Upsilon^{\nu^{(l)}; a,b,c}\bigr)- \E f^{(l)}\bigl(\eps^{(l)} \widetilde \Upsilon^{\nu^{(l)}}\bigr)$ being bounded away from $0$.
\end{proof}

\begin{prop} \label{Proposition_F_to_Bessel}
 Fix one of the four cases \eqref{eq_case1}--\eqref{eq_case4}, corresponding $t$ and $\gamma$, an integer $k=1,2,\dots$, and a compact set $\mathfrak A$ in $\mathbb R^k$. Let $\eps>0$ be a small parameter. Denote\footnote{Using Remark \ref{Remark_F_n_indepent}, we use $n$-independence of \eqref{eq_F_normalized} and therefore omit $n$ from the notations.}
 \begin{equation}
 \label{eq_F_normalized}
  \widetilde F_\nu(\xi_1,\dots,\xi_k)=\frac{F^{\text{sym}}_\nu(\xi_1,\dots,\xi_k;\, t,\gamma,n) \prod_{j=1}^k a(t+\xi_j,\gamma)^{-n}}{F^{\text{sym}}_\nu(0,\dots,0;\, t,\gamma,n) a(t,\gamma)^{-kn}}.
 \end{equation}
 Then for each $\nu=(\nu_1<\dots<\nu_k)$, each $M\in\mathbb R$, and each $(\xi_1,\dots,\xi_k)\in \mathfrak A$,  we have as $\ep\to 0$,
 \begin{equation}
 \label{eq_F_to_Bessel}
  \widetilde F_\nu\left(\eps \xi_1,\dots,\eps\xi_k\right)=\left[ \prod_{j=1}^k \left(\frac{b\left(t+\eps \xi_j,\gamma\right)a(t,\gamma)}{b(t,\gamma) a\left(t+\eps \xi_j,\gamma\right)}\right)^{M}\right] \B_{\sigma \eps(\nu-M)}(\xi_1,\dots,\xi_k) \cdot \bigl(1+ o(1) \bigr),
 \end{equation}
 where
 \begin{equation}
 \label{eq_sigma_formula}
  \sigma=\begin{cases} \cot(\gamma-t)+\cot(\gamma+t), &  |\Delta|<1,\\
                       \coth(\gamma-t)+\coth(\gamma+t), & |\Delta|>1,\\
                       \frac{1}{\gamma-t}-\frac{1}{\gamma+t}, &\Delta=-1.
  \end{cases}
 \end{equation}
 The $o(1)$ term tends to $0$ as $\eps \to 0$, uniformly in $(\xi_1,\dots,\xi_k)\in \mathfrak A$, and over $\nu$, $M$, such that $\sigma \eps(\nu-M)$ belongs to a compact subset of the closed Weyl chamber $\mathbb W_k=\{x_1\le x_2\le \dots\le x_k\}$.
\end{prop}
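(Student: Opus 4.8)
The plan is to deduce \eqref{eq_F_to_Bessel} from the combinatorial formula \eqref{eq_rectangular_function_2} by identifying the sum over monotone triangles appearing there with an expectation over the random array $\Upsilon^{\nu;a,b,c}$ of Definition \ref{Def_abc_measure}, then passing to the continuous limit using Proposition \ref{Proposition_Gibbs_approximation}, and finally recognizing the resulting integral over $\widetilde\Upsilon^{\nu}$ as a multivariate Bessel function. The starting point is to substitute $\xi_j\mapsto\eps\xi_j$ into \eqref{eq_rectangular_function_2} and divide by the same expression at $\xi=0$; the $n$-dependent prefactors $a(t+\xi_j,\gamma)^{n-2j+1}c(\gamma)^{2j-1}$ cancel after the normalization \eqref{eq_F_normalized}, leaving a ratio of sums over arrays $(\nu_i^j)$ with summand a product over $j$ of $\left(\tfrac{b}{a}\right)^{\sum_i\nu_i^j-\sum_{i'}\nu_{i'}^{j-1}-j}$ times the $\left(\tfrac{b^2}{c^2}\right)$ and $\left(\tfrac{a^2}{c^2}\right)$ factors on the coincidences. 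The first term telescopes: $\sum_j\left(\sum_i\nu_i^j-\sum_{i'}\nu_{i'}^{j-1}\right)$ collapses to a boundary contribution involving $\sum_i\nu_i^k=\sum_i\nu_i$ and the lower rows, so that after the division the surviving dependence on the interior of the triangle is exactly the weight \eqref{eq_abc_measure} of $\Upsilon^{\nu;a,b,c}$, while the boundary part contributes the explicit product $\prod_j\left(\tfrac{b(t+\eps\xi_j,\gamma)a(t,\gamma)}{b(t,\gamma)a(t+\eps\xi_j,\gamma)}\right)^{(\cdot)}$ that, after introducing the shift by $M$, matches the prefactor in \eqref{eq_F_to_Bessel}.

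After this bookkeeping, $\widetilde F_\nu(\eps\xi_1,\dots,\eps\xi_k)$ (up to the explicit prefactor) equals an expectation $\E\bigl[\prod_j q_j^{\,\text{linear statistic of }\upsilon_i^j}\bigr]$ over $\Upsilon^{\nu;a,b,c}$, where $q_j=\tfrac{b(t+\eps\xi_j,\gamma)a(t,\gamma)}{b(t,\gamma)a(t+\eps\xi_j,\gamma)}$. The key analytic input is that $\ln q_j=\eps\xi_j\sigma+O(\eps^2)$ with $\sigma$ as in \eqref{eq_sigma_formula}: this follows by a short Taylor expansion of $a(t+\eps\xi_j,\gamma)$ and $b(t+\eps\xi_j,\gamma)$ using the parameterizations \eqref{eq_case1}--\eqref{eq_case4} and the $\log$-derivative identities $\tfrac{b'}{b}+\tfrac{a'}{a}$ evaluated at $t$ — precisely the combinations $\cot(\gamma\mp t)$, $\coth(\gamma\mp t)$, $\tfrac{1}{\gamma\mp t}$ already appearing in Theorems \ref{Theorem_asymptotics_case2}--\ref{Theorem_asymptotics_case4}. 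Consequently the exponent is $\sigma\eps\sum_j\xi_j\bigl(\text{linear statistic in }\upsilon_i^j\bigr)+O(\eps)$, and the exponentiated linear statistic in the interlacing array is exactly the summand whose continuous analogue integrates to the Bessel function: by the Harish-Chandra/Itzykson-Zuber expansion as iterated interlacing integrals, $\B_{y_1,\dots,y_k}(\xi_1,\dots,\xi_k)$ is the expectation of $\exp\bigl(\sum_j\xi_j(\sum_i\widetilde\upsilon_i^j-\sum_{i'}\widetilde\upsilon_{i'}^{j-1})\bigr)$ over $\widetilde\Upsilon^{y}$, up to the normalization \eqref{eq_Bessel_norm}; this is the standard Gelfand-Tsetlin/Baryshnikov representation.

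The step I expect to be the main obstacle is controlling the passage from the discrete expectation over $\Upsilon^{\nu;a,b,c}$ to the continuous one over $\eps^{-1}\widetilde\Upsilon^{\eps\sigma(\nu-M)}$ uniformly in $\nu$ over a compact set of rescaled values, including the regime where some coordinates $\nu_i$ are close together and the interlacing array degenerates. This is exactly what Proposition \ref{Proposition_Gibbs_approximation} is designed for: since the integrand $F(y)=\exp\bigl(\sigma\eps\sum_j\xi_j(\cdot)\bigr)$ restricted to the (bounded) support is Lipschitz with constant $O(\eps)$ in the original variables, hence $O(1)$ in the $\eps$-rescaled variables, the Wasserstein bound \eqref{eq_Wesserstein distance} gives $\E F(\eps\Upsilon^{\nu;a,b,c})=\E F(\eps\widetilde\Upsilon^{\nu})+o(1)$ uniformly. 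One must also check that the $O(\eps^2)$ correction to $\ln q_j$, summed against the linear statistic which is $O(\eps^{-1})$ in size, contributes only $O(\eps)$ to the exponent — which it does because the linear statistic is $\eps^{-1}$ times a bounded quantity and $\eps^2\cdot\eps^{-1}=\eps\to0$ — and that the boundary prefactor, after replacing $\sum_i\nu_i^j$-type telescoped terms, reassembles into $\B_{\sigma\eps(\nu-M)}(\xi_1,\dots,\xi_k)$ rather than $\B_{\sigma\eps\nu}$, the $M$-shift being absorbed by pulling the $M$-power of $q_j$ out front exactly as written. The uniformity claims in the statement then follow by tracking the error terms through each of these three reductions, all of which are uniform over the stated compact families.
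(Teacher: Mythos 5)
Your proposal follows the paper's proof essentially step for step: start from the combinatorial formula \eqref{eq_rectangular_function_2}, introduce the shift by $M$ so that the linear-statistic exponent can be expressed via the measure $\Upsilon^{\nu-M;a,b,c}$ of Definition \ref{Def_abc_measure} with the $M$-power of $q_j$ pulled out front, Taylor-expand $\ln q_j = \sigma\eps\xi_j + O(\eps^2)$ in each of the four parameterizations, pass from the discrete expectation to the continuous one via the Wasserstein bound of Proposition \ref{Proposition_Gibbs_approximation} (keying on the $O(1)$-Lipschitz property of the exponential integrand after rescaling by $\eps$), and finally identify the continuous expectation with the multivariate Bessel function through its Gelfand--Tsetlin integral representation plus the homogeneity relation $\B_{\nu-M}(\eps\sigma\xi) = \B_{\eps\sigma(\nu-M)}(\xi)$. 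This is precisely the paper's argument; the only stylistic differences are that the paper does not phrase the bookkeeping as a "telescoping" (since the $\xi_j$ are distinct, the sum $\sum_j \xi_j(\sum_i\nu_i^j-\sum_{i'}\nu_{i'}^{j-1})$ stays a genuine linear statistic of the whole array rather than collapsing to the top row) and the paper makes explicit the observation that the finitely many coincidence factors $(b^2(t+\eps\xi_j)/c^2)^{\mathbf 1(\cdots)}$ etc.\ are $1+o(1)$ and may be dropped --- a point you subsume implicitly. Your "$\tfrac{b'}{b}+\tfrac{a'}{a}$" should read $\tfrac{b'}{b}-\tfrac{a'}{a}$, but since you then list the correct combinations this is only a sign typo.
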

\begin{remark}
 While we are not focusing on $\Delta=1$ case in this paper, it should be possible to obtain an analogue of Proposition \ref{Proposition_F_to_Bessel} for it. 
\end{remark}
\begin{proof}[Proof of Proposition \ref{Proposition_F_to_Bessel}]
 We use the formula \eqref{eq_rectangular_function_2} and aim to prove \eqref{eq_F_to_Bessel}. We have
 \begin{multline}
\label{eq_rectangular_function_3}
  F^{\text{sym}}_\nu(\eps \xi_1,\dots,\eps \xi_k;\, t,\gamma,n) \prod_{j=1}^k a(t+\eps \xi_j,\gamma)^{-n} =\prod_{j=1}^k \left[\frac{b(t+\eps \xi_j,\gamma)^{M}}{a(t+\eps \xi_j,\gamma)^{M+2j-1}}  c(\gamma)^{2j-1}  \right]\\ \times  \sum_{\nu_i^j} \prod_{j=1}^k \Biggl[ \left(\frac{b(t+\eps \xi_j,\gamma)}{a(t+\eps \xi_j,\gamma)}\right)^{\sum\limits_{i=1}^j (\nu_i^j-M-\frac{j+1}{2}) - \sum\limits_{i'=1}^{j-1} (\nu_{i'}^{j-1}-M-\frac{j}{2})}
\\ \times \prod_{i=1}^j  \left(\frac{b^2(t+\xi_j,\gamma)}{c^2(\gamma)}\right)^{\mathbf 1(\nu_i^j=\nu_{i-1}^{j-1})} \left(\frac{a^2(t+\xi_j,\gamma)}{c^2(\gamma)}\right)^{\mathbf 1(\nu_i^j=\nu_{i}^{j-1})} \Biggr].
\end{multline}
Choosing $a=a(t,\gamma)$, $b=b(t,\gamma)$, $c=c(\gamma)$ and using Definition \ref{Def_abc_measure}, we can further write
 \begin{multline}
\label{eq_rectangular_function_expectation}
  \frac{F^{\text{sym}}_\nu(\eps \xi_1,\dots,\eps \xi_k;\, t,\gamma,n) \prod_{j=1}^k a(t+\eps \xi_j,\gamma)^{-n}}
   {F^{\text{sym}}_\nu(0,\dots,0;\, t,\gamma,n) \prod_{j=1}^k a(t+\eps \xi_j,\gamma)^{-n}}
   =\prod_{j=1}^k \left[\frac{b(t+\eps \xi_j,\gamma)^{M} a(t,\gamma)^{M+2j-1}}{b(t,\gamma)^M a(t+\eps \xi_j,\gamma)^{M+2j-1}  }    \right]\\ \times  \E_{\Upsilon^{\nu-M;a,b,c,}}  \Biggl[ \prod_{j=1}^k\Biggl( \left(\frac{b(t+\eps \xi_j,\gamma)a(t,\gamma)}{b(t,\gamma) a(t+\eps \xi_j,\gamma)}\right)^{\sum\limits_{i=1}^j (\upsilon_i^j-\frac{j+1}{2}) - \sum\limits_{i'=1}^{j-1} (\upsilon_{i'}^{j-1}-\frac{j}{2})}
\\ \times \prod_{i=1}^j  \left(\frac{b^2(t+\eps \xi_j,\gamma) }{b^2(t,\gamma) }\right)^{\mathbf 1(\nu_i^j=\nu_{i-1}^{j-1})} \left(\frac{a^2(t+\eps \xi_j,\gamma)}{a^2(t,\gamma)}\right)^{\mathbf 1(\nu_i^j=\nu_{i}^{j-1})} \Biggr)\Biggr].
\end{multline}
Note that (for real $\xi_1,\dots,\xi_k$ and small-enough $\eps$) the expression under expectation is positive and the factors in the last line are $1+o(1)$. Therefore, they can be omitted. Further, using the same asymptotic expansion as in the proof of Proposition \ref{Proposition_Gaussian_limit}, we note
\begin{equation}
\label{eq_x22}
  \frac{b(t+\eps \xi_j,\gamma)a(t,\gamma)}{b(t,\gamma) a(t+\eps \xi_j,\gamma)}=\exp(\eps \sigma + o(\eps)).
\end{equation}
Hence, \eqref{eq_rectangular_function_expectation} as $\eps\to 0$ becomes
 \begin{equation}
\label{eq_rectangular_function_expectation_2}
(1+o(1))\prod_{j=1}^k \left[\frac{b(t+\eps \xi_j,\gamma)^{M} a(t,\gamma)^{M}}{b(t,\gamma)^M a(t+\eps \xi_j,\gamma)^{M}  }    \right] \E_{\Upsilon^{\nu-M;a,b,c}}   \exp\left( \sum_{j=1}^k \eps \sigma \xi_j \left(\sum\limits_{i=1}^j \upsilon_i^j - \sum\limits_{i'=1}^{j-1} \upsilon_{i'}^{j-1}\right) \right).
\end{equation}
Using Proposition \ref{Proposition_Gibbs_approximation}, we replace $\Upsilon^{\nu-M;a,b,c,}$ with $\widetilde \Upsilon^{\nu-M}$, transforming \eqref{eq_rectangular_function_2} into
\begin{equation}
 \label{eq_rectangular_function_expectation_3}
 (1+o(1))\prod_{j=1}^k \left[\frac{b(t+\eps \xi_j,\gamma)^{M} a(t,\gamma)^{M}}{b(t,\gamma)^M a(t+\eps \xi_j,\gamma)^{M}  }    \right] \E_{\widetilde \Upsilon^{\nu-M}}   \exp\left(\sum_{j=1}^k \eps \sigma  \xi_j \left(\sum\limits_{i=1}^j \upsilon_i^j - \sum\limits_{i'=1}^{j-1} \upsilon_{i'}^{j-1}\right) \right).
\end{equation}
At this point we can recognize in $ \E_{\widetilde \Upsilon^{\nu-M}}$ an integral representation\footnote{The integral representation of the multivariate Bessel function $\B$ as an integral over triangular arrays of reals is a folklore statement, which can be proven by a straightforward induction in $k$; in an explicit form it is contained, e.g.\ in \cite{guhr2002recursive}.} for the multivariate Bessel functions. Therefore, \eqref{eq_rectangular_function_expectation_3} equals
\begin{equation}
 \label{eq_rectangular_function_expectation_4}
 (1+o(1))\prod_{j=1}^k \left[\frac{b(t+\eps \xi_j,\gamma)^{M} a(t,\gamma)^{M}}{b(t,\gamma)^M a(t+\eps \xi_j,\gamma)^{M}  }    \right] \B_{\nu-M}(\eps \sigma \xi_1,\dots,\eps\sigma\xi_k).
\end{equation}
Replacing $\B_{\nu-M}(\eps \sigma \xi_1,\dots,\eps\sigma\xi_k)$ with equal $\B_{\eps \sigma(\nu-M)}(\xi_1,\dots,\xi_k)$, we arrive at \eqref{eq_F_to_Bessel}.

\medskip

Note that in order to make $o(1)$ in \eqref{eq_rectangular_function_expectation_4} uniform, we need to assume that $\eps \nu$ is bounded (equivalently, belongs to a compact subset of $\mathbb W_k$), because that was the assumption in Proposition \ref{Proposition_Gibbs_approximation} and, more importantly, because we need to bound the Lipshitz constant in the exponential function under the expectation sign to use this proposition.
\end{proof}

Proposition \ref{Proposition_F_to_Bessel} covers ``typical'' $\nu$, and we also need two additional bounds in order to produce tail estimates. We do not aim at making these bounds optimal.

\begin{prop}
  Fix one of the four cases \eqref{eq_case1}--\eqref{eq_case4}, corresponding $t$ and $\gamma$, an integer $k=1,2,\dots$, and a compact set $\mathfrak A$ in $\mathbb R^K$. We use the notations \eqref{eq_F_normalized}, \eqref{eq_sigma_formula} and let $\eps>0$ be a small parameter. There exists $C_1>0$, such that for each $\nu=(\nu_1<\dots<\nu_k)$, each $M\in\mathbb R$, and each $(\xi_1,\dots,\xi_k)\in \mathfrak A$,  we have
 \begin{equation}
 \label{eq_F_bound upper}
  \widetilde F_\nu\left(\eps \xi_1,\dots,\eps\xi_k\right)\le C_1 \left[ \prod_{j=1}^k \left(\frac{b\left(t+\eps \xi_j,\gamma\right)a(t,\gamma)}{b(t,\gamma) a\left(t+\eps \xi_j,\gamma\right)}\right)^{M}\right] \exp\left(\bigl(k+o(1)\bigr)\sum_{i=1}^k  \bigl|\sigma \eps (\nu_i-M)\xi_i\bigr| \right),
 \end{equation}
 where $o(1)$ tends to $0$ as $\eps\to 0$, uniformly in $\nu$, $M$, and $(\xi_1,\dots,\xi_k)\in \mathfrak A$.
\end{prop}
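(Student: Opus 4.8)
The plan is to repeat the argument establishing Proposition \ref{Proposition_F_to_Bessel}, but replacing the exact asymptotic manipulations by crude upper bounds at each step; the only real input is the uniform boundedness of the multivariate Bessel function on compacts of the Weyl chamber. Starting from the exact formula \eqref{eq_rectangular_function_expectation}, I would first discard the correction factors $\prod_{i=1}^j \left(\tfrac{b^2(t+\eps\xi_j,\gamma)}{b^2(t,\gamma)}\right)^{\mathbf 1(\dots)} \left(\tfrac{a^2(t+\eps\xi_j,\gamma)}{a^2(t,\gamma)}\right)^{\mathbf 1(\dots)}$: since $\xi_j$ ranges over the compact $\mathfrak A$ and $\eps\to 0$, each such factor is $1+o(1)$, so the product over the (finitely many, namely $\le k(k+1)/2$) indices is bounded above by a constant $C_1$ uniformly in everything. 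This already yields the prefactor $C_1$ in \eqref{eq_F_bound upper}.

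Next, bounding the expectation. Using \eqref{eq_x22}, i.e. $\tfrac{b(t+\eps\xi_j,\gamma)a(t,\gamma)}{b(t,\gamma)a(t+\eps\xi_j,\gamma)} = \exp(\eps\sigma+o(\eps))$, the expectation in \eqref{eq_rectangular_function_expectation} is at most
\[
\E_{\Upsilon^{\nu-M;a,b,c}} \exp\left( \sum_{j=1}^k (\eps\sigma+o(\eps))\,\xi_j \left( \sum_{i=1}^j \upsilon_i^j - \sum_{i'=1}^{j-1} \upsilon_{i'}^{j-1}\right)\right).
\]
The crucial structural fact is that, by the interlacing constraints of Definition \ref{Def_abc_measure}, each $\upsilon_i^j$ lies between $\min_l \nu_l$ and $\max_l \nu_l$ (more precisely $\nu_i^k = \nu_i$ and the array interlaces down from $\nu$), hence $|\upsilon_i^j - M| \le \max_i |\nu_i - M|$ for every entry. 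Therefore each telescoping combination $\sum_{i=1}^j (\upsilon_i^j - M) - \sum_{i'=1}^{j-1}(\upsilon_{i'}^{j-1}-M)$ has absolute value at most $(2j-1)\max_i|\nu_i-M| \le (2k-1)\max_i|\nu_i-M|$, and more carefully, writing the exponent in terms of the shifted variables as in the passage to \eqref{eq_rectangular_function_expectation_2}, the total exponent is bounded in absolute value by $\sum_{j=1}^k |\eps\sigma+o(\eps)|\,|\xi_j| \cdot (2j-1)\max_i|\nu_i-M|$. Since $\sum_{j=1}^k (2j-1)|\xi_j| \le k \sum_{i=1}^k |\xi_i|$ and $\max_i|\nu_i-M| \le \sum_i|\nu_i-M|$, this is at most $(k+o(1))\sum_{i=1}^k |\sigma\eps(\nu_i-M)\xi_i|$ after distributing (one should be slightly careful pairing indices, but any clean pairing giving $\sum_i |\sigma\eps(\nu_i-M)\xi_i|$ times a constant absorbed into the $k$ works, since one is free to enlarge the constant). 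Taking the expectation of an exponential of a quantity bounded by a deterministic constant gives that same exponential bound, which is exactly \eqref{eq_F_bound upper}.

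Alternatively — and this is probably cleaner — instead of bounding inside the expectation one can keep the exact identity \eqref{eq_rectangular_function_expectation_4} valid for \emph{real} $\xi_j$ (the Bessel integral representation requires nothing but positivity of the integrand, which holds for real parameters and small $\eps$), so that up to the $C_1$ from the first paragraph,
\[
\widetilde F_\nu(\eps\xi_1,\dots,\eps\xi_k) \le C_1\left[\prod_{j=1}^k \left(\tfrac{b(t+\eps\xi_j,\gamma)a(t,\gamma)}{b(t,\gamma)a(t+\eps\xi_j,\gamma)}\right)^M\right]\B_{\sigma\eps(\nu-M)}(\xi_1,\dots,\xi_k),
\]
and then one only needs a global upper bound $\B_{y_1,\dots,y_k}(\xi_1,\dots,\xi_k) \le \exp\big((k+o(1))\sum_i |y_i\xi_i|\big)$ valid for all real $(y_1,\dots,y_k)$ in the Weyl chamber. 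This follows directly from the HCIZ integral representation \eqref{eq_Bessel_2}: the trace $\mathrm{Trace}(\mathrm{diag}(y)\,U\,\mathrm{diag}(\xi)\,U^*) = \sum_{i,j} |U_{ij}|^2 y_i \xi_j$ is, since $\sum_i |U_{ij}|^2 = \sum_j |U_{ij}|^2 = 1$, bounded in absolute value by $\max_j|\xi_j| \cdot \sum_i |y_i| \le \sum_i |y_i| \max_j |\xi_j|$, and one converts this to the stated form $\le (k+o(1))\sum_i|y_i\xi_i|$ by absorbing the combinatorial factor relating $\max_j|\xi_j|\sum_i|y_i|$ to $\sum_i|y_i\xi_i|$ into the constant $k+o(1)$ (again using $\xi\in\mathfrak A$ compact to control ratios of coordinates uniformly, or simply enlarging the constant — the bound is not claimed to be sharp). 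Integrating this pointwise bound over the Haar measure (a probability measure) preserves it. The main obstacle, such as it is, is purely bookkeeping: matching the index-by-index form $\sum_i |\sigma\eps(\nu_i-M)\xi_i|$ in the statement with the coarser $\big(\max_i|\nu_i-M|\big)\big(\sum_i|\xi_i|\big)$-type bounds that come out naturally, which is handled by noting $\max_i|\nu_i-M| \le \sum_i|\nu_i-M|$ and folding factors into the already non-sharp constant $k+o(1)$.
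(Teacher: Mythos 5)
Your first approach is the same as the paper's (the paper's entire proof is a one-liner citing \eqref{eq_rectangular_function_expectation} and \eqref{eq_x22}, and you have correctly expanded that reasoning), but there is a genuine gap at exactly the point you flag and then wave away. What the argument actually yields, after using interlacing to bound each $\upsilon_i^j-M$ between $\nu_1-M$ and $\nu_k-M$, is an upper bound on the exponent of the shape $\eps|\sigma|\bigl(\max_j|\xi_j|\bigr)\sum_i|\nu_i-M|$, equivalently $\eps|\sigma|\sum_j|\xi_j|\cdot(|\nu_1-M|+|\nu_k-M|)$. This is \emph{not} dominated by $(k+o(1))\sum_i|\sigma\eps(\nu_i-M)\xi_i|$, and no ``enlarging of the constant'' repairs this, because the two quantities are simply incomparable: the latter can vanish while the former does not. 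Concretely, take $k=2$, choose $M=\nu_1$, $\xi_2=0$, $\xi_1\neq 0$, and let $\nu_2-\nu_1\to\infty$. The right-hand side of \eqref{eq_F_bound upper} is then $O(1)$, while the left-hand side is, up to bounded factors, $\bigl(\tfrac{b(t+\eps\xi_1,\gamma)a(t,\gamma)}{b(t,\gamma)a(t+\eps\xi_1,\gamma)}\bigr)^{M}\cdot\E_{\Upsilon^{\nu-M}}[r^{\upsilon_1^1}]$ with $r=e^{\eps\sigma\xi_1+o(\eps)}\neq 1$ and $\upsilon_1^1$ essentially uniform on $\{0,1,\dots,\nu_2-\nu_1\}$; this grows like $r^{\nu_2-\nu_1}/(\nu_2-\nu_1)\to\infty$. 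So \eqref{eq_F_bound upper} as printed is actually false, and the correct form of the bound --- the one that is actually used downstream in the proof of Proposition \ref{Proposition_GUE_limit}, where it is paired with the exponential moment bound \eqref{eq_x28} on $\lambda_1^k$ and $\lambda_k^k$ --- is of the $\max_j|\xi_j|\cdot(|\nu_1-M|+|\nu_k-M|)$ type. You sensed the discrepancy (``one should be slightly careful pairing indices'') but incorrectly declared it absorbable into the constant; you should have pursued the suspicion.

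Your second route, via \eqref{eq_rectangular_function_expectation_4} and the HCIZ representation, has an additional and independent defect. Display \eqref{eq_rectangular_function_expectation_4} is \emph{not} an exact identity for real $\xi$: it is obtained from the exact \eqref{eq_rectangular_function_expectation_2} by invoking Proposition \ref{Proposition_Gibbs_approximation} to replace the discrete Gibbs array $\Upsilon^{\nu-M;a,b,c}$ by the continuous uniform array $\widetilde\Upsilon^{\nu-M}$, and that replacement is only an $\eps\to 0$ approximation, valid on the regime where $\eps(\nu-M)$ stays in a compact subset of the Weyl chamber. The present proposition needs a bound uniform over \emph{all} $\nu$ and $M$ --- that is precisely its role as a tail estimate complementing the bulk asymptotics of Proposition \ref{Proposition_F_to_Bessel}. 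This is also why the paper's proof cites only the exact discrete formula \eqref{eq_rectangular_function_expectation} and not the Bessel approximation; your second route cannot run as written.
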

\begin{proof} Immediately follows from \eqref{eq_rectangular_function_expectation} and \eqref{eq_x22}.
\end{proof}

\begin{prop} \label{Proposition_F_lower_bound}
  Fix one of the four cases \eqref{eq_case1}--\eqref{eq_case4}, corresponding $t$ and $\gamma$, an integer $k=1,2,\dots$, and a compact set $\mathfrak A$ in $\mathbb R$. We use the notations \eqref{eq_F_normalized}, \eqref{eq_sigma_formula} and let $\eps>0$ be a small parameter. There exists $C_2>0$, such that for each $\nu=(\nu_1<\dots<\nu_k)$, each $M\in\mathbb R$, and each $\xi\in \mathfrak A$,  we have
 \begin{multline}
 \label{eq_F_bound lower}
  \widetilde F_\nu\left(\eps \xi,0^{k-1}\right) \cdot \left(\frac{b\left(t+\eps \xi,\gamma\right)a(t,\gamma)}{b(t,\gamma) a\left(t+\eps \xi,\gamma\right)}\right)^{-M} +\widetilde F_\nu\left(-\eps \xi,0^{k-1}\right)\cdot \left(\frac{b\left(t-\eps \xi,\gamma\right)a(t,\gamma)}{b(t,\gamma) a\left(t-\eps \xi,\gamma\right)}\right)^{-M}\\ \ge  C_2 \exp\Bigl( |\sigma \eps \xi| \frac{|\nu_1-M|+|\nu_k-M|}{8k!} \cdot (1+o(1))\Bigr),
 \end{multline}
 where $o(1)$ tends to $0$ as $\eps\to 0$, uniformly in $\nu$, $M$, and $\xi\in \mathfrak A$.
\end{prop}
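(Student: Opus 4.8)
\textbf{Proof proposal for Proposition \ref{Proposition_F_lower_bound}.}

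The plan is to lower bound the sum of the two expectations in \eqref{eq_rectangular_function_expectation} (one with $\xi$, one with $-\xi$) by restricting the expectation over $\Upsilon^{\nu-M;a,b,c}$ to a single, carefully chosen configuration of the triangular array. First, as in the proof of Proposition \ref{Proposition_F_to_Bessel}, I would use \eqref{eq_rectangular_function_expectation} with $k$ replaced by the role of $1$ non-trivial spectral variable: only $\xi_1=\pm\eps\xi$ is non-zero, so the product over $j$ collapses and the factors $\left(\tfrac{b^2(t+\eps\xi,\gamma)}{b^2(t,\gamma)}\right)^{\mathbf 1(\cdot)}$, $\left(\tfrac{a^2(t+\eps\xi,\gamma)}{a^2(t,\gamma)}\right)^{\mathbf 1(\cdot)}$ are all $1+o(1)$ and bounded below by a positive constant, hence absorbable into $C_2$. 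After pulling out the explicit prefactor $\prod_j\left(\tfrac{b(t+\eps\xi,\gamma)a(t,\gamma)}{b(t,\gamma)a(t+\eps\xi,\gamma)}\right)^{M}$, the quantity to bound is, up to the $o(1)$ and constant corrections,
\[
\E_{\Upsilon^{\nu-M;a,b,c}}\exp\!\left(\eps\sigma\xi\, S\right)+\E_{\Upsilon^{\nu-M;a,b,c}}\exp\!\left(-\eps\sigma\xi\, S\right),\qquad S:=\sum_{j=1}^{k}\Bigl(\textstyle\sum_{i=1}^{j}\upsilon_i^{j}-\sum_{i'=1}^{j-1}\upsilon_{i'}^{j-1}\Bigr).
\]
Since $e^{u}+e^{-u}\ge e^{|u|}$, it suffices to show that with probability bounded below (uniformly in $\eps$, $\nu$, $M$) one has $|S|\ge \tfrac{|\nu_1-M|+|\nu_k-M|}{8k!}$, and then use Jensen's inequality, or more simply restrict the expectation to that event and bound the exponential below on it.

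The combinatorial heart is estimating $S$. Note $S=\sum_{i}\nu_i^{k}-(\text{lower-row corrections})=\sum_i(\nu_i-M)+\sum_{j=1}^{k-1}\bigl(\sum_{i=1}^{j}\upsilon_i^{j}-\sum_{i'=1}^{j}\upsilon_{i'}^{j}\cdot\text{(sign)}\bigr)$; reorganizing the telescoping sum, $S=\sum_{j=1}^{k}\sum_{i=1}^{j}\upsilon_i^{j}-\sum_{j=1}^{k-1}\sum_{i=1}^{j}\upsilon_i^{j}=\sum_{i=1}^{k}\nu_i^{k}$ only formally; instead the point is that $S$ is a fixed linear functional of the array whose value on the top row $\nu$ dominates. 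More robustly, I would argue: each $\upsilon_i^{j}$ ranges over the interlacing interval $[\upsilon_i^{j+1},\upsilon_{i+1}^{j+1}]$, and since the array has at most $k!$ possible configurations fitting between fixed $\nu$'s when the $\nu_i$ are far apart — and in general the extreme entries $\upsilon_1^{j}$ and $\upsilon_j^{j}$ are forced to stay near $\nu_1$ and $\nu_k$ respectively — the functional $S$ is bounded below in absolute value by $c(|\nu_1-M|+|\nu_k-M|)$ on a set of configurations of probability at least $1/k!$ under the (nearly uniform, by \eqref{eq_abc_measure} with $a,b,c$ fixed so weights are bounded above and below) measure $\Upsilon^{\nu-M;a,b,c}$. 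The constant $\tfrac{1}{8k!}$ in the statement is exactly the kind of crude bound this produces: the factor $1/k!$ accounts for the number of "slots" / sign cancellations, and the $1/8$ absorbs the interlacing slack.

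The main obstacle I anticipate is controlling the potential sign cancellation in $S$: a priori the contributions $\sum_i\upsilon_i^{j}-\sum_{i'}\upsilon_{i'}^{j-1}$ at different levels $j$ could conspire to make $S$ small even when $\nu_1-M$ and $\nu_k-M$ are large. To handle this I would condition on all rows except one strategically chosen level (or on the single entries $\upsilon_1^{j},\upsilon_j^{j}$ for all $j$), observe that conditionally $S$ is an affine function of the remaining free coordinates with bounded coefficients, and that its conditional mean is pinned (up to $O(1)$ independent of $\nu$) to a value of order $|\nu_1-M|+|\nu_k-M|$ because the boundary entries of every row are squeezed by interlacing to within $O(k)$ of $\nu_1$ or $\nu_k$. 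Then a one-line union bound over the finitely many "bad conditioning" events, together with the observation that $a,b,c$ being fixed makes the Radon–Nikodym derivative of $\Upsilon^{\nu-M;a,b,c}$ against the uniform measure bounded between two positive constants depending only on $a,b,c,k$, yields the claimed probability lower bound and hence \eqref{eq_F_bound lower}. The $o(1)$ corrections enter only through \eqref{eq_x22} and are uniform exactly as in Proposition \ref{Proposition_F_to_Bessel}.
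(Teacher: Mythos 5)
Your central combinatorial object is wrong. Since in \eqref{eq_rectangular_function_expectation} only $\xi_1=\pm\eps\xi$ is nonzero and $\xi_2=\dots=\xi_k=0$, the factors $\left(\frac{b(t+\eps\xi_j,\gamma)a(t,\gamma)}{b(t,\gamma)a(t+\eps\xi_j,\gamma)}\right)^{(\dots)}$ are identically $1$ for $j\ge 2$. The exponent in the expectation is therefore $\sigma\eps\xi\,\upsilon_1^1\cdot(1+o(1))$ (up to constants), involving \emph{only} $\upsilon_1^1$, not the full telescoping sum $S=\sum_{j}(\sum_i\upsilon_i^j-\sum_{i'}\upsilon_{i'}^{j-1})$ you write down. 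You actually notice that $S$ telescopes to $\sum_i\nu_i^k$, i.e.\ a deterministic constant, and flag this as ``only formal'' --- but it is not formal, it is correct, and it means $S$ carries no information about the fluctuations of the array. All the subsequent reasoning about conditioning on rows and controlling sign cancellations in $S$ is therefore fighting a problem that does not exist, while missing the real one.

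The real problem is this: you need a lower bound on $|\upsilon_1^1|$ that holds with non-vanishing probability, uniformly in $\nu$ and $M$, and $\upsilon_1^1$ is \emph{not} pinned near $\nu_1$ or $\nu_k$ --- interlacing only gives $\nu_1-M\le\upsilon_1^1\le\nu_k-M$, so $\upsilon_1^1$ can be anywhere in that range and can be close to zero if $M$ sits between $\nu_1$ and $\nu_k$. This forces a genuine case split on where $M$ sits relative to $\nu$: if $\nu_1-M$ and $\nu_k-M$ have the same sign (and are of comparable size), the almost-sure interlacing bound already gives $|\upsilon_1^1|\gtrsim|\nu_1-M|+|\nu_k-M|$; otherwise $\nu_k-\nu_1\gtrsim|\nu_1-M|+|\nu_k-M|$ and one must invoke a quantitative anti-concentration estimate of the form $\mathrm{Prob}\bigl(|\upsilon_1^1|>\frac{\nu_k-\nu_1}{2k!}\bigr)\ge C_3$ (in the paper this is imported from \cite[Lemma 8]{Gorin14} and extended to general $a,b,c$ via the bounded Radon--Nikodym derivative against the uniform interlacing measure). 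Your proposal gestures toward the bounded-density comparison and the $1/k!$ loss, which is the right instinct, but without the correct identification of $\upsilon_1^1$ as the driving variable, without the case split on the sign pattern of $\nu-M$, and without the specific anti-concentration input, the argument does not close.
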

\begin{proof}
 Using \eqref{eq_rectangular_function_expectation} and \eqref{eq_x22}, the left-hand side of \eqref{eq_F_bound lower} is
\begin{equation} \label{eq_x23}
  \E_{\Upsilon^{\nu-M;a,b,c,}}  \Biggl[(1+o(1))\exp\Bigl( \sigma \eps \xi \upsilon_1^1 \cdot (1+o(1))\Bigr)+(1+o(1))\exp\Bigl( -\sigma \eps \xi \upsilon_1^1 \cdot (1+o(1))\Bigr)\Biggr].
\end{equation}
There several cases on the configuration of $\nu_1$ and $\nu_k$.
\begin{enumerate}
 \item If both $\nu_1-M$ and $\nu_k-M$ are positive and $\nu_1-M>\frac{\nu_k-M}{2}$, then we use the almost sure inequality (following from the interlacing inequalities in Definition \ref{Def_abc_measure}) for the random array ${\Upsilon^{\nu-M;a,b,c,}}$
     $$
      |\nu_1-M|\le \upsilon_1^1 \le |\nu_k-M|,
     $$
     to lower bound \eqref{eq_x23} by
     $$
      (1+o(1))\exp\Bigl( |\sigma \eps \xi| |\nu_1-M| \cdot (1+o(1))\Bigr) \ge (1+o(1)) \exp\Bigl( |\sigma \eps \xi| \frac{|\nu_1-M|+|\nu_k-M|}{8k!} \cdot (1+o(1))\Bigr).
     $$
 \item If both $\nu_1-M$ and $\nu_k-M$ are negative and $|\nu_k-M|>\frac{|\nu_1-M|}{2}$, then the argument is the same.
 \item In all other cases, we have
 \begin{equation}
 \label{eq_x25}
   \nu_k-\nu_1 \ge  \frac{|\nu_1-M|+|\nu_k-M|}{4}.
 \end{equation}
 We use the following statement:

 {\bf Claim. } There exists a constant $C_3$, which depends on $a,b,c$, and $k$ such that for all $\nu$
 \begin{equation} \label{eq_x24}
  \mathrm{Prob} \left( |\upsilon_1^1|> \frac{\nu_k-\nu_1}{2k!}\right) \ge C_3.
 \end{equation}
 For the case $a=b=c=1$, the inequality \eqref{eq_x24} is proven in \cite[Lemma 8]{Gorin14}. Because by \eqref{eq_abc_measure} the weights for arbitrary $a,b,c$ are uniformly bounded from $0$ and $\infty$, the case of general $(a,b,c)$ is a corollary.
 Using \eqref{eq_x25} and \eqref{eq_x24} we lower-bound \eqref{eq_x23} by
 \begin{equation}
   (C_3+o(1))\exp\Bigl( |\sigma \eps \xi| \frac{\nu_k-\nu_1}{2k!} \cdot (1+o(1))\Bigr)\ge C_2 \exp\Bigl( |\sigma \eps \xi| \frac{|\nu_1-M|+|\nu_k-M|}{8k!} \cdot (1+o(1))\Bigr)
\end{equation}
for another constant $C_2>0$. \qedhere
\end{enumerate}
\end{proof}

We have collected all the ingredients for the next step towards Theorem \ref{Theorem_GUE_corners}.

\begin{prop} \label{Proposition_GUE_limit}
 Choose parameters $a,b,c>0$ such that $\Delta=\frac{a^2+b^2-c^2}{2ab}<1$. Let $(\lambda_i^k)$ be a random monotone triangle corresponding to $(a,b,c)$--random configuration of the six-vertex model with DWBC, as in Definition \ref{Definition_monotone_triangle}. For each fixed $k=1,2,\dots$, we have the distributional convergence of $k$-dimensional vectors
 $$
  \lim_{n\to\infty} \left( \frac{\lambda_i^k- {\mathfrak m}(a,b,c)\cdot n}{{\mathfrak s}(a,b,c) \sqrt{n}} \right)_{i=1}^k \stackrel{d}{=} \bigl(g_i^k\bigr)_{i=1}^k.
 $$
\end{prop}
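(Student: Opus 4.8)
The plan is to upgrade the $k=1$ argument of Proposition \ref{Proposition_Gaussian_limit} to arbitrary $k$ by replacing the one-dimensional Laplace transform with the Bessel generating function and invoking Proposition \ref{Proposition_GUE_BGF}. Concretely, set
$$
 \eta_i = \frac{\lambda_i^k - {\mathfrak m}(a,b,c)\, n}{{\mathfrak s}(a,b,c)\sqrt{n}}, \qquad i=1,\dots,k,
$$
and let $\mathbf{\eta}=(\eta_1\le\dots\le\eta_k)$. I would compute the BGF $\mathcal G(\xi_1,\dots,\xi_k;\, \mathbf\eta) = \E\bigl[\B_{\eta_1,\dots,\eta_k}(\xi_1,\dots,\xi_k)\bigr]$ in the limit $n\to\infty$ and show it converges to $\exp\bigl(\tfrac12\sum_j \xi_j^2\bigr)$ for $\xi$ in a real neighborhood of the origin; Proposition \ref{Proposition_GUE_BGF} then identifies the limit with $(g_1^k,\dots,g_k^k)$. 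By the standard fact that convergence of BGFs on an open neighborhood of the origin (together with a mild tightness/uniform integrability input, supplied here by Proposition \ref{Proposition_F_lower_bound}) implies convergence in distribution, this yields the claim.

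The bridge between $\ZZ_n$ and the BGF of $\mathbf\eta$ is Proposition \ref{Proposition_inhom_as_generating}: we have
$$
 \ZZ_n\!\left(\tfrac{\xi_1}{\sigma{\mathfrak s}\sqrt n},\dots,\tfrac{\xi_k}{\sigma{\mathfrak s}\sqrt n};\, t,\gamma\right)
 = \sum_{\nu} \mathrm{Prob}\bigl((\lambda_1^k,\dots,\lambda_k^k)=\nu\bigr)\,
 \widetilde F_\nu\!\left(\tfrac{\xi_1}{\sigma{\mathfrak s}\sqrt n},\dots,\tfrac{\xi_k}{\sigma{\mathfrak s}\sqrt n}\right)\cdot
 \frac{\prod_j a(t+\tfrac{\xi_j}{\sigma{\mathfrak s}\sqrt n},\gamma)^{n}}{F^{\text{sym}}_\nu(0^k;t,\gamma,n)/a(t,\gamma)^{kn}}\cdot(\cdots),
$$
where I have absorbed the overall $a$-factors into the definition \eqref{eq_F_normalized} of $\widetilde F_\nu$. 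The key local computation, which I would carry out exactly as in Section \ref{Section_asymptotics_through_log_gas} and Proposition \ref{Proposition_Gaussian_limit}, is: first, Theorems \ref{Theorem_asymptotics_case2}--\ref{Theorem_asymptotics_case4} give the left-hand side asymptotically as $\exp\bigl(\sqrt n\,\theta_1\sum_j\tfrac{\xi_j}{\sigma{\mathfrak s}\sqrt n}+\theta_2\sum_j(\tfrac{\xi_j}{\sigma{\mathfrak s}\sqrt n})^2+o(1)\bigr)$; second, Proposition \ref{Proposition_F_to_Bessel} with $M={\mathfrak m}n$ and $\eps=\tfrac{1}{\sigma{\mathfrak s}\sqrt n}$ converts $\widetilde F_\nu(\eps\xi_1,\dots,\eps\xi_k)$ into $\bigl[\prod_j(\tfrac{b(t+\eps\xi_j,\gamma)a(t,\gamma)}{b(t,\gamma)a(t+\eps\xi_j,\gamma)})^{{\mathfrak m}n}\bigr]\,\B_{\mathbf\eta}(\xi_1,\dots,\xi_k)\,(1+o(1))$, since $\sigma\eps(\nu-{\mathfrak m}n)=\tfrac{1}{{\mathfrak s}\sqrt n}(\nu-{\mathfrak m}n)$, which is precisely the rescaled variable whose components are $\eta_i$. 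The explicit $a$-, $b$-, and $({\mathfrak m}n)$-prefactors on both sides are deterministic and cancel against one another after one uses the asymptotic expansions $\tfrac{\sin(x+y)}{\sin x}=1+y\cot x-\tfrac{y^2}{2}+O(y^3)$ (and its $\sinh$/linear analogues) together with the choices of ${\mathfrak m}$ and ${\mathfrak s}$ in \eqref{eq_case2_constants}--\eqref{eq_case4_constants} — the cancellation of the $\sqrt n$-order term is exactly the identity making ${\mathfrak m}$ work, and the surviving quadratic term is normalized to $\tfrac12\xi_j^2$ by the definition of ${\mathfrak s}$, as verified in the proof of Proposition \ref{Proposition_Gaussian_limit}. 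Hence $\E\bigl[\B_{\mathbf\eta}(\xi_1,\dots,\xi_k)\bigr]\to\exp\bigl(\tfrac12\sum_j\xi_j^2\bigr)$.

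The main obstacle is the interchange of the limit $n\to\infty$ with the summation over $\nu$: Proposition \ref{Proposition_F_to_Bessel} is only stated with $o(1)$ uniform over $\nu$ with $\sigma\eps(\nu-M)$ in a \emph{compact} subset of the Weyl chamber, so one must separately control the tail contribution from $\nu$ with $\|\sigma\eps(\nu-{\mathfrak m}n)\|$ large. Here I would use the upper bound \eqref{eq_F_bound upper} of the form $\widetilde F_\nu(\eps\xi)\le C_1[\cdots]\exp\bigl((k+o(1))\sum_i|\sigma\eps(\nu_i-M)\xi_i|\bigr)$ against the lower bound \eqref{eq_F_bound lower}, which gives exponential decay of $\mathrm{Prob}\bigl((\lambda_1^k,\dots,\lambda_k^k)=\nu\bigr)$ in $\|\nu-{\mathfrak m}n\|/\sqrt n$ (this is the role of Proposition \ref{Proposition_F_lower_bound}); combined with the $|\Delta|<1$ tail decay built into the asymptotics of $\ZZ_n$, a standard dominated-convergence argument shows the tail is negligible and also that $\mathbf\eta$ is tight, so that convergence of BGFs on a neighborhood of $0$ does upgrade to convergence in distribution. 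I would treat the disordered, antiferroelectric, and boundary phases uniformly via the case-dependent $\sigma$ in \eqref{eq_sigma_formula}; the arithmetic simplifications distinguishing the three phases are identical to those already done for $k=1$ in Proposition \ref{Proposition_Gaussian_limit} and need not be repeated.
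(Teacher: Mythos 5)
Your proposal is correct and follows essentially the same route as the paper: Proposition \ref{Proposition_inhom_as_generating} as the bridge, Theorems \ref{Theorem_asymptotics_case2}--\ref{Theorem_asymptotics_case4} for the left-hand side asymptotics, Proposition \ref{Proposition_F_to_Bessel} with $M={\mathfrak m}n$ for the bulk conversion to a Bessel generating function, the bounds \eqref{eq_F_bound upper} and \eqref{eq_F_bound lower} for tightness and tail control, and Proposition \ref{Proposition_GUE_BGF} for identification of the limit. The only cosmetic difference is that you absorb $\sigma{\mathfrak s}$ into $\eps$ from the start rather than rescaling $\xi_j$ at the final step, and you phrase the passage from BGF convergence to distributional convergence as a ``standard fact'' where the paper spells out the tightness/subsequential-limit argument explicitly.
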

\begin{proof} We choose $\mathfrak m$ according to formulas \eqref{eq_case2_constants}--\eqref{eq_case4_constants}, and record the result of Proposition \ref{Proposition_inhom_as_generating} using \eqref{eq_F_normalized} as
\begin{multline}\label{eq_x26}
 \ZZ_n\left(\frac{\xi_1}{\sqrt{n}},\dots,\frac{\xi_k}{\sqrt n};\, t,\gamma\right) \prod_{j=1}^k \left[\left(\frac{a\left(t+\frac{\xi_j}{\sqrt n},\gamma\right)}{a(t,\gamma)}\right)^{-n} \left(\frac{b\left(t+\frac{\xi_j}{\sqrt n},\gamma\right)a(t,\gamma)}{b(t,\gamma) a\left(t+ \frac{\xi_j}{\sqrt n},\gamma\right)}\right)^{-n {\mathfrak m}}\right]   = \\\E_{\lambda=(\lambda_1^k,\dots,\lambda_k^k)}\left[ \prod_{j=1}^k \left(\frac{b\left(t+\frac{\xi_j}{\sqrt n},\gamma\right)a(t,\gamma)}{b(t,\gamma) a\left(t+\frac{\xi_j}{\sqrt n},\gamma\right)}\right)^{-n {\mathfrak m}} \widetilde F_\lambda\left(\frac{\xi_1}{\sqrt n},\dots,\frac{\xi_k}{\sqrt n}\right)\right].
\end{multline}
Using Theorems \ref{Theorem_asymptotics_case2}--\ref{Theorem_asymptotics_case4}, we see that the left-hand side of \eqref{eq_x26} factorizes as $\prod_{j=1}^k$ as $n\to\infty$. Hence, using the $k=1$ computation made in Proposition \ref{Proposition_Gaussian_limit}, we convert \eqref{eq_x26} asymptotically as $n\to\infty$ into
\begin{equation}\label{eq_x27}
   \exp\left( {\mathfrak s}^2 \sigma^2 \sum_{j=1}^k \frac{ (\xi_j)^2}{2} + o(1)\right) =  \E_{\lambda=(\lambda_1^k,\dots,\lambda_k^k)}\left[ \prod_{j=1}^k \left(\frac{b\left(t+\frac{\xi_j}{\sqrt n},\gamma\right)a(t,\gamma)}{b(t,\gamma) a\left(t+\frac{\xi_j}{\sqrt n},\gamma\right)}\right)^{-n {\mathfrak m}} \widetilde F_\lambda\left(\frac{\xi_1}{\sqrt n},\dots,\frac{\xi_k}{\sqrt n}\right)\right],
\end{equation}
where ${\mathfrak s}^2$ is given in \eqref{eq_case2_constants}--\eqref{eq_case4_constants} and $\sigma$ is given in \eqref{eq_sigma_formula}.

Combining \eqref{eq_x27} with Proposition \ref{Proposition_F_lower_bound} we conclude that the exponential moments admit a uniform bound: for each $\xi>0$ there exists $C_4>0$ such that
\begin{equation}
\label{eq_x28}
 \sup_{n=1,2,\dots} \E \exp\left(\xi \frac{|\lambda_1^k-{\mathfrak m} n|+|\lambda_k^k-{\mathfrak m} n|}{\sqrt n}\right)<C_4.
\end{equation}
The bound \eqref{eq_x28} implies that $k$-dimensional vectors
\begin{equation}
 \label{eq_x29}
  \left(\frac{\lambda_1^k-{\mathfrak m} n}{ {\mathfrak s} \sqrt{n}},\frac{\lambda_2^k-{\mathfrak m} n}{ {\mathfrak s} \sqrt{n}},\dots, \frac{\lambda_k^k-{\mathfrak m} n}{ {\mathfrak s} \sqrt{n}}\right)
\end{equation}
are tight as $n\to\infty$. Therefore, their distribution has subsequential limits. Let $(\hat \lambda_1,\dots,\hat \lambda_k)$ be the a random vector distributed as one such subsequential limit. In order to identify it, we pass $n\to\infty$ (along the subsequence) in \eqref{eq_x27} under the expectation sign, using Proposition \ref{Proposition_F_to_Bessel} with $M={\mathfrak m} n$, $\eps=\frac{1}{\sqrt{n}}$ for the leading contribution and \eqref{eq_x28} together with \eqref{eq_F_bound upper} for tail bounds. We conclude that
\begin{equation}
  \exp\left( {\mathfrak s}^2 \sigma^2 \sum_{j=1}^k \frac{ (\xi_j)^2}{2}\right)= \E_{\hat \lambda=(\hat \lambda_1,\dots,\hat \lambda_k)}\Bigl[ \B_{\sigma {\mathfrak s} \hat \lambda}(\xi_1,\dots,\xi_k)\Bigr]= \E_{\hat \lambda=(\hat \lambda_1,\dots,\hat \lambda_k)}\Bigl[ \B_{ \hat \lambda}(\sigma {\mathfrak s} \xi_1,\dots,\sigma {\mathfrak s} \xi_k)\Bigr].
\end{equation}
Diving all $\xi_j$ by ${\mathfrak s} \sigma$ and using Proposition \ref{Proposition_GUE_BGF}, we conclude that $(\hat \lambda_1,\dots,\hat \lambda_k)$ has distribution of $k$ eigenvalues of $k\times k$ GUE. Because all subsequential limits are the same, the vectors \eqref{eq_x29} converge in distribution as $n\to\infty$ to this limit.
\end{proof}

\begin{proof}[Proof of Theorem \ref{Theorem_GUE_corners}] Proposition \ref{Proposition_GUE_limit} proves that the (rescaled) $k$-dimensional vector $(\lambda_1^k,\dots,\lambda_k^k)$ converges in distribution as $n\to\infty$ to $(g_1^k,\dots,g_k^k)$. To finish the proof, we will show that conditional distribution of $(\lambda_i^j)_{1\le i \le j<k}$ given $(\lambda_1^k,\dots,\lambda_k^k)$ converges (after rescaling) to the conditional distribution of $(g_i^j)_{1\le i \le j<k}$ given $(g_1^k,\dots,g_k^k)$. Indeed, the former conditional distribution is given in Definition \ref{Def_abc_measure}, as follows from the discussion in Section \ref{Section_F_functions}. On the other hand, the latter distribution is given in Definition \ref{Def_continuous_measure}, see \cite{Baryshnikov01} or \cite{neretin2003rayleigh} for modern proofs or \cite[Section 9.3]{gel1950unitary} for earlier discussions. Hence, the desired convergence of conditional distributions is Proposition \ref{Proposition_Gibbs_approximation} with $\eps=\frac{1}{\sqrt{n}}$.
\end{proof}

\subsection{Proof of Theorem \ref{Theorem_convergence_to_stochastic}}

We follow the same plan as for Theorem \ref{Theorem_GUE_corners} and start from $k=1$ case.

\begin{prop} \label{Proposition_Geometric_limit}
 Choose parameters $a,b,c>0$ such that $a>b$ and $\Delta=\frac{a^2+b^2-c^2}{2ab}>1$. Set
 \begin{equation}
 \label{eq_b1}
  b_1= \frac{a^2+b^2-c^2-\sqrt{(a^2+b^2-c^2)^2 - 4a^2b^2}}{2a^2}.
 \end{equation}
  Let $(\lambda_i^k)$ be a random monotone triangle corresponding to $(a,b,c)$--random configuration of the six-vertex model with DWBC, as in Definition \ref{Definition_monotone_triangle}. We have
 $$
  \lim_{n\to\infty} \left( \lambda_1^1  \right) \stackrel{d}{=} \mathrm{Geom}(b_1),
 $$
 where $\mathrm{Geom}(b_1)$ is the geometric random variable with
 $$
  \mathrm{Prob} \bigl( \mathrm{Geom}(b_1)= \ell\bigr)= b_1^{\ell-1}(1-b_1), \qquad \ell=1,2,\dots.
 $$
\end{prop}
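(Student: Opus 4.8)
\textbf{Proof proposal for Proposition \ref{Proposition_Geometric_limit}.}

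The plan is to mimic the structure of the proof of Proposition \ref{Proposition_Gaussian_limit}, but now using the ferroelectric asymptotics of Theorem \ref{Theorem_asymptotics_case1} in place of Theorems \ref{Theorem_asymptotics_case2}--\ref{Theorem_asymptotics_case4}, and with no rescaling of $\lambda_1^1$. First I would specialize the representation of Proposition \ref{Proposition_inhom_as_generating} to $k=1$. As in the derivation of \eqref{eq_x17}, the function $F^{\text{sym}}_{\nu_1}(\xi_1;t,\gamma,n) = a(t+\xi_1,\gamma)^{n-1}c(\gamma)\bigl(b(t+\xi_1,\gamma)/a(t+\xi_1,\gamma)\bigr)^{\nu_1-1}$, so that
\begin{equation}
\label{eq_prop_geom_x1}
\ZZ_n(\xi;\, t,\gamma)= \left(\frac{a(t+\xi,\gamma)}{a(t,\gamma)}\right)^{n-1} \E\left[\left(\frac{b(t+\xi,\gamma)a(t,\gamma)}{b(t,\gamma)a(t+\xi,\gamma)}\right)^{\lambda_1^1-1}\right].
\end{equation}
Here I would use the parameterization \eqref{eq_case1}: $a(t,\gamma)=\sinh(t-\gamma)$, $b(t,\gamma)=\sinh(t+\gamma)$, $c(\gamma)=\sinh(2\gamma)$, with $0<|\gamma|<t$; by Remark \ref{Remark_symmetry} and the hypothesis $a>b$ we may take $\gamma>0$, so $|\gamma|=\gamma$. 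A direct computation gives $\frac{b(t+\xi,\gamma)a(t,\gamma)}{b(t,\gamma)a(t+\xi,\gamma)} = \frac{\sinh(t+\gamma+\xi)\sinh(t-\gamma)}{\sinh(t+\gamma)\sinh(t-\gamma+\xi)}$, and this quantity, as $\xi$ ranges over real numbers with $\Re\xi>-\tfrac{t}{2}+\tfrac{\gamma}{2}$, takes all values in $(0,1)$, with value $1$ at $\xi=0$ and decreasing to $0$; call it $u=u(\xi)\in(0,1)$. The right side of \eqref{eq_prop_geom_x1} is then $\left(\tfrac{a(t+\xi,\gamma)}{a(t,\gamma)}\right)^{n-1}\E[u^{\lambda_1^1-1}]$, i.e.\ (a prefactor times) the probability generating function of $\lambda_1^1$ evaluated at $u$.

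Next I would invoke the $k=1$ case of Theorem \ref{Theorem_asymptotics_case1}, which gives $\ZZ_n(\xi;t,\gamma) = \left(\frac{\sinh(t+\gamma+\xi)}{\sinh(t+\gamma)}\right)^n e^{-\xi}(1+\bigO(n^{-1/2}))$. Solving \eqref{eq_prop_geom_x1} for $\E[u^{\lambda_1^1-1}]$ and substituting, the factors $\left(\frac{a(t+\xi,\gamma)}{a(t,\gamma)}\right)^{n-1}=\left(\frac{\sinh(t-\gamma+\xi)}{\sinh(t-\gamma)}\right)^{n-1}$ and the $\left(\frac{\sinh(t+\gamma+\xi)}{\sinh(t+\gamma)}\right)^n$ combine into $\left(\frac{\sinh(t+\gamma+\xi)\sinh(t-\gamma)}{\sinh(t+\gamma)\sinh(t-\gamma+\xi)}\right)^{n}\cdot\frac{\sinh(t-\gamma+\xi)}{\sinh(t-\gamma)} = u^n\cdot\frac{\sinh(t-\gamma+\xi)}{\sinh(t-\gamma)}$, so
\begin{equation}
\label{eq_prop_geom_x2}
\E\bigl[u^{\lambda_1^1-1}\bigr] = u^n\cdot\frac{\sinh(t-\gamma)}{\sinh(t-\gamma+\xi)}\cdot\frac{\sinh(t+\gamma)}{\sinh(t-\gamma)} \cdot e^{-\xi}(1+\bigO(n^{-1/2})) = u^{n}\cdot\frac{\sinh(t+\gamma)e^{-\xi}}{\sinh(t-\gamma+\xi)}(1+\bigO(n^{-1/2})).
\end{equation}
Wait — this does not look right as written; I should recompute the prefactor carefully. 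The correct bookkeeping is: from \eqref{eq_prop_geom_x1}, $\E[u^{\lambda_1^1-1}] = \ZZ_n(\xi;t,\gamma)\left(\tfrac{a(t,\gamma)}{a(t+\xi,\gamma)}\right)^{n-1}$; substituting Theorem \ref{Theorem_asymptotics_case1} and writing everything in terms of $u$, one finds $\E[u^{\lambda_1^1-1}] = A(\xi)\,u^{n-1}(1+\bigO(n^{-1/2}))$ for an explicit $n$-independent nonvanishing factor $A(\xi)$ with $A(0)=1$ (the value at $\xi=0$, where $u=1$, must be $1$ since $\E[1]=1$). Since $0<u<1$, $u^{n-1}\to 0$; this says $\lambda_1^1$ escapes to $+\infty$ after scaling — but that is because I forgot the minimal-displacement geometry: in the ferroelectric regime with $a>b$, the typical $\lambda_1^1$ is $\bigO(1)$, so the generating function should \emph{not} decay. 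Let me re-examine: the correct statement is that $\lambda_1^1-1$ is tight, so $\E[u^{\lambda_1^1-1}]\to\sum_{\ell\ge 0}u^\ell\,\mathrm{Prob}(\lambda_1^1-1=\ell)$. The hard part — and the main obstacle — is exactly this: one must track the prefactors so that the surviving limit of $\E[u^{\lambda_1^1-1}]$ is a function of $u$ of the form $\frac{1-b_1}{1-b_1 u}$, which is the PGF of $\mathrm{Geom}(b_1)-1$, and then verify algebraically, using \eqref{eq_b1} and the identifications $b_1 = a/(a\Delta - b\cdot\text{(something)})$... concretely, one checks that $b_1 = \lim_{\xi\to ?}$ of an explicit ratio of hyperbolic sines. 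The cleanest route: show directly that $\lim_{n\to\infty}\E[u^{\lambda_1^1-1}]=\frac{1-b_1}{1-b_1 u}$ as an identity in $u\in(0,1)$, where the right-hand side emerges from the explicit $\xi$-dependent prefactor after the $u^n$-type factors cancel (they must cancel, since the limit is finite and nonzero), and then invoke uniqueness of the PGF on a neighborhood of $u=0$ (equivalently $\xi$ near a suitable point) to conclude convergence in distribution to $\mathrm{Geom}(b_1)$.

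The remaining routine work is: (i) verify that with the parameterization \eqref{eq_case1}, the quantity $\frac{a^2+b^2-c^2\pm\sqrt{(a^2+b^2-c^2)^2-4a^2b^2}}{2a^2}$ equals $e^{-2\gamma}$ for the minus sign and... more precisely $b_1 = \frac{\sinh(t-\gamma)}{\sinh(t+\gamma)}e^{-2\gamma}$ or a similar explicit expression — this is a finite trigonometric identity using $a=\sinh(t-\gamma)$, $b=\sinh(t+\gamma)$, $c=\sinh 2\gamma$ and $\Delta = \cosh 2\gamma/(\text{stuff})$; (ii) confirm the prefactor in \eqref{eq_prop_geom_x2} genuinely simplifies to $\frac{1-b_1}{1-b_1 u}$ after expressing $e^{-\xi}$ and $\sinh(t-\gamma+\xi)$ through $u$; and (iii) note that the $\bigO(n^{-1/2})$ error, being uniform on compacts in $\xi$, passes to the limit. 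I expect step (ii) — reconciling the output of Theorem \ref{Theorem_asymptotics_case1} with the Mallows/geometric prefactor — to be the genuinely delicate bookkeeping, while everything else is mechanical.
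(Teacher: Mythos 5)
Your overall strategy — evaluate the $k=1$ generating function from Proposition \ref{Proposition_inhom_as_generating} via Theorem \ref{Theorem_asymptotics_case1}, recognize the limit as a geometric probability generating function, and match the ratio to $b_1$ — is exactly the paper's route. However, there is a concrete sign error that your own computation flagged but you never diagnosed, and the handwave ``they must cancel, since the limit is finite and nonzero'' is circular: you do not yet know the limit is finite and nonzero.

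The error is the sign of $\gamma$. In the parameterization \eqref{eq_case1}, $a(t,\gamma)=\sinh(t-\gamma)$ and $b(t,\gamma)=\sinh(t+\gamma)$ with $0<|\gamma|<t$. The hypothesis $a>b$ forces $\sinh(t-\gamma)>\sinh(t+\gamma)$, hence $\gamma<0$, not $\gamma>0$. (Remark \ref{Remark_symmetry} is a reduction of $a<b$ to $a>b$; it does not reverse the relationship between the sign of $\gamma$ and the ordering of $a,b$ within \eqref{eq_case1}.) With $\gamma<0$ we have $|\gamma|=-\gamma$, so Theorem \ref{Theorem_asymptotics_case1} reads
$\ZZ_n(\xi;t,\gamma)=\bigl(\tfrac{\sinh(t-\gamma+\xi)}{\sinh(t-\gamma)}\bigr)^n e^{-\xi}(1+o(1))
=\bigl(\tfrac{a(t+\xi,\gamma)}{a(t,\gamma)}\bigr)^n e^{-\xi}(1+o(1))$.
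Dividing by the prefactor $\bigl(\tfrac{a(t+\xi,\gamma)}{a(t,\gamma)}\bigr)^{n-1}$ in your equation \eqref{eq_prop_geom_x1}, the $n$-dependent factors cancel up to a single residual power, leaving
$\E\bigl[u^{\lambda_1^1-1}\bigr]=\tfrac{\sinh(t-\gamma+\xi)}{\sinh(t-\gamma)}\,e^{-\xi}\,(1+o(1))$,
a finite nonzero limit. Your version with $\gamma>0$ replaced $a$ by $b$ in the numerator of the $n$-th power, producing the spurious $u^{n-1}$ you observed. Once the sign is corrected, the remainder is indeed routine: set $e^{u}=\tfrac{b(t+\xi,\gamma)a(t,\gamma)}{b(t,\gamma)a(t+\xi,\gamma)}$ and $q=\tfrac{\sinh(t+\gamma)}{\sinh(t-\gamma)}e^{2\gamma}$, verify the identity $\tfrac{1-q}{1-e^{u}q}=\tfrac{\sinh(t-\gamma+\xi)}{\sinh(t-\gamma)}e^{-\xi}$, note that $\xi$ ranges over a neighborhood of $0$ so $u$ ranges over a neighborhood of $0$, invoke uniqueness of Laplace transforms, and finally check the algebraic identity $q=b_1$ from \eqref{eq_b1}.
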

\begin{proof}
 As in \eqref{eq_x17}, we restate the $k=1$ result of Proposition \ref{Proposition_inhom_as_generating} as
 \begin{equation}
\label{eq_x31}
 \ZZ_n\left(\xi_1;\, t,\gamma\right)= \frac{a\left(t+\xi_1,\gamma\right)^{n-1}  }{a(t,\gamma)^{n-1}  }\, \E \left[ \left(\frac{b\left(t+\xi_1,\gamma\right)a(t,\gamma)}{b(t,\gamma) a\left(t+\xi_1,\gamma\right)}\right)^{\lambda_1^1-1}\right].
\end{equation}
Using Theorem \ref{Theorem_asymptotics_case1}, plugging the values of $a$ and $b$ from \eqref{eq_case1}, and noting that $\gamma$ should be negative for $a>b$, we transform \eqref{eq_x31} into
\begin{equation}
\label{eq_x32}
\E \left[ \left(\frac{\sinh\left(t+\xi_1+\gamma\right)\sinh(t-\gamma)}{\sinh(t+\gamma) \sinh\left(t+\xi_1-\gamma\right)}\right)^{\lambda_1^1-1}\right]=\frac{\sinh(t-\ga+\xi_1)}{\sinh(t-\ga)}
\exp\bigl(- \xi_1 \bigr) \cdot (1+o(1)).
\end{equation}
Let us denote
\begin{equation}
\label{eq_x34}
 e^u=\frac{\sinh\left(t+\xi_1+\gamma\right)\sinh(t-\gamma)}{\sinh(t+\gamma) \sinh\left(t+\xi_1-\gamma\right)},\qquad\qquad q= \frac{\sinh(t+\gamma)}{\sinh(t-\gamma)} e^{2\gamma}.
\end{equation}
Then a direct computation shows that
$$
 \frac{1-q}{1-e^u q}= \frac{\sinh(t-\ga+\xi_1)}{\sinh(t-\ga)}
\exp\bigl(- \xi_1 \bigr),
$$
and therefore, \eqref{eq_x32} is equivalent to
\begin{equation}
\label{eq_x33}
 \E e^{u\lambda_1^1} = e^u \frac{1-q}{1-e^uq} \cdot \bigl(1+o(1)\bigr).
\end{equation}
Note that possible values for $\xi_1$ in Theorem \ref{Theorem_asymptotics_case1} form an open neighborhood of $0$. Hence, the possible values for $e^u$ in \eqref{eq_x33} form an open neighborhood of $1$ and possible values for $u$ in \eqref{eq_x33} form an open neighborhood of $0$. Therefore, \eqref{eq_x33} means that the Laplace transform of $\lambda_1^1$ converges in a neighborhood of $0$ to the Laplace transform of geometric random variable with ratio $q$. If follows that $\lambda_1^1$ converges in distribution to $\mathrm{Geom}(q)$, and it remains to show that $q=b_1$, as given in \eqref{eq_b1}. Hence, abbreviating $\sinh$ to $\sh$, we would like to verify that for $0<-\gamma<t$ we have
\begin{multline*}
 \frac{\sh(t+\gamma)}{\sh(t-\gamma)} e^{2\gamma} \stackrel{?}{=} \\ \frac{\sh^2(t-\ga)+\sh^2(t+\ga)-\sh^2(2\ga)-\sqrt{(\sh^2(t-\ga)+\sh^2(t+\ga)-\sh^2(2\ga))^2 - 4\sh^2(t-\ga)\sh^2(t+\ga)}}{2\sh^2(t-\ga)}.
\end{multline*}
We leave this direct check to the reader. 
\end{proof}

For general $k>1$ case of Theorem \ref{Theorem_convergence_to_stochastic}, we need to introduce a new type of generating functions, generalizing the left-hand side of \eqref{eq_x33} and allowing us to identify the answer with the stochastic six-vertex model. Such generating functions can be introduced in terms of different families of weights of the six-vertex model. We chose to work with stochastic weights by two reasons: first, they appear in the answer anyway; second, they make it simpler for us to connect to \cite{borodin2017integrable}, from which we need to use one theorem.

\begin{definition}  Given a parameter $q$ and a $k$-tuple of strictly increasing positive integers, ${\nu=(0<\nu_1<\nu_2< \dots < \nu_k )}$, we define a function of $k$ complex variables $w_1,\dots,w_k$:
 \begin{align}
\F_\nu(w_1,\dots,w_k)=  \prod_{i=1}^k (1-w_i)  \cdot \sum_{\sigma\in \mathfrak S(k)} \sigma \left[ \prod_{1\le \alpha<\beta\le k} \frac{1- w_\alpha(1+ q)+ q  w_\alpha w_\beta }{w_\beta-w_\alpha}  \prod_{i=1}^k  \left(w_i\right)^{\nu_{i}-1} \right], \label{eq_stochastic_functions_new_def}
\end{align}
where $\sigma$ acts by permuting the variables $w_1,\dots,w_k$.
\end{definition}

The superscript $\text{st}$ in $\F$ stays for the stochastic, as an indication that we use the stochastic weights in the identification of $\F$ with a partition function recorded in the following proposition.

\begin{prop}\label{Proposition_F_as_partition} Take $k$-tuple of positive integers $\nu=(0<\nu_1< \dots<\nu_k)$ and parameters $q$ and $w_1,\dots,w_k\in\mathbb C$. Consider a strip domain $\mathbb Z_{>0}\times\{1,2,\dots,k\}$ with the paths entering on the left (at every line) and exiting on top at positions $\{\nu_i\}_{i=1}^k$, as in the right panel of Figure \ref{Fig_domain_for_symfunctions}. We have
\begin{equation}
\label{eq_F_as_partition}
 \F_\nu(w_1,\dots,w_k)=\sum_{\sigma} \prod_{x=1}^{\infty} \prod_{y=1}^k \omega(x,y;\sigma),
\end{equation}
where the sum is over the configurations of the six-vertex model and we use the stochastic weights of Figure \ref{Figure_six_vertices_three_ways} with $w$ depending on the row, $w=w_y$.
\end{prop}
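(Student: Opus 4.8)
The plan is to identify the right-hand side of \eqref{eq_F_as_partition}, a sum over configurations of the stochastic six-vertex model in the half-strip $\mathbb Z_{>0}\times\{1,\dots,k\}$ with paths entering on every left edge and exiting at the top in columns $\nu_1<\dots<\nu_k$, with a known symmetric rational function. First I would observe that because exactly $k$ paths enter and exactly $k$ exit, every configuration summed over in \eqref{eq_F_as_partition} is supported on a bounded region: there is a column $N$ beyond which all vertices are empty (Type 1), so the infinite product over $x$ is actually finite for each configuration and the sum is well-defined. The stochastic weights $b_1=w$, $c_1=1-w$ in row $y$ (with $b_2=1$, $c_2=0$ in the convention where a path always continues when entering from below without one from the left, etc.) make the row-to-row transfer matrix a well-studied object; in particular this partition function is precisely a (dual, or specialized) inhomogeneous Hall–Littlewood process / spin-$\tfrac12$ stochastic higher-spin vertex model partition function.

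The cleanest route is to appeal directly to the algebraic machinery in \cite{borodin2017integrable}. There, the partition function of the stochastic six-vertex model in a strip with paths entering on all left edges and prescribed exit locations is computed explicitly as a symmetrization sum over the Bethe-type variables; the resulting formula is exactly \eqref{eq_stochastic_functions_new_def} after matching spectral parameters. Concretely, I would: (1) write the transfer matrix for one row as an operator acting on the space spanned by occupation configurations of $\mathbb Z_{>0}$, and identify the partition function as $\langle \emptyset \,|\, T(w_k)\cdots T(w_1)\,|\,\text{all-full-on-left}\rangle$ evaluated against the state with paths at positions $\nu$ on top; (2) recall that $T(w)$ applied to the "fully packed left-entry" boundary creates, after $k$ rows, a sum of monomials in the $w_i$'s whose coefficients are the desired six-vertex probabilities; (3) invoke the explicit diagonalization / symmetrization identity from \cite{borodin2017integrable} (the relevant statement is the determinantal/symmetrization formula for these partition functions, sometimes attributed to the Bethe ansatz for the stochastic six-vertex model) to see that this sum equals the right-hand side of \eqref{eq_stochastic_functions_new_def}, including the prefactor $\prod_i(1-w_i)$ and the cross-terms $1-w_\alpha(1+q)+qw_\alpha w_\beta$ in the denominator-times-numerator structure. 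The factor $\prod(1-w_i)$ comes from summing the geometric series over how far to the right each path travels before turning up, and the two-body factors encode the non-crossing / exchange relation for the paths, i.e. the $R$-matrix.

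Rather than quoting a black box, a more self-contained alternative—which I would present as the actual proof—is an induction on $k$. For $k=1$ the claim reduces to a single geometric sum: a lone path enters at every left edge of row $1$ and must turn up at column $\nu_1$; the weight of the configuration where it enters at column $j\le \nu_1$, travels right through columns $j,\dots,\nu_1-1$ (each contributing the "straight-through" $c$-type weight $1-w_1$... wait, the horizontal-through weight), and turns at $\nu_1$, summed over $j$, telescopes to $(1-w_1)w_1^{\nu_1-1}$, matching \eqref{eq_stochastic_functions_new_def}. For the inductive step, I would peel off the top row: condition on the exit data of row $k-1$, which is an increasing $(k-1)$-tuple $\mu$ with $\mu_i\le\nu_i\le\mu_{i+1}$ (interlacing), apply the inductive hypothesis to rows $1,\dots,k-1$, and then sum the top-row weights over all interlacing $\mu$. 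The top-row weights are again products of geometric factors and the identity to verify becomes a Hall–Littlewood-type branching/Pieri identity, which is exactly the content of the symmetrization formula; this is where the cross-terms $1-w_k(1+q)+qw_kw_\beta$ get produced from the sum over $\mu$. The main obstacle is precisely this combinatorial step: carefully matching the six-vertex weights in the top row (and the bookkeeping of where paths cross $c$-type vs $b$-type vertices) against the terms of the symmetrization, and making sure the contour/analytic issues are vacuous because everything is a polynomial identity in $w_1,\dots,w_k$ once one clears denominators. I would handle this either by the direct Hall–Littlewood computation or, more efficiently, by citing the corresponding statement in \cite{borodin2017integrable} and checking the normalization on the $k=1$ and "all $\nu_i$ distinct, far apart" cases, which pins down the prefactor uniquely.
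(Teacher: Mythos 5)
Your first route (citing \cite{borodin2017integrable}) is essentially what the paper does: the paper invokes \cite[Theorem 4.12]{borodin2017integrable} for the Borodin--Petrov functions $\mathsf F_\mu$ and then translates to $\F_\nu$. However, your proposal leaves out the actual substance of that translation, and this is a genuine gap. The Borodin--Petrov formula \eqref{eq_BP_functions} uses a different gauge of weights and a different spectral parameter $u$, and the identity \eqref{eq_x43} you need is not merely a normalization constant that can be pinned down from $k=1$ and generic $\nu$: it involves the substitution $u_i = q^{-1/2}\tfrac{1-qw_i}{1-w_i}$ with $s=q^{-1/2}$, the index reversal $\mu_i=\nu_{k+1-i}-1$, and a $\nu$-dependent gauge factor $q^{-k/2}(-q^{-1/2})^{\nu_1+\cdots+\nu_k}$. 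Verifying this requires a term-by-term comparison of the two symmetrization formulas \emph{and} a matching computation on the vertex weights (the paper establishes \eqref{eq_x36_2}--\eqref{eq_x37_2} by counting left- and right-pointing half-edges). Your suggested cross-check on a few special cases does not establish that the spectral parameter map is the correct one, only that the overall constant is consistent once the map is already known. You need to present the explicit change of variables and the corresponding gauge transformation of the weights; absent that, the proof is incomplete.

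Separately, your $k=1$ computation is described incorrectly: for $k=1$ the path enters at the unique left edge of row $1$, so there is exactly one configuration with weight $w_1^{\nu_1-1}(1-w_1)$, and no sum over an entry column $j$ or telescoping geometric series. The final answer you state agrees with \eqref{eq_stochastic_functions_new_def}, but the route to it as you wrote it does not reflect the actual boundary conditions. Your alternative inductive route via a Hall--Littlewood branching/Pieri identity is plausible in spirit but, as you yourself acknowledge, you do not carry out the key combinatorial step, and in the end you fall back on citing \cite{borodin2017integrable}, which brings you back to the same missing translation.
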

\begin{remark} The proposition implies that $\F_\nu$ are essentially the same functions as $F^{\text{sym}}_\nu$ of \eqref{eq_rectangular_function}, but with stochastic rather than symmetric weights.
\end{remark}

\begin{figure}[t]
\begin{center}
   \includegraphics[width=0.95\linewidth]{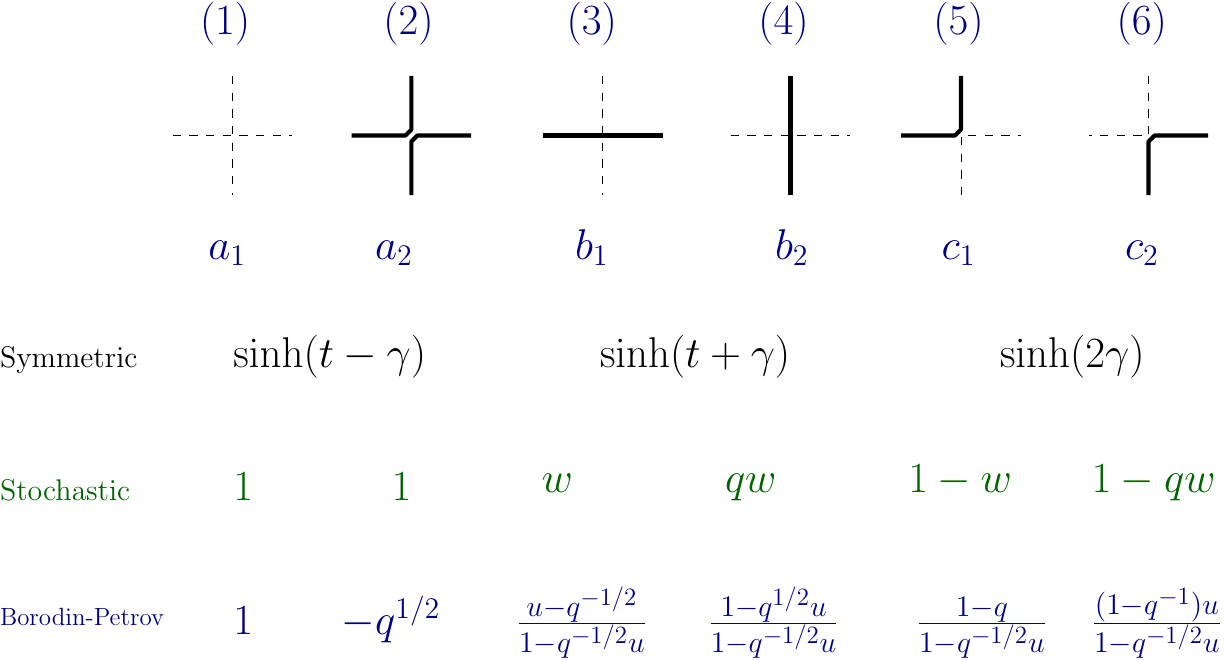}
\end{center}
        \caption{\label{Figure_six_vertices_three_ways} Three types of weights of vertices}
\end{figure}

\begin{proof}[Proof of Proposition \ref{Proposition_F_as_partition}]
 The identification of two definitions of $\F$: through \eqref{eq_F_as_partition} and through \eqref{eq_stochastic_functions_new_def} is one of the manifestations of the Bethe-ansatz solvability of the six-vertex model (see \cite{baxter2016exactly} for general discussions). Our proof relies on \cite[Theorem 4.12]{borodin2017integrable}, which uses different vertex weights given in \cite[(21) and Figure 4]{borodin2017integrable} in which we need to set $s=q^{-1/2}$ and $g-1,g,g+1\in\{0,1\}$; they are copied in the bottom of our Figure \ref{Figure_six_vertices_three_ways}. For the Borodin-Petrov weights,  \cite[Theorem 4.12]{borodin2017integrable} identifies the partition function of the six-vertex model in a strip --- an analogue of \eqref{eq_F_as_partition} --- with
 \begin{equation}
\label{eq_BP_functions}
 \mathsf{F}_\mu(u_1,\dots,u_k)=\frac{(1-q)^k}{\prod_{i=1}^k (1-s u_i)}  \sum_{\sigma\in \mathfrak S(k)} \sigma \left[ \prod_{1\le \alpha<\beta\le k} \frac{u_\alpha-q u_\beta}{u_\alpha-u_\beta}  \prod_{i=1}^k  \left(\frac{u_i-s}{1-s u_i}\right)^{\mu_i} \right],
\end{equation}
where $\sigma$ acts by permuting the variables $u_1,\dots,u_k$ (which correspond to the rows of the strip) and $\mu_i=\nu_{k+1-i}-1$ in order to match our notations. The functions $ \mathsf{F}_\mu$ are related to our $\F_\nu$ by the following transformation:
\begin{equation}
\label{eq_x43}
 \F_\nu(w_1,\dots,w_k)=q^{-k/2} \bigl(-q^{-1/2}\bigr)^{\nu_1+\dots+\nu_k}  \, \mathsf{F}_\mu\left(q^{-1/2}\frac{1-qw_1}{1-w_1},\dots, q^{-1/2}\frac{1-qw_k}{1-w_k}\right),
\end{equation}
with $s=q^{-1/2}$, $\mu_i=\nu_{k+1-i}-1$. The relation \eqref{eq_x43} is seen by directly comparing the formulas  \eqref{eq_stochastic_functions_new_def} and \eqref{eq_BP_functions}. As an intermediate step in this computation, we record
\begin{multline*}
q^{-k/2} \bigl(-q^{-1/2}\bigr)^{\nu_1+\dots+\nu_k}  \, \mathsf{F}_\mu\left(q^{-1/2}/z_1,\dots, q^{-1/2}/z_k\right)
\\= \prod_{i=1}^k \left[\frac{z_i(1-q)}{1-qz_i}\right]   \sum_{\sigma\in \mathfrak S(k)} \sigma \left[ \prod_{1\le \alpha<\beta\le k} \frac{z_\beta-q z_\alpha}{z_\beta-z_\alpha}  \prod_{i=1}^k  \left(\frac{1-z_i}{1-qz_i}\right)^{\nu_{k+1-i}-1} \right],
\end{multline*}
in which one should plug $z_i=\frac{1-w_i}{1-q w_i}$ (which is equivalent to $w_i=\frac{1-z_i}{1-q z_i}$) to get $\F_\nu(w_1,\dots,w_k)$.

It remains to also see the relation \eqref{eq_x43} for the partition functions \eqref{eq_F_as_partition} when changing between Borodin-Petrov and stochastic weights. We plug $u=q^{-1/2}/z$ into the Borodin-Petrov weights in Figure \ref{Figure_six_vertices_three_ways}, resulting in the six-tuple of weights:
\begin{equation} \label{eq_x35}
  1, \quad -q^{1/2},\quad  -q^{1/2}\frac{1-z}{1-qz}, \quad q\frac{1-z}{1-qz}, \quad -q\frac{(1-q)z}{1-qz}, \quad q^{-1/2}\frac{1-q}{1-qz}.
\end{equation}
Further, setting $z=\frac{1-w}{1-qw}$, the six-tuple is converted into
\begin{equation} \label{eq_x35_2}
  1, \quad -q^{1/2},\quad  -q^{1/2}w, \quad qw, \quad -q(1-w), \quad q^{-1/2}(1-qw).
\end{equation}
Comparing \eqref{eq_x35_2} with the stochastic weights of Figure \ref{Figure_six_vertices_three_ways} we see the following match:
\begin{equation}
\label{eq_x36_2}
 (\text{Borodin-Petrov weight})=(\text{stochastic weight}) \cdot (-q)^{\#[\text{line left}]} \cdot (q^{-1/2})^{\#[\text{line right}]},
\end{equation}
where $\#[\text{line left}]$ counts the number of lines entering into the vertex from the left (its values on six types are $0,1,1,0,1,0$) and $\#[\text{line right}]$ counts the number of lines exiting the vertex to the right (values $0,1,1,0,0,1$). Because each line exiting to the right immediately enters its neighboring vertex from the left, \eqref{eq_x36_2} implies a relation between partition functions in the domain of Figure \ref{Fig_domain_for_symfunctions} with stochastic and Borodin-Petrov weights:
\begin{equation}
\label{eq_x37_2}
 (\text{Borodin-Petrov p.f.})=(\text{stochastic p.f.}) \cdot (-q)^{k} \cdot (-q^{1/2})^{(\nu_1-1)+\dots(\nu_k-1)},
\end{equation}
which is precisely the relation between two variants of $F$--functions in \eqref{eq_x43}.
\end{proof}

We summarize useful properties of $\F_\nu(w_1,\dots,w_k)$ in the following lemma.
\begin{lem}
\label{Lemma_F_properties}
 Take a real parameter $q>0$ and $k$-tuple of integers $\nu_1>\dots>\nu_k>0$. We have:
 \begin{enumerate}
  \item $\F_\nu(w_1,\dots,w_k)$ is a symmetric polynomial in $w_1,\dots,w_k$ of degree $\nu_1+\dots+\nu_k + k(k-1)/2$.
  \item $\F_\nu(w_1,\dots,w_k) \prod\limits_{1 \le i<j \le k} (w_i-w_j)$ is a skew-symmetric polynomial in $w_1,\dots,w_k$ with lowest\footnote{We order the monomials by total degree first, and by lexicographic order on multi-degrees (so that $w_1^3 w_2 > w_2^2 w_1^2$) when total degrees are the same.} monomial
     $$
      w_1^{\nu_1-1} w_2^{\nu_2-1}\cdots w_k^{\nu_k-1}
     $$
  and highest monomial
      $$
      \bigl(-1\bigr)^k q^{k(k-1)/2}  \cdot w_1^{\nu_k+k-1} w_2^{\nu_{k-1}+k-1} \cdots w_k^{\nu_1+k-1}.
      $$
  \item $\F_\nu(w_1,\dots,w_k)>0$ whenever $0<w_1,w_2,\dots,w_k<\min(1,q^{-1})$.
  \item For each $0<w<\min(1,q^{-1})$, there exits a positive constant $C(w)$, such that
      \begin{equation}
      \label{eq_F_inequality}
        \left|\F_\nu(w_1,\dots,w_k)\right|< C(w) {\F_\nu}(w,\dots,w), \qquad \text{ for all }\,  w_i\in \mathbb C, \text{ such that } |w_i|\le w.
      \end{equation}
 \end{enumerate}
\end{lem}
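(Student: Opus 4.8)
\textbf{Proof proposal for Lemma \ref{Lemma_F_properties}.}

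The plan is to read off each property from the explicit definition \eqref{eq_stochastic_functions_new_def}, clearing the denominators $w_\beta-w_\alpha$ so that we work with honest polynomials, and then to use the partition-function interpretation from Proposition \ref{Proposition_F_as_partition} for the positivity claim (3). For part (1), note that the summand in \eqref{eq_stochastic_functions_new_def} has a prefactor $\prod_i(1-w_i)$ of degree $k$, a product $\prod_{\alpha<\beta}\frac{1-w_\alpha(1+q)+qw_\alpha w_\beta}{w_\beta-w_\alpha}$ whose numerator has bidegree contributing and whose denominator is the Vandermonde of degree $k(k-1)/2$, and a monomial part $\prod_i w_i^{\nu_i-1}$. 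After symmetrization by $\mathfrak S(k)$ the alternating Vandermonde in the denominator is absorbed and one is left with a symmetric \emph{polynomial}; tracking the total degree through this computation gives $\sum_i(\nu_i-1)+k+[\text{numerator contribution}]$, and a careful bookkeeping (the numerator $1-w_\alpha(1+q)+qw_\alpha w_\beta$ contributes degree $2$ per pair but the Vandermonde cancellation removes one degree per pair) yields exactly $\nu_1+\dots+\nu_k+k(k-1)/2$. It is cleanest to organize this by multiplying through by $\prod_{\alpha<\beta}(w_\beta-w_\alpha)$ first, which converts \eqref{eq_stochastic_functions_new_def} into a genuine alternating-sum-over-$\mathfrak S(k)$ of monomials-times-$\prod(\text{quadratics})$, manifestly a polynomial, and then dividing back.

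For part (2), I would multiply \eqref{eq_stochastic_functions_new_def} by $\prod_{i<j}(w_i-w_j)$ to obtain
$$
\F_\nu(w_1,\dots,w_k)\prod_{i<j}(w_i-w_j) = \pm\prod_i(1-w_i)\sum_{\sigma\in\mathfrak S(k)}(-1)^\sigma \,\sigma\!\left[\prod_{\alpha<\beta}\bigl(1-w_\alpha(1+q)+qw_\alpha w_\beta\bigr)\prod_i w_i^{\nu_i-1}\right],
$$
which is visibly skew-symmetric since every term in the symmetrization now carries the sign $(-1)^\sigma$ against a fixed skew-symmetric reference. To identify the lowest monomial (in the ordering specified in the footnote), observe that in each quadratic factor $1-w_\alpha(1+q)+qw_\alpha w_\beta$ the lowest-degree term is the constant $1$; picking the constant in every factor and the identity permutation gives $\prod_i w_i^{\nu_i-1}\cdot\prod_i 1$ (the $\prod_i(1-w_i)$ also contributes its constant $1$), and one checks this survives the alternating sum because no other permutation can produce the same minimal multidegree. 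Symmetrically, the highest-degree term of $1-w_\alpha(1+q)+qw_\alpha w_\beta$ is $qw_\alpha w_\beta$, the highest term of $1-w_i$ is $-w_i$, so the top monomial comes from choosing $q w_\alpha w_\beta$ in all $\binom{k}{2}$ quadratics (contributing $q^{k(k-1)/2}w_1^{k-1}\cdots w_k^{k-1}$ before symmetrization, since each $w_i$ appears in exactly $k-1$ pairs), choosing $-w_i$ from each linear factor (contributing $(-1)^k w_1\cdots w_k$), and then the permutation reversing the exponents $\nu_i-1$; collecting gives $(-1)^k q^{k(k-1)/2} w_1^{\nu_k+k-1}\cdots w_k^{\nu_1+k-1}$, up to a sign from the ordering convention that I would pin down by examining $k=2$.

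For part (3), I appeal to Proposition \ref{Proposition_F_as_partition}: for $0<w_1,\dots,w_k<\min(1,q^{-1})$ one checks that all six stochastic weights of Figure \ref{Figure_six_vertices_three_ways} with parameter $w=w_y$ are strictly positive (the weights are $1$, $w_y$, $1-w_y$, $q w_y$ ... up to the normalization, and the stochasticity constraints make each lie in $(0,1)$), hence $\F_\nu(w_1,\dots,w_k)$, being a finite sum of products of positive weights over configurations (the sum is finite because the boundary data $\nu$ forces all but finitely many vertices to be of the frozen types), is strictly positive. Finally part (4) follows by combining (3) with the polynomial triangle inequality: writing $\F_\nu(w_1,\dots,w_k)\prod_{i<j}(w_i-w_j)$ as an explicit polynomial $\sum_m c_m \mathbf{w}^m$ with known signs of the $c_m$ from the partition-function expansion, one has $|\F_\nu(w_1,\dots,w_k)|\le \bigl(\prod_{i<j}|w_i-w_j|\bigr)^{-1}\sum_m|c_m|\,|\mathbf w|^m$; the subtlety is the Vandermonde in the denominator, which I would handle by instead bounding $\F_\nu$ directly via its symmetric-function form, or more robustly by noting that $\F_\nu$ itself (not multiplied by the Vandermonde) is a polynomial with coefficients of definite sign --- this is exactly what the partition-function formula \eqref{eq_F_as_partition} with nonnegative weights provides --- so that $|\F_\nu(w_1,\dots,w_k)|\le\F_\nu(|w_1|,\dots,|w_k|)\le\F_\nu(w,\dots,w)$ by monotonicity of each coefficient's monomial in $|w_i|\le w$, possibly after tracking signs to absorb them into $C(w)$. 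The main obstacle I anticipate is the combinatorial bookkeeping in part (2) --- correctly identifying the extreme monomials after the $\mathfrak S(k)$-symmetrization and clearing the Vandermonde, including the precise sign --- together with making the sign-definiteness of coefficients in part (4) rigorous; both are best done by first verifying $k=2$ by hand and then inducting on $k$ using the branching structure already exploited in Proposition \ref{Proposition_F_as_partition}.
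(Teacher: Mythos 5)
Your treatment of parts (1)--(3) follows essentially the same route the paper takes: clear the denominators in \eqref{eq_stochastic_functions_new_def} to exhibit $\F_\nu\cdot\prod_{i<j}(w_i-w_j)$ as a bona fide skew-symmetrization of a polynomial (this is the paper's \eqref{eq_x41}), read the extreme monomials off the constant/top terms of each factor, and, for positivity, invoke Proposition \ref{Proposition_F_as_partition} together with the positivity of the six stochastic weights $1,1,w_i,qw_i,1-w_i,1-qw_i$ on $0<w_i<\min(1,q^{-1})$. One small stylistic difference: the paper derives (1) as an immediate consequence of (2), using that a skew-symmetric polynomial is divisible by the Vandermonde with symmetric polynomial quotient, which spares you the degree bookkeeping in the original symmetrized form. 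Your sign ambiguities (``$\pm$'', pinning things down at $k=2$) are honest but do resolve: the overall prefactor is $(-1)^{k(k-1)/2}$ (the paper absorbs it by reindexing $\nu_i\mapsto\nu_{k+1-i}$, which is why \eqref{eq_x41} carries no sign), and with this in place your identification of the lowest and highest monomials via the identity and reversal permutations is correct.

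There is, however, a genuine gap in part (4). Your main route is the claim that $\F_\nu$ ``is a polynomial with coefficients of definite sign,'' which you say follows from the partition-function formula with nonnegative weights, and from which you would conclude $|\F_\nu(w_1,\dots,w_k)|\le\F_\nu(|w_1|,\dots,|w_k|)\le\F_\nu(w,\dots,w)$. This claim is false. The Type-5 and Type-6 vertices contribute weights $1-w_i$ and $1-qw_i$, which are not sign-definite as polynomials, and consequently neither is $\F_\nu$. Already at $k=1$ one has $\F_{(\nu_1)}(w_1)=w_1^{\nu_1-1}(1-w_1)$, with alternating coefficients, and the naive inequality fails: for $\nu_1=2$, $|\F_{(2)}(-w)|=w(1+w)>w(1-w)=\F_{(2)}(w)$. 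So a constant $C(w)>1$ is genuinely needed and the argument must explain where it comes from. The combinatorial fact you are missing --- and which the paper uses --- is that the number of Type-5 and Type-6 vertices (path corners) in any configuration of \eqref{eq_F_as_partition} is uniformly bounded by $k(k+1)/2$, independently of $\nu$. These are the only vertices with sign-indefinite weights, so their contribution per configuration can be absorbed into a constant $C(w)$ on the order of $\bigl((1+w)/\min(1-w,1-qw)\bigr)^{k(k+1)/2}$, after which the remaining weights $1,1,w_i,qw_i$ are nonnegative and bounded monomially by $1,1,w,qw$ on $|w_i|\le w$. Your alternative suggestion (bounding $\F_\nu$ via $\prod_{i<j}|w_i-w_j|^{-1}\sum_m|c_m||\mathbf w|^m$) you correctly flag as problematic --- the Vandermonde denominator can vanish --- but you abandon it without reaching the working argument. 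The parenthetical ``possibly after tracking signs to absorb them into $C(w)$'' points in the right direction, but without the bound on the number of sign-indefinite factors per configuration it does not close.
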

\begin{proof}
 First, we prove the second property by noticing that ${\F_\nu}(w_1,\dots,w_k) \prod\limits_{1 \le i<j \le k} (w_i-w_j)$ is a skew-symmetrization of a polynomial:
 \begin{multline} \label{eq_x41}
  {\F_\nu}(w_1,\dots,w_k) \prod\limits_{1\le i<j\le k} (w_i-w_j)\\ =   \sum_{\sigma\in \mathfrak S(k)} (-1)^{\sigma}\sigma \left[ \prod_{1\le \alpha<\beta\le k} (1- w_\beta(1+ q)+ q  w_\alpha w_\beta)\, \prod_{i=1}^k (1-w_i) \left(w_i\right)^{\nu_{k+1-i}-1}  \right].
 \end{multline}
All properties in the second property are immediate from the form \eqref{eq_x41}. We deduce the first property from the second one by noticing that any skew-symmetric polynomial in $w_1,\dots,w_k$ is divisible by $\prod_{i<j} (w_i-w_j)$ and the ratio is a symmetric polynomial.

\smallskip

For the third and forth properties, we recall the formula of Proposition \ref{Proposition_F_as_partition} for ${\F_\nu}(w_1,\dots,w_k)$ and notice that the six-vertex weights for the $i$-th row are expressed in terms of $w_i$ as:
\begin{equation}
\label{eq_x42}
 1,\quad 1, \quad w_i, \quad q w_i, \quad 1-w_i,\quad 1-q w_i.
\end{equation}
If $0<w_1,\dots,w_k<\min(1,q^{-1})$, then all these weights are positive. Hence, $\widehat{\F_\nu}(w_1,\dots,w_k)>0$ as a sum of positive terms. For the last property of Lemma \ref{Lemma_F_properties}, notice that the total number of the vertices of the Types 5 and 6 (they correspond to the corners in the paths) is uniformly bounded: there are at most $k(k+1)/2$ of them in any configuration of the sum \eqref{eq_F_as_partition}. We absorb the factors corresponding to these two types into $C(w)$ factor in \eqref{eq_F_inequality}. The magnitude contribution of the remaining four weights $1,1, w_i, qw_i$ is readily upper bounded by $1,1, w, qw$. In this way, we upper-bound the magnitude of the sum \eqref{eq_F_as_partition} for the weights \eqref{eq_x42}, by a similar sum for the weights $1,1, w, qw, 1-w, 1-qw$, thus arriving at \eqref{eq_F_inequality}.
\end{proof}

Next, we characterize the stochastic six-vertex model, as defined before Theorem \ref{Theorem_convergence_to_stochastic}, in terms of the functions $\F_\nu$. Let us emphasize that there are two distinct meanings of the adjective ``stochastic'' and both are used in this section. When we speak about \emph{stochastic weights}, as in Proposition \ref{Proposition_F_as_partition}, we merely deal with weights of the second line of Figure \ref{Figure_six_vertices_three_ways} and boundary conditions for the model can be arbitrary. But if we speak about the \emph{stochastic six-vertex model}, as in Theorem \ref{Theorem_stochastic_6v_characterization}, then in addition to using the stochastic weights, we also impose special boundary conditions so that random configurations of the models can be sampled using the sequential algorithm described right before Theorem \ref{Theorem_convergence_to_stochastic}.

\begin{theo} \label{Theorem_stochastic_6v_characterization}
 Let $\mathbb P$ be a probability distribution on $k$-tuples of positive integers $0<\nu_1<\nu_2<\dots<\nu_k$. Take two reals $q,w$, such that $0<w<1$ and $0<qw<1$. Fix $\eps>0$ and suppose that
 \begin{equation}
 \label{eq_x38}
   \sum_{\nu} \mathbb P(\nu) \frac{\F_\nu(w_1,\dots,w_k)}{\F_{\nu}(w,\dots,w)}=1, \qquad \text{whenever }\quad w-\eps\le w_i\le w, \quad 1\le i \le k.
 \end{equation}
 Then $\nu$ has the law of the $k$-th row in the stochastic six-vertex model (under the identification of configurations with interlacing arrays as in Definition \ref{Definition_monotone_triangle}) with
 $b_1=w$, $b_2= q w$.
\end{theo}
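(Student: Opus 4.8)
\textbf{Proof plan for Theorem \ref{Theorem_stochastic_6v_characterization}.}
The plan is to show that the hypothesis \eqref{eq_x38} pins down $\mathbb P$ uniquely, and then exhibit the law of the $k$-th row of the stochastic six-vertex model as a distribution satisfying \eqref{eq_x38}. For uniqueness, I would use Lemma \ref{Lemma_F_properties}. Rewriting \eqref{eq_x38} by clearing the (positive, by property 3) denominator $\F_{\nu}(w,\dots,w)$, the relation says that the function $(w_1,\dots,w_k)\mapsto \sum_\nu \mathbb P(\nu)\F_\nu(w_1,\dots,w_k)/\F_\nu(w,\dots,w)$ is identically $1$ on an open box around $(w,\dots,w)$. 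Multiplying through by $\prod_{i<j}(w_i-w_j)$, each term $\F_\nu\cdot\prod_{i<j}(w_i-w_j)$ is a skew-symmetric polynomial whose lowest and highest monomials (in the total-degree-then-lexicographic order of property 2) are $w_1^{\nu_1-1}\cdots w_k^{\nu_k-1}$ (up to reordering the $\nu$'s) and $(-1)^k q^{k(k-1)/2} w_1^{\nu_k+k-1}\cdots$. The key point is that the \emph{minimal} monomials $w_1^{\nu_1-1}w_2^{\nu_2-1}\cdots w_k^{\nu_k-1}$ over distinct strictly increasing tuples $\nu$ are all distinct, so the skew-symmetric polynomials $\{\F_\nu\prod_{i<j}(w_i-w_j)\}_\nu$ are linearly independent (they have distinct leading terms in a fixed monomial order). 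Hence the decaying tail bound \eqref{eq_F_inequality} lets one justify interchanging the infinite sum over $\nu$ with the extraction of Taylor coefficients at the origin, and then matching Taylor coefficients of both sides of $\sum_\nu \mathbb P(\nu)\F_\nu(w_1,\dots,w_k)/\F_\nu(w,\dots,w) \equiv 1$ forces the coefficients $\mathbb P(\nu)$ to be uniquely determined. I should be slightly careful that a genuine probability measure on an infinite set is being reconstructed, but property 4 together with the positivity in property 3 gives absolute and locally uniform convergence of $\sum_\nu \mathbb P(\nu)\F_\nu(w_1,\dots,w_k)/\F_\nu(w,\dots,w)$ for $|w_i|\le w$, so the series is an honest analytic function and coefficient-matching is valid.

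Next, I would verify that the law of the $k$-th row of the stochastic six-vertex model (with $b_1=w$, $b_2=qw$; note $b_1+c_1=b_2+c_2=1$ with $c_1=1-w$, $c_2=1-qw$, so these are the stochasticity-normalized weights) satisfies \eqref{eq_x38}. By construction, the stochastic six-vertex model in the quadrant with step boundary conditions assigns to a configuration of the six-vertex model in the strip $\mathbb Z_{>0}\times\{1,\dots,k\}$ (with a path entering every row on the left and exiting on top at positions $\nu_1<\dots<\nu_k$) exactly the product of stochastic weights of Figure \ref{Figure_six_vertices_three_ways} with $w=w_y$ in row $y$ \emph{all equal to the fixed $w$}; summing over the configurations with a fixed exit profile $\nu$ gives, by Proposition \ref{Proposition_F_as_partition}, precisely $\mathbb P_{\mathrm{s6v}}(\nu)=\F_\nu(w,\dots,w)$ --- here one uses that the stochastic weights are normalized so that the sum over all profiles is $1$, i.e. $\sum_\nu \F_\nu(w,\dots,w)=1$ (this is the standard stochasticity of the vertex weights: the row-to-row transfer is a stochastic matrix, cf.\ \cite{borodin2016stochastic}). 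Then
\[
\sum_{\nu}\mathbb P_{\mathrm{s6v}}(\nu)\,\frac{\F_\nu(w_1,\dots,w_k)}{\F_\nu(w,\dots,w)} = \sum_\nu \F_\nu(w_1,\dots,w_k),
\]
and I claim the right-hand side equals $1$ for all $(w_1,\dots,w_k)$ in the required box. This is again a stochasticity statement, but now for the \emph{inhomogeneous} strip with row weights $w_1,\dots,w_k$: the sum over all exit profiles $\nu$ of the partition function $\F_\nu(w_1,\dots,w_k)$ of the strip of height $k$ is $1$, because each row applies a stochastic (sub-Markov becoming Markov once we fix that a path does enter on the left at every row) operator and the total mass is preserved --- concretely, $\sum_\nu \F_\nu(w_1,\dots,w_k)$ is the probability that the height-$k$ stochastic six-vertex dynamics, run with the given per-row parameters, produces \emph{some} configuration, which is $1$. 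One can also see $\sum_\nu \F_\nu(w_1,\dots,w_k)=1$ directly from the symmetrization formula \eqref{eq_stochastic_functions_new_def} by a residue/geometric-series computation, or cite the corresponding identity for the Borodin--Petrov functions $\mathsf F_\mu$ via \eqref{eq_x43}; I would include whichever is cleanest.

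Combining the two halves: \eqref{eq_x38} has at most one solution $\mathbb P$ (by the coefficient-matching argument), and $\mathbb P_{\mathrm{s6v}}$ is a solution; hence $\mathbb P=\mathbb P_{\mathrm{s6v}}$, which is the claim, with the stated identification of configurations with interlacing arrays coming from the fact that, by definition, recording the vertical path segments between consecutive rows of the strip (as in Definition \ref{Definition_monotone_triangle}) produces exactly a monotone interlacing array with top row $\nu$, and the per-row sampling rule of the stochastic six-vertex model makes these the $\mu_i^k$ of the theorem statement. The main obstacle I anticipate is the rigorous justification of the coefficient-matching step over an \emph{infinite} index set $\{\nu\}$: one needs the locally uniform convergence from property 4 of Lemma \ref{Lemma_F_properties} to legitimately expand $\sum_\nu\mathbb P(\nu)\F_\nu(w_1,\dots,w_k)/\F_\nu(w,\dots,w)$ as a convergent power series in $(w_1,\dots,w_k)$ near the origin and to argue that distinct $\nu$ contribute linearly independent series (distinct leading monomials), so that the constraint ``$\equiv 1$'' determines every $\mathbb P(\nu)$; everything else is either an application of Proposition \ref{Proposition_F_as_partition} or a standard stochasticity identity for the vertex weights.
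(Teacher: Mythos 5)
Your proposal is correct and follows essentially the same two-step route as the paper: (1) existence, via Proposition \ref{Proposition_F_as_partition} and stochasticity of the row-to-row vertex weights, and (2) uniqueness, via absolute convergence on $|w_i|\le w$ from Lemma \ref{Lemma_F_properties}(4), analytic continuation to a neighborhood of the origin, and the triangular structure of $\{\F_\nu\prod_{i<j}(w_i-w_j)\}$ (distinct lowest monomials, Lemma \ref{Lemma_F_properties}(2)) to recover the coefficients $\mathbb P(\nu)$ one at a time. The only step you leave implicit is the appeal to the identity theorem for holomorphic functions to propagate ``$\equiv 1$'' from the real box around $(w,\dots,w)$ to a neighborhood of $(0,\dots,0)$ before extracting Taylor coefficients there; the paper states this explicitly, but you have all the ingredients in place.
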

\begin{remark}
 As we will see in the proof, the left-hand side \eqref{eq_x38} is an analytic function in the domain $\{(w_1,\dots,w_k)\in\mathbb C:\,  |w_i|\le w, \, 1\le i \le k\}$ and, therefore, it is sufficient to check the identity for $(w_1,\dots,w_k)$ in any uniqueness set for this function.
\end{remark}
\begin{proof}[Proof of Theorem \ref{Theorem_stochastic_6v_characterization}]
 {\bf Step 1.} Let us first check that if $\nu$ comes from the $k$-th row of the stochastic six-vertex model, then \eqref{eq_x38} holds. By Proposition \ref{Proposition_F_as_partition} and definition of the stochastic six-vertex model, $ \mathbb P(\nu) = \F_\nu(w,\dots,w)$. Hence, the left-hand side of \eqref{eq_x38} transforms into $\sum_{\nu} \F_\nu(w_1,\dots,w_k)$. Applying Proposition \ref{Proposition_F_as_partition} again, we see that this is a sum over all possible row $k$ configurations in the stochastic six-vertex model with row-dependent vertex weights and, therefore, the sum evaluates to $1$.

 \smallskip

 {\bf Step 2.} In the rest of the proof we show that \eqref{eq_x38} uniquely determines the measure $\mathbb P(\nu)$.
 Clearly, the sum in \eqref{eq_x38} is absolutely convergent at $w_1=w_2=\dots=w$. Hence, by \eqref{eq_F_inequality}, it is uniformly convergent to a holomorphic function of $w_1,\dots,w_k$ for $|w_i|\le w$. Since the sum is $1$ in a neighborhood of $(w,\dots,w)$, by the uniqueness theorem for holomorphic functions, it is also $1$ for all $w_1,\dots,w_k$ in a neighborhood of $(0,\dots,0)$. Therefore, we have
 \begin{equation}
 \label{eq_x40}
   \sum_{\nu} \frac{\mathbb P(\nu)}{\F_\nu(w_1,\dots,w_k)}  \cdot  \F_\nu(w_1,\dots,w_k) \prod\limits_{1 \le i<j \le k} (w_i-w_j) = \prod\limits_{1 \le i<j \le k} (w_i-w_j)
 \end{equation}
 for $(w_1,\dots,w_k)$ in a neighborhood of $(0,\dots,0)$. By uniqueness of the Taylor series we can view \eqref{eq_x40} as an identity of two skew-symmetric power series in $w_1,\dots,w_k$. Because of the second property in Lemma \ref{Lemma_F_properties}, the coefficients of the expansion in terms of $\F_\nu(w_1,\dots,w_k) \prod\limits_{\le i<j \le k} (w_i-w_j)$ are obtained from the coefficients of the expansion in monomials by a triangular transformation (the coefficient for $\nu=(1,2,\dots,k)$ is obtained from the coefficient of the monomial $x_1^{0} x_2^1 \cdots x_k^{k-1}$, the coefficient for $\nu=(1,2,\dots,k-1,k+1)$ is obtained from the coefficients of the monomials $x_1^{0} x_2^1 \cdots x_k^{k-1}$ and $x_1^{0} x_2^1 \cdots x_k^{k}$, etc). Hence, all the numbers $\frac{\mathbb P(\nu)}{\F_\nu(w_1,\dots,w_k)}$ are uniquely determined by \eqref{eq_x40} and $\mathbb P(\nu)$ are identified with the stochastic six-vertex model by Step 1.
\end{proof}
\begin{remark}
 An alternative way to show that $\mathbb P(\nu)$ are uniquely determined by the identity \eqref{eq_x38} could have been by extracting $\mathbb P(\nu)$ from the sum through the orthogonality relations for the functions $\F_\nu$. The orthogonality relations are discussed, e.g., in \cite[Section 7]{borodin2017integrable}, however, the existing literature concentrated on $0<q<1$ case, while we need $q>1$ in this paper.
\end{remark}

In the previous sections we worked with symmetric weights and we need to develop formulas for translating the results into the stochastic weights setting, cf.\ Figure \ref{Figure_six_vertices_three_ways}. Here is the translation for the homogeneous weights.

\begin{prop} \label{Proposition_sym_to_stoch}
 The probability distribution of the configurations of the six-vertex model with Domain Wall Boundary Conditions and homogenous symmetric weights of \eqref{eq_case1} copied in the top line of Figure \ref{Figure_six_vertices_three_ways}, is the same the distribution for DWBC six-vertex model with the stochastic weights in the second line of Figure \ref{Figure_six_vertices_three_ways}, if we set the parameters as:
 \begin{equation}
 \label{eq_weights_match}
  \begin{dcases} q=e^{-4\gamma},
  \\
  w=\frac{1-e^{2t+2\gamma}}{1-e^{2t-2\gamma}},
  \end{dcases} \qquad  \textbf{ or }\qquad  \begin{dcases} q=e^{4\gamma},\\
  w=\frac{1-e^{-2t-2\gamma}}{1- e^{-2t+2\gamma}}.\end{dcases}
 \end{equation}
\end{prop}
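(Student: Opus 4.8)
The plan is to reduce the identity of Gibbs measures to the conservation laws of the DWBC model, together with one short computation with hyperbolic functions.

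First I would record the four conservation laws for the $n\times n$ six-vertex model with DWBC: for every admissible configuration $\sigma$,
\[
 N_1(\sigma)=N_2(\sigma),\qquad N_3(\sigma)=N_4(\sigma),\qquad N_5(\sigma)=N_6(\sigma)+n,\qquad \sum_{i=1}^6 N_i(\sigma)=n^2 .
\]
These are \cite[Lemma 2.1]{Gorin_Nicoletti_lectures}, so one may just cite them; alternatively they follow from a direct count in the up-right path picture. The $n$ paths pairwise share no edge, and the path entering row $r$ on the left and exiting column $c$ on top occupies exactly $c-1$ interior horizontal edges and $n-r$ interior vertical edges. Since the exit columns and the entrance rows each run over $\{1,\dots,n\}$, the total number of occupied interior horizontal edges equals the total number of occupied interior vertical edges, both equal to $\binom n2$. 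Counting, for each of the four half-edges at a vertex, which of the six types has it occupied (e.g.\ a vertex contributes to the occupied-left-half-edges count exactly when it has type $2$, $3$, or $5$) then gives $N_2+N_3+N_5=n+\binom n2$, $N_2+N_3+N_6=\binom n2$, $N_2+N_4+N_6=\binom n2$, and combining these with $\sum_i N_i=n^2$ produces the four displayed relations.

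Second, I would extract the consequence that matters: the DWBC Gibbs measure \eqref{eq_Gibbs_measure} attached to weights $(a_1,a_2,b_1,b_2,c_1,c_2)$ depends on them only through the projective triple $[a_1a_2:b_1b_2:c_1c_2]$. Indeed, replacing $a_1,a_2$ by $\kappa a_1,\kappa^{-1}a_2$ multiplies every configuration weight by $\kappa^{N_1(\sigma)-N_2(\sigma)}=1$ and so leaves \eqref{eq_Gibbs_measure} unchanged; the analogous statements for $(b_1,b_2)$ and $(c_1,c_2)$ use $N_3=N_4$ and $N_5-N_6=n$, and a global rescaling of all six weights is immaterial as well. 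Hence it suffices to check that, under \eqref{eq_case1} and the identification \eqref{eq_weights_match}, the triple $[a^2:b^2:c^2]$ of the symmetric weights equals the triple $[1:b_1b_2:c_1c_2]=[1:qw^2:(1-w)(1-qw)]$ of the stochastic weights, i.e.\ that $qw^2=b^2/a^2$ and $(1-w)(1-qw)=c^2/a^2$.

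Third, I would verify these two scalar identities. Using $1-e^{2x}=-2e^{x}\sinh x$ one finds, for the first choice in \eqref{eq_weights_match} (with $q=e^{-4\gamma}$), that $w=e^{2\gamma}\,b/a$, so $qw^2=e^{-4\gamma}e^{4\gamma}b^2/a^2=b^2/a^2$; expanding the numerators of $1-w=(a-e^{2\gamma}b)/a$ and $1-qw=(a-e^{-2\gamma}b)/a$ collapses them to $-e^{t+\gamma}c/a$ and $-e^{-t-\gamma}c/a$ respectively, whence $(1-w)(1-qw)=c^2/a^2$. The second choice in \eqref{eq_weights_match} is handled identically (now $w=e^{-2\gamma}b/a$, $q=e^{4\gamma}$); it amounts to swapping $b_1\leftrightarrow b_2$ and $c_1\leftrightarrow c_2$, which leaves the products $b_1b_2$ and $c_1c_2$ — hence the DWBC measure — untouched, so the two choices are equivalent. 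I do not expect a genuine obstacle here: the only step needing care is the bookkeeping in the path-counting derivation of the conservation laws, and even that can be bypassed by citing \cite{Gorin_Nicoletti_lectures}.
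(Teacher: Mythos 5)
Your proof is correct and follows essentially the same route as the paper's: both reduce, via the DWBC conservation laws $N_1=N_2$, $N_3=N_4$, $N_5-N_6=\text{const}$, to the observation that the Gibbs measure depends only on the projective triple $[a_1a_2:b_1b_2:c_1c_2]$ (equivalently the two double ratios $a_1a_2/b_1b_2$ and $a_1a_2/c_1c_2$ that the paper writes down), and then check the two scalar identities; your simplification $w=e^{2\gamma}b/a$, $1-w=-e^{t+\gamma}c/a$, $1-qw=-e^{-t-\gamma}c/a$ is a clean way to do the comparison that the paper states without intermediate steps.
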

\begin{remark}
 In order for all six stochastic weights to be positive, we should have  $0<w<1$, $0<qw<1$. Hence, because $t>|\gamma|$ in \eqref{eq_case1}, only when $\gamma<0$ (which implies $a>b$ for the symmetric weights) the stochastic weights are positive.
\end{remark}
\begin{proof}[Proof of Proposition \ref{Proposition_sym_to_stoch}]
 In any finite planar domain, the probability distribution on the configurations of the six-vertex model with fixed deterministic boundary conditions depends only on two double ratios $\frac{a_1 a_1}{b_1 b_2}$ and $\frac{a_1 a_2}{c_1 c_2}$ rather than on the all six weights, as follows from the existence of the four conservation laws, see, e.g., \cite[Lemma 2.1]{Gorin_Nicoletti_lectures} or \cite[Section 4.2]{Bleher-Liechty14}. For the symmetric weights, these two ratios are:
 \begin{equation}
  \frac{\sinh^2(t-\gamma)}{\sinh^2(t+\gamma)}= \left( \frac{e^{t-\gamma}-e^{\gamma-t}}{e^{t+\gamma}-e^{-t-\gamma}}\right)^2, \qquad\qquad \frac{\sinh^2(t-\gamma)}{\sinh^2(2\gamma)}= \left( \frac{e^{t-\gamma}-e^{\gamma-t}}{e^{2\gamma}-e^{-2\gamma}}\right)^2.
 \end{equation}
For the stochastic weights under either of the two identifications of the parameters \eqref{eq_weights_match} they are
\begin{equation}
\left(\frac{1-e^{2t-2\gamma}}{e^{-2\gamma}(1-e^{2t+2\gamma})} \right)^2, \qquad\qquad \left(\frac{1-e^{2t-2\gamma}}{e^{t+\gamma}(1-e^{-4\gamma})} \right)^2.
\end{equation}
A straightforward comparison shows that these two pairs of the ratios are the same.
\end{proof}

Continuing to the inhomogeneous weights, we need to transform the result of Theorem \ref{Theorem_asymptotics_case1} into the stochastic setting. For that we introduce a corresponding version of the Izergin--Korepin determinant and its normalized version.

\begin{definition} \label{Definition_inhomogeneities_stochastic}
 Fix $n=1,2,\dots$, and $2n+1$ complex parameters $\mathrm X_1,\dots,\mathrm X_n; \mathrm Y_1,\dots,\mathrm Y_n$, and $q$. For a configuration $\sigma$ of the six-vertex model in $n\times n$ square with DWBC, we define the weight $\omega(x,y;\sigma)$ of the vertex at $(x,y)$ to be the stochastic weight of Figure \ref{Figure_six_vertices_three_ways} with
 \begin{equation}
  w=\frac{1- \mathrm X_x \mathrm Y_y}{1-q \mathrm X_x \mathrm Y_y}.
 \end{equation}
 We set
 \begin{equation}
  \mathcal Z_n^{\text{st}}(\mathrm X_1,\dots,\mathrm X_n; \mathrm Y_1,\dots,\mathrm Y_n; q)=\sum_{\sigma} \prod_{x=1}^n \prod_{y=1}^n \omega(x,y;\sigma).
 \end{equation}
The normalized stochastic partition function is defined in terms of complex parameters $w_1,\dots, w_k$ and $w$ as
\begin{equation}
 \ZZ_n^{\text{st}}(w_1,\dots,w_k;\, q,w)=\frac{ \mathcal Z_n^{\text{st}}(1^n;\, \mathrm Y_1,\dots, \mathrm Y_k, \mathrm Y^{n-k};\, q)}{\mathcal Z_n^{\text{st}}(1^n;\, \mathrm Y^n;\, q)}, \qquad \text{ where}
\end{equation}
\begin{equation}
\label{eq_x45}
 w=\frac{1- \mathrm Y}{1-q  \mathrm Y}, \quad \text{ and } \quad   w_i=\frac{1- \mathrm Y_i}{1-q  \mathrm Y_i}, \quad 1\le i \le k.
\end{equation}
\end{definition}
\begin{remark}
 Inverse relations to \eqref{eq_x45} have the same form:
\begin{equation}
\label{eq_x46}
 \mathrm Y=\frac{1- w}{1-q  w}, \quad \text{ and } \quad   \mathrm Y_i=\frac{1- w_i}{1-q  w_i}, \quad 1\le i \le k.
\end{equation}
\end{remark}

\begin{prop} We have
  \begin{equation}
  \label{eq_x44}
  \mathcal Z_n^{\text{st}}(\mathrm X_1,\dots,\mathrm X_n; \mathrm Y_1,\dots,\mathrm Y_n; q)=\frac{\prod_{i,j=1}^n(1-\mathrm X_i \mathrm Y_j)}{\prod_{i<j}(\mathrm X_i-\mathrm X_j)(\mathrm Y_i-\mathrm Y_j)} \det \left[\frac{(1-q) \mathrm X_i \mathrm Y_j}{(1-\mathrm X_i \mathrm Y_j)(1-q \mathrm X_i \mathrm Y_j)}\right]_{i,j=1}^n.
 \end{equation}
\end{prop}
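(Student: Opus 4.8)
\emph{Plan.} The plan is to deduce \eqref{eq_x44} from the Izergin--Korepin determinant of Theorem \ref{Theorem_IK_det} by passing from the trigonometric parameterization \eqref{eq_case1} of the ferroelectric phase to multiplicative variables, exactly as Proposition \ref{Proposition_sym_to_stoch} does in the homogeneous case. Concretely, I would set $q=e^{-4\gamma}$, $X_x=e^{-2\chi_x}$ and $Y_y=e^{2\psi_y+2\gamma}$, so that with $v:=\psi_y-\chi_x$ one has $X_xY_y=e^{2(v+\gamma)}$, $qX_xY_y=e^{2(v-\gamma)}$, and the elementary identities
\[
1-X_xY_y=-2e^{v+\gamma}\sinh(v+\gamma),\qquad 1-qX_xY_y=-2e^{v-\gamma}\sinh(v-\gamma),\qquad 1-q=2e^{-2\gamma}\sinh 2\gamma,
\]
whence the stochastic vertex parameter becomes $w=\dfrac{1-X_xY_y}{1-qX_xY_y}=e^{2\gamma}\dfrac{\sinh(v+\gamma)}{\sinh(v-\gamma)}$.

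First I would compare, vertex by vertex, the stochastic weights $(1,1,w,qw,1-w,1-qw)$ with the ferroelectric weights $\bigl(a(v,\gamma),a(v,\gamma),b(v,\gamma),b(v,\gamma),c(\gamma),c(\gamma)\bigr)=\bigl(\sinh(v-\gamma),\sinh(v-\gamma),\sinh(v+\gamma),\sinh(v+\gamma),\sinh 2\gamma,\sinh 2\gamma\bigr)$ of Definition \ref{Definition_inhomogeneities}. A direct substitution shows that the two double ratios $\tfrac{b_1b_2}{a_1a_2}$ and $\tfrac{c_1c_2}{a_1a_2}$ agree at every vertex (they equal $\tfrac{\sinh^2(v+\gamma)}{\sinh^2(v-\gamma)}=qw^2$ and $\tfrac{\sinh^2 2\gamma}{\sinh^2(v-\gamma)}=(1-w)(1-qw)$, respectively). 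By the conservation laws for the six--vertex model with DWBC (see \cite[Lemma 2.1]{Gorin_Nicoletti_lectures} or \cite[Section 4.2]{Bleher-Liechty14}), this implies that the Gibbs measures produced by the two weightings coincide, so the weight of every configuration $\sigma$ of the $n\times n$ square satisfies $\prod_{x,y}\omega^{\text{st}}(x,y;\sigma)=P_n\cdot\prod_{x,y}\omega(x,y;\sigma)$ for a constant $P_n=\mathcal Z_n^{\text{st}}/\mathcal Z_n$ independent of $\sigma$. To evaluate $P_n$ I would write the per--vertex ratio $\omega^{\text{st}}/\omega$ as the product of a position--only scalar $\tfrac{1}{\sinh(v-\gamma)}$, a constant $b$--gauge factor $e^{2\gamma}$ acting with exponent $N_3(\sigma)-N_4(\sigma)$, and a $c$--gauge factor $-e^{v+\gamma}$ acting on row $y$ with exponent $N_5^{(y)}(\sigma)-N_6^{(y)}(\sigma)$; a short path--counting argument (sweeping a row, resp.\ a column, while tracking horizontal, resp.\ vertical, path flux across DWBC) gives $N_3=N_4$ and $N_5^{(y)}-N_6^{(y)}=N_5^{(x)}-N_6^{(x)}=1$ for every row and column, so that
\[
P_n=(-1)^{\,n}\,e^{\,n\gamma+\sum_y\psi_y-\sum_x\chi_x}\prod_{x,y}\frac{1}{\sinh(\psi_y-\chi_x-\gamma)}.
\]

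It then remains to insert the Izergin--Korepin formula \eqref{eq_IK_det} for $\mathcal Z_n$ in the ferroelectric case and to rewrite everything in the variables $X_i,Y_j,q$. The $(i,j)$ entry of the IK determinant equals exactly $2$ times the $(i,j)$ entry $\tfrac{(1-q)X_iY_j}{(1-X_iY_j)(1-qX_iY_j)}$ of the determinant in \eqref{eq_x44}, so the two determinants differ by the factor $2^n$; and, using the identities above together with $\sinh(\chi_i-\chi_j)=-\tfrac{X_i-X_j}{2\sqrt{X_iX_j}}$ and $\sinh(\psi_i-\psi_j)=\tfrac{Y_i-Y_j}{2\sqrt{Y_iY_j}}$, the IK prefactor combines with $P_n$ into $\tfrac{\prod_{i,j}(1-X_iY_j)}{\prod_{i<j}(X_i-X_j)(Y_i-Y_j)}$ times elementary powers of $2$, of $e^{\gamma}$, and of the $X_i,Y_j$. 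Assembling these factors (the exponentials cancelling against those in $P_n$, the powers of $2$ against the $2^n$ from the determinant) reproduces \eqref{eq_x44}. Finally, since both sides of \eqref{eq_x44} are rational functions of $(X_1,\dots,X_n,Y_1,\dots,Y_n,q)$ and the change of variables above is a bijection onto an open set of parameter space (the identity \eqref{eq_IK_det} being valid for all complex $\chi,\psi,\gamma$, not just in the ferroelectric range), \eqref{eq_x44} extends to all values of the parameters by analytic continuation.

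The main obstacle is entirely bookkeeping: organizing the per--vertex gauge factors (scalar, sign, exponential) and the trigonometric--to--multiplicative conversion so that everything telescopes into the stated prefactor and determinant. Conceptually there is nothing new --- \eqref{eq_x44} is simply the Izergin--Korepin determinant written in the variables natural for the stochastic weights, of which Proposition \ref{Proposition_sym_to_stoch} records the homogeneous specialization.
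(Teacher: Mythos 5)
Your proposal is correct and follows exactly the route the paper itself indicates: the paper's proof is a one-line pointer saying one may either "transform the weights (as we did in the proofs of Propositions \ref{Proposition_F_as_partition} and \ref{Proposition_sym_to_stoch}) and then refer to Theorem \ref{Theorem_IK_det}," or else cite \cite[Theorem 2.2]{Gorin_Nicoletti_lectures} directly. You have fleshed out the first option with the correct change of variables $q=e^{-4\gamma}$, $X_x=e^{-2\chi_x}$, $Y_y=e^{2\psi_y+2\gamma}$ (the same one the paper uses immediately afterward in the corollary), the matching double ratios, and the DWBC conservation laws $N_3=N_4$, $N_5^{(y)}-N_6^{(y)}=N_5^{(x)}-N_6^{(x)}=1$ needed for the gauge factor $P_n$.

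One caution worth recording, since you defer the final "assembling" step to bookkeeping: the sign tracking is genuinely delicate here. Converting $\prod_{i,j}(1-X_iY_j)$, $\prod_{i<j}(X_i-X_j)$, and $\prod_{i<j}(Y_i-Y_j)$ into $\sinh$ products introduces signs $(-1)^{n^2}$ and $(-1)^{\binom{n}{2}}$ that do \emph{not} trivially collapse to the $(-1)^n$ in your $P_n$ --- they balance only after one also accounts for the sign hidden in the Vandermonde-type denominator of the Izergin--Korepin formula (in the standard normalization the two spectral-parameter Vandermondes appear with opposite orderings, which contributes a further $(-1)^{\binom{n}{2}}$; the formula as copied in Theorem \ref{Theorem_IK_det} writes both with $i<j$). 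A small check at $n=2$ confirms that everything balances, but the factor $(-1)^{\binom{n}{2}}$ from the denominator is easy to drop if one takes \eqref{eq_IK_det} at face value, so it is worth carrying the signs through explicitly rather than trusting that they "cancel against the $2^n$."
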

\begin{proof}
 One path to the proof is to transform the weights (as we did in the proofs of Propositions \ref{Proposition_F_as_partition} and  \ref{Proposition_sym_to_stoch}) and then refer to Theorem \ref{Theorem_IK_det}. Alternatively, we can refer to \cite[Theorem 2.2]{Gorin_Nicoletti_lectures} for a direct proof of Izergin-Korepin determinant for the stochastic weights, thus replacing Theorem \ref{Theorem_IK_det}; for matching the notations, in \cite{Gorin_Nicoletti_lectures} we should set $t$ equal to our $q$ and apply to vertices an involution replacing paths by their absences (i.e.\ swapping the types of vertices in pairs: $1\leftrightarrow 2$, $3\leftrightarrow 4$, $5\leftrightarrow 6$).
\end{proof}
\begin{cor} We have
\begin{equation}\label{eq_x47}
  \ZZ_n^{\text{st}}(w_1,\dots,w_k;\, q,w)=\ZZ_n(\xi_1,\dots,\xi_k;\, t,\gamma) \prod_{j=1}^k \left(e^{\xi_j} \left[\frac{e^{-t+\ga}-e^{t-\ga}}{e^{-t-\xi_j+\ga}-e^{t+\xi_j-\ga}}\right]^n\right),
\end{equation}
under identification:
\begin{equation}
 q=e^{-4\gamma},  \qquad w=\frac{1-e^{2t+2\gamma}}{1-e^{2t-2\gamma}}, \qquad w_i =\frac{1-e^{2t+2\gamma+2\xi_i}}{1-e^{2t-2\gamma+2\xi_i} }.
\end{equation}
\end{cor}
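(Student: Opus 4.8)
The plan is to deduce \eqref{eq_x47} from the two Izergin--Korepin determinant formulas already at hand: \eqref{eq_IK_det} for $\mathcal Z_n$ in the ferroelectric parameterization \eqref{eq_case1}, and \eqref{eq_x44} for $\mathcal Z_n^{\text{st}}$. The bridge between them is the substitution
\[
 q=e^{-4\ga},\qquad \mathrm X_x=e^{-2\chi_x},\qquad \mathrm Y_y=e^{2(\psi_y+\ga)},
\]
under which $e^{2(\psi_y-\chi_x)}=e^{-2\ga}\mathrm X_x\mathrm Y_y$. Using $\sinh(u-\ga)\sinh(u+\ga)=\tfrac14\bigl(e^{2u}+e^{-2u}-e^{2\ga}-e^{-2\ga}\bigr)$ with $u=\psi_y-\chi_x$, together with $\sinh(2\ga)=\tfrac12 e^{2\ga}(1-q)$, one checks by a short computation that
\[
 \frac{c(\ga)}{a(\psi_y-\chi_x,\ga)\,b(\psi_y-\chi_x,\ga)}=2\,\frac{(1-q)\mathrm X_x\mathrm Y_y}{(1-\mathrm X_x\mathrm Y_y)(1-q\mathrm X_x\mathrm Y_y)},
\]
so the determinant in \eqref{eq_IK_det} equals $2^n$ times the determinant in \eqref{eq_x44}. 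Comparing also the two prefactors --- each is a ratio of a product over all pairs $(i,j)$ by a product over $i<j$, and $\sinh(\chi_i-\chi_j)$, $\sinh(\psi_i-\psi_j)$ become $\tfrac{\mathrm X_j-\mathrm X_i}{2\sqrt{\mathrm X_i\mathrm X_j}}$, $\tfrac{\mathrm Y_i-\mathrm Y_j}{2\sqrt{\mathrm Y_i\mathrm Y_j}}$ --- I would obtain a \emph{master identity}
\[
 \mathcal Z_n^{\text{st}}(\mathrm X;\mathrm Y;q)=\mathcal P_n(\mathrm X,\mathrm Y)\cdot\mathcal Z_n(\chi;\psi;\ga),\qquad
 \mathcal P_n(\mathrm X,\mathrm Y)=C^{n^2}\,\frac{\prod_{x}\mathrm X_x^{\kappa}\prod_{y}\mathrm Y_y^{\kappa}}{\prod_{x,y}(1-q\mathrm X_x\mathrm Y_y)},
\]
for an explicit constant $C$ and $\kappa=\tfrac{n+1}{2}$, valid for generic parameters linked by the substitution above.

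An equivalent, determinant-free route to the master identity is a direct vertex gauge transformation in the spirit of the proofs of Propositions \ref{Proposition_F_as_partition} and \ref{Proposition_sym_to_stoch}: the ratio of the symmetric weight to the stochastic weight at a vertex $(x,y)$ equals $\tfrac{1-q\mathrm X_x\mathrm Y_y}{2\sqrt q}$ times a sign and a power of $\mathrm X_x,\mathrm Y_y,q$ that depend only on the local vertex type, and the conservation laws of the six-vertex model with DWBC force the product of these last factors over a whole configuration to be configuration independent; a short check then identifies that product with the monomial part of $\mathcal P_n$.

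Granting the master identity, I would specialize $\chi_i\equiv0$ (hence $\mathrm X_i\equiv1$), $\psi_j=t+\xi_j$ for $1\le j\le k$ and $\psi_j=t$ for $k<j\le n$ (hence $\mathrm Y_j=e^{2(t+\ga+\xi_j)}$ for $j\le k$ and $\mathrm Y_j=\mathrm Y:=e^{2(t+\ga)}$ otherwise), and form the normalized ratios of Definitions \ref{Definition_inhom_norm_part} and \ref{Definition_inhomogeneities_stochastic}. The global constant $C^{n^2}$ cancels, the double product $\prod_{x,y}(1-q\mathrm X_x\mathrm Y_y)$ collapses (because $\mathrm X_x\equiv1$ and $n-k$ of the $\mathrm Y$'s equal $\mathrm Y$) to $\prod_{j=1}^k\bigl(\tfrac{1-q\mathrm Y}{1-q\mathrm Y_j}\bigr)^{n}$, and the monomial factors collapse to $\prod_{j=1}^k(\mathrm Y_j/\mathrm Y)^{\kappa}$, leaving
\[
 \ZZ_n^{\text{st}}(w_1,\dots,w_k;q,w)=\ZZ_n(\xi_1,\dots,\xi_k;t,\ga)\prod_{j=1}^k\left(\frac{\mathrm Y_j}{\mathrm Y}\right)^{\kappa}\left(\frac{1-q\mathrm Y}{1-q\mathrm Y_j}\right)^{n}.
\]
Finally I would translate back to $t,\ga,\xi_j$: since $\mathrm Y_j/\mathrm Y=e^{2\xi_j}$ and $1-e^{2t-2\ga}=-2e^{t-\ga}\sinh(t-\ga)$, with the analogous identity for $1-q\mathrm Y_j=1-e^{2t-2\ga+2\xi_j}$, one has $\bigl(\tfrac{1-q\mathrm Y}{1-q\mathrm Y_j}\bigr)^n=e^{-n\xi_j}\bigl(\tfrac{\sinh(t-\ga)}{\sinh(t-\ga+\xi_j)}\bigr)^n$ and $(\mathrm Y_j/\mathrm Y)^{\kappa}=e^{(n+1)\xi_j}$, so the $n$-dependent exponents cancel and the product becomes $\prod_{j=1}^k e^{\xi_j}\bigl(\tfrac{\sinh(t-\ga)}{\sinh(t+\xi_j-\ga)}\bigr)^n=\prod_{j=1}^k e^{\xi_j}\bigl(\tfrac{e^{-t+\ga}-e^{t-\ga}}{e^{-t-\xi_j+\ga}-e^{t+\xi_j-\ga}}\bigr)^n$, which is exactly the right-hand side of \eqref{eq_x47}.

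The single point that needs care is that the formula \eqref{eq_IK_det} (and likewise \eqref{eq_x44}) is literally of the form $0/0$ at $\chi_1=\dots=\chi_n$ (equivalently $\mathrm X_1=\dots=\mathrm X_n$), which is precisely the specialization we want. This is not a real obstacle: the master identity relates honest partition functions --- an exponential polynomial in $(t,\ga)$ on one side, a rational function of $(\mathrm X,\mathrm Y)$ with the expected poles on the other --- so it holds for generic parameters and extends to $\mathrm X_i\equiv1$ by continuity, or else one uses the vertex-gauge derivation, which is valid at $\mathrm X_i\equiv1$ directly. I expect the only real work to be the two weight computations above and the bookkeeping of the prefactor $\mathcal P_n$, with no conceptual difficulty.
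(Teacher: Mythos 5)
Your proof is correct and follows essentially the same route as the paper's: both substitute $q=e^{-4\gamma}$, $\mathrm X_i=e^{-2\chi_i}$, $\mathrm Y_j=e^{2\gamma+2\psi_j}$ into the two Izergin--Korepin determinant formulas, match entries and prefactors to obtain a ratio identity between $\mathcal Z_n^{\text{st}}$ and $\mathcal Z_n$, then specialize $\chi_i\equiv0$, $\psi_j=t+\xi_j$ or $t$, and divide by the $\xi=0$ version. Your bookkeeping (in particular $\kappa=(n+1)/2$ after combining the $\prod_{i,j}$ and $\prod_{i<j}$ factors, and the observation that the residual constant, including the sign $(-1)^{n(n-1)/2}$, cancels in the normalized ratio) agrees with the paper's intermediate computation.
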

\begin{proof}
  We plug
  $$
   q=e^{-4\gamma}, \quad \mathrm X_i=e^{-2\chi_i}, \quad \mathrm Y_i=e^{2\gamma+2\psi_i}, \quad 1\le i \le n,
  $$
  into \eqref{eq_x44} and get
  \begin{equation}
  \frac{\prod\limits_{i,j=1}^n(1-e^{-2\chi_i} e^{2\psi_j+2\gamma})}{\prod_{i<j}(e^{-2\chi_i}-e^{-2\chi_j})(e^{2\psi_i+2\gamma}-e^{2\psi_j+2\gamma})}   \det \left[\frac{(1-e^{-4\gamma}) e^{-2\chi_i} e^{2\psi_j+2\gamma}}{(1-e^{-2\chi_i} e^{2\psi_j+2\gamma})(1-e^{-2\chi_i} e^{2\psi_j-2\gamma})}\right]_{i,j=1}^n.
\end{equation}
On the other hand, \eqref{eq_IK_det} for the weights \eqref{eq_case1} becomes
\begin{align*}
   \mathcal Z_n(\chi_1,\dots,\chi_n;\, \psi_1,\dots,\psi_n;\, \gamma)=& 2^{-n}\dfrac{\prod\limits_{i,j=1}^n \bigl({(e^{\psi_j-\chi_i-\ga}-e^{-\psi_j+\chi_i+\ga}) (e^{\psi_j-\chi_i+\ga}-e^{-\psi_j+\chi_i-\ga}) }
   \bigr)}{\prod\limits_{i<j} \bigl(e^{\chi_i-\chi_j}-e^{\chi_j-\chi_i})(e^{\psi_i-\psi_j}-e^{\psi_j-\psi_i})\bigr)} \\ &\times \det \left[ \frac{2(e^{2\gamma}-e^{-2\gamma})}{(e^{\psi_j-\chi_i-\ga}-e^{-\psi_j+\chi_i+\ga}) (e^{\psi_j-\chi_i+\ga}-e^{-\psi_j+\chi_i-\ga}) } \right]_{i,j=1}^n.
\end{align*}
Comparing the last two formulas, we conclude that
\begin{multline}
\frac{\mathcal Z_n^{\text{st}}\left(e^{-2\chi_1},\dots,e^{-2\chi_n}; e^{2\gamma+2\psi_1},\dots,e^{2\gamma+2\psi_n}; e^{-4\gamma}\right)}
  {\mathcal Z_n(\chi_1,\dots,\chi_n;\, \psi_1,\dots,\psi_n;\, \gamma)}\\= \prod_{i,j=1}^n \frac{e^{\psi_j-\chi_i+\gamma}}{e^{-\psi_j+\chi_i+\ga}-e^{\psi_j-\chi_i-\ga}} \prod_{i<j} \left(-e^{\chi_i+\chi_j} e^{-\psi_i-\psi_j-2\gamma}\right).
\end{multline}
Plugging $\chi_1=\dots=\chi_n=0$, $\psi_i=t+\xi_i$, $\le i\le k$, $\psi_i=t$, $k+1\le i \le n$, we get
\begin{multline}
\frac{\mathcal Z_n^{\text{st}}\left(1^n; e^{2t+2\gamma+2\xi_1},\dots,e^{2t+2\gamma+2\xi_k}, (e^{2t+2\gamma})^{n-k}; e^{-4\gamma}\right)}
  {\mathcal Z_n(0^n;\, t+\xi_1,\dots,t+\xi_k, t^{n-k};\, \gamma)}\\= (-1)^{n(n-1)/2} \prod_{j=1}^k \left(e^{-\xi_j(n-1)} \left[\frac{e^{t+\xi_j+\gamma}}{e^{-t-\xi_j+\ga}-e^{t+\xi_j-\ga}}\right]^n\right) \cdot
   \left[\frac{e^{t+\gamma}}{e^{-t+\ga}-e^{t-\ga}}\right]^{n(n-k)} e^{-(\gamma+t) n(n-1)}.
\end{multline}
 Combining the last equality with its $\xi_1=\dots=\xi_k=0$ version, we get
\begin{multline}
\frac{\mathcal Z_n^{\text{st}}\left(1^n; e^{2t+2\gamma+2\xi_1},\dots,e^{2t+2\gamma+2\xi_k}, (e^{2t+2\gamma})^{n-k}; e^{-4\gamma}\right)}{\mathcal Z_n^{\text{st}}\left(1^n; (e^{2t+2\gamma})^{n}; e^{-4\gamma}\right)}
=\frac{\mathcal Z_n(0^n;\, t+\xi_1,\dots,t+\xi_k, t^{n-k};\, \gamma)}{\mathcal Z_n(0^n;\, t^n;\, \gamma)}\\ \times  \prod_{j=1}^k \left(e^{\xi_j} \left[\frac{e^{-t+\ga}-e^{t-\ga}}{e^{-t-\xi_j+\ga}-e^{t+\xi_j-\ga}}\right]^n\right),
\end{multline}
which is precisely \eqref{eq_x47}.
\end{proof}

Here is the version of Theorem \ref{Theorem_asymptotics_case1} for the stochastic weights.

\begin{theo} \label{Theorem_stochastic_partition_limit} Suppose that $q>1$ and $0<w<q^{-1}$ and fix $k=1,2,\dots$. There exists $\eps>0$, such that
\begin{equation}
 \lim_{n\to\infty}\ZZ_n^{\text{st}}(w_1,\dots,w_k;\, q,w)=1,
\end{equation}
uniformly over $w_i$ in a $\eps$--neighborhood of $w$.
\end{theo}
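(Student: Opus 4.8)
The plan is to deduce Theorem \ref{Theorem_stochastic_partition_limit} from the $k=1$ analysis already carried out for the ferroelectric phase (Theorem \ref{Theorem_asymptotics_case1}) and its reduction to general $k$ in Section \ref{Section_general_reduction}, simply by translating everything through the change of variables \eqref{eq_x47}. First I would fix $q>1$ and $0<w<q^{-1}$ and solve \eqref{eq_weights_match} for the symmetric parameters: from $q=e^{4\gamma}$ we get $\gamma=\tfrac14\ln q>0$, and from $w=\tfrac{1-e^{-2t-2\gamma}}{1-e^{-2t+2\gamma}}$ we can solve for $t$; the condition $0<w<q^{-1}$ together with the remark after Proposition \ref{Proposition_sym_to_stoch} guarantees that the resulting $(t,\gamma)$ lies in the admissible region $0<|\gamma|<t$ of the ferroelectric parameterization \eqref{eq_case1} (with $\gamma>0$, hence $a<b$, which is the case relevant here since the weight $w<1<q$ puts us in the $b_2>b_1$ situation). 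Then, for $w_i$ in a small neighborhood of $w$, the relation \eqref{eq_x46} defines complex parameters $\xi_i$ near $0$ via $e^{2t+2\gamma+2\xi_i}=\mathrm Y_i$, i.e. $\xi_i=\tfrac12\ln\bigl(\tfrac{1-w_i}{1-qw_i}\bigr)-t-\gamma$; these $\xi_i$ lie in an $\bigO(\eps)$-neighborhood of $0$, in particular in the domain $\{\Re\xi>-\tfrac t2+\tfrac{|\gamma|}2,\ |\Im\xi|<\pi/2\}$ required in Theorem \ref{Theorem_asymptotics_case1}.

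Next I would apply Theorem \ref{Theorem_asymptotics_case1} (the full $k$ version), which gives
$$
\ZZ_n(\xi_1,\dots,\xi_k;\,t,\gamma)=\prod_{j=1}^k\left(\frac{\sinh(t+|\gamma|+\xi_j)}{\sinh(t+|\gamma|)}\right)^n\exp\left[-\sum_{j=1}^k\xi_j+\bigO\!\left(\tfrac1{\sqrt n}\right)\right],
$$
uniformly on compacts. Plugging this into \eqref{eq_x47} and writing everything in exponential form ($\sinh(u)=\tfrac12(e^u-e^{-u})$), the $n$-th power prefactors should combine to give exactly $1$: the product $\prod_j\Bigl(\tfrac{\sinh(t+|\gamma|+\xi_j)}{\sinh(t+|\gamma|)}\Bigr)^n$ from $\ZZ_n$ cancels against $\prod_j\Bigl[\tfrac{e^{-t+\gamma}-e^{t-\gamma}}{e^{-t-\xi_j+\gamma}-e^{t+\xi_j-\gamma}}\Bigr]^n$ from \eqref{eq_x47} up to a sign, since with $\gamma>0$ one has $|\gamma|=\gamma$ and $e^{-t-\xi_j+\gamma}-e^{t+\xi_j-\gamma}=-2\sinh(t+\gamma+\xi_j)$; likewise the $e^{-\sum\xi_j}$ from Theorem \ref{Theorem_asymptotics_case1} cancels against the $\prod_j e^{\xi_j}$ in \eqref{eq_x47}. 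After these cancellations only the $\exp\bigl(\bigO(1/\sqrt n)\bigr)$ factor survives, giving $\ZZ_n^{\text{st}}(w_1,\dots,w_k;\,q,w)=1+\bigO(1/\sqrt n)\to 1$. The uniformity over $w_i$ in an $\eps$-neighborhood of $w$ follows from the uniformity of the remainder in Theorem \ref{Theorem_asymptotics_case1} over compact subsets of the $\xi$-domain, together with the fact that the change of variables $w_i\mapsto\xi_i$ is analytic and maps a small neighborhood of $w$ into a bounded region of that domain.

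The only genuine thing to check carefully is the bookkeeping of signs and the verification that the two $n$-th-power prefactors really do cancel exactly (not merely up to an $n$-independent constant, which would be fatal since we claim the limit is exactly $1$). I expect this to be a short but slightly delicate algebraic computation: one must track the factors of $(-1)^n$ coming from $e^{-t-\xi+\gamma}-e^{t+\xi-\gamma}=-2\sinh(t-\gamma+\xi)$ versus $\sinh(t+|\gamma|+\xi)=\sinh(t+\gamma+\xi)$ — note that $|\gamma|=\gamma$ here because $\gamma>0$ — and confirm that the base of the exponential in the product of \eqref{eq_x47} and Theorem \ref{Theorem_asymptotics_case1} is identically $1$ for all admissible $\xi_j$, not just asymptotically. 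This is the analogue, in the stochastic normalization, of the statement that the $k=1$ partition function tends to a clean limit, and it should reduce to the identity $\tfrac{\sinh(t+\gamma+\xi)}{\sinh(t+\gamma)}\cdot\tfrac{-2\sinh(t-\gamma)}{-2\sinh(t-\gamma+\xi)}\cdot\tfrac{\sinh(t-\gamma+\xi)\sinh(t+\gamma)}{\sinh(t+\gamma+\xi)\sinh(t-\gamma)}=1$ after one also accounts for the way $w$ rather than $t+\gamma$ enters; I would double-check this against the $k=1$ computation in Proposition \ref{Proposition_Geometric_limit}, where the same cancellation is implicitly performed when the Laplace transform of $\lambda_1^1$ is identified with that of $\mathrm{Geom}(q)$.
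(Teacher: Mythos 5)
Your overall strategy — combine Theorem \ref{Theorem_asymptotics_case1} with the translation formula \eqref{eq_x47} — is exactly what the paper does, and the reduction from general $k$ to $k=1$ is also used in the same way. However, your sign choice for $\gamma$ is wrong, and this is not a cosmetic issue: the paper works with $\gamma<0$ (the first branch of \eqref{eq_weights_match}), and with $\gamma>0$ the argument breaks in two places.

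First, the hypothesis $0<w<q^{-1}$ is incompatible with $\gamma>0$. With the second identification $q=e^{4\gamma}$, $w=\frac{1-e^{-2t-2\gamma}}{1-e^{-2t+2\gamma}}$, the ferroelectric constraint $t>\gamma>0$ gives $0<e^{-2t-2\gamma}<e^{-2t+2\gamma}<1$, hence both numerator and denominator are positive with the numerator strictly larger; so $w>1$, contradicting $w<q^{-1}<1$. This is precisely what the remark after Proposition \ref{Proposition_sym_to_stoch} is flagging: the stochastic weights are positive (and the hypotheses of the theorem can be met) only for $\gamma<0$, i.e.\ $a>b$. You should instead use the first branch $q=e^{-4\gamma}$, $w=\frac{1-e^{2t+2\gamma}}{1-e^{2t-2\gamma}}$, which is the one under which \eqref{eq_x47} is actually derived.

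Second, the cancellation you claim fails with $\gamma>0$. Your algebra contains an error: $e^{-t-\xi_j+\gamma}-e^{t+\xi_j-\gamma}=-2\sinh(t-\gamma+\xi_j)$, not $-2\sinh(t+\gamma+\xi_j)$. So the bracket in \eqref{eq_x47} is $\left[\frac{\sinh(t-\gamma)}{\sinh(t-\gamma+\xi_j)}\right]^n$. With $\gamma>0$, Theorem \ref{Theorem_asymptotics_case1} produces the prefactor $\left(\frac{\sinh(t+\gamma+\xi_j)}{\sinh(t+\gamma)}\right)^n$, and $\sinh(t+\gamma+\xi_j)\ne\sinh(t-\gamma+\xi_j)$: the $n$-th powers do not cancel, and the limit is not $1$. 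The ``identity'' you wrote at the end with a third factor $\frac{\sinh(t-\gamma+\xi)\sinh(t+\gamma)}{\sinh(t+\gamma+\xi)\sinh(t-\gamma)}$ is true but that extra factor has no source in either Theorem \ref{Theorem_asymptotics_case1} or \eqref{eq_x47}; it appears to have been inserted to force a cancellation that does not occur. With $\gamma<0$, however, $|\gamma|=-\gamma$, so Theorem \ref{Theorem_asymptotics_case1} gives $\left(\frac{\sinh(t-\gamma+\xi_j)}{\sinh(t-\gamma)}\right)^n$ and the cancellation with \eqref{eq_x47} is exact — this is the one-line argument the paper runs. So fix $\gamma<0$, drop the spurious third factor, and the rest of your proposal (uniformity on compacts, analyticity of the change of variables $w_i\mapsto\xi_i$) is fine.
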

\begin{remark}
 While in Proposition \ref{Proposition_sym_to_stoch}, there was a symmetry between $0<q<1$ and $q>1$ cases --- it was possible to transform the symmetric weights into both --- this is no longer true in Theorem \ref{Theorem_stochastic_partition_limit}. We could produce its version for $0<q<1$ (exploiting an analogue of \eqref{eq_x47}), but the limit would be more complicated than the constant $1$.
\end{remark}
\begin{proof}[Proof of Theorem \ref{Theorem_stochastic_partition_limit}]
 Theorem \ref{Theorem_asymptotics_case1} for $\gamma<0$ implies that
 \begin{equation}
 \lim_{n\to\infty} \ZZ_n(\xi_1,\dots,\xi_k;\, t,\gamma)  \prod_{j=1}^k\left( e^{\xi_j}\left[\frac{\sinh(t-\ga)}{\sinh(t-\ga+\xi_j)}\right]^{n}\right)=1.
\end{equation}
for $(\xi_1,\dots,\xi_n)$ in a neighborhood of $(0,\dots,0)$. Comparing with \eqref{eq_x47}, we get the desired result.
\end{proof}

We have prepared all the ingredients for the proof of Theorem \ref{Theorem_convergence_to_stochastic}.

\begin{proof}[Proof of Theorem \ref{Theorem_convergence_to_stochastic}] We fix $k$ and let $\mathbb P_{k,n}(\nu),$ $\nu=(0<\nu_1<\nu_2<\dots<\nu_k)$, be the distribution of the $k$-th row in the monotone triangle, i.e.\ of $(\lambda_1^k,\lambda_2^k,\dots,\lambda_k^k)$. Applying Proposition \ref{Proposition_sym_to_stoch}, we can assume that this monotone triangle comes from the six-vertex model with DWBC and stochastic weights with $q=e^{-4\gamma}$ and $w=\frac{1-e^{2t+2\gamma}}{1-e^{2t-2\gamma}}$. (Recall that $a>b$ condition in Theorem \ref{Theorem_convergence_to_stochastic} implies that $\gamma<0$. Therefore the stochastic weights are positive.)

First, let us show that the measures $\mathbb P_{k,n}(\nu)$ are tight as $n\to\infty$. Indeed, conditionally on $(\lambda_1^k,\lambda_2^k,\dots,\lambda_k^k)$, the distribution of $(\lambda_i^j)_{1\le i \le j<k}$ is given by the Gibbs measure of the six-vertex model of Definition \ref{Def_abc_measure}. By \eqref{eq_x24} in the Claim inside the proof\footnote{Although the Claim dealt with symmetric weights, the same transformation as in Proposition \ref{Proposition_sym_to_stoch} allows us to replace them by the stochastic weights.} of Proposition \ref{Proposition_F_lower_bound}, if $\lambda_k^k$ escapes to infinity with positive probability, then so does $\lambda_1^1$. Therefore, tightness of $\mathbb P_{k,n}(\nu)$ for arbitrary $k>1$ is implied by the tightness at $k=1$, which was established in Proposition \ref{Proposition_Geometric_limit}.

Next, we form a generating function
\begin{equation}
\label{eq_x48}
 \mathcal G_{k,n}(w_1,\dots,w_k)=  \sum_{\nu} \mathbb P_{k,n}(\nu) \frac{\F_\nu(w_1,\dots,w_k)}{\F_{\nu}(w,\dots,w)}.
\end{equation}
By the estimate \eqref{eq_F_inequality} and tightness of $\mathbb P_{k,n}$, the series \eqref{eq_x48} is uniformly convergent over $n$ and over $(w_1,\dots,w_k)$ such that $w-\eps\le w_i \le w$ for some $\eps>0$. Therefore, if $\mathbb P_{k,\infty}$ is a subsequential limit point of the measures $\mathbb P_{k,n}$ as $n\to\infty$, then
\begin{equation}
\label{eq_x49}
 \mathcal G_{k,\infty}(w_1,\dots,w_k):=  \sum_{\nu} \mathbb P_{k,\infty}(\nu) \frac{\F_\nu(w_1,\dots,w_k)}{\F_{\nu}(w,\dots,w)}=\lim_{n\to\infty} \mathcal G_{k,n}(w_1,\dots,w_k).
\end{equation}
On the other hand, as in Proposition \ref{Proposition_inhom_as_generating}, we have
\begin{equation}
\label{eq_x50}
  \mathcal G_{k,n}(w_1,\dots,w_k)=\ZZ_n^{\text{st}}(w_1,\dots,w_k;\, q,w).
\end{equation}
Combining \eqref{eq_x49}, \eqref{eq_x50}, and Theorem \ref{Theorem_stochastic_partition_limit}, we conclude that
$$
  \sum_{\nu} \mathbb P_{k,\infty}(\nu) \frac{\F_\nu(w_1,\dots,w_k)}{\F_{\nu}(w,\dots,w)}=1.
$$
Hence, by Theorem \ref{Theorem_stochastic_6v_characterization}, all subsequential limits $\mathbb P_{k,\infty}$ are identified with the $k$-th row of the stochastic six-vertex model. Therefore, $(\lambda_1^k,\lambda_2^k,\dots,\lambda_k^k)$ converges in distribution to the $k$-th row of the stochastic six-vertex model. Because the distribution of $(\lambda_i^j)_{1\le i \le j<k}$ conditional on $(\lambda_1^k,\lambda_2^k,\dots,\lambda_k^k)$ is unchanged as $n\to\infty$ and agrees with the stochastic six-vertex model by a version of Proposition \ref{Proposition_sym_to_stoch}, we conclude that the full triangular array $(\lambda_i^j)_{1\le i \le j}$ converges in distribution to the stochastic six-vertex model in the quadrant and with parameters
\begin{equation}
\label{eq_x51}
 b_1=w=\frac{1-e^{2t+2\gamma}}{1-e^{2t-2\gamma}}, \qquad b_2= qw= e^{-4\gamma}\frac{1-e^{2t+2\gamma}}{1-e^{2t-2\gamma}}.
\end{equation}
It remains to check that given the expressions for $a,b,c$ of \eqref{eq_case1} through $\gamma$ and $t$, the formulas \eqref{eq_x51} match the expressions of $b_1$ and $b_2$ given in the statement of Theorem \ref{Theorem_convergence_to_stochastic}. For $b_1$ this is the same check as at the end of the proof of Proposition \ref{Proposition_Geometric_limit} and for $b_2$ the computation is similar and left to the reader.
\end{proof}

\section{Proofs for $c=0$ case}

\label{Section_c_0}

In this section we prove Theorem \ref{Theorem_Mallows} and Proposition \ref{Proposition_c_0_double}. Our approach is different from the proofs of the previous sections: rather than working with inhomogeneous partition functions, we present and analyze a stochastic algorithm which can be used for sampling the random configurations. (The algorithm works only at $c=0$ and we are not aware of its versions for $c>0$.) We start with presenting this algorithm in a self-contained way and then use it to study the asymptotics of the DWBC six-vertex model.

\bigskip

We fix a real parameter $q>0$ and aim at sampling a random permutation $\tau=(\tau(1),\tau(2),\dots,\tau(n))$ of $n$ letters $\{1,2,\dots,n\}$. Let $\zeta_1, \zeta_2,\dots,\zeta_n$ be $n$ independent random variables, such that $\zeta_k$ has a geometric distribution confined to the integers $\{1,2,\dots n-k+1\}$ and with ratio $q$:
\begin{equation}
\label{eq_confined_Geometric}
 \mathrm{Prob}( \zeta_k=m )= \frac{q^{m-1}}{1+q+\dots+q^{n-k}}, \qquad m\in\{1,2,\dots,n-k+1\}.
\end{equation}
Note that in contrast to the conventional geometric distribution, \eqref{eq_confined_Geometric} makes sense for all values of $q>0$; the restriction $q<1$ is not necessary.

\begin{definition} \label{Definition_q_shuffle_finite} The permutation $\tau=(\tau(1),\tau(2),\dots,\tau(n))$ is obtained by the \emph{$q$-shuffle procedure}:
\begin{itemize}
 \item Set $\tau(1):=\zeta_1$.
 \item Sequentially, for $k=2,3,\dots,n$, set $\tau(k)$ to be the $\zeta_k$-th largest element in the set $\{1,\dots,n\}\setminus\{\tau(1),\dots,\tau(k-1)\}$.
\end{itemize}
\end{definition}

\begin{prop} \label{Proposition_DWBC_c_0}
 Fix $a,b>0$ and consider the Gibbs measure of the DWBC six-vertex model in the $n\times n$ square obtained as $\eps\to 0$ limit of $(a,b,c)$ random configurations with $c=\eps$. The corresponding monotone triangle $(\lambda_i^k)_{1\le i \le k \le n}$, as in Definition \ref{Definition_monotone_triangle}, can be sampled from random permutation $\tau$ of Definition \ref{Definition_q_shuffle_finite} with $q=\frac{b^2}{a^2}$ by setting for each $k=1,2,\dots,n$ the subarray $(\lambda^k_1,\lambda^k_2, \dots,\lambda^k_k)$ to be the rearrangement of $(\tau(1),\dots,\tau(k))$ in the increasing order.
\end{prop}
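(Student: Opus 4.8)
The plan is to directly analyze the Boltzmann--Gibbs measure on six-vertex configurations with DWBC in the limit $c=\eps\to 0$, identify the surviving configurations combinatorially, and then recognize the resulting law on permutations as the output of the $q$-shuffle. First I would recall (cf.\ the discussion around Figure \ref{Figure_DWBC_3}) that among all DWBC configurations the number of $c$-type vertices is bounded below by $n$, with equality precisely on the configurations in which the union of all paths forms $n$ disjoint ``staircase'' paths, each going right then up (or up then right) exactly once; these are in bijection with $n\times n$ permutation matrices. Concretely, reading off where the path that enters on the left in row $k$ exits on top gives a permutation $\tau$ of $\{1,\dots,n\}$, and conversely every $\tau$ gives exactly one minimal configuration. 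Thus as $\eps\to 0$ the measure concentrates on these $n!$ configurations, and on each such configuration the weight is $a^{N_a(\sigma)} b^{N_b(\sigma)}\eps^n$ where $N_a+N_b = n^2-n$ is deterministic; dividing numerator and denominator by $a^{n^2-n}\eps^n$, the limiting probability of $\tau$ is proportional to $(b/a)^{N_b(\sigma(\tau))} = q^{N_b(\sigma(\tau))/2}$.

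The second step is to compute $N_b(\sigma(\tau))$ in terms of $\tau$. I would track, row by row, which vertices are type $a$ (Types 1,2: no horizontal path, or horizontal path passing straight through with a vertical one) versus type $b$ (Types 3,4: a horizontal turn). In a minimal configuration each staircase path makes exactly one turn, contributing one $c$-vertex; between the entry column and the turn it occupies horizontal edges, and above the turn it occupies vertical edges. A short bookkeeping argument — comparing, at each vertex not on any turn, whether the local picture is a ``crossing'' (type $a$, Type 2) or a ``straight horizontal/vertical segment through an otherwise empty vertex'' — shows that $N_b(\sigma(\tau))$ equals twice the number of inversions $\mathrm{inv}(\tau)$ (the factor $2$ because each inversion is witnessed once in the horizontal bookkeeping and once in the vertical one, or equivalently by the $b^2$ versus $a^2$ grouping already visible in \eqref{eq_rectangular_function_2}). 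Hence the limiting measure on $\tau$ is the Mallows measure: $\mathrm{Prob}(\tau)\propto q^{\mathrm{inv}(\tau)}$.

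The third step is to recognize the Mallows measure as the law of the permutation produced by Definition \ref{Definition_q_shuffle_finite}. This is the classical fact that the Lehmer code (inversion table) of a Mallows-distributed permutation has independent, truncated-geometric coordinates: if $c_k$ counts the number of $j>k$ with $\tau(j)<\tau(k)$ — equivalently $\tau(k)$ is the $(c_k+1)$-st smallest among the not-yet-used letters when we process positions $1,2,\dots,n$ — then $\mathrm{inv}(\tau)=\sum_k c_k$ and the $c_k$ range independently over $\{0,1,\dots,n-k\}$ with $\mathrm{Prob}(c_k=m)\propto q^{m}$; normalizing gives exactly \eqref{eq_confined_Geometric} with $\zeta_k=c_k+1$. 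I would either cite \cite{gnedin2010q} for this (their $q$-shuffle is set up precisely for this purpose) or include the two-line induction: fixing $\tau(1),\dots,\tau(k-1)$ and summing $q^{\mathrm{inv}}$ over the completions shows the conditional law of $\tau(k)$ is truncated geometric in its rank among the remaining letters, with the remaining inversions decoupling. Finally, since the subarray $(\lambda_1^k,\dots,\lambda_k^k)$ records the set of columns occupied by vertical path-edges between rows $k$ and $k+1$, and in a minimal configuration a path contributes a vertical edge there iff it has already turned, i.e.\ iff it entered in one of rows $1,\dots,k$, this set is exactly $\{\tau(1),\dots,\tau(k)\}$; reordering increasingly yields the claimed identification.

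\textbf{Main obstacle.} The conceptual core — concentration on permutation matrices and the Mallows form of the limit — is routine; the one place demanding care is the exact bookkeeping that $N_b(\sigma(\tau)) = 2\,\mathrm{inv}(\tau)$, making sure the weight of every type-2 (crossing) vertex is correctly attributed and that no off-by-a-factor-of-$2$ or boundary-row anomaly creeps in. I would double-check this against the already-established formula \eqref{eq_rectangular_function_2} (specialized to $n=k$, all $\xi_j=0$, and the $c\to 0$ regime where the indicators force the minimal-$c$ configurations), which exhibits exactly the $b^2/c^2$ and $a^2/c^2$ groupings and so pins down the power of $q=b^2/a^2$ unambiguously.
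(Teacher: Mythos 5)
Your proposal is correct and follows essentially the same route as the paper: concentrate on the $n$-Type-5 configurations (identified with permutations), show the surviving weight is $\propto q^{\mathrm{inv}(\tau)}$ with $q=b^2/a^2$, and recognize the Mallows measure as the output of the $q$-shuffle via the Lehmer-code/truncated-geometric description from Gnedin--Olshanski, with the identification $\{\lambda_1^k,\dots,\lambda_k^k\}=\{\tau(1),\dots,\tau(k)\}$ reading off the turned paths. The bookkeeping you flagged as the main obstacle does go through cleanly: in row $k$ one has $N_4=\#\{i<k:\tau(i)>\tau(k)\}$ and $N_3=\tau(k)-1-\#\{i<k:\tau(i)<\tau(k)\}$, so $\sum_k N_3=\sum_k N_4=\mathrm{inv}(\tau)$, giving $N_b=2\,\mathrm{inv}(\tau)$ exactly as you anticipated (and your symmetric ``horizontal plus vertical'' description of the factor $2$ is the right intuition for $N_3=N_4$).
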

\begin{proof}
 In the limit $c\to 0$, the Gibbs measure becomes supported on the configuration with the minimum number of $c$-type vertices. By using the third representation in Figure \ref{Figure_six_vertices}, where these vertices correspond to corners of the paths and noticing that we have precisely $n$ paths, which entered the square horizontally and need to exit vertically, we conclude that such a minimal number is $n$. Moreover, the configurations with $n$ $c$--type vertices have no vertices of Type 6 and $n$ vertices of Type 5. Hence, the $(a,b,0)$--Gibbs measure assigns the weight proportional to
 \begin{equation}
 \label{eq_c_0_weight}
  a^{N_1(\sigma)+N_2(\sigma)} b^{N_3(\sigma)+N_4(\sigma)}
 \end{equation}
 to a configuration $\sigma$ with exactly $n$ Type $5$ vertices and no Type $6$ vertices (here $N_i$ is the number of Type $i$ vertices); the weight vanishes if the numbers of Type $5$ and $6$ vertices are different from $n$ and $0$, respectively. In order to better understand the weight \eqref{eq_c_0_weight}, it is helpful to change the meaning of the Type 2 vertices, as shown in Figure \ref{Figure_Type2change}: rather than two turning paths, we now think about two intersecting paths. Then a configuration with minimal number of Type 5 vertices becomes a permutation $(\tau(1),\dots,\tau(n))$, where $\tau(i)$ is the horizontal coordinate of the path when its exits the square vertically, if it entered at height $i$. Equivalently, $(\tau(i),i)$, $1\le i \le n$, are cartesian $(x,y)$--coordinates of Type 5 vertices, and these are the only points where the paths turn. Clearly, the monotone triangle $(\lambda_i^k)_{1\le i \le k \le n}$ is reconstructed from $\tau$ by the procedure described in the proposition.

\begin{figure}[t]
\begin{center}
   \includegraphics[width=0.7\linewidth]{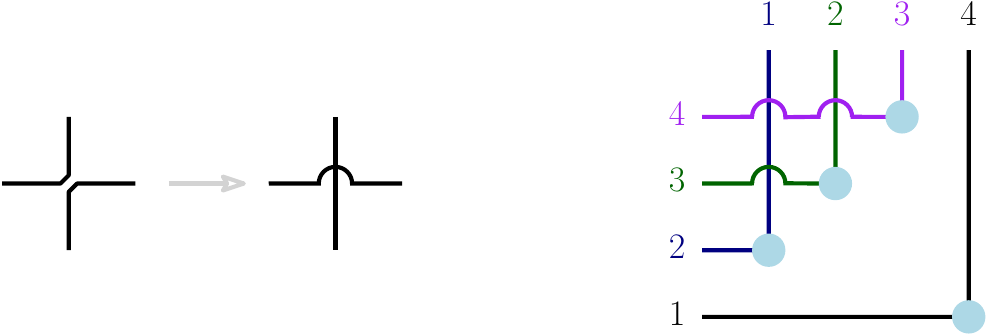}
\end{center}
        \caption{\label{Figure_Type2change} After we change the meaning of Type 2 vertex, each configuration with $n$ Type 5 and no Type 6 vertices is identified with a permutation. On the right we have permutation $4123$.}
\end{figure}

Let us compute the weight \eqref{eq_c_0_weight} corresponding to a permutation $\tau$ by evaluating the product of the weights of vertices in each row of the $n\times n $ square. In the first row we have $\tau(1)-1$ Type 3 vertices, one Type 5 vertex, and $n-\tau(1)$ Type 1 vertices, resulting in the weight
\begin{equation*}
 a^{n-\tau(1)-1} b^{\tau(1)-1}.
\end{equation*}
More generally, in row $k$ there is one Type 5 vertex and:
\begin{itemize}
 \item $n-\tau(k)- \#\{i<k: \tau(i)>k\}$ vertices of Type 1,
 \item $\#\{i<k: \tau(i)<k\}$ vertices of Type 2,
 \item $\tau(k)-1-\#\{i<k: \tau(i)<k\}$ vertices of Type 3,
 \item $\#\{i<k: \tau(i)>k\}$ vertices of Type 4.
\end{itemize}
Noting that $\#\{i<k: \tau(i)<k\}=k-\#\{i<k: \tau(i)>k\}-1$, the product of the weights of the vertices in row $k$ is
\begin{equation*}
 a^{n+k-1-\tau(k)- 2\cdot \#\{i<k:\, \tau(i)>k\}} \,  b^{\tau(k)-k+2\cdot \#\{i<k:\, \tau(i)>k\}}.
\end{equation*}
Multiplying over all $k=1,\dots,n$, and omitting the prefactor which does not depend on the choice of $\tau$, we conclude that the probability of a configuration corresponding to $\tau$ is
$$
 \left(\frac{b^2}{a^2}\right)^{\mathrm{inv}(\tau)}, \qquad \text{ where } \mathrm{inv}(\tau)\text{ is the number of inversion in }\tau.
$$
Such a measure on permutations is called the Mallows measure with $q=\tfrac{b^2}{a^2}$, and the fact that it can be sampled by the $q$--shuffle procedure of Definition \ref{Definition_q_shuffle_finite} is well-known, see, e.g., \cite[Section 3]{gnedin2010q}.
\end{proof}

\begin{proof}[Proof of Theorem \ref{Theorem_Mallows}] We send $n\to\infty$ in the procedure of Definition \ref{Definition_q_shuffle_finite} keeping $q=\tfrac{b^2}{a^2}$ fixed, notice that random variables $\zeta_i$, $i=1,2,\dots$, converge in distribution to i.i.d.\ Geometric distributions, and arrive at the procedure of Definition \ref{Def_q_shuffle}. Hence Theorem \ref{Theorem_Mallows} is the $n\to\infty$ limit of Proposition \ref{Proposition_DWBC_c_0}.
\end{proof}

\begin{proof}[Proof of Proposition \ref{Proposition_c_0_double}]  We send $n\to\infty$ in the procedure of Definition \ref{Definition_q_shuffle_finite} with asymptotic behavior of $q=\frac{b^2}{a^2}$ given by \eqref{eq_q_to_1}. The random variables $\tfrac{1}{n} \zeta_i$ converge in distribution to i.i.d.\ random variables $\eta_i$ of density \eqref{eq_eta_density}. Hence, Proposition \ref{Proposition_c_0_double} is the $n\to\infty$ limit of Proposition \ref{Proposition_DWBC_c_0}.
\end{proof}

\begin{appendix}

\section{Proof of Propositions \ref{prop:OP_asy} and \ref{prop:OP_asyF}}\label{app:OP_asymptotics} 
\subsection{Proof of Proposition \ref{prop:OP_asy}}
The asymptotic properties of the orthogonal polynomials $p_{n,k}(z)$ in the disordered phase $|\De|<1$, the antiferroelectric phase $\De<-1$, and on the boundary $\De = -1$ follow from the nonlinear steepest descent analysis of the $2\times 2$ matrix-valued Riemann--Hilbert problems (RHPs) presented in \cite{Bleher-Fokin06}, \cite{Bleher-Liechty10}, and \cite{Bleher-Bothner12}, respectively, and reviewed in \cite{Bleher-Liechty14}. In each of those works, the first step in the analysis is to rescale the polynomials as we have done in \eqref{eq:OPs_rescaled}.  Our rescaling \eqref{eq:OPs_rescaled} matches the one in \cite[Section 2]{Bleher-Bothner12} for the boundary case $\De=-1$, but differs slightly from the ones in \cite{Bleher-Fokin06} ($|\De|<1$) and \cite{Bleher-Liechty10} ($\De<-1$): whereas we rescale by a factor of $n$, in those papers the rescaling is by a factor of $n/\ga$ (see \cite[Equation (2.4)]{Bleher-Fokin06} and  \cite[Equation (1.25)]{Bleher-Liechty09}).

Throughout this appendix we use the notation $\sg_3  = \begin{pmatrix} 1 & 0 \\ 0 & -1\end{pmatrix}$ to denote the third Pauli matrix, so that a scalar $f$ raised to the power $\sg_3$ is
\[
f^{\sg_3} = \begin{pmatrix} f & 0 \\ 0 & 1/f\end{pmatrix}.
\]

\medskip

{\bf Proof of Proposition \ref{prop:OP_asy} for $|\De| < 1$.} Taking the different scalings into account, the $2\times 2$ matrix-valued RHP presented in \cite[Section 6.4]{Bleher-Liechty14} has the solution given in \cite[Equation (2.21)]{Bleher-Liechty14},
\eq\label{eq:RHP_sol_D}
{\bf Y}_n(\ga z) = \begin{pmatrix} \ga^{n} & 0 \\ 0 & \ga^{-n} \end{pmatrix} \begin{pmatrix} p_{n,n}(z) & C^{\mt_n}(p_{n,n})(z) \\ -2\pi \ii \frac{p_{n,n-1}(z)}{h_{n,n-1}} &  -2\pi \ii \frac{C^{\mt_n}(p_{n,n-1})(z)}{h_{n,n-1}}
\end{pmatrix}.
\eeq

%
%
The asymptotic analysis of the RHP presented in \cite[Section 6.4]{Bleher-Liechty14}   consists of a sequence of explicit transformations to the RHP to arrive at a RHP whose solution is uniformly close to the identity matrix as $n\to\infty$. The analysis of  \cite[Chapter 6]{Bleher-Liechty14}  involves the quantities $\al,\be, g(z)$ and $l$ which differ slightly from the corresponding quantities used in this paper due to the different scalings. In particular, the endpoints of the equilibrium measure $\al$ and $\be$ defined in  \cite[Equation (6.3.7)]{Bleher-Liechty14}  differ from the ones defined in \eqref{eq_case2_alpha_beta} of the current paper by a factor of $\ga$.


For $z$ away from the band $[\alpha, \beta]$ the transformations of the RHP in  \cite[Chapter 6]{Bleher-Liechty14} give
\eq\label{eq:RHunravel}
{\bf Y}_n(\ga z) = e^{(nl_n/2)\sg_3} {\bf X}_n(\ga z) {\bf M}_n(\ga z) e^{n(g_n(\ga z) - l_n/2)\sg_3}.
\eeq
The matrix ${\bf M}_n(z)$ is defined in  \cite[Equations (6.4.18), (6.4.19)]{Bleher-Liechty14} with a cut on the interval $[\al_n, \be_n$], the endpoints of which satisfy by \cite[Proposition 6.3.2]{Bleher-Liechty14} as $n\to\infty$,
\eq\label{eq:aln_al}
\al_n = \ga\al + \bigO(n^{-2}), \quad \be_n = \ga\be + \bigO(n^{-2}).
\eeq
It is clear from the definition  \cite[Equations (6.4.18), (6.4.19)]{Bleher-Liechty14} that as $z\to\infty$,
\eq\label{eq:Mn_zinf}
{\bf M}_n(z) = {\bf I} + \bigO(1/z),
\eeq
and that ${\bf M}_n(z)$ is bounded on closed subsets of $\C \setminus [\al_n,\be_n]$.
The entries of  ${\bf M}_n(z)$ are algebraic functions of $z$ which are uniformly bounded on closed subsets of $\C \setminus [\al_n,\be_n]$, and whose $n$-dependence comes solely from the constants $\al_n$ and $\be_n$. Therefore ${\bf M}_n(z)$ has a limit as $n\to\infty$ which is analytic in $\C \setminus [\ga\al, \ga\be]$. Let us denote this limit as ${\bf M}(z)$,
\eq
{\bf M}(z) := \lim_{n\to\infty} {\bf M}_n(z), \quad z\in \C \setminus [\ga\al,\ga\be],
\eeq
which inherits the property \eqref{eq:Mn_zinf} as $z\to\infty$,
\eq\label{eq:M_zinf}
{\bf M}(z) = {\bf I} + \bigO(1/z).
\eeq
The functions $N_{\rm D}(z)$ and $M_{\rm D}(z)$ appearing in Proposition \ref{prop:OP_asy} are the second row of the matrix ${\bf M}(\ga z)$,
\eq\label{eq:N_M_matrix}
N_{\rm D}(z) = {\bf M}(z)_{21}, \quad M_{\rm D}(z) = {\bf M}(z)_{22},
\eeq
which satisfy \eqref{eq:Nstar_infinity} and \eqref{eq:Mstar_infinity}, respectively, by \eqref{eq:M_zinf}.
According to \eqref{eq:aln_al} $\al_n$ and $\be_n$ differ from $\ga\al$ and $\ga\be$, respectively, by order $\bigO(1/n^2)$ so we have
\eq\label{eq:Mn_M}
{\bf M}_n(\ga z) = {\bf M}(\ga z) + \bigO(n^{-2})
\eeq
uniformly for $z$ in closed subsets of $\C \setminus [\al,\be]$.

The relations of the rest of the quantities in \eqref{eq:RHunravel} to those appearing in Proposition \ref{prop:OP_asy} are presented in the following lemma.

\begin{lem}\label{lem:appendix_misc}
Let $g_n(z)$ be the function defined in \cite[Equation (6.3.93)]{Bleher-Liechty14} using \cite[Equation (6.3.34)]{Bleher-Liechty14}. Let $l_n$ be the constant defined in \cite[Equation (6.3.88)]{Bleher-Liechty14}, and ${\bf X}_n(z)$ the matrix function defined in \cite[Section 6.4.5]{Bleher-Liechty14}. Let $g(z)$ and $l$ be as defined in \eqref{def:g-function} and \eqref{eq:eq_condition_band} of the current paper. Then as $n\to\infty$,
\eq\label{eq:ln_l}
l_n = l+ 2\ln \ga + \bigO(n^{-2}),
\eeq
and for $z$ bounded away from $[\al, \be]$,
\eq\label{eq:g_gn}
g_n(\ga z) =  g(z) + \ln \ga + \bigO(n^{-2}),
\eeq
and
\eq\label{eq:R_largen}
{\bf X}_n(\ga z) = {\bf I} + \bigO(n^{-1}),
\eeq
where both \eqref{eq:g_gn} and \eqref{eq:R_largen} hold uniformly on closed subsets on $\C \setminus [\al,\be]$.
\end{lem}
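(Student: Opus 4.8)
\textbf{Proof proposal for Lemma \ref{lem:appendix_misc}.}

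The plan is to track how each quantity in the steepest-descent analysis of \cite[Chapter 6]{Bleher-Liechty14} depends on the parameters $\al_n,\be_n$ and the scaling factor $\ga$, and then invoke the endpoint estimates \eqref{eq:aln_al} together with the explicit integral formulas for $g_n$ and $l_n$. First I would establish \eqref{eq:g_gn}. By \cite[Equations (6.3.93), (6.3.34)]{Bleher-Liechty14}, $g_n(z)=\int \ln(z-x)\,\rho_n(x)\,dx$, where $\rho_n$ is the density of the (rescaled) equilibrium measure supported on $[\al_n,\be_n]$; by \eqref{eq:Gp} the derivative $g_n'(z)$ equals $-\bigl(z\sqrt{(z-\al_n)(z-\be_n)}\bigr)^{-1}$ up to the appropriate normalization. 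Changing variables $x\mapsto \ga x$ in the defining integral turns the support $[\al_n,\be_n]$ into $[\al_n/\ga,\be_n/\ga]$, and a short computation gives $g_n(\ga z)=\ln\ga + \tilde g_n(z)$, where $\tilde g_n$ is the $g$-function attached to the equilibrium measure on $[\al_n/\ga,\be_n/\ga]$. Since $\al_n/\ga = \al + \bigO(n^{-2})$ and $\be_n/\ga=\be+\bigO(n^{-2})$ by \eqref{eq:aln_al}, and since both endpoints enter $\tilde g_n$ only through the algebraic function $\bigl((z-\al_n/\ga)(z-\be_n/\ga)\bigr)^{1/2}$ which is analytic in $z$ and jointly smooth in the endpoints on any closed set $K\subset\C\setminus[\al,\be]$, we get $\tilde g_n(z)=g(z)+\bigO(n^{-2})$ uniformly on $K$. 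This proves \eqref{eq:g_gn}.

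Next I would handle \eqref{eq:ln_l}. The constant $l_n$ of \cite[Equation (6.3.88)]{Bleher-Liechty14} is characterized by the equilibrium condition $2\Re g_{n\pm}(x)=V_n(x)+l_n$ on $[\al_n,\be_n]$, where $V_n$ is the (rescaled) external field, $V_n(x)=\ga|x|-tx$ up to $\bigO(1/n)$. Evaluating \eqref{eq:g_gn} on the band and comparing with the analogous identity \eqref{eq:eq_condition_band} for $g$ and $l$ in the current paper, and using that $V_n(\ga x)=\ga V(x)$... wait — I should instead compare directly: substituting $z=\ga x$ for $x$ in a reference point and using $g_n(\ga x)=g(x)+\ln\ga+\bigO(n^{-2})$ along with the relation between $V_n$ and $V$ under the scaling, one reads off $l_n = l + 2\ln\ga + \bigO(n^{-2})$ (the factor $2$ comes from the $2\Re g$ on the left side of the equilibrium condition). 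The error is $\bigO(n^{-2})$ because the only $n$-dependence is through the endpoints, which are correct to $\bigO(n^{-2})$ by \eqref{eq:aln_al}, and $l_n$ depends analytically on them. Finally, \eqref{eq:R_largen} is the statement that the error matrix ${\bf X}_n$ (the solution of the final small-norm RHP after opening lenses and inserting local parametrices) tends to the identity; this is exactly the conclusion of the small-norm RHP argument in \cite[Section 6.4.5]{Bleher-Liechty14}, where the jump matrices for ${\bf X}_n$ are shown to be ${\bf I}+\bigO(1/n)$ uniformly, so by the standard small-norm theory ${\bf X}_n(z)={\bf I}+\bigO(1/n)$ uniformly on $\C$, in particular on closed subsets of $\C\setminus[\al,\be]$; substituting $z=\ga z$ is harmless.

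The main obstacle is bookkeeping: making sure that the various $\ga$-dependent rescalings in \cite{Bleher-Liechty14} (which rescales by $n/\ga$) versus the present paper (which rescales by $n$) are reconciled consistently across $g_n$, $l_n$, $V_n$, $\al_n$, $\be_n$ and ${\bf M}_n$ simultaneously, so that all the additive $\ln\ga$ shifts and the $\bigO(n^{-2})$ error terms line up. I expect no genuine analytic difficulty — every error estimate follows either from \eqref{eq:aln_al} combined with analytic (hence locally Lipschitz) dependence on the endpoints, or from the already-established small-norm estimate for ${\bf X}_n$ — but the chain of substitutions needs to be carried out carefully. Once the lemma is in hand, combining \eqref{eq:RHP_sol_D}, \eqref{eq:RHunravel}, \eqref{eq:Mn_M}, \eqref{eq:N_M_matrix} and Lemma \ref{lem:appendix_misc} yields \eqref{eq:leading_Cauchy} and \eqref{eq:leading_OPs} of Proposition \ref{prop:OP_asy} in the disordered phase after reading off the second row of ${\bf Y}_n(\ga z)$; the antiferroelectric and boundary cases are treated identically, using \cite{Bleher-Liechty10} and \cite{Bleher-Bothner12} in place of \cite{Bleher-Fokin06}, with the only new feature being the lattice-distance restriction $\mathrm{dist}(z,\tfrac2n\Z)\ge\ep/n$ needed for the discrete RHP local analysis.
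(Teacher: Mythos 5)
There are two substantive gaps in your proposal, both arising from the special structure of the disordered-phase weight, whose potential $V_n(x)=-\tfrac1n\ln w_n^{\rm D}(x)$ genuinely depends on $n$ (unlike the $\De<-1$ case).

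For \eqref{eq:g_gn} you argue that the $n$-dependence of $g_n$ enters only through the endpoints $\al_n,\be_n$, so $O(n^{-2})$ accuracy of the endpoints propagates by smoothness. That is false: the equilibrium density from \cite[Eq.\ (6.3.78)]{Bleher-Liechty14} has the form
$\rho_n(x)=\rho_n^0(x)-\tfrac{1}{2\pi^2}k(nx)+O(n^{-2})$,
with an explicit, genuinely $n$-dependent correction $k(nx)$ that is \emph{not} accounted for by a shift of endpoints. Naively, $\int k(nx)f(x)\,dx = O(1/n)$, which would only give \eqref{eq:g_gn} with an $O(1/n)$ error — too weak for the $O(1/\sqrt n)$ error in Proposition \ref{prop:OP_asy}. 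The paper uses the two special properties $\int_{-\infty}^\infty k(x)\,dx=0$ (a consequence of both $\rho_n$ and $\rho_n^0$ being probability densities) and $k(x)=O(x^{-2})$ as $x\to\pm\infty$ to upgrade this to $O(n^{-2})$, including a separate argument via the moment expansion at $\infty$ to make the bound uniform on unbounded closed sets. Any correct proof of \eqref{eq:g_gn} must control this term.

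For \eqref{eq:R_largen} you assert that the jump matrix for ${\bf X}_n$ is ${\bf I}+O(1/n)$ uniformly and then invoke the standard small-norm machinery. This is not what happens: on the lens contours $\Sg_n^\pm$ near the origin, the jump is only ${\bf I}+O(n^{-\de})$ for some small $\de>0$ (see \eqref{eq:JR_uni_bounds}), so the naive small-norm argument yields only ${\bf X}_n={\bf I}+O(n^{-\de})$. The paper instead (a) exploits the exponential decay of the jump along $\Sg_n^\pm$ to show its $L^2$-norm is $O(n^{-\eta/2-\de})+O(n^{-1})$ for any $\eta<1$, which controls all but the first term in the Neumann series, and (b) estimates the first term ${\bf X}_{n,1}(z)$ separately by a residue computation, deforming the contour across the poles of $j_X^0$ accumulating on the imaginary axis and showing the leading residues contribute $O(n^{-\kappa_1})$ with $\kappa_1>1$. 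Without this two-step argument the asserted $O(1/n)$ bound does not follow.

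Your treatment of \eqref{eq:ln_l} is more forgivable: inferring $l_n=l+2\ln\ga+O(n^{-2})$ from \eqref{eq:g_gn} and the equilibrium condition is a viable alternative to the paper's route (explicit evaluation of $l$ by integration by parts and comparison with the closed-form expression for $l_n$ from \cite[Eq.\ (6.3.137)]{Bleher-Liechty14}), but it presupposes \eqref{eq:g_gn} with the $O(n^{-2})$ error already established, which is exactly the part your proposal does not deliver.
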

Assuming this lemma, Proposition \ref{prop:OP_asy}  for $|\De| < 1$ then follows by combining \eqref{eq:RHP_sol_D} and \eqref{eq:RHunravel}, using \eqref{eq:Mn_M} and \eqref{eq:R_largen}.
Taking the $(22)$-entry gives
\eq
\frac{C^{\mt_n}(p_{n,n-1})(z)}{h_{n,n-1}} = -\frac{\ga^n}{2\pi \ii} {\bf Y}_n(\ga z)_{22} = -\frac{\ga^n}{2\pi \ii} e^{-ng_n(\ga z)} {\bf M}(\ga z)_{22} (1+\bigO(1/n)), \\
\eeq
and taking the $(21)$-entry gives
\eq
\frac{p_{n,n-1}(z)}{h_{n,n-1}} =  -\frac{\ga^n}{2\pi i} {\bf Y}_n(\ga z)_{21} = -\frac{\ga^n}{2\pi \ii} e^{n(g_n(\ga z) - l_n)} {\bf M}(\ga z)_{21} (1+\bigO(1/n)),
\eeq
which proves  \eqref{eq:leading_Cauchy} and \eqref{eq:leading_OPs} for $|\De|<1$ after using \eqref{eq:N_M_matrix}, \eqref{eq:ln_l}, and \eqref{eq:g_gn}.

\medskip

To complete the proof of Proposition \ref{prop:OP_asy} for $|\De|<1$, we still need to prove Lemma \ref{lem:appendix_misc}. We start with
\eqref{eq:g_gn}. We first compare $g(z)$ as defined in the current paper with the function denoted $g(z)$ in \cite[Section 6]{Bleher-Liechty14}. To avoid confusion we will denote the function from \cite[Section 6]{Bleher-Liechty14}, given explicitly in \cite[Proposition 6.3.1]{Bleher-Liechty14} as $\tilde g(z)$. The derivative of $\tilde g(z)$ is given in \cite[Equation (6.3.4)]{Bleher-Liechty14}. Comparing with \eqref{eq_case2_G} (and recalling that $\al$ and $\be$ in \cite[Section 6]{Bleher-Liechty14} differ from those in the current paper by a factor of $\ga$), we find
\eq\label{eq:2gprimes}
 \frac{\dd}{\dd z} \tilde g(\ga z) = g'( z),
\eeq
so $\tilde g'(z)$ and $g(\ga z)$ can differ only by a constant. From the conditions $\tilde g(z) = \ln z +\bigO(1/z)$ and $ g(z) = \ln z +\bigO(1/z)$ as $z\to\infty$, we must have
\eq\label{eq:g_gtilde}
\tilde g(\ga z) = g( z) + \ln \ga.
\eeq

We now compare $\tilde g( z)$  and $g_n(z)$. They are the log transforms of their respective equilibrium measures:
\eq\label{eq:g_gn_rho}
\tilde g(z) = \int_{\al\ga}^{\be\ga} \ln(z-x)\, \rho(x)\, \dd x, \qquad g_n(z) = \int_{\al_n}^{\be_n} \ln(z-x)\, \rho_n(x)\, \dd x.
\eeq
 The density $\rho(x)$ (resp. $\rho_n(x)$) is a probability density supported on $[\ga \al, \ga \be]$ (resp. $[\al_n, \be_n]$).
The density $\rho(x)$ is given by \cite[Equation (6.3.9)]{Bleher-Liechty14}
\eq\label{def:rho}
\rho(x) = \frac{2}{\pi^2}\ln\left[\frac{\sqrt{\ga\be(x-\al\ga)}+\sqrt{-\ga\al(\ga\be-x)}}{\sqrt{|x|(\ga\be-\ga\al)}}\right], \quad \ga\al \le x \le \ga\be,
\eeq
and $\rho_n(x)$ is given by \cite[Equations (6.3.78)]{Bleher-Liechty14}
\eq
\rho_n(x) = \rho_n^0(x) -\frac{1}{2\pi^2} k(nx)+\bigO(n^{-2}),
\eeq where
\eq\label{eq:rhon_rho0}
\rho_n^0(x) = \frac{2}{\pi^2}\ln\left[\frac{\sqrt{\be_n(x-\al_n)}+\sqrt{-\al_n(\be_n-x)}}{\sqrt{|x|(\be_n-\al_n)}}\right], \quad \al_n \le x \le \be_n,
\eeq
and
 $k(x)$ is an even real function not depending on $n$, defined in \cite[Equation (6.3.28)]{Bleher-Liechty14}, and satisfying
 \eq\label{eq:k_infty}
k(x) = -\frac{2\pi \ga^2}{3(\pi-2\ga)}x^{-2}+\bigO(x^{-4}), \quad x\to \pm \infty,
\eeq
see \cite[Equations (6.3.29), (6.6.30), (6.3.31)]{Bleher-Liechty14}. Since $\rho_n^0(\al_n) =\rho_n^0(\be_n)=0$, integration by parts gives
\[
\int_t^{\be_n} \rho^0_n(x)\dd x = -t  \rho^0_n(t) + \int_t^{\be_n} x \left(\frac{\dd}{\dd x}\rho_n^0(x)\right)\dd x
\]
for $\al_n \le t \le \be_n$. The latter integral is straightforward to compute, giving
\[
\int_t^{\be_n} \rho^0_n(x)\dd x =   -t  \rho^0_n(t) + \frac{2}{\pi} \arctan\sqrt{\frac{\be_n -t}{t-\al_n}}, \quad \al_n \le t \le \be_n,
\]
as in \cite[Equation (5.35)]{Bleher-Fokin06}. Plugging in $t=\al_n$ then shows $\int_{\al_n}^{\be_n} \rho^0_n(x)\dd x =1$. Since  $\int_{\al_n}^{\be_n} \rho_n(x)\dd x =1$ as well, integrating \eqref{eq:rhon_rho0} over $[\al_n, \be_n]$ and taking $n\to\infty$ implies
\eq\label{eq:k_int}
\int_{-\infty}^\infty k(x)\, \dd x  =0,
\eeq
as in \cite[Equation (5.39)]{Bleher-Fokin06}.

From \eqref{eq:g_gn_rho}, $\tilde g(z)$ and $g_n(z)$ satisfy, as $z\to\infty$,
\eq
\tilde g(z) = \ln(z) + \sum_{k=1}^\infty \frac{\tilde \mu_k}{z^k}, \qquad  g_n(z) = \ln(z) + \sum_{k=1}^\infty \frac{ \mu_k^{(n)}}{z^k},
\eeq
where
\eq
\tilde \mu_k = \int_{\al\ga}^{\be\ga} x^k \rho(x)\,\dd x, \qquad  \mu_k^{(n)} = \int_{\al_n}^{\be_n} x^k \rho_n(x)\,\dd x.
\eeq
Their difference is then given by the convergent series in a neighborhood of $\infty$,
\eq\label{eq:g_n_diff}
\tilde g(z) - g_n(z) = \sum_{k=1}^\infty \frac{\tilde \mu_k - \mu_k^{(n)}}{z^k}.
\eeq
Let $M$ be a large but fixed positive number such that \eqref{eq:g_n_diff} holds for $|z|>M$. We will prove \eqref{eq:g_gn} first for $|z| \le M$, and then for $z>M$.

Assume that $z$ is bounded away from the interval $[\ga\al,\ga\be]$ and $|z|<M$. Since $\ga \al= \al_n+\bigO(n^{-2})$ and  $\ga \be = \be_n+\bigO(n^{-2})$, the difference $\tilde g(z)- g_n(z)$ is
\eq\label{eq:rho_rhon_error}
\tilde g(z) - g_n(z) = \int_{\al_n}^{\be_n} \ln(z-x)(\rho(x) - \rho_n(x))\,\dd x + \bigO(n^{-2}).
\eeq
Again from $\ga \al  = \al_n+\bigO(n^{-2})$ and  $\ga \be = \be_n+\bigO(n^{-2})$, we have $\rho(x) = \rho_n^0(x) + \bigO(n^{-2})$, and we find
\eq\label{eq:integral_diff}
|\tilde g(z) - g_n(z)| = C\int_{\al_n}^{\be_n} k(nx)\, \dd x = \bigO(1/n)\int_{n \al_n}^{n\be_n} k(x)\, \dd x = \bigO(n^{-2}),
\eeq
where we have used properties \eqref{eq:k_infty} and \eqref{eq:k_int} for $k(x)$ in the last step. Combined with \eqref{eq:g_gtilde}, this proves \eqref{eq:g_gn} for $z$ bounded away from $[\ga \al ,\ga \be]$ and $|z|<M$.

Now assume $|z|>M$, so that \eqref{eq:g_n_diff} holds and
\eq\label{eq:g_gn_series}
|\tilde g(z) - g_n(z) |\le  \sum_{k=1}^\infty \frac{|\tilde \mu_k - \mu_k^{(n)}|}{M^k}.
\eeq
Similar to \eqref{eq:rho_rhon_error},
\[
\tilde \mu_k - \mu_k^{(n)} = \int_{\al_n}^{\be_n} x^k(\rho(x) - \rho_n(x))\dd x + \max\{ |\al |,\be\}^k \bigO (n^{-2}),
\]
and similar to \eqref{eq:integral_diff} we have
\begin{multline}\label{eq:mu_k_diff_abs}
\left| \tilde \mu_k - \mu_k^{(n)} \right| \le C\int_{\al_n}^{\be_n} x^k k(nx)\dd x+ \max\{ |\al |,\be\}^k \bigO (n^{-2}) \\
 \le  \frac{C}{n^{k+1}} \int_{n\al_n}^{n\be_n} x^k k(x) \dd x+ \max\{ |\al |,\be\}^k \bigO (n^{-2}).
\end{multline}
Using \eqref{eq:k_infty}, the integral $\int_{n\al_n}^{n\be_n} x^k k(x) \dd x$ is estimated as $\bigO\left(n^{k-1} \max\{ |\al |,\be\}^k\right)$, so \eqref{eq:mu_k_diff_abs} becomes
\eq\label{eq:mu_muk_diff}
\left| \tilde \mu_k - \mu_k^{(n)} \right| \le C \frac{\max\{ |\al |,\be\}^k}{n^2},
\eeq
where the constant $C$ may be different than the one in \eqref{eq:mu_k_diff_abs}. Plugging \eqref{eq:mu_muk_diff} into \eqref{eq:g_gn_series} gives
\[
|\tilde g(z) - g_n(z) |\le \frac{C}{n^2} \sum_{k=1}^\infty  \left(\frac{\max\{ |\al |,\be\}}{M}\right)^k,
\]
which is $\bigO(n^{-2})$ provided $M> \max\{ |\al |,\be\}$. This proves  \eqref{eq:g_gn} for $|z|>M$.

\medskip

The constant $l_n$ is given explicitly in \cite[Equation 6.3.137]{Bleher-Liechty14} as (recall that we replace $\al$ and $\be$ from \cite[Chapter 6]{Bleher-Liechty14} with $\ga\al$ and $\ga\be$, respectively),
\eq\label{eq:ln_On}
l_n = \ln(\be-\al) - 4\ln 2 - 1 + 2\ga + \bigO(n^{-2}).
\eeq
From the equilibrium condition \eqref{eq:eq_condition_band}, the constant $l$ --- as appearing in the current paper --- is
 \eq\label{eq:lD_gV}
 l = 2g(\be) - V(\be) = 2g(\be) - \be(\ga-t).
 \eeq
 Using integration by parts, $g(z)$ is given, up to a constant, as
 \eq\label{eq:gD_ibp}
 g(z) = zg'(z) - \int zg''(z)\dd z = zG_\nu(z) + \int \frac{\dd z}{\sqrt{(z-\al)(z-\be)}},
 \eeq
 where $G_\nu(z)\equiv g'(z)$ is as in \eqref{eq_case2_G} and we have used \eqref{eq:Gp}.
 Integrating \eqref{eq:gD_ibp} and using $g(z) = \ln z + \bigO(1/n)$  to determine the constant, we find
 \eq\label{eq:gD_formula}
 \begin{aligned}
 g(z) &= z G_\nu(z) + 2\ln(\sqrt{z-\al}+\sqrt{z-\be}) -2\ln 2- \frac{\ga\sqrt{-\al\be}}{\pi} \\
& =z G_\nu(z) + 2\ln(\sqrt{z-\al}+\sqrt{z-\be}) -2\ln 2-1,
 \end{aligned}
 \eeq
 where we used $-\al\be = \pi^2/\ga^2$ in the second line.
 Since $G_\nu(\be) = (\ga-t)/2$, plugging $z=\be$ into \eqref{eq:gD_formula} we find
 \[
 g(\be) = \be (\ga-t)/2 + \ln(\be-\al) - 2\ln 2 -1,
\]
and \eqref{eq:lD_gV} becomes
\[
 l = \be (\ga-t)+ \ln(\be-\al)- 4\ln 2 -2 - \be(\ga-t) = \ln(\be-\al)- 4\ln 2 -2.
 \]
 Comparing with \eqref{eq:ln_On}, this proves \eqref{eq:ln_l}.

\medskip

Finally we prove \eqref{eq:R_largen}. To describe the structure of the error ${\bf X}_n(z)$, we first summarize a few results on the Cauchy operator on a contour in the complex plane. For more details see, e.g., \cite[Lecture 2]{Deift19}, \cite[Chapters 4 and 5]{Bottcher-Karlovich97}, and references therein.  For a locally rectifiable oriented contour $\Sg$ in the complex plane and a $2\times 2$ complex matrix-valued H\"older continuous function $f(z)  \in L^1(\Sg)$, the Cauchy transform of $f(z)$ on the contour $\Sg$ is defined as
\[
C_\Sg(f)(z) = -\frac{1}{2\pi \ii}\int_\Sg \frac{f(\mu)}{z-\mu}\, \dd \mu, \quad z\in \C \setminus \Sg.
\]
For $\la \in \Sg$ we can define $C^\pm_\Sg(f)(\la)$ as
\[
C^\pm_\Sg(f)(\la):= \lim_{z\to \la_\pm} C_\Sg(f)(z),
\]
where $ \lim_{z\to \la_\pm}$ refers to the limit as $z$ approaches $\la$ non-tangentially to $\Sg$ from the $\pm$-side of $\Sg$, and the $+$ (resp. $-$) side of the contour is the side to the left (resp. right) when the contour is traversed in the direction of its orientation (local rectifiability of $\Sg$ implies that such a non-tangential limit exists almost everywhere). For a large class of contours $\Sg$ called Carleson contours (including the contour $\Sg_X$ below), it is known that $C^\pm_\Sg$ are bounded operators in $L^p(\Sg)$ for $1<p<\infty$ \cite[Theorem 2.24]{Deift19}.

The matrix ${\bf X}_n(z)$ in \eqref{eq:RHunravel} is given by the convergent series \cite[Equation (2.5.11)]{Bleher-Liechty14} (see also \cite[Equation (9.29)]{Bleher-Fokin06}):
\eq\label{eq:R_series}
{\bf X}_n(z) = {\bf I} + \sum_{k=1}^\infty {\bf X}_{n,k}(z), \quad {\bf X}_{n,k}(z) =  -\frac{1}{2\pi \ii}\int_{\Sg_X} \frac{v_{k-1}(\mu)j_X^0(\mu)}{z-\mu}\,d\mu = C_{\Sg_X}(v_{k-1}\cdot j_X^0)(z),
\eeq
where ${\Sg_X}$ is a contour in $\C$ (see \cite[Section 6.4.5, Figure 6.2]{Bleher-Liechty14}), $j_X^0(\mu)$ is an $n$-dependent function on ${\Sg_X}$, and $v_k(\mu)$ is defined recursively as
\[
v_k(\la) = C^-_{\Sg_X}(v_{k-1}\cdot j_X^0)(\la), \quad \la\in \Sg_X; \quad v_0(\la) = {\bf I}.
\]
We will prove \eqref{eq:R_largen} by estimating ${\bf X}_{n,k}(z)$ for each $k$. The contour ${\Sg_X}$ consists of the components
\[
\Sg_X = \Sg_n^+ \cup \Sg_n^- \cup \d D(\al_n, r) \cup\d D(\be_n, r)  \cup (-\infty, \al_n - r) \cup (\be_n+r, \infty),
\]
for some $r>0$, where $\d D(x, r)$ is a circle of radius $r$ centered at $x$.  $\Sg_n^+$ (resp. $\Sg_n^-$) is a contour in the upper (resp. lower) half-plane which begins on $ \d D(\al_n, r)$ and ends at $\d D(\be_n, r) $, maintaining a distance $r/2$ from the real line except in a small neighborhood of the origin, which it passes at a distance of order $1/n$.   In \cite[Proposition 6.5.1]{Bleher-Liechty14} the following uniform estimates for $j_X^{0}(\mu)$ are given:  for some constants $C, d, \de>0$,
\eq\label{eq:JR_uni_bounds}
||j_X^{0}(\mu)|| \le \left\{
\begin{aligned}
&\frac{C}{n}, \qquad \mu \in \d D(\al_n, r) \cup \d D(\be_n, r), \\
&C e^{-nd|\mu|}, \qquad  \mu \in(-\infty, \al-r) \cup (\be+r, \infty), \\
& \frac{C}{n^\de} e^{- nd |\Im \mu|}, \qquad \mu \in \Sg_n^\pm.
\end{aligned}\right.
\eeq
The above estimate on those intervals implies that for $z$ bounded away from $\Sg_X$, and $k>1$,
\eq\label{eq:Rk_est1}
||{\bf X}_{n,k}(z)||\le \frac{C}{1+|z|} \int_{\Sg_X} ||v_{k-1}(\mu) j_X^{0}(\mu)||\, \dd \mu \le \frac{C}{1+|z|} || v_{k-1} ||_{L_2(\Sg_X)}\cdot ||j_X^{0} ||_{L_2(\Sg_X)},
\eeq
for some constant $C>0$. If $z$ is close to the intervals $(-\infty, \al-\ep) \cup (\be+\ep, \infty)$, the contour $\Sg_X$ may be deformed slightly so that the integration contour in \eqref{eq:R_series} maintains a uniform distance from $z$, and such a deformation does not change the estimates \eqref{eq:JR_uni_bounds}. Thus the estimate \eqref{eq:Rk_est1}. holds for all $z$ bounded away from $[\al,\be]$.

We therefore need to estimate the $L_2$-norms of $v_{k-1}$ and $j_X^{0}$ on $\Sg_X$. Since $\Sg_n^\pm$ pass within an $\bigO(1/n)$ neighborhood of the real line, the estimates \eqref{eq:JR_uni_bounds} give the apriori bound $||j_X^{0} ||_{L_2(\Sg_X)} = \bigO(n^{-\de})$ for some $\de>0$, which can be improved. Using  $\eqref{eq:JR_uni_bounds}$ we find
\eq\label{eq:JR_L2}
||j_X^{0} ||_{L_2(\Sg_X)}^2 = \int_{\Sg_X} ||j_X^0(\mu)||^2\, \dd\mu =  \int_{\Sg_n^{\pm}} ||j_X^0(\mu)||^2\, \dd\mu +\bigO(1/n).
\eeq
The estimate \eqref{eq:JR_uni_bounds} on $\Sg_n^\pm$ is of order $n^{-\de}$ for all $\mu \in\Sg_X^\pm$, and is in fact super-polynomial in $n$ as long as $|\Im \mu|>n^{-\eta}$ for some $0<\eta<1$. The intersection of $\Sg_X^\pm$ with the strip
\[
\mathcal{S}_\eta = \{\mu \in \C : |\Im \mu|< n^{-\eta}\}
\]
has length of order $\bigO(n^{-\eta})$, so the estimate \eqref{eq:JR_L2} becomes
\eq\label{eq:JR_L2a}
\begin{aligned}
||j_X^{0} ||_{L_2(\Sg_X)}^2 &= \int_{\Sg_n^\pm \cap \mathcal{S}_\eta} ||j_X^0(\mu)||^2\, \dd\mu +\bigO(1/n^2) = \int_{\Sg_n^\pm \cap \mathcal{S}_\eta} \bigO(n^{-2\de})\, \dd\mu +\bigO(1/n^2) \\
& = \bigO(n^{-\eta-2\de})+\bigO(1/n^2),
\end{aligned}
\eeq
where we emphasize that the above estimate holds  {\it for some} $\de>0$, and  {\it for any} $0<\eta<1$. We now move on to the estimate of the $L_2$-norm of $v_{k-1}$ for $k>1$. Since $v_{k-1}$ is the Cauchy transform over $\Sg_X$ of the function $v_{k-2} \cdot j_X^0$, the boundedness of the Cauchy transform in $L_2(\Sg_X)$ implies for some constant $C>0$,
\eq\label{eq:vk_L2}
|| v_{k-1} ||_{L_2(\Sg_X)} \le C||v_{k-2} j_X^0||_{L_2(\Sg_X)}.
\eeq
When $k=1$, $v_{k-1} = {\bf I}$, and the estimate \eqref{eq:JR_L2a} gives
\[
|| v_{1} ||_{L_2(\Sg_X)} \le C||j_X^0||_{L_2(\Sg_X)} = \bigO(n^{-\eta/2-\de}) + \bigO(1/n).
\]
Then using the fact that $|| j_X^0||_{L_\infty(\Sg_X) } = \bigO(n^{-\de})$ and using induction in $k$ on \eqref{eq:vk_L2} we find
\eq\label{eq:vk_L2_est}
|| v_{k-1} ||_{L_2(\Sg_X)} = \bigO\left(\left(\frac{C}{n}\right)^{(k-1)\de+\eta/2}\right) + \bigO\left(\left(\frac{C}{n}\right)^{(k-2)\de+1}\right) \\
\eeq
Combining \eqref{eq:Rk_est1} with \eqref{eq:JR_L2a} and \eqref{eq:vk_L2_est}, we find that uniformly for all $k>1$,
\[
||{\bf X}_{n,k}(z)|| =\bigO\left(\frac{1}{1+|z|}\left(\frac{C}{n}\right)^{k\de+\eta}\right) + \bigO\left(\frac{1}{1+|z|}\left(\frac{C}{n}\right)^{(k-1)\de+1+\eta/2}\right).
\]
Taking $\eta > 1-\de$, we find for $k>1$,
\[
||{\bf X}_{n,k}(z)|| = \bigO\left(\frac{1}{n(1+|z|)}\right)\left(\frac{C}{n}\right)^{(k-1)\de}.
\]
We are left only to estimate
\eq\label{eq:Xn1z}
{\bf X}_{n,1}(z) =  -\frac{1}{2\pi \ii} \int_{\Sg_R} \frac{ j_X^0(\mu)}{z-\mu}\dd\mu.
\eeq
The function $j_X^0(\mu)$ has poles accumulating on the imaginary axis, spaced at a distance of order $1/n$, and the integral \eqref{eq:Xn1z} can be estimated by deforming the contours $\Sg_n^\pm$ across these poles and away from the real axis, keeping track of the residues which are accumulated. This is exactly what is done in \cite[Section 6.6]{Bleher-Liechty14} for the integral
\[
-\frac{1}{2\pi \ii} \int_{\Sg_n^\pm}  j_X^0(\mu)\dd\mu.
\]
That calculation is easily adapted to the integral \eqref{eq:Xn1z}. Indeed, similar to \cite[Equation (6.6.14)]{Bleher-Liechty14}, we have
\[
-\frac{1}{2\pi \ii} \int_{\Sg_n^+} \frac{ j_X^0(\mu)}{z-\mu}\dd\mu = -\frac{1}{2\pi \ii} \int_{\Sg_n^+} \frac{e^{-nG_n(\mu)}{\bf M}_n(\mu)\begin{pmatrix} 0 & 0 \\ 1 & 0\end{pmatrix}{\bf M}_n(\mu)^{-1}}{z-\mu}\dd\mu
\]
where $e^{-nG_n(\mu)}$ is a scalar function with poles at the points
\[
z_j = \frac{\ii j \pi}{n(\pi/(2\ga) -1)}, \quad j =  1,  2, \dots,
\]
and residues satisfying
\[
\Res_{\mu = z_j} e^{-nG_n(\mu)} =c_j n^{- \kappa_j}, \quad \kappa_j = 1+ \frac{2j}{\pi/(2\ga) -1}>1.
\]
for some explicit constants $c_j$. Assuming $z$ is bounded away from the origin, this immediately gives the asymptotic expansion similar to \cite[Equation (6.6.22)]{Bleher-Liechty14},
\eq
\begin{aligned}
-\frac{1}{2\pi \ii} \int_{\Sg_n^+} \frac{ j_X^0(\mu)}{z-\mu}\dd\mu &= -\sum_{j: \kappa_j \le 2} \frac{c_j n^{-\kappa_j}{\bf M}_n(z_j)\begin{pmatrix} 0 & 0 \\ 1 & 0\end{pmatrix}{\bf M}_n(z_j)^{-1}}{z-z_j} + \bigO(n^{-2-\ep}) \\
& = -\frac{1}{z}\sum_{j: \kappa_j \le 2} c_j n^{-\kappa_j}{\bf M}_{n+}(0)\begin{pmatrix} 0 & 0 \\ 1 & 0\end{pmatrix}{\bf M}_{n-}(z_j)^{-1}+ \bigO(n^{-2-\ep}),
\end{aligned}
\eeq
for some $\ep>0$, where in the second line we used the fact that $z_j = \bigO(1/n)$ and ${\bf M}_n(z)$ and ${\bf M}_n(z)^{-1}$ are both bounded in a neighborhood of the origin. A nearly identical computation and result applies to $-\frac{1}{2\pi \ii} \int_{\Sg_n^-} \frac{ j_X^0(\mu)}{z-\mu}\dd\mu$, showing that ${\bf X}_{n,1}(z) = \bigO(1/n)$ as $n\to\infty$ uniformly for $z$ bounded away from $[\al,\be]$.

%
%
%

\medskip

{\bf Proof of Proposition \ref{prop:OP_asy} for $\De = -1$.} Here the proof is nearly identical to the $|\De|<1$ case, this time using the Riemann--Hilbert analysis of \cite{Bleher-Bothner12}.  We do not include the details here, but make a few comments regarding the error term in the function ${\bf X}_n(z)$. As in the $|\De|<1$ case, the function ${\bf X}_n(z)$ is given by the series \eqref{eq:R_series}, and the function $j_X^0(\mu)$\footnote{In \cite{Bleher-Bothner12}, the function which we call $j_X^0(\mu)$ is instead denoted $(G_r(\mu) - I)$.}  is estimated as $\bigO(1/n)$ except on the small purely imaginary interval $(-\ii \ep, \ii\ep)$. The proof that  ${\bf X}_n(z) = {\bf I} + \bigO(1/n)$ for $z$ bounded away from $[\al,\be]$ then boils down to estimating integrals of the form
\[
\int_{-\ii\ep}^{\ii\ep} j_X^0(\mu) f(\mu)\dd\mu
\]
for a scalar function $f(\mu)$ which is analytic in a neighborhood of the origin. Such integrals are shown in \cite[Equation (11.13)]{Bleher-Bothner12} to be of order $\bigO(1/n)$. Taking $f(\mu) = \frac{1}{z-\mu}$ then implies the estimate  ${\bf X}(z) = {\bf I} + \bigO(1/n)$ for $z$ bounded away from $[\al,\be]$.

\medskip

{\bf Proof of Proposition \ref{prop:OP_asy} for $\De < -1$.}
Here the proof is very similar to the $|\De|<1$ case, this time using the Riemann--Hilbert analysis of \cite{Bleher-Liechty10}, also described in \cite[Chapter 7]{Bleher-Liechty14}. For the convenience of the reader, we will refer primarily to \cite[Chapter 7]{Bleher-Liechty14} in what follows.
 As in the case $|\De|<1$ described above, the quantities $\al, \al',\be',\be, g(z)$, and $l$ differ slightly from the quantities used in this paper due to the different scalings. The endpoints of the equilibrium measure $\al, \al',\be',$ and $\be$ presented there differ by a factor of $\ga$. We also have the following lemma.
\begin{lem}\label{lem:appendix_misc_2}
Let $\tilde g(z)$ be the function defined in \cite[Equation (7.4.20)]{Bleher-Liechty14} and denoted $g(z)$ there. Let $\tilde l$ be the constant defined in \cite[Equation (7.4.22)]{Bleher-Liechty14} and denoted $l$ there. Let $g(z)$ and $l$ be as defined in \eqref{def:g-function} and \eqref{eq:eq_condition_band} of the current paper. Then
\eq\label{eq:l_ltilde}
\tilde l = l+ 2\ln \ga, \qquad \tilde g(\ga z) =  g(z) + \ln \ga .
\eeq
\end{lem}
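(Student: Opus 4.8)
The plan is to mirror exactly the argument used to prove Lemma \ref{lem:appendix_misc} in the disordered phase, since the two statements are structurally identical (and, in fact, Lemma \ref{lem:appendix_misc_2} is simpler because there is no $\bigO(n^{-2})$ correction to track — the quantities $\tilde g$ and $\tilde l$ from \cite[Chapter 7]{Bleher-Liechty14} are the genuine $n$-independent equilibrium objects, not their $n$-dependent analogues). First I would establish the second identity in \eqref{eq:l_ltilde}. The derivative $\tilde g'(z)$ is given explicitly in \cite[Chapter 7]{Bleher-Liechty14} in terms of the elliptic-integral representation of the equilibrium measure on $[\ga\al,\ga\be]$ with density $\tfrac12$ on $[\ga\al',\ga\be']$, and comparing it with formula \eqref{eq_case3_G}--\eqref{eq:GpAF} of the current paper (recalling that the branch points $\al,\al',\be',\be$ in \cite[Chapter 7]{Bleher-Liechty14} are $\ga$ times those in \eqref{eq_case3_alpha_beta}) yields the chain-rule identity $\tfrac{\dd}{\dd z}\tilde g(\ga z) = g'(z)$. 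Hence $\tilde g(\ga z)$ and $g(z)$ differ by a constant, and the normalization $\tilde g(z) = \ln z + \bigO(1/z)$, $g(z) = \ln z + \bigO(1/z)$ as $z\to\infty$ forces that constant to be $\ln\ga$.

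Next I would prove the first identity $\tilde l = l + 2\ln\ga$. The cleanest route is through the equilibrium condition: by \eqref{eq:eq_condition_band}, $l = 2g(\be) - V_{\rm AF}(\be) = 2g(\be) - (\ga\be - t\be)$ (using $\be>0$), while \cite[Chapter 7]{Bleher-Liechty14} gives the analogous relation $\tilde l = 2\tilde g(\ga\be) - \tilde V(\ga\be)$, where $\tilde V$ is their potential. Since their scaling differs from ours by the factor $\ga$ in the argument, one has $\tilde V(\ga x) = V_{\rm AF}(x)$ (the potentials \eqref{def:V} are linear in the relevant pieces, so this is immediate). Substituting the already-established $\tilde g(\ga\be) = g(\be) + \ln\ga$ gives $\tilde l = 2g(\be) + 2\ln\ga - V_{\rm AF}(\be) = l + 2\ln\ga$. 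Alternatively, if \cite[Chapter 7]{Bleher-Liechty14} records an explicit closed form for $\tilde l$ (in terms of theta-constants), one can instead derive a parallel closed form for $l$ via integration by parts on $g$ — as done in the $|\De|<1$ case in the proof of Lemma \ref{lem:appendix_misc}, using \eqref{eq:GpAF} in place of \eqref{eq:Gp} — and compare directly; the equilibrium-condition route avoids this computation.

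The main obstacle is purely bookkeeping: one must carefully match the branch-point normalizations, the potential conventions, and the sign conventions in the elliptic-function formulas of \cite[Chapter 7]{Bleher-Liechty14} against those of the current paper (the theta-function arguments $\omega = \pi\tfrac{t+\ga}{4\ga}$ and the four branch points \eqref{eq_case3_alpha_beta} must line up with their counterparts after the rescaling $z\mapsto \ga z$). No genuine analytic difficulty arises — unlike the disordered-phase lemma, there is no $n$-dependent equilibrium measure here, so none of the $\bigO(n^{-2})$ estimates involving the auxiliary function $k(x)$ are needed. Once the dictionary between the two sets of conventions is fixed, both identities in \eqref{eq:l_ltilde} follow from the chain rule and the normalization at infinity, exactly as above.
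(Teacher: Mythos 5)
Your proposal is correct and follows essentially the same route as the paper's proof: establish $\tilde g(\ga z) = g(z) + \ln\ga$ by comparing derivatives (chain rule plus the $\ln z + \bigO(1/z)$ normalization at $\infty$), then obtain $\tilde l = l + 2\ln\ga$ by plugging the band endpoint into the two equilibrium conditions and substituting the already-proven $g$-function identity. Your explicit note that $\tilde V(\ga x) = V_{\rm AF}(x)$ is a detail the paper leaves implicit when it says "plugging $x=\ga\be$ into the equilibrium condition of \cite{Bleher-Liechty14}" versus "$x=\be$ into \eqref{eq:eq_condition_band}", but it is the correct accounting and does not change the argument.
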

\begin{proof}
Note that, as in \eqref{eq:2gprimes}, we have $\frac{\dd}{\dd z} \tilde g(\ga z) = g'( z),$ where  $\frac{\dd}{\dd z} \tilde g(\ga z)$ is given \cite[Equation (7.4.19)]{Bleher-Liechty14} (recall that $\al, \al', \be',$ and $\be$ are all rescaled by a factor of $\ga$), and $g'(z)$ is given in \eqref{eq_case3_G}. Thus, $\tilde g(\ga z) $ and $g'( z)$ can differ only by a constant and the value of the constant follows from the fact that both $g(z)$ and $\tilde g(z)$ behave as $\ln z +\bigO(1/z)$ as $z\to\infty$. Then the relation between $\tilde l$ and $l$ follows from plugging $x=\ga\be$ into the equilibrium condition  \cite[Equation (7.4.22)]{Bleher-Liechty14} for $\tilde g(x)$, and $x=\be$ into the equilibrium condition \eqref{eq:eq_condition_band} for $g(x)$.
\end{proof}

In \cite{Bleher-Liechty14} the discrete RHP, or interpolation problem, is presented in \cite[Section 7.5.1]{Bleher-Liechty14}, and its solution, denoted ${\bf P}_n(z)$, is given in \cite[Equation (7.5.4)]{Bleher-Liechty14}. In terms of the orthogonal polynomials $p_{n,k}(z)$ presented in equation \eqref{def:OPsrescaled_AF} of the current paper (again accounting between the rescaling $x\mapsto \ga x$), that solution is
\eq\label{eq:RHP_sol_AF}
{\bf P}_n(\ga z) = \begin{pmatrix} \ga^{n} & 0 \\ 0 & \ga^{-n} \end{pmatrix} \begin{pmatrix} p_{n,n}(z) & -4\pi \ii C^{\mt_n}(p_{n,n})(z) \\  \frac{p_{n,n-1}(z)}{2h_{n,n-1}} &  -2\pi \ii \frac{C^{\mt_n}(p_{n,n-1})(z)}{h_{n,n-1}}
\end{pmatrix}.	
\eeq
As in the proof above for $|\De|<1$,
unraveling the transformations in the RHP which make up the nonlinear steepest descent analysis of \cite[Chapter 7]{Bleher-Liechty14}, we obtain that for $|\Im z| >\ep$ and $\Re z \notin [\al, \be]$,
\eq\label{eq:RHunravelAF}
{\bf P}_n(\ga z) = {\bf K}^{-1}e^{(n(l+2\ln\ga)/2)\sg_3} {\bf X}_n(\ga z) {\bf M}(\ga z) e^{n(g( z)+\ln \ga - (l+2\ln\ga)/2))\sg_3}{\bf K},
\eeq
where
\[
{\bf K} = \begin{pmatrix} 1 & 0 \\ 0  & -2\pi \ii \end{pmatrix},
\]
and ${\bf M}(z)$ satisfies
\[
{\bf M}(z) = {\bf I} + \bigO(1/z), \quad \textrm{as} \ z\to\infty.
\]
 Note that we have used the relations \eqref{eq:l_ltilde} to write $\tilde g(z)$ and $\tilde l$ in terms of $g(z)$ and $l$.  The matrix ${\bf X}_n(z)$ in \eqref{eq:RHunravelAF} satisfies as $n\to\infty$,
\eq\label{eq:Xn_AF}
{\bf X}_n(z) = {\bf I}+ \bigO(1/n),
\eeq
uniformly on closed subsets of $\C \setminus [\ga\al,\ga\be]$. Unlike in the $|\De|<1$ case\footnote{In some ways the analysis is simpler in this case because the orthogonality weight $w_n^{\rm AF}(x)$ is given by $e^{nV(x)}$ where $V(x)$ is independent of $n$; in the $|\De|<1$ case the weight $w_n^{\rm D}(x)$ is given by $e^{nV_n(x)}$ where $V_n(x)$  depends on $n$. This $n$-dependence of $V_n(x)$ necessitates an $n$-dependent equilibrium measure in the asymptotic analysis, which in turn leads to $n$-dependence of quantities line $g_n(z)$ in \eqref{eq:RHunravel} and ${\bf M}_n(z)$ in \eqref{eq:Mn_zinf}. The corresponding quantities in the $\De<-1$ case are independent of $n$, and there is no need for careful error estimates for them.}, the asymptotic expression \eqref{eq:Xn_AF} follows immediately from the standard $\bigO(1/n)$ estimates on the jump matrix for ${\bf X}_n(z)$, see \cite[Equation (7.5.94)]{Bleher-Liechty14}.
Multiplying out the RHS of \eqref{eq:RHunravelAF}, taking the $(22)$-entry, and comparing with of \eqref{eq:RHP_sol_AF} yields
\[
\frac{C^{\mt_n}(p_{n,n-1})(z)}{h_{n,n-1}} = -  \frac{\ga^n}{2\pi \ii} e^{-n(g(z)+\ln \ga)}{\bf M}(\ga z)_{22}(1+\bigO(1/n)),
\]
which is \eqref{eq:leading_Cauchy} after writing $M(z)$ for ${\bf M}(\ga z)_{22}$. Similarly, comparing the $(21)$-entry of \eqref{eq:RHunravelAF} with \eqref{eq:RHP_sol_AF} yields
\[
\frac{p_{n,n-1}(z)}{h_{n,n-1}} =   \frac{\ga^n}{-\ii \pi} e^{n(g(z)-l-\ln \ga)}{\bf M}(\ga z)_{21}(1+\bigO(1/n)),
\]
which is
\eqref{eq:leading_OPs} after writing $N(z)$ for $-\frac{ {\bf M}(\ga z)_{21}}{\ii \pi}$.

When $|\Im z| <\ep$ and $\Re z \notin [\al, \be]$, there is an additional term in the asymptotic expansion coming from the discreteness of the orthogonality measure. In this region, the transformations of the RHP include a matrix ${\bf D}_{\pm}^u(z)$ whose purpose is to turn a discrete Riemann--Hilbert problem into a continuous one (see \cite[Section 7.5.2]{Bleher-Liechty14}). The result is that for $z$ close to the real axis but still bounded away from $[\al, \be]$, \eqref{eq:RHunravelAF}  becomes
\eq\label{eq:RHunravelAF_real}
{\bf P}_n(\ga z) = {\bf K}^{-1}e^{(n(l+2\ln\ga)/2)\sg_3} {\bf X}_n(\ga z) {\bf M}(\ga z) e^{n(g( z)+\ln \ga - (l+2\ln\ga)/2))\sg_3}{\bf K}{\bf D}^u(\ga z)^{-1},
\eeq
where
\[
{\bf D}^u(\ga z) = \begin{pmatrix} 1 & -  \frac{\pi e^{-nV(z)}}{\sin\left(\frac{n\pi z }{2}\right)}e^{\pm \ii n\pi z/2} \\ 0 & 1 \end{pmatrix} \quad \textrm{for} \ \pm \Im z>0.
\]
This upper-triangular factor does not change the first column on ${\bf P}_n(\ga z)$, but it does affect the second column. In particular, comparing the $(22)$-entry with \eqref{eq:RHP_sol_AF} we find
\eq\label{eq:Cauchy_closetoR}
\frac{C^{\mt_n}(p_{n,n-1})(z)}{h_{n,n-1}} = -\frac{1}{2\pi \ii} e^{-ng(z)} \left({\bf M}(\ga z)_{22} - \frac{e^{n(2g( z) - V(z) + l)} e^{\pm \frac{\ii n\pi z}{2}} {\bf M}(\ga z)_{21}}{2 i \sin\left(\frac{n\pi z}{2}\right)} \right)(1+\bigO(1/n)).
\eeq
The second term in the parentheses above has poles on the lattice $\frac{2}{n} \Z$, but is exponentially small in $n$ for $z$ near the intervals $(-\infty, \al) \cup (\be, \infty)$ and at a distance at least $\ep/n$ from these poles due to \eqref{eq:equilibrium_lessthan}. The second term is not present at all for $z$ away from the real line. This proves \eqref{eq:leading_Cauchy} for  $\De<-1$. 

\subsection{Proof of Proposition \ref{prop:OP_asyF}}\label{app:A2}

Unlike the cases $\De<1$, the asymptotic analysis of the Riemann--Hilbert problem encoding the orthogonal polynomials $p^{\rm F}_{n,n-1}(z)$ was not done in earlier papers. We present the analysis here, following the approach outlined in \cite{Bleher-Liechty11} and \cite[Chapter 3]{Bleher-Liechty14}. The key difference here is that the discrete measure of orthogonality accumulates on the negative real line, rather than the full real line as in \cite{Bleher-Liechty11} and \cite[Chapter 3]{Bleher-Liechty14}. There is, therefore, an additional local solution near the origin using the function $D(z)$ defined in \eqref{eq:def_D} below. The same local transformation is used in \cite[Section 6.6]{Geronimo-Liechty20}, see also \cite{Wang-Wong11}. The rest of the asymptotic analysis presented in this section follows the steps of \cite{Bleher-Liechty11}.

 Letting $p_{n,k}(z)$ be the orthogonal polynomials defined in \eqref{def:OPsrescaled_F}, define the shifted monic polynomials
\eq\label{eq:OPs_F_shifted}
q_{n,k}(z) = p_{n,k}(z-1/n),
\eeq
which satisfy the orthogonality condition
\eq\label{eq:OPs_F_reflected}
\frac{1}{n} \sum_{x\in L_n}q_{n,k}(x)q_{n,j}(x) e^{n(t-|\ga|) x}\left(1-e^{2n|\ga| x}e^{-2|\ga|}\right) =  h^q_{n,k} \de_{j,k}, \quad  h^q_{n,k} =  h_{n,k}e^{(t-|\ga|)},
\eeq
where $t>|\gamma|$ and
\[
L_n = \left\{ -\frac{1}{n}, -\frac{3}{n}, -\frac{5}{n},\dots\right\}.
\]
Define $V_n^q(x)$ as
\begin{equation}\label{em1}
\begin{aligned}
V_n^q(x)&=-n^{-1} \log \left[e^{n(t-|\ga|) x}\left(1-e^{2n|\ga| x}e^{-2|\ga|}\right) \right] \\
&=-(t-\ga) x + \frac{1}{n}\log\left(1-e^{2n|\ga| x}e^{-2|\ga|}\right).
\end{aligned}
\end{equation}
This function extends to a complex analytic function $V_n^q(z)$ on the left half-plane.
In the limit $n\to\infty$ the function $V^q_n(z)$ converges to the function
\begin{equation} \label{eq_V_appendix}
V(z)=-(t-\ga) z.
\end{equation}
More precisely, note that as $n\to\infty$,
\begin{equation}\label{eq:Vnq_V}
e^{-nV_n^q(z)} = e^{-nV(z)} \left(1-e^{2n|\ga| z}e^{-2|\ga|}\right) = \left\{
\begin{aligned}
&e^{-nV(z)} (1+\bigO(e^{-2n|\ga|\,|\Re z|})), \quad \Re z<0, \\
&e^{-nV(z)}\bigO(1), \quad \Re z = 0.
\end{aligned}\right.
\end{equation}

In what follows we will use the properties \eqref{eq:eq_condition_band}, \eqref{eq:equilibrium_lessthanF}, and \eqref{eq:eq_condition_sat}  for the function $g_{\rm F}(z)$, and we will drop the subscript F throughout this Appendix, denoting $g_{\rm F}(z) \equiv g(z)$. We additionally use the property
\eq\label{eq:G_int}
g_+(x) - g_-(x) = 2\pi \ii \int_x^0 \nu(\dd t), \quad x<0,
\eeq
which follows from the definition \eqref{def:g-functionF}.
Denote the density for the equilibrium measure $\nu$ as $\rho(x)$. We will use the following properties of $\rho(x)$ throughout this section.
\begin{itemize}
\item The density $\rho(x)$ is real analytic for $x\in(\be,\al)$, vanishing like a square root as $x\to\be_+$ and approaching the value 1/2 like a square root as $x\to\al_-$.
\item On the interval $(\be,\al)$, $\rho(x)$ satisfies the inequality
\eq
0<\rho(x) < 1/2, \quad \textrm{for} \ \be<x<\al.
\eeq
\item On the interval $[\al,0]$, the density is constant, $\rho(x) \equiv 1/2$.
\end{itemize}
 Note then that for $x\in[\al,0]$, \eqref{eq:G_int} becomes
\eq\label{eq:G_sat}
g_+(x) - g_-(x) = 2\pi \ii \frac{0-x}{2} = -\ii \pi x, \quad \al\le x\le 0.
\eeq
Since $\rho(x)$ is real analytic on the interval $(\be,\al)$, it extends to a complex analytic function $\rho^{a}(z)$ with cuts on the intervals $(-\infty,\be)$ and $(\al,\infty)$. For some $\ep>0$ we therefore may define for $z$ in  $\{[\be-\ep, \al+\ep]\times[-\ii \ep, \ii\ep] \}\setminus  \{(-\infty,\be) \cup (\al,\infty)\}$
\eq\label{eq:Gz_def}
G(z):= -\ii \pi \al - 2\pi \ii \int_\al^z \rho^a(w)\, \dd w,
\eeq
which gives an analytic extension of $g_+(x)- g_-(x)$ for $\be\le x \le \al$.
The above formula implies
\eq\label{eq:G_CR}
\left.\frac{\dd G(x+\ii y)}{\dd y} \right|_{y=0} = 2\pi \rho(x)>0, \quad \textrm{for} \ \be<x<\al.
\eeq
Finally, note that the equilibrium condition \eqref{eq:eq_condition_band} implies that
\eq\label{eq:G_gpgm}
G(x)=g_+(x) - g_-(x) = -(2g_-(x) - V(x) - l) = 2g_+(x) - V(x) - l, \quad \textrm{for} \ \be<x<\al.
\eeq
Furthermore, the above equations extend by analytic continuation into the lower and upper half-planes, so that for $z$ in a complex neighborhood of $(\be,\al)$,
\eq\label{eq:G_upper_lower}
G(z)=\left\{
\begin{aligned}
&2g(z) - V(z) - l \quad \textrm{for} \  \Im z > 0 \\
&-(2g(z) - V(z) - l)\quad \textrm{for} \  \Im z < 0.
\end{aligned}\right.
\eeq

The orthogonal polynomials $q_{n,n}(z)$ and $q_{n,n-1}(z)$ are encoded in the following interpolation problem (IP).
Find a $2\times 2$ matrix-valued function
$\mathbf Q_n(z)=(\mathbf Q_{n}(z)_{ij})_{1\le i,j\le 2}$ with the properties:
\begin{enumerate}
\item
{\it Analyticity}: $\mathbf Q_n(z)$ is an analytic function of $z$ for $z\in\C\setminus L_n$.
\item
{\it Residues at poles}: At each node $x\in L_n$, the elements $\mathbf Q_{n}(z)_{11}$ and
$\mathbf Q_{n}(z)_{21}$ of the matrix $\mathbf Q_n(z)$ are analytic functions of $z$, and the elements $\mathbf Q_{n}(z)_{12}$ and
$\mathbf Q_{n}(z)_{22}$ have a simple pole with the residues,
\begin{equation} \label{IP1}
\underset{z=x}{\rm Res}\; \mathbf Q_{n}(z)_{j2}=\frac{1}{n}e^{-nV_n^q(z)}\mathbf Q_{n}(x)_{j1},\quad j=1,2.
\end{equation}
\item
{\it Asymptotics at infinity}: There exists a function $r(x)>0$ on  $L_n$ such that
\begin{equation} \label{IP2a}
\lim_{x\to\infty} r(x)=0,
\end{equation}
and such that as $z\to\infty$, $\mathbf Q_n(z)$ admits the asymptotic expansion,
\begin{equation} \label{IP2}
\mathbf Q_n(z)\sim \left( {\bf I} + \bigO(1/z)\right)
\begin{pmatrix}
z^n & 0 \\
0 & z^{-n}
\end{pmatrix},\qquad z\in \C\setminus \left[\bigcup_{x\in L_n}^\infty D\big(x,r(x)\big)\right],
\end{equation}
where $D(x,r(x))$ denotes a disk of radius $r(x)>0$ centered at $x$ and ${\bf I}$ is the identity matrix.
\end{enumerate}

The unique solution to the IP is
\begin{equation} \label{IP3}
\mathbf Q_n(z)=
\begin{pmatrix}
 q_{n,n}(z) & \frac{1}{n}\sum_{x\in L_n}\frac{ q_{n,n}(x)e^{-nV_n^q(x)}}{z-x} \\
\frac{1}{h_{n,n-1}^q} q_{n,n-1}(z) & \frac{1}{n h_{n,n-1}^q}\sum_{x\in L_n}\frac{ q_{n,n-1}(x)e^{-nV_n^q(x)}}{z-x}
\end{pmatrix},
\end{equation}
see, e.g., \cite{Bleher-Liechty11}. Note that
\eq\label{eq:Q_p_Cauchy}
{\bf Q}_n(z)_{22} = \frac{1}{n h_{n,n-1} e^{t-|\ga|}} \sum_{x\in \frac{2}{n} \Z_{<0}} \frac{p_{n,n-1}(x) w_n^{\rm F}(x)e^{t-|\ga|}}{z-x-1/n} = -2\pi \ii \frac{C^{\mt_n}(p_{n,n-1})(z-1/n)}{h_{n,n-1}},
\eeq
and
\eq\label{eq:Q_p_OP}
{\bf Q}_n(z)_{21} = \frac{p_{n,n-1}(z-1/n)}{h_{n,n-1} e^{t-|\ga|}}.
\eeq

{\bf Reduction of IP to RHP.} The Interpolation Problem can be reduced to a Riemann--Hilbert Problem (RHP) by multiplying by a matrix-valued function which will cancel the poles of ${\bf Q}_n(z)$, and instead introduce jumps on certain contours in the complex plane. We introduce the function
\begin{equation} \label{redp1}
\Pi(z)=\frac{2\cos (n \pi z/2)}{n \pi}\,.
\end{equation}
Observe that
\begin{equation} \label{redp3}
\Pi(x_k)=0,\quad \Pi'(x_k)=(-1)^{k+1}=\exp\left[\frac{i \pi}{2}\left(n x_k+1\right)\right]
\quad \textrm{for}\quad x_k=\frac{-2k+1}{n}\in L_n\,.
\end{equation}
Introduce the upper and lower triangular matrices,
\begin{equation} \label{redp4}
\mathbf D^u_{\pm}(z)
=\begin{pmatrix}
1 & -\frac{e^{-nV_n^q(z)}}{n\Pi(z)}e^{\pm \frac{i \pi}{2}(n z+1)}   \\
0 & 1
\end{pmatrix}, \quad \mathbf D^l_{\pm}
=\begin{pmatrix}
\Pi(z)^{-1} & 0   \\
-ne^{nV_n^q(z)}e^{\pm \frac{i \pi}{2}(n z+1)} & \Pi(z)
\end{pmatrix},
\end{equation}
and define the matrix-valued functions (two-valued on the real line),
\begin{equation} \label{redp6}
\mathbf R^u_n(z)=\mathbf Q_n(z)\times
\left\{
\begin{aligned}
& \mathbf D^u_+(z),\quad \textrm{when}\quad \Im z\ge0,\\
& \mathbf D^u_-(z),\quad \textrm{when}\quad \Im z\le0,
\end{aligned}
\right.
\end{equation}
and
\begin{equation} \label{redp7}
\mathbf R^l_n(z)=\mathbf Q_n(z)\times
\left\{
\begin{aligned}
& \mathbf D^l_+(z),\quad \textrm{when}\quad \Im z\ge0,\\
& \mathbf D^l_-(z),\quad \textrm{when}\quad \Im z\le0.
\end{aligned}
\right.
\end{equation}
The functions $\mathbf R_n^u(z)$, $\mathbf R_n^l(z)$ are then pole free (see e.g., \cite[Proposition 3.5.1]{Bleher-Liechty14}), and have the following jumps on the negative real axis.  For $x\in \R_-$,
\begin{equation} \label{redp9a}
\mathbf R_{n+}^u(x)=\mathbf R_{n-}^u(x) \begin{pmatrix} 1 & -\ii \pi  e^{-nV_n^q(x)}\, \\
0 & 1
\end{pmatrix},\qquad \mathbf R_{n+}^l(x)=\mathbf R_{n-}^l(x)\begin{pmatrix}
1 & 0 \\
-\ii\pi  e^{nV_n^q(x)} & 1
\end{pmatrix}.
\end{equation}

 {\bf The transformations of the RHP.} For a small fixed $\ep>0$, let $\Om^{\rm b}_\pm$ be the region bounded by the quadrilateral with vertices at the points $\be,\al, \be+\ep\pm \ii\ep,$ and $\al-\ep \pm \ii\ep$; let $\Om^{\rm s}_\pm$ be the region bounded by the quadrilateral with vertices at the points $\al, \al-\ep \pm \ii\ep, \pm\ii \ep,$ and 0; and let  $\Om^{\rm v}_\pm$ be the region bounded by the line segment $(\be, \be + \ep\pm\ii \ep)$ and the two half-lines $(-\infty, \be)$ and $(-\infty\pm \ii \ep, \be \pm \ii \ep)$, see Figure \ref{contour_Sigma_S}\footnote{The superscripts b, s, and v refer to {\it bands}, {\it saturated regions}, and {\it voids} as described in \cite{Bleher-Liechty11}, \cite[Chapter 3]{Bleher-Liechty14}}. Now define the function ${\bf S}_n(z)$ as\footnote{This combines the first and second transformations of the Riemann--Hilbert problem as presented in \cite{Bleher-Liechty11}, \cite[Chapter 3]{Bleher-Liechty14}.}
\begin{equation}\label{stg2}
\mathbf S_n(z)=\left\{
\begin{aligned}
&e^{-\frac{nl}{2}\sg_3} {\bf K} {\bf R}_n^u(z) {\bf K}^{-1} e^{-n(g(z) - \frac{l}{2})\sg_3}  \begin{pmatrix}1 & 0 \\ \mp e^{\mp nG(z)} & 1 \end{pmatrix} \qquad \textrm{for} \quad z\in \Om^{\rm b}_\pm, \\
&e^{-\frac{nl}{2}\sg_3} {\bf K} {\bf R}_n^l(z) {\bf K}^{-1} e^{-n(g(z) - \frac{l}{2})\sg_3}  \begin{pmatrix} \mp \frac{1}{\ii n \pi} e^{\mp \frac{\ii \pi}{2}(nz+1)} & 0 \\ 0 &  \mp \ii n \pi e^{\pm \frac{\ii \pi}{2}(nz+1)} \end{pmatrix} \qquad \textrm{for} \quad z\in\Om^{\rm s}_\pm, \\
&e^{-\frac{nl}{2}\sg_3}  {\bf K} {\bf R}_n^u(z) {\bf K}^{-1} e^{-n(g(z) - \frac{l}{2})\sg_3}  \qquad \textrm{for} \quad z\in\Om^{\rm v}_\pm, \\
&e^{-\frac{nl}{2}\sg_3}  {\bf K} {\bf Q}_n(z) {\bf K}^{-1} e^{-n(g(z) - \frac{l}{2})\sg_3}  \qquad \textrm{otherwise},
\end{aligned}\right.
\end{equation}
where $\mathbf K=\begin{pmatrix} 1 & 0 \\ 0 & -\pi \ii \end{pmatrix}$.
This function satisfies a RHP  with jumps on an oriented contour $\Sigma_S$, consisting of the half-lines $(-\infty, 0]$ and $\{(-\infty, \be+\ep) \pm \ii \ep\}$ as well as the segments $(\be, \be +\ep\pm \ii \ep)$, $(\al- \ep\pm\ii \ep, \al)$, $(\be +\ep\pm \ii \ep,\al- \ep\pm\ii \ep)$, and $(-\ii \ep,  \ii \ep)$, oriented as shown in Figure \ref{contour_Sigma_S}. As we show below, the matrix ${\bf S}_n(z)$ satisfies jump properties which are close to constant as $n\to\infty$ except near the points $\be$, $\al$, and 0. We can then approximate a solution to this Riemann--Hilbert problem by combining solutions to a global Riemann--Hilbert problem with constant jumps (the model problem) and local Riemann--Hilbert problems near the points $\be$, $\al$, and 0.
\begin{figure}\
\begin{center}
\begin{tikzpicture}[scale=1.3]
\draw[very thick] (-6,0) --(5,0)
(5.2,0) node {$0$}
(5,1.2) node {$\ii\ep$}
(5,-1.2) node {$-\ii\ep$};
\draw[very thick] (-1,1) --(5,1);
\draw[very thick] (-6,1) --(-2,1);
\draw[very thick] (-3,0) --(-2,1);
\draw[very thick] (-3,0) --(-2,-1);
\draw[very thick] (-2,1) --(0,1);
\draw[very thick] (-1,-1) --(5,-1);
\draw[very thick] (-6,-1) --(-2,-1);
\draw[very thick] (-2,-1) --(0,-1);
\draw[very thick] (5,1) --(5,-1);
\draw[very thick] (0,1) --(1,0);
\draw[very thick] (0,-1) --(1,0);
\draw[very thick] (-.1,0) --(.1,0)
(1,-0.3) node {$\al$}
(0,1.2) node {$\al-\ep+\ii \ep$}
(0,-1.2) node {$\al-\ep-\ii \ep$}
(-1,0.5) node {$\Om_+^{\rm b}$}
(-1,-0.5) node {$\Om_-^{\rm b}$}
(3,0.5) node {$\Om_+^{\rm s}$}
(3,-0.5) node {$\Om_-^{\rm s}$}
(-5,0.5) node {$\Om_+^{\rm v}$}
(-5,-0.5) node {$\Om_-^{\rm v}$};
\draw[very thick] (-3,0) --(-3.1,0)
(-3,-0.3) node {$\be$}
(-2,1.2) node {$\be+\ep+\ii \ep$}
(-2,-1.2) node {$\be+\ep-\ii \ep$};
\draw[-Straight Barb, very thick] (3,0) --(3.1,0);
\draw[-Straight Barb, very thick] (-1.5,0) --(-1.4,0);
\draw[-Straight Barb, very thick] (-5.1,0) --(-5,0);
\draw[-Straight Barb, very thick] (3,-1) --(3.1,-1);
\draw[-Straight Barb, very thick] (-1.3,-1) --(-1.2,-1);
\draw[-Straight Barb, very thick] (-5.2,-1) --(-5.1,-1);
\draw[-Straight Barb, very thick] (3.1,1) --(3,1);
\draw[-Straight Barb, very thick] (-1.4,1) --(-1.5,1);
\draw[-Straight Barb, very thick] (-5.2,1) --(-5.3,1);
\draw[-Straight Barb, very thick] (5,-0.6) --(5,-0.5);
\draw[-Straight Barb, very thick] (5,0.5) --(5,0.6);
\draw[-Straight Barb, very thick] (-2.6,-0.4) --(-2.5,-0.5);
\draw[-Straight Barb, very thick] (-2.5,0.5) --(-2.6,0.4);
\draw[-Straight Barb, very thick] (0.5,-0.5) --(0.6,-0.4);
\draw[-Straight Barb, very thick]  (0.6,0.4) --(0.5,0.5);

%

    \end{tikzpicture}

\end{center}
\caption{The oriented contour $\Sg_S$ and the regions $\Om_\pm^{\rm v}$,  $\Om_\pm^{\rm b}$, and  $\Om_\pm^{\rm s}$ bounded by it.}\label{contour_Sigma_S}
\end{figure}
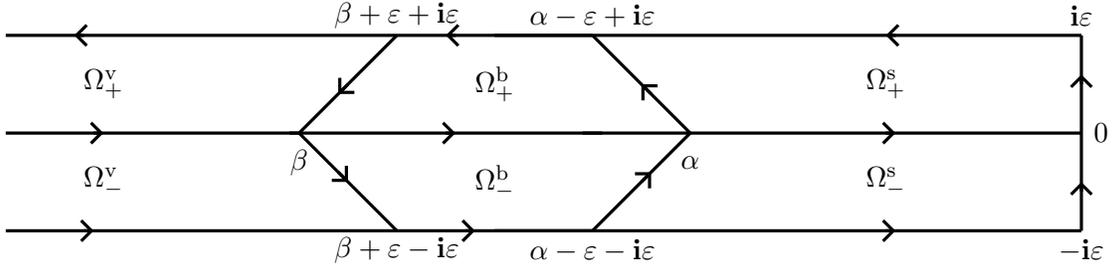

Recall that the $+$ (resp. $-$) side of the contour $\Sigma_S$ is the side on the left (resp. right) of the contour as it is traversed in the the direction of orientation shown in Figure \ref{contour_Sigma_S}. For $z\in \Sg_S$ we denote by $\mathbf S_{n\pm}(z)$ the limiting value of ${\bf S}_n$ as $z$ is approached from the $\pm$-side of the contour.  These values are related by   the jump conditions
\begin{equation}\label{st2a}
\mathbf S_{n+}(z)=\mathbf S_{n-}(z)j_S(z)\,,
\end{equation}
where
\begin{equation}\label{stg3}
j_S(z)=\left\{
\begin{aligned}
& \begin{pmatrix} e^{-nG(z)} - e^{n(2g_-(z) - l - V_n^q(z))} & e^{n(g_+(z) + g_-(z) - l - V_n^q(z))} \\ e^{n(g_+(x) + g_-(z) - l - V_n^q(z))} -2 & e^{n(G(z)} - e^{n(2g_+(z) - l - V_n^q(z))} \end{pmatrix}
 \ \textrm{for} \ z\in (\be,\al), \\
 &\begin{pmatrix} e^{-n(g_+(z) - g_-(z) + \ii \pi z)} & 0 \\ -e^{-n(g_{+}(z)+g_{-}(z)-l-V_n^q(z))} & e^{n(g_+(z) - g_-(z) + \ii \pi z)}  \end{pmatrix} \ \textrm{for} \ z \in \left(\al,0 \right), \\
 &\begin{pmatrix}1 & e^{n(g_{+}(z)+g_{-}(z)-l-V_n^q(z))} \\ 0 &  1 \end{pmatrix}
\qquad \textrm{for} \ z \in(-\infty,\be), \\
& \begin{pmatrix}1-\frac{e^{\mp nG(z)}e^{n(2g(z)-l-V_n^q(z))}}{1+e^{\mp \pi \ii n z}} & \pm \frac{e^{ n(2g(z) - l -V_n^q(z))}}{1+e^{\mp \pi \ii nz}}  \\ \mp e^{\mp nG(z)} & 1\end{pmatrix} \ \textrm{for} \ z\in (\be+\ep,\al-\ep) \pm \ii \ep, \\
& \begin{pmatrix} 1 & \pm \frac{e^{n(2g(z)-l-V_n^q(z))}}{1+e^{\mp \pi \ii nz}} \\ 0 & 1 \end{pmatrix} \ \textrm{for} \ z \in  (-\infty,\be+\ep) \pm \ii \ep, \\
&\begin{pmatrix} (1+e^{\pm \pi \ii n z})^{-1} & 0  \\ \mp e^{-n(2g(z)-V_n^q(z)-l)} &1+e^{\pm \pi \ii n z}\end{pmatrix} \qquad \textrm{for} \ z \in\left\{\left(\al-\ep,0 \right) \pm \ii \ep\right\} \cup \left(0,\pm \ii \ep\right), \\
&\begin{pmatrix} 1+e^{\pm \ii n\pi z}(1-e^{\mp nG(z)}e^{n(2g(z) - l -V_n^q(z))}) & \pm e^{n(2g(z) - V_n^q(z) - l\pm \ii \pi z)}  \\
\pm e^{-n(2g(z) -V_n^q(z) - l)} \mp e^{\mp nG(z)} &1
\end{pmatrix} \qquad \textrm{for} \ z \in\left(\al,\al-\ep\pm \ii\ep \right), \\
&\begin{pmatrix} 1 & 0  \\
\mp e^{\mp nG(z)} &1
\end{pmatrix} \qquad \textrm{for} \ z \in\left(\be,\be+\ep\pm \ii \ep \right). \\
\end{aligned}\right.
\end{equation}
Using \eqref{eq:Vnq_V} we can replace $e^{-nV_n^q(z)}$ with $e^{-nV(z)}$ throughout. We also use the relations \eqref{eq:G_gpgm} and \eqref{eq:G_upper_lower} between $G(z)$ and $g(z)$ to obtain, for some $c>0$,
\begin{equation}\label{stg3a}
j_S(z)=\left\{
\begin{aligned}
& \begin{pmatrix} e^{-nG(z)}\bigO(e^{-cn})  & 1+\bigO(e^{-cn}) \\ -1+\bigO(e^{-cn}) & e^{nG(z)}\bigO(e^{-cn}) \end{pmatrix}
 \qquad \textrm{for} \ z\in (\be,\al), \\
 &\begin{pmatrix} 1& 0 \\ -e^{-n(g_{+}(z)+g_{-}(z)-l-V(z))}(1+\bigO(e^{-2n|\ga|\,| z|})) & 1  \end{pmatrix} \qquad \textrm{for} \ z \in \left(\al,0 \right), \\
 &\begin{pmatrix}1 & e^{n(g_{+}(z)+g_{-}(z)-l-V(z))}\left(1+\bigO(e^{-cn})\right) \\ 0 &  1 \end{pmatrix}
\qquad \textrm{for} \ z \in(-\infty,\be), \\
& \begin{pmatrix}1-\frac{1+\bigO(e^{-cn})}{1+e^{\mp \pi \ii n z}} & \pm \frac{e^{ n(2g(z) - l -V(z))}\left(1+\bigO(e^{-cn})\right)}{1+e^{\mp \pi \ii nz}}  \\ \mp e^{\mp nG(z)} & 1\end{pmatrix} \qquad \textrm{for} \ z\in (\be+\ep,\al-\ep) \pm \ii\ep, \\
& \begin{pmatrix} 1 & \pm \frac{e^{n(2g(z)-l-V(z))}\left(1+\bigO(e^{-cn})\right)}{1+e^{\mp \pi \ii nz}} \\ 0 & 1 \end{pmatrix} \qquad \textrm{for} \ z \in  (-\infty,\be+\ep) \pm \ii \ep, \\
&\begin{pmatrix} (1+e^{\pm \pi \ii n z})^{-1} & 0  \\ \mp e^{-n(2g(z)-V(z)-l)}\bigO(1) &1+e^{\pm \pi \ii n z}\end{pmatrix} \qquad \textrm{for} \ z \in\left\{\left(\al-\ep,0 \right) \pm \ii \ep\right\} \cup \left(0,\pm \ii \ep\right), \\
&\begin{pmatrix} 1+e^{\pm \ii n\pi z}\bigO(e^{-cn}) & \pm e^{n(2g(z) - V(z) - l\pm \ii \pi z)}(1+\bigO(e^{-cn}))  \\
\pm e^{-n(2g(z) -V(z) - l)}\bigO(e^{-cn})&1
\end{pmatrix} \\
&\hspace{3cm} \textrm{for} \ z \in\left(\al,\al-\ep\pm \ii \ep \right), \\
&\begin{pmatrix} 1 & 0  \\
\mp e^{\mp nG(z)} &1
\end{pmatrix} \qquad \textrm{for} \ z \in\left(\be,\be+\ep\pm \ii \ep \right). \\
\end{aligned}\right.
\end{equation}

We choose $\ep>0$ small enough so that
\begin{itemize}
\item 
For $z= x\pm \ii \ep$ with $x\le\be+\ep$, we have $2\,\Re g(z) - \Re V(z) -l <  \pi \ep$. For large negative $x$ this inequality follows from the fact that $g(z) \sim \ln|z|$ and $V(z)$ is linear in $z$, so $V(z)$ dominates for large $|z|$; for $z$ bounded away from $\infty$ and from $\be$ the inequality follows for small enough $\ep$ by extending the inequality\eqref{eq:equilibrium_lessthanF} away from the real line by continuity. For $|x-\be|<\ep$ we use the fact that $2g(x\pm \ii \ep) - V(x\pm \ii \ep) - l = \pm G(x\pm \ii \ep)$, so \eqref{eq:Gz_def} gives
\begin{multline}\label{eq:G_beta_lessthan}
\Re \left(\pm G(x\pm \ii \ep)\right) = \Re\left(\mp 2\pi \ii \int_\be^{x\pm \ii \ep} \rho^a(w)\, \dd w\right) \le 2\pi |\be -(x\pm \ii \ep)| \sup_{w\in (\be, x\pm \ii \ep)}\{|\rho^a(w)|\} \\
\le 2\sqrt{2} \pi \ep\sup_{w\in (\be, x\pm \ii \ep)}\{|\rho^a(w)|\}.
\end{multline}
Since $\rho^a(\be) = 0$ so we may choose $\ep$ small enough so that $|\rho^a(z)|<1/\sqrt{200}$ for all $z$ such that $|z-\be|<\sqrt{2}\ep$, in which case \eqref{eq:G_beta_lessthan} gives
\[
\Re G(x\pm \ii \ep) \le \frac{\pi \ep}{5}, \quad x\in (\be-\ep, \be+\ep).
\]
\item The inequality
\eq\label{eq:G_band_inequalities}
0<\pm  \Re G(x \pm \ii \ep) - \pi \ep<0,
 \eeq
 holds for $x$ in the interval $(\be+\ep,\al-\ep)$. These inequalities are satisfied for small enough $\ep$ by \eqref{eq:G_CR}, which implies $\pm \Re G(x\pm \ii \ep)  = 2\pi \rho(x)\ep + \bigO(\ep^2)<\pi \ep+\bigO(\ep^2)$, as $\rho(x)$ is strictly between 0 and 1/2 on this interval.
 \item The inequality
\eq\label{eq:G_sat_inequalities}
\Re(2g(x\pm \ii \ep) - V(x\pm \ii \ep) - l)>0
 \eeq
 holds for $x$ in the interval $(\al-\ep\pm \ii\ep,\pm \ii\ep)$. For $x\in(\al+\ep,0)$ this inequality follows from extending the inequality \eqref{eq:eq_condition_sat} away from the real line by continuity. For $x\in (\al-\ep,\al+\ep)$ it follows by using \eqref{eq:Gz_def} and \eqref{eq:G_upper_lower} which imply, similar to \eqref{eq:G_beta_lessthan},
 \begin{multline}\label{eq:G_alpha_greater_than}
\Re(2g(x\pm \ii \ep) - V(x\pm \ii \ep) - l)=\Re\left(\pm G(x\pm \ii \ep)\right) = \Re\left(\mp 2\pi \ii \int_x^{x\pm \ii \ep} \rho^a(w)\, \dd w\right) \\
=\Re\left(\pm 2\pi \int_0^{\pm \ep} \rho^a(x+\ii y)\, \dd y\right)
= 2\pi \int_0^{ \ep}\Re \rho^a(x\pm \ii y)\, \dd y
\end{multline}
Since $\rho^a(\al) = 1/2$ so we may choose $\ep$ small enough so that $\Re \rho^a(z)>\frac{1}{4}$ for all $z$ such that $|z-\al|<\sqrt{2}\ep$, in which case \eqref{eq:G_alpha_greater_than} gives
\[
\Re G(x\pm \ii \ep) \ge \frac{\pi \ep}{4}, \quad x\in (\al-\ep, \al+\ep).
\]

 \end{itemize}
The properties \eqref{eq:eq_condition_band}, \eqref{eq:equilibrium_lessthanF}, \eqref{eq:eq_condition_sat}, and \eqref{eq:G_int} -- \eqref{eq:G_gpgm} then imply that $j_S(z)$ is uniformly close to the identity matrix on all parts of the contour bounded away from $(\be, \al) \cup \{0\}$, though not on the diagonally sloped segments near $\al$ and $\be$ or on the vertical segment near 0.   Introduce the function $\tilde j_S(z)$ on the same contour as
\begin{equation}\label{stg3b}
\tilde j_S(z)=\left\{
\begin{aligned}
& \begin{pmatrix} 0 & 1 \\ -1 & 0 \end{pmatrix}
 \ \textrm{for} \ z\in (\be,\al), \\
 &\begin{pmatrix} 1& 0 \\  -e^{-n(g_{+}(z)+g_{-}(z)-l-V(z))} & 1  \end{pmatrix} \ \textrm{for} \ z \in \left(\al,\al+\ep \right), \\
 &\begin{pmatrix}1 & e^{n(g_{+}(z)+g_{-}(z)-l-V(z))} \\ 0 &  1 \end{pmatrix}
\quad \textrm{for} \ z \in(\be-\ep,\be), \\
&\begin{pmatrix} (1+e^{\pm \pi \ii n z})^{-1} & 0  \\ 0 &1+e^{\pm \pi \ii n z}\end{pmatrix} \ \textrm{for} \ z \in \left(0,\pm \ii\ep\right), \\
&\begin{pmatrix} 1  & \pm e^{n(2g(z) - V(z) - l\pm \ii\pi z)}  \\
0 &1
\end{pmatrix}
\quad  \textrm{for} \ z \in\left(\al,\al-\ep\pm \ii \ep \right), \\
&\begin{pmatrix} 1 & 0  \\ \mp e^{\mp nG(z)} & 1 \end{pmatrix}\quad \textrm{for} \ z\in (\be,\be+\ep \pm \ii\ep), \\
& {\bf I} \qquad \textrm{otherwise}.
\end{aligned}\right.
\end{equation}
We then have as $n\to\infty$, uniformly for $z\in \Sg_S$,
\eq\label{eq:S_tildeS}
j_S(z) = \tilde j_S(z) + \bigO(e^{-cn}), \quad c>0.
\eeq

\begin{remark}
 In \eqref{stg3b}, the subleading corrections making $V_n^q(x)$ of \eqref{em1} different from $V(x)$ of \eqref{eq_V_appendix} are no longer visible. So the fact that we managed to convert our problem to the setting with jump matrix \eqref{stg3b} is a rigorous analogue of Conjecture \ref{Conjecture_concentration_case1} in the Riemann--Hilbert-based approach we follow here. In this way, the rest of the proof is a Riemann--Hilbert analogue of the conditional proof of Theorem \ref{Theorem_asymptotics_case1} at the end of Section \ref{Section_asymptotics_through_log_gas}. The Riemann--Hilbert problem with jumps of \eqref{stg3b} can be connected to classical Meixner orthogonal polynomials, and our arguments have parallels with the RH analysis of the latter in \cite{Wang-Wong11}, though their approach is slightly different. In \cite{Wang-Wong11} the authors combine the global and local solutions to the RHP to give an asymptotic solution which is valid on larger regions in $\C$, at the expense of having more complicated formulas.  
\end{remark}

We now introduce a global function ${\bf M}(z)$ with a jump matching $\tilde j_S(z)$ on the interval $(\be,\al)$, and local solutions with jumps matching  $\tilde j_S(z)$ in small neighborhoods of $\be$, $\al$, and 0.

{\bf The solution to the model problem.}
The model RHP is
\begin{enumerate}
  \item
  $\mathbf M(z)$ is analytic in $\C \setminus \left[\be,\al\right]$.
  \item For $z \in(\be,\al)$, the matrix function $\mathbf M(z)$ satisfies the jump condition
  \begin{equation}\label{mg1}
   \mathbf M_{+}(z)=\mathbf M_{-}(z)
  \begin{pmatrix} 0 & 1 \\ -1 & 0 \end{pmatrix}.  \\
  \end{equation}
 \item As $z \to \infty, \
\mathbf M(z)\sim {\bf I} + \bigO(1/z)$.
\end{enumerate}
The solution is
  \begin{equation}\label{mg3}
  \mathbf M(z) = \begin{pmatrix} \frac{\ga(z)+ \ga(z)^{-1}}{2} &  \frac{\ga(z)- \ga(z)^{-1}}{-2i} \\  \frac{\ga(z)- \ga(z)^{-1}}{2i} & \frac{\ga(z)+ \ga(z)^{-1}}{2}\end{pmatrix},
  \end{equation}
where
  \begin{equation}\label{mg3b}
\ga(z) = \left(\frac{z-\al}{z-\be}\right)^{1/4}\,,
\end{equation}
with a cut on $(\be,\al)$ taking the branch such that $\ga(\infty)=1$.

{\bf The local solution at $\al$ and $\be$.}
Consider small disks $D(\al, \ep)$ and $D(\be, \ep)$ around $\al$ and $\be$.  We seek a local parametrix $\mathbf U_n(z)$ in these disks satisfying:
\begin{itemize}
\item $\mathbf U_n(z)$ is analytic in $\{D(\al, \ep) \cup D(\be, \ep)\} \setminus \Sg_S$.
\item For $z\in \{D(\al, \ep) \cup D(\be, \ep)\} \cap \Sg_S$, $\mathbf U_n(z)$ satisfies the jump conditions
\eq\label{eq:U_tildeS}
\mathbf U_{n+}(z)=\mathbf U_{n-}(z)\tilde j_S(z).
\eeq
\item On the boundary of the disks, $\mathbf U_n(z)$ satisfies
\begin{equation}\label{pm1}
\mathbf U_n(z)=\mathbf M(z) \left(I+\bigO(n^{-1})\right)\,, \qquad z\in \d D(\al, \ep) \cup \d D(\be, \ep)\,.
\end{equation}
\end{itemize}
The solution is given explicitly in terms of Airy functions (see \cite[Section 3.9 and 3.10]{Bleher-Liechty14}, and we do not describe it here. We do note that $\mathbf U_n(z)$ is uniformly bounded in $n$ and $z$, since it satisfies a Riemann--Hilbert problem with uniformly bounded jump and boundary conditions.

{\bf The local solution at the origin.}
Introduce the function
\begin{equation}\label{eq:def_D}
D(z):=\left\{
\begin{aligned}
& \frac{\Ga\left(\frac{nz}{2}+\frac{3}{2}\right)e^{nz/2}}{\sqrt{2\pi}\left(\frac{nz}{2}\right)^{nz/2+1}}\,, \quad \textrm{for} \ \Re z>0\,, \\
& \frac{\sqrt{2\pi}e^{nz/2}}{\Ga\left(-\frac{nz}{2}-\frac{1}{2}\right)\left(-\frac{nz}{2}\right)^{nz/2+1}}\,, \quad \textrm{for} \ \Re z<0\,.
\end{aligned}\right.
\end{equation}
This function has the following properties:
\begin{itemize}
\item For $z\in \ii\R$, the function $D$ has the multiplicative jump
\begin{equation} \label{eq:D_jump}
D_+(z)=D_-(z)\times\left\{
\begin{aligned}
&(1+e^{\ii n\pi z})\,, \quad \Im z>0, \\
&(1+e^{-\ii n\pi z})\,, \quad \Im z<0, \\
\end{aligned}\right.
\end{equation}
where the imaginary axis is oriented upward, i.e., the $+$-side of the imaginary axis is to the left.
\item As $n\to \infty$,
\eq\label{eq:D_bound}
D(z) = 1+\bigO(n^{-1}), \quad |z|  \ge \ep.
\eeq
\end{itemize}
The first property follows from the reflection formula for the Gamma function,
and the second follows from Stirling's formula. 

{\bf The final transformation of the RHP}
We now consider the contour $\Sigma_X$, which consists of the circles $\d D(\al, \ep)$, $\d D(\be, \ep)$, and $\d D(0,\ep)$, oriented counterclockwise, together with $\Sigma_S$, see Figure \ref{contour_Sigma_X}.
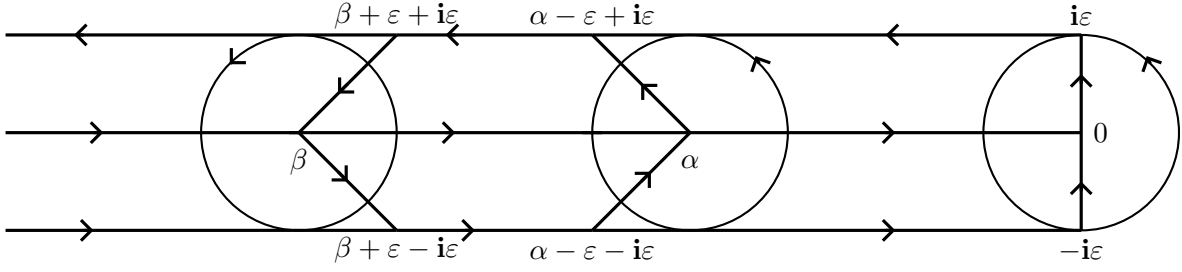
\begin{figure}
\begin{center}
\begin{tikzpicture}[scale=1.3]
\draw[thick] (5,0) circle (1);
\draw[thick] (1,0) circle (1);
\draw[thick] (-3,0) circle (1);
%
%
%

\draw[very thick] (-6,0) --(5,0)
(5.2,0) node {$0$}
(5,1.2) node {$\ii\ep$}
(5,-1.2) node {$-\ii\ep$};
\draw[very thick] (0,1) --(5,1);
\draw[very thick] (-6,1) --(-2,1);
\draw[very thick] (-3,0) --(-2,1);
\draw[very thick] (-3,0) --(-2,-1);
\draw[very thick] (-2,1) --(0,1);
\draw[very thick] (0,-1) --(5,-1);
\draw[very thick] (-6,-1) --(-2,-1);
\draw[very thick] (-2,-1) --(0,-1);
\draw[very thick] (5,1) --(5,-1);
\draw[very thick] (0,1) --(1,0);
\draw[very thick] (0,-1) --(1,0);
\draw[very thick] (-.1,0) --(.1,0)
(1,-0.3) node {$\al$}
(0,1.2) node {$\al-\ep+\ii \ep$}
(0,-1.2) node {$\al-\ep-\ii \ep$};
\draw[very thick] (-3,0) --(-3.1,0)
(-3,-0.3) node {$\be$}
(-2,1.2) node {$\be+\ep+\ii \ep$}
(-2,-1.2) node {$\be+\ep-\ii \ep$};
\draw[-Straight Barb, very thick] (3,0) --(3.1,0);
\draw[-Straight Barb, very thick] (-1.5,0) --(-1.4,0);
\draw[-Straight Barb, very thick] (-5.1,0) --(-5,0);
\draw[-Straight Barb, very thick] (3,-1) --(3.1,-1);
\draw[-Straight Barb, very thick] (-1.3,-1) --(-1.2,-1);
\draw[-Straight Barb, very thick] (-5.2,-1) --(-5.1,-1);
\draw[-Straight Barb, very thick] (3.1,1) --(3,1);
\draw[-Straight Barb, very thick] (-1.4,1) --(-1.5,1);
\draw[-Straight Barb, very thick] (-5.2,1) --(-5.3,1);
\draw[-Straight Barb, very thick] (5,-0.6) --(5,-0.5);
\draw[-Straight Barb, very thick] (5,0.5) --(5,0.6);
\draw[-Straight Barb, very thick] (-2.6,-0.4) --(-2.5,-0.5);
\draw[-Straight Barb, very thick] (-2.5,0.5) --(-2.6,0.4);
\draw[-Straight Barb, very thick] (0.5,-0.5) --(0.6,-0.4);
\draw[-Straight Barb, very thick]  (0.6,0.4) --(0.5,0.5);
\draw[-Straight Barb, very thick] (5.707107,0.707107) --(5.67,0.784);
\draw[-Straight Barb, very thick] (5.707107-4,0.707107) --(5.67-4,0.784);
\draw[-Straight Barb, very thick] (-3-0.7,0.707107) --(-3-0.707107,0.7);

%

    \end{tikzpicture}
\end{center}
\caption{The oriented contour $\Sg_X$.}\label{contour_Sigma_X}
\end{figure}

We let
\begin{equation}\label{tt1}
\mathbf X_n(z)=\left\{
\begin{aligned}
&\mathbf S_n(z) \mathbf M(z)^{-1} \ \textrm{for} \ z \ \textrm{outside the disks }  D(\al, \ep), \ D(\be, \ep), D(0, \ep),\\
&\mathbf S_n(z)  D(z)^{\sg_3}\mathbf M(z)^{-1} \ \textrm{for} \ z \ \textrm{inside the disk }  D(0, \ep), \\
&\mathbf S_n(z) \mathbf U_n(z)^{-1} \ \textrm{for} \ z \ \textrm{inside the disks }  D(\al, \ep), \ D(\be, \ep). \\
\end{aligned}\right.
\end{equation}
Then $\mathbf X_n(z)$ solves the following RHP:
\begin{enumerate}
\item
$\mathbf X_n(z)$ is analytic on $\C \setminus \Sigma_X$.
\item
$\mathbf X_n(z)$ has the jump properties
\begin{equation}\label{tt2}
\mathbf X_{n+}(x)=\mathbf X_{n-}(z)j_X(z),
\end{equation}
where
\begin{equation}\label{tt3}
j_X(z)=\left\{
\begin{aligned}
&\mathbf M(z)\mathbf U_n(z)^{-1} \quad \textrm{for} \ z \ \textrm{on the circles} \ \d D(\al, \ep), \ \d D(\be, \ep), \\
& D(z)^{\sg_3} \quad \textrm{for} \ z \ \textrm{on the circle} \ \d D(0, \ep), \\
&\mathbf U_{n-}(z)j_S(z)\mathbf U_{n+}(z)^{-1} \quad \textrm{for} \ z \in \Sg_S \cap\{ D(\al, \ep)\cup D(\be, \ep) \},\\
& D(z)^{-\sg_3} j_S(z) D(z)^{\sg_3} \quad \textrm{for} \ z \in \Sg_S \cap D(0, \ep), \\
&\mathbf M(z)j_S(z)\mathbf M(z)^{-1} \quad \textrm{otherwise}.
\end{aligned}\right.
\end{equation}
\item
As $z\to\infty, \
\mathbf X_n(z)\sim {\bf I} +  \bigO(1/z)$.
\end{enumerate}
From \eqref{pm1} and \eqref{eq:D_bound} we see $j_X(z) = {\bf I} + \bigO(1/n)$ for $z\in \d D(\al, \ep) \cup \d D(\be, \ep) \cup\d D(0, \ep)$. Also, from \eqref{eq:S_tildeS} and \eqref{eq:U_tildeS}
\eq
j_X(z) = {\bf U}_{n-}(z)\left(\tilde j_S(z) + \bigO(e^{-cn})\right)\mathbf U_{n+}(z)^{-1} =  {\bf I } + \bigO(e^{-cn}), \quad z\in \Sg_S \cap\{ D(\al, \ep)\cup D(\be, \ep) \},
\eeq
where we have used the fact that ${\bf U}_{n}(z)$ is uniformly bounded. Similarly, \eqref{eq:S_tildeS}, \eqref{stg3}, and \eqref{eq:D_jump} together with uniform boundedness of $D(z)$ imply
\eq
j_X(z) =  {\bf I } + \bigO(e^{-cn}), \quad z\in \Sg_S \cap D(0, \ep);
\eeq
and \eqref{eq:S_tildeS}, \eqref{stg3}, and \eqref{mg1} together with uniform boundedness of ${\bf M}(z)$ outside of neighborhoods of $\al$ and $\be$ imply
\eq
j_X(z) =  {\bf I } + \bigO(e^{-cn}), \quad z\in \Sg_S \setminus \{ D(\al, \ep) \cup D(\be, \ep) \cup D(0, \ep)\}.
\eeq
Since
\begin{equation}\label{tt5}
j_X(z)=\left\{
\begin{aligned}
&{\bf I} + \bigO(1/n) \quad \textrm{for} \ z \ \textrm{on the circles} \ \d D(\al, \ep) \cup \ \d D(\be, \ep) \cup \ \d D(0, \ep) \\
& {\bf I } + \bigO(e^{-cn}) \quad \textrm{for} \ z \ \textrm{on the rest of} \ \Sg_X, \\
\end{aligned}\right.
\end{equation}
${\bf X}_n(z)$ is given by a series as in \eqref{eq:R_series} with $j_X^0(\mu) =  j_X(\mu)-{\bf I}$, and the uniform estimate \eqref{tt5} implies that as $n\to \infty$, uniformly for $z\in \C$,
\eq
{\bf X}_n(z)  ={\bf I} + \bigO(1/n).
\eeq

{\bf Extraction of asymptotic formulas \eqref{eq:leading_OPsF} and \eqref{eq:leading_CauchyF}.} For $z$ bounded away from the negative real axis,  \eqref{stg2} and \eqref{tt3} give
\eq\label{eq:Q_unravel}
{\bf Q}_n(z) = {\bf K}^{-1}e^{\frac{nl}{2}\sg_3}{\bf X}_n(z) {\bf M}(z) e^{n(g(z) -l/2)\sg_3} {\bf K}.
\eeq
Using ${\bf X}_n(z)  ={\bf I} + \bigO(1/n)$ and multiplying out the RHS of \eqref{eq:Q_unravel} gives
\eq\label{eq:Q_asy_row2}
{\bf Q}_n(z)_{21} = -\frac{e^{n(g(z) - l)}}{\ii \pi} {\bf M}(z)_{21}\left(1+\bigO(1/n)\right), \quad {\bf Q}_n(z)_{22} = e^{-ng(z)} {\bf M}(z)_{22}\left(1+\bigO(1/n)\right),
\eeq
which, along with \eqref{eq:Q_p_Cauchy}, \eqref{eq:Q_p_OP}, and \eqref{mg3}, proves  \eqref{eq:leading_OPsF} and \eqref{eq:leading_CauchyF} for $z$ bounded away from the negative real axis. When $z$ is near the negative real axis but still bounded away from $[\be,0]$, i.e., $\Re z<\be$, $|\Im z|<\ep$, then \eqref{redp6}, \eqref{stg2}, and \eqref{tt3} give
\eq\label{eq:Q_unravel_real}
{\bf Q}_n(z) = {\bf K}^{-1}e^{\frac{nl}{2}\sg_3}{\bf X}_n(z) {\bf M}(z) e^{n(g(z) -l/2)\sg_3} {\bf K}{\bf D}^u_\pm(z).
\eeq
As in \eqref{eq:Cauchy_closetoR}, the extra factor ${\bf D}^u_\pm(z)$ does not change the first column of  ${\bf Q}_n(z)$ at all, and does not affect the asymptotic formula \eqref{eq:Q_asy_row2} for ${\bf Q}_n(z)_{22}$ provided $z$ remains at a distance at least $\ep/n$ from the set $L_n$. This proves \eqref{eq:leading_OPsF} and \eqref{eq:leading_CauchyF} for all $z$ bounded away from the interval $[\be,0]$.

\end{appendix}

\bibliographystyle{plain}
\bibliography{bibliography.bib}

\end{document}